\let\oldtocsubsection=\tocsubsection
\renewcommand{\tocsubsection}[2]{\hspace{.75cm}\oldtocsubsection{#1}{#2}}
\DeclareRobustCommand{\SkipTocEntry}[5]{}
\newtheorem{theorem}{Theorem}[section]
\newtheorem{lemma}[theorem]{Lemma}
\newtheorem{proposition}[theorem]{Proposition}
\newtheorem{corollary}[theorem]{Corollary}
\newtheorem{properties}[theorem]{Properties}
\theoremstyle{definition}
\newtheorem{definition}[theorem]{Definition}
\newtheorem{notation}[theorem]{Notation}
\newtheorem{remark}[theorem]{Remark}
\newtheorem{question}[theorem]{Question}
\newtheorem*{remark*}{Remark}
\newtheorem{example}[theorem]{Example}
\newcounter{mparcnt}
\numberwithin{equation}{section}
\newcommand{\Q}{\mathcal Q} 
\newcommand{\Z}{\mathcal Z}
\newcommand{\bN}{\mathbb{N}}
\newcommand{\bC}{\mathbb{C}}
\newcommand{\id}{\mathrm{id}}
\newcommand{\M}{\mathcal{M}}
\newcommand{\Cu}{\mathrm{Cu}} 
\newcommand{\minusa}{\ensuremath{\mathrlap{\!\not{\phantom{\mathrm{a}}}}\mathrm{a}}}
\newcommand{\Ka}{\ensuremath{\overline{K}^{\mathrm{alg}}_{1}}} 
\newcommand{\ka}[1]{\ensuremath{[#1]_{\mathrm{alg}}}} 
\newcommand{\KK}{\ensuremath{KK}} 
\newcommand{\KL}{\ensuremath{KL}} 
\newcommand{\Ext}{\ensuremath{\mathrm{Ext}}} 
\newcommand{\totK}{\ensuremath{\underline{K}}}
\newcommand{\bcN}{\ensuremath{\overline{\mathbb{N}}}}
\newcommand{\Zn}[1]{\mathbb Z/#1}
\newcommand{\alg}{\mathrm{alg}}
\newcommand{\Hom}{\ensuremath{\mathrm{Hom}}}
\DeclareMathOperator{\Aff}{Aff}
\DeclareMathOperator{\Ad}{Ad}
\DeclareMathOperator{\im}{im}
\DeclareMathOperator{\ev}{ev}
\DeclareMathOperator{\Th}{Th} 
\DeclareMathOperator{\Ell}{Ell}
\newcommand{\inv}{\underline{K}T_u}
\title[Classifying $^*$-homomorphisms I]%
{Classifying $^*$-homomorphisms I:\\ Unital simple nuclear $C^*$-algebras}
\author[J. Carri\'on]{Jos\'e R. Carri\'on}
\address{\hspace{.5ex}Jos\'e R.\ Carri\'on, Department of Mathematics, Texas Christian
  University, Fort \linebreak Worth, Texas 76109, United States}
\email{j.carrion@tcu.edu}
\author[J. Gabe]{James Gabe}
\address{\hspace{.5ex}James Gabe, Department of Mathematics and Computer Science,
University of Southern Denmark, 5230 Odense, Denmark}
  \email{gabe@imada.sdu.dk}
\author[C. Schafhauser]{Christopher Schafhauser}
\address{\hspace{.5ex}Christopher Schafhauser, Department of  Mathematics, University of Nebraska-\linebreak Lincoln, Lincoln, Nebraska 68588, United States}
\email{cschafhauser2@unl.edu}
\author[A. Tikuisis]{Aaron Tikuisis}
\address{\hspace{.5ex}Aaron Tikuisis, Department of Mathematics and Statistics, University of
  Ottawa, 585 King Edward, Ottawa, ON, K1N 6N5, Canada}
\email{aaron.tikuisis@uottawa.ca}
\author[S. White]{Stuart White}
\address{\hspace{.5ex}Stuart White, Mathematical Institute, University of Oxford,
  Oxford, OX2 6GG, United Kingdom}
\email{stuart.white@maths.ox.ac.uk}
\thanks{Research partially supported by: DFF starting grant 1054-00094B, ARC grant DP180100595 (JG); NSF grant DMS-2000129 (CS);  NSERC Discovery Grant (AT); EPSRC grants EP/R025061/1 and EP/R025061/2 (SW).  Part of the research in this paper was undertaken at the American Institute of Mathematics as part of the SQuaRE \emph{von Neumann techniques in the classification of $C^*$-algebras.}}
\subjclass[2010]{Primary: 46L05}
\begin{document}

\maketitle
\setcounter{tocdepth}{2}

\renewcommand*{\thetheorem}{\Alph{theorem}}

\begin{abstract}
  We classify the unital embeddings of a unital separable nuclear $C^*$-algebra satisfying the universal coefficient theorem into a unital simple separable nuclear $C^*$-algebra that tensorially absorbs the Jiang--Su algebra.
 
  This gives a new and essentially self-contained proof of the stably finite case of the unital classification theorem: unital simple separable nuclear $C^*$-algebras that absorb the Jiang--Su algebra tensorially and satisfy the universal coefficient theorem are classified by Elliott's invariant of $K$-theory and traces.
\end{abstract}

\addtocontents{toc}{\SkipTocEntry}
\section*{Overview of results}

This is the first of a series of papers in which we give an abstract approach to classification results for simple separable nuclear $C^*$-algebras.
This line of research, known as the Elliott classification program, has been a large-scale endeavor since the 1990s and seeks to obtain results for $C^*$-algebras analogous to the celebrated structure and classification theorems for amenable von Neumann algebras.

This paper handles the case of unital $C^*$-algebras. Our overarching objective is to give a new and comparatively self-contained proof of the unital classification theorem (Theorem~\ref{Main}). This says that a suitable class of unital simple separable nuclear $C^*$-algebras is classified by operator algebraic $K$-theory (the non-commutative version of Atiyah and Hirzebruch's topological $K$-theory) and its pairing with traces (thought of as non-commutative measures on $C^*$-algebras).
We put the hypotheses of the theorem, especially $\mathcal Z$-stability and the universal coefficient theorem (UCT) of Rosenberg and Schochet, into context in Section~\ref{Sect:Intro.1}.
At this point, we emphasize that these two conditions are both necessary: the classifiable class is maximal for such a result. Moreover, powerful tools exist to verify the hypotheses in a wide range of concrete situations (see Section~\ref{Intro:examples}).
This classification theorem may be regarded as the $C^*$-algebraic analog of the Connes--Haagerup classification of injective factors (\cite{Connes76,Haagerup87}).

\begin{theorem}[{The unital classification theorem; see Theorem~\ref{algebra-classification}}]\label{Main}
  Unital simple separable nuclear $\mathcal{Z}$-stable $C^*$-algebras satisfying Rosenberg and Schochet's universal coefficient theorem are classified by Elliott's invariant consisting of $K$-theory and traces.
\end{theorem}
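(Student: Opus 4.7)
My plan is to deduce Theorem~\ref{Main} from a classification of unital $^*$-homomorphisms via Elliott's two-sided approximate intertwining. Write $\Ell(-)$ for the Elliott invariant (the pointed ordered $K_0$-group, $K_1$, the tracial simplex, and the pairing $K_0 \times T \to \bR$), and $\Ell(\varphi)$ for the morphism induced by a unital $^*$-homomorphism.

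The core of the argument is to prove two companion theorems about unital $^*$-homomorphisms $\varphi \colon A \to B$, where $A$ is unital, separable, nuclear, and satisfies the UCT, and $B$ is unital, simple, separable, nuclear, and $\mathcal Z$-stable. \emph{Existence:} every morphism $\alpha \colon \Ell(A) \to \Ell(B)$ of Elliott invariants is induced by some unital $^*$-homomorphism $A \to B$. \emph{Uniqueness:} two unital $^*$-homomorphisms $A \to B$ inducing the same morphism on invariants are approximately unitarily equivalent. Given these, fix an isomorphism $\alpha \colon \Ell(A) \to \Ell(B)$, use existence to lift $\alpha$ and $\alpha^{-1}$ to $^*$-homomorphisms $\varphi_1 \colon A \to B$ and $\psi_1 \colon B \to A$, and observe that $\psi_1 \circ \varphi_1$ and $\varphi_1 \circ \psi_1$ act trivially on Elliott invariants, hence by uniqueness are approximately unitarily equivalent to $\id_A$ and $\id_B$ respectively. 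Iterating this one-step approximation produces an approximate intertwining that converges to mutually inverse $^*$-isomorphisms $A \cong B$.

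For existence, the UCT hypothesis on $A$ is the decisive input: it allows $\alpha$ to be lifted to a class in $KK(A,B)$ (after controlling the $\mathrm{Ext}^1$ obstruction using the tracial pairing data encoded in $\alpha$), and this $KK$-class is then realized by an honest unital $^*$-homomorphism using nuclearity of $A$ together with $\mathcal Z$-stability of $B$. The $\mathcal Z$-stability provides the flexibility --- through matricial embeddings coming from the Jiang--Su algebra --- to correct approximately multiplicative, approximately unital lifts into genuine $^*$-homomorphisms.

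The uniqueness theorem is the main obstacle. The approach is to work inside the sequence algebra $B^\omega$ and the trace-kernel ideal $J_B$ (elements vanishing under every limit trace). Simplicity and $\mathcal Z$-stability force $B^\omega / J_B$ to be a sufficiently well-behaved finite von Neumann algebra in which $\varphi$ and $\psi$ can be compared using only the tracial part of the invariant, via McDuff-type absorption at the von Neumann level. The harder half is to lift this comparison back to $B^\omega$ itself, which amounts to analyzing the extension $0 \to J_B \to B^\omega \to B^\omega/J_B \to 0$ with $KK$-theoretic tools; here the $K$-theoretic part of the invariant enters, and the deepest consequences of $\mathcal Z$-stability --- strict comparison, divisibility, and stable rank one --- are required to convert abstract agreement of invariants into honest unitary implementations back in $B$.
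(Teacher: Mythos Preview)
Your overall architecture---prove existence and uniqueness for morphisms, then run Elliott's two-sided intertwining---is exactly right, and your description of the trace-kernel extension strategy for uniqueness is broadly accurate. However, the uniqueness statement you propose is \emph{false}, and this is the central subtlety the paper is built around.

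It is not true that two unital $^*$-homomorphisms $A \to B$ inducing the same map on the Elliott invariant must be approximately unitarily equivalent, even when $A$ and $B$ are both classifiable. The paper's Example~\ref{ZwreathZ} gives an explicit counterexample: for $A = B = \mathcal Z^{\otimes \mathbb Z} \rtimes \mathbb Z$, the gauge automorphisms $\phi_z$ act trivially on $KT_u(A)$ (hence on the Elliott invariant), yet for $z \neq 1$ they are not approximately inner---they are distinguished by their action on $\Ka(A)$. This phenomenon goes back to Nielsen--Thomsen for A$\mathbb T$ algebras. Your intertwining argument therefore breaks down: once you produce $\varphi_1$ and $\psi_1$ lifting $\alpha$ and $\alpha^{-1}$, you cannot conclude that $\psi_1 \circ \varphi_1$ is approximately unitarily equivalent to $\id_A$ from Elliott-invariant agreement alone.

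The paper's resolution is to classify morphisms by the \emph{total invariant} $\inv$, which adjoins total $K$-theory $\totK$, Hausdorffized algebraic $K_1$, and the maps $\zeta^{(n)}$ connecting them (Theorem~\ref{one-sided-classification}). Uniqueness genuinely holds at this level. The passage back to the Elliott invariant works only because any $KT_u$-\emph{isomorphism} extends (non-canonically) to a $\inv$-isomorphism (Theorem~\ref{ThmExtension}); one runs the entire intertwining at the $\inv$ level (Theorem~\ref{algebra-classification-total}) and only then forgets down to $KT_u$. The resulting $^*$-isomorphism $\phi$ lifts the given $\alpha$, but is \emph{not} unique up to approximate unitary equivalence as a lift of $\alpha$---only as a lift of the chosen $\inv$-extension.
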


The unital classification theorem was first obtained in 2015 by combining \cite{GLN-part1,GLN-part2,EGLN,TWW,Winter16a,Winter14,Winter12} (and the large body of work on which these papers rely) with the Kirchberg--Phillips theorem \cite{Kirchberg95b,Phillips00} from the 1990s.\footnote{Note that \cite{GLN-part1,GLN-part2} were originally made available as \cite{GLN-preprint}. Our formulation of Theorem~\ref{Main} differs from the 2015 version, which had the hypothesis of finite nuclear dimension in place of $\mathcal Z$-stability. These two conditions are now known to be equivalent (\cite{Winter12,Castillejos-Evington-etal21}).} It is a capstone result for the Elliott classification program, involving decades of work by large numbers of researchers, and so is often attributed to ``many hands''.
Our approach follows an entirely different strategy --- made possible by the recent developments of \cite{Schafhauser17,Schafhauser18,CETW21} --- which, in our opinion, is both shorter and more conceptual.

Beginning with Elliott's classification of approximately finite dimensional $C^*$-algebras and continuing to the present day, classification results for $C^*$-algebras have been built on classification results for morphisms.
Our strategy in this paper fits the version of this framework pioneered by R\o{}rdam in \cite{Rordam95}: the main goal of the paper is the following classification of unital embeddings, from which Theorem~\ref{Main} is quickly deduced. We discuss the need for the total invariant $\inv$ further below (in Sections~\ref{Sect:Intro.1}, \ref{subsec:brief-history}, and~\ref{subsec:methods-behind-thmB}), and describe its components in detail in Sections~\ref{SectAlgK1}, \ref{ss:totalKtheory}, \ref{sec:new-map}, and~\ref{ss:totalinv}.

\begin{theorem}[{Classification of unital embeddings; see
    Theorem~\ref{one-sided-classification}}]\label{Main2} 
  Let $A$ be a unital separable nuclear $C^*$-algebra satisfying the UCT and $B$ be a unital simple separable nuclear $\mathcal{Z}$-stable $C^*$-algebra. Up to approximate unitary equivalence, the unital embeddings $A \hookrightarrow B$ correspond bijectively with those morphisms of the total invariant $\inv$ --- consisting of enriched $K$-theoretic and tracial data --- mapping traces on $B$ to faithful traces on $A$.
\end{theorem}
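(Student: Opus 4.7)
The plan is to reduce Theorem~\ref{Main2} to an existence and a uniqueness statement for unital $*$-homomorphisms $A \to B$ realizing prescribed invariant data. The \emph{existence} step asks that every morphism $\alpha \colon \inv(A) \to \inv(B)$ pulling traces of $B$ back to faithful traces of $A$ be induced by a unital $*$-homomorphism $A \to B$; the \emph{uniqueness} step asks that two such $*$-homomorphisms inducing the same morphism of $\inv$ be approximately unitarily equivalent. Together, these yield the desired bijection. Injectivity of the $*$-homomorphisms produced in the existence step is automatic: $B$ is simple, so the kernel of a unital $*$-homomorphism from $A$ is contained in the kernel of every tracial state on $A$, which is trivial by the faithfulness hypothesis on $\alpha$.

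For \emph{uniqueness}, I would pursue a stable uniqueness strategy. Since $A$ satisfies the UCT, agreement of $\phi, \psi \colon A \to B$ on $\totK$ and $\Ka$, combined with matching trace and new-map data, should force agreement in $\KLn(A, B)$ and on the full trace-enriched structure. A Dadarlat--Eilers-type stable uniqueness theorem, adapted to accommodate traces and the finer algebraic $K_1$ and new-map data, then produces a unital $*$-homomorphism $\sigma \colon A \to B$ such that $\phi \oplus \sigma$ and $\psi \oplus \sigma$ are unitarily equivalent in $M_2(B)$. The $\mathcal{Z}$-stability of $B$ lets the auxiliary summand $\sigma$ be absorbed into a central sequence of $\mathcal{Z}$ inside $B$, promoting the stable equivalence to approximate unitary equivalence of $\phi$ and $\psi$ in $B$ itself.

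For \emph{existence}, I would first realize $\alpha$ by a unital $*$-homomorphism into the sequence algebra $B_\infty = \ell^\infty(B)/c_0(B)$. The UCT lifts the $\totK$-component of $\alpha$ to a $\KL$-class, providing the $K$-theoretic backbone. The tracial information and the $K_0$--trace pairing are grafted onto this backbone using the von Neumann-type machinery developed in \cite{Schafhauser17,Schafhauser18,CETW21}, available because $B$ is simple, nuclear, and $\mathcal{Z}$-stable so that $B_\infty \cap B'$ carries a rich supply of order-zero maps. The $\Ka$ and new-map components are then arranged by perturbations supported in the relative commutant of the initial map. Separability of $A$ finally allows the extraction of a convergent sequence of unital $*$-homomorphisms $A \to B$ whose image in $B_\infty$ realizes $\alpha$.

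The main difficulty, as I see it, lies in showing that the four pieces of $\inv$ --- $\totK$, $\Ka$, traces with the $K_0$-pairing, and the new map --- form a \emph{complete} obstruction. These pieces interact non-trivially (Bockstein operations couple the summands of $\totK$, the new map records $\Ka$--trace information invisible to ordinary $K_1$, and the trace pairing on $K_0$ must be respected throughout), so the uniqueness theorem must genuinely exploit all of them, and the existence construction must install them compatibly in a single $*$-homomorphism. This coordinated bookkeeping is the entire motivation for enlarging the invariant beyond Elliott's original formulation; verifying that no further refinement is required to classify embeddings, and in particular that $\Ka$ and the new map together absorb all the ``hidden'' obstructions left by the cruder $\underline{K}$-plus-traces invariant, is where the most substantial new technical work of the paper must reside.
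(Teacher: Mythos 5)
Your outline captures the broad shape — reduce to existence and uniqueness, classify first into the sequence algebra $B_\infty$, exploit the UCT and the trace-kernel machinery — but two of the load-bearing steps are described in ways that would not work as stated, and in fact describe exactly the older route the paper is built to avoid.

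The more serious problem is your uniqueness mechanism. You propose a Dadarlat--Eilers stable uniqueness statement giving $\phi\oplus\sigma\approx_u\psi\oplus\sigma$ and then assert that ``$\mathcal{Z}$-stability of $B$ lets the auxiliary summand $\sigma$ be absorbed,'' promoting stable equivalence to approximate unitary equivalence. This step fails. The auxiliary map $\sigma$ produced by a stable uniqueness theorem is an absorbing (i.e.\ large) representation; $\mathcal{Z}$-stability alone gives you no licence to cancel it. Historically, removing this summand is exactly the major obstacle that drove the use of internal approximation structure (TAF/TAI, Winter's localization through UHF-stabilizations, the Gong--Lin--Niu apparatus). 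The paper's way around this is the $\mathcal{Z}$-stable $KK$- and $KL$-uniqueness theorem (Theorem~\ref{thm:KK-Uniqueness}), whose point is precisely to replace the Dadarlat--Eilers direct-sum ``$\oplus\,\eta$'' stabilization with a tensor-product ``$\otimes 1_{\mathcal{Z}}$'' stabilization, so that $\mathcal{Z}$-stability of the codomain does allow one to strip off the stabilizing factor (via Proposition~\ref{PropSSA}). The proof of this theorem needs Jiang's $K_1$-injectivity of $\mathcal{Z}$-stable $C^*$-algebras (Theorem~\ref{prop:z-stable-K1-inj}), applied to the relative commutant $\mathcal{Q}(I)\cap q_I(\phi(A))'$, which has no analogue in your sketch. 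Moreover, the Dadarlat--Eilers machinery sees only $KK$, not traces or $\Ka$ or the $\zeta^{(n)}$-maps; the paper does not ``adapt'' a stable uniqueness theorem to carry this extra data, but instead splits the problem along the trace-kernel extension $0\to J_B\to B_\infty\to B^\infty\to 0$: traces are handled by the classification into $B^\infty$ (Theorem~\ref{thm:B^inftyClassification}), while the $K$-theoretic data (including $\Ka$ and $\zeta^{(n)}$) lives in $KL(A,J_B)$, identified via the UMCT and the Thomsen-map calculation (Theorems~\ref{Thm:K1J}, \ref{thm:algK1}, \ref{thm:calcKL}). This two-layer decomposition is essential to why the invariant is exactly $\inv$; your ``adapted'' stable uniqueness glosses over it.

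The second gap is in passing from $B_\infty$ back to $B$. Having a unital $*$-homomorphism $\phi_\infty\colon A\to B_\infty$ realizing $\alpha$ gives you only a sequence of approximately multiplicative maps $A\to B$, not $*$-homomorphisms; ``extraction of a convergent sequence'' does not produce an actual embedding $A\hookrightarrow B$. What the paper does is apply the intertwining-by-reparameterization theorem (Proposition~\ref{t:Gabereindex}): one checks, using $r^*\circ\iota_B=\iota_B$ and the uniqueness portion of the classification into $B_\infty$, that $\phi_\infty$ and $r^*\circ\phi_\infty$ are unitarily equivalent for every reparameterization $r$, and only then concludes that $\phi_\infty$ is (up to unitary equivalence) of the form $\iota_B\circ\phi$ for a genuine $\phi\colon A\to B$. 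This step is quick only because Theorem~\ref{approximate-classification} has already been proved in full generality for maps into $B_\infty$; your proposal does not make this dependence explicit and treats the descent to $B$ as a soft compactness step, which it is not.

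Finally, a minor point: in your opening paragraph you argue injectivity of the $*$-homomorphism from faithfulness of the traces on $A$. This is fine when $T(A)\neq\emptyset$, but the theorem also applies when $B$ is purely infinite and $A$ is traceless; in that case injectivity comes directly from simplicity of $B$ and unitality, without any reference to traces.
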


Both Theorem~\ref{Main} and Theorem~\ref{Main2} admit a dichotomy thanks to a special case of a beautiful theorem of Kirchberg: a $C^*$-algebra $B$ as in Theorem~\ref{Main2} is either \emph{purely infinite} or \emph{stably finite}. Moreover, the tracial component of the invariants used in Theorems~\ref{Main} and~\ref{Main2} detects these conditions: purely infinite simple $C^*$-algebras are traceless, while a deep theorem of Haagerup (\cite{Haagerup14}) combines with work of Blackadar and Handelman (\cite{Haagerup14,Blackadar-Handelman82}) to give traces on unital simple nuclear stably finite $C^*$-algebras.
Accordingly, both theorems and their proofs split into these two cases.
In the purely infinite setting, Theorem~\ref{Main} was established independently by Kirchberg and by Phillips in the 1990s (\cite{Kirchberg95b,Kirchberg-Phillips00, Phillips00}; see also R\o{}rdam's expository account in \cite{Rordam02}).
Subsequent work to reach Theorem~\ref{Main} was then devoted to the stably finite case. The same is true for our proof. In this paper we only handle the stably finite aspect of Theorem~\ref{Main}, and one obtains the entire unital classification theorem by combining this with the Kirchberg--Phillips theorem.
Likewise, when $B$ is purely infinite, Kirchberg obtained a classification of embeddings; our contribution is the stably finite counterpart, allowing for the unified statement of Theorem~\ref{Main2}.\footnote{In fact, Kirchberg's embedding theorem is more general, transferring some of the hypotheses from the domain and codomain to hypotheses on the morphisms.
  The second paper in this series, devoted to non-unital classification, will also extend the stably finite aspect of Theorem~\ref{Main2} to Kirchberg's framework.
  With the recent passing of Eberhard Kirchberg, his long-term project \cite{Kirchberg} describing these, and many more results, will sadly remain unfinished.
  JG gives a new approach to Kirchberg's classification theorems in \cite{Gabe20,Gabe-Preprint}.}

The generality of the stably finite case of Theorem~\ref{Main2} is the headline new result of the paper. It simultaneously unifies and generalizes earlier classifications of embeddings of nuclear $C^*$-algebras into simple nuclear stably finite $C^*$-algebras by $K$-theoretic data. The key point is the abstract nature of the hypotheses: the theorem allows all separable nuclear $C^*$-algebras as domains of embeddings, whereas prior results such as \cite{Lin07,Lin12a,Matui11} rely on internal structure for the domains (either commutativity, or explicit inductive limit or tracial approximation structure), and often also for the codomains.  While this paper was in preparation, Gong, Lin, and Niu independently obtained Theorem~\ref{Main2} when $A$ is additionally simple (\cite{Gong-Lin-etal23}, building on \cite{Lin-Niu14}). Their work uses the power of the unital classification theorem to obtain very precise internal structure on UHF-stabilizations of $A$.

The passage back from Theorem~\ref{Main2} to Theorem~\ref{Main} is via an Elliott intertwining argument, for which it is vital that Theorem~\ref{Main2} has both existence and uniqueness components. That is, Theorem~\ref{Main2} specifies exactly which maps between total invariants are realized by injective $^*$-homomorphisms, and then shows uniqueness of these. Moreover, although the total invariant $\inv$ contains more information than $K$-theory and traces, any map between $K$-theory and traces can be extended to $\inv$. In this way, the existence aspect of Theorem~\ref{Main2} yields the following:

\begin{corollary}\label{MainCor}
Let $A$ be a unital separable nuclear $C^*$-algebra satisfying the UCT and $B$ be a unital simple separable nuclear $\mathcal{Z}$-stable $C^*$-algebra. Any morphism from the Elliott invariant of $A$ to that of $B$ that maps traces on $B$ to faithful traces on $A$ is realized by a unital injective $^*$-homomorphism $A\to B$.
\end{corollary}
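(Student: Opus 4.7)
The plan is to deduce Corollary~\ref{MainCor} from the existence direction of Theorem~\ref{Main2}. Given a morphism $\phi\colon \mathrm{Ell}(A)\to\mathrm{Ell}(B)$ that sends each trace of $B$ to a faithful trace of $A$, I would first lift $\phi$ to a morphism $\tilde\phi\colon \inv(A)\to\inv(B)$ of the total invariant, still satisfying the tracial faithfulness condition. Then the existence half of Theorem~\ref{Main2}, applied to $\tilde\phi$, produces a unital $^*$-homomorphism $\varphi\colon A\to B$ inducing $\tilde\phi$, and in particular realizing $\phi$.

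The crux is therefore the extension claim: every morphism between Elliott invariants (with the tracial faithfulness property) lifts to a morphism of $\inv$. Beyond $K_*$ and traces, $\inv$ augments Elliott's invariant with the total $K$-theory $\totK$ (carrying its Bockstein operations), the algebraic $K_1$ group $\Ka$, and the new pairing introduced in Section~\ref{sec:new-map}. Since $A$ satisfies the UCT, the natural universal coefficient short exact sequence expresses $K_*(A;\bZ/n)$ as an extension built from $K_*(A)$ and $\mathrm{Tor}$, so the $K_0$ and $K_1$ components of $\phi$ lift (non-uniquely) to a Bockstein-compatible morphism on $\totK$. The group $\Ka$ admits an analogous presentation in terms of $K_1$ and tracial data, which lets one extend the $K_1$ and trace components of $\phi$ to $\Ka$ compatibly with the new map of Section~\ref{sec:new-map}.

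The main obstacle is verifying that the pieces chosen in the preceding paragraph can be assembled coherently into a single morphism of $\inv$: all of the structure (Bocksteins, the algebraic-to-topological $K_1$ map, the pairing of $\Ka$ with traces, and the new map) must be respected simultaneously rather than componentwise. This is essentially a diagram chase exploiting the UCT presentations, together with the naturality of each of these pairings in the underlying algebra. Once $\tilde\phi$ has been produced, the existence half of Theorem~\ref{Main2} supplies $\varphi\colon A\to B$ as required, and injectivity is automatic from the tracial faithfulness: if $\varphi(a)=0$ then $(\varphi^*\tau)(a^*a)=\tau(\varphi(a^*a))=0$ for every $\tau\in T(B)$, and faithfulness of $\varphi^*\tau$ forces $a=0$ (the traceless case being covered by the Kirchberg--Phillips theorem, as noted in the introduction).
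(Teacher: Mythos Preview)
Your proposal is correct and follows essentially the same route as the paper: extend the Elliott-invariant morphism to a morphism of the total invariant $\inv$, then invoke the existence half of Theorem~\ref{Main2}. The paper packages the extension step as Theorem~\ref{ThmExtension}, which carries out precisely the diagram chase you outline.

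Two minor remarks. First, the extension of $(\alpha_*,\gamma)$ to $\totK$ does not actually require the UCT: the Bockstein short exact sequence \eqref{eq:bockstein-new} holds for every $C^*$-algebra, and the paper's Proposition~\ref{TotalKExtend} (via B\"odigheimer's splitting) lifts $\alpha_*$ to a $\Lambda$-morphism in full generality. The UCT gives an easier route (see Footnote~\ref{TotalKExtendFootnote}), so your reasoning is not wrong, but the hypothesis is not needed at this step. Second, your separate argument for injectivity of $\varphi$ is redundant: Theorem~\ref{Main2} already produces unital \emph{embeddings}, so injectivity comes for free.
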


We will put the key ingredients in these theorems into context in Section~\ref{Sect:Intro.1} of the introduction, making the connection with von Neumann algebras and describing the range of examples covered by Theorem~\ref{Main}.  We follow this with a brief history of the unital classification theorem to set the scene for the outline of our methods in Section~\ref{subsec:methods-behind-thmB}.  

We first announced the results contained in this paper at NCGOA in M\"unster in 2018, and then in Oberwolfach (\cite{TikuisisWhite-OberwolfachReport}).  A summary of these results is also included in the survey \cite{White:ICM}.

\tableofcontents

\numberwithin{theorem}{section}

\section{Introduction}

\subsection{Context and examples}\label{Sect:Intro.1} From the outset, the classification of simple nuclear $C^*$-algebras has been intimately tied to developments in von Neumann algebras. Indeed, the first classification theorem for simple $C^*$-algebras is Glimm's classification of uniformly hyperfinite $C^*$-algebras (\cite{Glimm60}), which is a direct analog of Murray and von Neumann's uniqueness theorem for the hyperfinite II$_1$ factor (\cite{MvN.4}) taking into account the more refined invariants needed for $C^*$-algebras.

Later developments in $C^*$-algebra classification were similarly influenced by the spectacular successes on the von Neumann algebra side.  Connes' groundbreaking work on injective von Neumann algebras (\cite{Connes76}) gives a clean, testable, and abstract characterization of hyperfiniteness and is a landmark of 20th century mathematics.  Connes' theorem provides a game-changing enlargement of the scope of Murray and von Neumann's uniqueness theorem and led to a complete classification of injective infinite factors.\footnote{This classification combines Connes' theorem with Connes' and Krieger's prior work (\cite{Connes73,Connes75a,Connes75,Krieger76}) and Haagerup's subsequent result on the uniqueness of the hyperfinite III$_1$ factor (\cite{Haagerup87}).} Beyond its impact in von Neumann algebras --- which continues to the present day, for example in subfactor theory (\cite{Jones83,Popa94,Jones-Morrison-etal14}) and Popa's deformation/rigidity theory (\cite{Popa07,Vaes18}) --- Connes' theorem inspired classification results for amenable measurable dynamical systems (\cite{Connes-Feldman-etal81}) and is vital in developing approximation properties for $C^*$-algebras.

The unital classification theorem is the counterpart for unital $C^*$-algebras of the von Neumann algebra classification theorems. The hypotheses of unitality, simplicity, separability, and nuclearity in Theorem~\ref{Main} directly correspond to the von Neumann algebra framework. All von Neumann algebras are unital, and factors are the simple von Neumann algebras. Separability for $C^*$-algebras corresponds to von Neumann algebras with separable predual, and these hypotheses play essentially the same role in the two settings.\footnote{Separability enters through an intertwining argument and is necessary for both von Neumann and $C^*$-classification results, as shown by the examples of \cite{Farah-Katsura15}.} Finally, nuclearity 
 corresponds to injectivity in Connes' theorem: a $C^*$-algebra $A$ is nuclear if and only if its bidual $A^{**}$ is injective (\cite{Choi-Effros76a,Choi-Effros77}). 

The other two hypotheses in the unital classification theorem --- tensorial absorption of the Jiang--Su algebra $\mathcal Z$ and belonging to the UCT class --- are of a more subtle nature. We describe these next.

\subsubsection{$\mathcal Z$-stability}
A key step in Connes' work is that every separably acting injective II$_1$ factor $\mathcal M$ absorbs the hyperfinite II$_1$ factor $\mathcal R$ tensorially, i.e., $\mathcal M\cong\mathcal M\,\overline{\otimes}\,\mathcal R$. This property was first extensively studied in \cite{McDuff70}, and such II$_1$ factors are said to be  \emph{McDuff}. In contrast, not all infinite dimensional unital simple separable nuclear $C^*$-algebras factorize as a non-trivial tensor product.\footnote{R\o{}rdam's famous example of a unital simple separable nuclear $C^*$-algebra with both an infinite and finite projection (\cite{Rordam03}) has no such factorization.}
Yet tensorial absorption has been crucial in $C^*$-classification, coming into prominence in Kirchberg's Geneva theorems, one of which shows that for simple separable nuclear $C^*$-algebras, pure infiniteness is characterized by tensorial absorption of the Cuntz algebra $\mathcal O_\infty$. Moreover, ``$\mathcal O_\infty$-stability'' is a key ingredient in the Kirchberg--Phillips theorem.

The Jiang--Su algebra $\mathcal Z$, introduced in \cite{Jiang-Su99}, is the stably finite counterpart of $\mathcal O_\infty$, and a $C^*$-algebra $A$ is said to be $\Z$-stable if $A\cong A\otimes\Z$. Both $\mathcal O_\infty$ and $\Z$ are infinite dimensional, unital, simple, separable, nuclear, and have the same $K$-theory as the complex numbers.  However, $\Z$ has a unique trace, so it is stably finite, whereas $\mathcal O_\infty$ is purely infinite. Both are (strongly) tensorially self-absorbing. Moreover, in a sense that can be made precise using \cite{Toms-Winter07} and \cite{Winter11}, the property of $\Z$-stability is the mildest $C^*$-tensorial absorption condition akin to the McDuff property for II$_1$ factors.
Thus, from this point of view, $\mathcal Z$ is the most natural analog of $\mathcal R$.

Just as $\mathcal O_\infty$-stability is essential for the Kirchberg--Phillips classification, we use $\Z$-stability as the fundamental regularity property to separate classifiable $C^*$-algebras from the higher dimensional examples of \cite{Villadsen98,Villadsen99,Rordam03,Toms08a,Toms-Winter13,Giol-Kerr10} pioneered by Villadsen. The use of $\Z$-stability in classification was explicitly predicted by Toms in 2005 (\cite{Toms08a}): ``Optimistically $\Z$-stability is an abstraction of slow dimension growth, and the Elliott conjecture will be confirmed for all simple separable nuclear $C^*$-algebras having this property.'' See also \cite{Elliott-Toms08}.  Not only does $\Z$ have the same $K$-theory and traces as $\mathbb C$, but for any $C^*$-algebra $A$, the $K$-theory and traces of $A$ and of $A \otimes \mathcal Z$ are isomorphic. For this reason, $\mathcal Z$-stability is for all intents and purposes necessary for classification by $K$-theory and traces.\footnote{In language we introduce in Section~\ref{sec:Elliott-invariant}, $KT_u(A)\cong KT_u(A\otimes \mathcal Z)$.  However, the order structure on the $K_0$-groups, and therefore their Elliott invariants, may differ between these two algebras.}
We describe the Jiang--Su algebra and collect the key consequences of $\Z$-stability used throughout the paper in Section~\ref{Sect:Z}.

\subsubsection{The UCT}\label{Intro:UCTsubsect} Kasparov's $KK$-theory is a bivariant theory unifying $K$-theory and $K$-homology (\cite{Kasparov80,Kasparov84}). It has proved indispensable in both $C^*$-algebra classification and index theory (see for example \cite{Kasparov88,Higson98}). The Kirchberg--Phillips classification of stable simple separable nuclear purely infinite  $C^*$-algebras is expressed directly in terms of $KK$-theory: two such algebras $A$ and $B$ are isomorphic if and only if they are $KK$-equivalent.  Thinking of $KK$-equivalence as a very loose notion of homotopy between $C^*$-algebras,\footnote{Indeed, $C^*$-algebras that are homotopy equivalent are $KK$-equivalent.} this has the spirit of Mostow's rigidity theorem in geometric topology.

Inspired by earlier work of Brown (\cite{Brown84}), Rosenberg and Schochet established their universal coefficient theorem (UCT), which computes $KK$-theory in terms of $K$-theory for a large class of $C^*$-algebras (\cite{Rosenberg-Schochet87}) with generous permanence properties. This class is precisely the $C^*$-algebras that are $KK$-equivalent to some abelian $C^*$-algebra. Such algebras are said to \emph{satisfy the UCT}.

In contrast to $\Z$-stability, where examples lacking the property are known, it remains a crucial open question whether all (simple) separable nuclear $C^*$-algebras satisfy the UCT. For naturally occurring, concrete examples, this is less problematic as there are very general tools for verifying the UCT. Notably, all amenable groupoid $C^*$-algebras satisfy the UCT (\cite{Tu05}). In the setting of group actions, the UCT is preserved by crossed products of nuclear $C^*$-algebras by countable torsion-free amenable groups (\cite{Meyer-Nest06}). Whether the same holds for finite groups is open (in fact, equivalent to the UCT problem). See Section~\ref{subsec:the-uct} and Remark~\ref{rmk:UCTcrossedproducts} for more details.

Among the classification hypotheses, the UCT is the only one that does not appear to have a von Neumann algebraic counterpart, due to its inherently topological nature.

\subsubsection{The invariant}
The data comprising the \emph{Elliott invariant} of a unital stably finite $C^*$-algebra evolved over the early stages of the classification program, from the dimension groups appearing in Elliott's classification of approximately finite dimensional $C^*$-algebras (\cite{Elliott76}), to its present form (\cite{Elliott93,Thomsen94}; see \cite{Rordam02}). This consists of ordered $K_0$, $K_1$, the class of the unit, traces, and the pairing map. As alluded to earlier, the order on $K_0$ is redundant under the hypothesis of $\mathcal Z$-stability (see \cite[Corollary 4.10]{Rordam04}). While we have stated Theorem~\ref{Main} and Corollary~\ref{MainCor} in terms of Elliott's invariant, in the main body of the paper we use the (formally weaker) invariant $KT_u$ consisting of $K$-theory, the class of the unit, traces, and the pairing map, but not the order on $K_0$.  See the discussion in Section~\ref{sec:Elliott-invariant}.

The unital classification theorem is complemented by an important range of invariant result: all pairings of $K$-theory (without order) and traces that can occur for unital separable $C^*$-algebras are realized by the $C^*$-algebras of Theorem~\ref{Main}. The range of the invariant can be characterized abstractly as follows.  In the purely infinite case (when $T(A)=\emptyset$), $K_0(A)$ and $K_1(A)$ can be arbitrary countable abelian groups, $[1_A]_0$ can be any element of $K_0(A)$, and the pairing map can be disregarded since there are no traces.
In the stably finite case (when $T(A)\neq \emptyset$), $K_1(A)$ can be an arbitrary countable abelian group, $K_0(A)$ is any countable abelian group with a specified element $[1_A]_0$ of infinite order, $T(A)$ can be an arbitrary metrizable Choquet simplex, and the pairing map $\rho_A \colon K_0(A) \rightarrow \Aff T(A)$ can be any unit-preserving group homomorphism.

These range of invariant theorems are established via crossed product constructions (\cite{Rordam95}) in the purely infinite setting and inductive limit constructions (\cite{Elliott96}) in the stably finite one.
Understanding the range of the invariant leads to a wealth of other automatic structural results such as the existence of Cartan subalgebras inside the $C^*$-algebras covered by Theorem~\ref{Main} (\cite{Li20,Clark-Fletcher-etal}); equivalently via \cite{Renault08}, 
classifiable $C^*$-algebras all arise from twisted groupoids.

Turning to Theorem~\ref{Main2}, examples in the 1990s show that $K$-theory and traces alone are not enough to classify $^*$-homomorphisms (this was made explicit in \cite{NielsenThomsen}). This forces enlargement of the invariant, formalized in the \emph{total invariant}, written $\inv$. This is obtained by adjoining a Hausdorffized unitary algebraic $K_1$-group developed by Thomsen in \cite{Thomsen95}, which we denote $\Ka(A)$, and Schochet's $K$-theory with $\mathbb Z/n\mathbb Z$-coefficients (\cite{Schochet84}), known as \emph{total $K$-theory}, to $KT_u$.

While our approach does not need a range of invariant result (i.e., existence of objects) to classify $C^*$-algebras in the unital classification theorem, it is vital in the deduction of Theorem~\ref{Main} from Theorem~\ref{Main2} that the classification of unital embeddings does include a range of invariant result. Any morphism at the level of the invariant $\inv$ in Theorem~\ref{Main2} that is faithful on traces is realized by a unital $^*$-monomorphism.  This is more subtle than the previous sentence might initially suggest. Just as any morphism between the Elliott invariants of $C^*$-algebras is required to intertwine the pairing maps between $K$-theory and traces, morphisms between the enlarged invariant $\inv$ must intertwine the natural maps between its components.  Most of these are familiar to experts: the Bockstein operations connecting the various $K$-groups with coefficients, the de la Harpe--Skandalis determinant from \cite{Harpe-Skandalis84} relating $\Ka$ and traces, and the canonical map from $\Ka$ to $K_1$.  We review these in Section~\ref{SectAlgK1} and collect properties of the Hausdorffized algebraic $K_1$-group and total $K$-theory there as well.

There is one more natural family of maps
\begin{equation}\label{Intro.NewMaps}
\zeta_A^{(n)}\colon K_0(A;\mathbb Z/n\mathbb Z)\to \Ka(A) 
\end{equation}
through which the Bockstein maps $K_0(A;\mathbb Z/n\mathbb Z)\to K_1(A)$ factorize. When $K_1(A)$ is torsion-free, the maps $\zeta^{(n)}_A$ are redundant, but in general, compatibility with these maps is an additional requirement in order to construct a morphism between $C^*$-algebras realizing specified behavior on $K$-theory, traces, total $K$-theory, and $\Ka$.  In the setting of the  classification of unital embeddings (Theorem~\ref{Main2}) this is the only additional compatibility needed.

We give an explicit construction of the maps \eqref{Intro.NewMaps} in Section~\ref{sec:new-map}, formalizing the total invariant in Section~\ref{ss:totalinv}. During the (lengthy) gestation of this paper, Gong, Lin, and Niu also discovered the need for additional compatibility between total and algebraic $K$-theory to produce morphisms (\cite{Gong-Lin-etal23}). They took a different, more abstract approach. See  Remark~\ref{NewMapsRemark}.

\subsubsection{Examples}\label{Intro:examples} It is important to be able to recognize $\Z$-stability in examples in order to  apply the classification theorem. To start, we note that $\Z$-stability can be described in terms of approximately central sequences, without reference to the Jiang--Su algebra itself (\cite[Proposition 2.3]{Winter10a}, using \cite{Rordam-Winter10}, together with \cite{Kirchberg06} or \cite{Toms-Winter07}). This formulation may be regarded as a suitable softening to positive elements of McDuff's criterion for absorption of $\mathcal R$ in terms of approximately central matrix embeddings from \cite{McDuff70}. 

Over the last 15 years, a major program of work on the Toms--Winter conjecture (see \cite{Elliott-Toms08,Toms-Winter09} and \cite[Section 5]{Winter19}) has provided  both a topological characterization of $\Z$-stability and Matui and Sato's powerful von Neumann techniques for establishing $\Z$-stability (\cite{Matui-Sato12}). \emph{Nuclear dimension} is a noncommutative generalization of topological covering dimension introduced by Winter and Zacharias (\cite{Winter-Zacharias10}). For unital simple separable nuclear non-elementary $C^*$-algebras, $\Z$-stability is equivalent to finiteness of the nuclear dimension (\cite{Winter12,Castillejos-Evington-etal21,Castillejos-Evington20}, building on \cite{Winter10a,Matui-Sato14,Sato-White-etal15,Bosa-Brown-etal15}).  This gives two paths to classifiability: estimating the nuclear dimension or directly obtaining $\Z$-stability. For examples built from objects of bounded topological dimension, it has typically been possible to estimate the nuclear dimension.  However, for many examples outside this setting, direct estimates on the nuclear dimension seem inaccessible, and von Neumann techniques provide a route to $\Z$-stability.

For example, a unital simple separable approximately subhomogeneous $C^*$-al\-ge\-bra $A$ is always in the UCT class. The nuclear dimension of subhomogeneous algebras is controlled by the dimension of the spectrum (\cite{Winter04}), so that when $A$ has no dimension growth it also has finite nuclear dimension. When the dimension grows, a direct estimate on the nuclear dimension fails, yet, using \cite{Winter12}, Toms proved that $A$ is $\Z$-stable precisely when it has slow dimension growth (\cite{Toms-11}).

Given a finitely generated amenable group $G$, every irreducible unitary representation $\pi$ generates a simple $C^*$-algebra precisely when $G$ is virtually nilpotent (this goes back to \cite{Moore-Rosenberg76, Poguntke81}; see also \cite{Echterhoff90}). In this case, $C^*_\pi(G)$ is $\Z$-stable whenever $\pi$ is infinite dimensional (\cite{Eckhard-Gillaspy-McKenney-19}, building on \cite{EckhardtMcKenney18}, an argument using $C^*$-classification and established parts of the Toms--Winter conjecture). For nilpotent $G$, the UCT for $C^*_\pi(G)$ was established in \cite{Eckhardt-Gillaspy-16}; this remains open in the virtually nilpotent case.

A vast class of examples arise from free minimal actions $G\curvearrowright X$ of a countable discrete amenable group on a compact metrizable space $X$.
Here, the associated crossed product $C^*$-algebra is unital, simple (by \cite[Corollary 5.16]{Effros-Hahn67}), separable, nuclear, and satisfies the UCT (by \cite{Tu05}). It is increasingly clear that $\mathcal Z$-stability is linked to the dimension of $X$, or more generally, of the action of $G$ on $X$.

When $X$ has finite covering dimension, direct estimates on the nuclear dimension are possible when $G$ is virtually nilpotent (\cite{SWZ-19}, going back to \cite{HWZ-15,Szabo-15}). For more general groups, Kerr made a major breakthrough in \cite{Kerr20}, generalizing Matui's notion of almost finiteness from \cite{Matui-12}.\footnote{Kerr's $\mathcal Z$-stability theorem makes crucial use of Matui and Sato's von Neumann algebra transfer technique (\cite{Matui-Sato12}) in the form given in \cite{Orovitz-Hirshberg13}.}
This has been used to show $\mathcal Z$-stability (hence classifiability) of such crossed products when $G$ has locally subexponential growth (\cite{Kerr-Szabo-20,DZ-17}), when $G$ is elementary amenable (\cite{Kerr-Naryshkin21,Naryshkin23}), or for generic actions of any amenable group $G$ on the Cantor set (\cite{CJKMST-D-18}).
It is conceivable that free minimal actions of countable discrete amenable groups on finite dimensional spaces give classifiable crossed products.

Outside the setting of finite dimensional base spaces, examples of non-$\Z$-stable free minimal crossed products by $\mathbb Z$ were shown to exist by Giol and Kerr (\cite{Giol-Kerr10}); these all have strictly positive mean dimension in the sense of Gromov, Lindenstrauss, and Weiss (\cite{Gromov99b,Lindenstrauss-Weiss00}). In the positive direction, Elliott and Niu showed that minimal $\mathbb Z$-actions of mean dimension zero give rise to a $\Z$-stable, and hence classifiable, crossed product (\cite{Elliott-Niu-17}).  This has been extended by Niu to $\mathbb Z^d$-actions in \cite{Niu19}. Conjecturally, a crossed product associated to a free minimal action of an amenable group on a compact metrizable space is classifiable precisely when the mean dimension vanishes.\footnote{This conjecture would be a consequence of the Phillips--Toms conjecture (mentioned in the introduction to \cite{Hirshberg-Phillips22}), relating mean dimension to comparison properties of the crossed product, and a still-open component of the Toms--Winter conjecture. }

Another very natural class of examples arises from non-commutative dynamics: crossed products from outer actions $G\curvearrowright A$ of amenable groups on classifiable $C^*$-algebras.  When $A$ has a unique trace, Sato showed that the crossed product $A\rtimes G$ is $\Z$-stable (\cite{Sato19}), so is classifiable when it satisfies the UCT (see Remark~\ref{rmk:UCTcrossedproducts}). This result extends outside the monotracial setting (\cite{Sato19,Gardella-Hirshberg-Vaccaro, GGNV}), although at present there are restrictions both on the size of the tracial state space, and on the structure of the action of $G$ on traces.

\subsection{A brief history of the unital classification theorem}
\label{subsec:brief-history}

In the previous section, we took advantage of hindsight to set out the modern context for the unital classification theorem.  Here we give a brief (and necessarily selective) account of how the subject got to this point. We proceed thematically (rather than strictly chronologically) and focus on classification rather than regularity. Therefore, we  do not describe the history of the Toms--Winter conjecture (for which, see \cite{Elliott-Toms08} and \cite[Section 5]{Winter19}) and only bring in those parts of this huge body of work that directly relate to the proof of the unital classification theorem.

\subsubsection{Early days}

The first classification results for simple $C^*$-algebras were for inductive limits of finite dimensional $C^*$-algebras (AF algebras) from the late 1950s to the mid-1970s (\cite{Glimm60,Dixmier67,Bratteli72,Elliott74}), inspired by Murray and von Neumann's uniqueness of the hyperfinite II$_1$ factor.  The range of the invariant (ordered $K_0$) for AF algebras was described abstractly at the end of the 1970s in \cite{Elliott79, Effros-Handelman-etal80}.

Also at the end of the 1970s came Cuntz's work on his now eponymous $C^*$-algebras $\mathcal O_n$ (\cite{Cuntz77,Cuntz81Ann},  building on an example given by Dixmier \cite{Dixmier64}). In this work, Cuntz identified the class of simple purely infinite $C^*$-algebras; these provide analogues of type III factors.  Other important examples include simple Cuntz--Krieger algebras (\cite{Cuntz-Krieger80}).

In 1980, a three-week meeting on operator algebras was held in Kingston, Ontario, which is widely regarded as a landmark event in the field. Cuntz's survey article from this conference gives a nice account of early progress and examples of simple $C^*$-algebras (\cite{Cuntz82}). A talk by Effros laid out a number of open problems (\cite{Effros82}) related to the structure and classification of (simple) nuclear $C^*$-algebras (particularly Problems 5--10). As we set out below, these proved influential over the next phase of the subject.

\subsubsection{Purely infinite $C^*$-algebras}

In \cite{Cuntz86}, Cuntz asked whether the Cuntz--Krieger algebras were classified by their $K_0$-groups (perhaps along with other invariants), following related classification results in symbolic dynamics (\cite{Franks84}). Interest in such questions led to early results including classification theorems for inductive limits of building blocks constructed from Cuntz algebras (\cite{Rordam93,Lin-Phillips95c}), followed by breakthrough tensorial absorption results: $\mathcal O_2 \otimes M_{2^\infty} \cong \mathcal O_2$ (a problem posed by Cuntz in \cite{Cuntz82}) was obtained using dynamical methods in \cite{BSKR93}, and Elliott proved that $\mathcal O_2 \otimes \mathcal O_2 \cong \mathcal O_2$.\footnote{Following R\o{}rdam's short argument in \cite{Rordam94}, Elliott's original proof remains unpublished.}  In \cite{Rordam95}, R\o{}rdam introduced an important strategy for classifying $C^*$-algebras by classifying embeddings, which has been crucial to subsequent classification results. Using this strategy, R{\o}rdam classified a collection of $C^*$-algebras\footnote{R{\o}rdam's class comprises those unital simple separable nuclear purely infinite $C^*$-algebras satisfying the UCT, all of whose embeddings into simple purely infinite $C^*$-algebras are classified by $KL$-theory.} that contains all simple Cuntz--Krieger algebras (\cite{RordamKth95}) and exhausts all possible combinations of $K$-groups (\cite{Elliott-Rordam95}). Accordingly, it is the largest possible class of simple separable nuclear purely infinite $C^*$-algebras satisfying the UCT that can be classified by $K$-theory.

R\o{}rdam's work reduced the classification problem (in the purely infinite setting) to classifying embeddings by $KL$. Checking this condition in concrete settings would be a herculean task --- an abstract approach was needed. The stage for this had been set by decades of advances, including Arveson's extension theorem (\cite{Arveson69}), 
the Choi--Effros lifting theorem (\cite{Choi-Effros76}), Voiculescu's theorem (\cite{Voiculescu76}), and Kasparov's development of $KK$-theory (\cite{Kasparov80, Kasparov84}).

Building on his deep work on tensor products and exactness (\cite{Kirchberg94, Kirchberg95}), Kirchberg announced his breakthrough ``Geneva theorems'' at the ICM satellite meeting in 1994: every separable exact $C^*$-algebra embeds into $\mathcal O_2$; $A\otimes \mathcal O_2 \cong \mathcal O_2$ for any unital simple separable nuclear $C^*$-algebra $A$; and such an $A$ is purely infinite if and only if $A\otimes \mathcal O_\infty \cong A$.\footnote{See \cite{Kirchberg95b}.  The first published proof was in collaboration with Phillips in \cite{Kirchberg-Phillips00}.} Using these results, Kirchberg (\cite{Kirchberg}) and Phillips (\cite{Phillips00}) independently showed that simple separable nuclear purely infinite $C^*$-algebras --- now known as \emph{Kirchberg algebras} --- satisfying the UCT are classified by $K$-theory.  Consequently, R\o{}rdam's class consists exactly of the UCT Kirchberg algebras, and \cite{Elliott-Rordam95} is a range of invariant result for the Kirchberg--Phillips classification.

\subsubsection{Approximately type I $C^*$-algebras} 
\label{sec:AIhistory}

Stepping back a bit, the late 1980s saw two of Effros' problems from \cite{Effros82} solved: Blackadar showed that the fixed-point algebra of a $\mathbb Z/2\mathbb Z$ action on an AF algebra need not be AF (\cite{Blackadar90}), and Evans and Kishimoto showed that removing an AF tensor factor from an AF algebra need not give an AF algebra (\cite{Evans-Kishimoto91}).
These and related developments brought renewed interest in classifying inductive limits of general type I building blocks --- again one of Effros' problems --- particularly AI and A$\mathbb T$ algebras,\footnote{That is, inductive limits of $C^*$-algebras of the form $C([0,1],F)$ and $C(\mathbb T,F)$ respectively, where $F$ is finite dimensional. In particular, Blackadar's example  in \cite{Blackadar90} relies on presenting the CAR algebra $M_{2^\infty}$ not as an inductive limit of finite dimensional $C^*$-algebras, but as an A$\mathbb T$ algebra.} and A$\mathbb T$ structure was found in important examples coming from dynamics (\cite{Putnam89,Elliott-Evans93}, the latter of which was also roughly conjectured in Effros' \cite{Effros82}).  In his pioneering classification of A$\mathbb T$ algebras of real rank zero (\cite{Elliott93}), Elliott enlarged his previous invariant, used to classify AF algebras, to include the $K_1$-group.\footnote{In talks at the time, Elliott noted inspiration from Pasnicu's use of $K_1$ in the classification of tensor products of Bunce--Deddens algebras (\cite{Pasnicu87}) and Brown's use of $K_1$ to show that extension of AF algebras are AF \cite{Brown82a}.} This led to his slogan ``$K$-theory suffices'', and arguably, the Elliott classification program began in earnest at this point.

It soon turned out that, outside the real rank zero setting, topological $K$-theory alone is not sufficient. In the first of many important contributions, Thomsen observed this in \cite{Thomsen94},\footnote{Thomsen showed that that any Choquet simplex can arise as the trace space of an AI algebra whose $K_0$-group is $\mathbb Q$ with the usual order. Examples which can be obtained from the work of Goodearl in \cite{Goodearl92} (see \cite[Proposition~3.1.8]{Rordam02}) show that it is also necessary to adjoin the pairing to the invariant. The need for traces was a big revelation at the time.} leading to the addition of traces (and their pairing with $K_0$) to the invariant.  With this larger invariant, Elliott managed to classify simple AI algebras in \cite{Elliott93a}, building on \cite{Blackadar-Kumjian-etal92, Thomsen94}, and then extended this to the A$\mathbb T$ case in \cite{Elliott97}.  Thus the Elliott invariant reached its modern form, and Elliott stated his classification conjecture in  \cite{Elliott95}.

The need for additional data beyond the Elliott invariant --- total $K$-theory and the Hausdorffized unitary algebraic $K_1$-group --- to establish uniqueness theorems for $^*$-homomorphisms was first seen in the setting of inductive limits of building blocks with topological dimension one. The role of total $K$-theory emerged in \cite{Dadarlat-Loring96b,Eilers96} and was used by Dadarlat and Loring (\cite{Dadarlat-Loring96}) to classify morphisms between certain (not necessarily simple) real rank zero inductive limits previously considered by Elliott.
The new ingredient introduced in \cite{Dadarlat-Loring96} was the universal multicoefficient theorem (UMCT), which was used to relate maps at the level of total $K$-theory to $KK$-classes.  This has become a crucial tool in classification.

Subsequently, Nielsen and Thomsen gave a new approach to the classification of simple A$\mathbb T$ algebras (\cite{NielsenThomsen,Nielsen99}). They give examples to show that the Elliott invariant alone does not classify the morphisms and prove a uniqueness theorem for $^*$-homomorphisms by adjoining the unitary group modulo the closure of the commutators. As A$\mathbb T$ algebras have stable rank one, this additional data is precisely the Hausdorffized unitary algebraic $K_1$-group previously studied by Thomsen (\cite{Thomsen95}) using the de la Harpe--Skandalis determinant of \cite{Harpe-Skandalis84}.

Moving to higher dimensional building blocks,
large-scale work culminated in far-reaching classification results for approximately homogeneous (AH) $C^*$-algebras by Elliott and Gong (\cite{Elliott-Gong96}) and then by Elliott, Gong, and Li (\cite{Gong02, Elliott-Gong-etal07}; preprint versions were circulated in 1998). At this point, the key hypothesis entailing classifiability was given in terms of relative upper bounds on the topological dimension of the spectrum of the building blocks of the inductive limit,\footnote{A strong condition of \emph{no dimension growth}, meaning uniform bound on the dimension of the building block's spectrum, is required in \cite{Elliott-Gong-etal07}. Gong's decomposition theorem \cite{Gong02} relaxes this to \emph{very slow dimension growth} relative to the size of the matrix algebras appearing in an inductive limit.   See \cite[Chapter 3]{Rordam02} for a discussion and definitions. An early dimension-reduction result (of a different flavor) for simple $C^*$-algebras can be found in \cite{DNNP}.} which was used in a careful analysis to reduce to the case of inductive limits of very specific building blocks. 

At the Fields Institute in 1995, Villadsen announced his work on exotic AH algebras with perforated $K$-theory (\cite{Villadsen98, Villadsen99}).
None of these examples can absorb $\Z$ tensorially, as shown by Gong, Jiang, and Su (\cite{Gong-Jiang-etal00}), who initiated the abstract study of $\Z$-stable $C^*$-algebras, continued in \cite{Rordam04}.
Through clever adaptations of Villadsen's constructions, R{\o}rdam (\cite{Rordam03}) and Toms (\cite{Toms08a}) produced strong counterexamples to Elliott's classification conjecture in 2002 and 2003. 
It is around this time that $\Z$-stability was first suggested as a potential abstract hypothesis for classification; see \cite[Section 5]{Toms08a} and \cite[Page 64]{Rordam04}.

Inductive limits of subhomogeneous $C^*$-algebras (ASH algebras)\footnote{More restrictive definitions of ASH algebras are sometimes used, especially in early literature.} provided further challenges, needing the techniques described in Sections~\ref{intro:sectTAF} and~\ref{intro:sectLocalization}, and it was not until the unital classification theorem in 2015 when unital simple ASH algebras with slow dimension growth (or, equivalently by \cite{Toms-11,Winter12}, $\Z$-stable ASH algebras) were classified. Particularly important examples came much earlier: Elliott exhausted the range of the invariant of stably finite classifiable $C^*$-algebras in \cite{Elliott96}, using inductive limits of certain subhomogeneous $C^*$-algebras with at most $2$-dimensional spectrum, and Jiang and Su's classification of simple inductive limits of sums of dimension drop $C^*$-algebras underpins their work on $\Z$ and strong self-absorption (\cite{Jiang-Su99}, building on \cite{Elliott-Gong-etal97}).  It remains an open problem whether every stably finite simple separable nuclear $C^*$-algebra is ASH.

\subsubsection{Tracially AF $C^*$-algebras}\label{intro:sectTAF} Returning to the setting of stably finite simple nuclear $C^*$-algebras, the major task around the turn of the millennium, articulated by Lin in his foundational paper \cite{Lin01a}, was to ``\dots establish a classification result for $C^*$-algebras that are not assumed to be direct limits of some special form.'' 

The impetus for the next phase of developments was given by breakthrough results for quasidiagonal $C^*$-algebras: \cite{Blackadar-Kirchberg97}, and especially Popa's approximation property (\cite{Popa97}).  This allows one to obtain finite dimensional subalgebras from quasidiagonality in the presence of sufficiently many projections.   In particular, if $A$ is a unital simple quasidiagonal $C^*$-algebra of real rank zero, then for any finite subset $\mathcal F$ of $A$, one can find a non-zero approximately central projection $p$ such that the corner $pAp$ contains a finite dimensional unital subalgebra approximately containing the compression $p\mathcal Fp$.  Popa's work was motivated in part by Effros' problem to abstractly characterise AF algebras (\cite{Effros82}); the approximation property is based on Popa's earlier 2-norm version of the approximation property for von Neumann algebras, where maximality allows one to take $p=1$, and recover Connes' Theorem that injective $\mathrm{II}_1$ factors are hyperfinite (\cite{Popa86}). Likewise, if one can always take $p=1$ in Popa's condition for a $C^*$-algebra $A$, then $A$ must be AF.  Lin's innovation was to ask for $p$ to be large in a suitable sense\footnote{The precise condition requires $1 - p$ to be small in the Cuntz semigroup. Assuming strict comparison, this is equivalent to $p$ being uniformly large in all traces, so one tends to say that $p$ is ``tracially large''.} using this concept to introduce the class of \emph{tracially AF} (TAF) $C^*$-algebras (\cite{Lin01a}).

Lin obtained a classification theorem for unital simple separable nuclear tracially AF algebras satisfying the UCT in \cite{Lin01}, and using a range of the invariant result, showed that this class coincides with unital simple AH algebras of real rank zero and slow dimension growth (\cite{Lin03}).

The classification of TAF algebras instigated the systematic use of absorption in the stably finite setting, prompting the ``stable uniqueness'' and ``stable existence'' theorems of Dadarlat--Eilers (\cite{Dadarlat-Eilers02}) and Lin (\cite{Lin02}) together with a novel use of the UCT as a way of controlling the complexity of the stabilizations in $KK$-theory.  Morally, Lin's TAF uniqueness theorem is obtained from stable uniqueness, hiding the stabilization in tracially small corners via the UCT.  The corresponding existence theorem is even more difficult and proceeds to build approximately multiplicative maps $A\to B$ by factoring through explicitly constructed model algebras $C$ with extremely precise internal structure. Lin uses Blackadar and Kirchberg's work on simple nuclear quasidiagonal $C^*$-algebras (\cite{Blackadar-Kirchberg97, Blackadar-Kirchberg01}) to produce approximately multiplicative maps $A\to C$ (\cite[Theorem~4.2]{Lin04}) and the classification of simple real rank zero AH algebras for approximately multiplicative maps $C\to B$ (\cite[Corollary~4.4 and Definition~3.1]{Lin01}).\footnote{A streamlined approach to this classification (which also obtains stronger results for embeddings) was developed by Dadarlat in \cite{Dadarlat04}.  This more general existence result also factors through models in a similar way.}

To use Lin's theorem one also needs tools for obtaining tracial approximations more abstractly. A number of dynamical examples were brought within the scope of this classification by various ad hoc means (for example \cite[Corollary 3.13, Theorem 3.6]{Lin03a}, \cite{Lin-Phillips10}). Winter was the first to provide a systematic approach in 2005, via the \emph{decomposition rank}  (\cite{Winter06}). The decomposition rank, introduced by Kirchberg and Winter in \cite{Kirchberg-Winter04}, is another important non-commutative covering dimension, which preceded  Winter and Zacharias' nuclear dimension. The two notions are subtly related, but having finite decomposition rank is a strong condition, entailing quasidiagonality, and can therefore be challenging to verify directly, although it does cover ASH algebras with no dimension growth (\cite{Winter04}).

For a unital simple separable $C^*$-algebra with real rank zero and finite decomposition rank,\footnote{At the time, $\mathcal Z$-stability was also needed but was later shown to be redundant in \cite{Winter10a}; unital simple separable infinite dimensional $C^*$-algebras of finite decomposition rank are $\Z$-stable.} Winter obtains a ``Popa approximation'' where the projection $p$ is bounded below in all traces in terms of the decomposition rank. Repeating the argument inductively in the complementary corner leads to well-approximated corners of arbitrarily large trace, proving such algebras are tracially AF. In particular (together with the later  \cite{Winter10a}), Winter's result gave the first classification of simple ASH algebras of real rank zero and no dimension growth.

\subsubsection{Localization and rational tracial approximation}\label{intro:sectLocalization}

The potential absence of projections in simple $C^*$-algebras is a serious obstruction to obtaining general classification results by means of tracial approximations. Winter's ``localization technique'' (\cite{Winter14}, which was first circulated as a preprint in 2007) overcomes this difficulty via an ingenious use of the Jiang--Su algebra $\mathcal Z$.  By viewing $\mathcal Z$ as an inductive limit of generalized dimension drop algebras with UHF fibers as in \cite{Rordam-Winter10}, Winter gives a strategy to pass from a classification of UHF-stable algebras to a classification of $\Z$-stable algebras. This localization technique comes at a heavy price: at the UHF-stable level, one must classify $^*$-isomorphisms up to ``strong asymptotic unitary equivalence'' instead of approximate unitary equivalence --- that is, sequences of unitaries are replaced with one-parameter families of unitaries starting at the unit.  There are considerable extra $K$-theoretic obstructions to uniqueness in this setting,\footnote{Among other things, one must consider $KK$ instead of its Hausdorffization $KL$.} and to account for this, correspondingly stronger existence results are also required.  

Strong asymptotic classification results were nevertheless proved for $C^*$-algebras with tracial approximations: Lin and Niu achieved this for simple separable nuclear TAF $C^*$-algebras with the UCT in \cite{Lin08, Lin-Niu08}. There is a real quantum leap in difficulty over approximate classification. The fact that $KK$-theory (in contrast to $KL$) fails to preserve inductive limits calls for highly intricate analysis involving careful use of so-called ``basic homotopy lemmas'' (\cite{Lin10}), allowing approximately central unitaries to be connected by approximately central homotopies. This concept goes back to \cite{Bratteli-Elliott-etal}.

In 2013, Matui and Sato used the Lin--Niu--Winter results together with breakthrough work on the Toms--Winter conjecture to give the first truly abstract definitive stably finite classification theorem: unital simple separable nuclear $\mathcal Z$-stable quasidiagonal monotracial $C^*$-algebras satisfying the UCT are classified by Elliott's invariant (\cite{Matui-Sato14}).\footnote{Via their earlier breakthrough \cite{Matui-Sato12}, one could replace $\Z$-stability by strict comparison here.} They show directly that the UHF-stabilizations of such a $C^*$-algebra $A$ (possibly without the UCT) are TAF ($A$ is said to be \emph{rationally TAF}).\footnote{Their paper is focused around the Toms--Winter conjecture and shows that such a $C^*$-algebra $A$ has finite decomposition rank, which enables them to obtain the tracial approximations for UHF-stabilizations from \cite{Winter06, Winter10a}. Matui and Sato also give a direct argument which parallels the last part of Connes' proof of injectivity implies hyperfiniteness for II$_1$ factors; see the introduction to \cite{Matui-Sato14}.}  This class covers $C^*$-algebras arising from a minimal uniquely ergodic homeomorphism of a finite dimensional compact metrizable space with infinitely many points, using  \cite{Toms-Winter13} and \cite{Pimsner83} to prove the regularity and quasidiagonality hypotheses, respectively.

\subsubsection{The Gong--Lin--Niu class and classification by embeddings}

Stepping back a bit, following Lin's classification of tracially AF $C^*$-algebras, there was rapid progress in the classification of $C^*$-algebras with tracial approximations by more complicated type I building blocks such as interval algebras (\cite{Lin07}). The new target of strong asymptotic classification promoted by localization was first obtained for TAI algebras in \cite{Lin11}. This sparked the line of research culminating in the Gong--Lin--Niu strong asymptotic classification (\cite{GLN-part1,GLN-part2}) of unital simple separable nuclear UHF-stable $C^*$-algebras which satisfy the UCT and are tracially approximated by non-commutative $1$-dimensional CW-complexes also known as Elliott--Thomsen building blocks.\footnote{The Gong--Lin--Niu theorem was announced in Oberwolfach in 2012 (\cite{Gong-OberwolfachReport,Lin-OberwolfachReport}) and was circulated as a preprint in early 2015 (\cite{GLN-preprint}).} The relevance of the Gong--Lin--Niu class is that it exhausts the values of the Elliott invariant on unital simple separable nuclear stably finite UHF-stable $C^*$-algebras. No larger class of building blocks would be needed.  

Gong, Lin, and Niu's argument is exceptionally technically demanding and involves detailed analysis and many layers of quantification and approximation.  Just as in the setting of real rank zero inductive limits of no dimension growth, where the ASH case proved an order of magnitude more challenging than the AH case (and came a decade later through tracial approximations), the subhomogeneous nature of the Elliott--Thomsen building blocks present huge technical hurdles that must be overcome.  Highly bespoke homotopy lemmas and existence and uniqueness results between the building blocks are needed throughout the paper.  The arguments also depend on elaborately constructed explicit models, through which to factor their existence and uniqueness results. It is an incredible achievement to put all this together and overcome all these difficulties, yet the daunting length and extreme technicality of \cite{GLN-part1,GLN-part2} have made it very difficult for researchers to fully absorb the entire argument.

Given the scope of Gong, Lin, and Niu's work, research on the stably finite case of the unital classification theorem pivoted to the task of determining abstractly which UHF-stable $C^*$-algebras fall into the Gong--Lin--Niu class.  Shortly after Matui and Sato's abstract approach to rational TAF structure, Winter introduced a general technique of \emph{classification by embeddings} (\cite{Winter16a}), and applied it (together with \cite{Robert12,Strung}) to bring $C^*$-algebras associated to minimal diffeomorphisms on odd spheres within the scope of the Elliott program by means of rational TAI classification. Winter's result shows that a significantly weaker form of tracial approximation implies rational (Lin-style) tracial approximations in the presence of finite nuclear dimension.
Combined with a classification result by Robert (\cite{Robert12}), Winter's technique only requires appropriate external approximations in order to get (internal) rational tracial approximations. As a proof of concept, these external approximations arise immediately for quasidiagonal monotracial $C^*$-algebras.\footnote{See \cite[Corollary 2.4]{Winter16a}, which at the time was a special case of \cite[Theorem 6.1]{Matui-Sato14}.}

Classification by embeddings was to become the route to the Gong--Lin--Niu class. For example, as precursors to the full classification theorem, it was used by Lin to classify simple $\mathbb Z$-crossed products of finite dimensional spaces (\cite{Lin15}) and by Elliott, Gong, Lin, and Niu to classify simple $\Z$-stable ASH algebras (\cite{Elliott-Gong-etal15}).

\subsubsection{Stable uniqueness across the interval and quasidiagonality}

Beyond these precursor results, major breakthroughs quickly followed the Gong--Lin--Niu preprint (\cite{GLN-preprint}) throughout 2015. Demonstrating a new technique for obtaining Winter-style external approximations, Niu announced joint work with Elliott (\cite{Elliott-Niu16a}) in April 2015, that $C^*$-algebras with finite decomposition rank and $K_0(A)\otimes\mathbb Q\cong \mathbb Q$ are rationally TAI. This was quickly followed by Elliott, Gong, Lin, and Niu's general result (\cite{EGLN}; circulated as a preprint in July 2015) which removed the $K_0$ hypothesis, and therefore showed all unital simple separable nuclear $C^*$-algebras with finite decomposition rank satisfying the UCT, are rationally in the Gong--Lin--Niu class, and so classified by \cite{GLN-part1,GLN-part2}. The $\Z$-stability needed for classification is obtained from Winter's first $\Z$-stability theorem (\cite{Winter10a}).

The fundamental new idea in \cite{Elliott-Niu16a,EGLN} is a \emph{stable uniqueness across the interval} argument to connect two maps $\phi$ and $\psi$ into the universal UHF algebra (agreeing suitably on invariants) by a homotopy of approximately multiplicative maps with approximately constant tracial behavior.  The UCT enters to control the matrix size required in the stable uniqueness theorem.  This is then applied to maps arising from quasidiagonality and a range of invariant result to produce Winter-style approximations into (UHF-stabilizations of) Elliott--Thomsen building blocks.

As noted in Section~\ref{intro:sectTAF}, finite decomposition rank is much harder to verify in examples than finite nuclear dimension. In part, this is because finite decomposition rank entails quasidiagonality. Right back in his foundational TAF paper (\cite{Lin01a}), Lin noted that ``recent developments suggest one may further assume simple (nuclear stably finite) $C^*$-algebras are quasidiagonal'', and indeed Elliott's examples exhausting the range of the invariant (\cite{Elliott96}) amongst stably finite $C^*$-algebras are all quasidiagonal, and moreover, all their traces are quasidiagonal.\footnote{Brown was the first to notice the importance of quasidiagonality of traces in $C^*$-classification in his memoir \cite{Brown06}.}  

While there are many deep theorems giving quasidiagonality in examples, in the abstract setting this is a major challenge: Blackadar and Kirchberg's famous question (\cite[Question 7.3.1]{Blackadar-Kirchberg97}) of whether all nuclear stably finite $C^*$-algebras are quasidiagonal remains open. In fact, the Elliott--Gong--Lin--Niu theorem isolates quasidiagonality of traces as a key hypothesis: their theorem holds for unital simple separable finite $C^*$-algebras of finite nuclear dimension,\footnote{This uses Winter's second $\Z$-stability theorem (\cite{Winter12}) in place of \cite{Winter10a}.} satisfying the UCT, such that all traces are quasidiagonal.\footnote{Later, through \cite{Bosa-Brown-etal15} and \cite{Castillejos-Evington-etal21,Castillejos-Evington20}, it was shown, without reference to the UCT, that a simple separable $C^*$-algebra $A$ of finite nuclear dimension has finite decomposition rank precisely when both $A$ and all its traces are quasidiagonal.}

A little earlier in 2014, Ozawa, R\o{}rdam, and Sato showed that $C^*$-classification results could give an unexpected route to quasidiagonality --- even for algebras that are far from simple --- by using Matui and Sato's \cite{Matui-Sato14} to show that elementary amenable groups have quasidiagonal $C^*$-algebras (\cite{Ozawa-Rordam-etal15}). Inspired by this, the Kirchberg--Phillips theorem, and talks on \cite{Elliott-Niu16a,EGLN}, AT, SW and Winter used a  stable uniqueness across the interval argument to prove their quasidiagonality theorem (\cite{TWW}) over the summer of 2015: faithful traces on a separable nuclear $C^*$-algebra satisfying the UCT are quasidiagonal. This established Rosenberg's conjecture --- $C^*$-algebras associated to discrete amenable groups are quasidiagonal --- and removed the quasidiagonality hypothesis from \cite{EGLN}. In this way, the combination of \cite{GLN-part1,GLN-part2,EGLN,TWW,Winter16a,Winter12,Winter14} and the large body of work these papers rely on comes together to prove the unital stably finite classification theorem (with the hypothesis of finite nuclear dimension replacing that of $\Z$-stability).

\subsubsection{Post-2015: $\Z$-stability and abstract classification}

Developments continued apace post 2015 in numerous directions. These included classification results in the non-unital setting, a vast body of work bringing examples within the scope of the unital classification theorem as discussed in Section~\ref{Intro:examples}, and the continued large-scale study of fine structure in the Cuntz semigroup. We do not describe these here.

The present paper has its origins in two results announced at BIRS in 2017 (\cite{BanffReport17}).   Firstly Castillejos, Evington, AT, SW, and Winter showed that $\Z$-stable unital simple separable nuclear $C^*$-algebras have nuclear dimension at most one (\cite{Castillejos-Evington-etal21}).  This allows the unital classification theorem to be accessed from $\Z$-stability.\footnote{The methods of \cite{Castillejos-Evington-etal21} can also be combined with Matui and Sato's strategy for obtaining rational TAF structure (\cite{Matui-Sato14}) to prove Winter's classification by embeddings theorem without assuming finite nuclear dimension (\cite{CETW22}). This bypasses the only use of nuclear dimension in the 2015 proof of the unital classification theorem.} Secondly, building on his abstract approach to the quasidiagonality theorem (\cite{Schafhauser17}), CS announced a precursor of what would become his AF-embedding theorem (\cite{Schafhauser18}): a classification of unital full nuclear embeddings of unital separable nuclear UCT $C^*$-algebras into the ultrapower of the universal UHF algebra. The techniques provided an abstract classification of  unital simple separable nuclear UHF-stable monotracial $C^*$-algebras satisfying the UCT which does not rely on obtaining any form of tracial approximation.\footnote{The version published in \cite{Schafhauser18} assumes $\mathcal Q$-stability and trivial $K_1$-group as this allows one to bypass some technicalities and is sufficient to obtain the headline AF-embedding result.}  This was the first abstract stably finite classification theorem which does not pass through some kind of internal $C^*$-algebra structure in the proof.

\subsection{Discussion of the methods behind Theorem~\ref{Main2}}
\label{subsec:methods-behind-thmB}

The overarching strategy of our paper is to combine the ideas in  \cite{Schafhauser18} and \cite{Castillejos-Evington-etal21} together with a new $KK$-uniqueness theorem (Theorem~\ref{intro-KK-unique}) which allows us to avoid Winter's localization technique and directly classify $\Z$-stable $C^*$-algebras without first classifying UHF-stable algebras.

\subsubsection{Classifying full approximate embeddings}

The existence portion of Theorem~\ref{Main2} requires producing $^*$-ho\-mo\-mor\-phisms between abstract $C^*$-algebras. Constructing these directly is hard. Instead, the usual strategy is to classify approximate $^*$-ho\-mo\-mor\-phisms --- these are sequences of maps that become $^*$-homomorphisms in the limit.  These are often easier to produce. Indeed, for example, quasidiagonality provides a rich supply of approximate $^*$-ho\-mo\-mor\-phisms into matrix algebras.

Let $A$ and $B$ be as in Theorem~\ref{Main2}. To avoid the need to carefully keep track of quantifiers, we work with the sequence algebra $B_\infty\coloneqq \ell^\infty(B)/c_0(B)$, and $^*$-ho\-mo\-mor\-phisms $A\to B_\infty$; these encode approximate $^*$-homomorphisms. 
Unitary equivalence of $^*$-homomorphisms into $B_\infty$ corresponds to a quantified approximate unitary equivalence of approximate $^*$-homomorphisms into $B$.
Although $B$ is simple, $B_\infty$ is not, so we impose a simplicity condition on the allowed morphisms $\phi\colon A\to B_\infty$: we require that $\phi(a)$ generates $B_\infty$ as an ideal for each non-zero $a\in A$.  Such maps are called \emph{full}, and (under our hypotheses on $B$) fullness is readily tested using traces via Cuntz comparison.  

With the setup above, our main objective in this paper is to prove the following theorem.

\begin{theorem}[Classification of unital full approximate embeddings; see Theorem~\ref{approximate-classification}]\label{Main3}
Let $A$ be a unital separable nuclear $C^*$-algebra satisfying the UCT and $B$ be a unital simple separable nuclear $\Z$-stable $C^*$-algebra.  Up to unitary equivalence, the unital full embeddings $A \rightarrow B_\infty$ are classified by faithful morphisms on $\inv$.
\end{theorem}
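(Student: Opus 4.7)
The plan is to split Theorem~\ref{Main3} into an \emph{existence} assertion (every faithful morphism on $\inv$ is induced by a unital full $^*$-homomorphism $A\to B_\infty$) and a \emph{uniqueness} assertion (two unital full embeddings $\phi,\psi\colon A\to B_\infty$ inducing the same map on $\inv$ are unitarily equivalent), and to attack them in turn. The key input --- and the main technical advance of the paper --- is the new $KK$-uniqueness theorem announced as Theorem~\ref{intro-KK-unique}, designed to bypass Winter's localization technique and work directly with $\Z$-stable targets rather than passing through UHF-stabilizations.

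For the uniqueness half, I would first translate agreement of $\phi$ and $\psi$ on $\inv$ into a $KK$-theoretic statement. The UCT, through the Dadarlat--Loring universal multicoefficient theorem, identifies $\KL(A,B_\infty)$ with $\Hom(\totK(A),\totK(B_\infty))$, while the Hausdorffized algebraic $K_1$-data together with the $\zeta^{(n)}_A$-maps of \eqref{Intro.NewMaps} should be precisely what is needed to pin down a class in $\KK(A,B_\infty)$ rather than merely in $\KL$. Once $[\phi]=[\psi]$ in $\KK$ and the traces and algebraic $K_1$ data agree, Theorem~\ref{intro-KK-unique} should deliver a unitary $u\in B_\infty$ with $\Ad(u)\circ\phi=\psi$, using $\Z$-stability of $B$ to absorb the auxiliary copy of $\bK$ that appears in Dadarlat--Eilers style stable uniqueness theorems.

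For existence, one starts with a faithful morphism $\alpha$ on $\inv$ and lifts its $\totK$-component to a class $\kappa\in\KL(A,B_\infty)$ via the UCT; the Choi--Effros lifting theorem combined with nuclearity of $A$ then produces a completely positive contractive approximate $^*$-homomorphism $A\to B_\infty$ representing $\kappa$. Converting this into a genuine $^*$-homomorphism matching the prescribed traces proceeds by first exhibiting a concrete model embedding built from $B\otimes\Z$, and then perturbing within a fixed $\KK$-class using $\Z$-stability to adjust the de la Harpe--Skandalis determinant and the $\zeta^{(n)}$-compatibilities without disturbing the traces or total $K$-theoretic data. Faithfulness of $\alpha$ on traces will imply fullness of the resulting embedding via Cuntz comparison in the simple $\Z$-stable $C^*$-algebra $B$.

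The hardest step, in my estimation, is the $KK$-uniqueness theorem itself. Classical stable uniqueness \`a la Dadarlat--Eilers gives approximate unitary equivalence of $\phi\oplus\sigma$ and $\psi\oplus\sigma$ for a sufficiently absorbing ancillary representation $\sigma$, but killing this $\sigma$ is exactly the obstacle that, in the 2015 proof of the unital classification theorem, was circumvented by passage to UHF-stabilizations followed by Winter's localization. Replacing that long route by a single $\Z$-stable absorption argument will require careful exploitation of approximately central matrix embeddings in $B_\infty$, and the treatment of the $\zeta^{(n)}_A$-maps adds another layer of bookkeeping that is easy to disrupt while simultaneously adjusting the algebraic $K_1$ and tracial data.
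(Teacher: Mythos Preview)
Your proposal misses the central organizing principle of the paper's proof: the decomposition via the trace-kernel extension
\[
0 \to J_B \to B_\infty \to B^\infty \to 0.
\]
The paper does \emph{not} work directly with $KK(A,B_\infty)$ and apply Theorem~\ref{intro-KK-unique} there. Instead, it first classifies maps into the trace-kernel quotient $B^\infty$ by traces alone (Theorem~\ref{intro:classtraces}, drawing on \cite{Castillejos-Evington-etal21}), and then classifies \emph{lifts} along the extension by classes in $KL(A,J_B)$ (Theorem~\ref{intro-lifts}). The $\Z$-stable $KK$/$KL$-uniqueness theorem is applied to absorbing representations into $\M(I)$ for separable stable $\Z$-stable subalgebras $I$ of $J_B$, not to $B_\infty$ itself; this is essential because the theorem requires a Cuntz pair $(\phi,\psi)$ with $\phi(a)-\psi(a)\in I$ and $\phi,\psi$ absorbing, conditions that only become available after reducing to the trace-kernel ideal and verifying separable stability of $J_B$ (Section~\ref{SSTKSepStab}) and absorption via Elliott--Kucerovsky (Theorem~\ref{thm:absorbing-z-stable}).

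Your uniqueness sketch has a specific gap: agreement of $[\phi]$ and $[\psi]$ in $KK(A,B_\infty)$ is not what is needed, and the $\zeta^{(n)}$-maps do not ``pin down a class in $KK$ rather than $KL$''. Rather, after arranging $q_B\circ\phi=q_B\circ\psi$ (using the tracial classification into $B^\infty$), one obtains a class $[\phi,\psi]_{KL(A,J_B)}$, and the task is to show it vanishes. The computation of $KL(A,J_B)$ (Theorem~\ref{intro:calcKL}) shows that $\totK$ agreement forces this class into $\ker KL(A,j_B)$, and then the $\Ka$-agreement together with the $\zeta^{(n)}$-compatibility kills it entirely via Theorem~\ref{thm:algK1}. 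Your existence sketch is also off-track: the paper does not use Choi--Effros or model embeddings from $B\otimes\Z$. Instead it realizes $\gamma$ by a map $\theta\colon A\to B^\infty$, uses the UCT to produce a $KK$-lift $\kappa\in KK(A,B_\infty)$, invokes the absorption-based lifting result (Theorem~\ref{intro-lifts}\ref{intro-lifts-1}) to get a genuine $^*$-homomorphism $\psi\colon A\to B_\infty$, and then corrects $\Ka(\psi)$ to $\beta$ by choosing an appropriate $\lambda\in\ker KL(A,j_B)$ and applying Theorem~\ref{intro-lifts}\ref{intro-lifts-2}.
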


The deduction of Theorem~\ref{Main2} from Theorem~\ref{Main3} is a standard intertwining argument. For this, it is vital that the classification in Theorem~\ref{Main3} contains both uniqueness and existence statements. This is set out in Section~\ref{sec:main-results}.

As with Theorem~\ref{Main2}, the purely infinite case of Theorem~\ref{Main3} is a known consequence of the methods behind the Kirchberg--Phillips classification theorem, while our proof handles the finite (i.e., tracial) situation. We proceed by dividing  $B_\infty$ into two pieces: a ``tracially small'' part,
\begin{equation} J_B \coloneqq \{ b \in B_\infty : \tau(b^*b) = 0 \text{ for all } \tau \in T(B_\infty) \} \lhd B_\infty, \end{equation}
and a ``tracially large'' part, $B^\infty \coloneqq B_\infty / J_B$.  These are known as the \emph{trace-kernel ideal} and the \emph{trace-kernel quotient}, respectively. In this way, $B_\infty$ fits into the \emph{trace-kernel extension} \begin{equation}\label{eq:trace-kernel-intro}
\begin{tikzcd}
	0 \arrow{r} & J_B \arrow{r}{j_B} & B_\infty \arrow{r}{q_B} & B^\infty \arrow{r} & 0,
\end{tikzcd}
\end{equation}
an ultrapower version of which first came to the fore in Matui and Sato's work on the Toms--Winter conjecture (\cite{Matui-Sato12,Matui-Sato14}; see also \cite{Kirchberg-Rordam14}).

Just as with CS's $\mathcal Q$-stable monotracial classification theorem (\cite{Schafhauser18}), the trace-kernel extension allows us to organize the proof of Theorem~\ref{Main3} into two conceptual steps: classify morphisms into $B^\infty$, and then classify lifts of these morphisms to $B_\infty$:

\begin{equation}\label{overview-diag}
\begin{tikzcd}
&&&A\ar[d,"\theta\text{, classified by traces}"]\ar[dl,dashed,swap,"\substack{\text{lifts are classified by} \\ \text{$K$-theoretic data}}"]\\0\ar[r]&J_B\ar[r,"j_B"]&B_\infty\ar[r,"q_B"]&B^\infty\ar[r]&0.
\end{tikzcd}
\end{equation}

The first step only uses tracial data and follows from \cite{Castillejos-Evington-etal21}.  The major work we undertake here is the classification of lifts.  The precise data needed for the existence of a lift lives in $KK(A,B_\infty)$.  On the other hand,  the data needed for uniqueness resides in a different location, the group $KL(A,J_B)$ (a Hausdorffization of $KK(A,J_B)$).

\begin{theorem}[Classification of unital lifts along the trace-kernel extension; see Theorem~\ref{thm:ClassifyingUnitalLifts}]\label{intro-lifts}
Let $A$ be a unital separable nuclear $C^*$-algebra and $B$ be a unital simple separable nuclear $\mathcal Z$-stable $C^*$-algebra with $T(B) \neq \emptyset$. Suppose that $\theta \colon A \rightarrow B^\infty$ is a unital full embedding.
	\begin{enumerate}
		\item There is a unital full embedding $A \rightarrow B_\infty$ lifting $\theta$ if and only if there is an element in $KK(A, B_\infty)$ lifting $[\theta]_{KK(A, B^\infty)}$ and preserving the unit of $K_0$.\label{intro-lifts-1}
		\item Any choice of a unital full embedding $A \rightarrow  B_\infty$ lifting $\theta$ determines a bijection between $KL(A, J_B)$ and the unitary equivalence classes of unital full lifts of $\theta$.
  \label{intro-lifts-2}
	\end{enumerate}
\end{theorem}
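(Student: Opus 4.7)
The plan is to combine a Dadarlat--Eilers style stable uniqueness theorem with the very strong absorption properties that $\mathcal Z$-stability and the sequence-algebra structure confer on $B_\infty$ and the trace-kernel ideal $J_B$. The main technical engine I would set up first is a stable uniqueness statement of the following shape: if $\phi, \psi \colon A \to B_\infty$ are unital full nuclear $^*$-homomorphisms with $[\phi]_{KL} = [\psi]_{KL}$ in $KL(A, B_\infty)$, then $\phi \oplus \sigma$ and $\psi \oplus \sigma$ are unitarily equivalent for any sufficiently absorbing ``padding'' $\sigma \colon A \to B_\infty$. The point of working in $B_\infty$ (rather than $B$) is that full nuclear maps are automatically absorbing, and $\mathcal{Z}$-stability combined with Cuntz comparison by traces lets one realize $\sigma$ inside an approximately central copy of $B$ (hence inside $B_\infty$ itself). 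This yields a relative form: provided $\phi$ and $\psi$ agree modulo $J_B$ and have the same $KL$-class in $KL(A, J_B)$, the absorbing unitary can be chosen to lie in $1 + J_B$, so the final unitary equivalence takes place over $\theta$.

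For part (\ref{intro-lifts-1}), one direction is immediate. For the converse, given $\kappa \in KK(A, B_\infty)$ lifting $[\theta]$ and unital on $K_0$, I would first produce a unital nuclear completely positive contractive lift of $\theta$ by the Choi--Effros lifting theorem, and then a not-quite-multiplicative representative realizing $\kappa$ using the Kasparov/Cuntz-pair picture. Applying the stable uniqueness theorem over $B^\infty$ (whose classification by tracial data is available from the $\theta$ step, i.e., from \cite{Castillejos-Evington-etal21}) to the Cuntz pair witnessing $\kappa - [\text{lift of }\theta]$, and absorbing the stabilizing summand against an approximately central $\mathcal{Z}$-like sequence inside $B_\infty$, gives an honest unital $^*$-homomorphism $\phi \colon A \to B_\infty$ with $[\phi] = \kappa$ in $KK(A, B_\infty)$ and $q_B \circ \phi$ approximately unitarily equivalent to $\theta$. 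A short one-sided Elliott intertwining inside the von Neumann algebraic $B_\infty$ then promotes ``approximately unitarily equivalent to $\theta$'' to ``equal to $\theta$''. Fullness of $\phi$ is inherited from fullness of $\theta$ via the trace-kernel extension and the tracial characterization of fullness in $B_\infty$.

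For part (\ref{intro-lifts-2}), fix a unital full lift $\phi_0$ and define the map
\[
  \Phi \colon \{\text{unital full lifts of }\theta\}/{\sim_u} \longrightarrow KL(A, J_B), \qquad \Phi(\psi) = [\phi_0, \psi]_{KL},
\]
using the Cuntz-pair picture of $KK(A, J_B)$ (valid since $\phi_0(a) - \psi(a) \in J_B$) and passing to $KL$ by Hausdorffization. Surjectivity of $\Phi$ follows from part (\ref{intro-lifts-1}) applied to the class $[\phi_0] + \iota_*(\alpha)$ for $\alpha \in KL(A, J_B)$, where $\iota \colon J_B \hookrightarrow B_\infty$ is the inclusion; one has to check that the resulting lift is full, which again follows from tracial fullness of $\phi_0$ together with the fact that adding a contribution in $J_B$ is tracially invisible. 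Injectivity of $\Phi$ is the heart of the argument and is where the relative form of the stable uniqueness theorem is invoked directly: if $[\phi_0, \psi]_{KL} = 0$, stable uniqueness over $B_\infty$ (with padding landing in $J_B$) produces a unitary in $1 + J_B \subset B_\infty$ intertwining $\phi_0$ and $\psi$.

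The main obstacle will be the relative stable uniqueness step, and specifically the construction of the absorbing padding inside $J_B$. One needs the padding $\sigma$ not only to absorb generic nuclear maps but also to preserve the trace-kernel position, i.e., to be unitarily equivalent to $0$ modulo $J_B$. This is where $\mathcal{Z}$-stability of $B$ does the heavy lifting: the approximately central copies of $\mathcal{Z}$ (and hence of $B$) inside $B_\infty$ can be chosen to be concentrated in $J_B$ after conjugation, supplying simultaneously the absorption needed to kill the $KL(A, J_B)$-class and the tracial smallness needed to stay over $\theta$. Once this padding is in hand, the rest of the argument is a careful bookkeeping exercise pairing the Cuntz-pair representation of $KL(A, J_B)$ against the Dadarlat--Eilers absorption mechanism in the sequence algebra.
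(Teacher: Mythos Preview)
Your proposal captures the right philosophy---absorption plus $KK/KL$ control---but there are several concrete gaps that prevent the argument from going through as written.

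First, and most seriously, $J_B$ is not $\sigma$-unital, so one cannot apply absorption theory or even set up the multiplier/corona framework directly in $J_B$. The paper handles this by a separabilization step: it passes to a unital separable subextension $0 \to I \to E \to D \to 0$ of the trace-kernel extension in which $I$ is genuinely stable and $E$ is $\mathcal Z$-stable (using that $J_B$ is \emph{separably} stable and $B_\infty$ is separably $\mathcal Z$-stable), and then works in $\mathcal M(I)$. Your proposal to build an ``absorbing padding $\sigma$ inside $J_B$'' via approximately central copies of $\mathcal Z$ does not produce absorption in the Voiculescu sense needed to run a Dadarlat--Eilers argument; absorption here comes from the Elliott--Kucerovsky theorem applied to unitizably full maps into $\mathcal Q(I)$ with $I$ $\sigma$-unital, stable, and $\mathcal Z$-stable.

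Second, unital maps are never absorbing (they cannot absorb a non-unital extension), so one cannot simply declare that ``full nuclear maps into $B_\infty$ are automatically absorbing''. The paper resolves this with a de-unitization trick: compose with the top-left corner embedding $\iota^{(2)}_{B_\infty}\colon B_\infty \to M_2(B_\infty)$, which makes the map unitizably full and hence absorbing after separabilizing; stable rank one of $M_2(B)_\infty$ is then used to bring the answer back into $B_\infty$ as a unital map. Your sketch skips this entirely.

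Third, your surjectivity argument for part~\ref{intro-lifts-2} has a type error: part~\ref{intro-lifts-1} takes input in $KK$, not $KL$, so applying it to $[\phi_0]+\iota_*(\alpha)$ with $\alpha\in KL(A,J_B)$ does not make sense as stated. More importantly, part~\ref{intro-lifts-1} only produces a lift of \emph{some} map with the right $KK$-class in $B^\infty$; one still has to arrange that it lifts $\theta$ on the nose, which the paper does separately using that unitaries in $B^\infty$ lift to $B_\infty$ (a consequence of the von Neumann-like behavior of $B^\infty$, not of $B_\infty$). Finally, the uniqueness step is not a generic stable uniqueness statement but the $\mathcal Z$-stable $KL$-uniqueness theorem (Theorem~\ref{thm:KK-Uniqueness}): vanishing of the $KL$-class in $J_B$ gives proper approximate unitary equivalence of $\phi\otimes 1_{\mathcal Z}$ and $\psi\otimes 1_{\mathcal Z}$ with unitaries in $(I\otimes\mathcal Z)^\dag$, and $\mathcal Z$-stability of $E$ is used to remove the tensor factor---not to absorb a padding summand.
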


The invariants in Theorem~\ref{intro-lifts} are somewhat unwieldy. Once Theorem \ref{intro-lifts} is proved, our last major task will be to compute $KL(A,J_B)$ using the UCT, allowing the classification of lifts to be reinterpreted in terms of the total invariant.

Next, we provide a bit more detail regarding these components and how they come together to prove Theorem~\ref{Main3}. We follow this by a discussion of the role of the UCT and $\Z$-stability, and a brief comparison of our methods with the techniques in the tracial approximation based approach.

\subsubsection{Classification into the trace-kernel quotient}\label{Intro:ClassTKQ}

It is an important consequence of Connes' theorem that maps from separable nuclear $C^*$-algebras $A$ into finite von Neumann algebras are classified by traces. When $B$ has a unique trace, this provides a classification of $^*$-homomorphisms into the trace-kernel quotient.\footnote{This is more often stated with the tracial ultrapower $B^\omega$ associated to a free ultrafilter $\omega$ on $\mathbb N$ --- which is a II$_1$ factor --- in place of $B^\infty$, but it is routine to convert between these classifications.}  In general, neither $B^\infty$ nor $B^\omega$ is a von Neumann algebra, but the techniques introduced by Castillejos, Evington, AT, SW, and Winter in \cite{Castillejos-Evington-etal21} implicitly show that when $B$ is $\mathcal Z$-stable, $B^\infty$ behaves enough like an ultraproduct of von Neumann algebras for our purposes.  In particular, one still has a classification of maps into $B^\infty$ by traces. The precise classification result we need was recorded in the short sequel \cite{CETW22} to \cite{Castillejos-Evington-etal21} for use in this paper.

\begin{theorem}[Classification of unital morphisms into the trace-kernel quotient; see Theorem~\ref{thm:B^inftyClassification}]\label{intro:classtraces}
	Let $A$ be a unital separable nuclear $C^*$-algebra and $B$ be a unital simple separable  nuclear $\mathcal Z$-stable $C^*$-algebra with $T(B) \neq \emptyset$.  Up to unitary equivalence, the unital $^*$-homomorphisms $A \rightarrow B^\infty$ are classified by traces.
\end{theorem}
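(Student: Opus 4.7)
The plan is to reduce the $C^*$-algebraic classification into $B^\infty$ to a Connes-type classification at the von Neumann algebra level, with $\Z$-stability bridging the two.

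First, a standard reindexing argument replaces the sequence-algebra formulation with its ultrapower counterpart, so it suffices to classify unital $^*$-homomorphisms $A \to B^\omega$ (with respect to a free ultrafilter $\omega$ on $\mathbb N$) up to unitary equivalence by their induced maps on traces. Attached to $B^\omega$ is a finite von Neumann algebra $\M_B$, obtained by completing $B^\omega$ in the uniform trace $2$-norm; since $T(B) \neq \emptyset$, this is a direct integral of finite von Neumann algebras over the simplex $T(B)$, with a canonical surjection $B^\omega \to \M_B$.

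At the $\M_B$-level I would invoke the Connes-type classification of nuclear-into-finite embeddings: unital $^*$-homomorphisms from a separable nuclear $C^*$-algebra $A$ into a finite von Neumann algebra are classified up to unitary equivalence by the induced trace maps. This is essentially a packaged form of Connes' theorem that injectivity equals hyperfiniteness, combined with Dixmier-style averaging for II$_1$ factors.

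The main obstacle, and the technical heart of the result, is to lift this classification from $\M_B$ back to $B^\omega$: a unitary implementing conjugation in $\M_B$ need not come from a unitary in $B^\omega$, since $B^\omega$ is only a $C^*$-algebra. This is where $\Z$-stability of $B$ enters decisively. The work of \cite{Castillejos-Evington-etal21} shows that unital simple separable nuclear $\Z$-stable $C^*$-algebras have \emph{uniform property $\Gamma$}, which in particular provides approximately central projections in $B^\omega$ of prescribed trace. A Dixmier-style averaging argument run internally in $B^\omega$ then upgrades a unitary from $\M_B$ to an honest unitary in $B^\omega$ implementing the conjugation, and an analogous existence argument produces $^*$-homomorphisms realizing prescribed tracial data. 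The short sequel \cite{CETW22} to \cite{Castillejos-Evington-etal21} packages precisely this lifting step, so the proof ultimately reduces to combining that input with the Connes-type classification at the von Neumann algebra level and the ultrapower reindexing.
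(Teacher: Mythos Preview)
The paper's own proof is essentially a citation of \cite[Theorem~A]{CETW21}, together with a two-line argument that unitality of $\theta$ is automatic once the trace map is unit-preserving. Your sketch correctly identifies the two conceptual ingredients behind that citation---Connes' theorem at the von Neumann level, and $\mathcal Z$-stability (through uniform property~$\Gamma$) for the passage back to the $C^*$-level---and correctly points to \cite{Castillejos-Evington-etal21} as the source of the bridging technology.

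Where your sketch departs from the actual argument is the intermediate object. You posit a single finite von Neumann algebra $\M_B$, described as the completion of $B^\omega$ in the uniform trace $2$-norm and as a direct integral over $T(B)$; neither description is quite right (the uniform $2$-norm completion of the trace-kernel quotient is not a von Neumann algebra outside the Bauer simplex setting, direct integrals live over the extreme boundary rather than the whole simplex, and a map from $B^\omega$ into its own completion would be an inclusion, not a surjection). More substantively, the method in \cite{Castillejos-Evington-etal21,CETW21} does not factor through any single global $\M_B$. Instead one works trace by trace: for each $\tau\in T(B)$, Connes' theorem applied in the II$_1$ factor arising from $\tau$ supplies a unitary $u_\tau\in B^\omega$ that approximately conjugates $\phi$ onto $\psi$ in $\|\cdot\|_{2,\tau}$, and then the \emph{complemented partitions of unity} technique---which is more delicate than Dixmier averaging, and is precisely what is needed to handle the affine structure of $T(B)$ when it is not Bauer---glues the $u_\tau$ into a single unitary working uniformly over all traces. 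The paper spells this out in Section~\ref{SSTKQuotient}. So your endpoint (defer to the CETW sequel) matches the paper, but your narrative of what happens inside that reference should be reworked along these lines.
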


We describe the trace-kernel extension in Section~\ref{sec:trace-kern-ext}, giving a little more detail about how one thinks of $B^\infty$ as being ``von Neumann-like''.

\subsubsection{Classification of lifts along the trace-kernel extension (Theorem~\ref{intro-lifts})} The strategy for the proof of Theorem~\ref{intro-lifts} builds on CS's work on the non-stable $KK$-theory of the trace-kernel ideal $J_B$ from \cite{Schafhauser17,Schafhauser18} which essentially proves Theorem~\ref{intro-lifts} under the additional hypotheses that $B$ is $\mathcal Q$-stable and monotracial with $K_1(B)=0$ (see Remark~\ref{rmk:LiftsContext}). Absorption --- in the spirit of Voiculescu's Weyl--von Neumann theorem --- is the fundamental tool both in \cite{Schafhauser17,Schafhauser18} and our arguments.

For a unital full embedding $\theta \colon A \rightarrow B^\infty$, we may take the pullback of the trace-kernel extension along $\theta$ to form a commutative diagram
\begin{equation}
\begin{tikzcd}\label{eq:pullback-intro}
\mathsf{e}\colon 0 \arrow{r} & J_B \arrow{r} \arrow[equals]{d} & E \arrow{r} \arrow{d} & A \arrow{r} \arrow{d}{\theta} & 0 \\
 \phantom{\mathsf{e}\colon} 0 \arrow{r} & J_B \arrow{r}{j_B} & B_\infty \arrow{r}{q_B} & B^\infty \arrow{r} & 0.
\end{tikzcd}
\end{equation}
The extension $\mathsf{e}$ in the top row of \eqref{eq:pullback-intro} defines a class in $\Ext(A,J_B)$. Let us pretend that $J_B$ is separable and stable.\footnote{The trace-kernel $J_B$ is massive. It is neither separable, nor even $\sigma$-unital, which requires us to take some care with $KK$-theory. Moreover $J_B$ is not stable, but as we discuss below, it is \emph{separably stable} (every separable subalgebra of $J_B$ is contained in a stable separable subalgebra). The analysis that follows is really done on a suitable separable and stable subextension of the top row of \eqref{eq:pullback-intro}.} Given a $KK$-lift of $\theta$ in $KK(A, B_\infty)$ as in the statement of Theorem~\ref{intro-lifts}\ref{intro-lifts-1}, it follows that the $\Ext$-class of $\mathsf{e}$ vanishes, so $\mathsf{e}$ stably splits, i.e., there exists a trivial (i.e., split) extension $\mathsf{t}$ such that $\mathsf{e}\oplus\mathsf{t}$ splits.  This is where absorption enters in the existence part of Theorem~\ref{intro-lifts}: if we can show $\mathsf{e}$ is absorbing, then $\mathsf{e}\cong\mathsf{e}\oplus\mathsf{t}$ so that $\mathsf{e}$ itself splits, providing the required lift of $\theta$.  This overall strategy is an abstract version of CS's proof in \cite{Schafhauser17} of the quasidiagonality theorem of \cite{TWW}.

Absorption is equally crucial in the second part of Theorem~\ref{intro-lifts}, but appears in a slightly different guise. A pair of lifts $\phi,\psi\colon A\to B_\infty$ of $\theta$ gives rise to a class $[\phi,\psi]_{KK(A,J_B)}$ in $KK(A,J_B)$ via the Cuntz--Thomsen description of $KK$-theory.\footnote{With $\mathcal M(J_B)$ as the multiplier algebra of $J_B$, the trace-kernel extension induces a canonical map $\mu\colon B_\infty \to\mathcal M(J_B)$, and then the maps $(\mu\circ\phi,\mu\circ\psi)\colon A\rightrightarrows \mathcal M(J_B)$ form an $(A,J_B)$-Cuntz pair, i.e., $\mu(\phi(a))-\mu(\psi(a))\in J_B$ for $a\in A$.}

\begin{equation}\label{intro-kk-class}
\begin{tikzcd}
&&& A\ar["\psi",shift left=.2ex]{dl}\ar["\phi",shift right=.6ex,swap]{dl}\ar["\theta"]{d} \\
 0 \arrow{r} & J_B \arrow["j_B",swap]{r} & B_\infty\ar[d] \arrow["q_B",swap]{r}\ar["\mu"]{d} & B^\infty \arrow{r}\ar[d] & 0\\0\ar[r]&J_B\ar[equals]{u}\ar[r]&\mathcal M(J_B)\ar[r]&\mathcal Q(J_B)\ar[r]&0
\end{tikzcd}
\end{equation}

Our objective is to show that $\phi$ and $\psi$ are unitarily equivalent if and only if the associated class $[\phi,\psi]_{KL(A,J_B)}$ vanishes.  This is achieved through a ``$\mathcal Z$-stable $KL$-uniqueness theorem''.  

Uniqueness theorems for $KK$ and $KL$ are subtle and intimately involve absorption.  For example, via the \emph{stable uniqueness theorems} of Dadarlat--Eilers and Lin from \cite{Dadarlat-Eilers02} and \cite{Lin02} (and continuing to pretend that $J_B$ is separable and stable), if $[\phi,\psi]_{KL(A,J_B)}$ vanishes, then $\mu\circ\phi$ and $\mu\circ\psi$ become properly approximately unitarily equivalent after adjoining any absorbing representation $\eta\colon A\to \mathcal M(J_B)$.\footnote{That is, $(\mu\circ\phi)\oplus\eta$ and $(\mu\circ\psi)\oplus\eta$ are approximately unitarily equivalent via unitaries in the minimal unitization of $J_B$.}  If $\mu\circ\phi$ and $\mu\circ\psi$ are already known to be absorbing, then we can use these for $\eta$ in two applications of stable uniqueness to obtain proper approximate unitary equivalence of $(\mu\circ\phi)\oplus(\mu\circ\phi)$ and $(\mu\circ\psi)\oplus(\mu\circ\psi)$.  Assuming further that $B_\infty$ and $J_B$ absorb the UHF algebra $M_{2^\infty}$ tensorially,\footnote{Even when $B$ is $M_{2^\infty}$-absorbing, $B_\infty$ and $J_B$ are not, for similar reasons that $J_B$ is not stable. However, in this case, they will be ``separably $M_{2^\infty}$-stable'', which is enough to perform the argument working in a suitable separable subextension of \eqref{eq:trace-kernel-intro}.} a key observation in \cite{Schafhauser18} is that one can remove the two-fold amplification and lift back to $B_\infty$ to obtain unitary equivalence of $\phi$ and $\psi$. Importantly, the unitaries are in the minimal unitization of $J_B$, and hence in $B_\infty$, not just in $\mathcal M(J_B)$.  We view this result as a ``UHF-stable $KL$-uniqueness theorem,'' and it can be used to prove our objective whenever $B$ absorbs a UHF algebra of infinite type.

In this paper, some major new ingredients are $KK$- and $KL$-uniqueness theorems that work directly with $\mathcal Z$-stability in place of UHF-stability. We state the $KK$-version below; the $KL$-version is similar, instead characterizing proper \emph{approximate} unitary equivalence.  The strategy behind our $KK$-uniqueness theorem goes back to ideas in Dadarlat and Eilers' stable uniqueness theorem (\cite{Dadarlat-Eilers02}), as discussed further in Section~\ref{sec:KK-Uniqueness}.

\begin{theorem}[$\mathcal Z$-stable $KK$-uniqueness theorem; see Theorem~\ref{thm:KK-Uniqueness}\ref{KK-Uniqueness}]\label{intro-KK-unique} Let $A$ be a separable $C^*$-algebra and $I$ be a $\sigma$-unital and stable $C^*$-algebra. Suppose that $\phi,\psi\colon A\to \mathcal M(I)$ are absorbing and $\phi(a)-\psi(a)\in I$ for all $a\in A$. Then the following are equivalent:
\begin{enumerate}
    \item\label{kk-unique1}$[\phi,\psi]_{KK(A,I)}=0$
    \item \label{kk-unique2}$\phi\otimes 1_\Z,\psi\otimes1_\Z\colon A\to\mathcal M(I\otimes\Z)$ are properly asymptotically unitarily equivalent.
\end{enumerate}
\end{theorem}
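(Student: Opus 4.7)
For the direction (2) $\Rightarrow$ (1), I would argue formally using the $KK$-equivalence of $\mathcal Z$ with $\mathbb C$. Since $\mathcal Z$ is an inductive limit of type I $C^*$-algebras it satisfies the UCT, and combined with $K_*(\mathcal Z) \cong K_*(\mathbb C)$ this gives $\mathcal Z \sim_{KK} \mathbb C$. Consequently, the tensor product map
\[
- \otimes 1_{\mathcal Z} \colon KK(A, I) \longrightarrow KK(A, I \otimes \mathcal Z)
\]
is an isomorphism that sends $[\phi, \psi]_{KK(A,I)}$ to $[\phi \otimes 1_{\mathcal Z}, \psi \otimes 1_{\mathcal Z}]_{KK(A, I \otimes \mathcal Z)}$. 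A proper asymptotic unitary equivalence of $\phi \otimes 1_{\mathcal Z}$ and $\psi \otimes 1_{\mathcal Z}$ realises a Cuntz-pair homotopy, so it forces the right-hand class to vanish, and hence also the left.

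For (1) $\Rightarrow$ (2), my plan is to combine the Dadarlat--Eilers stable uniqueness theorem with a cancellation argument powered by $\mathcal Z$-stability. The first step is to invoke the asymptotic version of stable uniqueness: since $[\phi,\psi]_{KK(A,I)} = 0$, there exist an absorbing representation $\eta \colon A \to \mathcal M(I)$ and a continuous path of unitaries in the minimal unitization of $M_2(I) \cong I$ (using stability of $I$) witnessing a proper asymptotic unitary equivalence between $\phi \oplus \eta$ and $\psi \oplus \eta$. Tensoring with $1_{\mathcal Z}$ yields the analogous equivalence of $(\phi \oplus \eta) \otimes 1_{\mathcal Z}$ and $(\psi \oplus \eta) \otimes 1_{\mathcal Z}$ inside $\mathcal M(I \otimes \mathcal Z)$, implemented by a path in the minimal unitization of $I \otimes \mathcal Z$.

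The second step is to cancel the extra summand $\eta \otimes 1_{\mathcal Z}$ on each side. Since $\phi$ is absorbing there is a unitary $w \in \mathcal M(I)$ implementing $\phi \oplus \eta \sim_u \phi$, but $w$ need not lie in the connected component of the identity in the unitization, so absorption alone cannot produce the required continuous path. Here I would exploit the approximately inner half-flip of $\mathcal Z$: since $\mathcal Z$ is strongly self-absorbing there is a continuous path of unitaries in $\mathcal Z \otimes \mathcal Z$ implementing the tensor flip, and this can be leveraged inside $I \otimes \mathcal Z \cong I \otimes \mathcal Z \otimes \mathcal Z$ to produce unitary paths in the minimal unitization of $I \otimes \mathcal Z$ realising
\[
\phi \otimes 1_{\mathcal Z} \sim_{au}^{\mathrm{prop}} (\phi \otimes 1_{\mathcal Z}) \oplus (\eta \otimes 1_{\mathcal Z})
\]
and analogously for $\psi$. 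Chaining the three resulting asymptotic equivalences then delivers (2).

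The main obstacle is this second step: absorbing the auxiliary representation $\eta$ into $\phi \otimes 1_{\mathcal Z}$ (and into $\psi \otimes 1_{\mathcal Z}$) via unitaries in the minimal unitization of $I \otimes \mathcal Z$, rather than only via unitaries in $\mathcal M(I \otimes \mathcal Z)$. The delicate point is to use the self-absorbing structure of $\mathcal Z$ to connect the multiplier-algebra unitary arising from standard absorption to the identity through a controlled continuous path, while simultaneously controlling the conjugation action on $\phi \otimes 1_{\mathcal Z}$. A further technical complication is that $I$ is only assumed $\sigma$-unital and stable, not separable, which forces the Dadarlat--Eilers-type machinery to be applied in suitable separable subextensions before passing to the limit.
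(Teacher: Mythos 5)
Your treatment of \ref{kk-unique2}$\Rightarrow$\ref{kk-unique1} is correct and matches the paper's short argument via the $KK$-equivalence of $I$ and $I\otimes\mathcal Z$.

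For \ref{kk-unique1}$\Rightarrow$\ref{kk-unique2}, there is a genuine gap in the cancellation step, and the paper in fact follows a different route that you should compare against. You propose to treat the Dadarlat--Eilers stable uniqueness theorem as a black box and then cancel the auxiliary summand $\eta\otimes 1_{\mathcal Z}$ using the approximately inner half-flip of $\mathcal Z$. But the approximately inner half-flip by itself does not resolve the obstruction you correctly identified: the unitary $w\in\mathcal M(I)$ implementing the absorption $\phi\oplus\eta\sim\phi$ has an image $q(w)$ in the corona whose $K_1$-class in a suitable relative commutant is precisely the obstruction to producing the required path in $(I\otimes\mathcal Z)^\dagger$. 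Showing that this class can be killed after tensoring with $1_{\mathcal Z}$ --- i.e.\ that $q(w)\otimes 1_{\mathcal Z}$ lies in the path component of the identity in the relevant ($\mathcal Z$-stabilized) relative commutant --- is exactly where one must invoke Jiang's theorem (Theorem~\ref{prop:z-stable-K1-inj}) that $D\otimes\mathcal Z$ is $K_1$-injective for any unital $D$. Strong self-absorption enters the paper's argument, but only through this $K_1$-injectivity statement; the flip alone does not give you a controlled path connecting $q(w)\otimes 1_{\mathcal Z}$ to the identity while respecting the conjugation action. Moreover $\phi$ and $\phi\oplus\eta$ do not agree modulo $I$, so your cancellation is not directly a statement about a single relative commutant in the way Lemma~\ref{lemma:AsympEquiv} would require; you would need to reorganize the argument to make it fit that framework.

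The paper's actual proof does not invoke stable uniqueness and then cancel; instead it runs the Dadarlat--Eilers argument once, with a built-in twist. Starting from Corollary~\ref{cor:absorption-uniqueness} one gets a path $(u_t)$ in $\mathcal M(I)$ approximately conjugating $\phi$ to $\psi$; Paschke duality identifies $[\phi,\psi]_{KK(A,I)}=0$ with $[q_I(u_0)]_1=0$ in $K_1(\mathcal Q(I)\cap q_I(\phi^\dag(A^\dag))')$; and then Lemma~\ref{lemma:AsympEquiv}\ref{asympequiv.2}, powered by Jiang's $K_1$-injectivity theorem applied to the $\mathcal Z$-stabilization of this relative commutant, converts the $K_1$-vanishing into a genuine path in $(I\otimes\mathcal Z)^\dagger$. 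This sidesteps the intermediate ``add $\eta$, then remove $\eta$'' step entirely. (As a minor aside, no separabilization of $I$ is needed at this stage; the Dadarlat--Eilers machinery, in particular \cite[Lemma~3.5]{Dadarlat-Eilers01}, already works for $\sigma$-unital $I$ --- separabilization only enters later, in Section~\ref{sec:class-lifts-trace}, because $J_B$ is not even $\sigma$-unital.)
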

The Dadarlat--Eilers stable uniqueness theorem obtains the equivalence of~\ref{kk-unique1} and
\emph{\begin{enumerate}
\setcounter{enumi}{2}
    \item\label{kk-unique3} $\phi\oplus \eta,\psi\oplus\eta$ are properly asymptotically unitarily equivalent for all absorbing $\eta\colon A\to \mathcal M(I)$.
\end{enumerate}}

Using $\Z$-stability of $B$ to remove the tensor factor of $1_\Z$ (and taking due care with issues around separability and tensorial absorption), lifting the outcome of the $\Z$-stable $KL$-uniqueness theorem back to $B_\infty$, is exactly what we need to prove our objective --- provided we can show that the maps $\mu\circ\phi$ and $\mu\circ\psi$ from \eqref{intro-kk-class} are suitably absorbing.

\subsubsection{Obtaining absorption}
Elliott and Kucerovsky's vast generalization (\cite{Elliott-Kucerovsky01}) of Voiculescu's theorem provides a criterion --- ``pure largeness'' --- that abstractly characterizes absorption under nuclearity constraints.  The precise definition of pure largeness is not so important to our paper, as there are now tools enabling it to be accessed through regularity properties such as $\mathcal Z$-stability (\cite{Kucerovsky-Ng06,Ortega-Perera-etal12}).  This gives rise to an absorption theorem stating that an extension of a separable nuclear $C^*$-algebra by a $\sigma$-unital stable $\mathcal Z$-stable ideal is absorbing if and only if the (forced) unitization of the extension is full.

Using the version of the Elliott--Kucerovsky theorem described above requires separable stability and $\mathcal Z$-stability of $J_B$.  The separable $\mathcal Z$-stability of $J_B$ follows directly from the $\mathcal Z$-stability of $B$.  Hjelmborg and R{\o}rdam characterized stability for $\sigma$-unital $C^*$-algebras in \cite{Hjelmborg-Rordam98}; this also characterizes separable stability.   CS introduced the ad hoc notion of an \emph{admissible kernel} for the purpose of obtaining separable stability of $J_B$ when $B=\mathcal Q$ (\cite{Schafhauser17}) (and more generally, when $B$ is a unital simple $\mathcal Q$-stable AF algebra with unique trace (\cite{Schafhauser18})).  Both of these reduce to the Hjelmborg--R{\o}rdam characterization.  Here we use Cuntz semigroup methods to verify the Hjelmborg--R{\o}rdam conditions for $J_B$ from strict comparison (and hence from $\Z$-stability).

The final detail in obtaining absorption is a ``de-unitization'' trick.  The unital extensions that we use can never be absorbing, as they never absorb a non-unital extension. As in \cite{Schafhauser17,Schafhauser18}, we resolve this by taking the direct sum with the zero extension --- see Section~\ref{sect:unital-lifts}.  

It is tempting to try and replace the trace-kernel quotient $B^\infty$ with a von Neumann algebra $\M$, such as an ultrapower of the finite part of the bidual of $B$.  In this case, $\M$ would encode all the traces on $B$ and one has the classification of maps $A\to \mathcal M$ by traces directly from Connes' theorem, and Theorem~\ref{intro:classtraces} (and the work of \cite{Castillejos-Evington-etal21}) would not be needed.  However, the ideal in the ultrapower $B_\omega$ corresponding to the quotient onto $\mathcal M$ is not  separably stable, obstructing the route to absorption (see Remark~\ref{rem6.12}). This prevents us from lifting the classification back from $\M$ to $B_\omega$. Loosely speaking, we must work with $2$-norm estimates that are uniform over the traces in order to take advantage of strict comparison in $B$.

\subsubsection{Computing $KL$-theory of the trace-kernel ideal}\label{subsubsectKL}

The major remaining task is to compute the group $KL(A, J_B)$  by combining the universal multicoefficient theorem with total $K$-theory computations for the trace-kernel extension.

For any $C^*$-algebra $D$, the Kasparov product induces a natural map \begin{equation}\label{eq:UMCT-intro}
\begin{tikzcd}
	KL(A,D) \arrow{r} & \Hom_\Lambda(\totK(A), \totK(D)),
\end{tikzcd}
\end{equation}
where $\Hom_\Lambda(\totK(A), \totK(D))$ is the group of morphisms in total $K$-theory.  When $A$ satisfies the UCT,  Dadarlat and Loring's universal multicoefficient theorem shows that \eqref{eq:UMCT-intro} is an isomorphism.  Applying this to the trace-kernel extension allows $KL(A,J_B)$ to be computed using the resulting exact sequence:
	\begin{equation}\label{eq:KKJ}
	\begin{tikzcd}
		0 \arrow{rr} &[-3ex] &[-28ex] \ker \mathrm{Hom}_\Lambda\big(\totK(A),\totK(j_B)\big) \arrow{r} 	\arrow[dl, phantom, ""{coordinate, name=Z}] &[-7ex] KL(A, J_B) \arrow[dll, rounded corners, to path={-- ([xshift=3ex]\tikztostart.east) |- (Z) [near end]\tikztonodes -| ([xshift=-3ex]\tikztotarget.west) -- (\tikztotarget)}] \\ & \Hom_\Lambda \big(\totK(A), \totK(B_\infty)\big) \arrow{rr} & & \Hom_\Lambda \big(\totK(A), \totK(B^\infty) \big).
	\end{tikzcd}
	\end{equation}

The first and fourth terms above can be simplified using the von Neumann algebra-like behavior of $B^\infty$:  $K_1(B^\infty)=0$ and the pairing map $\rho_{B^\infty}\colon K_0(B^\infty)\to\Aff T(B_\infty)$ is an isomorphism,\footnote{The regularity hypotheses on $B$ ensure that the map $q_B\colon B_\infty\to B^\infty$ induces an isomorphism of traces, so we can and do identify $\Aff T(B^\infty)$ with $\Aff T(B_\infty)$.} just as the $K_0$-group of a II$_1$ factor is identified with $\mathbb R$ using the trace.  Consequently, $\totK(B^\infty)$  and $\ker\totK(j_B)$ are concentrated in their $K_0$ and $K_1$ components respectively, so the last entry in \eqref{eq:KKJ} is isomorphic to $\mathrm{Hom}(K_0(A),\Aff T(B_\infty))$ (see Proposition~\ref{Prop:KK-trace-kernel-quotient} and \eqref{eq:KTheoryIsoagain}).  This allows us to determine $KL(A,J_B)$ by computing the maps in \eqref{eq:KKJ} explicitly.

These computations can be cleanly interpreted in terms of Hausdorffized unitary algebraic $K_1$, as developed by Thomsen.  For a $C^*$-algebra $D$, $\Ka(D)$ is defined as the quotient of the infinite unitary group of $D$ by the closure (in the inductive limit topology) of its commutator subgroup and there is a natural surjection $\minusa_D\colon \Ka(D)\to K_1(D)$. Thomsen used the de la Harpe--Skandalis determinant to provide a natural sequence
\begin{equation}\label{eq:thomsen-intro}
    \begin{tikzcd}
        K_0(D)\arrow[r,"\rho_D"]&\Aff T(D)\arrow[r,"\Th_D"]&\Ka(D)\arrow[r,"\minusa_D"]&K_1(D)\arrow[r]&0
    \end{tikzcd}
\end{equation}
that is exact at $\Ka(D)$.  Here, $\rho_D$ is the pairing between $K_0$ and traces, and the \emph{Thomsen map} $\Th_D$ can be constructed from an inverse of the de la Harpe--Skandalis determinant (see Section~\ref{SectAlgK1}, particularly \eqref{eq:detInverse}).  When we take $D=B_\infty$, a fragment of the six-term exact sequence of the trace-kernel extension corresponds to Thomsen's sequence as in the following theorem.

\begin{theorem}[Calculating $K_1(J_B)$; see Theorem~\ref{Thm:K1J}]\label{intro:calcKJB}
Let $B$ be a unital simple separable nuclear $\Z$-stable $C^*$-algebra with $T(B)\neq\emptyset$.  Then there is a natural isomorphism $\omega_B\colon K_1(J_B)\to \Ka(B_\infty)$ that (together with the computations $K_0(B^\infty)\cong \Aff T(B_\infty)$ and $K_1(B^\infty)=0$) gives the commutative diagram
\begin{equation}
\label{eq:introcalcKJB}
    \begin{tikzcd}
K_0(B_\infty)\arrow[equal,d]\arrow[r,"K_0(q_B)"]&K_0(B^\infty)\arrow[d,"\cong"]\arrow[r,"\partial"]&K_1(J_B)\arrow[d,"\omega_B"]\arrow[r,"K_1(j_B)"]&K_1(B_\infty)\arrow[d,equal] \arrow[r] & 0\phantom{.} \\
        K_0(B_\infty)\arrow[r, "\rho_{B_\infty}"]&\Aff T(B_\infty)\arrow[r,"\Th_{B_\infty}"]&\Ka(B_\infty)\arrow[r,"\minusa_{B_\infty}"]&K_1(B_\infty)\arrow[r]&0.
    \end{tikzcd}
\end{equation}
\end{theorem}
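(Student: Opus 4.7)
The plan is to construct $\omega_B$ explicitly and then verify commutativity of \eqref{eq:introcalcKJB} square by square, concluding it is an isomorphism via a four-term five-lemma argument. The two rows are identified immediately: the top is the tail of the six-term exact sequence for the trace-kernel extension \eqref{eq:trace-kernel-intro}, truncated using $K_1(B^\infty) = 0$; the bottom is Thomsen's sequence \eqref{eq:thomsen-intro} for $D = B_\infty$. The leftmost square commutes because the identification $\beta \colon K_0(B^\infty) \xrightarrow{\cong} \Aff T(B_\infty)$ is $[p] \mapsto (\tau \mapsto \tau(p))$, and this coincides with $\rho_{B_\infty}([p])$ for any projection $p \in M_n(B_\infty)$: the inclusion $J_B \subseteq \ker \tau$ for each $\tau \in T(B_\infty)$ gives $\tau(q_B(p)) = \tau(p)$.

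For the central construction, I represent a class $x \in K_1(J_B)$ by a unitary $u \in M_n(J_B^\sim)$ with $u \equiv 1_n \pmod{M_n(J_B)}$; the canonical unital embedding $M_n(J_B^\sim) \hookrightarrow M_n(B_\infty)$ makes $u$ a unitary in $B_\infty$, and I set $\omega_B(x) := [u]_{\Ka(B_\infty)}$. The main subtlety is well-definedness: if $u \in U_n(J_B^\sim)_0$ and $u \equiv 1_n \pmod{M_n(J_B)}$, then $[u]_{\Ka(B_\infty)} = 0$. I would establish this via the de la Harpe--Skandalis determinant $\Delta_{B_\infty}$. Given a piecewise-smooth path $u_t$ from $1_n$ to $u$ inside $U_n(J_B^\sim)$, the integrand $(\tau \otimes \mathrm{Tr})(u_t^* \dot u_t)$ depends only on the scalar parts of the entries of $u_t^* \dot u_t$ (since $J_B \subseteq \ker \tau$), and these scalar parts coincide with the derivative of the loop $q_B(u_t)$ in $U_n(\bC) \cdot 1_{B^\infty}$ based at $1_n$ (as $u \equiv 1_n$ modulo $J_B$ forces $q_B(u) = 1_n$). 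Hence $\Delta_{B_\infty}(u)$ is the constant function equal to the winding number of $t \mapsto \det q_B(u_t)$, an integer and therefore in $\im(\rho_{B_\infty}) \subseteq \ker \Th_{B_\infty}$. Since $\Th_{B_\infty}$ and $\Delta_{B_\infty}$ are mutually inverse between $\Aff T(B_\infty)/\overline{\im(\rho_{B_\infty})}$ and $\ker \minusa_{B_\infty} \subseteq \Ka(B_\infty)$, this forces $[u]_{\Ka(B_\infty)} = 0$.

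The remaining two squares commute by direct computation. For the right square, $\minusa_{B_\infty}(\omega_B([u])) = [u]_{K_1(B_\infty)} = K_1(j_B)([u])$ by construction. For the middle square, the standard exponential formula for $\partial$ sends a projection $[p] \in K_0(B^\infty)$ (with $p \in M_n(B^\infty)$) to $[\exp(2\pi i x)]_{K_1(J_B)}$, where $x \in M_n(B_\infty)$ is any self-adjoint lift of $p$; hence $\omega_B(\partial[p]) = [\exp(2\pi i x)]_{\Ka(B_\infty)}$. Meanwhile, $\beta[p] = \hat x$ because $\tau(x_{ii}) = \tau(p_{ii})$ for every $\tau \in T(B_\infty)$, and so by definition of the Thomsen map $\Th_{B_\infty}(\beta[p]) = \Th_{B_\infty}(\hat x) = [\exp(2\pi i x)]_{\Ka(B_\infty)}$, matching the top path.

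To conclude that $\omega_B$ is an isomorphism, I apply the four-term five-lemma to the two rows of \eqref{eq:introcalcKJB}: both are exact at the inner positions, the outer verticals are identities, and $\beta$ is an isomorphism. The main obstacle I anticipate is verifying exactness of the bottom row at $\Aff T(B_\infty)$, i.e., the equality $\im(\rho_{B_\infty}) = \ker \Th_{B_\infty}$; in general only $\ker \Th_{B_\infty} = \overline{\im(\rho_{B_\infty})}$ is automatic from the de la Harpe--Skandalis theory, so the required closedness of $\im(\rho_{B_\infty})$ in $\Aff T(B_\infty)$ must be extracted from the $\Z$-stability of $B$ and the accompanying divisibility of $K_0(B_\infty)$. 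Naturality of $\omega_B$ in $B$ then follows from the naturality of the six-term sequence, Thomsen's sequence, and the formula $u \mapsto [u]_{\Ka(B_\infty)}$.
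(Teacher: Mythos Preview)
Your proposal is essentially correct and follows the same architecture as the paper's proof: define $\omega_B$ explicitly, verify the three squares, and conclude via the five lemma using exactness of both rows. A few small points of comparison are worth noting.

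First, the paper defines $\omega_B([u]_1) := \ka{u} - \ka{s(u)}$ for arbitrary $u \in U_\infty(J_B^\dagger)$ (where $s$ is the scalar map), rather than restricting to representatives with scalar part $1_n$. This is equivalent to your definition but makes the homomorphism property and naturality slightly more transparent, and streamlines well-definedness: one checks $\tilde\omega_B(e^{2\pi i h}) = 0$ directly by writing $h = h_0 + s(h)$ with $h_0 \in M_n(J_B)_{sa}$ and observing $\tilde\omega_B(e^{2\pi i h}) = \ka{e^{2\pi i h_0}} = \Th_{B_\infty}(\hat h_0) = 0$, since $\hat h_0 = 0$ (all traces on $B_\infty$ vanish on $J_B$). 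Your determinant-integral argument reaches the same conclusion by a longer route.

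Second, you correctly identify the key nontrivial input as closedness of $\im\rho_{B_\infty}$ in $\Aff T(B_\infty)$, but the mechanism is not divisibility of $K_0(B_\infty)$. The paper (Proposition~\ref{prop:B_inftyK0ClosedInAff}) proves this via strict comparison: given $g$ in the closure, Kirchberg's $\epsilon$-test together with strict comparison produces a genuine projection $p \in B_\infty$ with $\rho_{B_\infty}([p]_0) = g$. Divisibility alone would not suffice.

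Otherwise your verification of the squares and the five-lemma conclusion match the paper.
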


Putting this all together, and keeping track of the maps explicitly, allows us to compute $KL(A,J_B)$ in terms of the enriched invariant $\inv(\,\cdot\,)$.

\begin{theorem}[Calculating $KL(A, J_B)$; see Theorem~\ref{thm:calcKL}]\label{intro:calcKL}
Let $A$ be a unital separable $C^*$-algebra satisfying the UCT and let $B$ be a unital simple separable nuclear $\mathcal Z$-stable $C^*$-algebra with $T(B) \neq \emptyset$.  Then the sequence in \eqref{eq:KKJ} is exact.  Moreover:

\begin{enumerate}
    \item The third arrow  in \eqref{eq:KKJ} is a group homomorphism from $KL(A,J_B)$ onto the set of $\inv$-morphisms $\inv(A)\to\inv(B_\infty)$ that vanish on the trace.\label{intro:calcKL1}
    \item \label{intro:calcKL.2} Given a full $^*$-homomorphism $\psi\colon A \to B_\infty$, the kernel of the above homomorphism identifies with the set of $\inv$-morphisms $\inv(A) \to \inv(B_\infty)$ that agree with $\psi$ on total $K$-theory and traces.\label{intro:calcKL2}
\end{enumerate}
\end{theorem}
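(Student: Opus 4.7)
My plan is to derive Theorem~\ref{intro:calcKL} by combining the Dadarlat--Loring universal multicoefficient theorem (UMCT) with the six-term exact sequence in $KL$-theory associated to the trace-kernel extension, and then translating the resulting data into $\inv$-morphism language via Theorem~\ref{intro:calcKJB}. Since $J_B$, $B_\infty$, and $B^\infty$ are non-separable, I would begin by reducing to a suitable separable subextension closed under the maps of interest (in the spirit of the separable stability techniques used earlier in the paper) so that the UMCT and the six-term sequence apply in the usual sense.

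The exactness of \eqref{eq:KKJ} is then immediate: applying $KL(A,\cdot)$ to the trace-kernel extension yields a six-term exact sequence, and since $A$ satisfies the UCT, the UMCT identifies each $KL(A, D)$ for $D \in \{J_B, B_\infty, B^\infty\}$ with $\Hom_\Lambda(\totK(A), \totK(D))$. Under these identifications, the kernel of $KL(A, J_B) \to KL(A, B_\infty)$ becomes $\ker \Hom_\Lambda(\totK(A), \totK(j_B))$, yielding the exact sequence claimed.

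For part~\ref{intro:calcKL1}, exactness identifies the image of the third arrow with the set of $\Hom_\Lambda$-morphisms $\alpha\colon \totK(A) \to \totK(B_\infty)$ satisfying $\totK(q_B) \circ \alpha = 0$. Since Theorem~\ref{intro:calcKJB} together with divisibility of $\Aff T(B_\infty)$ (which forces $K_*(B^\infty;\mathbb Z/n) = 0$) shows that $\totK(B^\infty)\cong\Aff T(B_\infty)$ is concentrated in $K_0$, this condition reduces to $\rho_{B_\infty}\circ\alpha_0 = 0$. To enrich such an $\alpha$ to an $\inv$-morphism vanishing on trace, I would use a lift $\kappa \in KL(A,J_B) \cong \Hom_\Lambda(\totK(A), \totK(J_B))$: the $K_1$-component of $\kappa$ yields a map $K_1(A) \to K_1(J_B) \cong \Ka(B_\infty)$ via the isomorphism $\omega_B$, and I define the $\Ka$-component of the target $\inv$-morphism as this composed with $\minusa_A\colon \Ka(A) \to K_1(A)$. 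Compatibility with $\minusa_{B_\infty}$ is the commutativity of \eqref{eq:introcalcKJB}, compatibility with $\Th$ follows from the trace vanishing combined with $\minusa_A \circ \Th_A = 0$, and compatibility with $\zeta^{(n)}$ should come from an analogue of \eqref{eq:introcalcKJB} on $\mathbb Z/n\mathbb Z$-coefficients identifying the Bockstein $K_0(J_B;\mathbb Z/n) \to K_1(J_B) \cong \Ka(B_\infty)$ with $\zeta^{(n)}_{B_\infty}$. Surjectivity is the reverse: from an $\inv$-morphism with zero trace, the $\Ka$-data factors through $K_1(A) \to \Ka(B_\infty)\cong K_1(J_B)$ and provides, together with the $K_0$-part in $\ker\rho_{B_\infty}$, a $\Hom_\Lambda$-morphism $\totK(A)\to\totK(J_B)$.

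For part~\ref{intro:calcKL.2}, exactness gives that the kernel of the third arrow (into $\Hom_\Lambda(\totK(A), \totK(B_\infty))$) is $\ker \Hom_\Lambda(\totK(A),\totK(j_B))$. Given $\psi$, the base-point shift $\kappa \mapsto \inv(\psi) + \Phi(\kappa)$, where $\Phi$ denotes the enriched homomorphism from part~\ref{intro:calcKL1}, maps this kernel into the set of $\inv$-morphisms agreeing with $\psi$ on total $K$-theory and traces, because $\Phi(\kappa)$ has trivial $\Hom_\Lambda$-image and trivial trace. Conversely, any such $\inv$-morphism differs from $\inv(\psi)$ by an $\inv$-morphism vanishing on $\Hom_\Lambda$ and on the trace, and the surjectivity of $\Phi$ from part~\ref{intro:calcKL1} together with $K_1(J_B)\cong \Ka(B_\infty)$ produces the element of $\ker \Hom_\Lambda(\totK(A), \totK(j_B))$ inducing this difference. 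The main obstacle I anticipate is the careful bookkeeping of natural transformations --- especially identifying the maps $\zeta^{(n)}_{B_\infty}$ as arising from Bocksteins in the $K$-theory of $J_B$ via $\omega_B$ --- which will require the construction of $\omega_B$ in Theorem~\ref{intro:calcKJB} to be sufficiently explicit to control the $\mathbb Z/n\mathbb Z$-coefficient structure.
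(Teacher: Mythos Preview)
Your overall strategy---apply the UMCT to identify $KL$ with $\Hom_\Lambda$, then analyze the trace-kernel extension---matches the paper's. However, there is a genuine gap in your exactness argument: $KL(A,\,\cdot\,)$ is \emph{not} half-exact in general, so you cannot simply invoke a six-term exact sequence. The paper gives an explicit counterexample in a footnote (take $A = S\mathcal Q$ and apply the functor to the extension $0 \to S\mathcal Q \to C \to \mathbb C \to 0$ where $C$ is the cone over the unital inclusion $\mathbb C \hookrightarrow \mathcal Q$). To establish the inclusion $\ker \Hom_\Lambda(\totK(A),\totK(q_B)) \subseteq \im\Theta$, the paper instead uses half-exactness of $KK(A,\,\cdot\,)$---available because the UCT makes $A$ $KK$-equivalent to an abelian algebra---together with the fact (Proposition~\ref{Prop:KK-trace-kernel-quotient}) that $\Gamma_\Lambda^{(A,B^\infty)}\colon KK(A,B^\infty) \to \Hom_\Lambda(\totK(A),\totK(B^\infty))$ is an \emph{isomorphism}, not merely the UMCT surjection from $KL$. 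A diagram chase comparing the $KK$ and $\Hom_\Lambda$ rows then gives the required lifting.

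Your treatment of parts~\ref{intro:calcKL1} and~\ref{intro:calcKL.2} is close in spirit to the paper's, which packages the passage between $\ker KL(A,j_B)$ and $\Ka$-data into a single map $R_{A,B}$ (Theorem~\ref{thm:algK1}) rather than building the $\Ka$-component ad hoc from a lift. The difficulty you anticipate with $\zeta^{(n)}$-compatibility is real but resolved differently than you expect: the paper does not need a $\Zn{n}$-coefficient analogue of \eqref{eq:introcalcKJB}. Instead, for a correction term of the form $R_{A,B}(\lambda)\circ t_A \circ \minusa_A$, compatibility with $\zeta^{(n)}$ holds automatically because $\minusa_A \circ \zeta_A^{(n)} = \nu_{0,A}^{(n)}$ lands in $\mathrm{Tor}(K_1(A))$, which is killed by $t_A$. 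Conversely, showing that an arbitrary $\inv$-morphism agreeing with $\psi$ on $\totK$ and traces arises this way uses Lemma~\ref{Compatibility.Lem}, where the $\zeta^{(n)}$-compatibility hypothesis is precisely what forces the difference $\beta - \Ka(\psi)$ to factor through $K_1(A)/\mathrm{Tor}(K_1(A))$.
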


Compatibility with $\zeta^{(n)}$ is essential in establishing surjectivity in Theorem~\ref{intro:calcKL}\ref{intro:calcKL2}.  In slightly more detail, given a $\inv$-morphism $(\underline{\alpha},\beta,\gamma)\colon \inv(A) \rightarrow \inv(B_\infty)$ that agrees with $\psi$ on total $K$-theory and traces, $\beta-\Ka(\psi)$ provides a group morphism $r \colon K_1(A) \rightarrow \ker K_1(j_B)$ via \eqref{eq:introcalcKJB}.  In order to extend this map to total $K$-theory, as in the first term of \eqref{eq:KKJ}, $r$ must vanish on torsion.  This is equivalent to compatibility with $\zeta^{(n)}$.
See Sections~\ref{sec:new-map} and~\ref{Sect:RotationMap}.

Theorem~\ref{Main3} follows by combining the existence and uniqueness parts of  Theorems~\ref{intro:classtraces}, \ref{intro-lifts}, and~\ref{intro:calcKL}. 

\subsubsection{The role of the UCT and $\Z$-stability}

The UCT appears twice in the proof.  First, the UCT is needed to compute the group $KL(A, J_B)$ in terms of $KK(A, B_\infty)$ and traces, via the universal multicoefficient theorem, as described in Section~\ref{subsubsectKL} above.
Second, the UCT is needed to show that the $\Ext(A,J_B)$ obstruction in Theorem~\ref{intro-lifts}\ref{intro-lifts-1} vanishes, by producing a required $KK$-lifting.  This application of the UCT is analogous to the role it plays in CS's approach (\cite{Schafhauser17}) to the quasidiagonality theorem (see Remark~\ref{r:qdlift}).  It should be noted that assuming the existence of a ``trace-compatible'' element of $KK(A, B_\infty)$ does not appear to be sufficient for this step, as the UCT is still needed to show this element lifts a given morphism $A \rightarrow B^\infty$ in $KK(A, B_\infty)$.

In the classification of UCT Kirchberg algebras by $K$-theory, the UCT is only needed to lift an isomorphism at the level of $K$-theory to a $KK$-equivalence.  The more subtle uses of the UCT in the stably finite setting described above mean that classification currently requires the UCT and cannot (yet) be formulated in terms of a combination of $KK$-theory and traces.

In contrast, regularity --- i.e., $\Z$-stability of the codomain $B$ --- is ever-present in the argument. Jiang--Su stability is a rich hypothesis, with many ramifications. In the main body of the paper, we will typically state results with $\Z$-stability as the hypothesis, even if only a consequence of $\Z$-stability is needed.  Here, we give a flavor of which aspects of $\Z$-stability are actually being used at distinct points in the argument. 

Two consequences of $\Z$-stability are pertinent: unital simple finite $\Z$-stable $C^*$-algebras have \emph{strict comparison} and \emph{uniform property $\Gamma$}.\footnote{These both are analogs of properties of von Neumann algebras. For a unital simple separable nuclear $C^*$-algebra $B$ with $T(B)\neq \emptyset$, $\Z$-stability is now known to be equivalent to the combination of strict comparison and uniform property $\Gamma$ (using \cite{Rordam04} and \cite{CETW22}, which builds on \cite{Matui-Sato12,Sato12,Toms-White-Winter15,Kirchberg-Rordam14,Ozawa13}). }  Strict comparison is a Cuntz semigroup condition, and we typically use this to obtain properties of the trace kernel ideal --- in particular, the route to absorption really runs through comparison\footnote{In the main body of the paper, we use separable $\Z$-stability of $J_B$ (inherited from $\Z$-stability of $B$) as one of the two ingredients to access absorption via Theorem~\ref{thm:absorbing-z-stable} (for maps into the multiplier algebra of stable, $\Z$-stable separable subalgebras of $J_B$) for convenience. We could have worked with the weaker condition of unperforation of the Cuntz semigroup of $J_B$, and obtained this from strict comparison of $B$} --- and to control the traces on $B_\infty$.\footnote{In the main body, we use $\Z$-stability directly for this, via Ozawa's theorem (\cite{Ozawa13}), but we could have used the harder \cite[Theorem~1.2]{Ng-Robert16} to obtain this from strict comparison.} Likewise, although it doesn't appear explicitly in this paper, it is really uniform property $\Gamma$ rather than the full force of $\Z$-stability that gives $B^\infty$ its von Neumann-like behavior.  We also use that simple finite unital $\Z$-stable $C^*$-algebras have stable rank one as a technical convenience (to access cancellation) in the proof.

In contrast, our proof of the $\Z$-stable $KK$- and $KL$-uniqueness theorems use the full force of $\Z$-stability, not its consequences.  We use both an explicit construction of $\Z$ (in the proof of Jiang's theorem; see Section~\ref{SSK1Inject}) and strong-self absorption.  This is the only place in the argument where strong self-absorption of $\Z$ is used directly.

\subsubsection{Comparison with classification via tracial approximations}

The $\Z$-stable $KK$- and $KL$-uniqueness theorem provides a major shortcut. It allows us to bypass Winter's localization techniques, and  obtain a classification of morphisms into $\Z$-stable codomains up to approximate unitary equivalence directly, rather than needing to first obtain classification results for UHF-stable codomains up to (strong) asymptotic unitary equivalence.  A significant amount of the work in Gong, Lin, and Niu's argument (\cite{GLN-part1,GLN-part2}) is obtaining the required asymptotic classification for localization. As we do not need asymptotic classification results, we do not prove any in this paper,\footnote{With some extra work and invariants, it is possible to do this abstractly, and we will return to this point in future work.} allowing us to work with simpler invariants throughout.  

Another fundamental difference is that our approach does not require explicit models of classifiable $C^*$-algebras nor internal approximation structure such as inductive limits or tracial approximations. Indeed, the only place where internal structure arises in the argument is through Connes' theorem that injective von Neumann algebras (in this case, the bidual of the nuclear domain $C^*$-algebras) are hyperfinite, and so are inductive limits of finite dimensional algebras. This is crucial in the folklore result that maps from separable nuclear $C^*$-algebras into finite von Neumann algebras are classified (up to strong$^*$-approximate unitary equivalence) by traces. The classification into the trace-kernel quotient (Theorem~\ref{intro:classtraces}) is obtained by gluing this observation uniformly across the traces.
 
The absence of tracial approximations allows us to obtain our embedding results in vast generality: in this paper, a separable nuclear $C^*$-algebra satisfying the UCT is allowed as a domain, as opposed to the simplicity requirement found in \cite{Gong-Lin-etal23} (see the discussion in Remark~\ref{rmk:MatuiClassification}). In future work we will obtain embedding results for exact domains and quite general codomains, with the simplicity and nuclearity requirements at the level of the map.  With our approach, internal $C^*$-structure --- such as arising as an ASH algebra or enjoying tracial approximations --- can be viewed as a consequence of $C^*$-classification, rather than an ingredient.

Our proof essentially contains a copy of CS's proof (\cite{Schafhauser17}) of the quasidiagonality theorem, and so we no longer need this result nor UCT-powered stable uniqueness across the interval arguments used to obtain tracial approximations \cite{EGLN} (and in the original proof of the quasidiagonality theorem, \cite{TWW}). This is replaced by the abstract lifting result Theorem~\ref{intro-lifts}\ref{intro-lifts-1} in combination with the UCT to verify the hypotheses. The use of the UCT in \cite{Schafhauser17} is more direct than that in \cite{TWW} and is similar to its use here.

Our work also has points of contact with the work of Gong, Lin, and Niu, and many others in the long history of $C^*$-classification.  Classifying into the trace-kernel quotient is spiritually connected to classifying tracially large maps into $C^*$-algebras with good tracial approximations, and our use of the Elliott--Kucerovsky absorption theorem plays a role in our argument analogous to the one played by stable uniqueness theorems in \cite{GLN-part1,GLN-part2}. As with \cite{GLN-part1,GLN-part2}, our existence result is proven in two stages.  We first prove the existence of an approximate embedding with prescribed tracial information (through Theorem~\ref{intro-lifts}\ref{intro-lifts-1}), and then perturb the morphism to modify the $K$-theoretic behavior (in Theorem~\ref{intro-lifts}\ref{intro-lifts-2}).  This two-step process dates back to Lin's classification of TAF algebras (\cite{Lin01}); this is made explicit in Dadarlat's account (\cite{Dadarlat04}).

Finally, while we do not use any of the results from the major papers establishing the 2015 classification theorem during our work, we carefully studied the invariants and rotation maps from \cite{GLN-preprint}. This led us to expect that a suitable combination of total $K$-theory, traces, and Hausdorffized unitary algebraic $K_1$ would provide the classification invariant found in Theorem~\ref{Main2}.

\subsection{Structure of the paper, notation, and references}  We hope to clarify the new ingredients of the invariants used in our classification in the next two sections and review their essential properties (with certain facts about total $K$-theory being collated in Appendix~\ref{appendix:totalappendix} for completeness).  Section~\ref{s:totalinvariant} is devoted to the maps $\zeta^{(n)}\colon \Ka(\,\cdot\,;\Zn{n})\to \Ka(\,\cdot\,)$, and formalizes the total invariant $\inv$. Section~\ref{Sect:Z} is concerned with the Jiang--Su algebra $\mathcal Z$. We give a short self-contained proof of Jiang's unpublished theorem that $\Z$-stable $C^*$-algebras are $K_1$-injective using the modern perspective of the Jiang--Su algebra. This result is a vital component of the $\Z$-stable $KK$- and $KL$-uniqueness theorems. We also collect results on strict comparison and the Cuntz semigroup in Section~\ref{Sect:Z}, which are used later in absorption results.

Absorption and $KK$-theory strike back in Section~\ref{sec:KK-classification}.  Since we wish to use $KK(A,J_B)$ and $KL(A,J_B)$ and the trace-kernel quotient is not $\sigma$-unital, we will separabilize and treat these $KK$-groups as limits of the $KK$- and $KL$-groups corresponding to separable subalgebras $J$ of $J_B$.  This is explained in Section~\ref{subsec:elem-of-kk}, with the proofs of facts that are standard in the $\sigma$-unital case deferred to Appendix~\ref{sec:kkappendix}.  The rest of Section~\ref{sec:KK-classification} is devoted to absorption and the proof of the $\Z$-stable $KK$- and $KL$-uniqueness theorems. 

In Section~\ref{sec:trace-kern-ext}, we describe the trace-kernel extension, and compute the $K$-theory of the trace-kernel quotient.  This enables us to prove Theorem~\ref{intro:calcKJB}.  Section~\ref{SSTKSepStab}  shows that the trace kernel extension $J_B$ is separably stable for use in absorption.  Section~\ref{sec:class-lifts-trace} analyses lifts along the trace-kernel extension, proving Theorem~\ref{intro-lifts} (which we deduce from non-unital versions).

The final act of the paper sees the return of the UCT and its cousin, the universal multicoefficient theorem. We review these results in Section~\ref{sec:uct-rot}, and compute $KL(A,J_B)$ to prove Theorem~\ref{intro:calcKL}. In particular, we do not use that our domain algebras $A$ satisfy the UCT in any results before Section~\ref{sec:uct-rot}.  Finally we put everything together in Section~\ref{sec:main-results}, which proves the classification of full approximate $^*$-homomorphisms (Theorem~\ref{Main3}) and deduces Theorems~\ref{Main} and~\ref{Main2}.

\subsubsection{Notation}

Throughout the paper we endeavor to consistently use $A$ for $C^*$-algebras appearing as the domain of the classification of unital embeddings (Theorem~\ref{Main2}). When $A$ is used in other results (for example, appearing as the first variable in $KK$-theory) it is because we envisage applying these results to the domains of Theorem~\ref{Main2}. Likewise, we use $B$ for the codomains of the classification of embeddings, with the corresponding notation for the trace-kernel extension in \eqref{eq:trace-kernel-intro}.  For that reason we typically write $D$ or $E$ for a general $C^*$-algebra, and $I$ or $J$ for a general ideal (often the second variable in $KK$-theory). We will write $\mathcal M(I)$ for the multiplier algebra of $I$, and $\mathcal Q(I)$ for its corona.  The symbols $\alpha,\beta,\gamma$ will be reserved for the components of a morphism between invariants --- see Section~\ref{ss:totalinv}. We also use $\kappa$ for a $KK$-element,\footnote{$\kappa^{(m,n)}_*$ also appears briefly as one of the Bockstein maps; no ambigiuity is possible.} $\lambda$ for a $KL$-element, and typically use $\phi,\psi,\theta$ for $^*$-homomorphisms between $C^*$-algebras. For $D \subseteq E$ an inclusion of $C^*$-algebras, we write $\iota_{D\subseteq E}\colon D \to E$ for the inclusion map, and $D^\dagger$ denotes the forced unitization of $D$ (when $D$ is already unital, a new unit is added).

\subsubsection{References underpinning the argument}\label{section:referencesneeded}

We have attempted to provide both context throughout, and also to track down the intellectual origins of the ideas we use. However, from a formal point of view, the vast majority of the (many) papers we cite do not form part of our argument for the stably finite parts of Theorems~\ref{Main} and~\ref{Main2}, which are relatively self-contained.  Concretely, in addition to material in textbooks and short stand-alone lemmas which will be found straightforward by experts,\footnote{In particular, every instance where we appeal to one of \cite{Bosa-Brown-etal15,Schafhauser18,TWW} for a result (rather than for context) in the main body is for a short standard fact of this nature.} we only rely on the following works:
\begin{itemize}
    \item 
\cite{Harpe-Skandalis84,Thomsen95} for properties of algebraic $K_1$ and the de la Harpe--Skandalis determinant;
\item
\cite{Dadarlat-Loring96,Schochet84} for total $K$-theory;
\item \cite{Jiang-Su99,Ozawa13,Rordam04,Rordam-Winter10,Toms-Winter07} for the Jiang--Su algebra and consequences of $\Z$-stability;
\item \cite{Haagerup14} for the fact that quasitraces on exact $C^*$-algebras are traces;\footnote{We only use this for the simple $\mathcal Z$-stable codomain algebras $B$; there is a short proof of this fact for $C^*$-algebras with finite nuclear dimension (\cite{Brown-Winter}), although we don't know how to do this from $\mathcal Z$-stability.}
\item\cite{Hjelmborg-Rordam98,Ortega-Perera-etal12,Rordam92} for the Cuntz semigroup, strict comparison, and its use in obtaining stability, and the corona factorization property;
\item \cite{Dadarlat-Eilers01,Dadarlat-Eilers02,Elliott-Kucerovsky01,Gabe16,Kucerovsky-Ng06} for the absorption theorem; \cite{Dadarlat00b,Dadarlat-Eilers01} for ingredients in the $KK$- and $KL$-uniqueness theorems (including a convenient form of Paschke duality); 
\item \cite{CETW21,Castillejos-Evington-etal21} for the von Neumann-like behavior\footnote{Note that we only need the von Neumann-like behavior, namely the complemented partition of unity technique set out in \cite[Sections 2,3]{Castillejos-Evington-etal21} and not the full force of $\Z$-stability implies finite nuclear dimension.} of $B^\infty$ and Theorem~\ref{intro-lifts}; 
\item \cite{Dadarlat05,Dadarlat-Loring96} for the universal multicoefficient theorem, and aspects of $KL$;
\item \cite{Gabe20} for intertwining via reparameterizations.
\end{itemize}

\subsection{Acknowledgements}

We would like to thank the organizers and funding agencies of those conferences where we have undertaken work on this paper: BIRS (2017), NCGOA in M\"unster (2018), Texas A\&M (2018), Oberwolfach (2019, 2022), and Sde Boker (2022). We also thank the TCU Department of Mathematics and the University of Ottawa for hosting some of the authors during the early stages of this work.

Throughout this project we have benefited extensively from the insight and expertise of the community over multiple conversations and feedback following presentations.  We thank you all.  We would like to especially thank:
\begin{itemize}
    \item Guihua Gong, Huaxin Lin, and Zhuang Niu for valuable conversations on their work, which have been consequential in the development of this paper;
\item  George Elliott, who was present in many presentations of this work --- including a semester-long lecture series in the Fields given by CS --- and always had something illuminating to say;
\item  Bruce Blackadar, Joachim Cuntz, Marius Dadarlat, S\o{}ren Eilers, George Elliott, Huaxin Lin, Mikael R\o{}rdam, Klaus Thomsen, Stefaan Vaes, and Wilhelm Winter for their helpful comments on the introduction, especially regarding the history;
\item  Mikkel Munkholm, Robert Neagu, and Pawel Sarkowicz for their careful proofreading.
\end{itemize}

Finally, we thank the community for their collective patience. The second paper in this series on non-unital classification will follow more promptly.


\section{Invariants for unital $C^*$-algebras}
\label{s:invariant}
In this section, we review the invariants used for the classification of unital $C^*$-algebras and their $^*$-homomorphisms. 

\subsection{$K$-theory and traces}\label{sec:Elliott-invariant}

Elliott's invariant for the classification of simple nuclear
$C^*$-algebras consists of ordered $K$-theory and traces. Write
$K_*=(K_0,K_1)$ for the operator algebraic $K$-theory functor from
$C^*$-algebras to pairs of abelian groups (see \cite{Rordam-Larsen-etal00} for an overview). When $A$ is a unital
$C^*$-algebra, the element $[1_A]_0$ represents a distinguished
element in $K_0(A)$. The positive elements $K_0(A)_+$ of $K_0(A)$ are
precisely those classes that can be realized by a projection in
$M_n(A)$ for some $n\geq 1$, so that $(K_0(A), K_0(A)_+)$ is an
ordered group whenever $A$ is stably finite and unital (see
\cite[Proposition~6.3.3]{Blackadar98}, for example).

Since our focus in this paper is on unital $C^*$-algebras, by a
\emph{trace} we always mean a tracial state.  The set of all traces on
a unital $C^*$-algebra $A$ is denoted by $T(A)$. This is convex, and
unitality ensures that $T(A)$ is compact in the relative
weak$^*$-topology.\footnote{In fact, $T(A)$ is a Choquet simplex (see
  \cite[Theorem~3.1.18]{Sakai98}, for example) that is metrizable when
  $A$ is separable.  This fact plays no role in our approach to
  classification but is certainly crucial in constructing models to
  realize the range of the invariant.}  Since $A\mapsto T(A)$ is
contravariant, throughout this paper, we prefer to work with the
function system $\Aff T(A)$ of continuous real-valued affine functions
on $T(A)$ rather than with $T(A)$ itself. The duality between compact
convex subsets of locally convex spaces and Archimedean order unit
spaces (which are concretely realized as function systems) goes back
to Kadison in \cite{Kadison51}.  This works as follows: every
continuous affine map $\gamma^*\colon T(B)\rightarrow T(A)$ induces a
unital positive linear map $\gamma\colon\Aff T(A)\rightarrow \Aff
T(B)$ given by $\gamma(f)(\tau)\coloneqq f(\gamma^*(\tau))$ for $f\in \Aff
T(A)$ and $\tau\in T(B)$. Conversely, $T(A)$ is naturally recovered as
the states\footnote{Recall that a \emph{state} on an ordered abelian
  group $G$ with order unit $e$ is an order-preserving group homomorphism
  $G \to \mathbb R$ that maps $e$ to $1$.} on $\Aff T(A)$
(\cite[Theorem 7.1]{Goodearl86}).  Therefore, unital positive linear
maps $\gamma\colon\Aff T(A)\rightarrow \Aff T(B)$ give rise to
continuous affine maps $\gamma^*\colon T(B)\rightarrow T(A)$.

The relationship between the function system $\Aff T(A)$ and the space $A_{sa}$ of self-adjoint elements of $A$ goes back to the work of Cuntz and Pedersen (\cite{Cuntz-Pedersen79}). Every $a\in A_{sa}$ induces a continuous affine function $\hat{a}\colon T(A)\to\mathbb R$ by $\hat{a}(\tau)=\tau(a)$ for $\tau\in T(A)$. Write $[a,b]\coloneqq ab-ba$ and $[A,A]$ for the span of the set $\{ ab - ba : a, b \in A\}$, so a trace is precisely a state that vanishes on $[A,A]$. We need the following result, which is often attributed to \cite{Cuntz-Pedersen79} but does not appear
there explicitly. We give a short, self-contained proof for completeness.\footnote{This result can also be deduced from \cite[Lemma~6.2(iii)]{Kirchberg-Rordam14} or \cite[Theorem 5]{Ozawa13}, both of which factor through the enveloping von Neumann algebra. Alternatively, the beginning of the proof of \cite[Theorem 3.1]{Thomsen95} indicates a strategy for deducing it from \cite{Cuntz-Pedersen79}.}

\begin{proposition}
  \label{prop:cuntz-pedersen}
  Suppose $A$ is a unital $C^*$-algebra.
  \begin{enumerate}
  \item\label{cp1} If $f\in \Aff T(A)$, then there exists $a\in A_{\mathrm{sa}}$
    such that $f=\hat{a}$ (i.e., $f(\tau)=\tau(a)$ for all $\tau\in
    T(A)$), and $a$ is unique modulo $\overline{ [A,A] }$.  Moreover,
    given $\epsilon > 0$, we may choose $a$ with $\|a\| <\|f\| +
    \epsilon$.
  \item\label{cp2} $[A, A] \cap A_{sa}$ is spanned by $\{ [x, x^*] : x\in A\}$.
  \end{enumerate}
\end{proposition}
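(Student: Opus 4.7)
The plan for part (i) is to reduce everything to showing that $\phi\colon A_{sa} \to \Aff T(A)$, $a \mapsto \hat a$, descends to a surjective isometry $A_{sa}/\overline{[A,A] \cap A_{sa}} \to \Aff T(A)$. Granted this, existence of $a$ with $\hat a = f$ follows from surjectivity, uniqueness modulo $\overline{[A,A]}$ follows from injectivity together with the equality $\overline{[A,A]} \cap A_{sa} = \overline{[A,A] \cap A_{sa}}$ (obtained by averaging an approximating sequence with its adjoints), and the norm bound $\|a\| < \|f\| + \epsilon$ is obtained by picking a near-infimizer in the relevant coset.

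For the isometry, the inequality $\|\hat a\|_\infty \le \|a - b\|$ is clear whenever $b \in [A,A] \cap A_{sa}$, since $\tau(b) = 0$ for every trace $\tau$. For the reverse, Hahn--Banach identifies the distance from $a$ to $\overline{[A,A]\cap A_{sa}}$ with $\sup\{|\phi(a)| : \phi \in (A_{sa})^*,\ \|\phi\|\le 1,\ \phi\vert_{[A,A]\cap A_{sa}} = 0\}$. Each such $\phi$ extends by $\mathbb{C}$-linearity to a hermitian functional on $A$ of the same norm; since $[A,A]$ is $\ast$-closed, this extension vanishes on all of $[A,A]$ and is therefore tracial. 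The (trace-preserving) Jordan decomposition then writes $\phi = \lambda_+ \tau_+ - \lambda_- \tau_-$ with $\tau_\pm \in T(A)$ and $\lambda_+ + \lambda_- \le 1$, so $|\phi(a)| \le (\lambda_+ + \lambda_-)\|\hat a\|_\infty \le \|\hat a\|_\infty$.

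Density of $\hat A_{sa}$ in $\Aff T(A)$ is where I expect to have to be most careful. Once more by Hahn--Banach, it suffices to show that every bounded linear $\mu$ on $\Aff T(A)$ vanishing on $\hat A_{sa}$ is zero. Extend $\mu$ to a signed Borel measure $\tilde\mu$ on $T(A)$ via Hahn--Banach and Riesz, and take its Jordan parts $\tilde\mu_\pm$. Integrating the evaluation map against $\tilde\mu_\pm$ produces positive tracial functionals $\eta_\pm(a) \coloneqq \int \tau(a)\,d\tilde\mu_\pm(\tau)$ on $A$ that agree on $A_{sa}$, hence on all of $A$. The crucial observation is that for any $f \in \Aff T(A)$ and any positive Borel measure $\rho$ on $T(A)$, the integral $\int f\,d\rho$ depends only on the tracial functional $\eta_\rho(\cdot) \coloneqq \int \tau(\cdot)\,d\rho$: with $t \coloneqq \rho(T(A)) = \|\eta_\rho\|$, if $t > 0$ then $\rho/t$ is a probability measure on $T(A)$ whose barycenter equals $\eta_\rho/t \in T(A)$, and the defining property of the barycenter for continuous affine functions gives $\int f\,d\rho = t \cdot f(\eta_\rho/t)$. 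Applied to $\tilde\mu_\pm$, this yields $\int f\,d\tilde\mu_+ = \int f\,d\tilde\mu_-$, so $\mu(f) = 0$. This step is the crux: one might naively worry that different positive measures on $T(A)$ with the same barycenter give different integrals against $f \in \Aff T(A)$, and the content of the observation is that they do not.

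Part (ii) is a direct polarization argument. The identity
\[
  [a, b^*] = \tfrac{1}{4} \sum_{k=0}^{3} i^k [a + i^k b,\, (a+i^k b)^*]
\]
writes every commutator in $A$ as a complex linear combination of elements of the form $[x, x^*]$. Given $z \in [A,A] \cap A_{sa}$, expand $z = \sum_l \mu_l\, [x_l, x_l^*]$ with $\mu_l \in \mathbb{C}$. Since each $[x_l, x_l^*]$ is self-adjoint, applying $z = (z+z^*)/2$ produces $z = \sum_l \mathrm{Re}(\mu_l)\,[x_l, x_l^*]$, exhibiting $z$ as a real linear combination of elements of $\{[x, x^*] : x \in A\}$.
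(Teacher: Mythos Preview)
Your proof is correct, but for part \ref{cp1} it takes a genuinely different route from the paper. The paper proceeds by extending $f$ via the Jordan decomposition to a self-adjoint functional $\tilde f \in (T_{\mathbb C}(A))^* \cong (A/\overline{[A,A]})^{**}$, and then shows $\tilde f$ is weak$^*$-continuous by checking that $\ker\tilde f$ meets every ball in a weak$^*$-closed set and invoking Krein--Smulian; weak$^*$-continuity then produces the preimage $a$ directly. You instead establish the isometry $A_{sa}/\overline{[A,A]\cap A_{sa}} \to \Aff T(A)$ via Hahn--Banach and the trace-preserving Jordan decomposition, and then prove density of the range separately with a barycenter argument on measures over $T(A)$. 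Your approach is a bit longer but avoids Krein--Smulian entirely and makes the norm bound $\|a\| < \|f\| + \epsilon$ transparent (it drops out of the isometry), whereas in the paper's proof this bound is left implicit in the identification $\|\tilde f\| = \|f\|$ and choosing a near-minimal representative of the resulting coset. Both arguments ultimately lean on the same nontrivial fact that the Jordan decomposition of a hermitian tracial functional yields tracial positive parts. Part \ref{cp2} is identical to the paper's proof.
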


\begin{proof}
  Let $T_\mathbb C(A)\coloneqq \{ \tau \in A^* : \tau|_{[A, A]}=0\}$ be the space of bounded tracial functionals on $A$, so that there is a canonical isometric isomorphism $T_\mathbb C(A) \rightarrow (A/ \overline{ [A,A] })^*$ preserving the weak$^*$ topology.
        
  Given a continuous affine function $f\colon T(A)\to \mathbb{R}$, we
  may use the Jordan decomposition to extend $f$ to a bounded
  self-adjoint linear functional $\tilde{f}\in (T_\mathbb{C}(A))^* \cong (A/ \overline{ [A,A]
  })^{**}$.
  Note that $\tilde{f}$ is also weak$^*$-continuous: indeed, the intersection of $\ker\tilde{f}$ with any multiple of the closed unit ball in $T_{\mathbb{C}}(A)$ is weak$^*$-closed (using weak$^*$-compactness, replace a bounded weak$^*$ convergent net in $T_\mathbb C(A)$ with one whose Jordan decompositions also converge), so that $\ker\tilde{f}$ is weak$^*$-closed by the Krein--Smulian
  Theorem (\cite[Corollary~V.12.6]{Conway90}).
  
 As $\tilde{f}$ is weak$^*$-continuous, there is
  an $a \in A$ with $\tilde{f}(\tau) = \tau(a)$ for all $\tau \in T(A)$.  In
  fact, because $\tilde{f}$ is self-adjoint, we may choose $a$ to be
  self-adjoint. Uniqueness in \ref{cp1} follows from $(T_\mathbb{C}(A))^*\cong (A/ \overline{ [A,A] })^{**}$.
 
 The claim in \ref{cp2} follows from the
  polarization identity applied to the sesquilinear form $(a, b)
  \mapsto [a, b^*]$.
\end{proof}

The relationship between $K_0$ and traces is encoded in the natural \emph{pairing map} $\rho_A\colon K_0(A) \to
\mathrm{Aff}\, T(A)$. For $n\in\mathbb N$ and $\tau\in T(A)$, write
$\tau_n$ for the canonical non-normalized extension\footnote{That is, $\tau_n(1_{M_n(A)})=n$.} of $\tau$ to a tracial functional on $M_n(A)$ and define
\begin{equation}\label{DefPairingMap}
  \rho_A([p]_0-[q]_0)(\tau) \coloneqq \tau_n(p-q)
\end{equation}
for all $\tau\in T(A)$ and all projections $p,q\in M_n(A)$.

When $A$ is unital and exact, every state on
$(K_0(A),K_0(A)_+,[1_A]_0)$ arises from a trace (i.e., is of the form
$K_0(A)\ni x\mapsto \rho_A(x)(\tau)$ for some $\tau\in T(A)$), by
using a celebrated result of Haagerup (\cite{Haagerup14}) and \cite{Blackadar-Rordam92}. In particular,
unital nuclear stably finite $C^*$-algebras always have traces.

The evolution of the Elliott invariant of a unital stably
finite nuclear $C^*$-algebra $A$ naturally led to the
inclusion of the positive cone $K_0(A)_+$ in the invariant: it makes
its first appearance in Elliott's classification of AF algebras
(see \cite[Theorem 5.1]{Elliott76}).

\begin{definition}
  If $A$ is a unital stably finite $C^*$-algebra, then its
  \emph{Elliott invariant, $\Ell(A)$}, is\footnote{It is common to
    formulate the Elliott invariant with $T(A)$ rather than $\Aff
    T(A)$ and to encode the pairing via the map from $T(A)$ to states
    on $K_0(A)$; by the duality explained above, these formulations are
    equivalent.}
  \begin{equation}
    \Ell(A)\coloneqq
    ( K_0(A), K_0(A)_+, [1_A]_0, K_1(A), \Aff T(A), \rho_A ).
\end{equation}
\end{definition}

For reasons explained below, we prefer to work with a weaker
invariant, $KT_u$, that does not explicitly keep track of the order on $K_0$.  See Definition~\ref{def:KTu} below.  While we are primarily interested in the case of $C^*$-algebras
with traces, the definition makes perfect sense when
$T(A)=\emptyset$. In this case, $\Aff T(A)=0$, and $KT_u(A)$ reduces
to $K_*(A)$ together with the position of the unit --- which is
precisely the information used in the classification of unital
Kirchberg algebras satisfying the UCT by Kirchberg (\cite{Kirchberg95b})
and by Phillips (\cite{Phillips00}).

\begin{definition}
  \label{def:KTu}
  Let $KT_u$ be the functor on the category of unital $C^*$-algebras
  with unital $^*$-homomorphisms that assigns to a unital
  $C^*$-algebra $A$ the quadruple
  \begin{equation}
    KT_u(A)=(K_*(A),[1_A]_0,\Aff T(A),\rho_A).\footnote{For our purposes, the target category of $KT_u$ has as objects 
      quadruples $((G_0,G_1),e,X,\rho)$, where $G_0$ and $G_1$ are abelian
      groups with $e\in G_0$, $X$ is an Archimedian order unit space
      (the order unit is part of the data), and $\rho\colon
      G_0\rightarrow X$ a group homomorphism mapping $e$ to the order unit.}
  \end{equation}
  A morphism\footnote{We specify only the morphisms between $KT_u(A)$ and
    $KT_u(B)$; morphisms between abstract objects of the target
    category of $KT_u$ are defined analogously.}
  $(\alpha_*,\gamma)\colon KT_u(A)\to KT_u(B)$ consists of a pair
  $\alpha_*=(\alpha_0,\alpha_1)$ of homomorphisms $\alpha_i\colon
  K_i(A)\rightarrow K_i(B)$ (for $i=0,1$) with
  $\alpha_0([1_A]_0)=[1_B]_0$, and a positive unital linear map
  $\gamma\colon \Aff T(A)\rightarrow \Aff T(B)$ such that
    \begin{equation}
      \begin{tikzcd}
        K_0(A) \ar[r, "\rho_A"] \ar[d, "\alpha_0"] &
        \Aff T(A)\ar[d,"\gamma"] \\
        K_0(B) \ar[r, "\rho_B"] &
        \Aff T(B)       \end{tikzcd}
    \end{equation}
commutes.
\end{definition}

In the case where $A$ is exact and has real rank zero, $T(A)$ can be
recovered as states on the ordered group $(K_0(A), K_0(A)_+, [1_A]_0)$
(see \cite[Theorem 1.1.11 and Proposition 1.1.12]{Rordam02}), so
$K_0(A)_+$ can be used in place of $\Aff T(A)$ and $\rho_A$.  Outside
this setting, traces are needed for classification, and the pairing map, which first arose explicitly
in the invariant in \cite{Elliott93a}, carries more data than $T(A)$
and the order structure it induces on $K_0(A)$.\footnote{An example of
  how different pairing maps between the same collection of affine
  functions and abelian group $G_0$ can give the same order structure
  on $G_0$ can be found on page 29 of \cite{Rordam02}.}  Moreover, for
unital simple nuclear finite $\Z$-stable $C^*$-algebras, the order on
$K_0(A)$ is induced by the pairing map,\footnote{This is a
  direct consequence of Proposition~\ref{p:ZStableUnperforated}\ref{p:ZStableUnperforated.2}.} so $KT_u$ and the
Elliott invariant are equivalent
invariants.\footnote{\label{equivalent-invariants}By this, we mean
  that there are functors $F$ and $G$ between the target categories
  for $KT_u$ and $\Ell$ (one in each direction) such that $F\circ
  \Ell$ and $KT_u$ are naturally isomorphic once restricted to the
  category of unital simple separable nuclear finite $\mathcal Z$-stable
  $C^*$-algebras, and similarly for $G \circ KT_u$ and $\Ell$.} 
  
  In
general, the order $K_0(A)$ may fail to be weakly unperforated, in
which case, it cannot arise from a pairing map. This phenomenon was
first exhibited for a simple nuclear stably finite $C^*$-algebra by
Villadsen in \cite{Villadsen98}.  We prefer to use the invariant
$KT_u$ for two reasons: to emphasize that our proof makes no explicit
reference to the order structure on $K_0(A)$ and because for
\emph{any} unital $C^*$-algebra $A$, there is a canonical isomorphism
$KT_u(A)\cong KT_u(A\otimes\mathcal Z)$ (see Proposition~\ref{prop:ZKKequiv} below)
 whereas, for a unital simple
$C^*$-algebra $A$, $\Ell(A\otimes \Z)\cong \Ell(A)$ if and only if
$(K_0(A),K_0(A)_+)$ is weakly unperforated
(\cite[Theorem~1]{Gong-Jiang-etal00}).

 Recall that a trace $\tau$ on $A$ is \emph{faithful} if $\tau(a^*a)=0$ implies $a=0$. For an embedding $\theta\colon A\hookrightarrow B$ and a trace $\tau\in T(B)$, if $B$ is simple then it follows that the induced trace $\tau\circ\theta$ is faithful on $A$.  This gives a necessary condition for a map between invariants to arise from an
embedding into a simple $C^*$-algebra.

\begin{definition}
  Let $A$ and $B$ be unital $C^*$-algebras. We say that
  $(\alpha_*,\gamma)\colon{}KT_u(A)\rightarrow KT_u(B)$ is
  \emph{faithful} if the induced map $\gamma^*\colon{}T(B)\rightarrow
  T(A)$ (given by Kadison duality) satisfies that $\gamma^*(\tau)$ is
  faithful for all $\tau\in T(B)$.\footnote{This can be phrased
    directly in terms of $\gamma$ as follows: if $f\in \Aff T(A)$ is
    strictly positive on faithful traces on $A$, then $\gamma(f)$ is
    strictly positive.  To see this, note that if $\gamma^*(\tau)$ is
    not faithful for some $\tau\in T(B)$, then there is $a\in
    A_{+}\setminus \{0\}$ with $\gamma^*(\tau)(a) =0$.  Then
    $\hat{a}\in \Aff T(A)$ is strictly positive on faithful traces but
    $\gamma(\hat{a}) \in \Aff T(B)$ is not.}
\end{definition}

\begin{remark}
\label{rmk:RangeOfInvariant}
As our approach to classification does not require comparing
$C^*$-algebras with concrete models, the range of the invariant $KT_u$
does not play a role in our proof.  However, it is worth noting that
(in contrast to the Elliott invariant), the range of $KT_u$ is
completely understood: a quadruple $((G_0,G_1),e,X,\rho)$ in the
target category of $KT_u$ has the form $KT_u(B)$ for a separable
unital $C^*$-algebra $B$ if and only if $G_0$ and $G_1$ are countable
and $X$ is separable and has Riesz interpolation,\footnote{The
  relevance of Riesz interpolation is that an Archimedean order unit
  space has Riesz interpolation if and only if the state space of $X$
  is a Choquet simplex (\cite[Corollary II.3.11]{Alfsen71}).} and in this case, there is a
unital simple separable nuclear $\mathcal Z$-stable $C^*$-algebra $A$ in the UCT
class with $KT_u(A) \cong KT_u(B)$.\footnote{This is a reinterpretation of the range of invariant results from \cite{Rordam95,Elliott96}, which predate $\Z$-stability. While all the examples of \cite{Rordam95,Elliott96} are $\Z$-stable (for example, by $\mathcal O_\infty$-stability for those in \cite{Rordam95} and by Winter's $\Z$-stability theorem, \cite{Winter12}, for those in \cite{Elliott96}), one can avoid checking this by simply replacing them with their $\Z$-stabilizations.}
\end{remark}

\subsection{Unitary algebraic $K_1$}\label{SectAlgK1}The role of the
quotient of the infinite unitary group by the closure of the
commutator subgroup in the classification program goes back to work of
Nielsen and Thomsen (\cite{NielsenThomsen}), who showed that it is needed
to determine the approximate unitary equivalence classes of maps
between simple A$\mathbb T$ algebras.  Although this group and its
structure is known to experts, we provide a guide for less familiar
readers, both to clarify some topological points and to collect a
number of details of the construction  ---  notably, the de la
Harpe--Skandalis determinant  ---  for subsequent use.

For a unital $C^*$-algebra $A$, write $U_n(A)$ for the group of
unitaries in $M_n(A)$ with the norm topology and $DU_n(A)$ for the
derived subgroup of $U_n(A)$, i.e., the subgroup generated by
commutators $uvu^*v^*$ for $u,v\in U_n(A)$ (this subgroup is
automatically normal). Each $U_n(A)$ is included into $U_{n+1}(A)$ via
the standard connecting maps, and we write $U_\infty(A)$ for the
direct limit, which is equipped with the topological inductive limit
topology: $V\subseteq U_\infty(A)$ is open if and only if $V\cap U_n(A)$
is open for all $n\in\mathbb N$. We caution the reader that while the
inverse map on $U_\infty(A)$ is continuous, multiplication is only
separately continuous in general, so $U_\infty(A)$ is not a
topological group with this
topology.\footnote{\label{FootnoteTopGroupInd}If $G_1\subset
  G_2\subseteq \cdots$ is an increasing sequence of locally compact
  Hausdorff topological groups, then the topological inductive limit
  $G$ is a topological group (\cite[Theorem~2.7]{Tatsuuma98}). If some
  $G_n$ is not locally compact, and for all $n$, $G_n$ is not open in
  $G_m$ for some $m>n$, then $G$ is not a topological group by
  \cite[Theorem~4]{Yamasaki98}.  Accordingly $U_\infty(A)$ is a
  topological group in the inductive limit topology if and only if $A$
  is finite dimensional.  It may be more natural to work with the
  topological group inductive limit topology on $U_\infty(A)$, but
  this does not seem to be considered in the literature.  As we note
  in Remark~\ref{RemTopGroupInd}, using the topological group
  inductive limit topology turns out to give the same definition of
  the Hausdorffized unitary algebraic $K_1$-group.} Let
$DU_\infty(A)\coloneqq\bigcup_nDU_n(A)$ and note that $DU_\infty(A)$
is contained in $U^{(0)}_\infty(A)$, the path component of the
identity in $U_\infty(A)$, which is equal to
$\bigcup_nU_n^{(0)}(A)$.\footnote{\label{fn:U0infty}By the argument in \cite[Proposition~A.1]{Hatcher02}, every compact subset of $U_\infty(A)$ is contained in some $U_n(A)$; from this it follows that $U^{(0)}_\infty(A)=\bigcup_n U_n^{(0)}(A)$.} Moreover, any commutator in $U_\infty(A)$ is
already a commutator of elements in $U^{(0)}_\infty(A)$.\footnote{For $u, v \in U_\infty(A)$, one has \begin{equation}
    uvu^*v^*=\begin{pmatrix}u&0&0\\0&u^*&0\\0&0&1\end{pmatrix}\begin{pmatrix}v&0&0\\0&1&0\\0&0&v^*\end{pmatrix}\begin{pmatrix}u^*&0&0\\0&u&0\\0&0&1\end{pmatrix}\begin{pmatrix}v^*&0&0\\0&1&0\\0&0&v\end{pmatrix}.\end{equation}}

As multiplication is separately continuous on $U_\infty(A)$ and
inversion is continuous, the closure $\overline{DU_\infty(A)}$ of
$DU_\infty(A)$ in the inductive limit topology is a normal subgroup of
$U_\infty(A)$.  Taking the quotient gives the version of algebraic
$K_1$ we need for the uniqueness component of the approximate
classification of embeddings.\footnote{As pointed out to us by George
  Elliott, $DU_\infty(A)$ is norm-dense in $U^{(0)}_\infty(A)$. This
  can be seen using a trick in the proof of
  \cite[Theorem~2.4.7]{Higson88} that goes back to Pearcy and Topping
  \cite{PearcyTopping71}. Thus, it is important to take the closure in
  an inductive limit topology in Definition~\ref{DefKa}.}

\begin{definition}\label{DefKa}
Write
\begin{equation}
\Ka(A)\coloneqq U_\infty(A)/\overline{DU_\infty(A)}
\end{equation}
and call this the \emph{Hausdorffized unitary algebraic $K_1$-group}
of $A$.\footnote{The algebraic $K_1$-group of a unital ring $A$ is
  defined as $GL_\infty(A)/DGL_\infty(A)$. The version we use differs
  by using unitaries in place of invertibles, and by taking the quotient by the closure of $DU_\infty(A)$, to ensure invariance under approximate unitary equivalence of morphisms. For example, the algebraic $K_1$-group of $\mathbb C$ is $\mathbb C^\times$, whereas $\Ka(\mathbb C)=\mathbb T$.  In the literature it is common to write $U_\infty(A)/\overline{DU_\infty(A)}$ for $\Ka(A)$. Given the fundamental role it plays in classification results we prefer to introduce the more compact notation $\Ka(A)$.  The Hausdorffized algebraic $K_1$-group (using invertibles) is naturally recovered from $\inv$; this fact and further study of the relationship between the different versions of algebraic $K_1$ can be found in \cite{Elliott22,Sarkowicz-Tikuisis}.}
      Write $\ka{u}$ for the class of $u\in U_\infty(A)$ in $\Ka(A)$. For a unital $^*$-homo\-morphism $\phi\colon A\rightarrow B$, we get an induced map $\Ka(\phi)\colon \Ka(A)\rightarrow\Ka(B)$ given by $\Ka(\ka{u})\coloneqq \ka{\phi^{(n)}(u)}$ for $u\in U_n(A)$. In this way $\Ka$ is a functor from unital $C^*$-algebras to abelian groups that is invariant under approximate unitary equivalence of morphisms.%
\end{definition}

\begin{remark}
When the natural maps $\pi_0(U(A)) \to K_1(A)$ and $\pi_1(U(A)) \to K_0(A)$ are bijections, then so is $U(A)/\overline{DU(A)} \to \Ka(A)$ (\cite[Corollary 3.4]{Thomsen95}).
This is the case, in particular, for $C^*$-algebras with stable rank one  (\cite[Theorem 3.3]{Rieffel87}) and for $\Z$-stable $C^*$-algebras (\cite[Theorem~3]{Jiang97}).  This is typically why $U(\,\cdot\,)/\overline{DU(\,\cdot\,)}$ is used in classification results (for example, in \cite{GLN-part1}).  However, while our codomain algebras from Theorem~\ref{Main2} are $\Z$-stable, there is no such restriction on the domains, and so, in general, we cannot work with $U(A)/\overline{DU(A)}$ (or even $U_k(A)/\overline{DU_k(A)}$) in place of $\Ka(A)$.\end{remark}

The structure of $\Ka(A)$ was investigated by Thomsen in \cite{Thomsen95}, building on de la Harpe and Skandalis' determinant map and analysis of quotients by commutator subgroups in $GL_\infty(A)$ (\cite{Harpe-Skandalis84}). Importantly, it is related to $\Aff T(A)$ and $K_1(A)$ through natural homomorphisms of abelian groups:
\begin{equation}\label{ThomsenExtension2}
\begin{tikzcd}
\Aff T(A) \arrow{r}{\Th_A} & \Ka(A) \arrow{r}{\minusa_A} & K_1(A).
\end{tikzcd}
\end{equation}
As $\overline{DU_\infty(A)}\subseteq U^{(0)}_\infty(A)$, the second of these maps is readily defined.
\begin{definition}\label{DefMinusA}
Let $A$ be a unital $C^*$-algebra.  The map $\minusa_A\colon\Ka(A)\to
K_1(A)$ is the canonical surjection given by $\minusa_A(\ka{u})
\coloneqq [u]_1$ for $u\in U_\infty(A)$.
\end{definition}

The map $\Th_A$, which we call the \emph{Thomsen map}, is a little
harder to set up. We outline the construction after
Proposition~\ref{PropExtendAlgK1}, and collect the following key
properties, due to Thomsen (\cite{Thomsen95}), here.
\begin{properties}\label{ThomsenProp}
For a unital $C^*$-algebra $A$,
\begin{enumerate}
  \item\label{ThomsenProp.1} $\Th_A(\hat a) = \ka{e^{2\pi i a}}$ for $a \in A_\mathrm{sa}$,
    where $\hat{a}$ is the continuous affine function $\hat{a}(\tau)=\tau(a)$ for $\tau\in T(A)$;
  \item\label{ThomsenProp.2} $\im \Th_A = \ker \minusa_A$;
  \item\label{ThomsenProp.3} $\ker \Th_A = \overline{\im \rho_A}$.
\end{enumerate}
\end{properties}
Condition~\ref{ThomsenProp.3} gives a group isomorphism
\begin{equation}
  \label{thomsen-bar-def}
\begin{tikzcd}
\overline{\Th}_A\colon \Aff T(A)/\overline{\im\rho_A}\arrow{r}{\cong} & U^{(0)}_\infty(A)/\overline{DU_\infty(A)}\subset\Ka(A),
\end{tikzcd}
\end{equation}
and so \eqref{ThomsenExtension2} induces the short exact sequence
\begin{equation}\label{ThomsenExtension}
\begin{tikzcd}
0 \arrow{r} & \Aff
T(A)/\overline{\im\rho_A}\arrow{r}{\overline{\Th}_A} &
\Ka(A) \arrow{r}{\minusa_A} & K_1(A)\arrow{r} & 0.
\end{tikzcd}
\end{equation}

Since $\Aff T(A)/\overline{\im\rho_A}$ is a divisible group, \eqref{ThomsenExtension} splits unnaturally,\footnote{An abelian group $G$ is divisible if for each $x\in G$ and $n\in\mathbb N$, there exists $y\in G$ with $ny=x$.  This holds trivially for $\Aff T(A)$, and is inherited by quotients. For abelian groups, divisibility is equivalent to injectivity (see \cite[Corollary~2.3.2]{Weibel94}, for example), and this provides a splitting $\Ka(A)\to\Aff T(A)/\overline{\im\rho_A}$.} identifying $\Ka(A)$ as
\begin{equation}
\label{eq:K1algSplitting}
\Ka(A)\cong \Aff T(A)/\overline{\im\rho_A}\oplus K_1(A)
\end{equation} (see \cite[Corollary~3.3]{Thomsen95}).  Consequently,
any morphism between $K$-theory and traces can be extended to a map
between the $\Ka$-groups (as in the following proposition), albeit in a non-canonical way. Therefore, the additional data encoded by $\Ka$, beyond that held in $KT_u$, is found not in $\Ka(A)$ but in the behavior of $\Ka$ on morphisms. To our knowledge, the first explicit result extending $KT_u$-morphisms to $\Ka$ was set out by Nielsen in \cite[Section 3]{Nielsen99} in the context of simple A$\mathbb T$ algebras (see also \cite[Lemma 3.2]{NielsenThomsen}), although the argument works generally. We give a short proof of this from the above facts for completeness, as this forms an essential point in deducing the unital classification theorem (Theorem~\ref{Main}) from the classification of embeddings (Theorem~\ref{Main2}).

\begin{proposition}\label{PropExtendAlgK1}
Let $A$ and $B$ be unital $C^*$-algebras and $(\alpha_*,\gamma)\colon
KT_u(A)\rightarrow KT_u(B)$ be a morphism.  Then there exists a group homomorphism
$\beta\colon \Ka(A)\rightarrow\Ka(B)$ such that
    \begin{equation}\label{building:beta_1}
      \begin{tikzcd}
        K_0(A) \ar[r, "\rho_A"] \ar[d, "\alpha_0"] &
        \Aff T(A) \ar[r, "\Th_A"] \ar[d, "\gamma"] &
        \Ka(A) \ar[r, "\minusa_A"] \ar[d, "\beta"] &
        K_1(A) \ar[d, "\alpha_1"] \\
        K_0(B) \ar[r, "\rho_B"] &
        \Aff T(B) \ar[r, "\Th_B"] &
        \Ka(B) \ar[r, "\minusa_B"] &
        K_1(B)
      \end{tikzcd}
    \end{equation}
commutes.  Moreover, if $(\alpha_*, \gamma)$ is an isomorphism, then any such $\beta$ is an isomorphism.
\end{proposition}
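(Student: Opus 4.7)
The plan is to leverage Thomsen's short exact sequence \eqref{ThomsenExtension} and its (non-canonical) splitting, which exists because $\Aff T(A)/\overline{\im \rho_A}$ is divisible, hence an injective object in the category of abelian groups. The first step is to check that $\gamma$ descends to a group homomorphism $\bar\gamma\colon \Aff T(A)/\overline{\im\rho_A}\to\Aff T(B)/\overline{\im\rho_B}$. Since $\gamma$ is a positive unital linear map between Archimedean order unit spaces it is norm-continuous, and the hypothesis $\gamma\circ\rho_A=\rho_B\circ\alpha_0$ forces $\gamma(\im\rho_A)\subseteq\im\rho_B$; taking closures yields $\gamma(\overline{\im\rho_A})\subseteq\overline{\im\rho_B}$, so $\bar\gamma$ is well-defined.

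To construct $\beta$, fix group-theoretic splittings $s_A\colon K_1(A)\to\Ka(A)$ and $s_B\colon K_1(B)\to\Ka(B)$ of \eqref{ThomsenExtension} (provided by divisibility), which identify $\Ka(A)\cong \overline{\Th}_A(\Aff T(A)/\overline{\im\rho_A})\oplus s_A(K_1(A))$ and similarly for $B$. Define
\begin{equation}
\beta\bigl(\overline{\Th}_A(x)+s_A(y)\bigr)\coloneqq \overline{\Th}_B(\bar\gamma(x))+s_B(\alpha_1(y)).
\end{equation}
The right-hand square of \eqref{building:beta_1} commutes because $\minusa_B$ annihilates the image of $\overline{\Th}_B$ and $\minusa_B\circ s_B=\mathrm{id}_{K_1(B)}$, so $\minusa_B\circ\beta=\alpha_1\circ\minusa_A$. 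For the middle square, factor $\Th_A=\overline{\Th}_A\circ\pi_A$ where $\pi_A\colon\Aff T(A)\to\Aff T(A)/\overline{\im\rho_A}$ is the quotient map, and similarly for $B$; then $\beta\circ\Th_A=\overline{\Th}_B\circ\bar\gamma\circ\pi_A=\overline{\Th}_B\circ\pi_B\circ\gamma=\Th_B\circ\gamma$. The left square is the given hypothesis on $(\alpha_*,\gamma)$.

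For the moreover assertion, observe that \emph{any} $\beta$ making \eqref{building:beta_1} commute must satisfy $\beta\circ\overline{\Th}_A=\overline{\Th}_B\circ\bar\gamma$ (factor $\Th_A,\Th_B$ through $\pi_A,\pi_B$ and use surjectivity of $\pi_A$), so $(\bar\gamma,\beta,\alpha_1)$ is a morphism of the short exact sequences of the form \eqref{ThomsenExtension}. When $(\alpha_*,\gamma)$ is an isomorphism, $\gamma$ is a homeomorphism of Banach spaces by the open mapping theorem, and $\im\rho_B=\gamma(\im\rho_A)$ (using that $\alpha_0$ is also an isomorphism); taking closures gives $\gamma(\overline{\im\rho_A})=\overline{\im\rho_B}$, so $\bar\gamma$ is an isomorphism. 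The five lemma then forces $\beta$ to be an isomorphism. I expect the only subtle point to be the descent of $\gamma$ and the closure computation in the isomorphism case, which rely on automatic continuity of positive unital maps and the open mapping theorem; everything else is formal manipulation with the splittings and a diagram chase.
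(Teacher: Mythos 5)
Your proof is correct and follows essentially the same route as the paper's: you use divisibility of $\Aff T(\,\cdot\,)/\overline{\im\rho}$ to split Thomsen's extension, descend $\gamma$ to $\bar\gamma$ via continuity, define $\beta$ componentwise on the resulting direct sum decomposition (which is an equivalent packaging of the paper's formula $\beta=i_B\circ\alpha_1\circ\minusa_A+\overline{\Th}_B\circ\bar\gamma\circ p_A$), and invoke the five lemma for the isomorphism statement. The one cosmetic difference is that you derive invertibility of $\bar\gamma$ from the open mapping theorem, whereas the paper notes more directly that $\gamma^{-1}$ is itself a unital positive (hence automatically continuous) map and so descends; both are valid.
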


\begin{proof}
As $\gamma$ is continuous, since it is unital, positive, and linear, the maps $(\alpha_*,\gamma)$ induce the commutative diagram
  \begin{equation}\label{eq:building-beta}
  \begin{tikzcd}
  0\arrow[r]&  \Aff T(A) / \overline{\im \rho_A} \arrow{r}{\overline{\Th}_A} \arrow{d}{\bar{\gamma}} &
    \Ka(A)\arrow{r}{\minusa_A} \arrow[dashed]{d}{\beta} & K_1(A) \arrow{d}{\alpha_1}\arrow[r]&0 \\
    0\arrow[r]&\Aff T(B) / \overline{\im \rho_B} \arrow{r}{\overline{\Th}_B} & \Ka(B) \arrow{r}{\minusa_B} &
    K_1(B)\arrow[r]&0
  \end{tikzcd}
  \end{equation}
with exact rows. Both rows of this diagram split because $\Aff T(A) / \overline{\im \rho_A}$ and $\Aff T(B) / \overline{\im \rho_B}$ are
  divisible abelian groups.  Let
  \begin{equation}
    p_A \colon \Ka(A) \to \Aff T(A) / \overline{\im \rho_A} \quad \text{and} \quad i_B \colon K_1(B) \to \Ka(B)
  \end{equation}
  be splittings of $\overline{\Th}_A$ and $\minusa_B$, respectively. Defining the dashed arrow in \eqref{eq:building-beta} by
  \begin{equation}
    \beta \coloneqq
    i_B \circ \alpha_1 \circ \minusa_A
    + \overline{\Th}_B \circ \bar{\gamma} \circ p_A
  \end{equation}
ensures that \eqref{eq:building-beta}, and hence
\eqref{building:beta_1}, commutes.  If $(\alpha_*,\gamma)$ is an
isomorphism, then $\gamma$ is an isomorphism and thus $\overline{\gamma}$ is an isomorphism. By the five lemma, any map $\beta$ making \eqref{eq:building-beta} commute is necessarily an isomorphism.
\end{proof}

The rest of this subsection defines $\Th_A$ and indicates how to use the literature (\cite{Thomsen95} especially) to establish Properties~\ref{ThomsenProp} efficiently.

Recall that every element of $\Aff T(A)$ may be realized as $\hat{a}$
for some $a\in A_{\mathrm{sa}}$.  (See
Proposition~\ref{prop:cuntz-pedersen}.)  We wish to define
\begin{equation}\label{DefThomsen}
\Th_A(\hat a)\coloneqq \ka{e^{2\pi ia}}, \qquad  a \in A_\mathrm{sa}.
\end{equation}
To see this is well-defined, first note that
\begin{equation}\label{ExpCommute}
e^{ia}e^{ib}=e^{i(a+b)}\text{ modulo }\overline{DU(A)}
\end{equation}
for $a,b\in A_{\mathrm{sa}}$ (using the Lie--Trotter formula as in
\cite[Equation~(1.1)]{Thomsen95}), so that the map $a \mapsto
\ka{e^{2\pi i a}}$ is a continuous group homomorphism $A_{\mathrm{sa}} \to
\Ka(A)$.  It then suffices to show 
$\ka{e^{2\pi i( v^*v - vv^*)}}=0$ for every $v\in A$ (using Proposition~\ref{prop:cuntz-pedersen}).                  
This follows by combining \eqref{ExpCommute} with the polar decomposition trick found in the last few lines of the proof of \cite[Lemma~3.1]{Thomsen95}, which shows that for any $v\in M_n(A)$, $e^{2\pi i(v^*v-vv^*)}=1_A$ modulo $\overline{DU_n(A)}$. Note that \eqref{ExpCommute} also shows that the Thomsen map is a homomorphism. Since $\ker\minusa_A$ is generated by exponentials, the definition \eqref{DefThomsen} ensures that \eqref{ThomsenExtension2} is exact, so we have established \ref{ThomsenProp.1} and \ref{ThomsenProp.2} of Properties~\ref{ThomsenProp}.

As we can work equally well with elements $a\in M_n(A)_{sa}$ in \eqref{DefThomsen},\footnote{We must also take care to follow our scaling conventions: $\hat{1}_{M_n(A)}$ is the constant function $n$ on $T(A)$.} we get that $\im \rho_A\subseteq\ker\Th_A$, as follows. Given projections $p,q\in M_n(A)$, we have $\widehat{p-q}=\rho_A([p]_0-[q]_0)$, and so 
\begin{equation} 
\label{eq:ThRho0}
\begin{split}
  \Th_A(\rho_A([p]_0-[q]_0))=\ka{e^{2\pi i(p-q)}}
  =\ka{e^{2\pi i p}} + \ka{e^{-2\pi iq}}=0.
\end{split}\end{equation}
 Given $f\in \Aff T(A)$, one can control the norm of a choice of $a$ satisfying $\hat a =f$ (see Proposition~\ref{prop:cuntz-pedersen}), and therefore
 the Thomsen map is continuous as a map into $U_1(A) / \overline{DU}_1(A)$, so $\overline{\im\rho_A}\subseteq\ker\Th_A$.

The reverse inclusion $\ker\Th_A\subseteq\overline{\im\rho_A}$ uses the de la Harpe--Skandalis determinant from \cite{Harpe-Skandalis84}. We record its basic properties in a slightly unusual fashion, designed for use in Section~\ref{sec:new-map}.

\begin{proposition}\label{Determinant}
Let $A$ be a unital $C^*$-algebra.  The de la Harpe--Skandalis determinant map gives a continuous group homomorphism
\begin{equation}
\widetilde\Delta_A\colon U_\infty(C([0,1],A))\longrightarrow \Aff T(A)
\end{equation}
such that:
\begin{enumerate}
  \item\label{Determinant.C1} $\widetilde{\Delta}_A(u)$ depends only on the homotopy class of $u$ (in the space of unitary paths with endpoints fixed);
  \item\label{Determinant.C2} for a self-adjoint $h\in M_n(C([0,1],A))$,
\begin{equation}\label{DefDeterminant}
\widetilde\Delta_A(e^{2\pi i h})(\tau)=\tau_n(h(1)-h(0)),\quad \tau\in T(A)
\end{equation}
where $\tau_n$ is the canonical non-normalized extension of $\tau$ to $M_n(A)$;
\item\label{Determinant.C3} identifying homotopy classes of $U_\infty(C_0((0,1),A)^\dag)$ with $K_0(A)$ via the Bott map, the homomorphism $U_\infty(C_0((0,1),A)^\dag)\to\Aff T(A)$ induced by \ref{Determinant.C1} is precisely the pairing map $\rho_A$;
\item\label{Determinant.C4} there is a continuous group homomorphism\footnote{The codomain of $\det_A$ can be taken to be $\Aff T(A) / \im \rho_A$ instead (with the same proof), but this will not be needed.}
\begin{equation}
  \textstyle\det_A\displaystyle \colon U_\infty^{(0)}(A) \longrightarrow \Aff T(A) / \overline{\im \rho_A}
\end{equation}
given by $\det_A(u) \coloneqq  \widetilde\Delta_A(v) + \overline{\im \rho_A}$, where $v\in U_\infty(C([0, 1], A))$ has $v(0) = 1_A$ and $v(1)=u$.
\end{enumerate}
\end{proposition}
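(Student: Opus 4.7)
Plan. This is essentially the classical de la Harpe--Skandalis construction. For a piecewise smooth unitary path $u \in U_n(C([0,1],A))$, I would define
\begin{equation}
\widetilde{\Delta}_A(u) \coloneqq \widehat{\tfrac{1}{2\pi i}\textstyle\int_0^1 u'(t)u(t)^*\, dt}.
\end{equation}
Differentiating $uu^* \equiv 1$ shows $u'u^*$ is skew-adjoint, so the argument of the hat is self-adjoint and the definition makes sense in $\Aff T(A)$. Additivity $\widetilde{\Delta}_A(uv) = \widetilde{\Delta}_A(u) + \widetilde{\Delta}_A(v)$ on smooth paths comes from the product rule $(uv)'(uv)^* = u'u^* + u(v'v^*)u^*$ combined with the trace property. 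For (ii), applying Duhamel's formula to $u(t) = e^{2\pi i h(t)}$ yields
\begin{equation}
u'(t)u(t)^* = 2\pi i\textstyle\int_0^1 e^{2\pi i s h(t)}\, h'(t)\, e^{-2\pi i s h(t)}\, ds,
\end{equation}
and the tracial $\tau_n$ collapses the inner integral to $2\pi i\, \tau_n(h'(t))$; integrating in $t$ gives (ii).

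Homotopy invariance (i) is proved by differentiating $\widetilde{\Delta}_A(u_s)(\tau)$ along a smooth homotopy $\{u_s\}$ fixing the endpoints: a standard computation uses the trace property to rewrite the $s$-derivative of the integrand as a total $t$-derivative (the commutator terms being killed by $\tau$), which then integrates to boundary terms that cancel because $u_s(0), u_s(1)$ are $s$-independent. To extend $\widetilde{\Delta}_A$ to all continuous unitary paths in $U_n(C([0,1],A))$, I would partition $[0,1]$ finely enough that each product $u(t)u(t_k)^*$ lies in a norm neighborhood of $1$ supporting a principal logarithm, write this as $e^{2\pi i h_k(t)}$ for a self-adjoint $h_k$, and use the additivity and (ii) to define $\widetilde{\Delta}_A(u)$ piecewise. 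Homotopy invariance guarantees partition-independence, and the $h_k$ depend norm-continuously on $u$, giving norm continuity on $U_n$; compatibility with the inclusions $U_n \hookrightarrow U_{n+1}$ yields a continuous homomorphism on all of $U_\infty(C([0,1],A))$.

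For (iii), under the Bott identification a projection $p \in M_n(A)$ gives the loop $t \mapsto e^{2\pi i t p} \in U_n(C_0((0,1),A)^\dag)$, whose determinant is evaluated by (ii) to $\tau_n(p) - \tau_n(0) = \rho_A([p]_0)(\tau)$. Linearity on differences of projections extends this identification to all of $K_0(A)$. For (iv), given $u \in U_\infty^{(0)}(A)$ and a path $v$ with $v(0) = 1_A$, $v(1) = u$, set
\begin{equation}
\textstyle\det_A(u) \coloneqq \widetilde{\Delta}_A(v) + \overline{\im \rho_A}.
\end{equation}
Well-definedness follows because any two such paths $v_1, v_2$ concatenate (after reversing $v_1$) to a loop based at $1$, whose determinant lies in $\im \rho_A$ by (iii) and homotopy invariance. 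Continuity follows because nearby $u, u' \in U_\infty^{(0)}(A)$ satisfy $u'u^* = e^{2\pi i k}$ for small self-adjoint $k$, contributing only $\hat{k}$ of small norm to $\det_A(u') - \det_A(u)$.

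The main obstacle is the continuous extension of $\widetilde{\Delta}_A$ from smooth to arbitrary continuous paths while preserving the norm continuity. The piecewise-exponential construction above is the cleanest route, but verifying partition-independence of the resulting formula is where homotopy invariance carries the real technical weight and where the calibration between the differential-geometric definition on smooth paths and the algebraic formula in (ii) actually gets pinned down.
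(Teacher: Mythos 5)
Your proposal is correct and essentially the same as the paper's: same integral formula on piecewise smooth paths, same Duhamel-type computation for (ii), same Bott-map argument for (iii), same loop-concatenation argument for (iv). The one place where the mechanics differ is the extension from piecewise smooth paths to arbitrary continuous ones. The paper replaces $u$ outright by a piecewise smooth path homotopic to $u$ rel endpoints (constructed in a footnote by taking continuous logs on a fine partition and then affinely straightening the exponent), and cites de la Harpe--Skandalis's homotopy invariance [Lemme~1(c)] to get well-definedness. You instead keep the original continuous $u$, partition, take principal logarithms $h_k$, and directly declare $\widetilde\Delta_A(u)=\sum_k \widehat{h_k(t_{k+1})-h_k(t_k)}$. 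Your remark that ``homotopy invariance guarantees partition-independence'' is too quick as stated: the $h_k$ associated to different partitions are logs of products of non-commuting unitaries, and since $e^{2\pi ia}e^{2\pi ib}\neq e^{2\pi i(a+b)}$ in general, the sums of traces need not a priori agree. What rescues it is exactly the manoeuvre the paper makes explicit: each term $\widehat{h_k(t_{k+1})}$ is the determinant of the smooth interpolating path $t\mapsto e^{2\pi i t\, h_k(t_{k+1})}u(t_k)$, whose concatenation is a piecewise smooth path homotopic rel endpoints to $u$; only then does the piecewise-smooth homotopy invariance apply. So your construction is equivalent to the paper's, but the bridge between ``sum of traces of principal logs'' and ``determinant of a piecewise smooth homotopy representative'' is the step you should spell out rather than wave at.
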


\begin{proof}
For a piecewise smooth path $u \colon [0,1] \to  U_n(A)$, de la Harpe and Skandalis defined $\widetilde \Delta_A(u) \in \Aff T(A)$ by
\begin{equation}
\widetilde \Delta_A(u)(\tau) \coloneqq  \frac{1}{2\pi i} \int_0^1 \tau_n\big(\big(\tfrac{d}{d t} u(t)\big) u(t)^* \big) \, dt, \quad \tau\in T(A).
\end{equation}
By \cite[Lemme~1(c)]{Harpe-Skandalis84}, $\widetilde\Delta_A(u)(\tau)$ depends only on the homotopy class\footnote{Note that the homotopies need not be through piecewise smooth paths.} (with endpoints fixed) of $u$. Then one can extend the definition of $\widetilde{\Delta}_A$ to all continuous paths $u\colon [0,1]\to U_n(A)$ by applying $\widetilde{\Delta}_A$ to a piecewise smooth path homotopic to $u$ (again with endpoints fixed) so that \ref{Determinant.C1} holds.\footnote{These always exist.  In the case when $\|u(s)-u(0)\|<2$ for all $s\in[0,1]$, use a continuous logarithm to write $u(s)=e^{2\pi ih(s)}u(0)$ for a continuous path $h$ of self-adjoints. Then $k_t(s)=(1-t)h(s)+t((1-s)h(0)+sh(1))$ provides a homotopy between $h$ and an affine path; exponentiating this gives a homotopy between $u$ and a piecewise smooth path.  The general case is obtained by applying this argument to finitely many subintervals.}  Then $\widetilde{\Delta}_A$ is well-defined on $U_\infty(C([0,1],A))$ and is a homomorphism by \cite[Lemme~1(a)]{Harpe-Skandalis84}.  

Condition~\ref{Determinant.C2} is immediate when $h$ is piecewise smooth, and so follows in general by replacing $h$ with a piecewise smooth path homotopic to $h$ with the same endpoints.  Note that the Bott map associates a projection $p \in M_n(A)$ with the unitary $u \in U_n((C_0(0,1), A)^\dag)$ given by $u(t) \coloneqq  e^{2 \pi i t p}$ for $t \in [0, 1]$, so \ref{Determinant.C3} follows from \ref{Determinant.C2}.  Continuity of $\widetilde{\Delta}_A$ on the collection of piecewise smooth paths in $U_n(A)$ is immediate, and then extends to $U_n(C([0,1],A))$. As this holds for all $n$, $\widetilde{\Delta}_A$ is continuous on $U_\infty(C([0,1],A))$.

Condition~\ref{Determinant.C4} is essentially \cite[Lemme~1(d)]{Harpe-Skandalis84} and the subsequent definitions. Once one knows $\det_A$ is well-defined, it is a continuous homomorphism because $\widetilde\Delta_A$ is.  Since $U^{(0)}_\infty(A)=\bigcup_n U_n^{(0)}(A)$ (see Footnote~\ref{fn:U0infty}), given $u\in U^{(0)}_\infty(A)$, we can find $n\in\mathbb N$ and $v \in U_n(C[0, 1], A)$ with $v(0) = 1_{M_n(A)}$ and $v(1) = u$.  If $v_1, v_2 \in U_n(C([0, 1], A))$ are two such paths, then $v_1v_2^*$ can be viewed as an element of $U_n(C_0((0,1),A)^\dag)$ and so, by \ref{Determinant.C3}, $\widetilde{\Delta}_A(v_1v_2^*)\in\im\rho_A$.  Therefore $\widetilde{\Delta}_A(v_1)+ \overline{\im \rho_A}=\widetilde{\Delta}_A(v_2)+\overline{\im \rho_A}$.
\end{proof}

Let us return to the proof that $\ker\Th_A\subseteq\overline{\im\rho_A}$.  Since $\det_A$ is a continuous group homomorphism, $\overline{DU^{(0)}_\infty(A)}\subseteq\ker\det_A$, and hence $\det_A$ induces a continuous group homomorphism
\begin{equation}\label{eq:defoverlinedet}
  \overline{\det}_A \colon \ker \minusa_A = U^{(0)}_\infty(A) / \overline{DU^{(0)}_\infty(A)} \longrightarrow \Aff T(A) / \overline{\im \rho_A}.
\end{equation}
A computation shows $\overline{\det}_A(\Th_A(f)) = f + \overline{\im \rho_A}$ for all $f \in \Aff T(A)$ (using that $f = \hat a$ for some $a \in A_\mathrm{sa}$).  This implies $\ker \Th_A \subseteq \overline{\im \rho_A}$ and completes the outline of Properties~\ref{ThomsenProp}.
This also proves
\begin{equation}
\label{eq:detInverse}
\overline{\det}_A=\overline{\Th}_A^{\,\,-1}.
\end{equation}

While we will treat $\Ka(A)$ as a discrete abelian group throughout
the rest of the paper, we end this subsection by clarifying why
$\Ka(A)$ (unlike $U_\infty(A)$) is a topological group. The second
part of the following proposition explains why it is not necessary to
keep track of this topological data for the classification
of full approximate embeddings, as all of the morphisms $\beta$ we consider between
$\Ka$-groups will satisfy the compatibility conditions
\eqref{building:beta_1}.

\begin{proposition}\label{prop:ka-topology}
Let $A$ be a unital $C^*$-algebra.  Then $\Ka(A)$ is a Hausdorff topological group in the topology induced by the inductive limit topology on $U_\infty(A)$.  Moreover, given a unital $C^*$-algebra $B$ and a $KT_u$-morphism $(\alpha_*, \gamma) \colon KT_u(A) \rightarrow KT_u(B)$, then any group homomorphism $\beta \colon \Ka(A) \rightarrow \Ka(B)$ making  \eqref{building:beta_1} commute is automatically continuous.
\end{proposition}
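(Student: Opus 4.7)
The plan is to analyze $\Ka(A)$ through the Thomsen exact sequence \eqref{ThomsenExtension}, identifying $\ker\minusa_A$ topologically with $\Aff T(A)/\overline{\im\rho_A}$. I would first establish basic continuity on $\Ka(A)$. Inversion is continuous since $u\mapsto u^*$ is continuous on $U_\infty(A)$, and separate continuity of multiplication on $U_\infty(A)$ makes left and right translations into homeomorphisms. Consequently the quotient map $\pi\colon U_\infty(A)\to\Ka(A)$ is open, because $\pi^{-1}(\pi(U))=\bigcup_{h\in\overline{DU_\infty(A)}} Uh$ is a union of opens for every open $U\subseteq U_\infty(A)$, and so translations on $\Ka(A)$ are also homeomorphisms. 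Since each $U_n^{(0)}(A)$ is clopen in $U_n(A)$, the inductive limit topology makes $U_\infty^{(0)}(A)$ open in $U_\infty(A)$, so $\ker\minusa_A=\pi(U_\infty^{(0)}(A))$ and all of its translates are open in $\Ka(A)$; in particular $\minusa_A$ is continuous with $K_1(A)$ discrete.

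Next I would show that $\overline{\Th}_A\colon\Aff T(A)/\overline{\im\rho_A}\to\ker\minusa_A$ is a homeomorphism. Continuity of $\overline{\Th}_A$ follows from the fact that $a\mapsto e^{2\pi ia}$ is norm-continuous from $A_{\mathrm{sa}}$ into $U_1(A)$ and factors through $\Aff T(A)/\overline{\im\rho_A}$ (using Proposition~\ref{prop:cuntz-pedersen} and the analysis around \eqref{DefThomsen}). For the inverse, openness of $\ker\minusa_A$ in $\Ka(A)$ implies that the restricted map $\pi|_{U_\infty^{(0)}(A)}\colon U_\infty^{(0)}(A)\to\ker\minusa_A$ is itself a quotient map. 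Continuity of the de la Harpe–Skandalis determinant (Proposition~\ref{Determinant}\ref{Determinant.C4}) then yields continuity of $\overline{\Th}_A^{-1}=\overline{\det}_A$ via \eqref{eq:detInverse}. Thus $\ker\minusa_A$ is a Hausdorff topological group topologically isomorphic to $\Aff T(A)/\overline{\im\rho_A}$.

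For the first assertion of the proposition, $\Ka(A)$ is the disjoint union of open cosets of $\ker\minusa_A$, each homeomorphic via translation to $\ker\minusa_A$, hence Hausdorff. Joint continuity of multiplication can be tested on the open cover $\{C_1\times C_2\}$ of $\Ka(A)\times\Ka(A)$ by products of cosets, and translations reduce this to continuity of multiplication on $\ker\minusa_A$, which holds by the topological group structure just obtained. For the second assertion, any $\beta$ making \eqref{building:beta_1} commute restricts on $\ker\minusa_A$ to $\overline{\Th}_B\circ\bar\gamma\circ\overline{\Th}_A^{-1}$ (where $\bar\gamma$ is continuous as it is induced by the continuous $\gamma$), so $\beta|_{\ker\minusa_A}$ is continuous; translation invariance then passes continuity of $\beta$ to every coset and thus to $\Ka(A)$.

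The main obstacle is joint continuity of multiplication on $\Ka(A)$: because $U_\infty(A)$ is not itself a topological group, this cannot be transferred directly from the quotient. The workaround is precisely the identification of $\ker\minusa_A$ with the topological group $\Aff T(A)/\overline{\im\rho_A}$, which simultaneously relies on continuity of the Thomsen map, continuity of the de la Harpe–Skandalis determinant, and openness of $\ker\minusa_A$ inside $\Ka(A)$.
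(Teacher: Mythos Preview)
Your proof is correct and follows essentially the same approach as the paper: both identify $\ker\minusa_A$ homeomorphically with $\Aff T(A)/\overline{\im\rho_A}$ via $\overline{\Th}_A$ and $\overline{\det}_A$, deduce joint continuity and Hausdorffness from this, and handle the continuity of $\beta$ by restricting to $\ker\minusa_A$ and using that it is open. Your argument is somewhat more explicit about foundational points (openness of the quotient map, the saturated preimage $U_\infty^{(0)}(A)=\pi^{-1}(\ker\minusa_A)$, the coset-cover reduction), whereas the paper argues more directly via nets converging to $0$ and the $T_1$ criterion for Hausdorffness of topological groups.
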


\begin{proof}
From \eqref{ThomsenExtension2}, we have a group isomorphism
\begin{equation}\label{eq:thomsen-homeo}
   \overline{\Th}_A \colon \Aff T(A) / \overline{\im \rho_A} \longrightarrow \ker\minusa_A,
\end{equation}
that is continuous because $\Th_A$ is.  It follows from
\eqref{eq:detInverse} that its inverse is the continuous map
$\overline{\det}_A$.  Since addition is separately continuous on
$\Ka(A)$, to show addition is jointly continuous, it suffices to show
that if $(x_i), (y_i) \subseteq \Ka(A)$ are nets with $x_i \rightarrow
0$ and $y_i \rightarrow 0$, then $x_i + y_i \rightarrow 0$.  Since
$\minusa_A$ is continuous and $K_1(A)=U_\infty(A)/U^{(0)}_\infty(A)$
is discrete, we have $x_i, y_i \in \ker \minusa_A$ for all large $i$.
Since $\overline{\det}_A$ is a homeomorphic group isomorphism and the
addition on $\Aff T(A) / \overline{\im \rho_A}$ is jointly continuous,
by \eqref{eq:detInverse} we have $x_i + y_i \rightarrow 0$.  This
shows $\Ka(A)$ is a topological group.  As $\minusa_A$ and
$\overline{\det}_A$ are continuous, $\{0\}$ is closed in
$\Ka(A)$, and hence $\Ka(A)$ is Hausdorff.

For the second statement, note that any map $\beta$ making \eqref{building:beta_1} commute also makes \eqref{eq:building-beta} commute, so that by applying the continuous map $\overline{\det}_A$, it follows that $\beta|_{\ker\minusa_A}$ is continuous. Since $\ker\minusa_A$ is open and $\Ka(A)$ is a topological group, it follows that $\beta$ is continuous.
\end{proof}

\begin{remark}\label{RemTopGroupInd}
As mentioned in Footnote~\ref{FootnoteTopGroupInd}, it may be more
natural to equip $U_\infty(A)$ with the topological group inductive
limit topology  ---  the strongest topology such that $U_\infty(A)$ is
a topological group and the natural maps $U_n(A) \rightarrow
U_\infty(A)$ are continuous.  This topology is awkward to describe
explicitly but satisfies the universal property that if $H$ is a
topological group and $\phi_n \colon U_n(A) \rightarrow H$ are
compatible continuous group homomorphisms, then there is a unique
continuous group homomorphism $\phi \colon U_\infty(A) \rightarrow H$
extending the $\phi_n$'s.  We briefly sketch a proof that using this
topology instead in Definition~\ref{DefKa} produces the same group $\Ka(A)$.

Let $U_\infty^g(A)$ denote $U_\infty(A)$ equipped with the topological group inductive limit topology.  Define
\begin{equation} \overline{K}^{\mathrm{alg},g}_1(A) = U_\infty^g(A) / \overline{DU_\infty^g(A)} \end{equation}
and note that $\overline{K}^{\mathrm{alg},g}_1(A)$ is a Hausdorff topological abelian group.  The natural maps $U_n(A) \rightarrow \overline{K}^{\mathrm{alg},g}_1(A)$ are continuous and hence induce a continuous group homomorphism $U_\infty(A) \rightarrow \overline{K}^{\mathrm{alg},g}_1(A)$.  As $\overline{K}^{\mathrm{alg},g}_1(A)$ is Hausdorff and abelian, this homomorphism annihilates $\overline{DU_\infty(A)}$ and hence induces a continuous homomorphism $\phi \colon \Ka(A) \rightarrow \overline{K}^{\mathrm{alg},g}_1(A)$.  A similar argument using that $\Ka(A)$ is a Hausdorff topological abelian group (by Proposition~\ref{prop:ka-topology}) shows that the natural maps $U_n(A) \rightarrow \overline{K}^{\mathrm{alg}}_1(A)$ induce a continuous group homomorphism $U_\infty^g(A) \to \Ka(A)$, and in turn a continuous group homomorphism $\psi \colon \overline{K}^{\mathrm{alg},g}_1(A) \rightarrow \Ka(A)$.  Clearly $\phi$ and $\psi$ are inverses of each other since they are both induced by the identity map on the set $U_\infty(A)$.
\end{remark}

\subsection{Total $K$-theory}
\label{ss:totalKtheory}

Total $K$-theory comprises $K$-theory, $K$-theory with coefficients in $\mathbb Z/n\mathbb Z$ (written $\Zn{n}$ from now on) for all $n\geq 2$, and the natural connecting maps between these groups --- the \emph{Bockstein operations} --- satisfying a number of $6$-term exact sequences. To our knowledge, the first explicit use of $K$-theory with coefficients in the classification program was by Dadarlat and Loring as an obstruction to the classification of certain non-simple $C^*$-algebras using the Elliott invariant (\cite{Dadarlat-LoringAIF}).  Eilers subsequently used $K$-theory with coefficients (equipped with an order structure) to classify certain AD algebras \cite{EilersPhD}. 

For $n\geq 2$ and $i\in \{0,1\}$, the groups $K_i(A;\Zn{n})$ and the Bockstein operations were introduced by Schochet in \cite{Schochet84} and have subsequently been expressed in a number of equivalent forms. We will use two pictures of total $K$-theory in the paper. For the purpose of constructing our new pairing map connecting $K_0(A;\Zn{n})$ to $\Ka(A)$ in Section~\ref{sec:new-map}, we use the definition\footnote{\label{foot:AlternateKCoeffs}This form of the definition is obtained in the joint work of Cuntz
  and Schochet found in \cite[Section~6]{Schochet84}; as noted there,
  one can define $K_i(\,\cdot\,;\Zn{n}) = K_i(\,\cdot\,\otimes C_n)$
  using any separable nuclear $C^*$-algebra $C_n$ in the UCT class
  with $K_\ast(C_n) \cong (\mathbb Z/n , 0)$. In our case, we use $C_n
  \coloneqq C_0((0,1))\otimes \mathbb I_n$; as $K_*(\mathbb I_n)\cong(0,\Zn{n})$, the suspension gives the
  dimension shift from $K_i$ to $K_{1-i}$. Other possible choices for
  $C_n$ include the Cuntz algebras $\mathcal O_{n+1}$ (as we shall briefly use in Lemma~\ref{lem:inv-iota-injective}) or --- as in
  Schochet's original definition --- the mapping cone of a degree $n$
  map on $C_0(0,1)$. }
\begin{equation}\label{DefK-ThyCoefficients}
  K_i(A;\Zn{n})\coloneqq K_{1-i}(A\otimes\mathbb I_n),
\end{equation}
where $\mathbb I_n$ is the algebra
\begin{equation}
  \mathbb I_n \coloneqq \{ f\in C([0,1], M_n) : f(0) \in \mathbb C
  1_{M_n}  , \, f(1) = 0\}.
\end{equation}
Later in the paper, when we need to exploit the intimate links between total $K$-theory and $KK$-theory through Dadarlat and Loring's universal multicoefficient theorem, we will use the formulation in terms of $KK^i(\mathbb I_n,A)$ from \cite[Section 1]{Dadarlat-Loring96}. While it is well-known to experts that all pictures of total $K$-theory are naturally isomorphic, this is not easily extracted from the literature.  We handle this point in Appendix~\ref{appendix:totalappendix}.

Using the definition in \eqref{DefK-ThyCoefficients} one
obtains natural Bockstein maps
\begin{equation}
\mu^{(n)}_{i,A}\colon K_i(A)\longrightarrow K_i(A;\Zn{n})\text{ and }\nu^{(n)}_{i,A}\colon K_i(A;\Zn{n})\longrightarrow K_{1-i}(A)
\end{equation}
from the short exact sequence $0 \to C_0((0,1),M_n) \to \mathbb I_n
\to \mathbb C \to 0$, which induces\footnote{For this, tensor with $A$ and apply the natural identification
  $K_i(C_0(0,1) \otimes M_n \otimes A) \cong K_{1-i}(A)$ from Bott
  periodicity and stability of $K$-theory. It is easily checked that
  the boundary maps are multiplication by $n$. These Bockstein maps
  are often denoted by $\rho$ and $\beta$ in the literature, but as we
  consistently use $\rho$ and $\beta$ for other maps, we have chosen
  to denote these Bockstein maps by $\mu$ and $\nu$.} %
a $6$-term exact sequence
\begin{equation}
  \label{eq:bockstein-2}
  \begin{tikzcd}
    K_0(A) \ar[r, "\mu^{(n)}_{0,A}"]
      & K_0(A;\Zn{n}) \ar[r, "\nu^{(n)}_{0,A}"]
      & K_1(A) \ar[d, "\times n"] \\
    K_0(A) \ar[u, "\times n"]
      & K_1(A;\Zn{n}) \ar[l, "\nu^{(n)}_{1,A}"]
      & K_1(A)\ar[l, "\mu^{(n)}_{1,A}"]
  \end{tikzcd}
 \end{equation}
 that is natural in $A$.  For $n,m \geq 2$, the remaining Bockstein
 operations
\begin{equation}
\label{eq:kappaDef}
    \begin{split}
  &\kappa^{(nm,n)}_{i,A} \colon K_i(A; \Zn{n}) \to K_i(A; \Zn{nm}) \quad \text{and} \\
  &\kappa^{(n,nm)}_{i,A} \colon K_i(A; \Zn{nm}) \to K_i(A; \Zn{n})
\end{split} 
\end{equation}
are induced by the canonical inclusions $\mathbb I_n
\xrightarrow{\cong} \mathbb I_n \otimes 1_{M_m} \hookrightarrow
\mathbb I_{nm}$ and $\mathbb I_{nm} \hookrightarrow \mathbb I_n
\otimes M_m$ respectively. These maps interact with the rows of
\eqref{eq:bockstein-2} through the following commutative diagram with
exact columns (which is easily checked directly from the definitions):
\begin{equation}
	\label{eq:bockstein-commute}
	\begin{tikzcd}[cramped]
		K_i(A) \ar[r,equals] \ar[d, "\times n"]	& 
		K_i(A) \ar[r, "\times m"] \ar[d, "\times nm"] &
		K_i(A) \ar[d, "\times n"] \\
		K_i(A) \ar[r, "\times m"] \ar[d, "\mu_{i,A}^{(n)}"] &
		K_i(A) \ar[r,equals] \ar[d, "\mu_{i,A}^{(nm)}"]&
		K_i(A) \ar[d, "\mu_{i,A}^{(n)}"] \\
		K_i(A; \Zn{n}) \ar[r, "\kappa_{i,A}^{(nm,n)}"]
			\ar[d, "\nu_{i,A}^{(n)}"] &
		K_i(A; \Zn{nm}) \ar[r, "\kappa_{i,A}^{(n,nm)}"]
			\ar[d, "\nu_{i,A}^{(nm)}"] &
		K_i(A; \Zn{n}) \ar[d, "\nu_{i,A}^{(n)}"] \\
		K_{1-i}(A) \ar[r,equals] \ar[d, "\times n"] &
		K_{1-i}(A) \ar[r, "\times m"] \ar[d, "\times nm"] &
		K_{1-i}(A) \ar[d, "\times n"] \\
		K_{1-i}(A) \ar[r, "\times m"] &
		K_{1-i}(A) \ar[r,equals] &
		K_{1-i}(A)
	\end{tikzcd}
\end{equation}
In addition, the maps $\kappa_A$ fit into the exact sequence
\begin{equation}\label{eq:bockstein-secondexact}
	\begin{tikzcd}[column sep=large]
		K_0(A;\Zn{m}) \ar[r, "\kappa_{0,A}^{(nm,m)}"] &
		K_0(A;\Zn{nm}) \ar[r, "\kappa^{(n, nm)}_{0,A}"] &
		K_0(A;\Zn{n})\ar[d, "\mu_{1,A}^{(m)}\circ\nu_{0,A}^{(n)}"] \\
		K_1(A;\Zn{n})\ar[u, "\mu_{0,A}^{(m)}\circ\nu_{1,A}^{(n)}"] &
		K_1(A;\Zn{nm})\ar[l, "\kappa_{1,A}^{(n,nm)}"] &
		K_1(A;\Zn{m}) \ar[l, "\kappa_{1,A}^{(nm, m)}"]
	\end{tikzcd}
\end{equation}
but since this does not play an explicit role in the
paper,\footnote{The exactness of \eqref{eq:bockstein-secondexact} is
  important in the proof of the universal multicoefficient theorem in \cite{Dadarlat-Loring96}.} we refer
to \cite[Proposition~2.6]{Schochet84} for details on this.

\begin{definition}
  The \emph{total $K$-theory}, $\underline{K}(A)$, of a $C^*$-algebra
  $A$ is the collection of abelian groups $K_i(A)$ and $K_i(A; \Zn{n})$ for $i=0,1$ and $n=2,3,\dots$, together with all the Bockstein maps 
\begin{equation} 
\big\{\mu^{(n)}_{i,A},
  \nu^{(n)}_{i,A}, \kappa_{i,A}^{(n,nm)}, \kappa_{i,A}^{(nm,n)} : n,
  m\geq 2, i\in\{0,1\} \big\}.
\end{equation}
A \emph{$\Lambda$-morphism} $\underline{\alpha}\colon
\underline{K}(A)\to\underline{K}(B)$ consists of homomorphisms
\begin{equation}
\alpha_i\colon K_i(A)\to K_i(B)\quad\text{ and }
\quad \alpha_i^{(n)}\colon K_i(A;\Zn{n})\to K_i(B;\Zn{n})
\end{equation}
for all
$i\in\{0,1\}$ and $n\geq 2$ that intertwine all the Bockstein
operations. The collection of these $\Lambda$-morphisms is written
$\mathrm{Hom}_{\Lambda}(\underline{K}(A),\underline{K}(B))$.\footnote{This
  notation comes from the point of view, set out in
  \cite{Dadarlat-Loring96}, that the Bockstein operations span a
  non-unital ring $\Lambda$, and that the objects of total $K$-theory
  are modules over this ring. 
  We prefer to consider the collection of $K$-groups, the $K$-groups with coefficients, 
  and the Bockstein maps, as it better fits
  with our overall approach to the classification invariant. 
  From this perspective, the
  objects of the total $K$-theory category are a collection of abelian groups
  indexed by $G_{i,n}$ for $(i,n) \in \{0,1\} \times \{ 0, 2,3,4,\dots\}$ 
  where $G_{i,n}$ for $n\geq 2$ is $n$-torsion,
  together with
  Bockstein maps so that the sequences \eqref{eq:bockstein-2}
  and \eqref{eq:bockstein-secondexact} are chain complexes and the diagrams
  \eqref{eq:bockstein-commute} commute. With the $\Lambda$-morphisms as
  described above, this gives an abelian category.}
\end{definition}

As noted in \cite[Proposition~1.8]{Schochet84}, the sequence
\eqref{eq:bockstein-2} collapses to a short exact
sequence\footnote{Alternatively, as $\mathbb I_n$ satisfies the UCT,
  \eqref{eq:bockstein-new} can be obtained from the K{\"u}nneth exact
  sequence for $K_{1-i}(A\otimes \mathbb I_n)$ from \cite{Rosenberg-Schochet87}.}
\begin{equation}
  \label{eq:bockstein-new}
  0 \longrightarrow K_i(A)\otimes  \Zn{n} \xrightarrow{\overline \mu^{(n)}_{i,A}} K_i(A; \Zn{n}) \xrightarrow{\overline \nu^{(n)}_{i,A}} \mathrm{Tor}(K_{1-i}(A), \Zn{n}) \longrightarrow 0.
\end{equation}
Explicitly, $\mathrm{Tor}(K_{1-i}(A),\Zn{n})$ consists of those
elements $x\in K_{1-i}(A)$ with $nx=0$. The sequence
\eqref{eq:bockstein-new} splits unnaturally
(\cite[Proposition~2.4]{Schochet84}).  Moreover, as is known to
experts, these splittings may be chosen so that they preserve the
Bockstein operations. This goes back to B{\"o}digheimer
(\cite{Bodigheimer79,Bodigheimer80}) in a purely abstract framework and
is noted in the $C^*$-algebraic setting in
\cite[Lemma~2.2.8]{EilersPhD} and \cite[Remark~4.6]{Eilers97},
resulting in Proposition~\ref{TotalKExtend} below.  We explain how to
extract this precise statement from the literature in Appendix \ref{sec:bodigheimer} (the
proof follows that of Lemma~\ref{l:Bodigheimer}) since it is not
entirely straightforward.\footnote{In order to be clear
  about exactly where the UCT assumption plays a role in our approach
  to the classification theorem, we highlight that Proposition~\ref{TotalKExtend} is a purely
  algebraic result that doesn't require the UCT.   However, in the proof of the   classification theorem, we will only apply
  Proposition~\ref{TotalKExtend} when $A$ satisfies the UCT. In this
  case, as will be familiar to many readers, the result quickly
  follows from the UCT assumption.  Briefly, any $\alpha_*\colon
  K_*(A)\to K_*(B)$ lifts to an element of $KK(A,B)$ by the UCT, and this
  induces a $\Lambda$-morphism $\underline{\alpha}\colon\totK(A)\to\totK(B)$, which restricts to $\alpha_*$.  The map $\underline{\alpha}$ is an isomorphism if $\alpha_*$
  is (by applying the five lemma to
  \eqref{eq:bockstein-2}).  For this reason we do not include \cite{Bodigheimer79,Bodigheimer80} in the list of references in Section~\ref{section:referencesneeded}, which includes only those that we rely on to prove Theorems~\ref{Main} and \ref{Main2}.\label{TotalKExtendFootnote}}

\begin{proposition}\label{TotalKExtend}
  Let $A$ and $B$ be $C^*$-algebras.  Any morphism $\alpha_*\colon{}K_*(A)\to
  K_*(B)$ can be extended to a $\Lambda$-morphism
  $\underline{\alpha}\colon{}\underline{K}(A)\to\underline{K}(B)$,
  which is an isomorphism when $\alpha_*$ is.
\end{proposition}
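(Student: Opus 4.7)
The plan is to produce $\underline{\alpha}$ by exploiting the short exact sequence \eqref{eq:bockstein-new} together with well-chosen splittings. For each $n\geq 2$ and $i\in\{0,1\}$, the given $\alpha_i\colon K_i(A)\to K_i(B)$ induces canonical maps $\alpha_i\otimes\id_{\Zn{n}}\colon K_i(A)\otimes\Zn{n}\to K_i(B)\otimes\Zn{n}$ and $\mathrm{Tor}(\alpha_{1-i},\Zn{n})\colon \mathrm{Tor}(K_{1-i}(A),\Zn{n})\to \mathrm{Tor}(K_{1-i}(B),\Zn{n})$. If for each $n$ and $i$ we fix splittings $s^{(n)}_{i,A}\colon K_i(A;\Zn{n})\to K_i(A)\otimes\Zn{n}$ of $\overline{\mu}^{(n)}_{i,A}$ (and likewise for $B$), we can define
\begin{equation}
\alpha_i^{(n)} \coloneqq \overline{\mu}^{(n)}_{i,B}\circ(\alpha_i\otimes\id_{\Zn{n}})\circ s^{(n)}_{i,A}+\big(\mathrm{id}-\overline{\mu}^{(n)}_{i,B}\circ s^{(n)}_{i,B}\big)\circ \text{(lift of }\mathrm{Tor}(\alpha_{1-i},\Zn{n})\text{)}.
\end{equation}
Such splittings exist because $\mathrm{Tor}(K_{1-i}(A),\Zn{n})$ is $n$-torsion and $K_i(A)\otimes\Zn{n}$ is an injective module over $\Zn{n}$; equivalently, the extension \eqref{eq:bockstein-new} always splits as an abelian group extension.

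The first routine checks are that $\alpha_i^{(n)}$ so defined intertwines the Bockstein operations $\mu^{(n)}$ and $\nu^{(n)}$ and that, together with $\alpha_i$, it is compatible with \eqref{eq:bockstein-2}. This is immediate from the construction and naturality of the $\otimes$ and $\mathrm{Tor}$ functors in both $\alpha$ and the coefficient sequence.

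The hard part — and this is really the whole content of the proposition — is producing the splittings $\{s^{(n)}_{i,A}\}_{n,i}$ coherently so that the resulting $\alpha_i^{(n)}$ also intertwine the remaining Bockstein operations $\kappa_{i,A}^{(nm,n)}$ and $\kappa_{i,A}^{(n,nm)}$ relating different coefficients. An arbitrary choice of splittings will generically fail to be compatible with the vertical maps in \eqref{eq:bockstein-commute}, and it is precisely this coherence that requires B{\"o}digheimer's argument. The strategy is to work level by level in $n$, and at each stage modify the proposed splittings by cocycle-type corrections (valued in $\mathrm{Hom}(\mathrm{Tor}(K_{1-i}(A),\Zn{n}),K_i(A)\otimes\Zn{n})$) to force compatibility with $\kappa^{(nm,n)}$ and $\kappa^{(n,nm)}$; the availability of such corrections uses that the groups $K_i(A)\otimes \Zn{n}$ are injective as $\Zn{n}$-modules, which makes the required extension problems solvable. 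A detailed implementation of this is deferred to the appendix, following the proof of Lemma~\ref{l:Bodigheimer}.

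Once coherent splittings are in hand for both $A$ and $B$, the $\alpha_i^{(n)}$ automatically satisfy every Bockstein compatibility in the diagrams \eqref{eq:bockstein-commute} and \eqref{eq:bockstein-secondexact}, and so yield a $\Lambda$-morphism $\underline{\alpha}$ restricting to $\alpha_*$ on ordinary $K$-theory. For the final assertion, when $\alpha_*$ is an isomorphism, applying the five lemma to the naturality square of \eqref{eq:bockstein-new} for $\alpha_*$ — noting that $\otimes \Zn{n}$ and $\mathrm{Tor}(-,\Zn{n})$ preserve isomorphisms — shows that each $\alpha_i^{(n)}$ is an isomorphism as well, whence $\underline{\alpha}$ is an isomorphism of $\Lambda$-modules.
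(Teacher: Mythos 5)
Your overall strategy matches the paper's: obtain splittings of \eqref{eq:bockstein-new} that are coherent across the $\kappa$-operations (the content of Lemma~\ref{l:Bodigheimer}, which the paper likewise obtains in the appendix by reducing to prime powers via the Chinese remainder theorem and invoking B\"odigheimer's purely algebraic lemma), then assemble $\alpha_i^{(n)}$ from $\alpha_\ast$ together with the splittings, and finish the isomorphism case with the five lemma. This is essentially what the paper does, and your identification of the coherence of the splittings as "the whole content" is exactly right.

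Two technical points in your writeup should be repaired. First, the assertion that $K_i(A)\otimes\Zn{n}$ is always injective as a $\Zn{n}$-module is false: $\Zn{n}$ is quasi-Frobenius, so injective equals projective, and over the local ring $\Zn{4}$ the module $\Zn{2}$ is neither free nor projective, hence not injective. The sequence \eqref{eq:bockstein-new} does split unnaturally, but this is a genuine fact (see \cite[Proposition~2.4]{Schochet84}) rather than a formal consequence of injectivity of the subobject, and the same applies to the "availability of corrections" in your sketch of the B\"odigheimer coherence argument. Second, your displayed formula for $\alpha_i^{(n)}$ does not typecheck: the second summand is a map out of $\mathrm{Tor}(K_{1-i}(A),\Zn{n})$ but needs to be a map out of $K_i(A;\Zn{n})$, so it should be precomposed with $\overline{\nu}^{(n)}_{i,A}$. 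With that correction it agrees (up to reorganizing which splitting is chosen as a left retraction and which as a right section) with the formula used in the paper's proof, namely $\alpha_i^{(n)} = \overline{\mu}_{i,B}^{(n)} \circ(\alpha_i \otimes  \id_{\Zn{n}})\circ \iota_{i,A}^{(n)} + s_{i,B}^{(n)}\circ \mathrm{Tor}(\alpha_{1-i}, \Zn{n})\circ \overline{\nu}_{i,A}^{(n)}$.
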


As with $\Ka(A)$, the additional information
captured by $\underline{K}$ --- compared with $K_\ast$ --- lies in its
behavior on morphisms, and not in the constituent groups of
$\underline{K}(A)$ (which are completely determined by $K_*(A)$).

We end this subsection with the following calculation using the Bockstein maps, which will be applied to the trace-kernel extension as part of the proof of Theorem~\ref{intro:calcKL}. The relevant hypotheses will be verified in Section~\ref{SSTKKThy}.  If $G$ is an abelian group, we write $\mathrm{Tor}(G)$ for the
torsion subgroup of $G$ consisting of those elements in $G$ of finite
order; i.e., $\mathrm{Tor}(G)=\bigcup_{n\geq 2}\mathrm{Tor}(G,\Zn{n})$.
\begin{lemma}
  \label{lem:hom-Lambda-computation}
  Let $A$ be a $C^*$-algebra and let
  \begin{equation}
    0 \to I \xrightarrow{j} E \xrightarrow{q} D \to 0
  \end{equation}
  be an extension of $C^*$-algebras such that $K_0(D)$ is uniquely divisible\footnote{That is, for each $x\in K_0(D)$ and $n\in\mathbb Z$ with $n\neq 0$, there exists a unique $y\in K_0(D)$ with $ny=x$.}
  and $K_1(D)=0$.
  Then a $\Lambda$-morphism $\underline{\alpha}\colon{}\underline{K}(A)\to \underline{K}(I)$ belongs to $\ker\Hom_\Lambda\big( \underline{K}(A),\underline{K}(j) \big)$ if and only if the only non-zero component of $\underline{\alpha}$ is $\alpha_1$, $K_1(j)\circ \alpha_1 = 0$, and $\alpha_1$ vanishes on $\mathrm{Tor}(K_1(A))$. In fact, there is a natural\footnote{It is natural in both $A$ and in extensions satisfying the specified hypotheses.}
  isomorphism
  \begin{equation}
    \label{eq:KTheoryIso1}
    \ker\Hom_\Lambda\big(
    \underline{K}(A),\underline{K}(j)
    \big)
    \stackrel\cong\longrightarrow
    \Hom \big(
    K_1(A)/\mathrm{Tor}\big(K_1(A)\big), \ker K_1(j)
    \big).
  \end{equation}
\end{lemma}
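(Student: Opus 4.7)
The plan is to reduce the problem to explicit constraints on the individual components of $\underline{\alpha}$ by first computing $\underline{K}(D)$ and the $\Lambda$-morphism $\underline{K}(j)$ explicitly. Applying \eqref{eq:bockstein-new} to $D$ and using that $K_0(D)$ is uniquely divisible (so that both $K_0(D)\otimes\Zn{n}$ and $\mathrm{Tor}(K_0(D),\Zn{n})$ vanish) together with $K_1(D)=0$ yields $K_i(D;\Zn{n})=0$ for all $n\geq 2$ and $i\in\{0,1\}$. Thus $\underline{K}(D)$ is concentrated in its $K_0$-component. The six-term exact sequences of the extension --- the usual one and the one obtained by tensoring with $\mathbb I_n$ --- then show that $K_0(j)$ is injective, that $K_i(j)$ on $\Zn{n}$-coefficients is an isomorphism for all $n\geq 2$, and that $K_1(j)$ is surjective with kernel equal to the image of the boundary map $\partial\colon K_0(D)\to K_1(I)$. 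In particular, $\ker K_1(j)$ is divisible, being the image of a uniquely divisible group.

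From here, the forward direction is immediate: any $\underline{\alpha}$ in $\ker\Hom_\Lambda(\underline{K}(A),\underline{K}(j))$ must have $\alpha_0=0$ and $\alpha_i^{(n)}=0$ for all $n$ and $i$ (by the injectivity statements above), and $\alpha_1$ must take values in $\ker K_1(j)$. Bockstein compatibility of $\underline{\alpha}$ with $\nu_{0,A}^{(n)}$ forces $\alpha_1\circ\nu_{0,A}^{(n)} = \nu_{0,I}^{(n)}\circ\alpha_0^{(n)} = 0$; since the image of $\nu_{0,A}^{(n)}$ is the $n$-torsion subgroup of $K_1(A)$ by the exact sequence \eqref{eq:bockstein-2}, running over all $n$ gives $\alpha_1|_{\mathrm{Tor}(K_1(A))}=0$, as claimed.

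For the converse, I would start with $\alpha_1\colon K_1(A)\to \ker K_1(j)$ vanishing on $\mathrm{Tor}(K_1(A))$, set every other component to zero, and verify that the resulting collection of maps constitutes a $\Lambda$-morphism. Most Bockstein compatibilities are automatic because the relevant composites are zero on both sides. The one step I expect to be the main obstacle is compatibility with $\mu_{1,A}^{(n)}$, which requires $\mu_{1,I}^{(n)}\circ\alpha_1 = 0$. This is precisely where the divisibility of $\ker K_1(j)$ enters (and where unique divisibility of $K_0(D)$ is used beyond ensuring $K_0(D;\Zn{n})=0$): every element of $\ker K_1(j)$ lies in $nK_1(I)$, and $\mu_{1,I}^{(n)}$ vanishes on $nK_1(I)$ by the exactness in \eqref{eq:bockstein-2}. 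The natural isomorphism \eqref{eq:KTheoryIso1} is then given by $\underline{\alpha}\mapsto\overline{\alpha_1}$, where $\overline{\alpha_1}\colon K_1(A)/\mathrm{Tor}(K_1(A))\to \ker K_1(j)$ is the induced map; naturality in both $A$ and the extension is transparent from this description.
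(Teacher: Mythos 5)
Your proof is correct, and the overall skeleton agrees with the paper's: compute $K_i(D;\Zn{n})=0$ from unique divisibility of $K_0(D)$ and $K_1(D)=0$, deduce the injectivity statements for $\underline{K}(j)$, run the forward direction by factoring through kernels and using that $\im\nu_{0,A}^{(n)}$ is $n$-torsion, and for the converse set the other components of $\underline\alpha$ to zero and check the Bockstein compatibilities. The genuinely different move is your verification that $\mu_{1,I}^{(n)}\circ\alpha_1=0$. The paper argues by naturality of $\mu_1^{(n)}$ along $j$ and injectivity of $K_1(j;\Zn{n})$:
\begin{equation*}
K_1(j;\Zn{n})\circ\mu_{1,I}^{(n)}\circ\alpha_1 = \mu_{1,E}^{(n)}\circ K_1(j)\circ\alpha_1 = 0,
\end{equation*}
then cancels the injection. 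You instead observe that $\ker K_1(j)=\im(\partial\colon K_0(D)\to K_1(I))$ is a homomorphic image of the divisible group $K_0(D)$, hence divisible, and so lies in $nK_1(I)=\ker\mu_{1,I}^{(n)}$ by exactness of \eqref{eq:bockstein-2} at $K_1(I)$. Both routes work under the stated hypotheses; yours stays entirely inside $K_1(I)$ once you know where $\alpha_1$ lands and so is arguably more self-contained, while the paper's naturality computation avoids invoking the boundary map explicitly. One small caveat: you describe the remaining Bockstein compatibilities as ``automatic because the relevant composites are zero on both sides,'' but the $\nu_{0,A}^{(n)}$-compatibility $\alpha_1\circ\nu_{0,A}^{(n)}=0$ is not trivial in that way --- it genuinely needs the hypothesis that $\alpha_1$ annihilates $\mathrm{Tor}(K_1(A))$, since $\im\nu_{0,A}^{(n)}=\mathrm{Tor}(K_1(A),\Zn{n})$. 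You build this in by construction of $\alpha_1$, so nothing is missing, but this check belongs alongside the $\mu_{1,A}^{(n)}$-check as the second nontrivial verification rather than in the ``automatic'' bin.
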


\begin{proof}
  Since $K_0(D)$ is uniquely divisible, multiplication by $n$ is an
  automorphism of $K_0(D)$ for all $n \geq 2$.  This, together with
  $K_1(D)=0$ and the exactness of \eqref{eq:bockstein-2}, gives $K_i(D
  ; \Zn{n}) = 0$ for all $i = 0, 1$ and $n \geq 2$.  Since
  $K_\ast(\,\cdot\, ; \Zn{n})$ takes short exact sequences to 6-term
  exact sequences (this follows from its definition as
  $K_{1-\ast}(\,\cdot\, \otimes\mathbb I_n)$), it follows that $\ker
  K_i(j; \Zn{n}) =0$ for all $n\geq 2$ and $i\in 0,1$. Similarly,
  $\ker K_0(j)=0$ since $K_1(D) = 0$.

  Let $\underline \alpha \colon \underline K(A) \to \underline K(I)$
  be in $\ker \Hom_\Lambda(\underline{K}(A),\underline{K}(j))$ with
  components $\alpha_i$ and $\alpha^{(n)}_i$ for $i=0,1$ and $n\geq
  2$. As these homomorphisms factor through $\ker K_i(j)$ and $\ker
  K_i(j;\Zn{n})$ respectively, it follows that $\alpha_0 = 0$ and
  $\alpha_i^{(n)} = 0$, for $i=0,1$. Because $\underline\alpha$ is a $\Lambda$-morphism, we have
  $\alpha_1 \circ \nu^{(n)}_{0,A} = \nu^{(n)}_{0,I} \circ
  \alpha^{(n)}_0 = 0$.  It follows that $\alpha_1$ vanishes on the image
  of $\nu^{(n)}_{0,A}$ for every $n\geq 2$. By exactness of
  \eqref{eq:bockstein-2}, the image of $\nu^{(n)}_{0,A}$ consists
  precisely of the $n$-torsion elements of $K_1(A)$, and thus
  $\alpha_1$ vanishes on $\mathrm{Tor}(K_1(A))$. Thus the map sending
  $\underline{\alpha}\in \ker
  \Hom_\Lambda(\underline{K}(A),\underline{K}(j))$ to the map
  $K_1(A)/\mathrm{Tor}(K_1(A)) \to K_1(A)$ induced by $\alpha_1$ is a
  well-defined injection.

  For surjectivity in \eqref{eq:KTheoryIso1}, suppose $\alpha_1'
  \colon K_1(A)/\mathrm{Tor}(K_1(A)) \to \ker K_1(j)$ is a
  homomorphism, and let $\alpha_1$ be the composition
  \begin{equation}
  \begin{tikzcd}
    K_1(A) \ar[r] & K_1(A)/\mathrm{Tor}(K_1(A)) \ar[r, "\alpha_1'"] & \ker K_1(j) \subseteq K_1(I).
  \end{tikzcd}
  \end{equation}
  We define $\underline{\alpha}$ by setting the other components, that is,
  $\alpha_0 \colon K_0(A) \to K_0(I)$ and $\alpha_i^{(n)} \colon
  K_i(A;\Zn{n})\to K_i(I; \Zn{n})$, to be the zero maps. This will
  certainly lie in the kernel of
  $\Hom_\Lambda(\underline{K}(A),\underline{K}(j))$ once we have shown
  that these choices of components do intertwine the Bockstein
  operations, i.e., that $\underline{\alpha}$ is a $\Lambda$-morphism. 

  Since $\alpha_0=0$ and $\alpha_i^{(n)} =0$, we only need to verify
  intertwinings involving $\alpha_1$.  That is, we must verify the
  commutativity of
  \begin{equation}\label{eq:KTheoryIso.NewEq2}
    \begin{tikzcd}
      K_1(A) \ar[r, "\mu^{(n)}_{1,A}"] \ar[d, "\alpha_1"]
      & K_1(A; \Zn{n}) \ar[d, "\alpha^{(n)}_1 = 0"] \\
      K_1(I) \ar[r, "\mu^{(n)}_{1,I}"]
      & K_1(I; \Zn{n})
    \end{tikzcd}
    \quad\text{ and }\quad
    \begin{tikzcd}
      K_0(A; \Zn{n}) \ar[d, "\alpha^{(n)}_0 = 0",swap] \ar[r,
      "\nu^{(n)}_{0,A}"]
      & K_1(A) \ar[d, "\alpha_1"]\\
      K_0(I; \Zn{n}) \ar[r, "\nu^{(n)}_{0,I}"] &
      K_1(I) \ .
    \end{tikzcd}
  \end{equation}
  As noted above, the image of $\nu^{(n)}_{0,A}$ consists of
  $n$-torsion elements so is contained in
  $\mathrm{Tor}(K_1(A))$. Thus $\alpha_1 \circ \nu^{(n)}_{0,A} = 0$ and the
  right square in \eqref{eq:KTheoryIso.NewEq2} commutes. For
  commutativity of the left square, we note that since $\alpha_1$
  takes values in $\ker K_1(j)$, naturality of the Bockstein maps
  $\mu_{1}^{(n)}$ gives
  \begin{equation}
    K_1(j; \Zn{n}) \circ \mu^{(n)}_{1,I} \circ \alpha_1 =
    \mu^{(n)}_{1,E} \circ K_1(j) \circ \alpha_1 = 0.
  \end{equation}
  By injectivity of $K_1(j; \Zn{n})$ (as observed at the beginning the proof),
  it follows that $\mu^{(n)}_{1,I} \circ \alpha_1 = 0$, so the left hand
  square of \eqref{eq:KTheoryIso.NewEq2} also commutes.
\end{proof}

\section{The total invariant}\label{s:totalinvariant}

This section assembles the total invariant $\inv(\,\cdot\,)$. We start out by constructing natural maps
\begin{equation}\label{NewMap}
\zeta^{(n)}_A \colon K_0(A; \Zn{n}) \to \Ka(A)
\end{equation}
relating total and algebraic $K$-theory in Section~\ref{sec:new-map}.  Such a system of maps was independently constructed by Gong, Lin, and Niu in \cite{Gong-Lin-etal23} by other means; see Remark~\ref{NewMapsRemark}. Section~\ref{ss:totalinv} formalizes $\inv(\,\cdot\,)$ and proves an ``extension of invariants'' theorem (Theorem~\ref{ThmExtension}): (iso)morphisms at the level of $KT_u(\,\cdot\,)$ lift to $\inv(\,\cdot\,)$. This is crucial in deducing Theorem~\ref{Main} and Corollary~\ref{MainCor} from Theorem~\ref{Main2}.  We end by observing that the inclusion of a $C^*$-algebra into its sequence algebra is injective on invariants (Lemma~\ref{lem:inv-iota-injective}), for use in establishing Theorem~\ref{Main2} from Theorem~\ref{Main3}.

\subsection{Factorizing the Bockstein maps}\label{sec:new-map}

Let $A$ be a unital $C^*$-algebra.  There is a well-known bijection between projections $p\in A$ and self-adjoint unitaries $u\in A$ given by $p\mapsto 1-2p=e^{2\pi i p/2}$.  Since self-adjoint unitaries are trivial in $K_1$, this identification is not interesting at the level of $K$-theory.  But the map $[p]_0\mapsto\ka{1-2p}=\ka{e^{2\pi ip/2}}$ does give a well-defined natural map $K_0(A)\to\Ka(A)$ that factorizes through $K_0(A)\otimes \Zn{2}$.\footnote{The map is $[p]_0\mapsto \Th_A(\rho_A([p]_0)/2)$, which shows that it is well-defined.} More generally, for any $n\geq 2$, we have a map $K_0(A)\to\Ka(A)$ given by $[p]_0\mapsto \ka{e^{2\pi ip/n}}$ that factors through $K_0(A)\otimes \Zn{n}$.
Recall that the Bockstein maps give rise to the short exact sequence
\begin{equation}
\begin{tikzcd}[column sep = 5ex]
0 \arrow{r} & K_0(A)\otimes  \Zn{n} \arrow{r}{\bar\mu^{(n)}_{0,A}} & K_0(A; \Zn{n}) \arrow{r}{\bar \nu^{(n)}_{0,A}} & \mathrm{Tor}(K_{1}(A), \Zn{n}) \arrow{r} & 0
\end{tikzcd}
\end{equation}
that splits unnaturally (see \eqref{eq:bockstein-new}). This allows the maps above to be extended (a priori unnaturally) from $K_0(A)\otimes\Zn{n}$ to $K_0(A;\Zn{n})$.

In the next two paragraphs and Proposition~\ref{prop:zeta-def}, we show that this can, in fact, be done naturally.  Then, in Proposition~\ref{p:newmap}, we show that the resulting map is compatible with the other structure maps.

Fix $n\geq 2$. We identify $(A \otimes \mathbb I_n)^\dagger$ with
\begin{equation}
\{ f\in C([0,1], A\otimes M_n) : f(0) \in A\otimes 1_{M_n}, \,  f(1) \in \mathbb C1_{A\otimes M_n} \}.
\end{equation}
Write $\ev^{(0,n)}_A \colon (A\otimes \mathbb I_n)^\dagger \to A$ for the map induced by evaluation at $0$.
(In particular, note that $\ev^{(0,n)}_A (1_{(A\otimes \mathbb I_n)^\dagger})=1_A$ and not $1_{A\otimes M_n}$.)
We will abuse notation and also write $\ev_A^{(0,n)}$ for the induced map when taking matrix amplifications.  Then, using the definition $K_0(A;\Zn{n})= K_1(A\otimes \mathbb I_n)$, the Bockstein map $\nu^{(n)}_{0,A}\colon K_0(A;\Zn{n})\to K_1(A)$ is the map $K_1(\ev^{(0,n)}_A)$.

Let $u\in U_\infty((A\otimes \mathbb I_n)^\dagger)$;
in spirit, we would like to define $\zeta^{(n)}_A([u]_1)$ to be $\ka{\ev^{(0,n)}_A(u)}$.
However, this would not be well-defined since the class $\ka{\ev^{(0,n)}_A(u)}$ in $\Ka(A)$ is not invariant under homotopy, and thus we need a correcting term built using the de la Harpe--Skandalis determinant.  Write $\ev^{(1,n)}_A \colon (A\otimes \mathbb I_n)^\dagger \to \mathbb C1_A \subseteq A$ for the map induced by evaluation at $1$ (again with the normalization convention $\ev^{(1,n)}_A(1_{(A\otimes\mathbb I_n)^\dagger})=1_A$).  Regarding $u$ as element of $U_\infty(C([0,1],A))$, one obtains $\widetilde\Delta_A(u)\in\Aff T(A)$ as in Proposition~\ref{Determinant}. Now define $\widetilde{\zeta}^{(n)}_A\colon U_\infty((A\otimes \mathbb I_n)^\dagger) \to \Ka(A)$ by
\begin{equation}\label{NewMap.1}
\widetilde{\zeta}^{(n)}_A(u) \coloneqq [\ev^{(0,n)}_A(u)]_\alg - [\ev^{(1,n)}_A(u)]_\alg + \Th_A\big( \tfrac{1}{n} \widetilde \Delta_A(u) \big).
\end{equation}
The factor of $\frac1n$ arises in \eqref{NewMap.1} to account for the fact that when $u\in (A\otimes \mathbb I_n)^\dagger$, the computation in the last term is done viewing $u$ as a path in $M_{n}(A)$, while the normalization conventions use elements of $A$ for the first two terms.

Note that the last two terms in \eqref{NewMap.1} lie in the kernel of $\minusa_A$ (as $\ev^{(1,n)}_A(u)\in U_\infty(\mathbb C)$, and $\Th_A$ takes values in $\ker\minusa_A$).  Accordingly,
\begin{equation}
\label{eq:zetanurelation}
\minusa_A\big( \widetilde{\zeta}^{(n)}_A(u) \big) = [\ev^{(0,n)}_A(u)]_1 = \nu_{0, A}^{(n)}([u]_1).
\end{equation}

\begin{proposition}
  \label{prop:zeta-def}
  Let $A$ be a unital $C^*$-algebra and let $n\geq 2$. Then the map $\widetilde{\zeta}^{(n)}_A$ in \eqref{NewMap.1} 
  induces a group homomorphism $\zeta_A^{(n)}\colon K_0(A;\Zn{n}) \to \Ka(A)$ that is
  natural in $A$ and factorizes the Bockstein map $\nu^{(n)}_{0,A}$ through $\Ka(A)$ as $\nu^{(n)}_{0,A}=\minusa_A\circ\zeta^{(n)}_A$.
\end{proposition}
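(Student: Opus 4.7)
The factorization identity $\nu^{(n)}_{0,A} = \minusa_A \circ \zeta^{(n)}_A$ is already essentially established as equation \eqref{eq:zetanurelation} at the level of $\widetilde\zeta^{(n)}_A$, so it only remains to show that $\widetilde\zeta^{(n)}_A$ descends to a natural homomorphism on $K_0(A;\Zn{n}) = K_1((A\otimes \mathbb I_n)^\dagger)$. My plan is therefore to verify that $\widetilde\zeta^{(n)}_A$ is (i) a group homomorphism on $U_\infty((A\otimes\mathbb I_n)^\dagger)$, (ii) stable under $u \mapsto u \oplus 1$, and (iii) vanishes on the connected component $U^{(0)}_\infty((A\otimes \mathbb I_n)^\dagger)$.

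Properties (i) and (ii) are formal: $\ev^{(0,n)}_A$ and $\ev^{(1,n)}_A$ are unital $^*$-homomorphisms (so the first two terms of \eqref{NewMap.1} are additive and stable under adjoining a summand $1$), while $\widetilde\Delta_A$ is a homomorphism by Proposition~\ref{Determinant} and is unchanged when $u$ is replaced by $u \oplus 1$ since the adjoined summand is a constant path. Combined with additivity of $\Th_A$, this handles (i) and (ii).

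For (iii), the main calculation, I would use the standard fact that every element of $U^{(0)}_k((A\otimes\mathbb I_n)^\dagger)$ is a finite product of exponentials $e^{2\pi i h}$ with $h = h^* \in M_k((A\otimes\mathbb I_n)^\dagger)$; by the homomorphism property it suffices to treat $w = e^{2\pi i h}$. Writing $h(0) = h_0 \otimes 1_{M_n}$ with $h_0 \in M_k(A)_{\mathrm{sa}}$ (using $(A\otimes \mathbb I_n)^\dagger$ evaluated at $0$ lives in $A \otimes 1_{M_n}$) and $h(1) = C \otimes 1_{M_n}$ with $C \in M_k(\mathbb C)_{\mathrm{sa}}$, we get $\ev^{(0,n)}_A(w) = e^{2\pi i h_0}$ and $\ev^{(1,n)}_A(w) = e^{2\pi i C}$. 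Applying the determinant formula from Proposition~\ref{Determinant}\ref{Determinant.C2} with $\tau_{kn}(h_0 \otimes 1_{M_n}) = n\tau_k(h_0)$ yields
\begin{equation}
  \tfrac{1}{n}\widetilde\Delta_A(w) = \hat{C} - \hat{h}_0,
\end{equation}
where exactly the factor $\tfrac{1}{n}$ in the definition of $\widetilde{\zeta}^{(n)}_A$ cancels the factor of $n$ arising from $\tau_{kn}|_{M_k(A)\otimes 1_{M_n}} = n\tau_k$. Then Properties~\ref{ThomsenProp}\ref{ThomsenProp.1} gives $\Th_A(\hat{C} - \hat{h}_0) = \ka{e^{2\pi i C}} - \ka{e^{2\pi i h_0}}$, and the three terms of \eqref{NewMap.1} cancel, giving $\widetilde\zeta^{(n)}_A(w) = 0$.

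Putting (i)--(iii) together, $\widetilde\zeta^{(n)}_A$ descends to a homomorphism $\zeta^{(n)}_A \colon K_0(A;\Zn{n}) \to \Ka(A)$. Naturality in $A$ is then immediate, as each of $\ev^{(0,n)}$, $\ev^{(1,n)}$, $\widetilde{\Delta}$, and $\Th$ is natural in $A$, so for any unital $^*$-homomorphism $\phi\colon A \to B$ we have $\Ka(\phi) \circ \widetilde\zeta^{(n)}_A = \widetilde\zeta^{(n)}_B \circ (\phi \otimes \id_{\mathbb I_n})_*$ on $U_\infty$. The main obstacle in this proof is the bookkeeping in step (iii): one has to choose the normalizations of $\ev^{(0,n)}_A$, $\ev^{(1,n)}_A$, and $\tau_{kn}$ consistently so that the three terms in \eqref{NewMap.1} really cancel, and this is precisely what dictates the scalar $\tfrac{1}{n}$ in the definition.
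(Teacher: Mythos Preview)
Your proof is correct and follows essentially the same route as the paper: both reduce well-definedness to checking $\widetilde\zeta^{(n)}_A(e^{2\pi i h})=0$ for self-adjoint $h$, perform the same determinant computation via Proposition~\ref{Determinant}\ref{Determinant.C2} (where the factor $\tfrac{1}{n}$ cancels the $n$ coming from $\tau_{kn}|_{M_k(A)\otimes 1_{M_n}} = n\tau_k$), and invoke naturality of the constituent pieces. Your explicit check of stability under $u\mapsto u\oplus 1$ is a minor addition the paper leaves implicit in its definition of $\widetilde\zeta^{(n)}_A$ on $U_\infty$.
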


\begin{proof}
  As $\widetilde{\Delta}_A$ gives a homomorphism from
  $U_{\infty}(C([0,1],A))$ to $\Aff T(A)$ by Proposition~\ref{Determinant}, we have
  $\widetilde\zeta^{(n)}_A(uv)=\widetilde\zeta^{(n)}_A(u)+\widetilde\zeta^{(n)}_A(v)$
  for all $u,v\in U_\infty((A\otimes \mathbb
  I_n)^\dagger)$. Therefore, to show $\widetilde\zeta^{(n)}_A$ induces
  a well-defined homomorphism $\zeta^{(n)}_A$ on $K_0(A;\Zn{n})$ it
  suffices to check that $\widetilde{\zeta}^{(n)}_A(e^{2\pi ih})=0$ in
  $\Ka(A)$ for a self-adjoint $h\in M_k(A\otimes\mathbb I_n)^\dagger$ and $k\in\mathbb N$.
  For $\tau\in T(A)$, Proposition~\ref{Determinant}\ref{Determinant.C2} gives
  \begin{equation}
    \begin{aligned}
      \tfrac{1}{n}\widetilde{\Delta}_A(e^{2\pi ih})(\tau)
      =\tfrac{1}{n}\tau_{nk}\big(h(1)-h(0)\big)
      = \tau_k\big( \ev^{(1,n)}_A(h)-\ev^{(0,n)}_A(h) \big)
    \end{aligned}
  \end{equation}
  where $\tau_{nk}$ and $\tau_k$ are the non-normalized extensions of
  $\tau$ to $M_{kn}(A)$ and $M_k(A)$, and the second equality is a
  consequence of our normalization conventions. Thus,
  $\ev^{(1,n)}_A(h)-\ev^{(0,n)}_A(h)$ induces the affine function $
  \tfrac{1}{n}\widetilde{\Delta}_A(e^{2\pi ih})$, and so the definition of the
  Thomsen map in \eqref{DefThomsen} (working at the matrix
  amplification level as discussed in the text above \eqref{eq:ThRho0}) 
  gives
  \begin{equation}
    \begin{array}{rcl}
      \Th_A(\tfrac{1}{n}\widetilde{\Delta}_A(e^{2\pi ih}))&=& \ka{e^{2\pi i(\ev_A^{(1,n)}(h)-\ev_A^{(0,n)}(h))}}\\
      &\stackrel{\eqref{ExpCommute}}{=}& \ka{\ev^{(1,n)}_A(e^{2\pi
          ih})}-\ka{\ev^{(0,n)}_A(e^{2\pi ih})}.
    \end{array}
  \end{equation}
  Hence $\tilde\zeta^{(n)}_A(e^{2\pi ih})=0$ and so $\zeta^{(n)}_A$ is well-defined. Now
  $\nu^{(n)}_{0,A}=\minusa_A\circ\zeta_A^{(n)}$ follows from \eqref{eq:zetanurelation}.

  Naturality of $\zeta^{(n)}$ follows as all components of the
  construction of $\zeta^{(n)}$ in \eqref{NewMap.1} are natural.
\end{proof}

The next proposition shows that the maps $\zeta_A^{(n)}$ satisfy three
further compatibility relations with the other structure maps.

\begin{proposition}
  \label{p:newmap}
  If $A$ is a unital $C^*$-algebra and $m,n\geq 2$, then
  the diagrams
   \begin{equation}\label{eq:newmap1}
    \begin{tikzcd}
      K_0(A) \ar[r, "\mu^{(n)}_{0,A}"] \ar[d, "\tfrac{1}{n} \rho_A"]
      &
      K_0(A; \Zn{n}) \ar[d, "\zeta^{(n)}_A"] \ar[r, "\nu^{(n)}_{0,A}"]
      &
      K_1(A) \ar[d, equals]
      \\
      \Aff T(A) \ar[r, "\Th_A"]
      &
      \Ka(A) \ar[r, "\minusa_A"]
      &
      K_1(A)
    \end{tikzcd}
  \end{equation}
  and
  \begin{equation}\label{eq:newmap2}
    \begin{tikzcd}[column sep=large]
      K_0(A; \mathbb Z/nm) \ar[r, "\kappa_{0,A}^{(n,nm)}"]
      \ar[dr, "m\zeta^{(nm)}_A",swap]
      &
      K_0(A; \Zn{n}) \ar[r, "\kappa^{(nm,n)}_{0,A}"]
      \ar[d, "\zeta^{(n)}_A"]
      &
      K_0(A; \Zn{nm}) \ar[dl, "\zeta^{(nm)}_A"]
      \\
      &
      \Ka(A)
      &
    \end{tikzcd}
  \end{equation}
  commute.
\end{proposition}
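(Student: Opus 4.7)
My plan for both diagrams is to compute $\widetilde\zeta^{(\cdot)}_A$ directly on explicit $K_1$-representatives using \eqref{NewMap.1}. The right square of \eqref{eq:newmap1} is immediate from Proposition~\ref{prop:zeta-def}. For the left square, let $p \in M_k(A)$ be a projection. Unwinding the definition of $\mu^{(n)}_{0,A}$ via the Bott map (applied to $p$) and stabilization along $e_{11}\in M_n$, pushed through the inclusion $A\otimes C_0((0,1),M_n)\hookrightarrow A\otimes\mathbb I_n$, shows that $\mu^{(n)}_{0,A}([p]_0) \in K_1(A\otimes\mathbb I_n) = K_0(A;\Zn{n})$ is represented by the unitary $u(t) \coloneqq 1 + (e^{2\pi i t}-1)\,p\otimes e_{11}$ in $M_k((A\otimes\mathbb I_n)^\dagger)$. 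Writing $u = e^{2\pi i h}$ with $h(t) = t\,(p\otimes e_{11})$ and applying Proposition~\ref{Determinant}\ref{Determinant.C2} gives $\widetilde\Delta_A(u)(\tau) = \tau_{kn}(p\otimes e_{11}) = \tau_k(p) = \hat p(\tau)$. Since $u(0) = u(1) = 1$, the first two terms of \eqref{NewMap.1} vanish, and we obtain $\zeta^{(n)}_A(\mu^{(n)}_{0,A}([p]_0)) = \Th_A(\tfrac{1}{n}\hat p) = \Th_A(\tfrac{1}{n}\rho_A([p]_0))$, as required. Linearity in $[p]_0$ takes care of the general case.

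For \eqref{eq:newmap2}, the same approach handles both triangles. Under the inclusion $\mathbb I_n \cong \mathbb I_n \otimes 1_{M_m} \hookrightarrow \mathbb I_{nm}$, the map $\kappa^{(nm,n)}_{0,A}$ sends $[u]_1$ (with $u\in M_k((A\otimes\mathbb I_n)^\dagger)$) to $[u\otimes 1_{M_m}]_1$. The evaluations satisfy $\ev^{(i,nm)}_A(u\otimes 1_{M_m}) = \ev^{(i,n)}_A(u)$ for $i=0,1$, while the determinant scales as $\widetilde\Delta_A(u\otimes 1_{M_m}) = m\,\widetilde\Delta_A(u)$ (using $\tau_{knm}(X\otimes 1_{M_m}) = m\,\tau_{kn}(X)$). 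Substituting into \eqref{NewMap.1}, the factor $\tfrac{1}{nm}$ absorbs the extra $m$, yielding $\widetilde\zeta^{(nm)}_A(u\otimes 1_{M_m}) = \widetilde\zeta^{(n)}_A(u)$ and hence $\zeta^{(nm)}_A\circ\kappa^{(nm,n)}_{0,A} = \zeta^{(n)}_A$. Conversely, $\kappa^{(n,nm)}_{0,A}$ sends $[v]_1$ (with $v \in M_k((A\otimes\mathbb I_{nm})^\dagger)$) to the class of the same $v$ viewed as a unitary in $M_{km}((A\otimes\mathbb I_n)^\dagger)$ via $\mathbb I_{nm}\hookrightarrow\mathbb I_n\otimes M_m$ combined with the stabilization isomorphism $(A\otimes\mathbb I_n\otimes M_m)^\dagger\cong M_m((A\otimes\mathbb I_n)^\dagger)$. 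Writing $v(0) = a\otimes 1_{M_{nm}}$, in the stabilized picture $v(0)$ becomes the $m$-fold block diagonal of $a$, so $[\ev^{(0,n)}_A(v)]_\alg = m\,[\ev^{(0,nm)}_A(v)]_\alg$, and analogously at $t=1$. The total matrix size $knm$ is unchanged, so $\widetilde\Delta_A(v)$ is the same in both pictures. Plugging into \eqref{NewMap.1} gives $\widetilde\zeta^{(n)}_A(v) = m\,\widetilde\zeta^{(nm)}_A(v)$, i.e., $\zeta^{(n)}_A\circ\kappa^{(n,nm)}_{0,A} = m\zeta^{(nm)}_A$.

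The main bookkeeping hurdle is correctly identifying the explicit $K_1$-representatives: for the left square of \eqref{eq:newmap1} this means unwinding $\mu^{(n)}_{0,A}$ through Bott periodicity and stabilization, and for the left triangle of \eqref{eq:newmap2} it means carefully tracking how a unitary over $A\otimes\mathbb I_{nm}$ decomposes when regarded as a matrix unitary over $A\otimes\mathbb I_n$. Once these identifications are in hand, the calculations become essentially automatic, because the correcting term $\Th_A(\tfrac{1}{n}\widetilde\Delta_A(\,\cdot\,))$ in \eqref{NewMap.1} has been engineered precisely so that the evaluations and determinants conspire to yield all three required compatibilities.
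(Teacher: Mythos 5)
Your proof is correct and follows essentially the same strategy as the paper: for each diagram, plug an explicit $K_1$-representative into the defining formula \eqref{NewMap.1} and track the evaluations and the de la Harpe--Skandalis determinant. The only cosmetic difference is in the left square of \eqref{eq:newmap1}, where you compute on the explicit Bott unitary $e^{2\pi i t (p\otimes e_{11})}$ for a projection $p$ and then invoke linearity, whereas the paper works with an arbitrary unitary in $U_\infty(C_0((0,1),A)^\dagger)$ and appeals directly to Proposition~\ref{Determinant}\ref{Determinant.C3}; both routes are equally valid.
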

\begin{proof}
Proposition~\ref{prop:zeta-def} already shows that the right square in \eqref{eq:newmap1} commutes.

We first show the commutativity of the left square of \eqref{eq:newmap1}.  Recall that after identifying $K_0(A) \cong K_1(C_0((0,1),A))$ by Bott periodicity, $\mu^{(n)}_{0,A}$ is given by the inclusion of $C((0,1),A\otimes M_n)$ into $A\otimes\mathbb I_n$. Therefore, given a unitary $u\in U_\infty(C((0,1),A)^\dagger)$, and regarding it as a path $[0,1]\to U_\infty(A)$ with $u(0)=u(1)\in U_\infty(\mathbb C)$,  $\mu^{(n)}_{0,A}([u]_1)$ is the class of $u$ in $K_1(A\otimes \mathbb I_n) = K_0(A;\Zn{n})$. By Proposition~\ref{Determinant}\ref{Determinant.C3}, $\widetilde{\Delta}_A(u)=\rho_A([u]_1)$ (where we again identify $K_0(A) \cong K_1(C_0((0,1),A))$). Since $\ev^{(0,n)}_A(u)=\ev^{(1,n)}_A(u)$, we have
\begin{equation}
  \zeta^{(n)}_A (\mu^{(n)}_{0,A}([u]_1))
  = \widetilde \zeta^{(n)}_A(u) = \Th_A \big(\tfrac{1}{n} \rho_A([u]_1)\big).
\end{equation}

Next we deal with the right triangle of  \eqref{eq:newmap2}.  Recall that $\kappa^{(nm,n)}_A$ is induced by the canonical inclusion $\iota\colon \mathbb I_n \stackrel{\cong}{\longrightarrow} \mathbb I_n\otimes 1_{M_m} \hookrightarrow \mathbb I_{nm}$. For $u\in U_\infty((A\otimes \mathbb I_{n})^\dagger)$ and $i=0,1$, we have $\ev^{(i,nm)}_A(\iota(u)) = \ev^{i,n}_A(u)$. Since $\widetilde \Delta_A(u\otimes 1_{M_m}) = m \widetilde \Delta_A(u)$, it follows that $(\zeta^{(nm)}_A\circ \kappa^{(nm,n)}_A)([u]_1)= \zeta^{(n)}_A([u]_1)$.

Finally, we handle the left-hand triangle of  \eqref{eq:newmap2}. Since $\kappa^{(n,nm)}_A$ is induced by the inclusion $\mathbb I_{nm} \subseteq \mathbb I_n\otimes M_m$, for $u\in U_k((A\otimes \mathbb I_{nm})^\dagger)$, our normalization conventions give $\ev^{(i,n)}_{M_m(A)} (u) = \ev^{(i,nm)}_A(u) \otimes 1_{M_m}$ for $i=0,1$. Then
\begin{gather}
  \raisetag{-50pt}\hspace{30pt}\begin{multlined}
    \zeta^{(n)}_A \left(\kappa^{(n,nm)}_A\big([u]_1\big)\right) = \\
    \begin{aligned}
      \qquad & =\big[ \ev^{(0,n)}_{M_m(A)}(u) \big]_\alg -
      \big[\ev^{(1,n)}_{M_m(A)}(u) \big]_\alg + \Th_A\big(\tfrac{1}{n}\widetilde \Delta_A(u)\big) \\
  &= m \Big(\big[\ev^{(0,nm)}_A(u)\big]_\alg - \big[\ev^{(1,nm)}_A(u)\big]_\alg + \Th_A\big(\tfrac{1}{nm}\widetilde \Delta_A(u)\big)\Big) \\
  &= m \zeta^{(nm)}_A\big([u]_1\big),
\end{aligned}
\end{multlined}
\end{gather}
as required.
\end{proof}

\begin{remark}
\label{rmk:NewMapProjections}
As $\zeta_A^{(n)}\circ \mu_{0,A}^{(n)}  =  \Th_A \circ \left(\tfrac{1}{n} \rho_A\right) \colon K_0(A) \to \Ka(A)$ by Proposition~\ref{p:newmap}, it follows that if $p\in M_n(A)$ is a projection, then
\begin{equation}\label{NewMapEasyExample}
\zeta_A^{(n)}\big(\mu_{0,A}^{(n)}([p]_0) \big) = \Th_A (\tfrac{1}{n} \hat p) = [e^{2\pi i p/n}]_\alg.
\end{equation}
This shows that $\zeta_A^{(n)}\circ\overline{\mu}^{(n)}_{0,A}$ is the map $K_0(A)\otimes\Zn{n}\to\Ka(A)$ described in the opening paragraph of this subsection. When $K_1(A)$ has no $n$-torsion elements, $\bar\mu^{(n)}_{0,A}$ is an isomorphism, so $\zeta^{(n)}_A$ is completely determined by \eqref{NewMapEasyExample}. In particular, if $A\coloneqq \mathbb C$ (or $A\coloneqq \Z$), then identifying $K_0(A;\Zn{n})\cong \Zn{n}$ and $\Ka(A)\cong\mathbb T$, the map $\zeta^{(n)}_A$ is the embedding of $n$-th roots of unity in the circle.
\end{remark}

\begin{remark}\label{NewMapsRemark}
As noted in the introduction, while this paper was being written, Gong, Lin, and Niu independently discovered the need for compatibility with natural maps $K_0(A;\Zn{n})\to \Ka(A)$ in \cite{Gong-Lin-etal23} --- see Remark 6.6 therein.\footnote{The notation in \cite{Gong-Lin-etal23} is in terms of unitary groups modulo the closure of the commutators, and there is often a finite stable rank condition which allows them to work with $U_n(A)$ instead of $U_\infty(A)$.} (By a complete coincidence, the same symbol $\zeta$ was adopted in \cite{Gong-Lin-etal23} and by us.)

Gong, Lin, and Niu construct their maps abstractly through a splitting of the Thomsen extension for $A\otimes \mathbb I_n$. It turns out that the two maps are identical (taking care of our different conventions regarding the orientation of the interval in $\mathbb I_n$).  It will be more convenient to us to demonstrate this in the second paper in this series on non-unital $C^*$-algebras, where we will set out the Thomsen extension for non-unital algebras (such as $A\otimes \mathbb I_n$).
\end{remark}

\subsection{The total invariant}\label{ss:totalinv}

With all the ingredients in place, we now formalize the invariant needed to classify embeddings.

\begin{definition}\label{DefInv}
Let $A$ be a unital $C^*$-algebra.  The \emph{total invariant} of $A$, denoted $\inv(A)$, is\footnote{Recall our convention that the Bockstein maps $\mu^{(n)}_{i,A},\nu^{(n)}_{i,A},\kappa^{(m, n)}_{i,A}$ form part of $\underline{K}(A)$.}
\begin{equation}\label{DefInvE.1}
\inv(A)\coloneqq (\underline{K}(A),\Aff T(A),\Ka(A),[1_A]_0,\rho_A,\Th_A,\minusa_A,(\zeta^{(n)}_A)_{n\geq 2})
\end{equation}
with the convention that $\Aff T(A)=0$ if $T(A)=\emptyset$. A morphism $(\underline{\alpha},\beta,\gamma):\inv(A)\to\inv(B)$ consists of
\begin{itemize}
\item $\underline{\alpha}\in\Hom_\Lambda(\underline{K}(A),\underline{K}(B))$ with $\alpha_0([1_A]_0)=[1_B]_0$,
\item a positive linear map $\gamma\colon \Aff
  T(A)\rightarrow\Aff T(B)$,\footnote{Note that the definition implies that the map $\gamma$ is unital since \[ \gamma(1)=\gamma(\rho_A([1_A]_0))\stackrel{\eqref{eq:compatibility}}=\rho_B(\alpha_0([1_A]_0))=\rho_B([1_B]_0)=1. \]} and
\item a group homomorphism $\beta\colon \Ka(A)\rightarrow\Ka(B)$,
\end{itemize}
such that
    \begin{equation}\label{eq:compatibility}
      \begin{tikzcd}
        K_0(A) \ar[r, "\rho_A"] \ar[d, "\alpha_0"] &
        \Aff T(A) \ar[r, "\Th_A"] \ar[d, "\gamma"] &
        \Ka(A) \ar[r, "\minusa_A"] \ar[d, "\beta"] &
        K_1(A) \ar[d, "\alpha_1"] \\
        K_0(B) \ar[r, "\rho_B"] &
        \Aff T(B) \ar[r, "\Th_B"] &
        \Ka(B) \ar[r, "\minusa_B"] &
        K_1(B)
      \end{tikzcd}
    \end{equation}
    and
    \begin{equation}
      \label{eq:compatibility2}
      \begin{tikzcd}
        K_0(A; \Zn{n}) \ar[r, "\zeta^{(n)}_A"] \ar[d, "\alpha^{(n)}_0"] & \Ka(A) \ar[d, "\beta"] \\
        K_0(B;\Zn{n}) \ar[r, "\zeta^{(n)}_B"] & \Ka(B)
      \end{tikzcd}
    \end{equation}
commute.

As with $KT_u$-morphisms, we say that
$(\underline{\alpha},\beta,\gamma)\colon \inv(A)\rightarrow\inv(B)$ is
\emph{faithful} if the induced map $\gamma^*\colon T(B)\rightarrow
T(A)$ (given by Kadison duality) satisfies that $\gamma^*(\tau)$ is faithful
for all $\tau\in T(B)$.
\end{definition}

It is important for us to note that $\inv$ is a functor; the point is not the exact specification of the target category,\footnote{The target category consists of $8$-tuples of the appropriate objects as in \eqref{DefInvE.1} such that all the diagrams in Proposition~\ref{p:newmap} commute, $\rho(1)=1$, and the top row of \eqref{building:beta_1} is exact at the third entry ($\Ka$) and almost exact at the second, in the sense that the image of $\rho$ is dense in the kernel of $\Th$.} but rather that unital $^*$-homomorphisms do induce $\inv$-morphisms by naturality of the $\zeta^{(n)}$ (together with functoriality of $KT_u$).  This is vital in being able to prove the existence portion of the classification of full approximate embeddings in the presence of torsion in $K_1$.  Moreover, $\inv(\,\cdot\,)$ is invariant under approximate unitary equivalence as each component is.
\begin{proposition}\label{Inv:AFunctor}
Given a unital $^*$-homomorphism $\phi\colon A\to B$,
\begin{equation}
\inv(\phi)\coloneqq (\underline{K}(\phi),\Ka(\phi),\Aff T(\phi))\colon\inv(A)\to\inv(B)
\end{equation} 
is a $\inv$-morphism. In this way, the assignment of $\inv(A)$ to a unital $C^*$-algebra $A$ provides a functor from the category of unital $C^*$-algebras and unital $^*$-ho\-mo\-morphisms. Furthermore, if $\phi,\psi\colon A\to B$ are approximately unitarily equivalent, then $\inv(\phi)=\inv(\psi)$.
\end{proposition}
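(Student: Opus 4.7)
The plan is to verify the requirements of Definition~\ref{DefInv} componentwise, as each piece of data making up $\inv(\phi)$ has already been verified to be natural in $A$ at the point where it was introduced. Throughout, the main work is bookkeeping; no genuinely new idea is required.

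First I would confirm that the data $(\underline{K}(\phi), \Ka(\phi), \Aff T(\phi))$ is of the right type. That $\underline{K}(\phi)$ is a $\Lambda$-morphism with $K_0(\phi)([1_A]_0)=[1_B]_0$ is built into the functoriality of total $K$-theory (Section~\ref{ss:totalKtheory}) and the fact that $\phi$ is unital. That $\Ka(\phi)$ is a group homomorphism is part of Definition~\ref{DefKa}. That $T(\phi)\colon T(B)\to T(A)$ is continuous and affine is standard, and its Kadison dual $\Aff T(\phi)$ is then a positive unital linear map by the discussion in Section~\ref{sec:Elliott-invariant}.

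Next I would check the commuting diagrams \eqref{eq:compatibility} and \eqref{eq:compatibility2}. Commutativity of the square involving $\rho$ is immediate from the definition \eqref{DefPairingMap}: a projection $p\in M_n(A)$ maps to $\phi(p)\in M_n(B)$, and $\tau_n(\phi(p))=(\tau\circ\phi)_n(p)$. Commutativity of the square involving $\Th$ follows from Properties~\ref{ThomsenProp}\ref{ThomsenProp.1}: for $a\in M_n(A)_{sa}$, $\phi(e^{2\pi i a})=e^{2\pi i \phi(a)}$, so $\Ka(\phi)(\Th_A(\hat a))=\ka{e^{2\pi i \phi(a)}}=\Th_B(\widehat{\phi(a)})=\Th_B(\Aff T(\phi)(\hat a))$. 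The square involving $\minusa$ is immediate from Definition~\ref{DefMinusA}. Finally, the square \eqref{eq:compatibility2} is exactly the naturality statement for $\zeta^{(n)}$ already established in Proposition~\ref{prop:zeta-def}.

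Functoriality ($\inv(\id_A)=\id_{\inv(A)}$ and $\inv(\psi\circ\phi)=\inv(\psi)\circ\inv(\phi)$) then follows by applying the corresponding functoriality statements for $\underline{K}$, $\Ka$, and $\Aff T(\,\cdot\,)$ coordinatewise. For the final claim about approximate unitary equivalence, invariance of $\underline{K}(\phi)$ is standard (using that $K$-theory is homotopy-invariant and norm-continuous on unitaries), and invariance of $\Aff T(\phi)$ is immediate from the tracial property. The mildly more delicate point --- and the only place where one must pause --- is invariance of $\Ka(\phi)$. If $(v_k)\subset U(B)$ satisfies $\|v_k\phi(a)v_k^*-\psi(a)\|\to 0$ for all $a\in A$, then for any $u\in U_n(A)$ the unitary $v_k\phi(u)v_k^{-1}\phi(u)^{-1}$ is a commutator in $DU_n(B)$, whence $\ka{v_k\phi(u)v_k^*}=\ka{\phi(u)}$ in $\Ka(B)$. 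Since the quotient map $U_n(B)\to \Ka(B)$ is norm-continuous (the inductive limit topology restricts to the norm topology on each $U_n(B)$) and $v_k\phi(u)v_k^*\to\psi(u)$ in norm, we conclude $\ka{\phi(u)}=\ka{\psi(u)}$, giving $\Ka(\phi)=\Ka(\psi)$.
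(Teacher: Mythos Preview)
Your proposal is correct and matches the paper's approach exactly: the paper does not write out a proof at all, merely noting before the proposition that ``unital $^*$-homomorphisms do induce $\inv$-morphisms by naturality of the $\zeta^{(n)}$ (together with functoriality of $KT_u$)'' and that ``$\inv(\,\cdot\,)$ is invariant under approximate unitary equivalence as each component is.'' You have simply filled in the routine verifications the paper leaves to the reader, including the one mildly nontrivial point (invariance of $\Ka$ under approximate unitary equivalence, which the paper asserts already in Definition~\ref{DefKa}).
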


\begin{remark}
\label{rmk:TotInvPI}
Let $B$ be a unital simple purely infinite $C^*$-algebra.  Then
$T(B)=\emptyset$ and exactness of \eqref{ThomsenExtension} shows that $\minusa_B$ is an isomorphism. Thus, on these
algebras, the invariants $\inv(B)$ and $(\underline{K}(B),[1_B]_0)$
are equivalent (in the sense of
Footnote~\ref{equivalent-invariants}). In particular, given any unital
$C^*$-algebra $A$, there is a one-to-one correspondence between
$\underline{\alpha}\colon\underline{K}(A)\to\underline{K}(B)$ with
$\alpha_0([1_A]_0)=[1_B]_0$ and morphisms
$(\underline{\alpha},\beta,\gamma)\colon\inv(A)\to\inv(B)$, given by
defining $\beta\coloneqq\minusa_B^{-1}\circ\alpha_1\circ\minusa_A$ and
$\gamma\coloneqq 0$. This allows the classification of nuclear embeddings of
unital separable exact $C^*$-algebras with the UCT into unital
Kirchberg algebras by total $K$-theory (see \cite{Rordam95,Phillips00,Kirchberg,Gabe-Preprint} --- in this level of generality, the result is recorded as \cite[Theorem 8.12]{Gabe-Preprint}) to be viewed as a classification using $\inv$.
\end{remark}

\begin{remark}
  For each integer $n \geq 2$, there are examples of triples
  $(\underline{\alpha}, \beta, \gamma)$ that satisfy
  \eqref{eq:compatibility} but not \eqref{eq:compatibility2}.  Set $A
  \coloneqq \mathbb I_n^\dag$ and let
  $B \coloneqq \Z$.  Let $\psi \colon \mathbb I_n^\dag \rightarrow
  \mathcal Z$ be an embedding inducing the Lebesgue trace on $\mathbb
  I_n^\dag$ (such an embedding exists, for example, by \cite[Theorem 4.11]{Schemaitat19}).
 Define $\underline{\alpha}
  \coloneqq \underline{K}(\psi)$ and $\gamma \coloneqq \Aff T(\psi)$.  As
  \begin{equation}
    K_1(\mathbb I_n^\dag) \cong \mathbb Z /n \quad \text{and} \quad
    \Aff T(\mathcal Z)/\overline{\im\rho_\mathcal Z} \cong \mathbb R / \mathbb Z,
  \end{equation}
  there is an embedding $r \colon K_1(\mathbb I_n^\dag) \rightarrow
  \Aff T(\mathcal Z)/\overline{\im\rho_\mathcal Z}$.  Define
  \begin{equation}
    \beta \coloneqq \Ka(\psi) + \overline{\Th}_\mathcal Z \circ r \circ \minusa_{\mathbb I_n^\dag} \colon \Ka(\mathbb I_n^\dag) \rightarrow \Ka(\mathcal Z).
  \end{equation}
  Then $\beta$ satisfies \eqref{eq:compatibility}.  Also, by \eqref{eq:newmap1} and the fact that $\inv(\psi)$ is an $\inv$-morphism (Proposition~\ref{Inv:AFunctor}), we have
  \begin{equation}
    \beta \circ \zeta_{\mathbb I_n^\dag}^{(n)} = \zeta_\mathcal Z^{(n)} \circ \alpha_0^{(n)} + \overline{\Th}_\mathcal Z \circ r \circ \nu_{0, \mathbb I_n^\dag}^{(n)}.
  \end{equation}
  Since $K_1(\mathbb I_n^\dag) \cong \mathbb Z/n$, exactness of \eqref{eq:bockstein-2} implies that $\nu_{0, \mathbb I_n^\dag}^{(n)}$ is surjective.
  As $\overline{\Th}_\mathcal Z$ is injective and $r\neq 0$, it follows that $\overline{\Th}_{\mathcal Z} \circ r \circ \nu_{0, \mathbb I_n^\dag}^{(n)} \neq 0$.
   Hence $\beta \circ \zeta_{\mathbb I_n^\dag}^{(n)} \neq \zeta_\mathcal Z^{(n)} \circ \alpha_0^{(n)}$; i.e., \eqref{eq:compatibility2} does not commute.
\end{remark}

For unital $C^*$-algebras $A$ and $B$, a morphism
$(\underline{\alpha},\beta,\gamma)\colon \inv(A)\to\inv(B)$ certainly
restricts to a morphism $(\alpha_*,\gamma)\colon KT_u(A)\to KT_u(B)$.
To obtain classification results for $C^*$-algebras using $KT_u$ from
those for $\inv$, we need to be able to go in the other direction and
extend (in a non-canonical fashion) $KT_u$-(iso)morphisms to
$\inv$-(iso)morphisms.  Results of this form (often using the UCT to
extend morphisms from $K$-theory to total $K$-theory rather than
via B{\"o}digheimer's result, Proposition~\ref{TotalKExtend}), have been
used in the classification program previously in cases where
$\underline{K}$ or $\Ka$ is needed to obtain uniqueness of
morphisms --- see the proof of  \cite[Theorem~21.9]{GLN-part1}, for example.
The new ingredient we need is to construct an extension that intertwines the maps $\zeta^{(n)}$ of the previous
subsection.

\begin{theorem}[Extension of (iso)morphisms from $KT_u$ to
  $\underline{K}T_u$]
  \label{ThmExtension} Let \allowbreak$A$ and $B$ be unital $C^*$-algebras. Then any morphism
  $(\alpha_\ast , \gamma) \colon KT_u(A) \to KT_u(B)$ can be extended
  to a morphism $(\underline \alpha, \beta, \gamma) \colon \inv(A) \to
  \inv(B)$. In addition, if $(\alpha_\ast, \gamma)$ is an isomorphism,
  then any such extension $(\underline \alpha,\beta,\gamma)$ is an
  isomorphism.
\end{theorem}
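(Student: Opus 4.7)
My plan is to build $(\underline{\alpha},\beta,\gamma)$ in three stages: extend $\alpha_*$ to a $\Lambda$-morphism $\underline{\alpha}$; produce a first candidate $\beta_0$ satisfying only \eqref{eq:compatibility} from Proposition~\ref{PropExtendAlgK1}; and then correct $\beta_0$ by a map of the form $\overline{\Th}_B\circ r\circ\minusa_A$ to enforce the new compatibility \eqref{eq:compatibility2}. The hard part will be constructing the corrective homomorphism $r$ coherently across the coefficient rings $\Zn{n}$, and this is exactly where the new compatibility \eqref{eq:newmap2} of $\zeta^{(n)}$ with the Bockstein operations $\kappa$ plays an essential role. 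Once $r$ is in hand, the remaining verifications and the isomorphism statement will follow from routine diagram chases.

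First I would apply Proposition~\ref{TotalKExtend} to extend $\alpha_*$ to a $\Lambda$-morphism $\underline{\alpha}$, and Proposition~\ref{PropExtendAlgK1} to produce $\beta_0 \colon \Ka(A) \to \Ka(B)$ satisfying \eqref{eq:compatibility}. To measure the failure of \eqref{eq:compatibility2}, set
\begin{equation}
\eta^{(n)}(x) \coloneqq \beta_0(\zeta^{(n)}_A(x)) - \zeta^{(n)}_B(\alpha_0^{(n)}(x)), \quad x \in K_0(A;\Zn{n}).
\end{equation}
Proposition~\ref{prop:zeta-def} gives $\minusa_A \circ \zeta^{(n)}_A = \nu^{(n)}_{0,A}$ and similarly for $B$, the $\Lambda$-property gives $\alpha_1 \circ \nu^{(n)}_{0,A} = \nu^{(n)}_{0,B} \circ \alpha_0^{(n)}$, and the right-hand square of \eqref{eq:compatibility} gives $\minusa_B \circ \beta_0 = \alpha_1 \circ \minusa_A$; together these yield $\minusa_B \circ \eta^{(n)} = 0$, so $\eta^{(n)}$ takes values in $\ker \minusa_B = \im \overline{\Th}_B$. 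Combining the left square of \eqref{eq:newmap1}, the $\Lambda$-relation $\alpha_0^{(n)} \circ \mu^{(n)}_{0,A} = \mu^{(n)}_{0,B} \circ \alpha_0$, and the compatibility $\beta_0 \circ \Th_A = \Th_B \circ \gamma$, one further obtains $\eta^{(n)} \circ \mu^{(n)}_{0,A} = 0$.

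Next I would assemble the $\eta^{(n)}$'s into a single map $\tilde r \colon \mathrm{Tor}(K_1(A)) \to \Aff T(B)/\overline{\im \rho_B}$, identifying the codomain with $\im \overline{\Th}_B$ via $\overline{\Th}_B^{-1}$. For $t \in \mathrm{Tor}(K_1(A))$ of order dividing $n$, exactness of \eqref{eq:bockstein-2} supplies $x \in K_0(A;\Zn{n})$ with $\nu^{(n)}_{0,A}(x) = t$, and I set $\tilde r(t) \coloneqq -\overline{\Th}_B^{-1}(\eta^{(n)}(x))$. Independence of the choice of $x$ is immediate from $\eta^{(n)} \circ \mu^{(n)}_{0,A} = 0$ together with $\ker \nu^{(n)}_{0,A} = \im \mu^{(n)}_{0,A}$. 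For the crucial independence of $n$, given $n \mid N$ I take $x_N \coloneqq \kappa^{(N,n)}_{0,A}(x)$: the Bockstein identity $\nu^{(N)}_{0,A} \circ \kappa^{(N,n)}_{0,A} = \nu^{(n)}_{0,A}$ (a consequence of \eqref{eq:bockstein-commute}) gives $\nu^{(N)}_{0,A}(x_N) = t$, while $\zeta^{(N)}_A \circ \kappa^{(N,n)}_{0,A} = \zeta^{(n)}_A$ from \eqref{eq:newmap2} together with the $\Lambda$-naturality $\alpha_0^{(N)} \circ \kappa^{(N,n)}_{0,A} = \kappa^{(N,n)}_{0,B} \circ \alpha_0^{(n)}$ delivers $\eta^{(N)}(x_N) = \eta^{(n)}(x)$. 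Additivity of $\tilde r$ then follows by realizing any two torsion elements $t_1,t_2$ inside a common $K_0(A;\Zn{N})$ and using additivity of $\eta^{(N)}$.

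Finally, since $\Aff T(B)/\overline{\im \rho_B}$ is divisible and hence injective in the category of abelian groups, $\tilde r$ extends to a homomorphism $r \colon K_1(A) \to \Aff T(B)/\overline{\im \rho_B}$, and I set $\beta \coloneqq \beta_0 + \overline{\Th}_B \circ r \circ \minusa_A$. The compatibility \eqref{eq:compatibility} persists because $\minusa_B \circ \overline{\Th}_B = 0$ and $\minusa_A \circ \Th_A = 0$, while for $x \in K_0(A;\Zn{n})$ the computation $(\overline{\Th}_B \circ r \circ \minusa_A)(\zeta^{(n)}_A(x)) = \overline{\Th}_B(\tilde r(\nu^{(n)}_{0,A}(x))) = -\eta^{(n)}(x)$ delivers \eqref{eq:compatibility2}. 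For the isomorphism statement, Proposition~\ref{TotalKExtend} provides $\underline{\alpha}$ as an isomorphism, and applying the five lemma to \eqref{eq:building-beta} exactly as in the proof of Proposition~\ref{PropExtendAlgK1} shows that any $\beta$ making \eqref{eq:compatibility} commute is automatically an isomorphism.
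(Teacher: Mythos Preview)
Your proof is correct and follows essentially the same approach as the paper: extend $\alpha_*$ to $\underline{\alpha}$ via Proposition~\ref{TotalKExtend}, obtain a preliminary $\beta_0$ from Proposition~\ref{PropExtendAlgK1}, measure the defect $\eta^{(n)}$ in \eqref{eq:compatibility2}, show it descends to a well-defined map on $\mathrm{Tor}(K_1(A))$ (using \eqref{eq:newmap2} for independence of $n$), extend by injectivity of the divisible group $\Aff T(B)/\overline{\im\rho_B}$, and correct $\beta_0$ accordingly. The only differences are notational (the paper writes the correction as $\beta'-\hat{s}\circ\minusa_A$ with $\hat{s}$ valued directly in $\ker\minusa_B$, whereas you pass through $\overline{\Th}_B^{-1}$ and absorb a sign into $\tilde r$), and the final isomorphism argument is identical.
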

\begin{proof}
By Proposition~\ref{TotalKExtend}, we may extend $\alpha_\ast$ to $\underline \alpha \colon \underline K(A) \to \underline K(B)$, and by Proposition~\ref{PropExtendAlgK1}, there exists $\beta' \colon \Ka(A) \to \Ka(B)$ such that
\begin{equation}\label{Extension:E1}
\beta' \circ \Th_A = \Th_B \circ \gamma\quad\text{ and }\quad\minusa_B \circ \beta' = \alpha_1 \circ \minusa_A.
\end{equation}
Therefore, $\beta'$ makes the first diagram \eqref{eq:compatibility} commute
from the definition of a $\underline{K}T_u$-morphism, but it need not
make \eqref{eq:compatibility2} commute.  The rest of the proof
consists of identifying an appropriate correcting term to address this.

For $n\geq 2$, consider the maps
\begin{equation}\label{eq:zetacomm}
\beta' \circ \zeta^{(n)}_A - \zeta^{(n)}_B \circ \alpha^{(n)}_0 \colon K_0(A;\Zn{n}) \to \Ka(B).
\end{equation}
The plan is to prove that these maps factor through maps $s_n \colon \mathrm{Tor}(K_1(A),
\Zn{n}) \to \ker\minusa_B$ as compositions $s_n\circ \overline \nu^{(n)}_{0,A}$, and moreover that the resulting $s_n$ are compatible, and so define a 
homomorphism $s \colon \mathrm{Tor}(K_1(A)) \to \ker\minusa_B$. This will provide the needed correcting term.

Fix $n\geq 2$ for the next two paragraphs. Since $\underline \alpha$ is a $\Lambda$-homomorphism and
$(\alpha_\ast , \gamma)$ is a morphism on $KT_u$, we get
\begin{equation}
\begin{array}{rcl}
  \zeta_B^{(n)}\circ\alpha_0^{(n)} \circ\mu_{0,A}^{(n)}
  &\stackrel{(\Lambda)}=&
  \zeta_B^{(n)} \circ\mu_{0,B}^{(n)} \circ\alpha_0 \\
  &\stackrel{\eqref{eq:newmap1}}{=}&
  \Th_B \circ\tfrac{1}{n} \rho_B \circ\alpha_0 \\
  &\stackrel{(KT_u)}=&
  \Th_B \circ\tfrac{1}{n} \gamma \circ\rho_A.
\end{array}
\end{equation}
  Since $\gamma$ is $\mathbb R$-linear, it follows that $\tfrac{1}{n} \gamma = \gamma \circ \tfrac{1}{n}$. 
Hence, continuing the
computations, we get
\begin{equation}
\begin{array}{rcl}
  \Th_B\circ \tfrac{1}{n} \gamma \circ \rho_A
  &=& \Th_B \circ \gamma \circ \tfrac{1}{n} \rho_A \\
  &\stackrel{\eqref{Extension:E1}}=&
  \beta' \circ \Th_A \circ \tfrac{1}{n} \rho_A \\
  &\stackrel{\eqref{eq:newmap1}}{=}&
  \beta'\circ \zeta_A^{(n)} \circ \mu^{(n)}_{0,A}.
\end{array}
\end{equation}
Hence the map $\beta' \circ \zeta^{(n)}_A - \zeta^{(n)}_B \circ \alpha^{(n)}_0$
vanishes on the image of $\mu^{(n)}_{0,A}$. By exactness of \eqref{eq:bockstein-2}, this image is $\ker\nu^{(n)}_{0,A}$, and so the
map factors through $\mathrm{Tor}(K_1(A), \Zn{n})$ via the map $\overline \nu^{(n)}_{0,A} \colon
K_0(A;\Zn{n}) \twoheadrightarrow \mathrm{Tor}(K_1(A),\Zn{n})$ from
\eqref{eq:bockstein-new}.

Similarly, we find that
\begin{equation}
\begin{array}{rcl}
  \minusa_B \circ \zeta^{(n)}_B \circ \alpha_0^{(n)}
  &\stackrel{\eqref{eq:newmap1}}{=}&
  \nu_{0,B}^{(n)} \circ \alpha_0^{(n)}\\
  &
  \stackrel{(\Lambda)}=&
  \alpha_1 \circ \nu^{(n)}_{0,A} \\
  &\stackrel{\eqref{eq:newmap1}}{=}&
  \alpha_1 \circ \minusa_A \circ \zeta^{(n)}_A \\
  &\stackrel{\eqref{Extension:E1}}=&
  \minusa_B \circ \beta' \circ \zeta^{(n)}_A
\end{array}
\end{equation}
and so the range of $\beta' \circ \zeta^{(n)}_A - \zeta^{(n)}_B\circ
\alpha^{(n)}_0$ is contained in $\ker \minusa_B \subseteq
\Ka(B)$.
It follows that there is a unique homomorphism
\begin{equation}
  s_n \colon \mathrm{Tor}(K_1(A), \Zn{n}) \longrightarrow \ker \minusa_B \subseteq \Ka(B)
\end{equation}
such that
\begin{equation}\label{eq:rndef}
  s_n \circ \overline \nu^{(n)}_{0,A}
  =
  \beta' \circ\zeta^{(n)}_A - \zeta^{(n)}_B \circ\alpha^{(n)}_0.
\end{equation}

Allowing $n$ to vary, we identify the torsion subgroup
$\mathrm{Tor}(K_1(A))$ of $K_1(A)$ with the union $\bigcup_{n\geq 2}
\mathrm{Tor}(K_1(A), \Zn{n})$. Let $s_{nm,n}$ denote the restriction
of $s_{nm}$ to $\mathrm{Tor}(K_1(A), \Zn{n})$. We will show that
$s_{nm,n} = s_n$ for all $n,m\geq 2$, so that these maps induce a
homomorphism $s\colon \mathrm{Tor}(K_1(A)) \to \ker \minusa_B$.
By uniqueness of $s_n$ in \eqref{eq:rndef}, it suffices to show that
\begin{equation}\label{eq:Thrnmnufinal}
  s_{nm,n} \circ \overline \nu^{(n)}_{0,A}
  =
  \beta' \circ \zeta^{(n)}_A - \zeta^{(n)}_B \circ \alpha^{(n)}_0.
\end{equation}
As noted in \eqref{eq:bockstein-commute}, $\nu^{(nm)}_{0,A}
\circ \kappa^{(nm,n)}_{0,A} = \nu^{(n)}_{0,A} \colon K_0(A;\Zn{n}) \to
K_1(A)$, so it follows that $\overline \nu^{(n)}_{0,A}$ is the
corestriction of $\overline \nu^{(nm)}_{0,A} \circ \kappa^{(nm,n)}_A$ to
$\mathrm{Tor}(K_1(A), \Zn{n})$.  Hence
\begin{equation}\label{eq:Thrnmnu}
  \begin{aligned}
    s_{nm,n} \circ \overline \nu^{(n)}_{0,A} &\stackrel{\phantom{\eqref{eq:rndef}}}{=} s_{nm} \circ \overline \nu^{(nm)}_{0,A} \circ \kappa^{(nm,n)}_A\\
    &\stackrel{\eqref{eq:rndef}}{=} (\beta' \circ \zeta^{(nm)}_A -
    \zeta^{(nm)}_B \circ \alpha^{(nm)}_0)\circ \kappa^{(nm,n)}_A.
  \end{aligned}
\end{equation}
By commutativity of \eqref{eq:newmap2}, we have $\beta'\circ \zeta^{(nm)}_A \circ \kappa^{(nm,n)}_A = \beta'\circ \zeta^{(n)}_A$. Also
\begin{equation}
\zeta^{(nm)}_B \circ \alpha_0^{(nm)} \circ \kappa^{(nm,n)}_{0,A} \stackrel{(\Lambda)}= \zeta^{(nm)}_B \circ \kappa^{(nm,n)}_{0,B} \circ \alpha_0^{(n)} \stackrel{\eqref{eq:newmap2}}= \zeta^{(n)}_B \circ \alpha^{(n)}_0.
\end{equation}
Combining these last two equations with \eqref{eq:Thrnmnu}, we obtain
\eqref{eq:Thrnmnufinal}, as desired. In conclusion, the maps $s_n$ for
$n\geq 2$ induce a homomorphism
\begin{equation}
  s\colon \mathrm{Tor}(K_1(A)) \longrightarrow \ker \minusa_B.
\end{equation}

Since $\Aff T(B) /\overline{\im\rho_B}$ is divisible, and
hence injective (see \cite[Corollary 2.3.2]{Weibel94}), so is $\ker
\minusa_B$, because the two groups are isomorphic via $\overline{\Th}_B$.
Accordingly, $s$ can be extended to a homomorphism
\begin{equation}
\label{eq:ExtensionDefhats}
  \hat s \colon K_1(A) \longrightarrow \ker \minusa_B \subseteq \Ka(B).
\end{equation}
We now use this to correct the definition of $\beta'$ by setting
\begin{equation}\label{Extension:E3}
  \beta \coloneqq \beta' - \hat s \circ \minusa_A \colon \Ka(A) \to \Ka(B).
\end{equation}

It remains to prove that $(\underline \alpha, \beta, \gamma)$ is a morphism
of $\inv$.  We already know that $(\alpha_\ast , \gamma)$ is a
morphism of $KT_u$, and that $\underline \alpha$ is a
$\Lambda$-morphism, so we only need to show that the last two
squares of \eqref{eq:compatibility} commute and that
\eqref{eq:compatibility2} commutes.  Since $\minusa_A\circ \overline{\Th}_A
= 0$, we have
\begin{equation}
 \beta\circ \Th_A\stackrel{\eqref{Extension:E3}}{=}\beta'\circ \Th_A \stackrel{\eqref{Extension:E1}}{=} \Th_A \circ \gamma.
\end{equation}
Moreover,
\begin{equation}
\minusa_B\circ \beta\stackrel{\eqref{eq:ExtensionDefhats}}=\minusa_B\circ \beta' \stackrel{\eqref{Extension:E1}}{=} \alpha_1 \circ \minusa_A.
\end{equation}
Therefore, the last two squares of \eqref{eq:compatibility} commute.

For commutativity of \eqref{eq:compatibility2}, fix $n\geq 2$.
Recall that $\hat s$ is an extension of $s_n$ and
that $\overline \nu^{(n)}_{0,A}$ is the corestriction of
$\nu^{(n)}_{0,A}$ to $\mathrm{Tor}(K_1(A),\Zn{n})$, which is the
domain of $s_n$.  It follows that $\hat s \circ \nu^{(n)}_{0,A} = s_n
\circ \overline \nu^{(n)}_{0,A}$. Hence
\begin{equation}\label{Extension:E4}
  \hat s \circ \minusa_A \circ \zeta^{(n)}_A
  \stackrel{\eqref{eq:newmap1}}=
  s_n \circ \overline \nu^{(n)}_{0,A}
  \stackrel{\eqref{eq:rndef}}{=}
  \beta'\circ \zeta^{(n)}_A - \zeta^{(n)}_B \circ \alpha^{(n)}_0,
\end{equation}
and so
\begin{equation}
\begin{aligned}
\beta \circ \zeta^{(n)}_A 
&\stackrel{\eqref{Extension:E3}}= 
(\beta' -\hat s\circ \minusa_A)\circ \zeta^{(n)}_A \\
&\stackrel{\eqref{Extension:E4}}{=} \beta' \circ \zeta^{(n)}_A - (\beta' \circ \zeta^{(n)}_A - \zeta^{(n)}_B \circ \alpha^{(n)}_0) = \zeta^{(n)}_B \circ \alpha^{(n)}_0,
\end{aligned}
\end{equation}
establishing commutativity of \eqref{eq:compatibility2}.  Therefore
$(\underline \alpha, \beta, \gamma)$ defines a well-defined morphism
$\inv(A) \to \inv(B)$.

Finally, Propositions~\ref{PropExtendAlgK1} and~\ref{TotalKExtend}
ensure that $\underline \alpha$ and $\beta$ are isomorphisms if
$\alpha_\ast$ and $\gamma$ are, which implies that $(\underline
\alpha, \beta, \gamma)$ is an isomorphism whenever $(\alpha_\ast,
\gamma)$ is.
\end{proof}

\begin{remark}\label{ThmExtension:Rem}
  Note that the proof of Theorem~\ref{ThmExtension} shows that given a $KT_u$-morphism $(\alpha_*,\gamma):KT_u(A) \to KT_u(B)$ and any $\underline{\alpha}:\underline{K}(A) \to \underline{K}(B)$ extending $\alpha_*$, there is an $\inv$-morphism of the form $(\underline{\alpha},\beta,\gamma)$.
\end{remark}

The following lemma is used in the computation of $KL(A,J_B)$ in Theorem~\ref{intro:calcKL} to show that every $\inv$-morphism in part \ref{intro:calcKL.2} comes from an appropriate $KL$-class.
Its proof illustrates the role played by the maps $\zeta^{(n)}$.

\begin{lemma}\label{Compatibility.Lem}
  Let $A$ and $B$ be unital $C^*$-algebras. If
  \begin{equation}
  (\underline \alpha, \beta, \gamma), (\underline \alpha' , \beta' ,
  \gamma') \colon \inv(A) \longrightarrow \inv(B)
  \end{equation}
  are morphisms such that
  $\underline \alpha = \underline \alpha'$ and $\gamma =
  \gamma'$, then there is a unique homomorphism 
  \begin{equation}
  r\colon K_1(A)/\mathrm{Tor}(K_1(A))\longrightarrow\ker \minusa_B
\end{equation}
such that $\beta-\beta'\colon \Ka(A)\to\Ka(B)$ is the composition
\begin{equation}
\begin{tikzcd}[column sep = 3.8ex]
    \Ka(A) \ar[r, "\minusa_A"] & K_1(A) \ar[r, two heads] & K_1(A) / \mathrm{Tor}(K_1(A)) \ar[r, "r"] & \ker \minusa_B \ar[r,tail] & \Ka(B).
\end{tikzcd}
\end{equation}
\end{lemma}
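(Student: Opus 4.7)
The plan is to write $\delta \coloneqq \beta - \beta'$ and successively extract information about $\delta$ from the compatibility conditions satisfied by the two $\inv$-morphisms, until we have factored $\delta$ through the map required by the statement.

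First I would use \eqref{eq:compatibility}. Since $\beta \circ \Th_A = \Th_B \circ \gamma = \beta' \circ \Th_A$, we get $\delta \circ \Th_A = 0$, so $\delta$ vanishes on $\im \Th_A$. By Properties~\ref{ThomsenProp}\ref{ThomsenProp.2}, $\im \Th_A = \ker \minusa_A$, so $\delta$ factors uniquely through $\minusa_A$ (which is surjective by exactness of \eqref{ThomsenExtension2}) as $\delta = r_1 \circ \minusa_A$ for some homomorphism $r_1 \colon K_1(A) \to \Ka(B)$. A symmetric application of the right-hand square of \eqref{eq:compatibility} gives $\minusa_B \circ \delta = \alpha_1 \circ \minusa_A - \alpha_1 \circ \minusa_A = 0$, so the image of $\delta$, and hence of $r_1$, lies in $\ker \minusa_B$.

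The key step is then to use \eqref{eq:compatibility2}, i.e., the commutation with the maps $\zeta^{(n)}$, to show that $r_1$ vanishes on $\mathrm{Tor}(K_1(A))$. For any $n \geq 2$ and $x \in K_0(A; \Zn{n})$, the equality $\beta \circ \zeta^{(n)}_A = \zeta^{(n)}_B \circ \alpha^{(n)}_0 = \beta' \circ \zeta^{(n)}_A$ gives $\delta \circ \zeta^{(n)}_A = 0$. Combining this with the factorization $\nu^{(n)}_{0,A} = \minusa_A \circ \zeta^{(n)}_A$ supplied by Proposition~\ref{prop:zeta-def}, we compute
\begin{equation}
    r_1\bigl(\nu^{(n)}_{0,A}(x)\bigr) = r_1\bigl(\minusa_A(\zeta^{(n)}_A(x))\bigr) = \delta\bigl(\zeta^{(n)}_A(x)\bigr) = 0.
\end{equation}
By exactness of \eqref{eq:bockstein-2}, the image of $\nu^{(n)}_{0,A}$ is precisely $\mathrm{Tor}(K_1(A), \Zn{n})$. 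Letting $n$ range over all integers $\geq 2$ shows that $r_1$ vanishes on all of $\mathrm{Tor}(K_1(A)) = \bigcup_{n\geq 2}\mathrm{Tor}(K_1(A),\Zn{n})$.

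Consequently, $r_1$ descends uniquely to a homomorphism $r \colon K_1(A)/\mathrm{Tor}(K_1(A)) \to \ker \minusa_B$, and by construction $\beta - \beta' = \delta$ equals the composition in the statement. Uniqueness of $r$ follows from the surjectivity of both $\minusa_A$ and the quotient map $K_1(A) \twoheadrightarrow K_1(A)/\mathrm{Tor}(K_1(A))$. I don't anticipate any real obstacle; the only conceptually substantive step is recognizing that the $\zeta^{(n)}$-compatibility of \eqref{eq:compatibility2} is exactly what is needed to kill the torsion of $K_1(A)$ via the factorization $\nu^{(n)}_{0,A} = \minusa_A \circ \zeta^{(n)}_A$.
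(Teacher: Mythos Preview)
Your proof is correct and follows essentially the same approach as the paper's: factor $\beta-\beta'$ through $\minusa_A$ using the $\Th$-compatibility, use the $\zeta^{(n)}$-compatibility together with $\nu^{(n)}_{0,A}=\minusa_A\circ\zeta^{(n)}_A$ to kill torsion, and use the $\minusa$-compatibility to land in $\ker\minusa_B$. The only difference is the order in which you treat the range and the torsion-vanishing, which is immaterial.
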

\begin{proof}
  We have $(\beta - \beta') \circ \Th_A = \Th_B \circ (\gamma - \gamma') = 0$.  It
  follows that $\beta- \beta'$ vanishes on $\im \Th_A = \ker
  \minusa_A$ and thus factors uniquely as $\beta-\beta'=\hat r \circ \minusa_A$, for some map $\hat r \colon
  K_1(A) \to \Ka(B)$. It suffices to show that $\hat r$ vanishes on
  $\mathrm{Tor}(K_1(A))$ and takes values in $\ker \minusa_B$. Since
  for any $n\geq 2$ we have
  \begin{equation}
\begin{array}{rcl}
    \hat r \circ \nu_{0,A}^{(n)}
    &\stackrel{\eqref{eq:newmap1}}{=}&
    \hat r \circ \minusa_A \circ \zeta_A^{(n)}
    = (\beta-\beta') \circ \zeta_A^{(n)} \\
    &\stackrel{\eqref{eq:compatibility2}}{=}&
    \zeta_B^{(n)} \circ (\alpha_0^{(n)}-\alpha_0'^{(n)})
    = 0,
\end{array}
  \end{equation}
  it follows that $\hat r$ vanishes on the image of $\nu_{0,A}^{(n)}$.
By exactness of \eqref{eq:bockstein-2}, this image is $\mathrm{Tor}(K_1(A), \Zn{n})$, and as such, $\hat r$ vanishes on
$\mathrm{Tor}(K_1(A))$. Notice that
\begin{equation}
  \minusa_B \circ \hat r \circ \minusa_A
  = \minusa_B \circ (\beta - \beta')
  \stackrel{\eqref{Extension:E1}}{=}
  (\alpha_1 - \alpha_1') \circ \minusa_A
  = 0.
\end{equation}
Since $\minusa_A$ is surjective, it follows that $\hat r$
takes values in $\ker \minusa_B$.
\end{proof}

Although we will not use it in this paper, we note that 
compatibility with the maps $\zeta^{(n)}$ is automatic when $K_1(A)$ is torsion-free. Gong, Lin, and Niu made a similar observation, by means of a different approach in their framework (see \cite[Corollary 5.13 with condition (4)]{Gong-Lin-etal23}).

\begin{proposition}
\label{prop:NewMapsTorsion}
  Suppose that $A$ and $B$ are unital $C^*$-algebras with $K_1(A)$
  torsion-free. Then the condition of commutativity of \eqref{eq:compatibility2} 
  is automatic in the definition of a
  $\inv$-morphism.
\end{proposition}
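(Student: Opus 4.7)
The plan is to exploit exactness of the sequence \eqref{eq:bockstein-new}: when $K_1(A)$ is torsion-free, the term $\mathrm{Tor}(K_1(A),\Zn{n})$ vanishes, so $\bar{\mu}_{0,A}^{(n)}\colon K_0(A)\otimes\Zn{n}\to K_0(A;\Zn{n})$ is an isomorphism. In particular, $\mu_{0,A}^{(n)}\colon K_0(A)\to K_0(A;\Zn{n})$ is surjective. Therefore, to show that \eqref{eq:compatibility2} commutes, it suffices to show that $\beta\circ\zeta_A^{(n)}\circ\mu_{0,A}^{(n)}=\zeta_B^{(n)}\circ\alpha_0^{(n)}\circ\mu_{0,A}^{(n)}$.

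Both sides can then be simplified using the already-assumed compatibilities. For the left-hand side, we have
\begin{equation}
\beta\circ\zeta_A^{(n)}\circ\mu_{0,A}^{(n)}
\stackrel{\eqref{eq:newmap1}}{=} \beta\circ\Th_A\circ\tfrac{1}{n}\rho_A
\stackrel{\eqref{eq:compatibility}}{=} \Th_B\circ\gamma\circ\tfrac{1}{n}\rho_A.
\end{equation}
For the right-hand side, using that $\underline{\alpha}$ is a $\Lambda$-morphism (so $\alpha_0^{(n)}\circ\mu_{0,A}^{(n)}=\mu_{0,B}^{(n)}\circ\alpha_0$),
\begin{equation}
\zeta_B^{(n)}\circ\alpha_0^{(n)}\circ\mu_{0,A}^{(n)}
=\zeta_B^{(n)}\circ\mu_{0,B}^{(n)}\circ\alpha_0
\stackrel{\eqref{eq:newmap1}}{=} \Th_B\circ\tfrac{1}{n}\rho_B\circ\alpha_0
\stackrel{\eqref{eq:compatibility}}{=} \Th_B\circ\tfrac{1}{n}\gamma\circ\rho_A.
\end{equation}
The two sides agree because $\gamma\colon\Aff T(A)\to\Aff T(B)$ is $\mathbb R$-linear, so $\gamma\circ\tfrac{1}{n}=\tfrac{1}{n}\gamma$.

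There is no real obstacle here: the argument is a direct rerun of the piece of the proof of Theorem~\ref{ThmExtension} that showed $\beta'\circ\zeta_A^{(n)}-\zeta_B^{(n)}\circ\alpha_0^{(n)}$ vanishes on $\im\mu_{0,A}^{(n)}$, combined with the observation that under the torsion-free hypothesis, $\im\mu_{0,A}^{(n)}$ is all of $K_0(A;\Zn{n})$. The only subtlety worth noting is the use of $\mathbb R$-linearity of $\gamma$ to commute $\gamma$ past multiplication by $\tfrac{1}{n}$; this is precisely why the definition of a $\inv$-morphism requires $\gamma$ to be linear, not merely a homomorphism of abelian groups.
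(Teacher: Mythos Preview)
Your proof is correct and follows essentially the same approach as the paper: both use surjectivity of $\mu_{0,A}^{(n)}$ (from torsion-freeness of $K_1(A)$) to reduce the desired equality to its precomposition with $\mu_{0,A}^{(n)}$, and then verify that via \eqref{eq:newmap1}, the $\Lambda$-morphism property of $\underline{\alpha}$, the compatibility squares in \eqref{eq:compatibility}, and $\mathbb R$-linearity of $\gamma$. The only cosmetic difference is that the paper chains the computation from one side to the other, whereas you compute both sides separately and compare.
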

\begin{proof}
  Let $(\underline{\alpha},\beta,\gamma)$ be as in the definition of a
  $\inv$-morphism, except we only assume that \eqref{eq:compatibility}
  commutes. We will show that commutativity of \eqref{eq:compatibility2} 
  follows. Fix $n\geq 2$. We have
  \begin{equation}
    \beta \circ \zeta_A^{(n)} \circ \mu_{0,A}^{(n)}
    \stackrel{\eqref{eq:newmap1}}{=}
    \beta \circ \Th_A \circ \tfrac{1}{n} \rho_A
    \stackrel{\eqref{eq:compatibility}}{=}
    \Th_B \circ \gamma \circ \tfrac{1}{n}  \rho_A .
  \end{equation}
  Since $\gamma$ is $\mathbb R$-linear we have $\gamma \circ \tfrac{1}{n} =
  \tfrac{1}{n} \gamma$. 
Therefore, continuing the computation above,
  we get
  \begin{equation}
\begin{array}{rcl}
    \Th_B \circ \tfrac{1}{n}  \gamma  \circ \rho_A
    &\stackrel{\eqref{eq:compatibility}}{=}&
    \Th_B \circ \tfrac{1}{n}\rho_B  \circ \alpha_0 \\
    &\stackrel{\eqref{eq:newmap1}}{=}&
    \zeta_B^{(n)}\circ \mu_{0,B}^{(n)}  \circ \alpha_0
    \stackrel{(\Lambda)}{=}
    \zeta_B^{(n)} \circ \alpha_0^{(n)} \circ \mu_{0,A}^{(n)}.
\end{array}
  \end{equation}
  Multiplication by $n$ is an injective endomorphism on $K_1(A)$
  because $K_1(A)$ is torsion-free, and so $\mu_{0,A}^{(n)} \colon K_0(A)
  \to K_0(A; \Zn{n})$ is surjective by exactness of \eqref{eq:bockstein-2}. 
  Therefore, $\beta \circ \zeta_A^{(n)} =\zeta_B^{(n)} \circ \alpha_0^{(n)}$ as desired.
\end{proof}

\subsection{The sequence algebra}\label{sec:sequence-algebra}

In this short section we define the sequence algebra $B_\infty$ of a
$C^*$-algebra, record Kirchberg's $\epsilon$-test for later use, and observe that $B\hookrightarrow
B_\infty$ is injective on $\inv$.  

\begin{definition}
Let $B$ be a $C^*$-algebra.  The \emph{sequence algebra} of $B$, denoted $B_\infty$, is the quotient $\ell^\infty(B)/c_0(B)$ of bounded sequences modulo those tending to zero.  There is a canonical embedding
\begin{equation}
\label{eq:iota}
\iota_B\colon B\to B_\infty
\end{equation}
induced by embedding $B$ into $\ell^\infty(B)$ as constant sequences.
\end{definition}

As is standard in the literature, we will typically use representative
sequences in $\ell^\infty(B)$ to denote elements of $B_\infty$.  One
important feature of the sequence algebra is that one can often use
reindexing arguments to convert approximate properties into exact ones.
Our preference is to formulate this using a version of
Kirchberg's $\epsilon$-test from \cite{Kirchberg06} suited to sequence
algebras. The form we quote below can be proven by an easy modification of the proof of 
\cite[Lemma A.1]{Kirchberg06} or \cite[Lemma 3.1]{Kirchberg-Rordam14}.

\begin{lemma}[Kirchberg's $\epsilon$-test]
  \label{lem:EpsTest}
  Let $(X_n)_{n=1}^\infty$ be a sequence of non-empty sets and write
  $X\coloneqq \prod_{n=1}^\infty X_n$. Given functions $f_n^{(k)}\colon X_n\to
  [0,\infty]$ for $k,n\in\mathbb N$, define functions $f^{(k)}\colon
  X\to [0,\infty]$ by $f^{(k)}((x_n)_{n=1}^\infty)\coloneqq
  \limsup_nf^{(k)}_n(x_n)$.  Suppose that for all $\epsilon>0$ and
  $k_0\in\mathbb N$, there exists $x\in X$ with $f^{(k)}(x)<\epsilon$
  for $k=1,\dots,k_0$.  Then there exists $x\in X$ with $f^{(k)}(x)=0$
  for all $k\in\mathbb N$.
\end{lemma}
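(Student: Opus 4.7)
The plan is to prove this by a straightforward diagonal selection argument, extracting for each $k_0 \in \mathbb{N}$ an $x^{(k_0)} \in X$ that is $(1/k_0)$-good on the first $k_0$ functionals and then stitching these together.

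First, using the hypothesis with $\epsilon \coloneqq 1/k_0$, for each $k_0 \in \mathbb{N}$ I would choose $x^{(k_0)} = (x_n^{(k_0)})_{n=1}^\infty \in X$ such that $f^{(k)}(x^{(k_0)}) < 1/k_0$ for $k = 1, \ldots, k_0$. Unwinding the definition of $f^{(k)}$ as a $\limsup$, there exists $N(k_0) \in \mathbb{N}$ with
\begin{equation}
    f_n^{(k)}(x_n^{(k_0)}) < \tfrac{1}{k_0} \quad \text{for all } n \geq N(k_0) \text{ and all } k = 1, \ldots, k_0.
\end{equation}
By replacing $N(k_0)$ with $\max\{N(k_0), N(k_0-1)+1\}$ if needed, I may arrange $N(1) < N(2) < N(3) < \cdots$.

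Next, I define $x = (x_n)_{n=1}^\infty \in X$ by the diagonal rule: for $n < N(1)$ pick any element of $X_n$ (which is non-empty), and for $n \geq N(1)$, let $k_0(n)$ be the unique integer with $N(k_0(n)) \leq n < N(k_0(n)+1)$, and set $x_n \coloneqq x_n^{(k_0(n))}$. Since $N(k_0) \to \infty$, we have $k_0(n) \to \infty$ as $n \to \infty$.

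Finally, I verify $f^{(k)}(x) = 0$ for every fixed $k$. Given $\epsilon > 0$, choose $k_0 \geq \max(k, 1/\epsilon)$. Whenever $n \geq N(k_0)$, we have $k_0(n) \geq k_0 \geq k$, so by construction
\begin{equation}
    f_n^{(k)}(x_n) = f_n^{(k)}\big(x_n^{(k_0(n))}\big) < \tfrac{1}{k_0(n)} \leq \tfrac{1}{k_0} \leq \epsilon.
\end{equation}
Hence $\limsup_n f_n^{(k)}(x_n) \leq \epsilon$, and as $\epsilon > 0$ was arbitrary, $f^{(k)}(x) = 0$.

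There is no real obstacle; the only thing to be careful about is the indexing of the diagonal (ensuring $N(k_0)$ is strictly increasing so that $k_0(n)$ is well-defined and tends to infinity), and observing that non-emptiness of each $X_n$ is needed to fill in $x_n$ for the small indices $n < N(1)$.
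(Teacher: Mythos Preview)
Your proof is correct; the diagonal selection argument is exactly the standard one. The paper itself does not give a proof of this lemma, instead stating that it ``can be proven by an easy modification of the proof of \cite[Lemma A.1]{Kirchberg06} or \cite[Lemma 3.1]{Kirchberg-Rordam14}''; your write-up is essentially what such a proof looks like.
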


The following application of the $\epsilon$-test is completely standard. (See the proof of \cite[Lemma 1.17]{Bosa-Brown-etal15}, for example.)

\begin{lemma}
\label{lem:AUEImpliesUE}
Let $A$ and $B$ be $C^*$-algebras with $A$ separable, and let $\phi,\psi\colon A\to B_\infty$ be $^*$-homomorphisms.  Then $\phi$ and $\psi$ are approximately unitarily equivalent if and only if they are unitarily equivalent.
\end{lemma}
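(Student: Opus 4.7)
The reverse direction is immediate, since a unitary $u \in (B_\infty)^\dagger$ satisfying $u\phi(a)u^* = \psi(a)$ for all $a \in A$ gives an approximate unitary equivalence via the constant sequence. The substantive content is the forward direction, which is a textbook application of Kirchberg's $\epsilon$-test (Lemma~\ref{lem:EpsTest}).

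The plan is as follows. Using separability of $A$, fix a countable dense subset $\{a_k\}_{k\geq 1} \subseteq A$, and for each $k$ fix once and for all bounded sequences $(\phi(a_k)_n)_n, (\psi(a_k)_n)_n \in \ell^\infty(B)$ representing $\phi(a_k)$ and $\psi(a_k)$ in $B_\infty$. To feed the $\epsilon$-test, I take $X_n \coloneqq U(B^\dagger)$ and define
\begin{equation}
  f^{(k)}_n \colon X_n \longrightarrow [0,\infty], \qquad f^{(k)}_n(v) \coloneqq \|v\phi(a_k)_n v^* - \psi(a_k)_n\|.
\end{equation}
Given a sequence $(v_n) \in \prod_n X_n$, it represents an element $v \in (B^\dagger)_\infty \supseteq (B_\infty)^\dagger$ which is automatically unitary, and $f^{(k)}((v_n)) = \limsup_n f^{(k)}_n(v_n) = \|v\phi(a_k)v^* - \psi(a_k)\|$ because the norm on the sequence algebra is the $\limsup$ of the norms in $B$.

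The hypothesis of approximate unitary equivalence gives, for each $\epsilon > 0$ and each $k_0 \in \mathbb{N}$, a unitary $w \in (B_\infty)^\dagger$ with $\|w\phi(a_k)w^* - \psi(a_k)\| < \epsilon$ for $k = 1,\dots,k_0$; lifting $w$ to a sequence $(w_n)$ of unitaries in $B^\dagger$ verifies the hypothesis of Lemma~\ref{lem:EpsTest}. The $\epsilon$-test then supplies $(u_n) \in \prod_n X_n$ with $f^{(k)}((u_n)) = 0$ for all $k$, and the corresponding unitary $u \in (B_\infty)^\dagger$ satisfies $u\phi(a_k)u^* = \psi(a_k)$ for every $k$. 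Density of $\{a_k\}$ in $A$ together with continuity of $\phi$, $\psi$, and conjugation by $u$ then promotes this to $u\phi(a)u^* = \psi(a)$ for all $a \in A$, yielding unitary equivalence.

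There is essentially no obstacle beyond bookkeeping; the only point that warrants care is the passage between unitaries in $(B_\infty)^\dagger$ and representative sequences of unitaries in $B^\dagger$, which is handled by noting that $(B_\infty)^\dagger$ embeds into $(B^\dagger)_\infty$ and that any unitary in $(B^\dagger)_\infty$ is represented by a sequence of unitaries in $B^\dagger$ (perturb a general lift by functional calculus, which introduces an error vanishing in $B_\infty$).
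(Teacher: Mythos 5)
Your proof is correct and follows exactly the approach the paper indicates (the paper itself offers no proof, only a pointer to the $\epsilon$-test argument in \cite[Lemma 1.17]{Bosa-Brown-etal15}, which is what you have reproduced). One small imprecision: the unitary produced from $(u_n)\in\prod_n U(B^\dagger)$ lives in $(B^\dagger)_\infty$, which strictly contains $(B_\infty)^\dagger$ and is not equal to it (for instance when $B$ is unital, $(B^\dagger)_\infty \cong B_\infty \oplus \ell^\infty/c_0$), so you should either push it forward to a unitary in $\mathcal M(B_\infty)$ via the ideal inclusion $B_\infty\lhd (B^\dagger)_\infty$, or — since in the paper's applications $B$ is always unital — simply take $X_n = U(B)$ so that the resulting unitary lands directly in $B_\infty$.
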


We end by checking that the inclusion $\iota_B:B\hookrightarrow B_\infty$ is injective on the total invariant.

\begin{lemma}
  \label{lem:inv-iota-injective}
  If $B$ is a unital $C^*$-algebra, then
  \begin{equation}
    \inv(\iota_B) \colon \inv(B) \longrightarrow \inv(B_\infty)
  \end{equation}
  is a monomorphism.
\end{lemma}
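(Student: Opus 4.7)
The proof reduces to checking that each constituent of $\inv(\iota_B)$ is injective: the maps $K_i(\iota_B)$ and $K_i(\iota_B;\Zn{n})$ for $i\in\{0,1\}$ and $n\geq 2$, as well as $\Aff T(\iota_B)$ and $\Ka(\iota_B)$. For $K_i(\iota_B)$, the plan is to use standard lift-and-perturb arguments in sequence algebras: projections in $M_k(B_\infty)$ lift to sequences of projections via functional calculus on self-adjoint lifts, and approximate partial isometries witnessing stable Murray--von Neumann equivalences can be perturbed to exact ones in $M_{k+m}(B)$ for large $n$; this handles $K_0$, and an analogous argument using polar-decomposed representatives of unitaries and exponential representations of the connected component handles $K_1$. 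For the coefficient groups, the plan is to use the canonical $^*$-homomorphism $\mu\colon B_\infty\otimes\mathbb{I}_n \to (B\otimes\mathbb{I}_n)_\infty$ induced by universality of the minimal tensor product, yielding the factorization $\iota_{B\otimes\mathbb{I}_n} = \mu\circ(\iota_B\otimes\mathrm{id}_{\mathbb{I}_n})$. Since the preceding lifting arguments give injectivity of $K_*(\iota_C)$ for any $C^*$-algebra $C$ (including the non-unital $C = B\otimes\mathbb{I}_n$, via unitizations), injectivity of $K_i(\iota_B;\Zn{n}) = K_{1-i}(\iota_B\otimes\mathrm{id}_{\mathbb{I}_n})$ will follow from the factorization.

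The component $\Aff T(\iota_B)$ is handled via Kadison duality: it suffices to show that the dual continuous affine map $T(\iota_B)\colon T(B_\infty) \to T(B)$ is surjective. Given $\tau \in T(B)$ and a free ultrafilter $\omega$ on $\mathbb{N}$, the functional $(b_n)+c_0(B) \mapsto \lim_{n\to\omega} \tau(b_n)$ will be a trace on $B_\infty$ restricting to $\tau$ along $\iota_B$, giving the required surjectivity.

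The main task is injectivity of $\Ka(\iota_B)$. Given $u \in U_k(B)$ with $\ka{\iota_B(u)} = 0$, there exist $m\geq k$ and commutator products $w_j \in DU_m(B_\infty)$ with $w_j \to \iota_B(u)\oplus 1_{m-k}$ in norm in $M_m(B_\infty)$. The plan is, for each $j$, to lift the individual unitaries making up $w_j$ to bounded sequences in $\ell^\infty(M_m(B))$ and apply polar decomposition at sufficiently large indices $n$ to replace each lift by an actual unitary in $U_m(B)$ representing the original. Forming the corresponding products of commutators then produces representative sequences $w_j(n) \in DU_m(B)$ for $w_j$. Using that the $B_\infty$-norm coincides with the $\limsup$ of the $B$-norms, one chooses $j$ and $n$ suitably so that $\|w_j(n) - u\oplus 1_{m-k}\|_{M_m(B)}$ is arbitrarily small, placing $u\oplus 1_{m-k}$ in $\overline{DU_m(B)}$ and thus forcing $\ka{u} = 0$ in $\Ka(B)$. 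The delicate point, and main obstacle, is orchestrating the polar-decomposition perturbations so that the approximating words genuinely lie in $DU_m(B)$ (rather than some $DU_{m'}(B)$ with $m'>m$) while controlling norms in the inductive limit topology on $U_\infty(B_\infty)$.
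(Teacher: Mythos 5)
Your overall strategy --- prove injectivity of each constituent of $\inv(\iota_B)$ --- coincides with the paper's, and the arguments for $\Aff T(\iota_B)$ (ultrafilter limit traces give surjectivity of $T(\iota_B)$, whence injectivity of $\Aff T(\iota_B)$ by duality), $K_*(\iota_B)$ (lift and perturb at large indices), and $\Ka(\iota_B)$ (lift a finite product of commutators) are the same as in the paper.

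The one genuine divergence is in $K_i(\iota_B;\Zn{n})$. You stick with the model $K_i(\,\cdot\,;\Zn{n}) = K_{1-i}(\,\cdot\,\otimes\mathbb I_n)$, which forces you to deal with the sequence algebra of the non-unital $C^*$-algebra $B\otimes\mathbb I_n$, and you propose to handle this via unitization. That does work: the split inclusion $K_*(C)\hookrightarrow K_*(C^\dagger)$ composes with the already-established injective map $K_*(\iota_{C^\dagger})$, and the composite factors through $K_*(\iota_C)\colon K_*(C)\to K_*(C_\infty)$, so $K_*(\iota_C)$ is injective even when $C$ is non-unital; combined with the factorization $\iota_{B\otimes\mathbb I_n}=\mu\circ(\iota_B\otimes\mathrm{id}_{\mathbb I_n})$, the result follows. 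The paper instead switches to the naturally isomorphic model $K_i(\,\cdot\,;\Zn{n})\cong K_i(\,\cdot\,\otimes\mathcal O_{n+1})$, via \cite[Theorem~6.4]{Schochet84}, where the tensor factor is \emph{unital} so the already-proved unital case applies verbatim. Your route keeps everything within the $\mathbb I_n$ picture at the cost of the unitization bookkeeping; the paper's is shorter but invokes model-independence of $K$-theory with coefficients (Appendix~\ref{appendix:totalappendix}).

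The ``main obstacle'' you flag in the $\Ka$ step is misplaced. Once you have unitaries $v_1,\dots,v_k,w_1,\dots,w_k\in M_{r+s}(B_\infty)$ whose product of commutators is $\epsilon$-close to $\iota_B(u)\oplus 1_s$, lifting each $v_i,w_i$ to $\ell^\infty(M_{r+s}(B))$ and polar-decomposing at large indices produces unitaries in $M_{r+s}(B)$ --- the lifts never escape to a larger matrix algebra, so the resulting commutator word sits in $DU_{r+s}(B)$ exactly as you want. The only genuinely delicate point, which both you and the paper pass over rather lightly, is that the amplification $s$ produced from the inductive-limit-topology closure may depend on $\epsilon$; this does not directly put $u$ in $\overline{DU_\infty(B)}$ (a basic open neighborhood of $u$ in $U_\infty(B)$ can shrink faster than the approximation improves as the matrix level grows). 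One clean way to close this is via the structure of $\Ka(B)$: for small $\epsilon$ the approximation gives $[u]_1 = 0$, and then the (locally Lipschitz, uniformly over matrix levels) de la Harpe--Skandalis determinant $\det_B$ from Proposition~\ref{Determinant}\ref{Determinant.C4} vanishes on $DU_\infty(B)$ and hence on $u$, so $\ka{u}=0$ by Properties~\ref{ThomsenProp} and \eqref{eq:detInverse}.
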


\begin{proof}
  It suffices to prove that each of $\underline{K}(\iota_B)$,
  $\Ka(\iota_B)$, and $\Aff T(\iota_B)$ is injective.

For $\Aff T(\iota_B)$, when $T(B) = \emptyset$, the result is vacuous, so assume $T(B) \neq \emptyset$. Let $\tau\in T(B)$, let $\omega$ be a free
  ultrafilter on $\mathbb{N}$ and define $\tau_\omega\colon B_\infty
  \to \mathbb{C}$ by $\tau_\omega((b_n)_{n=1}^\infty) \coloneqq \lim_{n \to
    \omega} \tau(b_n)$.  Then $\tau_\omega \in T(B_\infty)$ and
  $\tau_\omega \circ \iota_B = \tau$, proving that $T(\iota_B)$ is
  surjective, and hence $\Aff T(\iota_B)$ is injective by duality.

For $\Ka(\iota_B)$, take a unitary $u\in M_r(B)$ with $\ka{\iota_B(u)} = 0$ in
  $\Ka(B_\infty)$.  Given $\epsilon > 0$, there are positive
  integers $k$ and $s$ and unitaries $v_1, \dots, v_k, w_1, \dots,
  w_k$ in $M_{r + s}(B_\infty)$ such that
  \begin{equation}
    \| \iota_B(u) \oplus 1_{M_s(B_\infty)} - v_1w_1v_1^*w_1^*
    \cdots v_kw_kv_k^*w_k^* \| < \epsilon.
  \end{equation}
  For $i \in \{1, \ldots, k\}$, let $(v_{i, n})_{n=1}^\infty$ and
  $(w_{i, n})_{n=1}^\infty$ be sequences of unitaries in $M_{r+s}(B)$ lifting $v_i$ and $w_i$, respectively.  Then, for
  sufficiently large $n$, we have
  \begin{equation}
    \| u \oplus 1_{M_s(B)} - v_{1, n}w_{1, n}v_{1,n}^*w_{1,n}^* \cdots v_{k, n}w_{k, n}v_{k,n}^*w_{k,n}^* \| < \epsilon.
  \end{equation}
  Thus, $\ka{u} = 0$ in $\Ka(B)$, proving that
  $\Ka(\iota_B)$ is injective.

  For injectivity of $\underline{K}(\iota_B)$, we start by establishing injectivity of $K_1(\iota_B)$.  Suppose that $r$ and $s$ are
  positive integers, that $u \in U_r(B)$, and that $\iota_B(u) \oplus
  1_{ M_s(B_\infty)}$ is homotopic to the identity in $U_{r +
   s}(B_\infty)$. Then there is a finite sequence $v_1,\dots,v_k$ in
 $U_{r+s}(B_\infty)$ with $v_1=u\oplus 1_{M_s(B_\infty)}$ and $v_k=1_{
   M_{r+s}(B_\infty)}$ such that $\|v_{j+1}-v_j\|<2$ for each
 $j=1,\dots,k-1$.  Lifting $v_2,\dots,v_{k-1}$ to sequences
 $(v_{2,n})_{n=1}^\infty,\dots,(v_{k-1,n})_{n=1}^\infty$ in
 $U_{r+s}(B)$, and setting $v_{k,n}\coloneqq1_{ M_{r+s}(B)}$,
 $v_{1,n}\coloneqq u\oplus 1_{M_s(B)}$, we can find some $n\in\mathbb N$, such that $\|v_{j+1,n}-v_{j,n}\|<2$ for all $j=1,\dots,k-1$, so that $u\oplus 1_{M_s(B)}$ is homotopic to the identity in $U_{r+s}(B)$, and hence $[u]_1=0$ in $K_1(B)$.  Hence $K_1(\iota_B)$ is injective.  A very similar argument shows that $K_0(\iota_B)$ is injective.

As we noted in Footnote~\ref{foot:AlternateKCoeffs}, we could equally well have defined total $K$-theory in terms of $K_i(A\otimes\mathcal O_{n+1})$: the functors $K_i(\,\cdot\, ; \Zn{n})$ and  $K_i(\,\cdot\, \otimes \mathcal O_{n+1})$ are naturally isomorphic by \cite[Theorem 6.4]{Schochet84}.\footnote{Strictly speaking, this result should be applied twice, as in \cite{Schochet84}, $K_i(A;\Zn{n})$ is defined to be $K_i(A\otimes C_n)$, where $C_n$ is an abelian $C^*$-algebra with $K_*(C_n)\cong (\Zn{n},0)$.}

Since $\mathcal O_{n+1}$ is nuclear, the canonical maps from $B_\infty$ and $\mathcal O_{n+1}$ to $(B\otimes \mathcal O_{n+1})_\infty$ with commuting ranges combine to yield a $^*$-homomorphism $\theta\colon B_\infty\otimes\mathcal O_{n+1}\rightarrow (B\otimes\mathcal O_{n+1})_\infty$.  Then we have the factorization $\iota_{B\otimes \mathcal O_{n+1}}=\theta\circ(\iota_B\otimes\mathrm{id}_{\mathcal O_{n+1}})$. Since $K_i(\iota_{B\otimes\mathcal O_{n+1}})$ is injective from the previous paragraph,\footnote{Since $\mathcal O_{n+1}$ is unital (in contrast to $\mathbb I_n$), the previous paragraph applies.} so too is $K_i(\iota_B;\Zn{n})$.
\end{proof}

\section{$\Z$-stability and Cuntz semigroup techniques}\label{Sect:Z}

The key regularity hypothesis in the classification of embeddings theorem (Theorem~\ref{Main2}) is $\Z$-stability of the codomain $B$, i.e., $B\cong B\otimes\mathcal Z$, where $\Z$ is the Jiang--Su algebra from \cite{Jiang-Su99}.  We give a brief review of the Jiang--Su algebra in Section~\ref{SSZ} and then turn to $\Z$-stability, setting out in Sections~\ref{SSZ}--\ref{SSZStableSeq} the various consequences that we use elsewhere in the paper.   Section~\ref{SSK1Inject} is devoted to a modern proof of the unpublished result of Jiang (\cite{Jiang97}) that $\Z$-stable $C^*$-algebras are $K_1$-injective, for use in Section~\ref{sec:KK-classification}.

The road from $\Z$-stability to absorption passes through the Cuntz semigroup and the corona factorization property.  We provide the relevant background in Section~\ref{SSCuntz}. Finally, Section~\ref{SSZStableSeq} collates the various structural properties of sequence algebras of $\Z$-stable $C^*$-algebras that we need from the literature.

\subsection{The Jiang--Su algebra}\label{SSZ}
The Jiang--Su algebra $\mathcal Z$ was originally constructed in \cite{Jiang-Su99} as an inductive limit of prime dimension drop algebras\footnote{A dimension drop algebra is a $C^*$-algebra of the form $$\{f\in C([0,1],M_p\otimes M_q):f(0)\in M_p\otimes \mathbb C 1_{M_q},\ f(1)\in \mathbb C 1_{M_p}\otimes M_q\},$$ for $p,q\in\mathbb N$. It is called a \emph{prime} dimension drop algebra when $p,q$ are coprime.} with unital connecting maps chosen carefully to ensure simplicity and uniqueness of trace. Since every prime dimension drop algebra has the same $K$-theory as $\mathbb C$, it follows that $KT_u(\mathcal Z)\cong KT_u(\mathbb C)$.  Building on the analysis of inductive limits developed in the 1990s, Jiang and Su classified simple inductive limits of dimension drop algebras, concluding that $\mathcal Z$ is independent of choices made in the construction.
See also \cite[Chapter 15]{Strung21} for an expository account of $\Z$.
The following fundamental fact about $\mathcal Z$ is one of the major black boxes in this paper.

\begin{theorem}[{Jiang--Su, \cite[Theorems~7.6 and 8.8]{Jiang-Su99}}]\label{ZisSSA}
The Jiang--Su algebra $\mathcal Z$ is strongly self-absorbing,\footnote{The formalism of strongly self-absorbing algebras was subsequently introduced by Toms and Winter in \cite{Toms-Winter07}. Instead of proving the statement of Theorem~\ref{ZisSSA}, Jiang and Su prove that $\Z\cong\Z\otimes\Z$ and all unital endomorphisms of $\Z$ are approximately inner, which easily implies strong self-absorption (and is in fact equivalent to it, by \cite[Corollary~1.12]{Toms-Winter07}).} i.e., there is an isomorphism $\mathcal Z\to\mathcal Z\otimes\mathcal Z$ that is approximately unitarily equivalent to $\id_{\mathcal Z}\otimes 1_{\mathcal Z}\colon \Z\to \Z\otimes\Z$.
\end{theorem}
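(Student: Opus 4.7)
The strategy is to leverage the uniqueness aspect of Jiang and Su's classification of simple inductive limits of prime dimension drop algebras --- precisely the classification underlying the very construction of $\mathcal Z$. My plan is to establish the two pieces identified by Toms and Winter as equivalent to strong self-absorption: first, that $\mathcal Z \otimes \mathcal Z \cong \mathcal Z$; and second, that every unital endomorphism of $\mathcal Z$ is approximately inner. These together yield the desired isomorphism approximately unitarily equivalent to $\mathrm{id}_{\mathcal Z} \otimes 1_{\mathcal Z}$ (cf.\ \cite[Corollary~1.12]{Toms-Winter07}).

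For the isomorphism $\mathcal Z \cong \mathcal Z \otimes \mathcal Z$, I would first argue that $\mathcal Z \otimes \mathcal Z$ is again a unital simple separable nuclear $C^*$-algebra obtainable as an inductive limit of prime dimension drop algebras with unital injective connecting maps. Nuclearity, simplicity, and the inductive limit structure transfer from $\mathcal Z$ because the tensor product of two prime dimension drop algebras can itself be embedded into an appropriate prime dimension drop algebra (using that coprime integers generate the positive integers multiplicatively). One then checks that $\mathcal Z \otimes \mathcal Z$ has the same invariant as $\mathcal Z$: $K_0 \cong \mathbb Z$ with $[1]$ as the generator, $K_1 = 0$, and a unique trace (the tensor product of the unique traces, which is unique by a maximal tensor product argument combined with simplicity). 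Jiang and Su's classification theorem for such limits then supplies an isomorphism $\mathcal Z \cong \mathcal Z \otimes \mathcal Z$.

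For approximate innerness of unital endomorphisms $\phi\colon \mathcal Z \to \mathcal Z$, the key is the corresponding \emph{uniqueness theorem} for $^*$-homomorphisms between simple inductive limits of prime dimension drop algebras proved by Jiang and Su: two such maps that agree on the Elliott invariant must be approximately unitarily equivalent. Since $\mathcal Z$ has unique trace and trivial $K_1$, and since $K_0(\mathcal Z) \cong \mathbb Z$ is generated by $[1_\mathcal Z]_0$ which $\phi$ must preserve, any two unital endomorphisms induce the same map on invariants. Applying the uniqueness theorem to $\phi$ and $\mathrm{id}_{\mathcal Z}$ yields $\phi \approx_{\mathrm{a.u.}} \mathrm{id}_{\mathcal Z}$.

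To conclude, take the isomorphism $\psi \colon \mathcal Z \to \mathcal Z \otimes \mathcal Z$ from the first step. Then $\mathrm{id}_{\mathcal Z} \otimes 1_{\mathcal Z} \colon \mathcal Z \to \mathcal Z \otimes \mathcal Z$ is also a unital embedding, so $\psi^{-1} \circ (\mathrm{id}_{\mathcal Z} \otimes 1_{\mathcal Z})$ is a unital endomorphism of $\mathcal Z$ and hence approximately inner by the previous paragraph. Composing with $\psi$ on both sides gives that $\mathrm{id}_{\mathcal Z} \otimes 1_{\mathcal Z}$ is approximately unitarily equivalent to $\psi$, as required. The main obstacle is invoking Jiang and Su's uniqueness theorem in a form sufficient to conclude approximate unitary equivalence of arbitrary unital endomorphisms of $\mathcal Z$ (as opposed to only those arising from the specific inductive limit presentation); this passes through the standard trick of presenting $\phi$ as a limit of maps defined at the level of building blocks and applying the uniqueness theorem at each finite stage, then diagonalizing --- an approximate intertwining argument in the spirit of Elliott.
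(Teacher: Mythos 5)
The paper does not re-prove this result: it treats Theorem~\ref{ZisSSA} as a black box, citing Jiang and Su directly and offering only the two-sentence summary that Jiang and Su ``use the explicit inductive limit structure of $\mathcal Z$, and their classification of limits of dimension drop algebras, to obtain a $^*$-homomorphism from $\mathcal Z\otimes\mathcal Z$ to $\mathcal Z$,'' then upgrade it to an isomorphism by intertwining. Your overall framework --- establish $\mathcal Z \cong \mathcal Z \otimes \mathcal Z$, show that unital endomorphisms of $\mathcal Z$ are approximately inner, and combine via the Toms--Winter reformulation --- matches what Jiang and Su actually do, and is consistent with the paper's footnote.

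However, your justification for the first and hardest step contains a genuine gap. You assert that $\mathcal Z\otimes\mathcal Z$ is a simple inductive limit of prime dimension drop algebras ``because the tensor product of two prime dimension drop algebras can itself be embedded into an appropriate prime dimension drop algebra.'' This is not true as stated, and even where embeddings exist they would not automatically furnish the required inductive limit structure. A tensor product $\mathcal Z_{p,q}\otimes\mathcal Z_{p',q'}$ is a subhomogeneous algebra over $[0,1]^2$ with fibres of several different ranks at corners, edges, and interior, and there is no obvious way to restrict along a path $[0,1]\to[0,1]^2$ so as to land unitally inside a single dimension drop algebra while respecting the boundary constraints of both factors. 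Jiang and Su do prove that $\mathcal Z\otimes\mathcal Z$ is an inductive limit of (sums of) dimension drop algebras, but their argument is substantially more delicate: it exploits the specific ``many-point evaluation'' form of the connecting maps in the construction of $\mathcal Z$ to approximate elements of $\mathcal Z_{p,q}\otimes\mathcal Z_{p',q'}$ locally by dimension drop subalgebras. You should either carry out this local approximation argument or, more economically, avoid it entirely by following the route the paper's description suggests: use the existence half of the classification to construct a single unital embedding $\mathcal Z\otimes\mathcal Z\to\mathcal Z$, pair it with $\mathrm{id}_\mathcal Z\otimes 1_\mathcal Z$, and then run a one-sided (asymmetric) Elliott intertwining, which only requires the uniqueness theorem on the $\mathcal Z$-side. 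The second and third steps of your sketch (approximate innerness via the uniqueness theorem, and the final composition) are sound.
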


To establish Theorem~\ref{ZisSSA}, Jiang and Su use the explicit inductive limit structure of $\Z$, and their classification of limits of dimension drop algebras, to obtain a $^*$-homomorphism from $\Z \otimes \Z$ to $\Z$. Via the intertwining argument, they then show that this is approximately unitarily equivalent to an isomorphism.

We record the following facts about tensoring with $\Z$.
The $KK$-equivalence fact is observed by Jiang and Su in \cite[Lemma 2.11]{Jiang-Su99}.
In the separable case, the isomorphism of invariants follows from this and uniqueness of trace on $\Z$; the non-separable case follows.

\begin{proposition}\label{prop:ZKKequiv}
    For any unital $C^*$-algebra $D$, the first-factor embedding induces both an isomorphism $KT_u(D)\cong KT_u(D\otimes \Z)$, and (when $D$ is separable), a $KK$-equivalence.
\end{proposition}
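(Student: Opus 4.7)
The plan is to reduce everything to the case $D = \mathbb C$ by a combination of Jiang and Su's computation of $K_*(\Z)$, the UCT, uniqueness of the trace on $\Z$, and continuity of the invariant under inductive limits.

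First I would establish the $KK$-equivalence fact. Jiang and Su's construction of $\Z$ as a simple inductive limit of prime dimension drop algebras gives $K_*(\Z) \cong K_*(\mathbb C) = (\mathbb Z, 0)$, with $[1_\Z]_0$ mapped to $1$. Since each building block is type I (hence nuclear) and satisfies the UCT, and the UCT class is closed under inductive limits, $\Z$ satisfies the UCT. The UCT then lifts the unital $K_*$-isomorphism $\mathbb C \to \Z$ to a $KK$-equivalence. Tensoring by a separable $D$ and using that $\Z$ is nuclear, the exterior Kasparov product yields a $KK$-equivalence $D \to D \otimes \Z$ represented by the first-factor embedding.

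Next, for the $KT_u$ isomorphism in the separable case, the $KK$-equivalence gives isomorphisms on $K_0$ and $K_1$ (and the class of the unit is preserved by construction). For the tracial part, uniqueness of the trace $\tau_\Z$ on $\Z$ makes the assignment $\tau \mapsto \tau \otimes \tau_\Z$ a bijection $T(D) \to T(D \otimes \Z)$, whose inverse is restriction along the first-factor embedding; this yields a unital isomorphism $\Aff T(D) \to \Aff T(D \otimes \Z)$. Compatibility with the pairing map $\rho$ is automatic from naturality of $\rho$ under unital $^*$-homomorphisms.

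For the non-separable case of the $KT_u$ isomorphism, I would write $D$ as the inductive limit (in the category of unital $C^*$-algebras) of its separable unital $C^*$-subalgebras $D_\lambda$, so that $D \otimes \Z = \varinjlim (D_\lambda \otimes \Z)$. Both $K_*$ and the pairing map are continuous with respect to such limits, and $\Aff T(\,\cdot\,)$ behaves well under inductive limits of unital $C^*$-algebras (any trace is determined by its restrictions to the $D_\lambda$'s, and compatible families assemble to a trace), so the separable-case isomorphism passes to the limit.

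The main technical point is Step 1: the proof that the unital map $\mathbb C \to \Z$ is a $KK$-equivalence. Once this is in hand, everything else is formal — tensor-product functoriality of $KK$ under nuclear factors, uniqueness of trace on $\Z$, and standard continuity of $K_*$ and $\Aff T(\,\cdot\,)$ under inductive limits. There are no delicate quantitative estimates to make.
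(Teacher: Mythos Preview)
Your proposal is correct and follows the same outline the paper sketches (cite Jiang--Su for the $KK$-equivalence, use uniqueness of the trace on $\Z$ for the tracial part, then handle the non-separable case). One simplification: the bijection $T(D)\cong T(D\otimes\Z)$ via $\tau\mapsto\tau\otimes\tau_\Z$ works directly for \emph{any} unital $D$, so the inductive-limit maneuver in your Step~3 is only needed for the $K$-theory part; your claim that ``$\Aff T(\,\cdot\,)$ behaves well under inductive limits'' is vague and best avoided, but since the direct trace argument already covers all $D$, this does not affect the validity of the proof.
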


More recently, two essentially self-contained approaches to Theorem~\ref{ZisSSA} have been given.  Ghasemi (\cite{Ghasemi21}, building on \cite{EFHKKL16,Masumoto17}) tackles strong self-absorption via the notion of Fra\"iss\'e limits from model theory. Schemaitat (\cite{Schemaitat19}), following a direction outlined in Winter's 2011 CBMS lectures, takes a more traditional classification approach, starting with classification results for UHF algebras to obtain a specific set of existence and uniqueness theorems.  To do this, Schemaitat uses R\o{}rdam and Winter's picture of the Jiang--Su algebra: an inductive limit of generalized dimension drop algebras with strongly self-absorbing fibers, developed in the mid-2000s (\cite{Rordam-Winter10}).  It is this picture of $\Z$ that is used in Winter's localization technique (\cite{Winter14}, see Section~\ref{intro:sectLocalization}). Since we will use this description of $\Z$ in Section~\ref{SSK1Inject}, we recall it here.

Let $p,q\geq 2$ be coprime, and consider the UHF algebras $M_{p^\infty}$ and $M_{q^\infty}$.  Define the generalized dimension drop algebra
\begin{equation}\label{DefZpq}
  \mathcal{Z}_{p^\infty, q^\infty} \subseteq C([0, 1], M_{p^\infty}
  \otimes M_{q^\infty})
\end{equation}
to be the subalgebra of functions $f \in C([0, 1], M_{p^\infty}
  \otimes M_{q^\infty})$ satisfying
\begin{equation}
  f(0) \in M_{p^\infty} \otimes \mathbb{C}1_{M_{q^\infty}} \quad \text{and} \quad f(1)
  \in \mathbb{C}1_{M_{p^\infty}} \otimes M_{q^\infty}.
\end{equation}
By \cite[Section 4]{Schemaitat19}, there is a unital $^*$-endomorphism $\theta$ of
$\mathcal{Z}_{p^\infty, q^\infty}$ such that 
\begin{equation}\label{DefZpqstdmap}
\tau \circ \theta =
\tau_{\mathrm{Leb}}, \quad \tau \in T(\mathcal{Z}_{p^\infty, q^\infty}),
\end{equation} where $\tau_{\mathrm{Leb}}$ is given by
\begin{equation}
  \tau_{\mathrm{Leb}}(f) \coloneqq \int_0^1 \tau_{M_{p^\infty} \otimes
    M_{q^\infty}}(f(t)) \,dt,\quad f\in\Z_{p^\infty,q^\infty}.
\end{equation}
Then \cite[Theorem 3.4(ii)]{Rordam-Winter10} shows that $\Z$ arises as the stationary inductive limit:\footnote{In fact, in \cite[Theorem~3.4(ii)]{Rordam-Winter10}, $\theta$ is only required to be trace-collapsing (not necessarily inducing the Lebesgue trace).
R\o rdam and Winter also proved the existence of these less specific trace-collapsing maps, produced as $\Z_{p^\infty,q^{\infty}}\to\Z\to\Z_{p^{\infty},q^{\infty}}$.}
\begin{equation}\label{Zstationaryinductivelimit}
  \mathcal{Z} \cong \underset{\longrightarrow}{\lim} \, \big( \mathcal{Z}_{p^\infty, q^\infty}
\xrightarrow{\theta} \mathcal{Z}_{p^\infty, q^\infty} \xrightarrow \theta \mathcal{Z}_{p^\infty, q^\infty}
\xrightarrow{\theta} \cdots \big).
\end{equation}

\begin{remark}
Schemaitat's work (\cite{Schemaitat19}) shows directly that there is a unique inductive limit of algebras $\mathcal Z_{p^\infty,q^\infty}$ where $p,q$ are coprime with trace-collapsing maps as in \eqref{DefZpqstdmap}. With the benefit of hindsight, it is perhaps more natural to define $\Z$ to be such an inductive limit and, as in \cite[Theorem 5.15]{Schemaitat19}, access Winter's work \cite{Winter11} to show that this is the minimal strongly self-absorbing $C^*$-algebra.  Most of the other properties of $\Z$-stable $C^*$-algebras used in the classification theorem can 
be readily obtained from this viewpoint.
\end{remark}

\smallskip Now we turn to $\Z$-stability. First, the definition.
\begin{definition}\label{DefZStable}
A $C^*$-algebra $B$ is \emph{$\Z$-stable} (or \emph{$\mathcal Z$-absorbing}) if $B\cong B\otimes\mathcal Z$.
\end{definition}

One immediate consequence of (strong) self-absorption is that $\Z$ is itself $\Z$-stable. Moreover, the strong form of the isomorphism $\Z\cong \Z\otimes\Z$ immediately strengthens the isomorphism in Definition~\ref{DefZStable} as follows (see\ \cite[Theorem 2.2]{Toms-Winter07}).

\begin{proposition}\label{PropSSA}
Let $B$ be a unital $C^*$-algebra.  The following are equivalent:
\begin{enumerate}[(i)]
\item $B$ is $\Z$-stable.
\item\label{PropSSA.2} There exists an isomorphism $B\to B\otimes\mathcal Z$ that is approximately unitarily equivalent to the first-factor embedding $\id_B\otimes 1_\Z\colon B\to B\otimes\mathcal Z$.
\end{enumerate}
\end{proposition}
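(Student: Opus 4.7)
The direction (ii) $\Rightarrow$ (i) is immediate. For (i) $\Rightarrow$ (ii), my plan is to leverage strong self-absorption of $\Z$ (Theorem~\ref{ZisSSA}) directly, without needing an approximate intertwining argument: the desired isomorphism can be written down explicitly as a three-fold composition, and both required properties fall out simultaneously.

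Concretely, fix an isomorphism $\phi\colon B \to B \otimes \Z$ witnessing $\Z$-stability, and let $\sigma\colon \Z \to \Z\otimes \Z$ be the isomorphism furnished by Theorem~\ref{ZisSSA}, approximately unitarily equivalent to $\id_\Z \otimes 1_\Z$. I would then consider
\begin{equation}
  \Phi \coloneqq (\phi \otimes \id_\Z)^{-1} \circ (\id_B \otimes \sigma) \circ \phi \colon B \to B \otimes \Z,
\end{equation}
which is visibly an isomorphism, being a composition of three isomorphisms. The only task remaining is to verify that $\Phi$ is approximately unitarily equivalent to $\id_B \otimes 1_\Z$.

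For the verification, let $(u_n)$ be unitaries in $\Z \otimes \Z$ implementing $\sigma \sim_{\text{AU}} \id_\Z \otimes 1_\Z$. Then the unitaries $(1_B \otimes u_n) \in B \otimes \Z \otimes \Z$ implement the approximate unitary equivalence $\id_B \otimes \sigma \sim_{\text{AU}} \id_{B \otimes \Z}\otimes 1_\Z$ between maps $B \otimes \Z \to B \otimes \Z \otimes \Z$, and this equivalence is preserved upon precomposing with $\phi$. The key algebraic identity is that $(\id_{B \otimes \Z} \otimes 1_\Z) \circ \phi = (\phi \otimes \id_\Z) \circ (\id_B \otimes 1_\Z)$ as maps $B \to B \otimes \Z \otimes \Z$, since both send $b \mapsto \phi(b) \otimes 1_\Z$. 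Applying the $^*$-isomorphism $(\phi \otimes \id_\Z)^{-1}$ on the left then converts the approximate unitary equivalence above into $\Phi \sim_{\text{AU}} \id_B \otimes 1_\Z$, with implementing unitaries $(\phi \otimes \id_\Z)^{-1}(1_B \otimes u_n) \in B \otimes \Z$. There is no genuine obstacle in this plan: all the difficulty is absorbed into Theorem~\ref{ZisSSA}, which is used as a black box, and no separability hypothesis on $B$ is required.
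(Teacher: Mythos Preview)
Your proof is correct. The paper does not give its own argument for this proposition; it simply cites \cite[Theorem~2.2]{Toms-Winter07}. The direct construction you give---conjugating the strong self-absorption isomorphism $\sigma$ of $\Z$ by an arbitrary isomorphism $\phi\colon B\to B\otimes\Z$---is essentially the standard proof, and your verification that applying the isomorphism $(\phi\otimes\id_\Z)^{-1}$ on the left transports approximate unitary equivalence correctly is sound.
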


 It is using this (or similar formulations, such as \cite[Lemma 4.4]{Rordam04}) that strong self-absorption of $\Z$ is generally used to obtain structural properties of $\Z$-stable $C^*$-algebras, such as \cite[Theorem 4.5]{Rordam04}. This form of the isomorphism is also a key ingredient in the proof of the $\Z$-stable $KK$-uniqueness theorem in Section~\ref{sec:KK-Uniqueness}. 

The first use of $\Z$-stability in the unital classification theorem is through the dichotomy it provides between purely infinite and stably finite simple $C^*$-algebras, as shown in  \cite[Theorem 3]{Gong-Jiang-etal00}. This is a special case of a more general dichotomy result of Kirchberg for simple $C^*$-algebras that can be written non-trivially as a tensor product (\cite[Corollary 3.9(i)]{Blanchard-Kirchberg04}; see also \cite[Theorem 4.1.10(ii)]{Rordam02}). 

\begin{theorem}\label{T:KDich}
Let $B$ be a simple exact $\Z$-stable $C^*$-algebra. Then $B$ is either purely infinite or stably finite.
\end{theorem}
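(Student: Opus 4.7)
The plan is to establish the dichotomy by ruling out quasitracial obstructions when $B$ fails to be stably finite, and then exploiting $\Z$-stability to obtain pure infiniteness. Suppose $B$ is not stably finite, so there exists an infinite projection $p \in M_n(B)$ for some $n \geq 1$. Any tracial state $\tau$ on $B$ extends canonically to $M_n(B)$; since $B$ is simple, this extension is faithful (its kernel would be a proper ideal of $M_n(B)$, hence zero). But infiniteness of $p$ provides a subprojection $p' < p$ with $p' \sim p$, so that $\tau_n(p - p') = 0$ with $p - p' \neq 0$, contradicting faithfulness. Hence $T(B) = \emptyset$.

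Next, I would upgrade ``no trace'' to ``no $2$-quasitrace'' using Haagerup's theorem (\cite{Haagerup14}), which says that every lower semicontinuous $2$-quasitrace on an exact $C^*$-algebra is a trace. Thus $B$ carries no non-zero densely defined lower semicontinuous $2$-quasitrace. This is the key input that makes quasitracial obstructions vanish in what follows.

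Finally, I would use $\Z$-stability through Rordam's analysis in \cite{Rordam04}: $\Z$-stability of $B$ forces almost unperforation of $\Cu(B)$, and from this one extracts a strict comparison statement of the form ``if $d_\tau(a) < d_\tau(b)$ for every lower semicontinuous $2$-quasitrace $\tau$, then $a \precsim b$ in $\Cu(B)$.'' In the absence of any $2$-quasitrace on $B$ (and hence on $B \otimes \mathcal K$), the hypothesis is vacuous, so $a \precsim b$ for every pair of non-zero positive $a, b$ in $B \otimes \mathcal K$. Applied with $b = a$, this yields $2 \langle a \rangle \leq \langle a \rangle$ in $\Cu(B)$ for every non-zero positive $a \in B$, so every non-zero positive element of $B$ is properly infinite. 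Together with simplicity of $B$, this is one of the standard equivalent characterizations of pure infiniteness (for instance, via \cite[Theorem 4.16]{Kirchberg-Rordam02} -- or rather Kirchberg--Rordam's foundational work on pure infiniteness).

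The main obstacle is locating the precise form of strict comparison needed for the third step: one needs a version that applies to all non-zero positive elements in $B \otimes \mathcal K$ (not only to pairs where $d_\tau(a) < d_\tau(b)$ with strict inequality witnessed by an actual quasitrace), and that handles the degenerate case $T(B) = \emptyset$ correctly. Care is also needed at the first step if one wants to be entirely formal about traces on non-unital simple $C^*$-algebras, but the argument goes through either by reducing to the unital case via $p M_n(B) p$ or by using the standard extension of tracial states to matrix amplifications.
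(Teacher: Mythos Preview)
The paper does not give its own proof of this theorem; it is quoted from the literature, with attributions to Gong--Jiang--Su \cite[Theorem~3]{Gong-Jiang-etal00} for the $\mathcal Z$-stable case, to Kirchberg via \cite[Corollary~3.9(i)]{Blanchard-Kirchberg04} for the more general tensor-product dichotomy, and to \cite[Theorem~4.1.10(ii)]{Rordam02} for an exposition. So there is nothing in the paper to compare your argument against directly.

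Your sketch is a legitimate route and is closest in spirit to R{\o}rdam's Cuntz-semigroup treatment in \cite{Rordam04}. The argument is essentially correct, and the obstacle you flag is the one genuine subtlety: the step ``no $2$-quasitraces $+$ almost unperforated $+$ simple $\Rightarrow$ $a\precsim b$ for all nonzero positive $a,b$'' is not literally the usual statement of strict comparison (whose proof typically \emph{uses} existing quasitraces to separate Cuntz classes). What is actually needed is the order-theoretic contrapositive: in a simple almost unperforated positive semigroup, if $\langle a\rangle\not\leq\langle b\rangle$ then one can construct a state witnessing this, so the absence of any dimension function forces $\langle a\rangle\leq\langle b\rangle$ for all nonzero $a,b$. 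This is in the literature and underlies R{\o}rdam's dichotomy, but it is a distinct ingredient from strict comparison as usually quoted; once you cite it precisely, your proof goes through. The proofs the paper actually points to take rather different routes --- Gong--Jiang--Su argue directly with the structure of $\mathcal Z$, and Kirchberg's dichotomy holds for any nontrivial tensor factorisation of a simple $C^*$-algebra and does not pass through the Cuntz semigroup in this way.
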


We will later need to use matrix amplifications of codomain $C^*$-algebras, and for this, we record the following.

\begin{remark}\label{matrixB}
  If $B$ is a unital simple separable nuclear (resp.\ exact) and $\Z$-stable
  $C^*$-algebra with $T(B)\neq \emptyset$, then $M_n(B)$ also enjoys
  these properties, for any $n\in\mathbb N$.
\end{remark}

\subsection{$K_1$-injectivity}\label{SSK1Inject}
In this section we give a short proof of an unpublished result of Jiang that we will use in the $\Z$-stable $KK$-uniqueness theorem in Section~\ref{sec:KK-Uniqueness}.\footnote{When we apply Theorem~\ref{prop:z-stable-K1-inj} in the proof of Theorem~\ref{thm:KK-Uniqueness}, the algebra $D$ will be properly infinite, so we could alternatively apply Rohde's later (and also unpublished) result on the $K_1$-injectivity of properly infinite $\mathcal Z$-stable $C^*$-algebras from her Ph.D.\ thesis (\cite{Rohde09}).}
 Recall that a unital $C^*$-algebra $D$ is \emph{$K_1$-injective} if whenever $u\in U_1(D)$ vanishes in $K_1$, then $u\in U^{(0)}_1(D)$.  That is, no matrix amplification is needed to construct a homotopy from $u$ to $1_D$.

\begin{theorem}[Jiang, \cite{Jiang97}]
  \label{prop:z-stable-K1-inj}
  If $D$ is a unital $C^*$-algebra, then $D\otimes \Z$ is $K_1$-injective.
\end{theorem}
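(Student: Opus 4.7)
The plan is to use the stationary inductive limit description from \eqref{Zstationaryinductivelimit}: for coprime $p,q\geq 2$, $\mathcal{Z} \cong \varinjlim_\theta \mathcal{Z}_{p^\infty,q^\infty}$, where $\theta$ is the trace-collapsing endomorphism of \eqref{DefZpqstdmap}. Tensoring with $D$ (and using exactness of the minimal tensor product for an AF-like inductive system) gives
\begin{equation}
D \otimes \mathcal{Z} \;\cong\; \varinjlim \bigl( D \otimes \mathcal{Z}_{p^\infty,q^\infty} \xrightarrow{\id_D \otimes \theta} D \otimes \mathcal{Z}_{p^\infty,q^\infty} \xrightarrow{\id_D \otimes \theta} \cdots \bigr),
\end{equation}
with canonical maps $\iota_n \colon D \otimes \mathcal{Z}_{p^\infty,q^\infty} \to D \otimes \mathcal{Z}$ at the $n$-th stage.

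Given $u \in U_1(D \otimes \mathcal{Z})$ with $[u]_1 = 0$, I would first approximate $u$ within distance less than $2$ by $\iota_n(v)$ for some unitary $v \in U_1(D \otimes \mathcal{Z}_{p^\infty,q^\infty})$ at a suitable level $n$; this places $u$ and $\iota_n(v)$ in the same path component of $U_1(D \otimes \mathcal{Z})$. Next, using continuity of $K_1$ along the inductive limit together with $[u]_1 = 0$, and replacing $v$ by $\theta^j(v)$ for $j$ sufficiently large (adjusting $n$ accordingly), one may arrange that $[v]_1 = 0$ in $K_1(D \otimes \mathcal{Z}_{p^\infty,q^\infty})$.

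The crux, and the main obstacle, is to prove that $(\id_D \otimes \theta)(v) \in U_1^{(0)}(D \otimes \mathcal{Z}_{p^\infty,q^\infty})$. Granting this, $\iota_n(v) = \iota_{n+1}((\id_D \otimes \theta)(v))$ lies in $U_1^{(0)}(D \otimes \mathcal{Z})$, and hence so does $u$. To establish this last step I would exploit the explicit construction of $\theta$ available through \cite{Schemaitat19} (following \cite{Rordam-Winter10}), which realizes $\theta$ via piecewise reparametrizations of the interval $[0,1]$ combined with UHF-valued unitary conjugations at the endpoint fibers $D \otimes M_{p^\infty}$ and $D \otimes M_{q^\infty}$. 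A unitary in $D \otimes \mathcal{Z}_{p^\infty,q^\infty}$ is a continuous path of unitaries in $D \otimes M_{p^\infty} \otimes M_{q^\infty}$ with the corresponding UHF-endpoint conditions, and the elementary moves making up $\theta$ allow matrix amplifications to be absorbed into the UHF boundary fibers. The plan for the final step is then to convert a null-homotopy of $v \oplus 1_{k-1}$ witnessing $[v]_1 = 0$ (a priori living in $U_k$) into a genuine null-homotopy of $(\id_D \otimes \theta)(v)$ in $U_1$ by running the matrix-amplification path through the UHF boundary fibers using this absorption, together with the divisibility $p^\infty q^\infty$ of the supernatural numbers involved to accommodate the matrix size $k$. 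Making this conversion precise --- i.e., exhibiting a concrete $U_1$-level contraction coming from $\theta$ applied to an arbitrary $U_k$-level contraction --- is the delicate technical point of the proof.
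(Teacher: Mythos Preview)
Your initial reduction --- passing to the building block $D \otimes \mathcal{Z}_{p^\infty,q^\infty}$ via the stationary inductive limit and arranging $[v]_1 = 0$ there --- matches the paper's first move. But the paper then dispenses with $\theta$ entirely: it proves that $D \otimes \mathcal{Z}_{p^\infty,q^\infty}$ is \emph{itself} $K_1$-injective. The tool is the elementary Lemma~\ref{lemma:uhf-stable-K1-inj}: for any $n\geq 2$, $D \otimes M_{n^\infty}$ is $K_1$-injective and its $K_0$ is generated by classes of projections living in the algebra (no matrix amplification). Feeding this into the exact sequence
\[
0 \longrightarrow D \otimes SM_{(pq)^\infty} \longrightarrow D \otimes \mathcal{Z}_{p^\infty,q^\infty} \xrightarrow{\ \sigma\ } D \otimes (M_{p^\infty} \oplus M_{q^\infty}) \longrightarrow 0,
\]
the paper applies the lemma three times: once to push $v$ (up to homotopy) into $\ker\sigma$; once, via the boundary map and the $K_0$-generation statement, to further reduce to $[v]_1 = 0$ in $K_1(D \otimes SM_{(pq)^\infty})$; and once more, viewing $v$ as a loop in $D \otimes M_{(pq)^\infty}$, to contract it to the identity.

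Your plan instead stakes everything on the explicit form of $\theta$ to convert a $U_k$-level null-homotopy of $v$ into a $U_1$-level one for $(\id_D\otimes\theta)(v)$. You correctly flag this as the crux, but what you have written is not an argument: the trace-collapsing maps of \cite{Rordam-Winter10,Schemaitat19} are assembled from interval reparametrizations and fiberwise unitary conjugations, and nothing in that description explains how to absorb the extra $k-1$ matrix summands of a $U_k$-homotopy while maintaining the endpoint constraints $f(0)\in D\otimes M_{p^\infty}\otimes 1$, $f(1)\in D\otimes 1\otimes M_{q^\infty}$ at every stage. The phrase ``running the matrix-amplification path through the UHF boundary fibers'' names the desired outcome, not a mechanism. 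The paper's exact-sequence route avoids this difficulty by decoupling the two UHF endpoints and handling each with the clean UHF lemma; I would recommend that path rather than trying to extract the result from the internal structure of~$\theta$.
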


Before proving the theorem, it is helpful to isolate the following
fact regarding the non-stable $K$-theory of $C^*$-algebras that absorb
a UHF algebra of infinite type. 

\begin{lemma}
  \label{lemma:uhf-stable-K1-inj}
  Let $D$ be a unital $C^*$-algebra and $n\in\mathbb N$ with $n\geq
  2$. Then $D \otimes M_{n^\infty}$ is $K_1$-injective and $K_0(D
  \otimes M_{n^\infty})$ is generated by 
 \begin{equation}
 	\{ [p]_0 : p\text{ is a projection in } D \otimes M_{n^\infty}\}.
 \end{equation}
\end{lemma}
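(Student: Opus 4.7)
The plan is to exploit the tensor identification $M_{n^\infty} \cong M_{n^k} \otimes M_{n^\infty}$ (by regrouping factors of $M_{n^\infty} = \bigotimes_i M_n$), tensored with $\mathrm{id}_D$, to produce an isomorphism $\phi_k \colon M_{n^k}(B) \to B$, where $B \coloneqq D \otimes M_{n^\infty}$. Arranging this so that $\phi_k = \mathrm{id}_D \otimes \psi_1$ for an isomorphism $\psi_1 \colon M_{n^k} \otimes M_{n^\infty} \to M_{n^\infty}$, the composition $\psi \coloneqq \phi_k \circ (1_{M_{n^k}} \otimes (\,\cdot\,)) \colon B \to B$ becomes a unital $^*$-endomorphism of the form $\mathrm{id}_D \otimes j$, where $j$ is a unital $^*$-endomorphism of $M_{n^\infty}$. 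Since $K_0(M_{n^\infty}) = \mathbb{Z}[1/n]$ and any group endomorphism fixing $1$ is the identity, $K_0(j) = \mathrm{id}$; by the standard classification of unital $^*$-endomorphisms of AF algebras, $j$ is approximately unitarily equivalent to $\mathrm{id}_{M_{n^\infty}}$ via unitaries in $U(M_{n^\infty})$. Connectedness of $U(M_{n^\infty})$ places these unitaries in $U^{(0)}_1(M_{n^\infty})$, and tensoring with $1_D$ shows $\psi$ is approximately unitarily equivalent to $\mathrm{id}_B$ via unitaries in $U^{(0)}_1(B)$. These facts will be used in both parts.

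For the $K_0$-generation claim, I would take $x \in K_0(B)$ and write $x = [P]_0 - [Q]_0$ for projections $P, Q \in M_N(B)$; picking $k$ with $n^k \geq N$, view $P, Q$ in $M_{n^k}(B)$ by padding with zeros, and set $p \coloneqq \phi_k(P)$, $q \coloneqq \phi_k(Q)$, which are projections in $B$. Because $K_0(\psi) = \mathrm{id}_{K_0(B)}$, and because $b \mapsto 1_{M_{n^k}} \otimes b$ induces multiplication by $n^k$ under the stabilization identification $K_0(B) \cong K_0(M_{n^k}(B))$, the map $K_0(\phi_k)$ is forced to invert multiplication by $n^k$. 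In particular, $n^k([p]_0 - [q]_0) = x$, so $x = n^k[p]_0 - n^k[q]_0$ is an integer combination of projection classes in $B$.

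For $K_1$-injectivity, I would let $u \in U_1(B)$ with $[u]_1 = 0$, so $u \oplus 1_{n^k - 1} \in U^{(0)}_1(M_{n^k}(B))$ for some $k$. The cyclic permutation matrix $\pi \in U(M_{n^k}) \subseteq U_1(M_{n^k}(B))$ lies in $U^{(0)}_1(M_{n^k}(B))$ by connectedness of $U(M_{n^k})$, and conjugating $u \oplus 1_{n^k - 1}$ by successive powers of $\pi$ produces the $n^k$ cyclic shifts of $u \oplus 1_{n^k - 1}$, each still in $U^{(0)}_1(M_{n^k}(B))$. Their product equals $1_{M_{n^k}} \otimes u = \mathrm{diag}(u, \dots, u)$, which therefore also lies in $U^{(0)}_1(M_{n^k}(B))$; applying $\phi_k$ yields $\psi(u) \in U^{(0)}_1(B)$. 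A standard short-path argument (using a logarithm path from $\psi(u)$ to a nearby $wuw^*$ for an approximating unitary $w \in U^{(0)}_1(B)$ from the first paragraph, combined with a conjugation path from $u$ to $wuw^*$ along a connecting path $1$ to $w$) then shows $u$ is homotopic to $\psi(u)$ in $U_1(B)$, so $u \in U^{(0)}_1(B)$.

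The main technical obstacle is ensuring the approximating inner unitaries for $\psi$ lie in $U^{(0)}_1(B)$; this is resolved by the specific factorization $\psi = \mathrm{id}_D \otimes j$ together with the connectedness of $U(M_{n^\infty})$, both special features of the UHF algebra $M_{n^\infty}$.
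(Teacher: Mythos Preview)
Your proof is correct and follows essentially the same approach as the paper: both show that $u^{\oplus n^k}$ is homotopically trivial in $U_{n^k}(B)$ and then exploit an isomorphism $B \cong M_{n^k}(B)$ (the paper uses $B \cong B \otimes M_{n^\infty}$) that is compatible with the diagonal embedding up to approximate unitary equivalence. The paper's packaging is slightly slicker in that it applies the inverse isomorphism directly to conclude $u \in U^{(0)}_1(B)$, bypassing your extra step of arranging (via the factorization $\psi = \mathrm{id}_D \otimes j$ and connectedness of $U(M_{n^\infty})$) that the approximating unitaries for $\psi \approx \mathrm{id}_B$ lie in $U^{(0)}_1(B)$.
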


\begin{proof}
  Write $B\coloneqq D\otimes M_{n^\infty}$ and suppose $u \in U_1(B)$ satisfies $[u]_1=0$.  Then
  there is an $m \geq 1$ such that $u \oplus 1^{\oplus (m-1)}_B$ is
  homotopic to $1^{\oplus m}_B$ in $U_m(B)$.
Enlarging $m$ if necessary, we may assume $m = n^k$ for some $k \geq 1$.  
We have
\begin{equation} u^{\oplus m} = (u \oplus 1_B^{\oplus (m-1)})(1_B \oplus u^{\oplus (m-1)}) \sim_h 1_B \oplus u^{\oplus (m-1)}, \end{equation}
and by repeating this, it follows that $u^{\oplus m}$ is homotopic to the identity in $U_{m}(B)$. Put another way, this says that
  $u\otimes 1_{M_{n^k}}$ is homotopic to the identity in $U_1(B\otimes
  M_{n^k})$.  Therefore, $u\otimes 1_{M_{n^\infty}}$ is homotopic to
  the identity in the unitary group of $B\otimes M_{n^\infty}$. 
  
  Fix an isomorphism
  \begin{equation}
    \theta\colon B \rightarrow B \otimes M_{n^\infty}
  \end{equation}
  that is approximately unitarily equivalent to the embedding $x \mapsto x
  \otimes 1_{M_{n^\infty}}$.\footnote{This follows immediately from strong self-absorption of $M_{n^\infty}$ which, in contrast to strong self-absorption of $\Z$, is elementary.} Then take a unitary $v\in B\otimes M_{n^\infty}$ with 
  \begin{equation} 
    \|\theta(u)-v(u\otimes 1_{M_{n^\infty}})v^*\|<1
  \end{equation} 
  so that $\theta(u)$ is homotopic to $v(u\otimes 1_{M_{n^\infty}})v^*$ in $U_1(B\otimes M_{n^\infty})$, which, in turn, is homotopic to $1_{B\otimes M_{n^\infty}}$.    Applying $\theta^{-1}$ shows that $u$ is homotopic to $1_B$ in $U_1(B)$.

  The statement regarding $K_0$ follows in a similar fashion.  If
  $p\in B \otimes M_m$ is a projection, we may again assume $m
  = n^k$ for some $k$.  This time, take an isomorphism
  \begin{equation}
    \theta\colon B \rightarrow B \otimes M_{n^k}
  \end{equation}
  that is approximately unitarily equivalent to the embedding $x \mapsto x
  \otimes 1_{M_{n^k}}$.\footnote{For this, use $B \cong B\otimes M_{n^\infty} \cong B \otimes M_{n^k} \otimes M_{n^\infty} \cong B \otimes M_{n^k}$, where the first and last isomorphisms are approximately unitarily equivalent to the first-factor embedding.}
Set $q \coloneqq \theta^{-1}(p) \in B$.  As $p=\theta(q)$ is approximately unitarily equivalent to $q\otimes 1_{M_{n^k}}$ in $B\otimes M_{n^k}$, 
  $[p]_0 = n^k[q]_0$, and so $[p]_0$ is in the group generated by $K_0$-classes of
  projections in $B$.
\end{proof}

\begin{proof}[Proof of Theorem~\ref{prop:z-stable-K1-inj}]
  $D \otimes \mathcal{Z}$ is a stationary inductive limit of the
  algebra $D \otimes \mathcal{Z}_{2^\infty, 3^\infty}$; see
  \eqref{DefZpq}.  It is therefore enough to show $D \otimes
  \mathcal{Z}_{2^\infty, 3^\infty}$ is $K_1$-injective.  To this end,
  choose $u \in U_1(D \otimes \mathcal{Z}_{2^\infty, 3^\infty})$ with
  $[u]_1 = 0$ in $K_1(D \otimes \mathcal{Z}_{2^\infty, 3^\infty})$.

  Consider the exact sequence\footnote{It is exact as each of
    $SM_{6^\infty}$, $\mathcal Z_{2^\infty,3^\infty}$, and
    $M_{2^\infty}\oplus M_{3^\infty}$ is nuclear.}
  \begin{equation}
   0 \rightarrow D \otimes SM_{6^\infty} \rightarrow D \otimes
    \mathcal{Z}_{2^\infty, 3^\infty} \xrightarrow{\sigma} D \otimes
    (M_{2^\infty} \oplus M_{3^\infty}) \rightarrow 0,
  \end{equation}
  where $SM_{6^\infty}$ is the suspension $C_0((0,1), M_{6^\infty}) \subseteq \mathcal Z_{2^\infty,3^\infty}$.
  Note that $[\sigma(u)]_1 = 0$ in $K_1(D \otimes (M_{2^\infty} \oplus
  M_{3^\infty}))$.  Lemma~\ref{lemma:uhf-stable-K1-inj} applies to both $D \otimes M_{2^\infty}$ and $D \otimes M_{3^\infty}$. Because $D \otimes (M_{2^\infty} \oplus M_{3^\infty}) \cong D \otimes M_{2^\infty} \oplus D\otimes M_{3^\infty}$,
  it follows that $\sigma(u)$ is in the path
  component of the identity in the unitary group of this algebra.
  Hence, there is a unitary $v$ in the path component of the identity
  in $U_1(D\otimes\Z_{2^\infty,3^\infty})$ with $\sigma(v)=\sigma(u)$.  After
  replacing $u$ with $uv^*$, we may assume $\sigma(u) =
  1_{D\otimes(M_{2^\infty}\oplus M_{3^\infty})}$ so that $u$ is a
  unitary in $(D \otimes SM_{6^\infty})^\dagger$.

  Since $[u]_1 = 0$ in $K_1(D \otimes \mathcal{Z}_{2^\infty,
    3^\infty})$, the 6-term exact sequence gives an element $x \in
  K_0(D \otimes (M_{2^\infty} \oplus M_{3^\infty}))$ such that
  $\mathrm{exp}(x) = [u]_1$ in $K_1(D \otimes SM_{6^\infty})$.  As above, 
  another application of Lemma
 ~\ref{lemma:uhf-stable-K1-inj} shows there are projections $p_1,
  \ldots, p_n, q_1, \ldots, q_n$ in $D \otimes (M_{2^\infty} \oplus
  M_{3^\infty})$ with $x = \sum_j ([p_j]_0 - [q_j]_0)$.  Then, in
  $K_1(D \otimes SM_{6^\infty})$, we have
  \begin{equation}
    [u]_1 = [e^{2\pi i h_1} \cdots e^{2\pi i h_n} e^{- 2 \pi i k_1}
    \cdots e^{- 2 \pi i k_n}]_1
  \end{equation}
  where the $h_j$, $k_j$ are positive contractions in $D \otimes
  \mathcal{Z}_{2^\infty, 3^\infty}$ lifting $p_j, q_j$, respectively.
  Replacing $u$ with
  \begin{equation}
    u e^{- 2\pi i h_1} \cdots e^{- 2\pi i h_n} e^{ 2 \pi i k_1} \cdots
    e^{ 2 \pi i k_n},
  \end{equation}
  if necessary, we may assume that $u \in (D \otimes
  SM_{6^\infty})^\dagger$ with $[u]_1 = 0$ in $K_1(D \otimes
  SM_{6^\infty})$ and with scalar part equal to $1$.

  With these reductions in place, we will now show $u$ is in the path
  component of the identity in the unitary group of $(D \otimes SM_{6^\infty})^\dagger$.
  We identify $D \otimes SM_{6^\infty}$ with the subalgebra of
  $C(\mathbb{T}, D \otimes M_{6^\infty})$ consisting of functions
  vanishing at $1$.  Then $u$ defines an element in $C(\mathbb{T}, D
  \otimes M_{6^\infty})$ with trivial $K_1$-class.  By a third, and
  final, application of Lemma~\ref{lemma:uhf-stable-K1-inj}, $u$ is in
  the path component of the identity of $U_1(C(\mathbb{T}, D \otimes
  M_{6^\infty}))$.  Let $(v_t)_{t\in[0,1]}$ be a path of unitaries in
  $C(\mathbb{T}, D \otimes M_{6^\infty})$ with $v_0 = 1_{D\otimes M_{6^\infty}}$ and $v_1 =
  u$.  Define $u_t = v_t(1)^* v_t \in C(\mathbb{T}, D \otimes
  M_{6^\infty})$.  Then $u_t(1) = 1_{D\otimes M_{6^\infty}}$ for all $t \in [0, 1]$, so
  $(u_t)_{t\in[0,1]}$ is a path of unitaries in $(D \otimes
  SM_{6^\infty})^\dagger$.  Moreover, $u_0 = 1_{(D\otimes SM_{6^\infty})^\dagger}$ and $u_1 = u$, as
  required.
\end{proof}

\subsection{The Cuntz semigroup and the corona factorization property}\label{SSCuntz}
Considerable information about a $C^*$-algebra is carried in an algebraic object --- the Cuntz semigroup --- constructed from its positive elements. The ideas for this semigroup originated in \cite{Cuntz78} and were subsequently developed in \cite{Blackadar-Handelman82,Rordam92,Perera97,Kirchberg-Rordam02,Rordam04,Brown-Perera-etal08,Coward-Elliott-etal08} and other works; see the survey \cite{Ara-Perera-Toms11}. 

Given a $C^*$-algebra $D$ and positive elements $a,b\in D\otimes\mathcal K$, write $a\precsim b$ if there exists a sequence $(x_n)_{n=1}^\infty$ in $D\otimes\mathcal K$ with $x_n^*bx_n\to a$.  Say $a$ and $b$ are \emph{Cuntz equivalent}, written $a\sim b$, if $a\precsim b$ and $b\precsim a$.  The \emph{Cuntz semigroup}, $\Cu(D)$, is defined by $(D\otimes\mathcal K)_+/{\sim}$.  This is an abelian semigroup (with addition given by identifying $\mathcal K\cong M_2(\mathcal K)$ and then adding representatives diagonally) that inherits an order $\leq$ from $\precsim$.  When consulting the literature, it is important to distinguish between the complete\footnote{Here, complete means that every increasing sequence has a supremum.} Cuntz semigroup $\Cu(D)$ introduced in \cite{Coward-Elliott-etal08} and the earlier incomplete version $W(D)$, which is defined in a similar fashion as above using $\bigcup_{n=1}^\infty M_n(D)_+$ in place of $(D\otimes K)_+$.  Note that $\Cu(D)\cong W(D\otimes\mathcal K)$. In order to accurately cite the results we need, we use both constructions.

Given $\tau\in T(D)$ and $a\in M_n(D)_+$ define 
\begin{equation}
d_\tau(a)\coloneqq \lim_{r\to\infty}\tau_n(a^{1/r}),
\end{equation}
where $\tau_n$ is the non-normalized extension of $\tau$ to $M_n(D)$. This provides a \emph{state} on $W(D)$,\footnote{It also gives a state on $\Cu(D)$ when one works with the extension of $\tau$ to a lower semicontinuous trace on $D\otimes \mathcal K$. See \cite[Proposition~4.2]{Elliott-Robert-etal11},  which shows (using work going back to \cite{Cuntz78,Blackadar-Handelman82}) that all states on $\Cu(D)$ are of this form for a suitable extended quasitrace on $D$.} i.e., an order-preserving semigroup homomorphism $W(D)\to \mathbb R$ (which also maps the class of the unit to $1$ when $D$ is unital). A particularly useful concept, strict comparison, allows  Cuntz comparison to be recovered from tracial data. This idea goes back to \cite[Section 6]{Blackadar88}, which gives an account of comparison conditions for simple $C^*$-algebras in the spirit of Murray and von Neumann's comparison theory for von Neumann algebra factors.  
There are several variations of ``strict comparison'' in the literature.
We use the following form, which appears in \cite[Definition 1.5]{Bosa-Brown-etal15}.

\begin{definition}\label{DefStrictComp}
Let $D$ be a $C^*$-algebra. Say that $D$ has \emph{strict comparison with respect to bounded traces} when for all $n\in\mathbb N$ and $a,b\in M_n(D)_+$, we have
\begin{equation}
d_\tau(a)<d_\tau(b) \text{ for all }\tau\in T(D)\implies a\precsim b.
\end{equation}
\end{definition}

We recall in Lemma~\ref{p:fullcomp} below a well-known application of strict comparison, namely, to catalyze checking fullness of $^*$-homomorphisms.
Let us first define fullness, which is the simplicity hypothesis in the classification of approximate embeddings.

\begin{definition}
Let $B,D$ be $C^*$-algebras. A $^*$-homomorphism $\phi\colon B \to D$ is \emph{full} if $\phi(b)$ generates $D$ as an ideal, for every non-zero $b \in B$.
\end{definition}

The first statement of the following lemma is trivial, and the second is relatively standard; see \cite[Lemma 2.2]{TWW},     for example.

\begin{lemma}\label{p:fullcomp}
Let $\theta \colon B\to D$ be a $^*$-homomorphism between $C^*$-algebras. If $\theta$ is full, then $\tau\circ\theta$ is faithful for every $\tau\in T(D)$. The converse holds if $D$ is unital and has strict comparison with respect to bounded traces. \end{lemma}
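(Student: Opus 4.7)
The plan is to handle the two directions separately; neither requires deep work, so the outline is mostly a matter of pinning down the right ideas and being careful about matrix amplifications.

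For the forward direction, I would exploit the fact that, for any $\tau\in T(D)$, the set $J_\tau\coloneqq \{x\in D:\tau(x^*x)=0\}$ is a closed two-sided ideal of $D$. Linearity and closure follow from Cauchy--Schwarz, while the two-sided ideal property uses the trace identity $\tau(y^*y xx^*)=\tau((yx)^*(yx))$ and a $\|\cdot\|$-estimate. With this in hand, suppose $\theta$ is full and that $\tau\circ\theta$ fails to be faithful: there exists $b\in B$ with $b\ne 0$ and $\tau(\theta(b^*b))=0$, so $\theta(b)\in J_\tau$. Since $\theta$ is full, $\theta(b)$ generates $D$ as an ideal, so $J_\tau=D$. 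But then $\tau$ vanishes on all of $D$, contradicting that $\tau$ is a state.

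For the converse, assume $D$ is unital with strict comparison with respect to bounded traces and $\tau\circ\theta$ is faithful for every $\tau\in T(D)$. Fix a non-zero positive element $b\in B$. Showing $\theta$ is full amounts to showing that $1_D$ lies in the closed two-sided ideal generated by $\theta(b)$; equivalently, that $1_D\precsim n\cdot\theta(b)$ in $M_n(D)$ for some $n\geq 1$ (where $n\cdot\theta(b)$ denotes the diagonal element $\mathrm{diag}(\theta(b),\dots,\theta(b))$). I would attack this via strict comparison by controlling $d_\tau(\theta(b))$ uniformly in $\tau$. The function $\tau\mapsto d_\tau(\theta(b))$ is lower semicontinuous on $T(D)$, since it is the supremum of the continuous functions $\tau\mapsto \tau(g_\epsilon(\theta(b)))$ as $\epsilon\downarrow 0$, for suitable continuous $g_\epsilon\nearrow 1_{(0,\infty)}$. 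By faithfulness of each $\tau\circ\theta$, the function is strictly positive on $T(D)$, which is weak$^*$-compact because $D$ is unital. A lower semicontinuous, strictly positive function on a compact set attains a positive infimum $c>0$.

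Choose $n$ with $nc>1$. Then for every $\tau\in T(D)$,
\begin{equation}
    d_\tau(n\cdot\theta(b))=n\,d_\tau(\theta(b))\geq nc>1=d_\tau(1_D),
\end{equation}
and strict comparison (applied in $M_n(D)$, which inherits it from $D$ in the obvious way via the non-normalized extensions $\tau_n$) gives $1_D\precsim n\cdot\theta(b)$, finishing the proof. The only mild subtlety --- and as close to an ``obstacle'' as this proof has --- is the translation between ``$\theta(b)$ generates $D$ as an ideal'' and a Cuntz subequivalence statement in a matrix algebra; this is routine once one unpacks that $1_D\precsim n\cdot\theta(b)$ in $M_n(D)$ forces $1_D$ to be an approximable sum of terms of the form $x^*\theta(b)y$ with $x,y\in D$.
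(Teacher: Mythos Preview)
Your proof is correct and follows the standard approach; the paper itself does not give a proof, merely noting that the first statement is trivial and citing \cite[Lemma~2.2]{TWW} for the second. Your argument is exactly the kind of proof that reference contains: bound $d_\tau(\theta(b))$ away from zero uniformly using lower semicontinuity and compactness of $T(D)$, then invoke strict comparison in a matrix amplification to get $1_D\precsim n\cdot\theta(b)$.
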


While the order structure of the Cuntz semigroup of a general $C^*$-algebra --- even a simple one --- can be quite badly behaved (see \cite{Villadsen98,Toms08}), R\o{}rdam showed the situation is much better for a $\Z$-stable $C^*$-algebra. In the second part of the following proposition, R\o{}rdam uses Haagerup's theorem (\cite{Haagerup14}) that quasitraces are traces for exact $C^*$-algebras. This is the place where this fundamental result appears in our proof of the unital classification theorem.

\begin{proposition}[{R\o{}rdam, \cite{Rordam04}}]\label{p:ZStableUnperforated}
Let $D$ be a $\Z$-stable $C^*$-algebra.  Then
\begin{enumerate}[(i)]
\item $\Cu(D)$ is almost unperforated: whenever $x,y\in \Cu(D)$
  satisfy $nx\leq my$ for some $n>m$ in $\mathbb N$, we have $x\leq
  y$.  \label{p:ZStableUnperforated.1}
\item If, in addition, $D$ is unital, simple, and exact with
  $T(D)\neq\emptyset$, then $D$ has strict comparison of positive
  elements with respect to bounded
  traces.\footnote{Working with an appropriate definition of strict comparison (one using extended quasitraces instead of bounded traces), this holds more generally for all $\Z$-stable $C^*$-algebras.}
\label{p:ZStableUnperforated.2}
\end{enumerate}
\end{proposition}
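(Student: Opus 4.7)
For part \ref{p:ZStableUnperforated.1}, since $D$ is $\Z$-stable, we may identify $\Cu(D)$ with $\Cu(D\otimes\Z)$. The plan is to exploit the fractional structure of $\Cu(\Z)$ itself, which is essentially $\bN\sqcup(0,\infty]$ (with the natural numbers identified with classes of projections and the remaining elements indexed by the unique trace). Concretely, given $x,y\in \Cu(D\otimes\Z)$ with $nx\le my$ and $n>m$, the first step is to pick a rational $m/n<r<1$ and produce a positive contraction $e\in\Z\otimes\mathcal K$ with $d_\tr(e)=r$, whose Cuntz class $\langle e\rangle$ satisfies $n\langle e\rangle\le m\langle 1_\Z\rangle$ in $\Cu(\Z)$ (this uses the dimension drop building blocks, where such elements are built by hand).

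The key step is then a pairing: representing $x=\langle a\rangle$ and $y=\langle b\rangle$ with $a,b\in (D\otimes\Z\otimes\mathcal K)_+$, one chases diagrams to argue that $\langle a\otimes 1_\Z\rangle\le \langle b\otimes e\rangle$ in $\Cu(D\otimes\Z\otimes\Z)$, using that $n\langle a\rangle\le m\langle b\rangle$ together with the Cuntz estimate $n\langle e\rangle\le m\langle 1_\Z\rangle$ via a bilinear pairing of Cuntz classes. Strong self-absorption of $\Z$ (Theorem~\ref{ZisSSA}) then lets one collapse $D\otimes\Z\otimes\Z$ back to $D\otimes\Z$, and because $\langle b\otimes e\rangle\le \langle b\otimes 1_\Z\rangle=\langle b\rangle$, we obtain $x\le y$. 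The hard part here is producing and correctly pairing the fractional element $e$; the cleanest way is to use an explicit element of $\Z_{p,q}$ for suitable coprime $p,q$ with the desired tracial value, then transfer via \eqref{Zstationaryinductivelimit}.

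For part \ref{p:ZStableUnperforated.2}, the strategy is to convert a strict tracial inequality into a Cuntz semigroup inequality with integer coefficients and then apply \ref{p:ZStableUnperforated.1}. Suppose $a,b\in M_k(D)_+$ satisfy $d_\tr(a)<d_\tr(b)$ for all $\tr\in T(D)$. Every state on the ordered semigroup $W(D)$ is of the form $d_{\tr'}$ for some $2$-quasitrace $\tr'$ on $D$ by the Blackadar–Handelman–Cuntz picture; Haagerup's theorem (valid since $D$ is exact) upgrades these quasitraces to traces, and because $D$ is unital and simple the bounded traces give exactly these states. By compactness of $T(D)$ there is a uniform gap $d_\tr(a)+\epsilon<d_\tr(b)$ for all $\tr$. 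A standard Hahn–Banach/state-space argument (in the spirit of Goodearl–Handelman) applied to the ordered semigroup $W(D)$ then produces integers $n>m$ with $n\langle a\rangle\le m\langle b\rangle$ in $W(D)$, hence in $\Cu(D)$.

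Applying \ref{p:ZStableUnperforated.1} gives $\langle a\rangle\le \langle b\rangle$, i.e., $a\precsim b$, completing part \ref{p:ZStableUnperforated.2}. The main obstacle is part \ref{p:ZStableUnperforated.1}: producing and using the fractional Cuntz element inside $\Z$ in a way that genuinely leverages strong self-absorption rather than just $\Z$-stability. The passage from (i) to (ii) is essentially a formal consequence of Haagerup's theorem and the state-space description of the Cuntz semigroup, both of which we are importing as black boxes.
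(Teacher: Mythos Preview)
The paper's own proof is a two-line citation: part~\ref{p:ZStableUnperforated.1} is \cite[Theorem~4.5]{Rordam04} applied to $D\otimes\mathcal K$ (using $\Cu(D)=W(D\otimes\mathcal K)$), and part~\ref{p:ZStableUnperforated.2} is \cite[Corollary~4.6]{Rordam04} applied to each $M_n(D)$. You are instead sketching R{\o}rdam's underlying arguments, which is fine in principle, but your sketch of~\ref{p:ZStableUnperforated.1} contains a concrete error.

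You choose $r$ with $m/n<r<1$ and $e\in\Z\otimes\mathcal K$ with $d_\tau(e)=r$, then assert $n\langle e\rangle\le m\langle 1_\Z\rangle$. Tracially this says $nr\le m$, but $r>m/n$ forces $nr>m$, so the inequality is backwards. More seriously, the phrase ``bilinear pairing of Cuntz classes'' is not a well-defined operation: there is no general rule producing $\langle a\otimes 1\rangle\le\langle b\otimes e\rangle$ from inequalities of the shape you wrote. R{\o}rdam's actual argument (reducing to the case $(n+1)x\le ny$) finds $e$ with $n\langle e\rangle\le\langle 1_\Z\rangle\le(n+1)\langle e\rangle$ and then tensors \emph{both} $a$ and $b$ by $e$: one gets $\langle a\otimes 1\rangle\le(n+1)\langle a\otimes e\rangle$ from the upper bound on $1_\Z$, then $(n+1)\langle a\otimes e\rangle\le n\langle b\otimes e\rangle$ by tensoring the hypothesis with $e$, and finally $n\langle b\otimes e\rangle\le\langle b\otimes 1\rangle$ from the lower bound. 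Chaining these gives $\langle a\rangle\le\langle b\rangle$ after identifying $D\otimes\Z$ with $D$. Your sketch gestures at this but does not get the mechanics right.

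Your sketch of~\ref{p:ZStableUnperforated.2} is essentially correct and matches R{\o}rdam's route: the identification of states on $W(D)$ with lower semicontinuous dimension functions, Haagerup's theorem to replace quasitraces by traces, and a Goodearl--Handelman state-separation argument to produce integers $n>m$ with $n\langle a\rangle\le m\langle b\rangle$, after which~\ref{p:ZStableUnperforated.1} finishes. One small caveat: the separation argument needs $\langle b\rangle$ to lie below a multiple of the order unit $\langle 1_D\rangle$ (automatic here since $b\in M_k(D)$), and simplicity of $D$ to ensure $\langle b\rangle$ is itself an order unit so that normalized states see it; you should make these hypotheses visible in the argument.
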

\begin{proof}
Theorem 4.5 of \cite{Rordam04} shows that the incomplete Cuntz semigroup $W(D)$ is almost unperforated when $D$ is $\Z$-stable.  Part~\ref{p:ZStableUnperforated.1} follows by applying this to $D\otimes\mathcal K$, since $\Cu(D)\cong W(D\otimes \mathcal K)$.  Part~\ref{p:ZStableUnperforated.2} follows by applying \cite[Corollary 4.6]{Rordam04} to each $M_n(D)$.
\end{proof}

Now we turn to the corona factorization property of Kucerovsky and Ng from \cite{Kucerovsky-Ng06}, which we use to verify absorption in Section~\ref{sec:KK-classification} and beyond.  A $\sigma$-unital $C^*$-algebra $D$ has the \emph{corona factorization property} if every (norm-)full projection in the multiplier algebra  of $D\otimes\mathcal K$ is properly infinite. This is a relatively mild regularity hypothesis, and is weaker than $\Z$-stability, although a tight reference for this seems difficult to come by.  The route through the literature that we give below uses Ortega, Perera, and R\o{}rdam's characterization of the corona factorization property in terms of the complete Cuntz semigroup (\cite[Theorem 5.11]{Ortega-Perera-etal12}).

\begin{proposition}\label{prop:Zcfp} 
Let $D$ be a $\sigma$-unital $\Z$-stable $C^*$-algebra.  Then $D$ has the corona factorization property.
\end{proposition}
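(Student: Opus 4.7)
The proof will be a straightforward chain of two citations, assembling the path from $\Z$-stability, through almost unperforation of the Cuntz semigroup, to the corona factorization property. The strategy is to read off the Cuntz-semigroup regularity that $\Z$-stability forces on $D$ (from Proposition~\ref{p:ZStableUnperforated}), and then feed that regularity into the Cuntz-semigroup characterization of the corona factorization property.

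First I would invoke Proposition~\ref{p:ZStableUnperforated}\ref{p:ZStableUnperforated.1}: since $D$ is $\Z$-stable, the complete Cuntz semigroup $\Cu(D)$ is almost unperforated, meaning that $nx\leq my$ with $n>m$ implies $x\leq y$ in $\Cu(D)$. This is exactly the hypothesis needed to feed into the Ortega--Perera--R{\o}rdam machinery.

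Next, I would apply \cite[Theorem~5.11]{Ortega-Perera-etal12}, which characterizes the corona factorization property for $\sigma$-unital $C^*$-algebras in terms of a comparison-type condition on $\Cu(D)$ (a weak form of unperforation together with cancellation of ``very large'' elements). Almost unperforation immediately implies this weaker condition, so $D$ has the corona factorization property. Since $D$ is assumed $\sigma$-unital, the hypotheses of \cite{Ortega-Perera-etal12} are met, and the conclusion follows at once.

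There is essentially no obstacle here: the entire content of the result is packaged in the two cited theorems, and the role of Proposition~\ref{prop:Zcfp} in this paper is only to bridge notation, so that later arguments can cite ``$\Z$-stable $\Rightarrow$ corona factorization property'' directly without repeatedly invoking the Cuntz-semigroup characterization. The only care needed is to confirm that the notion of almost unperforation in Proposition~\ref{p:ZStableUnperforated}\ref{p:ZStableUnperforated.1} matches the form used in \cite{Ortega-Perera-etal12} (it does, as both concern the complete Cuntz semigroup $\Cu(D)$), and that $\sigma$-unitality is preserved.
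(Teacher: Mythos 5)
Your approach is the same as the paper's: extract almost unperforation of $\Cu(D)$ from Proposition~\ref{p:ZStableUnperforated}\ref{p:ZStableUnperforated.1}, then feed it into the Ortega--Perera--R{\o}rdam machinery to get the corona factorization property. However, your description of that second step compresses the chain of citations in a way that, read literally, would be a gap: \cite[Theorem~5.11]{Ortega-Perera-etal12} does not directly take almost unperforation as its hypothesis. The paper has to pass through a few intermediate steps in \cite{Ortega-Perera-etal12}: almost unperforation of $\Cu(D)$ is equivalent to $0$-comparison (Remark~2.4 there), which in particular gives $\omega$-comparison (Definition~2.11 and the sentence following it), and then one combines Proposition~2.17 with Theorem~5.11 to obtain the corona factorization property. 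Your phrasing ``almost unperforation immediately implies this weaker condition, so $D$ has the corona factorization property'' is the right instinct, but the ``immediately'' is doing unannounced work; to make the argument checkable you should name the intermediate comparison properties and cite Proposition~2.17 in addition to Theorem~5.11. With that amendment your proof coincides with the paper's.
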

\begin{proof}
By Proposition~\ref{p:ZStableUnperforated}\ref{p:ZStableUnperforated.1}, $\Cu(D)$ is almost unperforated. By \cite[Remark 2.4]{Ortega-Perera-etal12}, this is equivalent to $0$-comparison for $\Cu(D)$ in the sense of \cite[Definition 2.8]{Ortega-Perera-etal12}.  In particular, $\Cu(D)$ has the $\omega$-comparison property of \cite[Definition 2.11]{Ortega-Perera-etal12} (see the sentence immediately following that definition). Then we can combine \cite[Proposition 2.17]{Ortega-Perera-etal12} and \cite[Theorem 5.11]{Ortega-Perera-etal12} to conclude that $D$ has the corona factorization property.
\end{proof}	

\subsection{Sequence algebras of $\Z$-stable $C^*$-algebras}\label{SSZStableSeq}
We end this section by recording some structural properties that a sequence algebra $B_\infty$ enjoys when the underlying $C^*$-algebra $B$ is $\Z$-stable.  Such a sequence algebra cannot be $\Z$-stable,\footnote{This is due to Ghasemi (\cite{Ghasemi15}). The fact that sequence algebras satisfy Pedersen's SAW$^*$ condition used by Ghasemi is a standard application of Kirchberg $\epsilon$-test (cf.\ \cite[Corollary 1.7]{Kirchberg06}).} and so we work with the following version of $\Z$-stability in terms of separable subalgebras, in the spirit of Blackadar's notion of separable inheritability (\cite[Section II.8.5]{Blackadar06}).

\begin{definition}[{cf.\ \cite[Definition 1.4]{Schafhauser18}}]
  A $C^*$-algebra $D$ is 
  \emph{separably $\mathcal Z$-stable} if, for every separable
  $C^*$-subalgebra $D_0$ of $D$, there exists a separable $\Z$-stable
  $C^*$-subalgebra $D_1\subseteq D$ containing $D_0$.
\end{definition}

When $B$ is separable and unital, an intertwining argument which seems to be originally
due to Elliott (see \cite[Theorem 8.2]{Kirchberg-Rordam02}, \cite[Page 34]{Rordam94}, and
 \cite[Section 2]{Toms-Winter07}) shows that $\Z$-stability of
$B$ can be characterized in terms of an embedding of $\Z$ into the
central sequence algebra $B_\infty\cap B'$.
This is in the spirit of McDuff's
characterization of separably acting II$_1$ factors absorbing the
hyperfinite II$_1$ factor tensorially.  It gives rise to a local
characterization of $\Z$-stability for unital separable
$C^*$-algebras, that extends to characterize separable
$\Z$-stability; this is why separable $\Z$-stability is well-suited to
working with non-separable $C^*$-algebras (see \cite[Lemma
1.11]{Schafhauser18}).  As a consequence, one has the following.

\begin{proposition}[{\cite[Proposition 1.12]{Schafhauser18}}]
  \label{prop:B_inftyProperties_1}
  If $B$ is a unital separable $\mathcal Z$-stable $C^*$-algebra, then
  $B_\infty$ is separably $\mathcal Z$-stable.
\end{proposition}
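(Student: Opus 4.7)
The plan is to build, for any given separable subalgebra $D_0 \subseteq B_\infty$ (enlarged harmlessly to contain $1_{B_\infty}$), a separable subalgebra $D_1 \supseteq D_0$ of $B_\infty$ together with a unital $^*$-homomorphism $\Z \to D_{1,\infty} \cap D_1'$. By the central-sequence characterization of $\Z$-stability for separable unital $C^*$-algebras recalled in the paragraph preceding the proposition, such a map makes $D_1$ a $\Z$-stable separable subalgebra of $B_\infty$ containing $D_0$, which is exactly what separable $\Z$-stability of $B_\infty$ demands.

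The key lemma I would prove is that for every separable subalgebra $E \subseteq B_\infty$ containing $1_{B_\infty}$, there exists a unital $^*$-homomorphism $\Z \to B_\infty \cap E'$. For this, fix countable dense subsets $(e_k)_{k=1}^\infty \subseteq E$ and $(g_k)_{k=1}^\infty \subseteq \Z$, and choose lifts $(e_k^{(n)})_{n=1}^\infty \in \ell^\infty(B)$ of each $e_k$. Let $X_n$ be the set of unital completely positive contractive maps $\Z \to B$, and for each test parameter $k \in \mathbb N$, define $f_n^{(k)} \colon X_n \to [0,\infty]$ to measure the maximum of $\|\phi(g_ig_j) - \phi(g_i)\phi(g_j)\|$ and $\|[\phi(g_i), e_j^{(n)}]\|$ for $i,j \le k$. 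Applying the local form of $\Z$-stability of $B$ itself --- which is a direct consequence of $B \cong B \otimes \Z$ and reindexing of the first-factor embedding --- to the finite set $\{e_1^{(n)},\dots,e_k^{(n)}\}$ and the generators $g_1,\dots,g_k$, one can find $\phi \in X_n$ making $f_n^{(k)}(\phi)$ arbitrarily small. Kirchberg's $\varepsilon$-test (Lemma~\ref{lem:EpsTest}) then produces $\phi_n \in X_n$ for which all $f^{(k)}((\phi_n)) = 0$; the induced map sending $g_k$ to the class of $(\phi_n(g_k))_n$ is the desired unital $^*$-homomorphism $\Z \to B_\infty \cap E'$.

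Given the key lemma, I would carry out an increasing inductive construction inside $B_\infty$: set $E_0 \coloneqq C^*(D_0, 1_{B_\infty})$ and, having built a separable $E_{n-1} \subseteq B_\infty$ containing the unit, apply the key lemma to obtain a unital $^*$-homomorphism $\psi^{(n)} \colon \Z \to B_\infty \cap E_{n-1}'$, and set $E_n \coloneqq C^*(E_{n-1}, \psi^{(n)}(\Z))$. Then $D_1 \coloneqq \overline{\bigcup_n E_n}$ is a separable unital subalgebra of $B_\infty$ containing $D_0$. The sequence $(\psi^{(n)})_{n=1}^\infty$ then assembles to a unital $^*$-homomorphism $\tilde\psi \colon \Z \to D_{1,\infty}$, since each $\psi^{(n)}$ already takes values in $E_n \subseteq D_1$. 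For any $d \in E_m$ and any $n > m$, $\psi^{(n)}(g)$ commutes with $d$, so the commutator sequence represents zero in $D_{1,\infty}$; by density this shows $\tilde\psi$ takes values in $D_{1,\infty} \cap D_1'$, proving $\Z$-stability of $D_1$.

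The main technical obstacle is the key lemma, i.e., the uniform-in-$n$ production of approximately central c.p.c.\ maps $\Z \to B$ that correctly lift to $B_\infty$; the careful bookkeeping in the $\varepsilon$-test and in the reindexing step is the heart of the matter, while the inductive construction afterward is formal. One minor point deserves attention: since $B_\infty$ is not itself $\Z$-stable, one cannot produce the map $\Z \to B_\infty \cap E'$ simply by invoking $\Z$-stability of $B_\infty$; the argument genuinely uses the separability of $E$ to reduce, row by row in $\ell^\infty(B)$, to the local $\Z$-stability of the separable algebra $B$.
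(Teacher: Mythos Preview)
Your argument is correct and is essentially the standard proof; the paper itself does not supply a proof but merely cites \cite[Proposition~1.12]{Schafhauser18}, whose argument is along the same lines as yours (a ``key lemma'' producing a unital embedding $\Z \to B_\infty \cap E'$ for any separable $E$, followed by the inductive enlargement to arrange $\Z \to D_{1,\infty}\cap D_1'$). One minor simplification: because $B\cong B\otimes\Z$ via an isomorphism approximately unitarily equivalent to $\id_B\otimes 1_\Z$, you can take each $\phi_n$ in $X_n$ to be a genuine unital $^*$-homomorphism $\Z\to B$ rather than just a u.c.p.\ map, so the approximate-multiplicativity test functions are unnecessary and only approximate centrality needs to be handled by the $\epsilon$-test.
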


Now we turn to traces on sequence algebras, with the aim of using Kirchberg's $\epsilon$-test to observe that the image of $K_0(B_\infty)$ is closed in $\Aff T(B_\infty)$ when $B$ satisfies the codomain assumptions in the classification of embeddings theorem.
 Once this is done, Thomsen's work described in Section~\ref{SectAlgK1} (in particular, Properties~\ref{ThomsenProp}) will give an exact sequence
\begin{equation}\label{ThomsenExactSequenceAlg}
K_0(B_\infty)\xrightarrow{\rho_{B_\infty}} \Aff T(B_\infty)\xrightarrow{\Th_{B_\infty}} \Ka(B_\infty)\xrightarrow{\minusa_{B_\infty}} K_1(B_\infty)\to 0
\end{equation}
which we use in the computation of $KL(A,J_B)$ in Section~\ref{sec:uct-rot}. 
First, we recall the notion of limit traces; these are the traces on $B_\infty$ that are computationally tractable.
\begin{definition}\label{DefLimitTraces}
  The set of \emph{limit traces} on $B_\infty$, written $T_\infty(B)$,
  consists of all $\tau \in T(B_\infty)$ of the form
  $\tau((b_n)_{n=1}^\infty) = \lim_{n\to\omega} \tau_n(b_n)$, where
  $(\tau_n)_{n=1}^\infty$ is a sequence in $T(B)$ and $\omega$ is a
  free ultrafilter on $\mathbb N$.
\end{definition}

\begin{proposition}[{Ozawa, cf.\ \cite[Theorem 1.2]{Ng-Robert16}}]\label{NoSillyTraces}
If $B$ is an exact $\Z$-stable $C^*$-algebra, then the convex hull of $T_\infty(B)$ is weak$^*$-dense in $T(B_\infty)$. 
\end{proposition}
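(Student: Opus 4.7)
Plan: I would prove the proposition by contradiction using Hahn-Banach separation, together with the two main consequences of $\Z$-stability and exactness: strict comparison with respect to bounded traces (Proposition~\ref{p:ZStableUnperforated}\ref{p:ZStableUnperforated.2}, which uses Haagerup's theorem \cite{Haagerup14}) and an abundance of approximately central positive contractions. The proof is a transcription of Ozawa's original argument for the ultrapower \cite{Ozawa13} to the sequence-algebra setting.

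First, suppose for a contradiction that the weak$^*$-closed convex hull $C$ of $T_\infty(B)$ is a proper subset of $T(B_\infty)$. Both $C$ and $T(B_\infty)$ are weak$^*$-compact and convex, so by Hahn-Banach separation there exists $\tau_0\in T(B_\infty)\setminus C$, a self-adjoint $a\in B_\infty$, and scalars $s<t$ with $\tau_0(a)\ge t$ while $\sigma(a)\le s$ for every $\sigma\in C$. Lift $a$ to a bounded self-adjoint sequence $(a_n)_{n=1}^\infty$ in $B$. Unpacking Definition~\ref{DefLimitTraces}, the bound $\sigma(a)\le s$ for every limit trace, applied to arbitrary sequences $(\tau_n)\subseteq T(B)$ and arbitrary free ultrafilters, is equivalent to $\limsup_n \sup_{\tau\in T(B)}\tau(a_n)\le s$. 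In particular, the positive contractions $c_n\coloneqq \tfrac{1}{\|a\|}(a_n-s)_+$ satisfy $\lim_n \sup_{\tau\in T(B)}\tau(c_n)=0$, while $c\coloneqq (c_n)+c_0(B)$ satisfies $\tau_0(c)\ge (t-s)/\|a\|>0$.

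The crux will be to show that any such $c$ actually satisfies $\tau(c)=0$ for every $\tau\in T(B_\infty)$, which contradicts the lower bound above. For this I would use $\Z$-stability of $B$ together with strict comparison to implement an averaging/subdivision step: for each $N\ge 1$, approximately central positive contractions $e_1,\dots,e_N\in B_\infty$ pairwise orthogonal with $\sum e_i=1$ and $\sigma(e_i)=1/N$ on every limit trace can be extracted from a commuting copy of $\Z\subseteq \mathcal{Z}_{p^\infty, q^\infty} \subset B_\infty\cap B'_0$ for any fixed separable $B_0\supseteq C^*(c)$ (using separable $\Z$-stability of $B_\infty$, Proposition~\ref{prop:B_inftyProperties_1}). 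Combining this with strict comparison, one shows that $e_i^{1/2}ce_i^{1/2}$ is Cuntz subequivalent to an element whose trace is uniformly small (at rate $1/N$), and summing gives $\tau(c)=O(1/N)$ for every $\tau\in T(B_\infty)$; letting $N\to\infty$ yields $\tau(c)=0$.

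The main obstacle is making the subdivision step work at the level of the sequence algebra rather than the ultrapower: the element $c$ does not commute with the approximately central projections, and one must carefully combine uniformity in $\tau\in T(B)$ from the sequential data $(c_n)$ with comparison arguments that are uniform in the trace. The cleanest path is Kirchberg's $\epsilon$-test (Lemma~\ref{lem:EpsTest}) to convert the approximate inequalities into exact ones inside $B_\infty$, after which $\tau$ applied to the decomposition $c=\sum e_i^{1/2}ce_i^{1/2}$ (modulo a tracially null error) furnishes the required estimate $\tau(c)\le 1/N$. Rather than reproving this in detail, I would simply invoke the result in the form already established for $C^*$-algebras with strict comparison \cite[Theorem~1.2]{Ng-Robert16}, noting that the $\Z$-stable, exact hypothesis here is a special case via Proposition~\ref{p:ZStableUnperforated}\ref{p:ZStableUnperforated.2}.
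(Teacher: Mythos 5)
Your reconstruction correctly identifies the Hahn--Banach separation/strict comparison/approximate divisibility skeleton of Ozawa's argument, and in that sense it is in the right spirit. But the paper does something much simpler: it cites Ozawa's \cite[Theorem~8]{Ozawa13} outright and observes that the proof transfers verbatim from ultraproducts to sequence algebras, with the single mechanical change of replacing ``$I\in\mathcal U$ (or $I=\mathbb N$)'' with ``$I$ cofinite.'' No re-derivation of the averaging machinery is attempted, and in particular strict comparison of $B_\infty$, Kirchberg's $\epsilon$-test, and Ng--Robert are not invoked. Your final pivot to \cite[Theorem~1.2]{Ng-Robert16} is acknowledged by the authors as an alternative route (see the footnote near the discussion of this proposition), but they explicitly flag it as the \emph{harder} option and choose Ozawa for that reason.

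Two concrete problems with the way you set up the Ng--Robert route. First, the strict comparison you need is for the \emph{sequence algebra} $B_\infty$, not for $B$; Proposition~\ref{p:ZStableUnperforated}\ref{p:ZStableUnperforated.2} (R\o{}rdam) concerns unital simple exact $\mathcal Z$-stable $C^*$-algebras and applies to $B$, not to the non-simple $B_\infty$. The correct reference is Proposition~\ref{prop:B_inftyProperties_2}\ref{prop:B_inftyProperties.3}, whose proof is a separate Kirchberg $\epsilon$-test argument lifted from \cite[Lemma~1.23]{Bosa-Brown-etal15}. Second, the subdivision sketch contains a genuine inconsistency: pairwise orthogonal positive contractions $e_1,\dots,e_N$ with $\sum e_i = 1$ are automatically projections (since $e_j = e_j\sum_i e_i = e_j^2$), and $\mathcal Z$ (and $\mathcal Z_{p^\infty,q^\infty}$) is projectionless, so no such elements can be ``extracted from a commuting copy of $\mathcal Z$.'' Ozawa instead uses a c.p.c.\ order zero map $M_N \to B_\infty\cap B_0'$, whose diagonal images are pairwise orthogonal positive contractions with sum $\le 1$ but not $=1$; the trace estimate then proceeds differently. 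Neither issue is fatal to the overall plan, but as written the argument would not go through, and the cleanest fix is precisely what the paper does: follow \cite[Theorem~8]{Ozawa13} directly.
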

\begin{proof}
Ozawa's \cite[Theorem 8]{Ozawa13} shows this for ultraproducts of $\Z$-stable separable exact $C^*$-algebras, and the case of sequence algebras is essentially identical.  One follows the proof of \cite[Theorem 8]{Ozawa13} verbatim with $A\coloneqq B_\infty$ and $\Sigma\coloneqq\mathrm{co}\,T_\infty(B)$ and changing ``Let $I\in\mathcal U$ (or $I=\mathbb N$ in case $A=\prod A_n$)'' to ``Let $I$ be cofinite''.
\end{proof}

The following proposition collects additional properties of the sequence
algebra that will be useful later.

\begin{proposition}
  \label{prop:B_inftyProperties_2}
  Let $B$ be a unital simple separable exact $\mathcal Z$-stable
  $C^*$-algebra with $T(B)\neq\emptyset$.  Then
  \begin{enumerate}[(i)]
  \item $B_\infty$ has stable rank one,
    and \label{prop:B_inftyProperties.2}
  \item $B_\infty$ has strict comparison of positive elements with
    respect to bounded traces. \label{prop:B_inftyProperties.3}
  \end{enumerate}
\end{proposition}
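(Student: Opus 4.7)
The plan for \textbf{(i)} is to first deduce stable rank one of $B$ itself and then lift to $B_\infty$ via polar decomposition. Since $B$ is exact with $T(B) \neq \emptyset$, the Kirchberg dichotomy (Theorem~\ref{T:KDich}) rules out pure infiniteness, so $B$ is stably finite, and Rørdam's theorem from \cite{Rordam04} yields that $B$ has stable rank one. In any such $C^*$-algebra, for every element $x$ and every $\delta > 0$ one can find a unitary $u$ and a positive element $c$ with $\|c\| \leq \|x\| + \delta$ and $\|x - uc\| < \delta$: pick an invertible $z \in B$ with $\|z - x\| < \delta$ and polar-decompose $z = u|z|$ with $u$ unitary and $c \coloneqq |z| \geq 0$. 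Given $x \in B_\infty$ with bounded representing sequence $(x_n)$ and $\epsilon > 0$, applying this at each coordinate (with $\delta = \epsilon$) and setting $y_n \coloneqq u_n(c_n + \epsilon 1_B)$ produces invertibles in $B$ with $\|y_n^{-1}\| \leq \epsilon^{-1}$ uniformly in $n$, $\|y_n\| \leq \|x_n\| + 2\epsilon$, and $\|y_n - x_n\| \leq 2\epsilon$. Since both $(y_n)$ and $(y_n^{-1})$ are bounded, they lift to an invertible element $y \in B_\infty$ with $\|y - x\| \leq 2\epsilon$.

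For \textbf{(ii)}, suppose $a, b \in M_k(B_\infty)_+$ with $d_\tau(a) < d_\tau(b)$ for all $\tau \in T(B_\infty)$. By standard Cuntz semigroup arguments it suffices to show $(a - \epsilon)_+ \precsim b$ for every $\epsilon > 0$. Fix $\epsilon > 0$ and, for each $\delta > 0$, pick continuous cutoffs $f, g_\delta \colon [0,\infty) \to [0,1]$ vanishing near $0$ and equal to $1$ on $[\epsilon,\infty)$ and $[\delta,\infty)$ respectively, with $g_\delta$ monotone non-increasing in $\delta$. These are chosen so that $d_\tau((a-\epsilon)_+) \leq \tau(f(a))$ and $d_\tau((b-\delta)_+) \leq \tau(g_\delta(b)) \leq d_\tau((b-\delta/2)_+)$ for all $\tau$. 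Using $d_\tau(a) < d_\tau(b) = \sup_\delta d_\tau((b-\delta)_+)$, for each $\tau$ there is some $\delta_\tau > 0$ with $\tau(f(a)) < \tau(g_{\delta_\tau}(b))$. The compactness of $T(B_\infty)$, continuity of $\tau \mapsto \tau(c)$ for fixed $c \in B_\infty$, and monotonicity of $g_\delta$ in $\delta$ together let me extract by a finite-subcover argument a single $\delta > 0$ and then a uniform gap $\eta > 0$ with $\tau(f(a)) + \eta \leq \tau(g_\delta(b))$ for all $\tau \in T(B_\infty)$.

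The next step translates this uniform tracial inequality into a pointwise statement on representing sequences, using the density of limit traces (Proposition~\ref{NoSillyTraces}). Evaluating on every limit trace $\tau = \lim_\omega \tau_n$ and running an ultrafilter-based contradiction shows that for every $\tau_n \in T(B)$ and cofinitely many $n$, $\tau_n(f(a_n)) + \eta/2 \leq \tau_n(g_\delta(b_n))$. Chaining with the sandwich inequalities above yields $d_{\tau_n}((a_n-\epsilon)_+) \leq \tau_n(f(a_n)) < \tau_n(g_\delta(b_n)) \leq d_{\tau_n}((b_n-\delta/2)_+)$ for these $n$ and all $\tau_n$, so strict comparison in $M_k(B)$ (Proposition~\ref{p:ZStableUnperforated}\ref{p:ZStableUnperforated.2}, applicable via Remark~\ref{matrixB}) gives $(a_n-\epsilon)_+ \precsim (b_n-\delta/2)_+$ in $M_k(B)$ for all large $n$. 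Reassembling these Cuntz comparisons via Kirchberg's $\epsilon$-test (Lemma~\ref{lem:EpsTest}) produces a single sequence $(w_n)$ in $M_k(B)$ witnessing $(a-\epsilon)_+ \precsim (b-\delta/2)_+ \leq b$ in $M_k(B_\infty)$.

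The main obstacle will be the uniform translation step in (ii): carefully exchanging the lower semicontinuous rank functions $d_\tau$ for genuine continuous tracial functionals $\tau \mapsto \tau(c)$, and propagating a single uniform inequality from all traces on $B_\infty$ down to pointwise inequalities on the representing sequences in $B$. The fact that this manipulation succeeds rests on strict comparison of $B$ (Proposition~\ref{p:ZStableUnperforated}) together with the density of limit traces in $T(B_\infty)$ (Proposition~\ref{NoSillyTraces}), both consequences of $\mathcal Z$-stability and exactness of $B$.
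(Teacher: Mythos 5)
Your proposal takes essentially the same route as the paper's proof, which delegates both parts to citations: part~(i) to \cite[Lemma 19.2.2]{Loring97a} (lifting stable rank one from $B$ to $B_\infty$) together with \cite[Theorem 6.7]{Rordam04}, and part~(ii) to \cite[Lemma 1.23]{Bosa-Brown-etal15}, adapted from $B_\omega$ to $B_\infty$. What you do is spell out, from scratch, the content of those references; the mathematical strategy is the same. Two remarks on part~(i): the Kirchberg dichotomy (Theorem~\ref{T:KDich}) and the exactness of $B$ are not actually needed here --- the existence of a trace directly rules out pure infiniteness, after which \cite[Theorem~6.7]{Rordam04} applies without an exactness hypothesis (as the paper notes). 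Your hands-on lifting via polar decomposition with the $+\epsilon$ regularization is correct.

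In part~(ii) there is a genuine, though fillable, gap in the last step. You arrive at $(a_n - \epsilon)_+ \precsim (b_n - \delta/2)_+$ in $M_k(B)$ for cofinitely many $n$, and then assert that Kirchberg's $\epsilon$-test "produces a single sequence $(w_n)$" witnessing $(a - \epsilon)_+ \precsim b$ in $M_k(B_\infty)$. The $\epsilon$-test by itself does not give you a \emph{bounded} sequence unless you first restrict the sets $X_n$ to a uniformly bounded ball, and for the hypothesis of the test to hold with such a restriction you need to know in advance that each pointwise Cuntz subequivalence can be witnessed by an element whose norm is bounded independently of $n$. This is true, but it is precisely the missing content: from $(a_n - \epsilon)_+ \precsim (b_n - \delta/2)_+$, one uses the uniform separation $\delta$ between the cut-down levels to produce, for each $\epsilon' > 0$, an element $w_n \in M_k(B)$ with $w_n^* b_n w_n = (a_n - \epsilon - \epsilon')_+$ and $\|w_n\|$ controlled only by $\|a\|$ and $\delta$ (for example, take $x_n$ with $x_n^* x_n = (a_n - \epsilon - \epsilon')_+$ and $x_n x_n^*$ in the hereditary subalgebra generated by $(b_n - \delta/2)_+$ via \cite[Proposition~2.7(iii)]{Kirchberg-Rordam00}, then set $w_n = h(b_n) x_n$ for a bounded continuous $h$ with $h(t) = t^{-1/2}$ on $[\delta/2, \infty)$ and $h = 0$ near $0$). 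With such bounded witnesses in hand, $(w_n)$ directly defines an element of $M_k(B_\infty)$ implementing $(a - \epsilon - \epsilon')_+ = w^* b w$, and no $\epsilon$-test is needed. Finally, your citation of Proposition~\ref{NoSillyTraces} is superfluous: the ultrafilter contradiction only uses that limit traces lie in $T(B_\infty)$, which holds by definition, not the density of their convex hull.
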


\begin{proof}
  \ref{prop:B_inftyProperties.2} follows directly from \cite[Lemma
  19.2.2]{Loring97a}, since $B$ has stable rank one by \cite[Theorem
  6.7]{Rordam04}.\footnote{Note that exactness is not needed.}

  For~\ref{prop:B_inftyProperties.3}, note that $B$ has strict
  comparison of positive elements by bounded traces by R\o{}rdam's
  result recalled in
  Proposition~\ref{p:ZStableUnperforated}\ref{p:ZStableUnperforated.2}. Using
  this, the version of~\ref{prop:B_inftyProperties.3} for the
  ultrapower $B_\omega$ has been observed in \cite[Lemma
  1.23]{Bosa-Brown-etal15}. The result for
  sequence algebras is obtained by following this short proof
  verbatim, with the following changes: replace $B_\omega$ by
  $B_\infty$, $T_\omega(B_\omega)$ by $T_\infty(B)$, and $I \in
  \omega$ in \cite[(1.38)]{Bosa-Brown-etal15} by ``$I$ is cofinite.''
\end{proof}

Now we can show exactness of (\ref{ThomsenExactSequenceAlg}).

\begin{proposition}
  \label{prop:B_inftyK0ClosedInAff}
  Let $B$ be a unital simple separable exact $\mathcal Z$-stable
  $C^*$-algebra with $T(B)\neq \emptyset$.  Then
  $\rho_{B_\infty}(K_0(B_\infty))$ is closed in $\Aff T(B_\infty)$, and so \eqref{ThomsenExactSequenceAlg}
is exact.
\end{proposition}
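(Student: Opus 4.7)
Once we establish that $\rho_{B_\infty}(K_0(B_\infty))$ is closed in $\Aff T(B_\infty)$, exactness of \eqref{ThomsenExactSequenceAlg} follows directly from Properties~\ref{ThomsenProp}: part~\ref{ThomsenProp.3} gives $\ker\Th_{B_\infty} = \overline{\im\rho_{B_\infty}} = \im\rho_{B_\infty}$, part~\ref{ThomsenProp.2} yields exactness at $\Ka(B_\infty)$, and surjectivity of $\minusa_{B_\infty}$ holds by definition.

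For the closedness, I would first reduce to a projection-realization problem. Using complementary projections within matrix algebras over the unital $B_\infty$, every element of $K_0(B_\infty)$ can be written as $[R]_0 - k[1_{B_\infty}]_0$ with $R$ a projection in some matrix algebra over $B_\infty$ and $k \in \mathbb{Z}_{\geq 0}$; correspondingly, $\im \rho_{B_\infty}$ consists of functions of the form $\widehat{R} - k$. Since $\rho_{B_\infty}([1_{B_\infty}]_0)$ is the constant function $1$, by shifting by an integer it suffices to prove that whenever $f \in \overline{\im\rho_{B_\infty}}$ satisfies $0 \leq f \leq M$ for some integer $M$, there is a projection $P$ in a matrix algebra over $B_\infty$ with $\widehat{P} = f$.

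To produce such a $P$, write $f$ as a uniform limit of $\widehat{P_k}$ for projections $P_k$ in $M_{N_k}(B_\infty)$, passing to a subsequence with $\|\widehat{P_{k+1}} - \widehat{P_k}\|_\infty < 2^{-k}$. For $k$ large, $\widehat{P_k}(\tau) < M + 1$ for every $\tau \in T(B_\infty)$, so strict comparison in $B_\infty$ (Proposition~\ref{prop:B_inftyProperties_2}\ref{prop:B_inftyProperties.3}) gives $P_k \precsim 1_{M_{M+1}(B_\infty)}$. Stable rank one of $B_\infty$ (Proposition~\ref{prop:B_inftyProperties_2}\ref{prop:B_inftyProperties.2}) then upgrades this Cuntz subequivalence to Murray--von Neumann subequivalence, allowing us to replace each $P_k$ by a unitarily equivalent projection inside $M_N(B_\infty)$ for a fixed $N = M + 1$. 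Lift each $P_k$ to a sequence of projections $(P_k^{(n)})_n$ in $M_N(B)$.

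The main step is Kirchberg's $\epsilon$-test (Lemma~\ref{lem:EpsTest}) with $X_n$ the set of projections in $M_N(B)$ and test functions
\begin{equation*}
f_n^{(k)}(Q) \coloneqq \max\bigl(0,\ \sup_{\sigma \in T(B)} |\sigma_N(Q - P_k^{(n)})| - 2^{-k+1}\bigr).
\end{equation*}
The identity
\begin{equation*}
\limsup_n \sup_{\sigma \in T(B)} |\sigma_N(Q_n - P_k^{(n)})| = \|\widehat{Q} - \widehat{P_k}\|_\infty,
\end{equation*}
valid for any sequence $(Q_n)$ of projections in $M_N(B)$ with corresponding $Q \in M_N(B_\infty)$, follows from the density of limit traces (Proposition~\ref{NoSillyTraces}) together with the fact that every limit trace arises from a sequence of traces on $B$. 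With this identity in hand, the input condition of the $\epsilon$-test is satisfied by $(Q_n) = (P_{k_0}^{(n)})_n$ for any given $k_0$ (using the Cauchy property of $(\widehat{P_k})$), while the output produces a projection $P \in M_N(B_\infty)$ with $\|\widehat{P} - \widehat{P_k}\|_\infty \leq 2^{-k+1}$ for every $k$, hence $\widehat{P} = f$. The main technical obstacle is the careful management of matrix sizes via strict comparison and the calibration of tolerances in the test functions so that both the $\epsilon$-test hypotheses and the desired conclusion hold simultaneously; the bridge from finite-stage traces on $B$ to arbitrary traces on $B_\infty$ is provided entirely by the limit trace density.
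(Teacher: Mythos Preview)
Your argument is correct and uses the same core ingredients as the paper---Kirchberg's $\epsilon$-test, strict comparison, and density of limit traces---but the $\epsilon$-test is organised differently. The paper reduces (by shifting and replacing $B$ by $M_m(B)$) to the case $0 < f < 1$, chooses a single self-adjoint $c = (c_n) \in B_\infty$ with $\hat c = f$ via Proposition~\ref{prop:cuntz-pedersen}, and runs the $\epsilon$-test on the unit ball of $B$ with the single function
\[
f_n(b) = \|b-b^*\| + \|b^2-b\| + \sup_{\tau\in T(B)}|\tau(b-c_n)|,
\]
so the output is automatically a projection with the correct trace. Your version instead fixes a matrix size $N$, takes $X_n$ to be projections in $M_N(B)$, and uses countably many test functions indexed by a Cauchy sequence of approximating projections~$P_k$. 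Both setups work; the paper's is more economical, while yours makes the role of the approximants more explicit.

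One step is glossed over: from $f \in \overline{\im\rho_{B_\infty}}$ you only get $f = \lim(\widehat{R_k} - m_k)$ with $m_k \in \mathbb Z_{\geq 0}$, not $f = \lim\widehat{P_k}$ for projections $P_k$. Passing to the latter requires showing that each $K_0$-class $x_k$ with $\rho_{B_\infty}(x_k)$ close to $f$ is represented by a single projection. After shifting so that $f \geq 1$ (hence the approximants are strictly positive), this follows from strict comparison and cancellation: writing $x_k = [r]_0 - [s]_0$, one has $d_\tau(s) < d_\tau(r)$ for all $\tau$, so $s \precsim r$, and stable rank one then gives $x_k = [P_k]_0$. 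This is exactly the argument the paper carries out (in a footnote) to verify the input condition of its own $\epsilon$-test, so you have the tools at hand---just make the step explicit.
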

\begin{proof}
  Let $g \in \overline{\rho_{B_\infty}(K_0(B_\infty))}$. We will show that $g \in \rho_{B_\infty}(K_0(B_\infty))$. Without loss
  of generality, we may assume that $0< g < 1$.\footnote{Indeed, we
    may add $n=\rho_{B_\infty}([1_{M_n(B)}]_0)$ for sufficiently large $n$, and
    replace $B$ with $M_m(B)$ for sufficiently large $m$, using Remark~\ref{matrixB}.}  As a consequence of Proposition~\ref{prop:cuntz-pedersen}, there exists
a self-adjoint  $c=(c_n)_{n=1}^\infty \in B_\infty$ such that $\tau(c)=g(\tau)$ for
  all $\tau \in T(B_\infty)$.

  Let $X$ be the unit ball of $B$ and define $f_n\colon X \to [0,\infty)$ by
  \begin{equation}
    f_n(b) \coloneqq \|b-b^*\|+\|b^2-b\|+\sup_{\tau \in T(B)}
    |\tau(b-c_n)|,
  \end{equation}
  and then define $f\colon \prod_{n=1}^\infty X \to [0,\infty)$ by
  \begin{equation}
    f((b_n)_{n=1}^\infty) \coloneqq \limsup_{n\to\infty} f_n(b_n).
  \end{equation}
  For a projection $p=(p_n)_{n=1}^\infty \in B_\infty$, one computes
  \begin{equation}
    f\left((p_n)_{n=1}^\infty\right) = \sup_{\tau \in T_\infty(B)} |\tau(p)-g(\tau)|\,
    \stackrel{\text{Prop.~\ref{NoSillyTraces}}}=
    \sup_{\tau \in T(B_\infty)} |\tau(p)-g(\tau)|.
  \end{equation}

  By hypothesis, for any $\epsilon>0$, there exists $x \in
  K_0(B_\infty)$ such that $\|\rho_{B_\infty}(x)-g\|_\infty <
  \epsilon$.  For $\epsilon$ sufficiently small (so that
  $\rho_{B_\infty}(x)$ is bounded away from $0$ and $1$), Proposition~\ref{prop:B_inftyProperties_2}\ref{prop:B_inftyProperties.3} implies that $x=[p]_0$ for some
  projection $p \in B_\infty$,\footnote{First, $x=[r]_0-[s]_0$ for
    some projections $r,s$ in $M_m(B_\infty)$. Since $\tau(x)>0$ for
    all $\tau\in T(B_\infty)$, strict comparison implies $s\precsim
    r$, and hence by \cite[Proposition 2.1]{Rordam92}, there is $v$ such that $v^*v=s$ and $vv^*\leq r$.
Then $x=[p]_0$ where $p=r-vv^*$. Now,
    as $\tau(x)<1$ for all $\tau\in T(B_\infty)$, we have $p\precsim
    1_{B_\infty}$, and hence $p$ is equivalent to a projection in
    $B_\infty$, as claimed.} and thus $f(p)< \epsilon$.

  By Kirchberg's $\epsilon$-test (Lemma~\ref{lem:EpsTest}), there
  exists $b \in \prod_{n=1}^\infty X$ such that $f(b)=0$.  Hence $b$ represents a
  projection in $B_\infty$ such that $\tau(b)=g(\tau)$ for all $\tau \in
  T(B_\infty)$. Therefore, \eqref{ThomsenExactSequenceAlg} is exact at $\mathrm{Aff} T(B)$, and hence exact by Properties~\ref{ThomsenProp}.
\end{proof}


\section{Absorption and $KK$-existence and uniqueness theorems}\label{sec:KK-classification}

The main goal of this section is to prove the $\Z$-stable $KK$- and $KL$-uniqueness theorems (collected together as Theorem~\ref{thm:KK-Uniqueness}) in Section~\ref{sec:KK-Uniqueness}. This proves Theorem~\ref{intro-KK-unique} from the introduction. We start by setting out the Cuntz--Thomsen picture of $KK$-theory in Section~\ref{subsec:elem-of-kk}, leaving certain proofs for the non-separable setting to Appendix~\ref{sec:kkappendix}. We follow this by recalling salient facts about absorption in Section~\ref{sec:absorption} and record the $KK$-existence theorem in Section~\ref{sec:KK-Existence}.  Although it is
not needed for this paper, we end the section by outlining in
Section~\ref{subsect:weaknucKKunique} the minor modifications needed
to obtain a $\mathcal Z$-stable $KK_{\mathrm{nuc}}$-uniqueness theorem
as this will be crucial to our future work.

\subsection{The Cuntz--Thomsen picture of Kasparov's
  $KK$-theory}\label{subsec:elem-of-kk} We start by collecting those
aspects of $KK$- and $KL$-theory that we need.  As was the case in Kasparov's original definition of $KK$-theory (\cite{Kasparov80}) using Fredholm modules, $KK(A,I)$ is typically defined for separable $A$ and $\sigma$-unital
$I$.\footnote{The $\sigma$-unitality hypothesis is to obtain the Kasparov product through Kasparov's stabilization
  and technical theorems (\cite{Kasparov80,Kasparov80a}).  Some approaches to $KK$-theory, such as
  Cuntz's quasihomomorphism picture from \cite{Cuntz83a}, and in particular, Cuntz's approach from \cite{Cuntz87} which we use in Appendix~\ref{sec:kkappendix}, proceed without a $\sigma$-unitality restriction on $I$. That said, the equivalence of the various forms of $KK(A,I)$ is typically only
  recorded for separable $A$ and $\sigma$-unital $I$.}  However, we need
to allow the non-$\sigma$-unital trace-kernel ideal $J_B$ in the second variable. 
As suggested by
Skandalis in \cite[Section 3]{Skandalis85} (see also
\cite[Definition~5.5]{Skandalis88}), this can be achieved by taking
inductive limits over separable $C^*$-subalgebras. We set out how to
do this here. 
All definitions and results in this subsection are
standard when the second variable is $\sigma$-unital; we provide
 proofs of how to extend these to the general case in
Appendix~\ref{sec:kkappendix}.

Of the many equivalent ways of defining $KK$-theory (see \cite[Chapter
17]{Blackadar98}, or \cite{Jensen-Thomsen91}), the Cuntz--Thomsen picture
is particularly well-suited to classification problems (as
demonstrated, for example, by Dadarlat and Eilers in
\cite{Dadarlat-Eilers01,Dadarlat-Eilers02}). Originally, this approach
was developed by means of reductions to Kasparov's Fredholm module
construction.\footnote{This is implicit in \cite[Section 5]{Cuntz83a}
  and set out in \cite[17.6.2]{Blackadar98}.  See also \cite[Section
  3.1]{Dadarlat-Eilers01}.} Thomsen later gave a self-contained
treatment in \cite{Thomsen90}.

\begin{definition}
  Let $A$ and $I$ be $C^*$-algebras with $A$ separable.  An
  \emph{$(A,I)$-Cuntz pair} is a pair $(\phi,\psi)$ of
  $^*$-homomorphisms $A\to E$ for some $C^*$-algebra $E$ containing
  $I$ as an ideal, such that $\phi(a)-\psi(a)\in I$ for all $a\in
  A$. We will often write $(\phi,\psi)\colon A\rightrightarrows E\rhd
  I$ for such a Cuntz pair.  We call $(\phi,\psi)$ \emph{unital} when all of $A$, $E$, $\phi$, and $\psi$ are unital. 
\end{definition}

\begin{definition}
\label{defn:KK}
  Let $A$ and $I$ be $C^*$-algebras with $A$ separable and $I$ $\sigma$-unital. Write
  $\mathcal M(I\otimes\mathcal K)$ for the multiplier algebra of the
  stabilization $I\otimes\mathcal K$. Two Cuntz pairs
  \begin{equation}
    (\phi_i,\psi_i)\colon A\rightrightarrows \mathcal M(I\otimes\mathcal K)\rhd I\otimes\mathcal K,\quad i=0,1,
  \end{equation}
  are said to be \emph{homotopic} if there is a Cuntz
  pair\footnote{\label{fn:Csigma}$C_{\sigma}\big( [0,1],\mathcal M(I\otimes\mathcal K)
    \big)$ is the algebra of strictly continuous functions $[0,1]\to
    \mathcal{M}(I\otimes\mathcal K)$ (these are automatically bounded
    by the principle of uniform boundedness). By
    \cite[Corollary~3.4]{Akemann-Pedersen-etal73}, one has
    $C_{\sigma}\big( [0,1],\mathcal M(I\otimes\mathcal K)
    \big)\cong \mathcal{M}\big( C([0,1],I\otimes\mathcal K) \big)$.}
\begin{equation}
(\phi,\psi)\colon A \rightrightarrows C_{\sigma}\big( [0,1],\mathcal
M(I\otimes\mathcal K) \big)\rhd C([0,1],I\otimes\mathcal
K)
\end{equation}
such that evaluation at $i$ induces the  Cuntz pairs $(\phi_i,\psi_i)$ for $i=0,1$. 
The Cuntz--Thomsen picture of $KK$-theory is then
\begin{equation}
  KK(A,I) \coloneqq \{
  (\phi,\psi)\colon A \rightrightarrows\mathcal{M}(I\otimes \mathcal{K})
  \rhd I\otimes \mathcal{K}
  \} /
  \text{homotopy}. 
\end{equation}
\end{definition}

We write $[\phi,\psi]_{KK(A,I)}$ for the class in $KK(A,I)$ of
$(\phi,\psi)\colon A\rightrightarrows \mathcal M(I\otimes\mathcal
K)\rhd I\otimes\mathcal K$. Any Cuntz pair $(\phi,\psi)\colon
A\rightrightarrows E\rhd I$ induces a class in
$KK(A,I)$,\footnote{\label{ftn:cuntz-pair-convention}The
  ideal $I\otimes\mathcal K\lhd E\otimes\mathcal K$ induces a
  canonical map $\mu \colon E\otimes\mathcal K\to \mathcal
  M(I\otimes\mathcal K)$. Writing $e$ for a rank one projection in
  $\mathcal K$, $[\phi,\psi]_{KK(A,I)}$ is defined to be the class of
  $(\mu\circ (\phi\otimes e),\mu\circ (\psi\otimes e))\colon
  A\rightrightarrows \mathcal M(I\otimes\mathcal K)\rhd
  I\otimes\mathcal K$.} which we also denote
$[\phi,\psi]_{KK(A,I)}$. An important special case is when $\phi\colon
A\to I$ is a $^*$-homomorphism, so that $(\phi,0)$ is an $(A,I)$-Cuntz
pair. We write $[\phi]_{KK(A,I)}$ for the induced class in $KK(A,I)$.

Addition in $KK(A,I)$ is defined by means of an orthogonal direct sum. There is a copy of $\mathcal B(\mathcal H)$ in $\mathcal{M}(I\otimes \mathcal K)$, so one can find $s_1,s_2\in\mathcal M(I\otimes\mathcal K)$ satisfying $s_i^*s_i =s_1s_1^* + s_2s_2^* = 1_{\mathcal{M}(I\otimes \mathcal K)}$. Given $\phi_1,\phi_2\colon A\to \mathcal M(I\otimes\mathcal K)$, write%
\footnote{As a map, $\phi_1\oplus\phi_2$ depends on the choice of $s_1$ and $s_2$.
  However, this sum is well-defined up to unitary equivalence (which implies homotopy equivalence, since the unitary group of $\mathcal M(I\otimes \mathcal K)$ is path connected in the strict topology by \cite[Proposition~12.2.2]{Blackadar98}, for example). Note that $\oplus$ is associative up to unitary equivalence.
  Moreover, the Cuntz isometries $s_1,s_2$ implement an isomorphism $I\otimes\mathcal K \cong M_2(I\otimes \mathcal K)$ (and hence $\mathcal{M}(I\otimes \mathcal K) \cong M_2(\mathcal{M}(I\otimes \mathcal K))$), and $\oplus$ is defined by pulling back the usual direct sum in $M_2(\mathcal M(I\otimes \mathcal K))$ to $\mathcal M(I\otimes \mathcal K)$.\label{fn:ComparingDirectSums}}
\begin{equation}\label{KK-directsumeq}
  (\phi_1\oplus\phi_2)(a) \coloneqq s_1\phi_1(a)s_1^*+s_2\phi_2(a)s_2^*,\quad a\in
  A.
\end{equation}
Then, the sum in $KK(A,I)$ is well-defined by
\begin{equation}
[\phi_1,\psi_1]_{KK(A,I)}+[\phi_2,\psi_2]_{KK(A,I)}\coloneqq[\phi_1\oplus \phi_2,\psi_1\oplus\psi_2]_{KK(A,I)}.\footnote{In this definition, the same choice of Cuntz isometries $s_1$ and $s_2$ should be used to define both direct sums.}
\end{equation}
This turns $KK(A,I)$ into an abelian group.  The zero element is  $[\phi,\phi]_{KK(A,I)}$ where $\phi\colon A\to \mathcal M(I\otimes
\mathcal K)$ is  any $^*$-homomorphism (\cite[Lemma 4.1.5]{Jensen-Thomsen91}). 

A key feature is that $KK(\,\cdot\, ,\,\cdot\,)$ is a bifunctor.
Contravariance in the first variable is straightforward: a $^*$-homomorphism $\theta\colon A_1\to A_2$ induces a homomorphism $KK(\theta,I)\colon KK(A_2,I)\to KK(A_1,I)$ by precomposing Cuntz pairs (and homotopies thereof) by $\theta$.
A direct definition of $KK(A,\theta)$ (for $\theta\colon I_1 \to I_2)$ is possible, though subtle, in the Cuntz--Thomsen picture (see \cite{Thomsen90} or \cite[Section 4.1]{Jensen-Thomsen91}), though is straightforward in the pictures introduced by Cuntz (\cite{Cuntz83a,Cuntz87}).  All of our computations involving functoriality in the second variable will use the statement in Proposition~\ref{prop:KK-facts}\ref{prop:KK-facts.5} below.

We now (re)define $KK(A,I)$ to cover the case that $I$ is not $\sigma$-unital.

\begin{definition}
\label{KK-inductivelimit}
Let $A$ and $I$ be $C^*$-algebras with $A$ separable. Then
\begin{equation}\label{KK-inductivelimit-equation} KK(A,I) \coloneqq \varinjlim_{I_0\text{ sep.}} KK(A,I_0), \end{equation}
where $I_0$ ranges over all separable $C^*$-subalgebras of $I$, ordered by inclusion, and with connecting maps $KK(A,\iota_{I_0\subseteq I_1})$ for $I_0\subseteq I_1$.

Given a Cuntz pair $(\phi,\psi)\colon A \rightrightarrows E \rhd I$, we define $[\phi,\psi]_{KK(A,I)}$ to be the image of $[\phi|^{E_0},\psi|^{E_0}]_{KK(A,I_0)}$ in the right-hand side of \eqref{KK-inductivelimit-equation}, where $E_0\subseteq E$ and $I_0\subseteq I$ are separable $C^*$-subalgebras such that $I_0\lhd E_0$, $\phi(A)\cup\psi(A) \subseteq E_0$, and $(\phi-\psi)(A)\subseteq I_0$.
(In Proposition~\ref{prop:KKwelldefined}, we show that this is well-defined.)
\end{definition}

Definition~\ref{KK-inductivelimit} allows the functoriality in the
  second variable to be
  extended to non-$\sigma$-unital $I$, giving a bifunctor.  
We note that when $I$ is $\sigma$-unital but not separable, a priori Definitions~\ref{defn:KK} and \ref{KK-inductivelimit} give two competing definitions of $KK(A,I)$.
In Proposition~\ref{prop:KKequalsKKc-nonsep}, we show that these two definitions naturally agree.

The following facts are standard when $I$ is separable.  The proof that they extend to general $I$ is deferred to Proposition~\ref{prop:KasparovProdAppendix}.

\begin{proposition}
  \label{prop:KK-facts}
Let $A$ and $I$ be $C^*$-algebras with $A$ separable, let $E$ be a $C^*$-algebra containing $I$ as an ideal, and let $(\phi,\psi)\colon A \rightrightarrows E \rhd I$ be an $(A,I)$-Cuntz pair.
  \begin{enumerate}
  \item \label{prop:KK-facts.1}
    Let $\iota^{(2)}_I\colon I \to M_2(I)$ be the top-left
    corner inclusion. Then the induced map $KK(A,\iota^{(2)}_I)\colon KK(A,I) \to
    KK(A,M_2(I))$ is an isomorphism taking $[\phi,\psi]_{KK(A,I)}$ to $[\iota^{(2)}_{E}\circ\phi,\iota^{(2)}_E\circ\psi]_{KK(A,M_2(I))}$.
  \item \label{prop:KK-facts.2}
    Given a unitary $u \in I^\dagger$,
    \begin{equation}
      [\phi,\psi]_{KK(A,I)}=[\Ad u\circ\phi,\psi]_{KK(A,I)}.
    \end{equation}
\item \label{prop:KK-facts.4}
$[\phi,\psi]_{KK(A,I)}+[\psi,\phi]_{KK(A,I)}=0$.
\item \label{prop:KK-facts.5}
If $J \lhd F$ and $\theta\colon I \to J$ is a $^*$-homomorphism that extends to a $^*$-ho\-mo\-mor\-phism $\bar\theta\colon E \to F$, then $KK(A,\theta)([\phi,\psi]_{KK(A,I)}) = [\bar\theta\circ \phi,\bar\theta\circ \psi]_{KK(A,J)}$.
In particular,\footnote{For the first in particular, take $J=F\coloneqq E$, $\theta\coloneqq \iota_{I\subseteq E}$, and $\bar\theta\coloneqq \id_E$.   For the second one, use $E\coloneqq I$, $F\coloneqq J$, and $\bar\theta\coloneqq \theta$.}
$KK(A, \iota_{I\subseteq E})([\phi, \psi]_{KK(A,I)}) = [\phi]_{KK(A,E)} - [\psi]_{KK(A,E)}$ and if $\phi\colon A \to I$ and $\theta\colon I \to J$ are $^*$-homomorphisms, then 
\begin{equation}
    KK(A,\theta)([\phi]_{KK(A,I)})=[\theta\circ\phi]_{KK(A,J)}.
\end{equation}
  \item \label{prop:KK-facts.3}
    Let $A$ be nuclear, and suppose
    \begin{equation}
      \mathsf{e}\colon
      0\longrightarrow I \stackrel{j_{\mathsf{e}}}\longrightarrow
      E\stackrel{q_{\mathsf{e}}}\longrightarrow D\longrightarrow 0
    \end{equation}
is an extension of $C^*$-algebras.
Then the sequence
    \begin{equation}
      KK(A,I) \xrightarrow{KK(A,j_{\mathsf{e}})}
      KK(A,E)\xrightarrow{KK(A,q_{\mathsf{e}})} KK(A,D) 
    \end{equation}
    is exact.  The same holds when $A$ is $KK$-equivalent to a nuclear $C^*$-algebra.
  \end{enumerate}
\end{proposition}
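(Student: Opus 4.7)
The plan is to reduce each of \ref{prop:KK-facts.1}--\ref{prop:KK-facts.5} to the corresponding statement in the $\sigma$-unital (in fact, separable) case, where each assertion is standard in the Cuntz--Thomsen picture (see \cite{Thomsen90,Jensen-Thomsen91}), and then to use that direct limits of abelian groups preserve isomorphisms, injectivity, and exactness to transfer the statements through Definition~\ref{KK-inductivelimit}. A preliminary step (handled by Proposition~\ref{prop:KKwelldefined}) is to check that the class $[\phi,\psi]_{KK(A,I)}$ attached to an $(A,I)$-Cuntz pair is well-defined and independent of the choice of separable sub-extension in Definition~\ref{KK-inductivelimit}; this amounts to verifying cofinality of the directed system of separable sub-pairs together with compatibility of the Cuntz--Thomsen construction with inclusions. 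Once this is in place, the assignment $I \mapsto KK(A,I)$ is functorial with respect to arbitrary $^*$-homomorphisms of the second variable.

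For items \ref{prop:KK-facts.1}, \ref{prop:KK-facts.2}, \ref{prop:KK-facts.4}, and \ref{prop:KK-facts.5}, the strategy in each case is to choose a sufficiently large separable $C^*$-subalgebra on which to carry out the argument. For \ref{prop:KK-facts.1}, the families $\{M_2(I_0) : I_0 \subseteq I \text{ separable}\}$ and $\{J_0 \subseteq M_2(I) : J_0 \text{ separable}\}$ are mutually cofinal, giving $KK(A,M_2(I)) \cong \varinjlim_{I_0} KK(A, M_2(I_0))$, so the separable-case isomorphism $KK(A, \iota_{I_0}^{(2)})$ passes to the limit. For \ref{prop:KK-facts.2}, choose a separable $E_0 \subseteq E$ containing $\phi(A) \cup \psi(A) \cup \{u\}$ and a separable $I_0 \lhd E_0$ with $I_0 \subseteq I$ containing $(\phi-\psi)(A)$ and $u - 1_{I^\dagger}$; the separable-case identity then applies directly. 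Item \ref{prop:KK-facts.4} reduces in the same way. For \ref{prop:KK-facts.5}, pick separable $E_0 \subseteq E$, a separable $F_0 \subseteq F$ containing $\bar{\theta}(E_0)$, and corresponding separable ideals $I_0 \coloneqq I \cap E_0$ and $J_0 \coloneqq J \cap F_0$, and then apply the standard functoriality in the separable case.

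The main obstacle will be \ref{prop:KK-facts.3}, the exactness of the six-term fragment when $I$ (and possibly $D$) is not $\sigma$-unital. My plan is as follows: a class $\kappa \in KK(A,E)$ mapping to zero in $KK(A,D)$ is, by Definition~\ref{KK-inductivelimit}, represented by a Cuntz pair for some separable $E_0 \subseteq E$, and vanishing of its image in $KK(A,D)$ is witnessed by a homotopy encoded in $KK(A,D_1)$ for some separable $D_1 \subseteq D$ containing $q_{\mathsf{e}}(E_0)$. Using separability of $D_1$ together with surjectivity of $q_{\mathsf{e}}$, one can lift a countable dense subset of $D_1$ to $E$ and enlarge $E_0$ to a separable $E_0' \subseteq E$ with $E_0 \subseteq E_0'$ and $q_{\mathsf{e}}(E_0') = D_1$; setting $I_0' \coloneqq I \cap E_0'$, one obtains a separable sub-extension $0 \to I_0' \to E_0' \to D_1 \to 0$ accommodating both the Cuntz pair representing $\kappa$ and the witnessing homotopy. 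The standard exactness result for nuclear $A$ (or $A$ that is $KK$-equivalent to a nuclear $C^*$-algebra, as in \cite[Theorem~19.5.7]{Blackadar98} or \cite[Chapter 4]{Jensen-Thomsen91}) then produces a lift in $KK(A,I_0')$ which maps to the desired lift in $KK(A,I)$. The technical verifications needed are that $I_0'$ is a separable ideal in $E_0'$ and that the restriction of $q_{\mathsf{e}}$ to $E_0'$ is surjective onto $D_1$, both of which follow from a routine cofinality argument together with the fact that quotients of separable $C^*$-algebras by closed ideals remain separable.
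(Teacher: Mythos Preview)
Your proposal is correct and follows essentially the same approach as the paper: reduce each statement to a separable sub-Cuntz-pair (or sub-extension) and invoke the known $\sigma$-unital/separable case, then push through the inductive limit of Definition~\ref{KK-inductivelimit}. The paper carries this out in Appendix~\ref{sec:kkappendix} (Proposition~\ref{prop:KasparovProdAppendix}), with one presentational difference worth noting: for item~\ref{prop:KK-facts.5} (and implicitly for the formula in~\ref{prop:KK-facts.1}) the paper establishes the separable case by passing through Cuntz's quasihomomorphism picture $KK_c(A,I)=[qA,I\otimes\mathcal K]$, where functoriality in the second variable is transparent (Proposition~\ref{appendix.corestrict}), rather than citing the Cuntz--Thomsen literature directly. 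This buys a self-contained and clean formula $KK_c(A,\theta)([\phi,\psi])=[\bar\theta\circ\phi,\bar\theta\circ\psi]$ without unpacking Thomsen's definition of $KK(A,\theta)$. Your approach of citing \cite{Thomsen90,Jensen-Thomsen91} for the separable case of~\ref{prop:KK-facts.5} is also valid, though the relevant computation there is somewhat buried. Your handling of~\ref{prop:KK-facts.3} via enlarging $E_0$ to a separable $E_0'$ surjecting onto $D_1$ is exactly what the paper does.
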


There is a well-known action of $KK$ on $K$-theory,\footnote{We write $\Hom(K_*(A), K_*(I))$ for $\Hom(K_0(A),K_0(I))\oplus \Hom(K_1(A), K_1(I))$.}
\begin{equation}
  \label{KKtoHom}
  \Gamma^{(A,I)} \colon KK(A,I) \to \Hom(K_*(A), K_*(I))
\end{equation}
such that if $\phi \colon A \rightarrow I$ is a
$^*$-homomorphism, then
\begin{equation}
  \label{eq:Gamma-KK-K}
  \Gamma^{(A,I)}_i([\phi]_{KK(A, I)}) = K_i(\phi).
\end{equation}
This map is defined using the Kasparov product in the case where $I$ is $\sigma$-unital, and it is extended to the non-separable case by taking the limit; this is laid out (and \eqref{eq:Gamma-KK-K} is justified) in  \eqref{eq:AppendixKKtoHom}.

Next, we recall the $KL$-groups, first conceived to help classify
$^*$-ho\-mo\-mo\-rphisms up to approximate unitary equivalence.  R\o{}rdam's
original definition (\cite[Section 5]{Rordam95}) required a UCT
assumption. This was relaxed by Dadarlat in \cite{Dadarlat05}, who
defined $KL(A,I)$ to be the quotient of $KK(A,I)$ by the closure of
$\{0\}$ in various equivalent topologies on $KK(A,I)$.
We use a direct description of this closure, also from
\cite{Dadarlat05}, which we give without any separability assumptions
on $I$.

\begin{definition}[cf. {\cite[Theorem~3.5, Theorem~4.1, and Section~5]{Dadarlat05}}]
\label{def:KL}
  Let $A$ and $I$ be $C^*$-algebras with $A$ separable. Write $\bcN
  \coloneqq \mathbb N \cup \{\infty\}$ for the one-point
  compactification of $\mathbb N$ and $\ev_n$ for evaluation at a
  point $n\in\bcN$.  Define
\begin{equation}
\label{eq:KLZ}
  Z_{KK(A,I)}\coloneqq
  \left\{
    KK(A,\ev_\infty)(\kappa)\colon \!
    \begin{array}{l}
       \kappa \in KK(A, C(\bcN, I)) \text{ and }\\
       KK(A,\ev_n)(\kappa) = 0\, \forall n \in \mathbb N
    \end{array}\hspace{-1ex}
    \right\}
\end{equation}
and
\begin{equation}
  \label{eq:def-kl}
  KL(A,I)\coloneqq KK(A,I)/Z_{KK(A,I)}.
\end{equation}
\end{definition}

With this definition, the bifunctor structure descends from $KK(\,\cdot\, , \cdot\,)$ to $KL(\,\cdot\,,\,\cdot\,)$, just as it does when $I$ is separable.
We can also express $KL$ as an inductive limit over separable subalgebras, analogous to how we defined $KK$ in Definition~\ref{KK-inductivelimit}; we prove the following in Appendix~\ref{sec:kkappendix}, immediately following Lemma~\ref{lem:ZKK-limit}.

\begin{proposition}\label{KL-inductivelimit}
Let $A$ and $I$ be $C^*$-algebras with $A$ separable. Then
\begin{equation}\label{KL-inductivelimit-equation} KL(A,I) \cong \varinjlim_{I_0\text{\emph{ sep.}}} KL(A,I_0) \end{equation}
where $I_0$ ranges over all separable $C^*$-subalgebras of $I$, ordered by inclusion, and the isomorphism is induced by the inclusions $I_0 \rightarrow I$.
\end{proposition}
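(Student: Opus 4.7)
The plan is to leverage the inductive limit description $KK(A,I) \cong \varinjlim_{I_0} KK(A,I_0)$ from Definition~\ref{KK-inductivelimit} (whose well-definedness is the content of Proposition~\ref{prop:KKwelldefined}) and reduce the statement to the identification
\begin{equation}
  Z_{KK(A,I)} = \varinjlim_{I_0 \text{ sep.}} Z_{KK(A, I_0)}
\end{equation}
as subgroups of $KK(A,I)$. Since quotients of abelian groups commute with filtered colimits, this immediately yields \eqref{KL-inductivelimit-equation}.

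The inclusion $\varinjlim Z_{KK(A, I_0)} \subseteq Z_{KK(A, I)}$ is a routine application of functoriality: if $\lambda \in Z_{KK(A, I_0)}$, choose a witness $\kappa \in KK(A, C(\bcN, I_0))$ with $\lambda = KK(A, \ev_\infty)(\kappa)$ and $KK(A, \ev_n)(\kappa) = 0$ for all $n \in \mathbb N$. The inclusion $\iota \colon I_0 \hookrightarrow I$ induces $C(\bcN, \iota) \colon C(\bcN, I_0) \to C(\bcN, I)$, and the image of $\kappa$ under $KK(A, C(\bcN, \iota))$ satisfies the defining vanishing conditions for $Z_{KK(A, I)}$ by naturality of $\ev_n$ and $\ev_\infty$ in $I$.

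The substantive step is the reverse inclusion, which is exactly the content of Lemma~\ref{lem:ZKK-limit}, to be proved just before this proposition. Given $z \in Z_{KK(A, I)}$, represented by some $\kappa \in KK(A, C(\bcN, I))$ with $KK(A, \ev_n)(\kappa) = 0$ for all $n \in \mathbb N$, the goal is to produce a separable $I_0 \subseteq I$ such that $\kappa$ arises from an element of $KK(A, C(\bcN, I_0))$ whose images under $KK(A, \ev_n)$ vanish at the separable level for every $n$. To do this, one first applies the inductive limit description of $KK$ (in the form of Proposition~\ref{KK-inductivelimit}) to lift $\kappa$ to some $\tilde\kappa_0 \in KK(A, J_0)$ for a separable $J_0 \subseteq C(\bcN, I)$. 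One then enlarges $J_0$ to be of the form $C(\bcN, I_0^{(0)})$ by letting $I_0^{(0)}$ be the separable $C^*$-subalgebra of $I$ generated by the values of elements of $J_0$ at all points of $\bcN$. Next, for each $n \in \mathbb N$ the hypothesis $KK(A, \ev_n)(\kappa) = 0$ in $KK(A,I)$, together with the inductive limit description, provides a separable $I_0^{(n)} \subseteq I$ containing $I_0^{(0)}$ in which the corresponding lift already vanishes. Taking $I_0$ to be the $C^*$-subalgebra generated by $\bigcup_{n \geq 0} I_0^{(n)}$ (which is separable, as a countable union of separable subalgebras) then gives a representative of $z$ in $Z_{KK(A, I_0)}$.

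The main obstacle is thus organizing this countable family of simultaneous enlargements in a way that makes $\tilde\kappa$ into a genuine $KK$-element with respect to $C(\bcN, I_0)$ — as opposed to a subalgebra of $C(\bcN, I)$ that merely happens to contain the required data. This is handled by using that $\bcN$ is a compact metric space, so $C(\bcN, I_0)$ is separable whenever $I_0$ is, and $C(\bcN, \cdot)$ preserves filtered colimits of $C^*$-algebras (up to norm density on the algebraic tensor product). Once the refined lift is in place, the computation $[\lambda]_{KL(A, I_0)} \mapsto z + Z_{KK(A, I)}$ is canonical, and the naturality of the construction in $I_0$ ensures the resulting map $\varinjlim KL(A, I_0) \to KL(A, I)$ is the inverse of the one induced by the structure maps.
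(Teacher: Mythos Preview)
Your proposal is correct and follows essentially the same approach as the paper: reduce to the identity $Z_{KK(A,I)} = \varinjlim_{I_0} Z_{KK(A,I_0)}$ (the paper's Lemma~\ref{lem:ZKK-limit}), then take the quotient. Your sketch of that lemma—lift $\kappa$ to a separable $J_0 \subseteq C(\bcN,I)$, enlarge to some $C(\bcN,I_0)$, then make countably many further separable enlargements to kill each $KK(A,\ev_n)$—matches the paper's argument, and your use of the general fact that filtered colimits commute with quotients is a slightly more streamlined packaging than the paper's explicit surjectivity/injectivity check for the comparison map $\Theta$.
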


Bootstrapping standard facts for $KK$ (from Proposition~\ref{prop:KK-facts}), we arrive at the following $KL$ version (see Proposition~\ref{prop:KasparovProdAppendix}).

\begin{proposition}
\label{prop:KL-facts}
Let $A$ and $I$ be $C^*$-algebras with $A$ separable, let $E$ be a $C^*$-algebra containing $I$ as an ideal, and let $(\phi,\psi)\colon A \rightrightarrows E \rhd I$ be an $(A,I)$-Cuntz pair. Then:
  \begin{enumerate}
  \item \label{prop:KL-facts.1}
    Let $\iota^{(2)}_I\colon I \to M_2(I)$ be the top-left
    corner inclusion. Then the induced map $KL(A,\iota^{(2)}_I)\colon KL(A,I) \to
    KL(A,M_2(I))$ is an isomorphism taking $[\phi,\psi]_{KL(A,I)}$ to $[\iota^{(2)}_E \circ\phi,\iota^{(2)}_E\circ\psi]_{KL(A,M_2(I))}$.
  \item \label{prop:KL-facts.2}
    Given a unitary in $I^\dagger$,
    \begin{equation}
      [\phi,\psi]_{KL(A,I)}=[\Ad u\circ\phi,\psi]_{KL(A,I)}.
    \end{equation}
\item \label{prop:KL-facts.4}
$[\phi,\psi]_{KL(A,I)}+[\psi,\phi]_{KL(A,I)}=0$.
\item \label{prop:KL-facts.5}
If $J \lhd F$ and $\theta\colon I \to J$ is a $^*$-homomorphism that extends to a $^*$-ho\-mo\-morphism $\bar\theta\colon E \to F$, then $KL(A,\theta)([\phi,\psi]_{KL(A,I)}) = [\bar\theta\circ \phi,\bar\theta\circ \psi]_{KL(A,J)}$.
In particular, 
$KL(A, \iota_{I\subseteq E})([\phi, \psi]_{KL(A,I)}) = [\phi]_{KL(A,E)} - [\psi]_{KL(A,E)}$, and if $\phi\colon A \to I$ and $\theta\colon I \to J$ are $^*$-homomorphisms, then
\begin{equation}
    KL(A,\theta)([\phi]_{KL(A,I)})=[\theta\circ\phi]_{KL(A,J)}.
\end{equation}
  \end{enumerate}
\end{proposition}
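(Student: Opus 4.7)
The plan is to deduce Proposition~\ref{prop:KL-facts} directly from its $KK$-theoretic counterpart, Proposition~\ref{prop:KK-facts}, by passing through the quotient map $KK(A,I) \twoheadrightarrow KL(A,I)$. Since $KL(A,I) = KK(A,I)/Z_{KK(A,I)}$, any identity that holds in $KK(A,I)$ automatically descends to an identity in $KL(A,I)$. This immediately handles parts \ref{prop:KL-facts.2} and \ref{prop:KL-facts.4}, as they are just statements that certain $KK$-classes coincide.

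For parts \ref{prop:KL-facts.1} and \ref{prop:KL-facts.5}, which concern maps between $KL$-groups induced by $^*$-homomorphisms, I first need to verify that the induced $KK$-maps descend to well-defined maps on the $KL$-quotients. This is the bifunctoriality of $KL$ which, as indicated in the text immediately following Proposition~\ref{KL-inductivelimit}, is inherited from that of $KK$. Concretely, given $\theta \colon I \to J$, the construction in \eqref{eq:KLZ} is natural: a representing class $\kappa \in KK(A,C(\bcN,I))$ witnessing that $\lambda \in Z_{KK(A,I)}$ is pushed forward via $C(\bcN,\theta)$ to a class witnessing $KK(A,\theta)(\lambda) \in Z_{KK(A,J)}$, because $\mathrm{ev}_n$ commutes with the functoriality in the coefficient variable. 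Hence $KK(A,\theta)$ descends to a well-defined map $KL(A,\theta)$, and the formulas for the action on Cuntz pairs in parts \ref{prop:KL-facts.1} and \ref{prop:KL-facts.5} follow from the corresponding $KK$-formulas by applying the quotient map.

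For the isomorphism claim in part \ref{prop:KL-facts.1}, it suffices to observe that the $KK$-isomorphism $KK(A,\iota_I^{(2)})$ has an inverse which is itself induced by a $^*$-homomorphism (up to Morita-type identifications, coming from the standard identification $M_2(M_2(I)) \cong M_2(I)$ via a Cuntz pair of isometries), so both the map and its inverse descend to $KL$ and remain mutually inverse. The only mild subtlety is the non-$\sigma$-unital case; here I would invoke Proposition~\ref{KL-inductivelimit} to express $KL(A,I)$ as an inductive limit of $KL(A,I_0)$ over separable $I_0 \subseteq I$, and use that each claim is already known in the separable case (with compatibility under inclusions following from naturality).

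The main obstacle I anticipate is ensuring that all the diagrams and naturality properties used implicitly for the $KK$-to-$KL$ descent remain valid in the non-$\sigma$-unital setting, where $KK$ itself was defined via an inductive limit in Definition~\ref{KK-inductivelimit}. Since the analogous compatibility for $KK$ is addressed in Appendix~\ref{sec:kkappendix}, the cleanest path is to verify in the appendix alongside Proposition~\ref{prop:KasparovProdAppendix} that the maps $KK(A,\theta)$ and the stabilization isomorphism preserve $Z_{KK(A,\cdot)}$, and then simply quote this to obtain Proposition~\ref{prop:KL-facts} as a formal consequence.
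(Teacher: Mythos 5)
Your overall strategy — establish each identity at the level of $KK$ via Proposition~\ref{prop:KK-facts} (or its appendix form, Proposition~\ref{prop:KasparovProdAppendix}), verify that $Z_{KK(A,\,\cdot\,)}$ is preserved by the relevant maps, and then take quotients to obtain the $KL$ statements — is precisely the paper's approach, which proves the combined $KK$/$KL$ statement as Proposition~\ref{prop:KasparovProdAppendix} and settles each $KL$ clause with ``the $KL$ version follows by taking quotients.'' Your observation that $C(\bcN,\theta)$ pushes a witness for membership in $Z_{KK(A,I)}$ to a witness for membership in $Z_{KK(A,J)}$, since evaluation commutes with functoriality in the coefficient variable, is exactly the naturality input the paper relies on.

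However, your justification of the isomorphism claim in part~\ref{prop:KL-facts.1} contains a concrete error. You write that the inverse of $KK(A,\iota^{(2)}_I)$ is ``itself induced by a $^*$-homomorphism (up to Morita-type identifications, coming from the standard identification $M_2(M_2(I)) \cong M_2(I)$ via a Cuntz pair of isometries).'' There is no such identification: $M_2(M_2(I)) \cong M_4(I)$, which is Morita equivalent to $M_2(I)$ but not $^*$-isomorphic to it unless $I$ is stable, and there is no $^*$-homomorphism $M_2(I)\to I$ inducing the inverse on $KK$. The correct route to the $KL$-isomorphism is the one you actually need for the other parts: show directly that $KK(A,\iota^{(2)}_I)$ maps $Z_{KK(A,I)}$ \emph{onto} $Z_{KK(A,M_2(I))}$. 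The forward inclusion is naturality, and the reverse inclusion uses the canonical identification $C(\bcN, M_2(I)) = M_2(C(\bcN, I))$ together with the already-known $KK$-isomorphism applied to $C(\bcN, I)$ and its compatibility with the evaluation maps $\ev_n$: a witness $\bar\kappa\in KK(A, C(\bcN, M_2(I)))$ for $\kappa\in Z_{KK(A,M_2(I))}$ pulls back along $KK(A,\iota^{(2)}_{C(\bcN,I)})$ to a witness for the preimage of $\kappa$. This fixes the gap while staying inside the paper's framework.
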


\subsection{Absorption}\label{sec:absorption}

Now we turn to absorption. Loosely speaking, an
absorbing $^*$-homomorphism is one for which the conclusion of
Voiculescu's non-commutative Weyl--von Neumann Theorem holds.  The
relevance of absorption can be seen in the early developments of the
theory: in \cite{Pimsner-Popa-etal79}, for example, Pimsner, Popa, and
Voiculescu prove an absorption theorem for extensions of $C(X)\otimes
\mathcal{K}$ for $X$ finite dimensional, in part to generalize Brown--Douglas--Fillmore theory;
Kasparov's generalization of Voiculescu's theorem in \cite{Kasparov80a}
provides (nuclearly) absorbing $^*$-ho\-mo\-mor\-phisms. 

Whereas we set out $KK$-theory without
restriction on the second variable, absorption results are another
matter. For these, it is vital that one has a countable approximate unit.

Throughout the rest of the section, $I$ will be $\sigma$-unital and
stable. We write $\mathcal Q(I)$ for the corona algebra $\mathcal
M(I)/I$, and $q_I\colon \mathcal M(I)\to \mathcal Q(I)$ for the
quotient map. We can form the direct sum of two maps
$\phi_1,\phi_2\colon A\to \mathcal M(I)$ as in \eqref{KK-directsumeq}.
Likewise, given maps $\psi_1,\psi_2\colon A\to \mathcal Q(I)$, we can
form the direct sum $\psi_1\oplus \psi_2\colon A\to \mathcal Q(I)$
(using isometries coming from $\mathcal M(I)$). Both of these are well
defined up to unitary equivalence (in the latter case by unitaries
from $\mathcal M(I)$). Therefore, the notions of absorption in the
next definition do not depend on the isometries chosen.\footnote{Part
  \ref{absorption.2} of Definition~\ref{defn:absorption} goes back to Kasparov
  (phrased in the language of extensions) as \cite[p. 560, Definition
  2]{Kasparov80}.  To our knowledge, absorbing
  $^*$-ho\-mo\-mor\-phisms into general multiplier algebras (as in
  part \ref{absorption.1} of the definition) started to appear explicitly around the
  turn of the millennium, and are formalized in \cite{Thomsen01}.}

\begin{definition}\label{defn:absorption}
  Let $A$ and $I$ $C^*$-algebras with $A$ separable and $I$ $\sigma$-unital and stable.
  \begin{enumerate}
  \item \label{absorption.1}
A $^*$-ho\-mo\-mor\-phism $\phi \colon A \to \M(I)$ is
    \emph{absorbing} if for every $^*$-ho\-mo\-mor\-phism $\psi \colon
    A \to \M(I)$, there is a sequence of unitaries $(u_n)_{n=1}^\infty
    \subseteq \M(I)$ such that, for each $a\in A$, we have
    \begin{equation}
      \big(n \mapsto  u_n (\phi(a) \oplus \psi(a))u_n^* - \phi(a)
      \big) \in C_0(\mathbb N, I).\footnote{This is merely a concise
        way to encode the two conditions that
        $u_n(\phi(a)\oplus\psi(a))u_n^*$ is equal to $\phi(a)$
        modulo $I$ for each $n$, and that $\|
        u_n(\phi(a)\oplus\psi(a))u_n^* - \phi(a)\| \to 0$ as
        $n\to\infty$.  We will use it throughout this section.}
    \end{equation}
  \item \label{absorption.2}
A $^*$-ho\-mo\-mor\-phism $\theta \colon A \to \mathcal Q(I)$
    is \emph{absorbing} if for every $^*$-ho\-mo\-mor\-phism $\psi
    \colon A \to \M(I)$ there is a unitary $u \in \M(I)$ such that
    $\Ad(q_I(u)) \circ (\theta \oplus (q_I\circ\psi)) = \theta$.
\end{enumerate}
\end{definition}

The next three results are not new, but nor are they entirely
straightforward to extract from the literature.  The proof below
follows the computations in \cite{Dadarlat-Eilers01} and
\cite{Dadarlat-Eilers02}, which in turn uses ideas from
\cite{Kasparov80a}.  The ``asymptotic absorption'' condition in \ref{absorption.cond3}
below originates in \cite{Dadarlat-Eilers01}, and Corollary
\ref{cor:absorption-uniqueness} below is implicit there.

\begin{proposition}\label{prop:absorption}
Let $A$ and $I$ be $C^*$-algebras with $A$ separable and $I$ $\sigma$-unital and
stable, and let $\phi \colon A \to \M(I)$ be a $^*$-ho\-mo\-mor\-phism. The following are equivalent:
\begin{enumerate}
  \item \label{absorption.cond1}
$\phi$ is absorbing;
  \item \label{absorption.cond2}
for all $^*$-ho\-mo\-mor\-phisms $\psi \colon A \to \M(I)$, there is a unitary $u \in \M(I)$ such that
  \begin{equation}
     u(\phi(a) \oplus \psi(a)) u^* - \phi(a) \in I,\quad a\in A;  \end{equation}
  \item \label{absorption.cond3}
for all $^*$-ho\-mo\-mor\-phisms $\psi \colon A \to \M(I)$, there is a norm-continuous path $(u_t)_{t \geq 0} \subseteq \M(I)$ of unitaries such that
\begin{equation}
  \big(t \mapsto u_t (\phi(a) \oplus \psi(a)) u_t^* - \phi(a) \big) \in C_0([0, \infty), I),\quad a\in A.
\end{equation}
\end{enumerate}
\end{proposition}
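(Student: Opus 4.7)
The equivalence is established via the cycle (iii) $\Rightarrow$ (i) $\Rightarrow$ (ii) $\Rightarrow$ (iii). The first two implications are essentially formal. For (iii) $\Rightarrow$ (i), restrict the continuous path $(u_t)_{t\geq 0}$ from (iii) to integer times $t = n$, noting that the condition $t \mapsto u_t(\phi(a)\oplus\psi(a))u_t^* - \phi(a) \in C_0([0,\infty), I)$ encodes both pointwise membership in $I$ and norm decay to zero, which is precisely the $C_0(\mathbb{N}, I)$ condition of (i). For (i) $\Rightarrow$ (ii), simply take $u \coloneqq u_1$ (or any $u_n$), since the $C_0(\mathbb{N}, I)$ condition already builds in element-wise membership in $I$.

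The substance lies in (ii) $\Rightarrow$ (iii), which I would prove via the classical Kasparov/Dadarlat--Eilers rotation trick. Given $\psi\colon A \to \mathcal M(I)$, form the infinite amplification $\psi^\infty(a) \coloneqq \sum_{k=1}^\infty s_k \psi(a) s_k^*$ (strictly convergent) using isometries $(s_k)_{k\geq 1}$ in $\mathcal M(I)$ with pairwise orthogonal ranges, which exist because $I$ is stable. Apply (ii) with $\psi$ replaced by $\psi^\infty$ to obtain a unitary $u \in \mathcal M(I)$ with $u(\phi(a) \oplus \psi^\infty(a))u^* - \phi(a) \in I$ for all $a \in A$. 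Next, construct a norm-continuous path of unitaries $(w_t)_{t\geq 0}$ in $\mathcal M(I)$ with $w_0 = 1$ that progressively conjugates the single copy of $\psi$ in $\phi \oplus \psi$ into later and later slots of $\psi^\infty$; concretely, one uses sine/cosine interpolation between the identity and the partial isometries that shift the $\psi$-summand within $\psi^\infty$, exploiting that the unitary group of $\mathcal M(I)$ is norm path-connected when $I$ is $\sigma$-unital and stable. Setting $u_t \coloneqq u w_t$, one writes
\begin{align*}
    u_t(\phi(a)\oplus\psi(a))u_t^* - \phi(a)
    &= u\bigl(w_t(\phi(a)\oplus\psi(a))w_t^* - (\phi(a)\oplus\psi^\infty(a))\bigr)u^* \\
    &\quad + \bigl(u(\phi(a)\oplus\psi^\infty(a))u^* - \phi(a)\bigr),
\end{align*}
and the two summands together yield a norm-continuous function valued in $I$ that tends to zero in norm, verifying (iii).

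The main technical obstacle is ensuring that for every $t \geq 0$, the rotation discrepancy $w_t(\phi(a)\oplus\psi(a))w_t^* - (\phi(a)\oplus\psi^\infty(a))$ lies in $I$ and not merely converges to zero in norm; this pointwise membership is required so that both summands above lie in $I$ and the total error defines an element of $C_0([0,\infty), I)$. To achieve it, the shifting isometries must be built inside $\mathcal M(I)$ so that their commutators with $\phi(a) \oplus \psi(a)$ land in $I$: they should act as the identity on the $\phi$-component and permute copies exactly within $\psi^\infty$, with the tail discrepancies controlled in the strict topology using the stability of $I$. Careful bookkeeping through the sine/cosine interpolation preserves pointwise $I$-membership while maintaining norm continuity of the path $(w_t)$, completing the argument.
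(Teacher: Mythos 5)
Your implications (iii) $\Rightarrow$ (i) $\Rightarrow$ (ii) are correct and match the paper, which also treats them as immediate. The difficulty is (ii) $\Rightarrow$ (iii), and there your argument has a genuine gap.

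In the displayed decomposition you write
\begin{equation*}
u_t(\phi(a)\oplus\psi(a))u_t^* - \phi(a) = u\bigl[w_t(\phi(a)\oplus\psi(a))w_t^* - (\phi(a)\oplus\psi^\infty(a))\bigr]u^* + \bigl[u(\phi(a)\oplus\psi^\infty(a))u^* - \phi(a)\bigr],
\end{equation*}
and assert that the two summands together yield an element of $C_0([0,\infty),I)$. But the second bracket is a \emph{fixed} element of $I$, independent of $t$, and in general nonzero. For the left-hand side to tend to zero in norm, the first summand would have to converge in norm to \emph{minus} the second; equivalently, $w_t(\phi(a)\oplus\psi(a))w_t^*$ would have to converge in norm to $u^*\phi(a)u$. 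Your construction of $w_t$ is designed instead to make the first bracket small, i.e.\ to bring $w_t(\phi(a)\oplus\psi(a))w_t^*$ close to $\phi(a)\oplus\psi^\infty(a)$ --- and that is impossible for any unitary path, since $\phi\oplus\psi$ carries a single copy of $\psi$ while $\phi\oplus\psi^\infty$ carries infinitely many. Indeed, any path that ``shifts the $\psi$-slot to infinity'' makes $w_t(\phi(a)\oplus\psi(a))w_t^*$ converge \emph{strictly}, not in norm, to $\phi(a)\oplus 0$; this is neither the claimed limit $\phi(a)\oplus\psi^\infty(a)$ nor the required limit $u^*\phi(a)u$. The observation that $U(\mathcal M(I))$ is norm-connected does not rescue the argument, since an arbitrary path from $1$ to a shift unitary gives no pointwise control over the conjugates along the way.

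The paper's proof circumvents this entirely. It applies condition (ii) not to $\psi^\infty$ but to the doubly iterated amplification $(\psi_\infty)_\infty(a) = \sum_{m,n}v_nv_m\psi(a)v_m^*v_n^*$, exploits the relation $v_n^*(\psi_\infty)_\infty(a)v_n = \psi_\infty(a)$ to extract a sequence of pairwise-orthogonal isometries $w_n = us_2v_n$ with $w_n\psi_\infty(a) - \phi(a)w_n \to 0$ in norm, and only then invokes \cite[Lemma~2.3]{Dadarlat-Eilers01} (which rests on \cite[Lemma~2.16]{Dadarlat-Eilers02}) to turn this asymptotic sequence of intertwining isometries into the continuous unitary path required by (iii). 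That Dadarlat--Eilers lemma is precisely the place where the careful rotation argument lives, and it is the missing ingredient that your sine/cosine interpolation sketch does not supply.
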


\begin{proof}
  The implications \ref{absorption.cond3}$\Rightarrow$\ref{absorption.cond1}$\Rightarrow$\ref{absorption.cond2} are
  clear. Now assume that \ref{absorption.2} holds and fix a $^*$-ho\-mo\-mor\-phism
  $\psi \colon A \to \M(I)$.  Because $I$ is stable, we may find a
  sequence $(v_n)_{n=1}^\infty \subseteq \M(I)$ of isometries such
  that $\sum_{n=1}^\infty v_n v_n^* = 1_{\M(I)}$ (with convergence in the
  strict topology),\footnote{Use that $1_{\M(I)} \otimes \mathcal
    B(\mathcal H) \subseteq \M(I\otimes \mathcal K)$.} so that, in
  particular,
\begin{equation}\label{prop:absorption.3}
  v_n^*xv_n\to 0,\quad x\in I.
\end{equation}

Define $^*$-ho\-mo\-mor\-phisms $\psi_\infty,(\psi_\infty)_\infty\colon A\to \M(I)$ by
\begin{equation}\label{eq:psi-infty}
  \psi_\infty(a) \coloneqq \sum_{n=1}^\infty v_n \psi(a) v_n^*,\quad a\in A,\end{equation}
and
\begin{equation}\label{eq:psi-infty-infty}
  (\psi_{\infty})_\infty(a)\coloneqq\sum_{n=1}^\infty v_n\psi_\infty(a)v_n^*=\sum_{m,n=1}^\infty v_nv_m\psi(a)v_m^*v_n^*,\quad a\in A.
\end{equation}
Note that, for each $n\in\mathbb N$ and $a\in A$,
\begin{equation}\label{prop:absorption.2}
v_n^*(\psi_{\infty})_\infty(a)v_n=\psi_\infty(a).
\end{equation}
Now fix isometries $s_1,s_2 \in \M(I)$ with $s_1s_1^* + s_2s_2^* =
1_{\M(I)}$ so that
\begin{equation}
  \big( \phi \oplus (\psi_\infty)_\infty \big)(a) = s_1\phi(a)s_1^* +
  s_2(\psi_\infty)_\infty(a)s_2^*,\quad a\in A. 
\end{equation}
Using \ref{absorption.cond2}, let $u \in \M(I)$ be a unitary with
\begin{equation}
  u \big( \phi(a) \oplus (\psi_\infty)_\infty(a) \big) u^* - \phi(a)
  \in I,\quad a \in A,
\end{equation}
and note that this implies
\begin{equation}\label{prop:absorption.1}
  s_2^*u^*\phi(a) u s_2 - (\psi_\infty)_\infty(a)\in I, \quad a\in A.
\end{equation}
Then $w_n \coloneqq u s_2v_n$ is an isometry for each $n \geq 1$ and,
for $n\neq m$, we have $w_n^*w_m=v_n^*v_m=0$.  Furthermore, for each
$a\in A$,
\begin{equation}
n\mapsto w_n^* \phi(a) w_n - \psi_\infty(a) \stackrel{\eqref{prop:absorption.2}}= v_n^* \big( s_2^* u^*
\phi(a) u s_2^* - ( \psi_\infty )_\infty(a) \big) v_n
\end{equation}
lies in $C_0(\mathbb N,I)$ by \eqref{prop:absorption.1}  and \eqref{prop:absorption.3}. Then, for all $a\in A$, we have
\begin{equation}
  \begin{aligned}
    \Big( n \mapsto \big(w_n&\psi_\infty (a) - \phi(a)w_n\big)^*\big(w_n\psi_\infty(a)-\phi(a)w_n\big) \Big) \\
     &= \Big( n \mapsto \psi_\infty(a^*)\big(\psi_\infty(a)-w_n^*\phi(a)w_n\big) \\
     &\qquad+ \big(\psi_\infty(a^*)-w_n^*\phi(a^*)w_n\big)\psi_\infty(a)
    \\ &\qquad+\big(w_n^*\phi(a^*a)w_n-\psi_\infty(a^*a)\big)\Big) \in
    C_0(\mathbb N, I).
  \end{aligned}
\end{equation}
Therefore,
\begin{equation}
  \big(n \mapsto w_n \psi_\infty(a) - \phi(a) w_n \big) \in C_0(\mathbb N, I)
\end{equation}
by the $C^*$-identity.  Condition \ref{absorption.cond3} now follows from  \cite[Lemma~2.3]{Dadarlat-Eilers01} (which uses \cite[Lemma~2.16]{Dadarlat-Eilers02}).
\end{proof}

The two notions of absorption in Definition~\ref{defn:absorption} are closely related.

\begin{corollary}\label{cor:multipler-vs-corona}
Suppose $A$ and $I$ are $C^*$-algebras with $A$ separable and $I$ $\sigma$-unital and stable.  Then a $^*$-ho\-mo\-mor\-phism $\phi \colon A \to \M(I)$ is absorbing if and only if $q_I\circ \phi \colon A \to \Q(I)$ is absorbing.
\end{corollary}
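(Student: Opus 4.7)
The plan is to show this is essentially a translation via Proposition~\ref{prop:absorption}, specifically the equivalence of conditions~\ref{absorption.cond1} and~\ref{absorption.cond2} therein. No new ideas should be required; the argument is just an unpacking of definitions, using one key compatibility between direct sums in $\M(I)$ and in $\Q(I)$.

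The first step is to pin down this compatibility. Fixing isometries $s_1, s_2 \in \M(I)$ with $s_1s_1^* + s_2s_2^* = 1_{\M(I)}$ and using them both in $\M(I)$ and (via $q_I$) in $\Q(I)$, the discussion preceding Definition~\ref{defn:absorption} gives
\begin{equation}
  q_I \circ (\phi_1 \oplus \phi_2) = (q_I \circ \phi_1) \oplus (q_I \circ \phi_2) \colon A \to \Q(I)
\end{equation}
for any $\phi_1, \phi_2 \colon A \to \M(I)$. Since both notions of direct sum are well-defined up to unitary equivalence (by unitaries in $\M(I)$, in the corona case), it suffices to work with one fixed pair of isometries throughout the argument.

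For the forward implication, if $\phi$ is absorbing, then by Proposition~\ref{prop:absorption}\ref{absorption.cond1}$\Rightarrow$\ref{absorption.cond2}, for every $\psi \colon A \to \M(I)$ there is a unitary $u \in \M(I)$ with $u(\phi(a) \oplus \psi(a))u^* - \phi(a) \in I$ for all $a \in A$. Applying $q_I$ and using the compatibility of direct sums, this gives
\begin{equation}
  \Ad(q_I(u)) \circ \bigl( (q_I \circ \phi) \oplus (q_I \circ \psi) \bigr) = q_I \circ \phi,
\end{equation}
which is exactly the condition that $q_I \circ \phi$ is absorbing in the sense of Definition~\ref{defn:absorption}\ref{absorption.2}.

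For the reverse implication, suppose $q_I \circ \phi$ is absorbing and let $\psi \colon A \to \M(I)$ be arbitrary. By assumption, there is a unitary $u \in \M(I)$ such that the displayed corona equality holds. Lifting this equality back to $\M(I)$ (using the compatibility of direct sums) gives $u(\phi(a) \oplus \psi(a))u^* - \phi(a) \in I$ for all $a \in A$, which is condition~\ref{absorption.cond2} of Proposition~\ref{prop:absorption}. Invoking \ref{absorption.cond2}$\Rightarrow$\ref{absorption.cond1} of that proposition shows $\phi$ is absorbing. The only even mildly subtle point is the bookkeeping to ensure the direct sums upstairs and downstairs are formed from the same isometries, so that lifting the corona identity really does produce the intermediate multiplier-algebra condition.
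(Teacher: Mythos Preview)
Your proof is correct and follows essentially the same approach as the paper. The paper's proof is terser: it says the forward direction is clear (immediate from the definitions, since the sequence of unitaries in Definition~\ref{defn:absorption}\ref{absorption.1} in particular gives one unitary witnessing corona absorption) and that the backward direction follows from \ref{absorption.cond2}$\Rightarrow$\ref{absorption.cond1} of Proposition~\ref{prop:absorption}, which is exactly your argument.
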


\begin{proof}
  The forward direction is clear.  The backward direction follows from
  \ref{absorption.cond2}$\Rightarrow$\ref{absorption.cond1} of Proposition~\ref{prop:absorption}.
\end{proof}

The asymptotic absorption in Proposition~\ref{prop:absorption}\ref{absorption.cond3} leads to the following asymptotic uniqueness result for absorbing $^*$-ho\-mo\-mor\-phisms, which we use in the $\Z$-stable $KK$- and $KL$-uniqueness theorems.

\begin{corollary}\label{cor:absorption-uniqueness}
Suppose $A$ and $I$ are $C^*$-algebras with $A$ separable and $I$ $\sigma$-unital and stable.  If $\phi, \psi \colon A \to \M(I)$ are absorbing representations, then there is a norm-continuous path $(u_t)_{t \geq 0} \subseteq \M(I)$ of unitaries such that
\begin{equation}\label{cor:absorption-uniqueness.1}
  \big(t \mapsto u_t \phi(a) u_t^* - \psi(a) \big) \in C_0([0, \infty), I).
\end{equation}
\end{corollary}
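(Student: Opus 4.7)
The plan is to apply the asymptotic form of absorption (Proposition~\ref{prop:absorption}\ref{absorption.cond3}) twice and chain the resulting paths together via a flip unitary. In more detail, since $\phi$ is absorbing, \ref{absorption.cond3} applied with $\psi$ produces a norm-continuous path $(u_t^{(1)})_{t\geq0}$ of unitaries in $\M(I)$ with
\begin{equation}
  \bigl(t \mapsto u_t^{(1)}(\phi(a)\oplus\psi(a))(u_t^{(1)})^* - \phi(a)\bigr)\in C_0([0,\infty),I),\quad a\in A.
\end{equation}
Since $\psi$ is absorbing, \ref{absorption.cond3} applied with $\phi$ produces a similar norm-continuous path $(u_t^{(2)})_{t\geq0}$ satisfying
\begin{equation}
  \bigl(t\mapsto u_t^{(2)}(\psi(a)\oplus\phi(a))(u_t^{(2)})^* - \psi(a)\bigr)\in C_0([0,\infty),I),\quad a\in A.
\end{equation}

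Next, I would fix the isometries $s_1,s_2\in\M(I)$ used throughout this subsection to define $\oplus$ (with $s_1^*s_1=s_2^*s_2=1_{\M(I)}$ and $s_1s_1^*+s_2s_2^*=1_{\M(I)}$), and set $w\coloneqq s_1s_2^*+s_2s_1^*\in\M(I)$. A direct computation shows $w$ is a (self-adjoint) unitary and that
\begin{equation}
  w(x\oplus y)w^* = y\oplus x, \qquad x,y\in\M(I).
\end{equation}
In particular, $w(\phi(a)\oplus\psi(a))w^* = \psi(a)\oplus\phi(a)$ for all $a\in A$.

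Finally, I would define
\begin{equation}
  u_t \coloneqq u_t^{(2)}\,w\,(u_t^{(1)})^*, \qquad t\geq0.
\end{equation}
Since $(u_t^{(1)})$ and $(u_t^{(2)})$ are norm-continuous paths of unitaries in $\M(I)$ and $w$ is a fixed unitary, $(u_t)$ is a norm-continuous path of unitaries in $\M(I)$. To verify \eqref{cor:absorption-uniqueness.1}, take adjoints in the first displayed condition to get $(u_t^{(1)})^*\phi(a)u_t^{(1)} - (\phi(a)\oplus\psi(a))\in C_0([0,\infty),I)$; conjugating by $w$ and then by $u_t^{(2)}$ preserves membership in $C_0([0,\infty),I)$ (as conjugation by unitaries in $\M(I)$ is isometric and preserves the ideal $I$), so
\begin{equation}
  u_t\phi(a)u_t^* - u_t^{(2)}(\psi(a)\oplus\phi(a))(u_t^{(2)})^* \in C_0([0,\infty),I).
\end{equation}
Adding this to the second displayed condition yields $(t\mapsto u_t\phi(a)u_t^*-\psi(a))\in C_0([0,\infty),I)$ for every $a\in A$, as required. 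There is no genuine obstacle here: the only place any care is needed is verifying the flip identity for $w$ and checking that the $C_0$-membership is preserved under conjugation and addition, both of which are routine.
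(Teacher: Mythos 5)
Your argument is correct and is essentially the proof in the paper: apply Proposition~\ref{prop:absorption}\ref{absorption.cond3} to both $\phi$ and $\psi$ and compose the two resulting paths; the only difference is that you introduce the flip unitary $w=s_1s_2^*+s_2s_1^*$ explicitly to reconcile $\psi\oplus\phi$ with $\phi\oplus\psi$, whereas the paper silently absorbs this adjustment into its choice of $w_t$. A minor expository slip: the step you call ``taking adjoints'' on the first condition is really conjugation by $(u_t^{(1)})^*$ (literally taking adjoints would replace $a$ by $a^*$), but the manipulation you intend is of course correct.
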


\begin{proof}
By Proposition~\ref{prop:absorption}\ref{absorption.cond3} applied to both $\phi$ and $\psi$, there are norm-continuous paths $(v_t)_{t\geq 0}$ and $(w_t)_{t\geq0}$ of unitaries in $\mathcal M(I)$ such that for $a\in A$,
\begin{equation}
  \begin{aligned}
    &\big(t\mapsto v_t(\phi(a)\oplus \psi(a))v_t^*-\phi(a)\big)\in C_0([0,\infty),I),\quad\text{ and}\\
    &\big(t\mapsto w_t(\phi(a)\oplus \psi(a))w_t^*-\psi(a)\big)\in
    C_0([0,\infty),I).
  \end{aligned}
\end{equation}
The result follows with $u_t\coloneqq w_tv_t^*$.
\end{proof}

Next we turn to tools for verifying absorption of maps $A\to\mathcal
Q(I)$ when $A$ is separable and $I$ is $\sigma$-unital and stable. These date
back to Voiculescu's theorem (\cite{Voiculescu76}) for the case
$I=\mathcal K$, which was then generalized by Kasparov to produce
absorbing maps when at least one of $A$ or $I$ is nuclear
(\cite[Theorem 6]{Kasparov80a}).\footnote{The same result when $I$ is separable but without the
  nuclearity assumption was later obtained by Thomsen
  (\cite{Thomsen01}).}  Kirchberg first characterized absorption for
$^*$-ho\-mo\-mor\-phisms under certain pure infiniteness criteria
(\cite[Theorem 6]{Kirchberg95b}; see also \cite[Theorem
8.3.1]{Rordam02}).  In the presence of nuclearity, Elliott and
Kucerovsky used Kirchberg's work (see \cite[Lemma
11]{Elliott-Kucerovsky01}) to give a beautiful characterization
(\cite[Theorem 6]{Elliott-Kucerovsky01}) of unitally absorbing
$^*$-ho\-mo\-mor\-phisms (defined by requiring all maps in
Definition~\ref{defn:absorption} to be unital) by a condition they
call \emph{pure largeness.} We will explore pure largeness in more
detail in our subsequent work, but, for the maps appearing in this
paper, Kucerovsky and Ng's corona factorization property
(\cite{Kucerovsky-Ng06}) allows a clean characterization of absorption
in terms of unitizably full maps.  We recall the definition below.

\begin{definition} A $^*$-ho\-mo\-mor\-phism $\phi \colon A \to D$
  between $C^*$-algebras with $D$ unital is said to be
  \emph{unitizably full} if the unitized map $\phi^\dag \colon A^\dag
  \to D$ is full.\footnote{When $A$ is unital, $A^\dag$ is obtained
    by adding an additional unit: $A^\dag\coloneqq A \oplus \mathbb
    C$.  Note that no unital map can be unitizably full.}
\end{definition}

With this in place, the characterization we use is as follows.  The
short proof is deceiving --- the real work is outsourced to
references.\footnote{Indeed, the heavy lifting is performed in
  \cite[Theorem 6]{Elliott-Kucerovsky01}, which characterizes unitally
  nuclearly absorbing maps. The non-unital version we need is then
  obtained by unitizing, but this is not handled correctly in
  \cite[Corollary 16]{Elliott-Kucerovsky01}.  The correct result, as
  noted in \cite{Gabe16} is that a map $\phi\colon A\to\mathcal Q(I)$ is
  nuclearly absorbing if and only if its forced unitization
  $\phi^\dagger$ is purely large. When $I$ has the corona
  factorization property, $\phi^\dagger$ is purely large if and
  only if $\phi^\dagger$ is full, i.e., if and only if $\phi$ is
  unitizably full (\cite[Theorem 1.4]{Kucerovsky-Ng06}).  A very different reformulation is used to establish the corona factorization property from regularity
  conditions such as $\Z$-stability.}
It is fundamental to our abstract approach to $C^*$-algebra
classification in just the same way that a corresponding version for
$C^*$-algebras that absorb the universal UHF algebra was vital in
\cite{Schafhauser17,Schafhauser18}.

\begin{theorem}\label{thm:absorbing-z-stable}
If $A$ is a separable nuclear $C^*$-algebra and $I$ is a $\sigma$-unital stable $\mathcal Z$-stable $C^*$-algebra, then a $^*$-ho\-mo\-mor\-phism $A \to \M(I)$ or $A \to \mathcal Q(I)$ is absorbing if and only if it is unitizably full.
\end{theorem}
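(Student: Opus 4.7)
The plan is to deduce the theorem from the Elliott--Kucerovsky absorption theorem (\cite[Theorem 6]{Elliott-Kucerovsky01}) by combining it with the corona factorization property of $\Z$-stable algebras. The overall route has three ingredients: (1) an equivalence between absorption into $\mathcal{M}(I)$ and $\mathcal{Q}(I)$, (2) the Elliott--Kucerovsky characterization of (nuclear) absorption in terms of pure largeness of the forced unitization, and (3) the reduction of pure largeness to fullness under the corona factorization property.

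First, I would reduce the multiplier version to the corona version. By Corollary~\ref{cor:multipler-vs-corona}, a $^*$-homomorphism $\phi\colon A\to\mathcal M(I)$ is absorbing if and only if $q_I\circ\phi\colon A\to\mathcal Q(I)$ is. Since $\mathcal Q(I)$ is unital and $q_I$ is surjective, fullness of $\phi^\dagger$ transfers to $(q_I\circ\phi)^\dagger$ and vice versa, so it suffices to prove the theorem for $^*$-homomorphisms $\theta\colon A\to\mathcal Q(I)$.

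For the ``if'' direction, I would proceed as follows. By Proposition~\ref{prop:Zcfp}, $I$ has the corona factorization property. If $\theta\colon A\to\mathcal Q(I)$ is unitizably full, then $\theta^\dagger\colon A^\dagger\to\mathcal Q(I)$ is full, and \cite[Theorem 1.4]{Kucerovsky-Ng06} then implies that $\theta^\dagger$ is purely large. Because $A$ is nuclear, any $^*$-homomorphism out of $A$ is automatically nuclear. The formulation of the Elliott--Kucerovsky theorem appropriate for non-unital domains and ranges (\cite[Theorem 6]{Elliott-Kucerovsky01}, in the precise form recorded in \cite{Gabe16} --- this is the point where one has to be a little careful, as \cite[Corollary 16]{Elliott-Kucerovsky01} needs the correction supplied there) then gives that $\theta$ is absorbing. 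For the ``only if'' direction, if $\theta$ is absorbing, then building a unital faithful representation $\rho\colon A^\dagger\to\mathcal M(I)$ (using that $I$ is stable, so $\mathcal M(I)$ contains a unital copy of $\mathcal B(\mathcal H)$ for separable $\mathcal H$) and absorbing $q_I\circ\rho|_A$ into $\theta$ via an inner automorphism of $\mathcal Q(I)$ forces $\theta^\dagger$ to be full.

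The main obstacle --- the work one cannot avoid --- is verifying the non-unital Elliott--Kucerovsky criterion, but this has already been done in the literature, so this proof is essentially a matter of assembling references correctly. The subtlety is that pure largeness does not directly apply to the non-unital map $\theta$ itself but to its forced unitization $\theta^\dagger$; once this is remembered, the corona factorization property (Proposition~\ref{prop:Zcfp}) converts pure largeness of $\theta^\dagger$ into fullness of $\theta^\dagger$, which is the clean hypothesis we want.
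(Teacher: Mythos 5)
Your proposal follows the same route as the paper (Kucerovsky--Ng corona factorization reduces pure largeness to fullness, and the corrected Elliott--Kucerovsky theorem in the form recorded by Gabe supplies the characterization of nuclear absorption), so the ``if'' direction is fine and matches the paper's argument. Two points deserve comment, one genuine.

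The genuine gap is in your reduction from $\mathcal{M}(I)$ to $\mathcal{Q}(I)$. You claim that ``since $\mathcal{Q}(I)$ is unital and $q_I$ is surjective, fullness of $\phi^\dagger$ transfers to $(q_I\circ\phi)^\dagger$ and vice versa.'' Surjectivity only gives the forward implication; the converse --- that fullness of the image in $\mathcal{Q}(I)$ lifts to fullness in $\mathcal{M}(I)$ --- is false in general and here relies crucially on stability of $I$: if $q_I(x)$ is full then there are $y_i, z_i\in\mathcal{M}(I)$ with $c\coloneqq 1-\sum y_ixz_i\in I$, and one uses stability to pick an isometry $v\in\mathcal{M}(I)$ with $\|v^*cv\|<1$, whence $\sum v^*y_ixz_iv$ is invertible. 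And you do need this converse direction: the inference ``$\phi$ absorbing $\Rightarrow q_I\circ\phi$ absorbing $\Rightarrow q_I\circ\phi$ unitizably full $\Rightarrow \phi$ unitizably full'' uses exactly the direction you omit.

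A secondary observation: your separate hands-on argument for the ``only if'' direction is unnecessary --- both directions of the Elliott--Kucerovsky and Kucerovsky--Ng theorems are biconditionals, so the same references give the converse directly --- and as written it is incomplete. For the argument to run, you need $q_I\circ\rho$ to be full on nonzero elements of $A^\dagger$, not just faithful; an arbitrary unital faithful $\rho\colon A^\dagger\to\mathcal{B}(\mathcal{H})\subseteq\mathcal{M}(I)$ will not do (some $\rho(a)$ might have finite-rank spectral projections, and then $q_I(\rho(a))$ need not be full). You would need to arrange something like infinite multiplicity so that each nonzero $\rho(a)$ dominates an infinite-rank projection, but this is not stated, and the simpler route is just to quote the equivalence.
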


\begin{proof}
By Proposition~\ref{prop:Zcfp}, $I$ has the corona factorization property.
The statement for a $^*$-ho\-mo\-mor\-phism $\phi\colon A\to\Q(I)$ is \cite[Theorem~2.6]{Gabe16} (which is phrased in the language of extensions) after noting that, since $A$ is nuclear, absorption and nuclear absorption of $\phi$ are equivalent. The multiplier version follows from Corollary~\ref{cor:multipler-vs-corona} and the fact that (by stability of $I$) an element $x\in \M(I)$ is full if and only if its image in $\mathcal Q(I)$ is full.\footnote{This is standard: if the image of $x$ in $\mathcal Q(I)$ is full, there are $y_1,\dots, y_n, z_1,\dots, z_n \in \M(I)$ such that $c \coloneqq 1_{\mathcal M(I)} - \sum_{i=1}^n y_i x z_i \in I$. As $I$ is stable we may pick an isometry $v\in \M(I)$ such that $\|v^* c v \| < 1$. Hence $\| v^* c v \| = \| 1_{\mathcal M(I)} - \sum_{i=1}^n v^* y_i x z_i v\| < 1$ which implies that $x$ is full.}
\end{proof}

\subsection{$KK$-existence}\label{sec:KK-Existence}

The $KK$-existence theorem we use comes in two forms, both of which are essentially folklore to experts.  The first shows that all $KK$-classes can be realized by Cuntz pairs with a fixed absorbing $^*$-ho\-mo\-mor\-phism in the second variable, while the second uses a result of Dadarlat to rephrase a standard fact from extension theory in terms of $KK$-theory.

\begin{theorem}[$KK$-existence]\label{thm:KK-Existence}
  Suppose $A$ is a separable $C^*$-algebra and $I$ is a
  $\sigma$-unital stable $C^*$-algebra.
  \begin{enumerate}
  \item \label{KKexistence.1}
If $\psi \colon A \to \M(I)$ is an absorbing
    $^*$-ho\-mo\-mor\-phism and $\kappa \in KK(A, I)$, then there is
    an absorbing $^*$-ho\-mo\-mor\-phism $\phi \colon A \to \M(I)$
    such that $(\phi, \psi)$ is an $(A, I)$-Cuntz pair and $[\phi,
    \psi]_{KK(A,I)} = \kappa$.    \label{thm:KK-Existence.C1}
  \item\label{KKexistence.2}
    Suppose $A$ is nuclear.  If $\theta \colon A \to \mathcal
    Q(I)$ is an absorbing $^*$-ho\-mo\-mor\-phism, then there is a
    (necessarily) absorbing $^*$-ho\-mo\-mor\-phism $\phi \colon A \to
    \M(I)$ lifting $\theta$ if and only if $[\theta]_{KK(A,\Q(I))}=0$.
    \label{thm:KK-Existence.C2}
  \end{enumerate}
\end{theorem}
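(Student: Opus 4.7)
For part~\ref{KKexistence.1}, the plan is to start with an arbitrary representative of $\kappa$ and adjust it so its second coordinate becomes $\psi$, using absorption to perform the replacement. After identifying $\M(I \otimes \mathcal K) \cong \M(I)$ via stability of $I$, I would pick any Cuntz pair $(\phi_0,\psi_0)\colon A \rightrightarrows \M(I) \rhd I$ representing $\kappa$, then take the ``padded'' pair $(\phi_0 \oplus \psi, \psi_0 \oplus \psi)$, which still represents $\kappa$ since $[\psi,\psi]_{KK(A,I)}=0$. Absorption of $\psi$ (Proposition~\ref{prop:absorption}\ref{absorption.cond2}) supplies a unitary $u \in \M(I)$ with $u(\psi_0 \oplus \psi)(a)u^* - \psi(a) \in I$ for every $a$, so $\phi \coloneqq \Ad u \circ (\phi_0 \oplus \psi)$ forms a Cuntz pair $(\phi,\psi)$. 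Verifying $[\phi,\psi]_{KK(A,I)} = \kappa$ will use only standard $KK$-manipulations (path-connectedness of $U(\M(I))$ to see that conjugation by $u$ preserves the $KK$-class of a Cuntz pair, plus cocycle relations in the second variable). Absorption of $\phi$ should then follow formally: $\phi$ contains a unitary conjugate of $\psi$ as a direct summand, and the direct sum of any $^*$-homomorphism with an absorbing one remains absorbing, which is immediate from Definition~\ref{defn:absorption} and associativity of $\oplus$.

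For the ``only if'' direction of part~\ref{KKexistence.2}, the plan is an Eilenberg swindle. If $\phi \colon A \to \M(I)$ lifts $\theta$, Proposition~\ref{prop:KK-facts}\ref{prop:KK-facts.5} reduces the problem to showing $[\phi]_{KK(A,\M(I))} = 0$. Since $I$ is stable and $\sigma$-unital, I would fix isometries $(v_n)_{n=1}^\infty \subseteq \M(I)$ with $\sum_n v_n v_n^* = 1$ strictly, form $\phi_\infty(a) \coloneqq \sum_n v_n \phi(a) v_n^*$, and note that with $s_1 \coloneqq v_1$ and $s_2 \coloneqq \sum_n v_{n+1} v_n^*$ as the isometries defining the direct sum, one literally has $\phi \oplus \phi_\infty = \phi_\infty$ in $\M(I)$. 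The resulting identity $[\phi]_{KK(A,\M(I))} + [\phi_\infty]_{KK(A,\M(I))} = [\phi_\infty]_{KK(A,\M(I))}$ then forces $[\phi]_{KK(A,\M(I))} = 0$.

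For the ``if'' direction of part~\ref{KKexistence.2}, the plan is to transport $\theta$ back to $\M(I)$ through a fixed absorbing reference lift. Kasparov's theorem (\cite{Kasparov80a}), applicable since $A$ is nuclear, produces an absorbing $^*$-homomorphism $\tilde\phi \colon A \to \M(I)$; then $q_I \circ \tilde\phi$ is absorbing (Corollary~\ref{cor:multipler-vs-corona}) and has vanishing $KK(A,\Q(I))$-class by the ``only if'' direction. I would then invoke Dadarlat's uniqueness theorem for absorbing corona representations (\cite{Dadarlat00b}) to conclude that $\theta$ and $q_I \circ \tilde\phi$ are unitarily equivalent via a unitary of the form $q_I(u)$ with $u \in U(\M(I))$, so that $\phi \coloneqq \Ad u \circ \tilde\phi$ is the desired absorbing lift of $\theta$.

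The main obstacle will be the ``if'' direction, specifically guaranteeing that the unitary in $\Q(I)$ implementing the equivalence between two absorbing corona representations can be chosen in the image of $q_I$. Dadarlat's uniqueness naturally produces an implementing unitary in $\Q(I)$, and not every such unitary lifts to $\M(I)$; the hard part will be arranging for the implementing unitary to lie in the path component of the identity (where lifts are always available), perhaps by an appropriate modification of the reference representation $\tilde\phi$ absorbing a suitable trivial summand.
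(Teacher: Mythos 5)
Your strategy for part~\ref{KKexistence.1} --- pad the representative with $\psi$ and use absorption to conjugate the second coordinate onto $\psi$ --- is the right one (it is the approach of \cite[Proposition~2.6]{Schafhauser18}, which the paper defers to). However, the step where you claim $[\phi,\psi]_{KK(A,I)}=\kappa$ has a genuine gap. Writing $\phi'\coloneqq\Ad u\circ(\psi_0\oplus\psi)$, the cocycle relation and conjugation-invariance (which follows from strict path-connectedness of $U(\mathcal M(I))$) give
\begin{equation}
[\phi,\psi]_{KK(A,I)}=[\phi,\phi']_{KK(A,I)}+[\phi',\psi]_{KK(A,I)}=\kappa+[\phi',\psi]_{KK(A,I)},
\end{equation}
and the correction term $[\phi',\psi]_{KK(A,I)}$ is \emph{not} zero for an arbitrary $u$ supplied by Proposition~\ref{prop:absorption}\ref{absorption.cond2}. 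It genuinely depends on the choice: already for $A=\mathbb C$, $I=\mathcal K$ it is an essential codimension of projections, and replacing $u$ by $wu$ for $w$ a unitary essentially commuting with $\psi(1)$ of nontrivial index shifts the class by any integer. What you need is the \emph{asymptotic} form of absorption-uniqueness, Corollary~\ref{cor:absorption-uniqueness}: a norm-continuous path $(u_t)_{t\geq0}$ of unitaries with $u_t(\psi_0\oplus\psi)(a)u_t^*-\psi(a)\in C_0([0,\infty),I)$. Then $t\mapsto(\Ad u_t(\psi_0\oplus\psi),\psi)$ extends (by norm convergence to $\psi$) to a homotopy of Cuntz pairs over $[0,\infty]$ ending at $(\psi,\psi)$, so $[\Ad u_0(\psi_0\oplus\psi),\psi]_{KK(A,I)}=0$, and one takes $\phi\coloneqq\Ad u_0\circ(\phi_0\oplus\psi)$. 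Your argument that this $\phi$ is absorbing is fine. (The paper also indicates the alternative Paschke-duality route: produce a unitary $u\in\mathcal M(I)$ commuting with $\psi(A)$ modulo $I$ realizing $\kappa$, and set $\phi=\Ad u\circ\psi$.)

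Your Eilenberg swindle for the ``only if'' half of part~\ref{KKexistence.2} is correct and simply reproves the fact $KK(A,\mathcal M(I))=0$ that the paper cites from \cite[Proposition~4.1]{Dadarlat00b}; the only care required is that $\mathcal M(I)$ is not $\sigma$-unital in general, so the additivity identity $[\phi\oplus\phi_\infty]=[\phi]+[\phi_\infty]$ must be read in the inductive-limit picture of $KK(A,\mathcal M(I))$ from Appendix~\ref{sec:kkappendix}.

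For the ``if'' half, you correctly locate the difficulty (liftability of the implementing unitary), but the fix you sketch is aimed at the wrong place: the paper never needs to lift a unitary produced by a uniqueness theorem. Via Dadarlat's isomorphism $\Ext(A,I)\cong KK(A,\mathcal Q(I))$ \cite[Proposition~4.2]{Dadarlat00b}, the hypothesis $[\theta]_{KK(A,\mathcal Q(I))}=0$ says that $\theta$ is \emph{stably} split: $\theta\oplus(q_I\circ\psi)$ splits for some $^*$-homomorphism $\psi\colon A\to\mathcal M(I)$. Then the very \emph{definition} of absorption for corona maps (Definition~\ref{defn:absorption}\ref{absorption.2}) already hands you a unitary $u\in\mathcal M(I)$ --- not merely in $\mathcal Q(I)$ --- with $\Ad(q_I(u))\circ(\theta\oplus(q_I\circ\psi))=\theta$. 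Conjugating a lift of $\theta\oplus(q_I\circ\psi)$ by $u$ gives the lift of $\theta$, which is automatically absorbing by Corollary~\ref{cor:multipler-vs-corona}. There is no need to introduce a reference lift $\tilde\phi$, and no implementing unitary in $\mathcal Q(I)$ ever needs to be lifted.
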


Part \ref{KKexistence.1} is standard --- a version for weakly nuclear, nuclearly absorbing $^*$-ho\-mo\-mor\-phisms is given in \cite[Proposition~2.6]{Schafhauser18}, for example, and the same proof works here.  The result has its origins in Higson's Paschke duality result (\cite{Higson95}) for $K$-homology, showing that if $\kappa \in KK(A, \mathcal K)$ and $\psi \colon A \to \M(\mathcal K)$ is absorbing, then there is a unitary $u \in \M(\mathcal K)$ commuting with $\psi(A)$ modulo $\mathcal K$ such that $\kappa = [\psi, \psi, u]_{KK(A, \mathcal K)}$ in the Fredholm picture of $KK$-theory.
The result for general $I$ was obtained by Thomsen in \cite{Thomsen01} in the proof of his Paschke duality result.

Part \ref{KKexistence.2} is a consequence of Dadarlat's isomorphism between $\mathrm{Ext}(A,I)$ and $KK(A,\Q(I))$ for $A$ separable and nuclear and $I$ stable and $\sigma$-unital from \cite{Dadarlat00b}.  In the proof below, we identify extensions with their Busby invariants. Recall that, under the hypotheses of Theorem~\ref{thm:KK-Existence}, an extension of $A$ by $I$ with Busby invariant $\theta\colon A\to\Q(I)$ splits if and only if $\theta$ lifts to $\phi\colon A\to \M(I)$. Moreover, the class of $\theta$ in $\Ext(A,I)$ vanishes precisely when $\theta\oplus\psi$ splits for some split extension $\psi$.  See \cite[Chapter~15]{Blackadar98} for details.

\begin{proof}[Proof of Theorem~\ref{thm:KK-Existence}\ref{KKexistence.2}]
If $\theta$ lifts to a $^*$-ho\-mo\-mor\-phism $\phi \colon A \to
\M(I)$, then $[\theta]_{KK(A,\Q(I))} = 0$ since $KK(A, \M(I)) = 0$ by
\cite[Proposition~4.1]{Dadarlat00b}. For the converse, if
$[\theta]_{KK(A,\Q(I))}$ vanishes, then the extension with Busby
invariant $\theta$ has the trivial class in $\Ext(A,I)$, by applying the isomorphism between $KK(A,\Q(I))$ and $\Ext(A,I)$ from \cite[Proposition~4.2]{Dadarlat00b}.  Thus there is a split extension, say with Busby invariant $q_I\circ \psi\colon A\to\Q(I)$ for some $^*$-ho\-mo\-mor\-phism $\psi\colon A\to\M(I)$, such that $\theta\oplus (q_I\circ \psi)$ splits, and so has a lift to $\M(I)$. But since $\theta$ is absorbing, there is a unitary $u\in \M(I)$ with $q_I(u)(\theta\oplus (q_I\circ \psi))q_I(u)^*=\theta$.  Thus $\theta$ has a lift $\phi\colon A\to \M(I)$, which is necessarily absorbing by
Proposition~\ref{cor:multipler-vs-corona}, since $\theta$ is absorbing.
\end{proof}

\subsection{$KK$-uniqueness}\label{sec:KK-Uniqueness}

Now we turn to our $\Z$-stable $KK$- and $KL$-uniqueness theorems.  The first part of the following result is \ref{kk-unique1}$\Rightarrow$\ref{kk-unique2} of Theorem~\ref{intro-KK-unique} from the introduction.  The other implication will be proved at the end of this subsection.

\begin{theorem}[{$\mathcal Z$-stable $KK$- and $KL$-uniqueness}]\label{thm:KK-Uniqueness}\mbox{}
Let $A$ and $I$ be $C^*$-algebras with $A$ separable and $I$ $\sigma$-unital and stable. 
Let $(\phi,\psi)\colon A\rightrightarrows \mathcal M(I) \rhd I$ be a Cuntz pair with $\phi$ and $\psi$ absorbing.
\begin{enumerate}
  \item \label{KK-Uniqueness}
If $[\phi, \psi]_{KK(A,I)} = 0$, then there is a norm-continuous path $(u_t)_{t \geq 0}$ of unitaries in $(I \otimes \mathcal Z)^\dag$ such that
      \begin{equation}
         \| u_t (\phi(a) \otimes 1_{\mathcal Z}) u_t^* - \psi(a)\otimes 1_\Z \| \to 0, \quad a \in A.
      \end{equation}
\item \label{KL-Uniqueness}
If $[\phi, \psi]_{KL(A,I)} = 0$, then there is a sequence $(u_n)_{n=1}^\infty$ of unitaries in $(I \otimes \mathcal Z)^\dag$ such that
      \begin{equation}
         \| u_n (\phi(a) \otimes 1_{\mathcal Z}) u_n^* - \psi(a)\otimes 1_\Z \| \to 0, \quad a \in A.
      \end{equation}
\end{enumerate}
\end{theorem}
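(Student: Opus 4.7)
The plan is to deduce the result from the stable uniqueness theorem of Dadarlat--Eilers~\cite{Dadarlat-Eilers02}, exploiting three structural features of $\mathcal Z$: the inductive limit presentation $\mathcal Z \cong \varinjlim(\mathcal Z_{p^\infty,q^\infty}, \theta)$ from \eqref{Zstationaryinductivelimit}, strong self-absorption (Proposition~\ref{PropSSA}\ref{PropSSA.2}), and Jiang's $K_1$-injectivity theorem (Theorem~\ref{prop:z-stable-K1-inj}). I focus on part~\ref{KK-Uniqueness}; part~\ref{KL-Uniqueness} follows by running the same argument, starting from the $KL$-version of Dadarlat--Eilers and working with sequences rather than norm-continuous paths of unitaries.

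The first step applies the Dadarlat--Eilers stable uniqueness theorem with absorbing representation $\eta = \phi$: since $[\phi, \psi]_{KK(A, I)} = 0$ and $\phi$ is absorbing, this yields a norm-continuous path $(v_t)_{t \geq 0}$ of unitaries in $I^\dagger$ with $(t \mapsto v_t (\phi \oplus \phi)(a) v_t^* - (\psi \oplus \phi)(a)) \in C_0([0,\infty), I)$ for $a \in A$. Tensoring each $v_t$ by $1_{\mathcal Z}$ produces a path of unitaries in $(I \otimes \mathcal Z)^\dagger$ properly asymptotically intertwining $(\phi \oplus \phi) \otimes 1_{\mathcal Z}$ with $(\psi \oplus \phi) \otimes 1_{\mathcal Z}$ in $\mathcal M(I \otimes \mathcal Z)$. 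Running the same argument with $\eta = \psi$ (valid because $[\psi, \phi]_{KK(A, I)} = 0$ by Proposition~\ref{prop:KK-facts}\ref{prop:KK-facts.4}) and concatenating would then give a path in $(I \otimes \mathcal Z)^\dagger$ properly asymptotically intertwining $(\phi \oplus \phi) \otimes 1_{\mathcal Z}$ with $(\psi \oplus \psi) \otimes 1_{\mathcal Z}$.

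The second step is to eliminate the two-fold amplification. Fix coprime integers $p, q \ge 2$. In the dimension drop algebra $\mathcal Z_{p^\infty, q^\infty}$, the endpoint fibers are $M_{p^\infty}$ and $M_{q^\infty}$; since $I \otimes M_{p^\infty}$ and $I \otimes M_{q^\infty}$ are $M_{p^\infty}$- and $M_{q^\infty}$-stable respectively, the standard UHF half-flip argument (central to \cite{Schafhauser18}) produces, at each endpoint, a unitary in the corresponding UHF stabilization of $I^\dagger$ absorbing the amplification $\phi \oplus \phi$ back into $\phi$ (and likewise for $\psi$). Interpolating these endpoint unitaries along the path $[0,1]$ inside $\mathcal Z_{p^\infty, q^\infty}$ and composing with the path from the first step yields a proper asymptotic unitary equivalence between $\phi \otimes 1$ and $\psi \otimes 1$ in $\mathcal M(I \otimes \mathcal Z_{p^\infty, q^\infty})$ via unitaries in $(I \otimes \mathcal Z_{p^\infty, q^\infty})^\dagger$. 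Passing through the stationary inductive system \eqref{Zstationaryinductivelimit}, using strong self-absorption of $\mathcal Z$ to match the iterated $\mathcal Z_{p^\infty, q^\infty}$-tensorings against the target $\mathcal Z$-tensoring, then produces the desired path inside $(I \otimes \mathcal Z)^\dagger$.

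The main obstacle is the glueing step: the two endpoint unitaries produced by the UHF-stable arguments are only determined up to $K_1$-class, so a priori they need not extend to a norm-continuous unitary path across $[0, 1]$ satisfying the boundary conditions of $\mathcal Z_{p^\infty, q^\infty}$. It is precisely here that Jiang's $K_1$-injectivity theorem for $\mathcal Z$-stable $C^*$-algebras (Theorem~\ref{prop:z-stable-K1-inj}) is indispensable, since it guarantees that the relevant $K_1$-classes in $I \otimes \mathcal Z$ can be trivialized by genuine paths in the identity component of the unitary group of $(I \otimes \mathcal Z)^\dagger$, allowing the interpolation to be carried out without introducing any further amplification and completing the proof.
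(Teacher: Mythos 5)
Your proposal takes a genuinely different route from the paper. The paper does not invoke the Rørdam--Winter description of $\mathcal Z$ as a stationary inductive limit of $\mathcal Z_{p^\infty,q^\infty}$ at this point; instead it works entirely in the Paschke-dual picture. Briefly: Corollary~\ref{cor:absorption-uniqueness} produces a path $(u_t)$ of multiplier unitaries properly asymptotically intertwining $\phi$ and $\psi$; the class $[q_I(u_0)]_1$ in $K_1(\mathcal Q(I)\cap q_I(\phi^\dagger(A^\dagger))')$ is the complete obstruction to pulling this path down to $I^\dagger$ (after tensoring with $\mathcal Z$); this class is shown to vanish using \cite[Lemma 3.5]{Dadarlat-Eilers01} and absorption of $\phi$; then Lemma~\ref{lemma:AsympEquiv}\ref{asympequiv.2} applies Jiang's $K_1$-injectivity to the algebra $(D\cap q(\phi(A))')\otimes\mathcal Z$, where $D=\mathcal Q(I)$, to conclude. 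So the amplification trick happens inside the Paschke dual, not at the level of the maps $\phi$ and $\psi$.

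There is a genuine gap in your glueing step, and I do not think Jiang's theorem, as you have invoked it, closes it. You want to interpolate between endpoint unitaries $w_0\in (I\otimes M_{p^\infty})^\dagger$ and $w_1\in(I\otimes M_{q^\infty})^\dagger$ (each coming from a UHF half-flip), viewed in $(I\otimes M_{p^\infty}\otimes M_{q^\infty})^\dagger$, so as to obtain a unitary in $(I\otimes\mathcal Z_{p^\infty,q^\infty})^\dagger$ that approximately intertwines $\phi\otimes 1$ with $\psi\otimes 1$ along the whole interval. The obstruction is not just whether $w_0^*w_1$ has trivial $K_1$-class in $(I\otimes\mathcal Z)^\dagger$ --- rather, you need a path from $w_0$ to $w_1$ that stays in the set of approximate intertwiners, i.e., a nullhomotopy of $w_0^*w_1$ through unitaries (approximately) commuting with $\phi(A)\otimes 1$. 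This is a $K_1$-statement about the relevant relative commutant, not about $(I\otimes\mathcal Z)^\dagger$ itself, and Theorem~\ref{prop:z-stable-K1-inj} as stated only applies to algebras of the form $D\otimes\mathcal Z$. The paper's Lemma~\ref{lemma:AsympEquiv}\ref{asympequiv.2} confronts exactly this: it identifies the relevant relative commutant $D\cap q(\phi(A))'$, tensors it with $\mathcal Z$ (thereby making it $K_1$-injective by Jiang), and embeds it into the larger relative commutant $(D\otimes\mathcal Z)\cap(q\otimes\mathrm{id}_{\mathcal Z})(\phi\otimes 1_{\mathcal Z})(A)'$ via the map $\theta$. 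Your sketch would need an analogous structure theorem for the approximate intertwiner spaces appearing at the inner fibers of $\mathcal Z_{p^\infty,q^\infty}$, and it is not clear this comes for free.

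Two further loose ends. First, your concatenation $\phi\oplus\phi\sim_{\mathrm{pau}}\psi\oplus\phi$ and $\phi\oplus\psi\sim_{\mathrm{pau}}\psi\oplus\psi$ does not directly compose: the flip unitary identifying $\psi\oplus\phi$ with $\phi\oplus\psi$ lies in $M_2(\mathcal M(I))$, not in $M_2(I)^\dagger$, so the resulting path would leave the unitization. (This is fixable, but not with the argument as stated.) Second, pushing an asymptotic intertwining through the stationary inductive system \eqref{Zstationaryinductivelimit} requires more care than a one-line appeal to strong self-absorption: the connecting maps $\theta$ are trace-collapsing endomorphisms, not inclusions, so one must track how approximate multiplicativity and asymptotics interact with them. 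The paper's argument avoids all of this by never passing through $\mathcal Z_{p^\infty,q^\infty}$: it applies Lemma~\ref{lemma:AsympEquiv} once, after the $K_1$-computation in the corona.
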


Our proof of \ref{KK-Uniqueness} follows the same strategy as Dadarlat and Eilers' original $KK$-uniqueness theorem for $I=\mathcal K$ and no $\mathcal Z$-stabilization (\cite[Theorem~3.12]{Dadarlat-Eilers01}), making heavy use of Paschke duality (\cite{Paschke81}, and generalizations \cite[Theorem 3.2]{Thomsen01}, \cite[Lemma~3.5]{Dadarlat-Eilers01}).  More precisely, given a Cuntz pair $(\phi,\psi)\colon A\rightrightarrows \mathcal M(I)\rhd I$ of absorbing representations, take a unitary path $(u_t)_{t\geq 0}$ as in Corollary~\ref{cor:absorption-uniqueness}. Then $q_I(u_0)$ commutes with $q_I(\phi(A))$. Paschke duality ensures that $[\phi,\psi]_{KK(A,I)}=0$ if and only if the class of $q_I(u_0)$ is trivial in $K_1(\mathcal Q(I)\cap q_I(\phi(A))')$.  
If $q_I(u_0)$ is in the path component of the identity in $\Q(I) \cap q_I(\phi(A))'$, then $\phi$ and $\psi$ are properly asymptotically unitarily equivalent;\footnote{$\phi$ and $\psi$ are properly asymptotically unitarily equivalent when there is a continuous path $(u_t)_{t\geq 0}$ of unitaries in $I^\dag$ with $u_t\phi(a)u_t^*\rightarrow \psi(a)$ for all $a\in A$.} this is extracted from the proof of \cite[Theorem~3.12]{Dadarlat-Eilers01} as Lemma~\ref{lemma:AsympEquiv}\ref{asympequiv.1} below.

Accordingly, this strategy gives $KK$-uniqueness whenever $\mathcal Q(I) \cap q_I(\phi(A))'$ is $K_1$-injective.\footnote{$K_1$-injectivity allows one to know that $q_I(u_0)$ is in the path component of the identity whenever $[q_I(u_0)]_1=0$.} It is an open problem whether all such relative commutants are $K_1$-injective (see Question~\ref{q:kk-uniqueness} and the discussion thereafter).

Dadarlat and Eilers' stable $KK$-uniqueness theorem (recalled in the remarks after Theorem~\ref{intro-KK-unique}) now follows from the fact that $\mathcal Q(I)\cap q_I(\phi(A))'$ is properly infinite,\footnote{Indeed, since $\phi$ is absorbing, $\phi$ is unitarily equivalent to $\phi \oplus \phi$ modulo $I$ by Corollary~\ref{cor:absorption-uniqueness}, and if $u$ is a unitary implementing this equivalence and $s_1$ and $s_2$ are the Cuntz isometries defining the direct sum, then $us_1$ and $us_2$ are isometries in $\mathcal Q(I) \cap q_I(\phi(A))'$ with orthogonal range projections.\label{foot:PaschkePropInf}}  so that $[q_I(u_0)]_1=0$ implies that $q_I(u_0) \oplus 1_{\mathcal Q(I)}$ is in the path component of the identity in $U_2(\mathcal Q(I)\cap q_I(\phi(A))')$ (see \cite[Exercise 8.11(ii)]{Rordam-Larsen-etal00}). The requirement for the direct summand $1_{\mathcal Q(I)}$ in this argument leads to the summand of $\eta$ in condition \ref{kk-unique3} of the stable uniqueness theorem (Theorem~\ref{intro-KK-unique}).

For our $\Z$-stable $KK$-uniqueness theorem, we bypass the problem of whether $\mathcal Q(I)\cap q_I(\phi(A))'$ is $K_1$-injective, by working with its $\Z$-stabilization --- which is $K_1$-injective by Jiang's result (Theorem~\ref{prop:z-stable-K1-inj}).
This allows us to conclude that $u_0 \otimes 1_\mathcal Z$ is in the path component of the identity in the $\Z$-stabilization of $\mathcal Q(I)\cap q_I(\phi(A))'$.
So in spirit, our $\Z$-stable $KK$-uniqueness theorem is obtained by tensoring on a copy of $1_\Z$, whereas the Dadarlat--Eilers theorem adjoins a direct summand of the identity.

The $KL$-uniqueness theorem (part \ref{KL-Uniqueness} of Theorem~\ref{thm:KK-Uniqueness}) follows from the $KK$-uniqueness theorem (part \ref{KK-Uniqueness}). 
The strategy, which goes back to Lin (\cite{Lin02,Lin05}) and Dadarlat (\cite{Dadarlat05}), is to relate vanishing in $KL$ with approximate vanishing in $KK$. This is exemplified in the proof of \cite[Theorem 5.1]{Dadarlat05}, where Dadarlat uses a reduction to $KK$-classification to reprove that $KL$ detects approximate uniqueness of morphisms between Kirchberg algebras; our argument for \ref{KL-Uniqueness} is in the same spirit.

We now give an abstract version of Dadarlat and Eilers' argument.\footnote{Dadarlat and Eilers' strategy has its origins in the large body of work on automorphisms and derivations on operator algebras undertaken in the 1960s (\cite{Kadison66,Sakai66,KR67,OlesenPedersen74}; see also \cite[Sections 8.6 and 8.7]{Pedersen79}).}

\begin{lemma}\label{lemma:AsympEquiv}
Let $A$ be a unital separable $C^*$-algebra, and let
\begin{equation}
\begin{tikzcd}
    0\ar[r]& I\ar[r] &E \ar[r,"q"] &D \ar[r] &0
\end{tikzcd}
\end{equation}
be an extension of $C^*$-algebras with $E$ unital.  Suppose $\phi, \psi \colon A \to E$ are unital $^*$-ho\-mo\-mor\-phisms such that $q \circ \phi = q \circ\psi$ and that this composition is injective.  Suppose further that there is a continuous path $(v_t)_{t \geq 0}$ of unitaries in $E$ such that
\begin{equation}\label{eq:asymp-equiv-lemma}
  \big( t \mapsto v_t \phi(a) v_t^* - \psi(a) \big) \in C_0([0, \infty), I), \quad a\in A.
\end{equation}
\begin{enumerate}[(i)]
  \item \label{asympequiv.1}
If the path $(v_t)_{t \geq 0}$ satisfying \eqref{eq:asymp-equiv-lemma} can be chosen such that $q(v_0)$ lies in the path component of the identity in the unitary group of $D\cap q(\phi(A))'$, then it can be chosen with $v_t \in I^\dag \subseteq E$ for all $t \geq 0$. \label{lemma:AsympEquiv.C1}
  \item \label{asympequiv.2}
If the path $(v_t)_{t \geq 0}$ satisfying \eqref{eq:asymp-equiv-lemma} can be chosen with $[q(v_0)]_1 = 0$ in $K_1(D \cap q(\phi(A))')$, then there is a path $(u_t)_{t \geq 0}$ of unitaries in $(I \otimes \mathcal Z)^\dag$ such that for all $a \in A$,
  \begin{equation}\label{eq:asymp-equiv-lemma2}
    \big( t \mapsto u_t (\phi (a) \otimes 1_{\mathcal Z}) u_t^* - \psi(a) \otimes 1_{\mathcal Z} \big) \in C_0([0, \infty), I \otimes \mathcal Z).
  \end{equation}
\end{enumerate}
\end{lemma}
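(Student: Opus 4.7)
\emph{Overview.} The two parts have quite different character. Part (i) is, as the surrounding paragraph makes explicit, to be extracted from the proof of \cite[Theorem~3.12]{Dadarlat-Eilers01}, so the plan there is largely to cite (or replay) that argument. Part (ii) is the new $\mathcal Z$-stable strengthening: its plan is to reduce to (i) by tensoring with $\mathcal Z$ and invoking Jiang's $K_1$-injectivity theorem (Theorem~\ref{prop:z-stable-K1-inj}).

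\emph{Plan for (i).} First note that, because $q\circ\phi = q\circ\psi$ and $v_t\phi(a)v_t^* - \psi(a)\in I$, each $q(v_t)$ in fact lies in $U(D\cap q(\phi(A))')$. The goal is to modify $(v_t)$ so that $q(v_t) = 1_D$ for all $t\geq 0$, which places the new path inside $I^\dag\subseteq E$. Using the hypothesis, prepend a continuous path $(p_s)_{s\in[-1,0]}$ from $1_D$ to $q(v_0)$ inside $U(D\cap q(\phi(A))')$, producing an extended continuous path $(\tilde p_t)$ of unitaries in this relative commutant with $\tilde p_{-1}=1_D$ and $\tilde p_t = q(v_t)$ for $t\geq 0$. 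Lift $(\tilde p_t)$ to a continuous path $(\tilde v_t)$ of unitaries in $E$ with $\tilde v_{-1}=1_E$, built by locally writing increments as finite products $e^{ih_1}\cdots e^{ih_k}$ with $h_j$ self-adjoint in $D\cap q(\phi(A))'$ and lifting each $h_j$ self-adjointly to $E$ (any lift then averaged with its adjoint). After reparameterization, set $u_t \coloneqq \tilde v_t^* v_t$ for $t\geq 0$; then $q(u_t)=1_D$, so $u_t\in I^\dag$, and the $C_0$-decay of $u_t\phi(a)u_t^* - \psi(a)$ is the content of the Dadarlat--Eilers computation.

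\emph{Plan for (ii).} Apply part (i) to the extension $0\to I\otimes\Z\to E\otimes\Z\to D\otimes\Z\to 0$, the $^*$-homomorphisms $\phi\otimes 1_\Z,\psi\otimes 1_\Z\colon A\to E\otimes\Z$, and the path $(v_t\otimes 1_\Z)_{t\geq 0}$. The decay hypothesis transfers by tensoring with $1_\Z$. The relative commutant $(D\otimes\Z)\cap (q\circ\phi\otimes 1_\Z)(A)'$ contains $(D\cap q(\phi(A))')\otimes\Z$, which is $\Z$-stable and hence $K_1$-injective by Theorem~\ref{prop:z-stable-K1-inj}. By Proposition~\ref{prop:ZKKequiv}, the first-factor inclusion $D\cap q(\phi(A))'\hookrightarrow (D\cap q(\phi(A))')\otimes\Z$ induces an isomorphism on $K_1$, so the hypothesis $[q(v_0)]_1=0$ yields $[q(v_0)\otimes 1_\Z]_1=0$. $K_1$-injectivity then places $q(v_0)\otimes 1_\Z$ in the path component of the identity in $U((D\cap q(\phi(A))')\otimes\Z)$, and a fortiori in the larger relative commutant. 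Part (i) applied to this tensored setup now produces the desired path in $(I\otimes\Z)^\dag$.

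\emph{Main obstacle.} The only delicate step is in (i): after setting $u_t = \tilde v_t^* v_t$, the difference $u_t\phi(a)u_t^* - \psi(a)$ splits as the sum of $\tilde v_t^*(v_t\phi(a)v_t^* - \psi(a))\tilde v_t$, which decays in $I$ by hypothesis, and $\tilde v_t^*\psi(a)\tilde v_t - \psi(a)$, which only satisfies exact membership in $I$ (from $q(\tilde v_t)\in q(\psi(A))'$) with no a priori decay. Controlling this second term requires using the local exponential form of the lift together with the asymptotic intertwining already present in $(v_t)$, and is exactly the calculation done in \cite{Dadarlat-Eilers01}; by contrast, (ii) is a clean application of (i) once Jiang's $K_1$-injectivity and the $K_1$-isomorphism of Proposition~\ref{prop:ZKKequiv} are available.
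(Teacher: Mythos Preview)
Your plan for part~(ii) is correct and matches the paper's argument almost exactly: tensor the extension with $\mathcal Z$, note that $(D\cap q(\phi(A))')\otimes\mathcal Z$ sits inside the relevant relative commutant, use Jiang's $K_1$-injectivity (Theorem~\ref{prop:z-stable-K1-inj}) to upgrade $[q(v_0)\otimes 1_{\mathcal Z}]_1=0$ to a path to the identity, and then apply (i). (Functoriality of $K_1$ already gives $[q(v_0)\otimes 1_{\mathcal Z}]_1=0$; you do not need the full $KK$-equivalence of Proposition~\ref{prop:ZKKequiv}.)

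Part~(i), however, has a real gap. You correctly identify the obstacle: after setting $u_t=\tilde v_t^{\,*}v_t$, the term $\tilde v_t^{\,*}\psi(a)\tilde v_t-\psi(a)$ lies in $I$ but there is no mechanism forcing it to decay, since the lifts $\tilde v_t$ are only determined modulo $I$. Your claim that this ``is exactly the calculation done in \cite{Dadarlat-Eilers01}'' is not right: neither Dadarlat--Eilers nor the paper controls this term. The paper's argument proceeds quite differently. It uses the prepending step only to arrange $v_0\in I^\dagger$ (so your first move is correct, but only for the \emph{endpoint}). After that it does not lift $q(v_t)$ at all. Instead it observes that $\Ad(v_0^*v_t)$ is a continuous path of automorphisms of a separable $C\subseteq\phi(A)+I$ starting at the identity, and invokes \cite[Proposition~2.15]{Dadarlat-Eilers01} to approximate these by inner automorphisms $\Ad(w_t')$ with $w_t'\in C$. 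Setting $w_t=v_0w_t'\in\phi(A)+I$ yields $w_t\phi(a)w_t^*\to\psi(a)$, and one then writes $w_t=\phi(a_t)+x_t$ with $x_t\in I$; injectivity of $q\circ\phi$ (which your plan never uses) forces $a_t\in U(A)$ to be an asymptotically central path, and $u_t\coloneqq w_t\phi(a_t)^*\in I^\dagger$ is the desired path. The key technical input is thus the automorphism-approximation result \cite[Proposition~2.15]{Dadarlat-Eilers01}, not a direct lifting of $q(v_t)$.
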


\begin{proof}
\ref{asympequiv.1}: A path $(v_t)_{t\geq 0}$ of unitaries  satisfying \eqref{eq:asymp-equiv-lemma} such that $q(v_0)$ is in the path component of the identity in $D \cap q(\phi(A))'$ can be adjusted so that $v_0\in I^\dagger$.\footnote{Indeed, the hypothesis gives self-adjoint elements $\bar h_1, \ldots, \bar h_n \in D \cap q(\phi(A))'$ such that
$q(v_0) = e^{i \bar h_1} \cdots e^{i \bar h_n}$. Lifting these to self-adjoints $h_1, \ldots, h_n \in E$ with $q(h_i) = \bar h_i$ for all $i = 1, \ldots, n$, define
$v_t\coloneqq v_0 e^{i th_n} \cdots e^{i t h_1}$ for $-1 \leq t < 0$. Then $(v_t)_{t \geq -1}$ is a continuous path of unitaries in $E$ with $v_{-1} \in I^\dag$ and
\begin{equation*}
  \big( t \mapsto v_t \phi(a) v_t^* - \psi(a) \big) \in C_0([-1, \infty), I).
\end{equation*}
Hence after shifting the index, we may assume the given path $(v_t)_{t \geq 0}$ satisfies $v_0 \in I^\dag$.}

As
\begin{equation}
\begin{aligned}
v_0^*v_t\phi(a)v_t^*v_0-\phi(a)&=v_0^*(v_t\phi(a)v_t^*-\psi(a))v_0\\&\quad+(v_0^*\psi(a)v_0-\phi(a))\in I,\quad a\in A,
\end{aligned}
\end{equation}
 $\phi(A) + I$ is a $C^*$-subalgebra of $\mathcal M(I)$ which is invariant under $\Ad(v_0^* v_t)$ for all $t \geq 0$.  Then, since $A$ is separable, we can find a separable $C^*$-subalgebra $C \subseteq \phi(A) + I$ containing $\phi(A)$ such that $C$ is invariant under $\Ad(v_0^* v_t)$ for all $t \geq 0$.\footnote{Let $(t_n)_{n=1}^\infty$ be an enumeration of $[0,\infty)\cap\mathbb Q$. Starting with $C_0=\phi(A)$, choose separable $C^*$-subalgebras $C_0\subseteq C_1\subseteq\dots$ of $\phi(A)+I$ such that for each $n\in\mathbb N$, $C_n$ is invariant under $\Ad(v_0^*v_{t_m})$ for $m=1,\dots,n$.  Then $C=\overline{\bigcup_{n=1}^\infty C_n}$ provides the required separable invariant subalgebra.}  Then $(\Ad(v_0^* v_t))_{t \geq 0}$ is a norm-continuous path of automorphisms of $C$ starting at $\mathrm{id}_C$.  By \cite[Proposition~2.15]{Dadarlat-Eilers01}, there is a continuous path $(w_t')_{t \geq 0}$ in $C \subseteq \phi(A) + I$ of unitaries such that
\begin{equation}
 \| \Ad(w_t')(c) - \Ad(v_0^*v_t)(c) \| \to 0,\quad c\in C.
\end{equation}
Set $w_t\coloneqq v_0 w_t'$.  Then $w_t$ is a unitary in $\phi(A) + I$ for all $t \geq 0$ and
\begin{equation}\label{lemma:AsympEquiv.E1}
 \| w_t \phi(a) w_t^* - \psi(a) \| \to 0, \quad a\in A.
\end{equation}

Write $w_t = \phi(a_t) + x_t$, where $a_t \in A$ and $x_t \in I$, so that $q(\phi(a_t)) = q(w_t)$ is a unitary. Since $q\circ\phi$ is injective, $a_t$ (and hence $x_t$) are uniquely determined by $w_t$, and $(a_t)_{\geq 0}$ is a continuous path of unitaries in $A$.  Also, since $q\circ\phi = q\circ\psi$, by \eqref{lemma:AsympEquiv.E1} we have
\begin{equation}
 \| q(\phi(a_t)) q(\phi(a)) q(\phi(a_t))^* - q(\phi(a)) \| \to 0,\quad a\in A,
\end{equation}
so another application of injectivity of $q\circ\phi$ gives
\begin{equation}\label{lemma:AsympEquiv.E2}
\|a_taa_t^*-a\|\to 0,\quad a\in A.
\end{equation}
Now, define
\begin{equation}
  u_t \coloneqq w_t \phi(a_t)^* = (1_{I^\dag} + x_t \phi(a_t)^*)\in I^\dagger.
\end{equation}
Then $(u_t)_{t \geq 0}$ is a continuous path of unitaries in $I^\dag$, and combining \eqref{lemma:AsympEquiv.E1} and \eqref{lemma:AsympEquiv.E2}, we have, for all $a \in A$,
\begin{equation}
\begin{aligned}
\|u_t\phi(a)u_t^*-\psi(a)\|&\leq \|w_t(\phi(a_t^*)\phi(a)\phi(a_t))w_t^*-w_t\phi(a)w_t^*\|\\
&\quad +\|w_t\phi(a)w_t^*-\psi(a)\|\to 0,
\end{aligned}
\end{equation}
as required.

\ref{asympequiv.2}: Let $(v_t)_{t \geq 0}$ be a path of unitaries as in \ref{asympequiv.2}.  By nuclearity of $\Z$, we have a short exact sequence
\begin{equation}
\begin{tikzcd}    
0\ar[r]& I \otimes \mathcal Z \ar[r]& E \otimes \mathcal Z \ar[r,"q \otimes \id_{\mathcal Z}"] &D \otimes \mathcal Z \ar[r]& 0.
\end{tikzcd}
\end{equation}
We will complete the proof by applying \ref{asympequiv.1} to this extension, the $^*$-ho\-mo\-mor\-phisms $\phi \otimes 1_{\mathcal Z}$ and $\psi \otimes 1_{\mathcal Z}$, and the path $(v_t\otimes 1_{\mathcal Z})_{t\geq 0}$.  

One has a containment of relative commutants
\begin{equation}
\theta\colon(D\cap q(\phi(A))')\otimes \mathcal Z\hookrightarrow (D\otimes\mathcal Z)\cap (q\otimes\id_{\mathcal Z})\big((\phi\otimes 1_{\mathcal Z})(A)\big)'.\footnote{Using the completely positive approximation property for $\mathcal Z$, one can check that $\theta$ is surjective, though we do not need this.  In general, the relative commutant of a tensor product of two inclusions of $C^*$-algebras need not be the tensor product of the relative commutants, even when one inclusion is of the form $\mathbb C1_C\subseteq C$; see \cite{Archbold}.  }
\end{equation}
Since $q(v_0)$ has trivial class in $K_1(D\cap q(\phi(A))')$, its image $q(v_0)\otimes 1_{\mathcal Z}$ is also trivial in $K_1((D\cap q(\phi(A))')\otimes\mathcal Z)$.   Jiang's $K_1$-injectivity of $\mathcal Z$-stable $C^*$-algebras (Theorem~\ref{prop:z-stable-K1-inj}) implies that $q(v_0) \otimes 1_{\mathcal Z}$ is in the path component of the identity in $(D\cap q(\phi(A))')\otimes\mathcal Z$. Applying $\theta$, it follows that $(q\otimes \id_{\mathcal Z})(v_0\otimes 1_\mathcal Z)$ is in the path component of the identity in $(D\otimes\mathcal Z)\cap (q\otimes\id_{\mathcal Z})(\phi\otimes 1_{\mathcal Z})(A)'$.  The result now follows from \ref{asympequiv.1}.
\end{proof}

We now prove the $\Z$-stable $KK$- and $KL$-uniqueness theorems.

\begin{proof}[Proof of Theorem~\ref{thm:KK-Uniqueness}]
\ref{KK-Uniqueness}:
In this proof, we use $\oplus$ to denote the usual direct sum (using a matrix amplification), rather than the definition in \eqref{KK-directsumeq}; see Footnote \ref{fn:ComparingDirectSums}.
Since $\phi$ and $\psi$ are absorbing, Corollary~\ref{cor:absorption-uniqueness} provides a norm-continuous path $(u_t)_{t \geq 0} \subseteq \M(I)$ of unitaries such that
\begin{equation}
  \big( t \mapsto u_t \phi(a) u_t^* - \psi(a) \big) \in C_0([0, \infty), I).
\end{equation}
By Lemma~\ref{lemma:AsympEquiv}\ref{asympequiv.2}, it suffices to show $[q_I(u_0)]_1 = 0$ in $K_1(\mathcal Q(I) \cap q_I(\phi^\dag(A^\dag))')$.

Now, $(\phi,\Ad(u_0)\circ\phi)$ is a Cuntz pair, and it is homotopic (via $(\phi,\mathrm{Ad}(u_t)\circ\phi)$) to $(\phi,\psi)$ (this is \cite[Lemma~3.1]{Dadarlat-Eilers01}), so that $[\phi, \Ad(u_0)\circ \phi]_{KK(A,I)} = [\phi, \psi]_{KK(A,I)} = 0$.  By considering the split exact sequence
\begin{equation}
\begin{tikzcd}
0 \arrow{r} & KK(\mathbb C, I) \arrow[shift right=.5ex]{r} & KK(A^\dag, I) \arrow{r} \arrow[shift right=.5ex]{l} & KK(A, I) \arrow{r} & 0,
\end{tikzcd}
\end{equation}
it follows that $[\phi^\dag, \Ad(u_0)\circ\phi^\dag]_{KK(A^\dagger,I)} =
0$.\footnote{This follows, as the maps $KK(A^\dag, I)\to KK(A, I)$ and $KK(A^\dag,
  I)\to KK(\mathbb C, I)$ arise from the inclusions of $A$ and
  $\mathbb C$ into $A^\dag$, and so are given by restricting Cuntz
  pairs on $A^\dag$ to $A$ and $\mathbb C$ respectively (by
  definition). Thus $[\phi^\dag, \Ad(u_0)\circ \phi^\dag ]_{KK(A^\dag, I
    )}$ has image $[\phi, \Ad(u_0)\circ \phi]_{KK(A, I)} = 0$ in $KK(A, I)$
  and image $[1_{\M(I)}, 1_{\M(I) }]_{KK(\mathbb C, I)} = 0$ in
  $KK(\mathbb C, I)$. Hence $[\phi^\dag,
  \Ad(u_0)\circ\phi^\dag]_{KK(A^\dag, I)} = 0$.} %
Using the hypothesis that $I$ is stable, \cite[Lemma~3.5]{Dadarlat-Eilers01}\footnote{\cite[Lemma~3.5]{Dadarlat-Eilers01} is stated in terms of the
  Fredholm picture of $KK$-theory. We briefly explain how to translate
  following the discussion in \cite[Section 3.1]{Dadarlat-Eilers01}.
  A Cuntz pair $(\psi_1,\psi_2)$ gives the Fredholm triple
  $(\psi_1,\psi_2,1_{\mathcal M(I)})$, and a cycle $(\psi_1,\psi_2, u)$ with $u$ a
  unitary in $\M(I)$ corresponds to the Cuntz pair $(\Ad(u)\circ\psi_1,
  \psi_2)$. Therefore, the Cuntz pair $(\phi^\dag, \Ad(u_0)\circ\phi^\dag)$
  corresponds to the cycle $(\phi^\dag, \phi^\dag, u_0^*)$ and, since
  $[\phi^\dag, \Ad(u_0)\circ\phi^\dag]_{KK(A^\dag, I)} = 0 = [\phi^\dag,
  \phi^\dag]_{KK(A^\dag, I )}$, we can apply
  \cite[Lemma~3.5]{Dadarlat-Eilers01} to the cycles $(\phi^\dag,
  \phi^\dag, u_0)$ and $(\phi^\dag, \phi^\dag, 1_{\mathcal M(I)})$. This gives the
  specified $\theta$ and $(v_t)_{0 \leq t \leq 1}$.} gives a unital $^*$-ho\-mo\-mor\-phism $\theta \colon A^\dag \to \M(I)$
and a norm-continuous path $(v_t)_{0 \leq t \leq 1} \subseteq M_2(\M(I))$
with $v_0 = 1_{\M(I)} \oplus 1_{\M(I)}$ and $v_1 =u_0 \oplus
1_{\M(I)}$ such that each $q_{M_2(I)}(v_t)$ is a unitary in $M_2(\mathcal Q(I))$
commuting with $q_{M_2(I)}((\phi^\dag \oplus \theta)(A))$.
  Thus $q_I(u_0)\oplus 1_{\mathcal Q(I)}$ is homotopic to
$1_{\mathcal Q(I)}\oplus 1_{\mathcal Q(I)}$ in $M_2(\mathcal Q(I))\cap
q_{M_2(I)}((\phi^\dag\oplus\theta)(A))'$.  Taking the direct sum of this homotopy
with $1_{\mathcal Q(I)}$, gives a homotopy between $q_I(u_0)\oplus
1_{\mathcal Q(I)}\oplus 1_{\mathcal Q(I)}$ and $1_{\mathcal
  Q(I)}\oplus 1_{\mathcal Q(I)}\oplus 1_{\mathcal Q(I)}$ in
$M_3(\mathcal Q(I))\cap q_{M_3(I)}((\phi^\dag\oplus\theta\oplus\phi^\dag)(A))'$.

Since $\phi$ is absorbing and $I$ is stable, $\theta|_A\oplus\phi$ and $\phi$ are unitarily equivalent modulo $I$,\footnote{To be precise, there is a unitary $2\times 1$ matrix $u$ over $\mathcal M(A)$ such that $u^*(\theta(a)\oplus \phi(a))u-\phi(a) \in I$ for all $a\in A$ (and consequently, $\theta(a)\oplus\phi(a)-u\phi(a)u^* \in M_2(I)$ for all $a\in A$). This follows from Proposition~\ref{prop:absorption}, since if $s_1,s_2$ are Cuntz isometries then $\begin{bmatrix} s_1\\ s_2\end{bmatrix}$ is a $2\times 1$ unitary which translates $\oplus$ used here to $\oplus$ as defined in \eqref{KK-directsumeq} and used in Proposition~\ref{prop:absorption}.}
and hence so too are $\theta\oplus \phi^\dag$ and $\phi^\dag$.  It follows that $q_I(u_0) \oplus 1_{\mathcal Q(I)}$ is homotopic to $1_{\mathcal Q(I)} \oplus 1_{\mathcal Q(I)}$ in the unitary group of
\begin{equation}
M_2(\mathcal Q(I)) \cap ((q_I\circ\phi^\dag) \oplus (q_I\circ\phi^\dag))(A^\dag)'= M_2(\mathcal Q(I)\cap q_I(\phi^\dag(A^\dag))').
\end{equation}
Accordingly, $[q_I(u_0)]_1 = 0$ in $K_1(\mathcal Q(I)\cap q_I(\phi^\dag(A^\dag))')$ as required.

\ref{KL-Uniqueness}: Suppose $[\phi, \psi]_{KL(A,I)} = 0$. Then there exists $\kappa \in KK(A, C(\bcN, I))$ such that $KK(A,\ev_n)(\kappa) = 0$ for $n \in \mathbb N$ and $KK(A,\ev_\infty)(\kappa) = [\phi, \psi]_{KK(A,I)}$ (see Definition~\ref{def:KL}, and recall that $\bcN = \mathbb N \cup \{\infty\}$). By \cite[Corollary~3.4]{Akemann-Pedersen-etal73},
\begin{equation}
  \M(C(\bcN,I))\cong C_\sigma(\bcN,\M(I)),
\end{equation}
where the right hand side is the $C^*$-algebra of (norm-bounded) strictly continuous functions from $\bcN$ to $\M(I)$ (see Footnote~\ref{fn:Csigma}).
Let $\Psi \colon A \to C_\sigma(\bcN, \M(I))$ be given by $\Psi(a)(n)\coloneqq \psi(a)$ for all $n\in\bcN$. It is absorbing by \cite[Proposition~3.2]{Dadarlat05}.\footnote{Note that although the statement of \cite[Proposition~3.2]{Dadarlat05} requires that $I$ is separable, the proof only uses that $I$ is $\sigma$-unital.}

The $KK$-existence theorem (Theorem~\ref{thm:KK-Existence}), gives an absorbing $^*$-ho\-mo\-mor\-phism $\Phi \colon A \to \M(C(\bcN, I))$ with $[\Phi, \Psi]_{KK(A,C(\bcN,I))} = \kappa$.  Viewing $\Phi$ as taking values in $C_\sigma(\bcN,\M(I))$, let $\phi_n\coloneqq \ev_n\circ \Phi \colon A \to \M(I)$.  Then $(\phi_n, \psi)$ is a Cuntz pair for all $n\in\bcN$.  Since $\psi$ is absorbing, Corollary~\ref{cor:multipler-vs-corona} (applied twice) implies that $\phi_n$ is also absorbing for all $n\in\bcN$.

Fix a finite subset $\mathcal F$ of $A$ and $\epsilon>0$. As $A$ is separable, it suffices to find a unitary $u\in (I\otimes\Z)^\dag$ such that $\| u(\phi(a)\otimes 1_\Z)u^*- \psi(a)\otimes 1_\Z\| < \epsilon$ for all $a\in \mathcal F$. A priori, for each $a\in A$, $\phi_n(a)\to\phi_\infty(a)$ strictly in $\M(I)$. However, since $\Phi(a)-\Psi(a)\in C(\bcN,I)$, we have
\begin{equation}
\phi_n(a)-\phi_\infty(a)=(\phi_n(a)-\psi(a))+(\psi(a)-\phi_\infty(a))\in I,\quad a\in A,
\end{equation} so the convergence $\phi_n(a)\to\phi_\infty(a)$ actually happens in norm.    Then we can find $n\in\mathbb N$ such that $\|\phi_n(a)-\phi_\infty(a)\|<\epsilon/3$ for $a\in \mathcal F$.  Since $[\phi_n, \psi]_{KK(A,I)} = KK(A,\ev_n)(\kappa) = 0$ (by Proposition~\ref{prop:KK-facts}\ref{prop:KK-facts.5}), we can use part \ref{KK-Uniqueness} to find a unitary $v\in (I\otimes\mathcal Z)^\dag$ with 
\begin{equation}
  \| v(\phi_n(a)\otimes1_\Z)v^*-\psi(a)\otimes 1_{\Z} \| < \epsilon /3, \quad a\in \mathcal F.
\end{equation}
Also 
\begin{equation} 
  [\phi_\infty, \psi]_{KK(A,I)} = KK(A,\ev_\infty)(\kappa) = [\phi, \psi]_{KK(A,I)}.
\end{equation}
Accordingly, $[\phi_\infty, \phi]_{KK(A,I)} = 0$ (by Proposition~\ref{prop:KK-facts}\ref{prop:KK-facts.4}) and so another application of part \ref{KK-Uniqueness} gives a unitary $w\in (I\otimes\mathcal Z)^\dag$ with 
\begin{equation} 
\| w(\phi(a)\otimes 1_\Z)w^*-\phi_\infty(a)\otimes 1_\Z\| < \epsilon /3, \quad a\in \mathcal F.
\end{equation}
 Set $u\coloneqq vw \in (I\otimes \Z)^\dag$.  Then
 \begin{equation}
\| u(\phi(a)\otimes 1_\Z)u^*-\psi(a)\otimes 1_\Z\| < \epsilon,  \quad a\in \mathcal F.  \qedhere
 \end{equation}
\end{proof}

We do not know if either the tensor factor $1_\mathcal Z$ in 
Theorem~\ref{intro-KK-unique}\ref{kk-unique2} or the direct summand $\eta$ in Theorem~\ref{intro-KK-unique}\ref{kk-unique3} are necessary. This leads to the \emph{$KK$-uniqueness problem}.

\begin{question}[The $KK$-uniqueness problem] \label{q:kk-uniqueness}
Let $A$ and $I$ be $C^*$-algebras with $A$ separable and $I$ $\sigma$-unital and stable, and let $(\phi, \psi)\colon A\rightrightarrows \mathcal M(I) \rhd I$ be a Cuntz pair with $\phi$ and $\psi$ absorbing and $[\phi,\psi]_{KK(A, I)} = 0$.  Must there be a norm-continuous path $(u_t)_{t \geq 0}$ of unitaries in $I^\dag$ such that
\begin{equation}
    \|u_t \phi(a)u_t^* - \psi(a)\| \rightarrow 0
\end{equation}
for all $a\in A$?
\end{question}

The first result in this direction is Dadarlat and Eilers' theorem (\cite[Theorem~3.12]{Dadarlat-Eilers01}), giving a positive solution when $I = \mathcal K$.  We point out the modifications needed in the treatment above to obtain this result, but we emphasize that the proof is essentially the same as that given in \cite{Dadarlat-Eilers01}.   Paschke proved in \cite{Paschke81} that the $C^*$-algebra $\mathcal Q(\mathcal K)\cap q_{\mathcal K}(\phi(A))'$ is $K_1$-injective.  Apply this in the final line of the proof of Theorem~\ref{thm:KK-Uniqueness}\ref{KK-Uniqueness} to conclude that $u_0$ is in the path component of the identify in $U(\mathcal Q(\mathcal K)\cap q_{\mathcal K}(\phi(A))')$, and then apply part \ref{asympequiv.1} of Lemma~\ref{lemma:AsympEquiv} in place of \ref{asympequiv.2}.

Dadarlat and Eilers' work provides a strategy for approaching the $KK$-uniqueness problem.  Whenever $\mathcal Q(I) \cap q_I(\phi(A))'$ is $K_1$-injective for some (equivalently, any) absorbing $^*$-homomorphism $\phi \colon A \rightarrow \M(I)$, the same proof shows that the $KK$-uniqueness problem has a positive solution (cf. \cite[Theorem~2.11]{Lee11}).  Using this strategy, a partial result was obtained in \cite{Loreaux-Ng20}.

In the setting of the $KK$-uniqueness problem, the relative commutant $\mathcal Q(I) \cap q_I(\phi(A))'$ is well-known to be properly infinite (see Footnote~\ref{foot:PaschkePropInf}). We would like to reiterate the following question from \cite{BRR08}.  By the argument above, a positive answer would imply a positive answer to the $KK$-uniqueness problem.

\begin{question}[{\cite[Question~2.9]{BRR08}}] \label{q:propinfK1inj}
    Is every properly infinite $C^*$-algebra $K_1$-injective?
\end{question}

We end this subsection by recording the easy implication of Theorem~\ref{intro-KK-unique} --- the hard direction follows from the $KK$-uniqueness theorem.

\begin{proof}[Proof of Theorem~\ref{intro-KK-unique}]
The implication \ref{kk-unique1}$\Rightarrow$\ref{kk-unique2} is Theorem~\ref{thm:KK-Uniqueness}\ref{KK-Uniqueness}.
For \ref{kk-unique2}$\Rightarrow$\ref{kk-unique1}, note that \ref{kk-unique2} implies that $[\phi\otimes 1_{\Z},\psi\otimes 1_{\Z}]_{KK(A,I\otimes\Z)}=0$.
The inclusion $\iota:I \to I \otimes \Z$ is a $KK$-equivalence by Proposition~\ref{prop:ZKKequiv} and $[\phi\otimes 1_\Z,\psi\otimes 1_\Z]_{KK(A,I\otimes\Z)}=KK(A,\iota)([\phi,\psi]_{KK(A,I)})$, by Proposition~\ref{prop:KK-facts}\ref{prop:KK-facts.5}, so it follows that $[\phi,\psi]_{KK(A,I)}=0$.
\end{proof}

\subsection{$\Z$-stable $KK$-uniqueness for weakly nuclear maps}\label{subsect:weaknucKKunique}

In our subsequent work, we will require a version of the $\Z$-stable $KK$- and $KL$-uniqueness theorem in the setting of weakly nuclear maps (as a weakening of the hypothesis of nuclear domains).  Since the proofs above work mutatis mutandis, we sketch the details here to avoid duplication.  The material in this subsection is not used in the rest of this paper.

We start by recalling the relevant definitions from \cite{Skandalis88} (for $KK_{\mathrm{nuc}}$) and \cite{Kucerovsky-Ng06,Gabe-Preprint} (for $KL_{\mathrm{nuc}}$).

\begin{definition} Let $A$ be a separable $C^*$-algebra and let be $I$ be a $\sigma$-unital stable $C^*$-algebra.
\begin{enumerate} 
  \item A $^*$-ho\-mo\-mor\-phism $\phi \colon A \to \M(I)$ is \emph{weakly nuclear} if   $x^*\phi(\,\cdot\,)x \colon A \to I$ is nuclear for all $x \in I$.\footnote{This is equivalent to saying that $\phi$ has the completely positive approximation property with respect to the strict topology on $\M(I)$ (see, for instance, \cite[Proposition 5.11]{Gabe-Preprint}).  For this reason, weakly nuclear maps are sometimes called \emph{strictly nuclear}.}
  \item A \emph{nuclearly absorbing} $^*$-ho\-mo\-mor\-phism $\phi \colon A \to \M(I)$ (resp.\ $\theta \colon A \to Q(I)$) is defined as in Definition~\ref{defn:absorption}\ref{absorption.1} (resp.\ \ref{absorption.2}), but replacing ``for every $^*$-ho\-mo\-mor\-phism $\psi$'' with ``for every weakly nuclear $^*$-ho\-mo\-mor\-phism $\psi$''.
  \item The abelian group $KK_{\mathrm{nuc}}(A, I)$ is defined by requiring all $^*$-ho\-mo\-mor\-phisms (including the homotopies) in the definition of $KK(A, I)$ (Section~\ref{subsec:elem-of-kk}) to be weakly nuclear.
  \item  Define
\begin{equation}
  Z_{KK_{\mathrm{nuc}} (A,I)}\coloneqq
  \left\{
    KK_{\mathrm{nuc}}(A,\ev_\infty)(\kappa)\colon \!
    \begin{array}{l}
       \kappa \in KK_{\mathrm{nuc}}(A, C(\bcN, I)) \text{ and }\\
       KK_{\mathrm{nuc}}(A,\ev_n)(\kappa) = 0\, \forall n \in \mathbb N
    \end{array}\hspace{-1ex}
    \right\}
\end{equation}
and
\begin{equation}
  \label{eq:def-klnuc}
  KL_{\mathrm{nuc}}(A,I)\coloneqq KK_{\mathrm{nuc}}(A,I)/Z_{KK_{\mathrm{nuc}}(A,I)}.
\end{equation}
\end{enumerate}
\end{definition}

With these definitions in place, 
we record the following adaptation of the $KK$- and $KL$-uniqueness theorems (Theorem~\ref{thm:KK-Uniqueness})
to this setting.  

\begin{theorem}\label{thm:NucKKUnique}
    Let $A$ and $I$ be $C^*$-algebras with $A$ separable and $I$ $\sigma$-unital and stable. 
Suppose that $\phi, \psi \colon A \to \M(I)$ are weakly nuclear, nuclearly absorbing $^*$-ho\-mo\-mor\-phisms such that $(\phi, \psi)$ is a Cuntz pair.
\begin{enumerate}
  \item \label{KKnuc-Uniqueness}
If $[\phi, \psi]_{KK_{\mathrm{nuc}}(A,I)} = 0$, then there is a norm-continuous path $(u_t)_{t \geq 0}$ of unitaries in $(I \otimes \mathcal Z)^\dag$ such that
      \begin{equation}
         \| u_t (\phi(a) \otimes 1_{\mathcal Z}) u_t^* - \psi(a)\otimes 1_\Z \| \to 0, \quad a \in A.
      \end{equation}
\item \label{KLnuc-Uniqueness}
If $[\phi, \psi]_{KL_{\mathrm{nuc}}(A,I)} = 0$, then there is a sequence $(u_n)_{n=1}^\infty$ of unitaries in $(I \otimes \mathcal Z)^\dag$ such that
      \begin{equation}
         \| u_n (\phi(a) \otimes 1_{\mathcal Z}) u_n^* - \psi(a)\otimes 1_\Z \| \to 0, \quad a \in A.
      \end{equation}
\end{enumerate}
\end{theorem}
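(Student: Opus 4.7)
My plan is to mirror the proof of Theorem~\ref{thm:KK-Uniqueness} verbatim, substituting each absorption-flavored ingredient with its nuclearly absorbing weakly nuclear analogue. First I will need a weakly nuclear version of Proposition~\ref{prop:absorption}: for a weakly nuclear $\phi\colon A\to\mathcal M(I)$, nuclear absorption is equivalent to unitary-implemented absorption against all weakly nuclear $\psi\colon A\to\mathcal M(I)$, as well as to the norm-continuous-path version \ref{absorption.cond3}. This follows by the same argument, once one observes that the direct sum of two weakly nuclear maps is weakly nuclear and that the auxiliary map $\psi_\infty$ defined in \eqref{eq:psi-infty} preserves weak nuclearity. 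From this, the weakly nuclear analogue of Corollary~\ref{cor:absorption-uniqueness} follows by the same derivation: any two weakly nuclear, nuclearly absorbing $\phi,\psi\colon A\to\mathcal M(I)$ are asymptotically unitarily equivalent modulo $I$ via a norm-continuous path $(u_t)_{t\geq 0}$ in $\mathcal M(I)$.

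With these tools in hand, assume $[\phi,\psi]_{\KKn(A,I)}=0$. I will argue exactly as in the proof of Theorem~\ref{thm:KK-Uniqueness}\ref{KK-Uniqueness}: the Cuntz pair $(\phi,\Ad(u_0)\circ\phi)$ is $\KKn$-homotopic to $(\phi,\psi)$ through $(\phi,\Ad(u_t)\circ\phi)$, so $[\phi^\dag,\Ad(u_0)\circ\phi^\dag]_{\KKn(A^\dag,I)}=0$. A weakly nuclear version of \cite[Lemma~3.5]{Dadarlat-Eilers01} then produces a weakly nuclear $\theta\colon A^\dag\to\mathcal M(I)$ and a path of unitaries in $M_2(\mathcal M(I))$ connecting $u_0\oplus 1$ to $1\oplus 1$ whose image in $M_2(\mathcal Q(I))$ commutes with $q_{M_2(I)}((\phi^\dag\oplus\theta)(A^\dag))$; the proof of that lemma respects weak nuclearity because the auxiliary map it constructs is built from weakly nuclear pieces. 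Using nuclear absorption of $\phi$ to absorb the (weakly nuclear) $\theta|_A$, I deduce $[q_I(u_0)]_1=0$ in $K_1(\mathcal Q(I)\cap q_I(\phi^\dag(A^\dag))')$. Applying Lemma~\ref{lemma:AsympEquiv}\ref{asympequiv.2} — which is insensitive to nuclearity and only uses Jiang's $K_1$-injectivity theorem (Theorem~\ref{prop:z-stable-K1-inj}) — then delivers the desired path of unitaries in $(I\otimes\Z)^\dag$.

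For part \ref{KLnuc-Uniqueness}, I will copy the second half of the proof of Theorem~\ref{thm:KK-Uniqueness}: given a class in $Z_{\KKn(A,I)}$ witnessing that $[\phi,\psi]_{\KLn(A,I)}=0$, lift to $\kappa\in\KKn(A,C(\bcN,I))$ vanishing at each finite $n$, and use a weakly nuclear analogue of the $KK$-existence theorem (Theorem~\ref{thm:KK-Existence}\ref{KKexistence.1}) to realize $\kappa$ by a weakly nuclear, nuclearly absorbing $\Phi$ together with the constant map $\Psi(\,\cdot\,)=\psi$. Since $\Phi-\Psi$ takes values in $C(\bcN,I)$, strict continuity upgrades to norm continuity, so for any finite $\mathcal F\subseteq A$ and $\epsilon>0$ there is $n$ with $\|\phi_n(a)-\phi_\infty(a)\|<\epsilon/3$ on $\mathcal F$. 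Then $[\phi_n,\psi]_{\KKn(A,I)}=0$ and $[\phi_\infty,\phi]_{\KKn(A,I)}=0$, so two applications of part \ref{KKnuc-Uniqueness} produce the approximating unitary in $(I\otimes\Z)^\dag$ by composition.

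The main obstacle is the bookkeeping: I must verify that the intermediate absorption theory, the $KK$-existence theorem, and \cite[Lemma~3.5]{Dadarlat-Eilers01} all have statements and proofs that go through in the nuclearly absorbing, weakly nuclear category. None of these requires genuinely new mathematics — the absorption-style arguments only ever build new maps by summing, amplifying, or conjugating existing weakly nuclear ones, and the $\KKn$-existence theorem was proven in this generality in \cite{Thomsen01,Dadarlat-Eilers01} — but each substitution needs to be checked carefully. Every other ingredient (Lemma~\ref{lemma:AsympEquiv}, proper infiniteness of $\mathcal Q(I)\cap q_I(\phi(A))'$, and Theorem~\ref{prop:z-stable-K1-inj}) is agnostic to nuclearity and transfers without modification.
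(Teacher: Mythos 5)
Your proposal matches the paper's own proof essentially line for line: the paper likewise states that the argument follows that of Theorem~\ref{thm:KK-Uniqueness} verbatim once one verifies all maps produced along the way (the auxiliary $\psi_\infty$, $(\psi_\infty)_\infty$, the map $\theta$ from Dadarlat--Eilers' stable uniqueness lemma, the constant map $\Psi$, and the homotopies) remain weakly nuclear, substitutes nuclear absorption for absorption throughout, and cites the same ingredients (Lemma~\ref{lemma:AsympEquiv}, Jiang's $K_1$-injectivity, and a weakly nuclear $KK$-existence theorem, for which the paper points to \cite[Proposition~2.6]{Schafhauser18}). The one helpful technical point the paper makes explicit and you leave implicit under ``bookkeeping'' is the characterization (from \cite[Lemma 10.30]{Gabe-Preprint}) that a $^*$-homomorphism into $C_\sigma(X,\mathcal M(I))$ is weakly nuclear exactly when it is pointwise weakly nuclear, which is what justifies treating $KK_{\mathrm{nuc}}$-homotopies (via $X=[0,1]$) and the $\bcN$-parameterized maps for $KL_{\mathrm{nuc}}$ (via $X=\bcN$) in the weakly nuclear category; this is worth spelling out, but the proof you outline is correct as a whole.
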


Essentially, the proof follows that of Theorem~\ref{thm:KK-Uniqueness}, carefully verifying that all maps used in the proof are weakly nuclear, so that nuclear absorption can be used in place of absorption throughout.  In a bit more detail, for a compact and Hausdorff space $X$, a completely positive map $\eta \colon A \to C(X,I)$ is nuclear exactly when $a\mapsto \eta(a)(x)$ is nuclear for every $x\in X$ by \cite[Lemma 10.30]{Gabe-Preprint}. Hence a $^*$-homomorphism $\phi \colon A \to \M(C(X,I)) \cong C_\sigma(X, \M(I))$ is weakly nuclear exactly when it is pointwise weakly nuclear. This can be used with $X\coloneqq [0,1]$ to deal with homotopies and with $X\coloneqq \bcN$ to handle  $KL_{\mathrm{nuc}}$. 
In Proposition~\ref{prop:absorption}, we can additionally insist that $\phi$ is weakly nuclear, replace absorption by nuclear absorption in \ref{absorption.cond1}, and demand that $\psi$ is weakly nuclear in \ref{absorption.cond2} and \ref{absorption.cond3}.  

With the observations above, the proof then works verbatim as the $^*$-ho\-mo\-mor\-phisms $\psi_\infty$ and $(\psi_\infty)_\infty$ in \eqref{eq:psi-infty} and \eqref{eq:psi-infty-infty} are both weakly nuclear.  Accordingly, we obtain versions of Corollaries~\ref{cor:multipler-vs-corona} and~\ref{cor:absorption-uniqueness} for weakly nuclear, nuclearly absorbing maps. This enables us to commence following the proof of Theorem~\ref{thm:KK-Uniqueness}\ref{KK-Uniqueness}, working with $KK_{\mathrm{nuc}}$-groups in place of $KK$-groups.  In the second paragraph of the proof of Theorem~\ref{thm:KK-Uniqueness}, the calculation $[\phi,\mathrm{Ad}(u_0)\phi]_{KK_{\mathrm{nuc}}(A,I)}=[\phi,\psi]_{KK_{\mathrm{nuc}}(A,I)}=0$ follows using the same argument since the homotopies in \cite[Lemma~3.1]{Dadarlat-Eilers01} are pointwise weakly nuclear when $\phi$ and $\psi$ are weakly nuclear. In addition, \cite[Lemma~3.5]{Dadarlat-Eilers01} explicitly allows the map $\theta$ to be taken weakly nuclear. In the third paragraph, we can use nuclear absorption because $\theta|_A$ and $\phi$ are both weakly nuclear, and the rest of that paragraph goes through verbatim to make use of Lemma~\ref{lemma:AsympEquiv} and \eqref{eq:asymp-equiv-lemma2},  proving Theorem~\ref{thm:NucKKUnique}\ref{KK-Uniqueness}. Theorem~\ref{thm:NucKKUnique}\ref{KLnuc-Uniqueness} likewise follows the proof of Theorem~\ref{thm:KK-Uniqueness}\ref{KL-Uniqueness}, replacing Theorem~\ref{thm:KK-Existence}\ref{KKexistence.1} with \cite[Proposition~2.6]{Schafhauser18}.

\section{The trace-kernel extension}
\label{sec:trace-kern-ext}

We recall the trace-kernel extension in Section~\ref{SSTraceKernel} and describe the background behind Theorem~\ref{intro:classtraces} in Section~\ref{SSTKQuotient}. In Section~\ref{SSTKKThy}, we use this to compute the $K$-theory of the trace-kernel quotient (under the codomain hypotheses of Theorem~\ref{Main}) and prove Theorem~\ref{intro:calcKJB}. We end with Section~\ref{SSTKSepStab}, in which we show that the
trace-kernel ideal of a simple exact $\Z$-stable $C^*$-algebra is
separably stable.

\subsection{The trace-kernel extension}\label{SSTraceKernel}
There is a long history of examining a $C^*$-algebra by transferring
information from a suitable von Neumann algebraic closure. This
technique gained particular prominence over the last decade, following
Matui and Sato's breakthrough work (\cite{Matui-Sato12,Matui-Sato14}) on
the Toms--Winter conjecture.  Working primarily in the setting of
simple nuclear $C^*$-algebras with finitely many extremal traces, they
showed how to relate von Neumann and $C^*$-algebraic central sequence
algebras, bringing to the forefront what we now call the trace-kernel
ideal $J_B$ in the sequence algebra $B_\infty$.  Later, the trace-kernel ideal was used to analyze the sequence algebra itself (along with its ultrapower), as opposed to the central sequence
algebra.  The papers \cite{Schafhauser17,Schafhauser18} demonstrate its power in
extension and $KK$-theoretic arguments.

Let $B$ be a $C^*$-algebra.  The trace-kernel ideal in the sequence algebra $B_\infty$ is obtained as the intersection of all the ideals associated to limit traces on $B_\infty$ (recall that the collection of limit traces on $B_\infty$ is denoted $T_\infty(B)$; see Definition~\ref{DefLimitTraces}). It appears implicitly in \cite{Matui-Sato12} and explicitly (in an ultrapower formulation) in \cite{Kirchberg-Rordam14} (which develops many of the fundamental properties of the trace-kernel ideal, particularly vis-\`a-vis central sequences) and \cite{Matui-Sato14}.

\begin{definition} Let $B$ be a unital separable $C^*$-algebra satisfying $T(B)\neq\emptyset$. 
  The \emph{trace-kernel ideal} 
  of $B$ is
  \begin{equation}
    J_B \coloneqq \{ b \in B_\infty : \tau(b^*b) = 0\text{ for all }\tau \in
    T_\infty(B) \},
  \end{equation}
and the \emph{trace-kernel extension} is the short exact sequence
  \begin{equation}
  \label{eq:trace-kernel-ext}
    \begin{tikzcd}
	0 \arrow{r} & J_B \arrow{r}{j_B} & B_\infty \arrow{r}{q_B} & B^\infty \arrow{r} & 0,
\end{tikzcd}      
\end{equation}
  where $j_B$ is the canonical inclusion, $B^\infty \coloneqq B_\infty
  / J_B$, called the
  \emph{trace-kernel quotient}, and $q_B$ is the quotient map.
\end{definition}

Note that given $x= (x_n)_{n=1}^\infty \in B_\infty$, we have $x\in
J_B$ if and only if
\begin{equation}\label{eq.DefJB}
\lim_{n\to \infty}\sup_{\tau\in T(B)}\tau(x_n^*x_n)=0.
\end{equation}
It is sometimes useful to view $B^\infty$ as the quotient of
$\ell^\infty(B)$ by the ideal of sequences $(x_n)_{n=1}^\infty$
satisfying (\ref{eq.DefJB}). From this perspective, it is natural to
use representative sequences in $\ell^\infty(B)$ to denote elements of
$B^\infty$.

Note too that the limit traces descend from $B_\infty$
to give traces on $B^\infty$; we will use the notation $T_\infty(B)$
in both cases.  For a positive element $x\in B^\infty$, if $\tau(x)=0$ for all $\tau\in T_\infty(B)$, then $x=0$.

\begin{remark}
A unital $^*$-homomorphism $\phi\colon B\to C$ induces a map $\phi_\infty\colon B_\infty\to C_\infty$. As $\phi$ is unital, any trace $\tau\in T_\infty(C)\subseteq T(C_\infty)$ induces a trace $\phi_\infty^*(\tau)\in T_\infty(B)$. Therefore,  $\phi_\infty(J_B)\subseteq J_C$, and hence $\phi$ also induces a map $\phi^\infty\colon B^\infty\to C^\infty$.  In this way,  the assignment of the trace-kernel extension $\mathsf{e}_B$ to $B$ is functorial for unital $C^*$-algebras with unital $^*$-homomorphisms. 
\end{remark}

As recorded in Proposition~\ref{NoSillyTraces}, convex combinations of
the limit traces are dense in $T(B_\infty)$ when $B$ is $\Z$-stable
and exact. When this holds, it descends to the trace-kernel quotient
$B^\infty$.\footnote{While we prove this using Proposition~\ref{NoSillyTraces} here, it is possible for the convex combinations
  of the limit traces to be dense in $T(B^\infty)$
  even when they are not dense in $T(B_\infty)$; see the discussion
  before \cite[Proposition 2.5]{CETW21}, which shows how to use the
  technology of complemented partitions of unity discussed in the next
  section to obtain results like Proposition~\ref{NoSillyTracesB^infty}\ref{NoSillyTracesB^infty.1} without
  passing through Proposition~\ref{NoSillyTraces}.}

\begin{proposition}\label{NoSillyTracesB^infty}
  Let $B$ be a unital simple separable exact $\Z$-stable $C^*$-algebra
  with $T(B)\neq\emptyset$.
\begin{enumerate}[(i)]
\item The map $T(q_B)\colon T(B^\infty)\to T(B_\infty)$ induced by the quotient map $q_B\colon B_\infty\to B^\infty$ is an affine homeomorphism. In particular, the convex hull of the limit traces $T_\infty(B)$ is weak$^*$-dense in $T(B^\infty)$.  \label{NoSillyTracesB^infty.1}
\item $B^\infty$ has strict comparison of positive elements by bounded traces.\label{NoSillyTracesB^infty.2}
\item Let $A$ be a $C^*$-algebra. A $^*$-homomorphism $\theta:A\to B^\infty$ is full if and only if $\tau\circ\theta$ is faithful for all traces $\tau\in T(B^\infty)$.\label{NoSillyTracesB^infty.3}
\end{enumerate}
\end{proposition}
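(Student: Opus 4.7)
The plan for \ref{NoSillyTracesB^infty.1} is to use Proposition~\ref{NoSillyTraces} to show every trace on $B_\infty$ annihilates $J_B$, which would make $T(q_B)$ a continuous affine bijection onto $T(B_\infty)$; since both $T(B^\infty)$ and $T(B_\infty)$ are compact Hausdorff (as $B_\infty$ and $B^\infty$ are unital), such a bijection is automatically a homeomorphism. Concretely, the set $S \coloneqq \{\tau \in T(B_\infty) : \tau|_{J_B} = 0\}$ is weak$^*$-closed and convex, and it contains $T_\infty(B)$ by the very definition of $J_B$. By Proposition~\ref{NoSillyTraces} the convex hull of $T_\infty(B)$ is weak$^*$-dense in $T(B_\infty)$, forcing $S = T(B_\infty)$. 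Injectivity of $T(q_B)$ is immediate from surjectivity of $q_B$. The density statement for $T(B^\infty)$ then transfers from $T(B_\infty)$ through the affine homeomorphism $T(q_B)^{-1}$, after noting that the limit traces $T_\infty(B)$, which vanish on $J_B$, may be regarded naturally as sitting inside $T(B^\infty)$.

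For \ref{NoSillyTracesB^infty.2}, I would lift the problem to $B_\infty$, where strict comparison is already available by Proposition~\ref{prop:B_inftyProperties_2}\ref{prop:B_inftyProperties.3}. Given $a, b \in M_n(B^\infty)_+$ satisfying $d_{\bar\tau}(a) < d_{\bar\tau}(b)$ for every $\bar\tau \in T(B^\infty)$, choose positive lifts $\tilde a, \tilde b \in M_n(B_\infty)_+$ (possible since positive elements always lift through surjections of $C^*$-algebras). The decisive observation is that for any $\tau \in T(B_\infty)$, part \ref{NoSillyTracesB^infty.1} furnishes a unique $\bar\tau \in T(B^\infty)$ with $\tau = \bar\tau \circ q_B$, and then $\tau(\tilde a^{1/r}) = \bar\tau(q_B(\tilde a)^{1/r}) = \bar\tau(a^{1/r})$ for all $r$; passing to the limit gives $d_\tau(\tilde a) = d_{\bar\tau}(a)$ and likewise for $\tilde b$. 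Hence $d_\tau(\tilde a) < d_\tau(\tilde b)$ for all $\tau \in T(B_\infty)$, and strict comparison in $B_\infty$ yields $\tilde a \precsim \tilde b$; applying the $*$-homomorphism $q_B$ to the witnessing sequence preserves Cuntz subequivalence, so $a \precsim b$ in $B^\infty$. The main (very minor) obstacle is just the identification $d_\tau(\tilde a) = d_{\bar\tau}(a)$, which rests on part \ref{NoSillyTracesB^infty.1}.

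Part \ref{NoSillyTracesB^infty.3} is then immediate. Since $B^\infty$ is unital, Lemma~\ref{p:fullcomp} applies and combines with \ref{NoSillyTracesB^infty.2} to give both directions: the forward implication is the trivial half of the lemma, while the converse uses strict comparison of $B^\infty$ established in \ref{NoSillyTracesB^infty.2}.
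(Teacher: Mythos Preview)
Your proof is correct and follows essentially the same approach as the paper's: density of limit traces from Proposition~\ref{NoSillyTraces} for part~\ref{NoSillyTracesB^infty.1}, lifting to $B_\infty$ and invoking Proposition~\ref{prop:B_inftyProperties_2}\ref{prop:B_inftyProperties.3} for part~\ref{NoSillyTracesB^infty.2}, and then Lemma~\ref{p:fullcomp} for part~\ref{NoSillyTracesB^infty.3}. Your argument for~\ref{NoSillyTracesB^infty.1} is slightly more explicit (phrasing surjectivity of $T(q_B)$ via the closed convex set of traces annihilating $J_B$, rather than via the image of $T(q_B)$ being closed, convex, and containing the limit traces), but the content is identical.
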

\begin{proof}
  For \ref{NoSillyTracesB^infty.1}, the affine map $T(q_B)$ is a
  continuous injection (because $q_B$ is surjective).  It is
  surjective as well  ---  and therefore a homeomorphism  --- as its image is
  compact, convex, and contains the limit traces on $B_\infty$, which have dense convex hull
  in $T(B_\infty)$ by Proposition~\ref{NoSillyTraces}.

  Part \ref{NoSillyTracesB^infty.2} follows since if $a,b\in M_k(B^\infty)$ are positive elements with $d_\tau(a)<d_\tau(b)$ for all $\tau\in T(B^\infty)$, then taking positive lifts $\tilde{a}$ and $\tilde{b}$ of $a$ and $b$ in $M_k(B_\infty)$, the previous part gives $d_\tau(\tilde{a})<d_\tau(\tilde{b})$ for all $\tau\in T(B_\infty)$. As $B_\infty$ has strict comparison of positive elements by bounded traces (Proposition~\ref{prop:B_inftyProperties_2}\ref{prop:B_inftyProperties.3}), $\tilde{a}\precsim \tilde{b}$, and hence $a\precsim b$.
 
Part \ref{NoSillyTracesB^infty.3} is an immediate consequence of \ref{NoSillyTracesB^infty.2} and Lemma~\ref{p:fullcomp}.
\end{proof}

We will need to work with matrix amplifications of the trace-kernel extension in Sections~\ref{SSTKKThy} and \ref{sect:unital-lifts}.  The following observation is implicitly used (in the setting of
ultrapowers) in \cite[Section~4]{Schafhauser18}. It boils down to the fact that $\tau\mapsto \tau_{M_k} \otimes \tau$ gives an affine homeomorphism $T(B)\to T(M_k \otimes B)$.

\begin{remark}\label{matrix}
  For a  $C^*$-algebra $D$, write $\iota^{(k)}_D\colon D\to M_k(D)$ for the top-left corner embedding of $D$ into $M_k(D)$. If $B$ is a unital $C^*$-algebra and $T(B)\neq\emptyset$, then the following diagram commutes, with natural maps forming the isomorphism of extensions between the bottom two rows:
\begin{equation}\label{matrix.diag}
  \begin{tikzcd}
    0 \ar[r] & J_B\ar[r, "{j_B}"] \ar[d, "{\iota^{(k)}_{J_B}}"] & B_\infty \ar[d, "{\iota^{(k)}_{B_{\infty}}}"] \ar[r, "{q_B}"] & B^\infty \ar[d, "{\iota^{(k)}_{B^\infty}}"] \ar[r] & 0\phantom{.}\\
    0 \ar[r] & M_k(J_B) \ar[r] \ar[d, "{\cong}"] & M_k(B_\infty) \ar[d, "\cong"] \ar[r] & M_k(B^\infty) \ar[d, "\cong"] \ar[r] & 0\phantom{.}\\
    0 \ar[r] & J_{M_k(B)} \ar[r, "{j_{M_k(B)}}"] & M_k(B)_\infty \ar[r, "{q_{M_k(B)}}"] & M_k(B)^\infty \ar[r] & 0.
  \end{tikzcd}
  \end{equation}
\end{remark}

\subsection{Classification into the trace-kernel quotient}\label{SSTKQuotient}
The intuition behind the classification of maps into the trace-kernel quotient is most readily seen with the ultrapower version of the
trace-kernel extension $0\to J_{B,\omega}\to B_\omega\to B^\omega\to
0$, for some free ultrafilter $\omega$ on $\mathbb N$. %
  When $B$ has a unique trace $\tau$, the ultrapower trace-kernel quotient
$B^\omega$ is the tracial von Neumann algebra ultrapower
$(\pi_\tau(B)'',\tau)^\omega$, where $\pi_\tau$ is the
GNS-representation associated to $\tau$.\footnote{The ultrapower trace-kernel quotient $J_B$ is formed in a similar way by using limits along the
  ultrafilter instead of limits along $\mathbb{N}$. The isomorphism $B^\omega\cong(\pi_\tau(B)'',\tau)^\omega$ is an easy application of Kaplansky's density theorem.  See
  \cite{Kirchberg-Rordam14}, for example.} It is a well-known
consequence of Connes' equivalence of injectivity and hyperfiniteness
(\cite{Connes76}) that maps from separable nuclear $C^*$-algebras
into type II$_1$ von Neumann algebras are classified up to
strong$^*$-approximate unitary equivalence by traces.\footnote{This is
  most often stated when the codomain is a II$_1$ factor, or has
  separable predual (\cite[Corollary 10, Theorem 5]{Ding-Hadwin05},
  for example); a more general statement is \cite[Proposition
  2.1]{Ciuperca-Giordano-etal13}, though note that the condition of
  countable decomposability there is not needed.}  When $A$ is separable and nuclear and $B$ has a unique trace, it follows (using Kirchberg's $\epsilon$-test) that maps from $A\to B^\omega$ are determined up to unitary equivalence by their
trace.

In general, $B^\omega$ will not be a von Neumann algebra, but under the right conditions, classification of maps into $B^\omega$ (or $B^\infty$) is possible by a tracial gluing technique. Fix a nuclear $C^*$-algebra $A$ and maps $\phi,\psi\colon A\to B^\omega$ that agree on traces. For each $\tau\in T_\omega(B)$, Connes' theorem provides unitaries $u_\tau\in B^\omega$ that approximately conjugate $\phi$ onto $\psi$ pointwise in the $\|\cdot\|_{2,\tau}$-norm. To prove that $\phi$ and $\psi$ are unitarily equivalent, one must find a way to use nuclearity and $\Z$-stability of $B$ to combine the $u_\tau$'s into a single unitary $u\in B^\omega$ that approximately conjugate $\phi$ onto $\psi$ pointwise in the uniform trace norm.\footnote{That is, for a finite subset $\mathcal F$ of $A$ and $\epsilon>0$, the unitary $u$ should satisfy 
\begin{equation*}
    \max_{x\in\mathcal F}\sup_{\tau\in T_\omega(B)}\|u\phi(x)u^*-\psi(x)\|_{2,\tau}<\epsilon.
\end{equation*}}

Techniques for solving this problem were developed in stages. When $T(B)$ is a Bauer simplex,\footnote{$T(B)$ is a Bauer simplex if it is non-empty and the extreme boundary $\partial_eT(B)$ is compact.} one can view $B^\omega$ as an ultrapower of a continuous $W^*$-bundle over this extreme boundary (see \cite[Chapter 3]{Bosa-Brown-etal15}) --- a notion introduced by Ozawa in \cite{Ozawa13}. When $B$ is nuclear and $\Z$-stable, Ozawa's trivialization theorem (\cite[Theorem 15]{Ozawa13}) provides suitable approximately central partitions of unity, which can be used to combine the $u_\tau$ above into a single $u$ (see \cite[Proposition 3.23]{Bosa-Brown-etal15}).  Outside the Bauer simplex setting, the situation is more complicated. Here, one no longer has a $W^*$-bundle structure and must build partitions of unity in $B^\omega$ that take into account the affine structure of $T(B)$. A general technique for carrying this out using the nuclearity and $\Z$-stability of $B$ was developed in \cite{Castillejos-Evington-etal21}, for use in the Toms--Winter conjecture. (See the second half of the introduction to \cite{Castillejos-Evington-etal21} for more details.)

The exact form of the classification of maps by traces that we need was set out in the short sequel \cite{CETW21} to
\cite{Castillejos-Evington-etal21}, which also handles the conversion
from ultrapowers to the sequence algebra setup of this paper.
Note that \cite{CETW21} contains a small error that is corrected in \cite{CETW-corrigendum}.

\begin{theorem}[{Classification of maps into $B^\infty$ by traces; 
    \cite[Theorem~A]{CETW21}}]
  \label{thm:B^inftyClassification}
  Let $A$ be a unital separable nuclear $C^*$-algebra and let $B$ be a unital separable
  nuclear $\mathcal Z$-stable $C^*$-algebra with $T(B)\neq
  \emptyset$.
  For every positive unital linear map $\gamma \colon \Aff T(A) \to \Aff T(B^\infty)$,
  there is a unital $^*$-ho\-mo\-mor\-phism $\theta \colon A \to B^\infty$ satisfying
  $\Aff T(\theta) = \gamma$.  Moreover, $\theta$ is unique up to unitary equivalence.
\end{theorem}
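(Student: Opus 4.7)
The plan is to prove both existence and uniqueness by a trace-by-trace construction that is then glued together uniformly across $T(B^\infty)$ using the complemented partition of unity machinery from \cite{Castillejos-Evington-etal21}. In both directions, the starting point is the observation that when $(\mathcal M, \tau)$ is a tracial von Neumann algebra with separable predual and $A$ is separable and nuclear, unital $^*$-homo\-mor\-phisms $A \to \mathcal M$ are classified up to $\|\cdot\|_{2,\tau}$-approximate unitary equivalence by their induced trace. This is the standard folklore consequence of Connes' theorem that injectivity implies hyperfiniteness, applied to $\pi_\tau(A)''$. Using Proposition~\ref{NoSillyTracesB^infty}\ref{NoSillyTracesB^infty.1}, every trace on $B^\infty$ is (in the weak$^*$-closure of) a convex combination of limit traces, so it suffices to understand the situation fiberwise over the limit traces and then patch.

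For existence, I would argue as follows. Given $\gamma$, for each $\tau \in T_\infty(B)$ with representing sequence $(\tau_n)$ in $T(B)$, and for each finite subset $\mathcal F \subseteq A$ and $\varepsilon>0$, the nuclear von Neumann algebra machinery produces a c.p.c.\ almost-order-zero, almost-multiplicative map $A \to B$ whose trace with respect to $\tau_n$ approximates $\gamma$ on $\mathcal F$ to within $\varepsilon$ in the uniform tracial 2-norm associated with $\tau_n$. The hard part is to combine these fiberwise-constructed maps into a single $^*$-homomorphism $A \to B^\infty$ that implements $\gamma$ uniformly across $T(B^\infty)$. This is exactly what the complemented partitions of unity furnished by nuclearity and $\mathcal Z$-stability of $B$ allow: one covers $T(B^\infty)$ by finitely many tracially small neighborhoods, builds the approximating map on each piece using the fiberwise construction, and then sums using positive contractions in $B_\infty \cap B'$ that sum to $1$ and whose supports are tracially controlled by the cover. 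Kirchberg's $\epsilon$-test (Lemma~\ref{lem:EpsTest}) then upgrades the resulting sequence of finite-data approximations to an honest $^*$-homomorphism $\theta\colon A \to B^\infty$ with $\Aff T(\theta) = \gamma$.

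For uniqueness, suppose $\phi, \psi \colon A \to B^\infty$ are unital $^*$-homomorphisms with $\Aff T(\phi) = \Aff T(\psi)$. Fix a finite subset $\mathcal F \subseteq A$ and $\varepsilon>0$. By the von Neumann algebra classification above, for each limit trace $\tau$, there is a unitary $u_\tau \in B^\infty$ that conjugates $\phi$ onto $\psi$ on $\mathcal F$ to within $\varepsilon$ in $\|\cdot\|_{2,\tau}$. Applying the same complemented partition of unity argument, the $u_\tau$'s can be spliced into a single unitary $u \in B^\infty$ that approximately conjugates $\phi$ to $\psi$ on $\mathcal F$ uniformly in the 2-norm coming from all of $T(B^\infty)$. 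Strict comparison for $B^\infty$ (Proposition~\ref{NoSillyTracesB^infty}\ref{NoSillyTracesB^infty.2}) converts uniform 2-norm closeness of $^*$-homomorphisms into norm closeness, so $\phi$ and $\psi$ are approximately unitarily equivalent, and then Lemma~\ref{lem:AUEImpliesUE} (applied to $B^\infty$, which can be handled analogously to $B_\infty$ via Kirchberg's $\epsilon$-test) upgrades this to honest unitary equivalence.

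The genuinely hard step is the gluing in both arguments: producing a single unitary (or single $^*$-homomorphism) from the tracewise data in a way that behaves uniformly across $T(B^\infty)$. Without the affine-simplicial complication this is fairly standard Bauer-simplex $W^*$-bundle technology (\cite{Bosa-Brown-etal15}), but in general it requires the partitions of unity whose existence is the main technical achievement of \cite{Castillejos-Evington-etal21}. I would use this result as a black box and cite \cite{CETW21,CETW-corrigendum} for the precise sequence-algebra formulation packaged as Theorem~\ref{thm:B^inftyClassification}; the combinatorial bookkeeping for assembling $\theta$ and $u$ from the fiberwise data, together with the final application of Kirchberg's $\epsilon$-test, is routine once the partitions of unity are in hand.
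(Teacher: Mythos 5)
The paper does not prove this theorem from scratch: it cites \cite[Theorem~A]{CETW21} for the existence and uniqueness of a (possibly non-unital) $^*$-homomorphism, using Kadison duality to translate between $\gamma$ and the corresponding continuous affine map $T(B^\infty)\to T(A)$, and then supplies a short separate argument to upgrade the map to a \emph{unital} one. Your proposal instead sketches what is \emph{inside} the CETW21 black box (Connes' theorem fiberwise over $T_\infty(B)$, complemented partitions of unity from \cite{Castillejos-Evington-etal21} to glue the fiberwise data, Kirchberg's $\epsilon$-test to turn approximations into equality), which is a faithful description of the technology but a different route from the paper, whose proof is deliberately a citation plus a small deduction.

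Two substantive points. First, you do not address unitality. The result you are importing produces a $^*$-homomorphism with $\Aff T(\theta)=\gamma$ but with no a priori guarantee that $\theta(1_A)=1_{B^\infty}$, and even if your local fiberwise maps are unital, the partition-of-unity splicing does not obviously preserve this. The paper's argument is short but essential: $1_{B^\infty}-\theta(1_A)$ is a positive element of $B^\infty$ with $\tau(1_{B^\infty}-\theta(1_A))=1-\gamma(1)(\tau)=0$ for every $\tau\in T(B^\infty)$, and in the trace-kernel quotient a positive element killed by all traces vanishes, so $\theta$ is unital. Your proof proposal as written does not contain this step.

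Second, your claim that ``strict comparison for $B^\infty$ converts uniform 2-norm closeness of $^*$-homomorphisms into norm closeness'' is not the right mechanism, and indeed is not true in that generality (the operator norm and uniform 2-norm on $B^\infty$ are genuinely different). What is actually used is that $B^\infty=B_\infty/J_B$ is by construction the quotient in which the uniform 2-seminorm becomes a norm: an element of $B^\infty$ whose uniform 2-norm over all limit traces is zero is the zero element. Combined with Kirchberg's $\epsilon$-test (to pass from ``$<\epsilon$ on finite $\mathcal F$'' to exact equality via a choice of unitary), this gives honest unitary equivalence without any appeal to comparison. Strict comparison in $B^\infty$ (Proposition~\ref{NoSillyTracesB^infty}\ref{NoSillyTracesB^infty.2}) is used elsewhere in the paper, e.g.\ to recognize full maps, not here.
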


\begin{proof}
  By Kadison duality (see the second paragraph of
  Section~\ref{sec:Elliott-invariant}), there is a natural bijection
  between positive unital linear functions $\Aff T(A)\to\Aff T(B^\infty)$ and
  continuous affine functions $T(B^\infty) \rightarrow T(A)$.  The
  existence and uniqueness of a not-necessarily-unital map $\theta$
  follow from \cite[Theorem~A]{CETW21}.  Further, if $\theta \colon
  A \rightarrow B^\infty$ is a $^*$-homomorphism with $\tau
  \circ \theta \in T(A)$ for all $\tau \in T(B^\infty)$, (i.e., $\tau\circ\theta$ is a tracial state not just a tracial functional) then
  $1_{B_\infty} - \theta(1_A)$ is a positive element of $B^\infty$
  which vanishes on all traces on $B^\infty$. 
  Since a general positive element of $B^\infty$ that vanishes on all traces is automatically zero, this implies $\theta$ is unital.
\end{proof}

\subsection{$K$-theory of the trace-kernel ideal and quotient}\label{SSTKKThy}

We now turn to the computation of $K_1(J_B)$ when $B$ is an allowed codomain in Theorem~\ref{Main2}. Firstly, the gluing procedures used to obtain the classification of maps into $B^\infty$ also show that every unitary in $B^\infty$ is an exponential.

\begin{proposition}[{\cite[Proposition 2.1]{CETW21}}]\label{prop:B^inftyExponentials}
  Let $B$ be a unital separable nuclear $\mathcal Z$-stable
  $C^*$-algebra with $T(B)\neq \emptyset$.  If $u \in B^\infty$ is a
  unitary, then $u=e^{ih}$ for some self-adjoint $h \in B^\infty$ of
  norm at most $\pi$. In particular, every unitary in $B^\infty$ is
  the image of a unitary in $B_\infty$ under $q_B$.
\end{proposition}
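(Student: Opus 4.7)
The ``in particular'' clause is immediate from the first statement: lifting a self-adjoint $h \in B^\infty$ to some $x \in B_\infty$ and replacing $x$ by $(x+x^*)/2$ gives a self-adjoint $\tilde h \in B_\infty$ with $q_B(\tilde h) = h$, so $e^{i\tilde h}$ is a unitary in $B_\infty$ lifting $u$. The substance of the proposition is therefore the existence of the exponential representation.

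The strategy mimics the standard von Neumann algebra fact that Borel functional calculus, applied with a measurable branch of logarithm $\mathbb T \to [-\pi,\pi]$, expresses any unitary in a von Neumann algebra as $e^{ih}$ with $\|h\| \leq \pi$. Although $B^\infty$ is not a von Neumann algebra, the hypotheses on $B$ endow it with enough von Neumann-like structure via the complemented partition of unity (CPoU) machinery of \cite{Castillejos-Evington-etal21} --- the same technology that drives Theorem~\ref{thm:B^inftyClassification} --- to emulate this Borel calculus by gluing local data indexed by limit traces.

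Given $u \in U(B^\infty)$ and $\epsilon > 0$, my plan is to produce a self-adjoint $h_\epsilon \in B^\infty$ with $\|h_\epsilon\| \leq \pi$ such that $\sup_{\tau \in T_\infty(B)} \tau\big((e^{ih_\epsilon} - u)^*(e^{ih_\epsilon} - u)\big) < \epsilon$, via: (i) partitioning $\mathbb T$ into finitely many arcs $I_1, \ldots, I_N$, each carrying a continuous branch of logarithm $\ell_j \colon I_j \to [-\pi,\pi]$; (ii) using CPoU to build approximately central, pairwise orthogonal positive contractions $e_1, \ldots, e_N \in B^\infty$ summing to $1_{B^\infty}$ that approximate, uniformly across $T_\infty(B)$, the tracial spectral projections of $u$ on the arcs $I_j$; and (iii) assembling $h_\epsilon$ from the $e_j$ and continuous extensions of the $\ell_j(u)$, with norm at most $\pi$ because each $\ell_j$ takes values in $[-\pi,\pi]$. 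With $h_\epsilon$ in hand for every $\epsilon > 0$, Kirchberg's $\epsilon$-test (Lemma~\ref{lem:EpsTest}) --- applied to sequences of self-adjoint elements of $B$ of norm at most $\pi$, with test function $f_n(x) \coloneqq \sup_{\tau \in T(B)} \tau\big((e^{ix} - v_n)^*(e^{ix} - v_n)\big)$ for a fixed lift $(v_n)_{n=1}^\infty$ of $u$ (so that the vanishing of the limsup encodes $e^{ih} = u$ in $B^\infty$ by the defining property of $J_B$ in \eqref{eq.DefJB}) --- upgrades the approximate solutions to an exact self-adjoint $h \in B^\infty$ with $\|h\| \leq \pi$ and $e^{ih} = u$.

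The main obstacle is step (ii), the CPoU construction of the approximate spectral projections $(e_j)$: while genuine spectral projections of $\pi_\tau(u)$ exist inside each tracial GNS von Neumann algebra $\pi_\tau(B^\infty)''$, the task is to glue these pointwise-in-trace projections into single elements of $B^\infty$ uniformly across all limit traces. This is precisely the kind of gluing problem resolved by the CPoU framework of \cite{Castillejos-Evington-etal21}, where the $\mathcal Z$-stability and nuclearity of $B$ both play essential roles. The careful choice of each branch $\ell_j$ with range contained in $[-\pi,\pi]$ is what preserves the norm bound $\pi$ through the assembly.
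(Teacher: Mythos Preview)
The paper does not prove this; it cites \cite[Proposition~2.1]{CETW21} (specializing to $S=\{1_{B^\infty}\}$) and notes that the lifting clause is immediate. Your proposal goes further and sketches the argument behind the cited result. The overall shape---solve locally in each tracial von Neumann closure, glue via complemented partitions of unity, upgrade via Kirchberg's $\epsilon$-test---is correct and matches the approach in \cite{CETW21}.

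One comment on step (ii): CPoU as set up in \cite{Castillejos-Evington-etal21} does not directly manufacture ``approximate spectral projections'' of $u$ indexed by arcs of $\mathbb T$; its output is a partition of unity indexed by a finite family of candidate solutions (equivalently, by an open cover of the trace simplex). The cleaner and more standard route bypasses the $e_j$ altogether: for each limit trace $\tau$, lift the Borel logarithm of $\pi_\tau(u)$ to a self-adjoint $h_\tau\in B^\infty$ with $\|h_\tau\|\le\pi$ and $\|e^{ih_\tau}-u\|_{2,\tau}$ small, reduce to finitely many candidates $h_1,\ldots,h_k$ by compactness of $T(B)$, apply CPoU with $a_i=(e^{ih_i}-u)^*(e^{ih_i}-u)$ to obtain pairwise orthogonal projections $p_1,\ldots,p_k\in B^\infty$ summing to $1$ and commuting with $u$ and the $h_i$, and take $h_\epsilon=\sum_i p_i h_i$. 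Your arc-based assembly can also be pushed through, but it adds a layer; in particular, arranging the $e_j$ to be genuinely orthogonal and sum to $1$---which you need for the norm bound $\|h_\epsilon\|\le\pi$ in step (iii)---is not automatic from CPoU and would itself require essentially the direct gluing just described.
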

\begin{proof}
The first statement is a special case of the cited reference (taking $S\coloneqq\{1_{B^\infty}\}$) and the lifting statement is then an immediate consequence.
\end{proof}

Combing Proposition~\ref{prop:B^inftyExponentials} with the classification of maps into $B^\infty$ by traces gives the following $K$-theory of $B^\infty$.  This is analogous to the
fact that the $K$-theory of a II$_1$ factor is $(\mathbb R,0)$.

\begin{proposition}
  \label{prop:B^inftyKtheory}
  Let $B$ be a unital separable nuclear $\mathcal Z$-stable $C^*$-algebra
  with $T(B)\neq \emptyset$.  Then $(K_0(B^\infty),K_1(B^\infty)) \cong
  (\Aff T(B^\infty),0)$, with the pairing map $\rho_{B^\infty}$ inducing the isomorphism
  in the first entry, and $K_*(B^\infty;\Zn{n})=0$ for all $n\geq 2$.
\end{proposition}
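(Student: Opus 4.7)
The proof naturally splits into three parts: $K_1(B^\infty)=0$, the identification $K_0(B^\infty)\cong \Aff T(B^\infty)$ via $\rho_{B^\infty}$, and the vanishing of $K_\ast(B^\infty;\Zn{n})$. The first follows immediately from Proposition~\ref{prop:B^inftyExponentials}: every unitary in $B^\infty$ is an exponential, hence lies in the path-component of the identity. To handle unitaries in matrix amplifications $M_k(B^\infty)$, I would use the natural isomorphism $M_k(B^\infty)\cong M_k(B)^\infty$ from Remark~\ref{matrix} together with Remark~\ref{matrixB}, which ensures $M_k(B)$ still satisfies the hypotheses of the proposition, and apply Proposition~\ref{prop:B^inftyExponentials} to $M_k(B)$.

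For the isomorphism $K_0(B^\infty)\cong \Aff T(B^\infty)$, the main engine is the classification of maps into $B^\infty$ by traces (Theorem~\ref{thm:B^inftyClassification}) applied to the domain $A=\mathbb{C}^2$. For surjectivity of $\rho_{B^\infty}$, given $f\in\Aff T(B^\infty)$ with $0\le f\le 1$ (the general case reduces to this after subtracting a multiple of $[1]_0$), I would define $\gamma\colon \Aff T(\mathbb{C}^2)\cong\mathbb{R}^2\to \Aff T(B^\infty)$ by $\gamma(r,s)\coloneqq rf+s(1-f)$ and produce a unital $^*$-homomorphism $\theta\colon\mathbb{C}^2\to B^\infty$ with $\Aff T(\theta)=\gamma$. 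Then $p\coloneqq \theta(e_1)$ is a projection with $\rho_{B^\infty}([p]_0)=f$. For injectivity, if $\rho_{B^\infty}([p]_0-[q]_0)=0$ for projections $p,q$ in some $M_n(B^\infty)$ (after padding by zeros so both sit in the same matrix size), then $\tau(p)=\tau(q)$ for all $\tau\in T(M_n(B)^\infty)$ after passing through the identification of Remark~\ref{matrix}. The two unital $^*$-homomorphisms $\mathbb{C}^2\to M_n(B)^\infty$ sending $e_1\mapsto p$ (respectively $e_1\mapsto q$) and $e_2$ to the complementary projection agree on traces, hence are unitarily equivalent by the uniqueness part of Theorem~\ref{thm:B^inftyClassification} applied to the codomain $M_n(B)$. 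This yields a Murray--von Neumann equivalence between $p$ and $q$, so $[p]_0=[q]_0$.

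Finally, for $K_\ast(B^\infty;\Zn{n})=0$, I would plug the established computation of $K_\ast(B^\infty)$ into the Bockstein six-term exact sequence \eqref{eq:bockstein-2}. Since $K_0(B^\infty)\cong \Aff T(B^\infty)$ is an $\mathbb{R}$-vector space (hence uniquely divisible) and $K_1(B^\infty)=0$, multiplication by $n$ is an automorphism on $K_0(B^\infty)$ and zero on $K_1(B^\infty)$. Exactness of
\begin{equation}
K_0(B^\infty)\xrightarrow{\times n}K_0(B^\infty)\xrightarrow{\mu^{(n)}_0}K_0(B^\infty;\Zn{n})\xrightarrow{\nu^{(n)}_0}K_1(B^\infty)=0
\end{equation}
forces $\mu^{(n)}_0=0$ and $\nu^{(n)}_0$ injective into $0$, giving $K_0(B^\infty;\Zn{n})=0$; the symmetric fragment of \eqref{eq:bockstein-2} yields $K_1(B^\infty;\Zn{n})=0$ analogously. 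The main subtle point, which I would treat with care, is the injectivity of $\rho_{B^\infty}$, since one has to set up the right pair of maps from $\mathbb{C}^2$ and correctly match the trace comparison in matrix amplifications to the pairing convention of \eqref{DefPairingMap}; everything else follows formally from the classification theorem and the Bockstein machinery.
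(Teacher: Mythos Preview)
Your proof is correct and follows essentially the same route as the paper: $K_1$ vanishes via Proposition~\ref{prop:B^inftyExponentials} applied to matrix amplifications, $K_0$ is computed by applying Theorem~\ref{thm:B^inftyClassification} with $A=\mathbb C^2$ (existence for surjectivity, uniqueness for injectivity), and the coefficient groups vanish by the Bockstein sequence. One small imprecision: the reduction ``the general case reduces to $0\le f\le 1$ after subtracting a multiple of $[1]_0$'' only handles the lower bound; to force $f\le 1$ you must also pass to a matrix amplification $M_n(B)$ with $n\ge\|f\|$, exactly as you do for the injectivity argument (and as the paper does). With that adjustment, your argument is complete.
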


\begin{proof}
  The computation
  of $K_0(B^\infty)$ is a consequence of
  Theorem~\ref{thm:B^inftyClassification} with $A\coloneqq \bC^2$,
  as follows.  Given $f \in \Aff T(B^\infty)_+$, fix $n\in\bN$
  such that $f\leq n$, which exists by the compactness of $T(B^\infty)$. 
  Let $\tilde f \in \Aff T(M_n \otimes B^\infty)$ be given by $\tilde f (\tau_{M_n} \otimes \tau) \coloneqq f(\tau)$ for $\tau\in T(B^\infty)$.
  Define $\gamma \colon \mathbb R\oplus \mathbb R \to \Aff T(M_n \otimes B^\infty)$ by $\gamma(s,t) \coloneqq \tfrac{s}{n} \tilde f + t(1-\tfrac{1}{n} \tilde f)$.
  Then existence
  in Theorem~\ref{thm:B^inftyClassification} (with $M_n\otimes B$ in
  place of $B$, see Remark~\ref{matrixB}) gives us a unital $^*$-homomorphism $\phi \colon \mathbb C\oplus \mathbb C \to M_n\otimes B^\infty$ realizing $\gamma$, and thus 
  a projection $p \coloneqq \phi(1,0) \in (M_n\otimes B)^\infty\cong M_n(B^\infty)$ (see Remark \ref{matrix}) such that $(\tau_{M_n}
  \otimes \tau)(p)=\tfrac{1}{n}f(\tau)$ for all $\tau \in
  T(B^\infty)$.
  This means that $\rho_{B^\infty}([p]_0) = f$.  Moreover, uniqueness in Theorem~\ref{thm:B^inftyClassification} tells us that any projection $q \in
  M_n\otimes B^\infty$ with the same property is unitarily equivalent
  to $p$, and thus $[p]_0=[q]_0$ in $K_0(B^\infty)$. Taking linear combinations shows that the pairing map induces an
  isomorphism $K_0(B^\infty) \cong \Aff T(B^\infty)$.

  Proposition~\ref{prop:B^inftyExponentials}, applied to $M_n\otimes B^\infty \cong (M_n
  \otimes B)^\infty$ (via Remarks~\ref{matrix} and \ref{matrixB}), implies $K_1(B^\infty)=0$.
  For the last statement, the map $K_0(B^\infty) \rightarrow K_0(B^\infty)$ of multiplication by $n$ is an isomorphism for all $n \geq 2$ (as $K_0(B^\infty)$ is a real vector space).  Using this, and $K_1(B^\infty)=0$, in the exact sequence \eqref{eq:bockstein-2} implies $K_*(B^\infty ; \mathbb Z/n) = 0$ for all $n\geq 2$.
\end{proof}

Now we turn both to the proof of Theorem~\ref{intro:calcKJB} and the setup for Theorem~\ref{intro:calcKL}.
 Note that when $B$ is unital, simple, separable, exact, and $\Z$-stable, we have that $\Aff T(q_B)\colon\Aff T(B_\infty)\to \Aff T(B^\infty)$ is an isomorphism by Proposition~\ref{NoSillyTracesB^infty}\ref{NoSillyTracesB^infty.1}. In the following diagram, the first row is an extract of the six-term exact sequence for the trace-kernel extension, while the second row is exact as it is Thomsen's extension for $B_\infty$ (Proposition~\ref{prop:B_inftyK0ClosedInAff}). Commutativity of the left-hand square is naturality of the pairing map.

\begin{equation}\label{Thm:K1JDiag}
    \begin{tikzcd}[row sep = 7ex]
    K_0(B_\infty)\ar[r,"K_0(q_B)"]\ar[equal,d]&K_0(B^\infty)\ar[r,"\partial"]\arrow{d}[description]{(\Aff T(q_B))^{-1}\circ \rho_{B^\infty}}&K_1(J_B)\ar[r,"K_1(j_B)"]\ar[d,dashed,"\omega_B"]&K_1(B_\infty)\ar[r,"K_1(q_B)"]\ar[d,equal]&K_1(B^\infty)\ar[d]\\
        K_0(B_\infty)\ar[r,"\rho_{B_\infty}"]&
        \Aff T(B_\infty)\ar[r,"\Th_{B_\infty}"]&\Ka(B_\infty)
        \ar[r,"\minusa_{B_\infty}"]&K_1(B_\infty)\ar[r]&0
    \end{tikzcd}
    \end{equation}
A diagram chase gives a natural map 
\begin{equation}\label{Thm:K1J.3}
\omega'_B\colon\im\partial\to \im\Th_{B_\infty}=\ker\minusa_{B_\infty}\colon \partial([p]_0)\mapsto\ka{e^{2\pi ia}},
\end{equation}
where $a\in M_k(B_\infty)$ is a self-adjoint lifting the projection $p\in M_k(B^\infty)$.  For our calculation of $KL(A,J_B)$ in Theorem~\ref{thm:calcKL} (the precise version of Theorem~\ref{intro:calcKL}), we need to know that $\omega_B'$ is an isomorphism and describe it explicitly in terms of elements of $\ker K_1(j_B)=\im\partial$. The explicit description we give extends to a natural map $\omega_B\colon K_1(J_B)\to\Ka(B_\infty)$. Moreover, both this and $\omega_B'$ are isomorphisms when $B^\infty$ has the $K$-theory provided by Proposition~\ref{prop:B^inftyKtheory}.

\begin{theorem}[Calculating $K_1(J_B)$]\label{Thm:K1J}
Let $B$ be a unital separable exact $\Z$-stable $C^*$-algebra with $T(B)\neq\emptyset$. 
\begin{enumerate}
\item There is a natural map $\omega_B\colon K_1(J_B)\to \Ka(B_\infty)$ such that (\ref{Thm:K1JDiag}) commutes, given explicitly by
\begin{equation}\label{Thm:K1J:Formula}\omega_B([u]_1)\coloneqq \ka{u}-\ka{s(u)}, \quad u\in U_\infty(J_B^\dagger),
\end{equation} where $s\colon J_B^\dagger\to\mathbb C1_{B_\infty} \subseteq B_\infty$ is the canonical character.  This map extends the natural map $\omega_B'$ from \eqref{Thm:K1J.3}.\label{Thm:K1J.1}

\item When $B$ is also simple and nuclear, $\omega_B$ and $\omega_B'$ are isomorphisms.\label{Thm:K1J.2}
\end{enumerate}
\end{theorem}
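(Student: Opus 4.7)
For part \ref{Thm:K1J.1}, the plan is to first verify that the formula $u\mapsto \ka{u}-\ka{s(u)}$ descends to a well-defined group homomorphism $\omega_B\colon K_1(J_B)\to \Ka(B_\infty)$, and then check each square of \eqref{Thm:K1JDiag} separately. Since both the inclusion $J_B^\dagger\hookrightarrow B_\infty$ and $s$ are unital $*$-homomorphisms, the formula already defines a group homomorphism on $U_\infty(J_B^\dagger)$ that respects matrix amplification. After replacing $u$ by $u\, s(u)^*$ (using that $s(u)$ is a scalar unitary, so $\ka{u s(u)^*}=\ka{u}-\ka{s(u)}$), well-definedness reduces to showing that $\ka{u}=0$ in $\Ka(B_\infty)$ whenever $u\in U_n(J_B^\dagger)$ satisfies $s(u)=1$, so that $u\in 1+M_n(J_B)$, and $u$ is homotopic to $1$ in $U_n(J_B^\dagger)$.

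The hard part will be this well-definedness step, and the plan is to use the de la Harpe--Skandalis determinant (Proposition~\ref{Determinant}). I would choose a piecewise smooth path $u(t)\in U_n(J_B^\dagger)$ from $1$ to $u$ and replace it by $u(t)s(u(t))^*$ to arrange $u(t)\in 1+M_n(J_B)$ for every $t$ (this is still a path from $1$ to $u$). Then $u'(t)\in M_n(J_B)$, and so $u'(t)u(t)^*\in M_n(J_B)$ because $J_B$ is an ideal. Under the hypotheses of (i), Ozawa's Proposition~\ref{NoSillyTraces} shows the convex hull of $T_\infty(B)$ is weak*-dense in $T(B_\infty)$, and limit traces annihilate $J_B$ by \eqref{eq.DefJB} and Cauchy--Schwarz, so every $\tau\in T(B_\infty)$ vanishes on $J_B$. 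Consequently $\tau_n(u'(t)u(t)^*)=0$ for all $\tau$ and $t$, giving $\widetilde\Delta_{B_\infty}(u(\cdot))=0$ in $\Aff T(B_\infty)$. Combined with $u\in U_n^{(0)}(B_\infty)$ and the identity $\overline{\det}_{B_\infty}=\overline{\Th}_{B_\infty}^{-1}$ from \eqref{eq:detInverse}, this forces $\ka{u}=0$. Naturality of $\omega_B$ is then immediate from the naturality of its ingredients.

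For the commutativity of \eqref{Thm:K1JDiag}: the rightmost square holds since $[s(u)]_1=0$ in $K_1(B_\infty)$ (as $s(u)\in U_\infty(\mathbb C)$), giving $\minusa_{B_\infty}(\omega_B([u]_1))=[u]_1=K_1(j_B)([u]_1)$; the leftmost square is naturality of the pairing map, as noted in the excerpt. For the middle square, given a projection $p\in M_k(B^\infty)$ with self-adjoint lift $a\in M_k(B_\infty)$, one has $e^{2\pi i a}\in 1+M_k(J_B)$ (since $q_B(e^{2\pi i a})=e^{2\pi i p}=1$), so $\partial([p]_0)=[e^{2\pi i a}]_1$ and $s(e^{2\pi i a})=1$, yielding $\omega_B(\partial([p]_0))=\ka{e^{2\pi i a}}$. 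Going around the other way, $(\Aff T(q_B))^{-1}(\rho_{B^\infty}([p]_0))=\hat a$ (using once more that traces on $B_\infty$ vanish on $J_B$), and $\Th_{B_\infty}(\hat a)=\ka{e^{2\pi i a}}$ by Properties~\ref{ThomsenProp}\ref{ThomsenProp.1} applied at the matrix level. This computation also verifies that $\omega_B$ restricts to $\omega_B'$ on $\im\partial$.

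For part \ref{Thm:K1J.2}, under the additional hypotheses that $B$ is simple and nuclear, Proposition~\ref{prop:B^inftyKtheory} gives $K_1(B^\infty)=0$ and that $\rho_{B^\infty}$ is an isomorphism; together with Proposition~\ref{NoSillyTracesB^infty}\ref{NoSillyTracesB^infty.1}, this makes the central vertical arrow $(\Aff T(q_B))^{-1}\circ\rho_{B^\infty}$ in \eqref{Thm:K1JDiag} an isomorphism. Using $K_1(B^\infty)=0$, the top row becomes a five-term exact sequence ending in $0$ after $K_1(B_\infty)$; the bottom row is five-term exact by Proposition~\ref{prop:B_inftyK0ClosedInAff}; and the extreme verticals are identities. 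The five lemma then forces $\omega_B$ to be an isomorphism. Finally, $\omega_B|_{\im\partial}=\omega_B'$ has image $\Th_{B_\infty}((\Aff T(q_B))^{-1}(\rho_{B^\infty}(K_0(B^\infty))))=\im\Th_{B_\infty}=\ker\minusa_{B_\infty}$ by surjectivity of the middle vertical, so $\omega_B'$ is also an isomorphism.
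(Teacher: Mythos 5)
Your proof is correct and follows essentially the same route as the paper: establish well-definedness of $\omega_B$ using that all traces on $B_\infty$ vanish on $J_B$ (via Proposition~\ref{NoSillyTraces}), verify commutativity of the squares, and conclude part~\ref{Thm:K1J.2} by the five lemma. The one noticeable variation is in the well-definedness step: the paper checks $\tilde\omega_B$ on the exponential generators $e^{2\pi i h}$ of $U^{(0)}_\infty(J_B^\dagger)$, decomposing $h=h_0+s(h)$ and applying $\Th_{B_\infty}(\hat h_0)=\ka{e^{2\pi i h_0}}$ directly (Properties~\ref{ThomsenProp}\ref{ThomsenProp.1}); you instead take an arbitrary $u\in U^{(0)}_\infty(J_B^\dagger)$, normalize to a piecewise smooth path in $1+M_n(J_B)$, and compute $\widetilde\Delta_{B_\infty}$ of that path to be zero, then invoke \eqref{eq:detInverse}. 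Both arguments use exactly the same key input and are of comparable length; the paper's version avoids fussing with piecewise smooth approximations, while yours also supplies the (omitted in the paper) detail for why $\omega_B'$ inherits the isomorphism property from $\omega_B$.
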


\begin{proof}
\ref{Thm:K1J.1}: The map $\tilde{\omega}_B\colon U_\infty(J_B^\dagger)\to \Ka(B_\infty)$ given by $\tilde{\omega}_B(u)\coloneqq \ka{u}-\ka{s(u)}$ for $u\in U_\infty(J_B^\dagger)$ is a group homomorphism.  To show that this induces a well-defined map $\omega_B$ on $K_1(J_B)$, it suffices to check that $\tilde{\omega}_B(e^{2\pi i h})=0$ in $\Ka(B_\infty)$ for any self-adjoint $h\in M_n(J_B^\dagger)$. To this end, write $h=h_0+s(h)$, where $h_0=h_0^*\in M_n(J_B)$, so that $\tilde{\omega}_B(e^{2\pi i h})=\ka{e^{2\pi i h_0}}\in\ker\minusa_{B_\infty}$. Using exactness and $\Z$-stability of $B$, all traces on $B_\infty$ vanish on $J_B$ (Proposition~\ref{NoSillyTraces}), and so $\hat{h}_0=0$ in $\Aff T(B_\infty)$.  Applying the Thomsen map from \eqref{DefThomsen}, $\tilde{\omega}_B(e^{2\pi i h})=\Th_{B_\infty}(\hat{h}_0)=0$, as required.  The explicit formula (\ref{Thm:K1J:Formula}) confirms that $\omega_B$ is natural in $B$.

By construction, the induced map $\omega_B$ makes the third square of (\ref{Thm:K1JDiag}) commute. For the second square, consider a projection $p\in M_n(B^\infty)$, which we lift to to a positive contraction $a\in M_n(B_\infty)$. This satisfies $\Aff T(q_B)(\hat{a})=\hat{p}$, $e^{2\pi ia}\in U_n(J_B^\dagger)$ and $\partial([p]_0)=[e^{2\pi i a}]_1$. Noting that $s(e^{2\pi i a})=1_{M_n(B_\infty)}$, we have $(\omega_B\circ\partial)([p]_0)=\ka{e^{2\pi ia}}=\Th_{B_\infty}(\hat{a})$ (from the definition of $\omega_B$ and that of the Thomsen map in (\ref{DefThomsen})).  As $(\Aff T(q_B)^{-1}\circ\rho_{B_\infty})([p]_0)=\Aff T(q_B)^{-1}(\hat{p})=\hat{a}$, the commutativity follows.  Note that commutativity of the second square shows that $\omega_B$ extends $\omega_B'$ from \eqref{Thm:K1J.3}

\ref{Thm:K1J.2}: Proposition~\ref{NoSillyTracesB^infty}\ref{NoSillyTracesB^infty.1} ensures that $\Aff T(q_B)$ is an isomorphism and, using the nuclearity hypothesis, the $K$-theory computation of Proposition~\ref{prop:B^inftyKtheory} shows that $\rho_{B_\infty}$ is an isomorphism and $K_1(B^\infty)=0$.  The second row of \eqref{Thm:K1JDiag} is exact by Proposition~\ref{prop:B_inftyK0ClosedInAff}, and as the four outermost vertical arrows of (\ref{Thm:K1JDiag}) are isomorphisms, the result follows from the five lemma.
\end{proof}

\subsection{Separable stability of the trace-kernel ideal $J_B$}\label{SSTKSepStab}
In order to work with $KK(A,J_B)$ in
Sections~\ref{sec:class-lifts-trace} and~\ref{sec:main-results}, it
would be most convenient if the trace-kernel ideal $J_B$ was stable.
However, provided $T(B)\neq\emptyset$ and $J_B \neq 0$, this is never
the case.\footnote{Since $J_B$ is an $SAW^*$-algebra, this is a
  consequence of Ghasemi's result (\cite{Ghasemi15}).  To prove that
  $J_B$ is indeed an $SAW^*$-algebra, given orthogonal positive
  contractions $e,f\in J_B$, one uses an $\epsilon$-test argument
  (similar to \cite[Proposition 4.6]{Kirchberg-Rordam14}, which proves
  that $J_B$ is a $\sigma$-ideal) with functional calculus on $e$ to
  produce a positive contraction $h \in J_B$ such that $he=e$ and
  $hf=0$.}  This is similar to $B_\infty$ not being $\Z$-stable (see Section~\ref{SSZStableSeq}), and the solution is the same: to separabilize (pass to separable
subalgebras with the desired property).

\begin{definition}[{cf.\ \cite[Definition 1.4]{Schafhauser18}}]
A $C^*$-algebra $D$ is \emph{separably stable} if for every separable $C^*$-subalgebra $D_0\subseteq D$, there exists a stable separable $C^*$-subalgebra $E\subseteq D$ with $D_0\subseteq E$.
\end{definition}

Our objective is to show in Lemma~\ref{lem:J_BsepStable} that $J_B$ is
separably stable whenever $B$ satisfies the codomain assumptions in
the classification of embeddings. A version of this lemma was obtained
in \cite[Propositions~3.2 and~3.3]{Schafhauser18} (in the ultrapower
setting) for unital simple separable $C^*$-algebras $B$ with a unique
trace, which is also the unique quasitrace, and that are also stable
under tensoring by the universal UHF algebra and have trivial $K_1$-group. These assumptions ensure
that $B_\omega$ is real rank zero (as used in \cite{Schafhauser18}),
while in our setting of multiple traces, and working with
$\Z$-stability, we must use Cuntz semigroup techniques rather than
projections.  To do this, we note that Hjelmborg and R\o{}rdam's
characterization of stability for $\sigma$-unital $C^*$-algebras via
positive elements extends to characterize separable stability.  We say
that a $C^*$-algebra $D$ satisfies the \emph{Hjelmborg--R\o{}rdam
  criterion} if, for every $a\in D_+$ and $\epsilon>0$, there exists
$x\in D$ satisfying $\|a-xx^*\|<\epsilon$ and
$\|(x^*x)(xx^*)\|<\epsilon$.  Section 2 of \cite{Hjelmborg-Rordam98}
shows that a $\sigma$-unital $C^*$-algebra $D$ is stable if and only
if it satisfies the Hjelmborg--R\o{}rdam criterion.\footnote{Note that
  what we call the Hjelmborg--R\o{}rdam criterion is equivalent to
  condition \cite[Proposition 2.2(b)]{Hjelmborg-Rordam98} since the
  set $F(D)$ of \cite{Hjelmborg-Rordam98} is dense in $D_+$ (as noted
  on \cite[Page 154]{Hjelmborg-Rordam98}). The equivalence of the
  Hjelmborg--R\o{}rdam criterion and stability follows immediately from
  \cite[Theorem 2.1 and Proposition~2.2]{Hjelmborg-Rordam98}.}

\begin{proposition}
  \label{prop:HR-stability}
  Let $D$ be a $C^*$-algebra.  Then $D$ is separably stable if and
  only if it satisfies the Hjelmborg--R\o{}rdam criterion.
  \end{proposition}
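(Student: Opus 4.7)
The forward direction is essentially immediate. If $D$ is separably stable and $a\in D_+$, $\epsilon>0$, then $a$ lies in some separable stable $C^*$-subalgebra $E\subseteq D$. Since $E$ is $\sigma$-unital and stable, the Hjelmborg--R\o{}rdam theorem (\cite[Proposition~2.2]{Hjelmborg-Rordam98}) applied to $E$ yields $x\in E\subseteq D$ with $\|a-xx^*\|<\epsilon$ and $\|(x^*x)(xx^*)\|<\epsilon$, as required.

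For the reverse direction, assume $D$ satisfies the Hjelmborg--R\o{}rdam criterion and fix a separable $C^*$-subalgebra $D_0\subseteq D$. The plan is to recursively enlarge $D_0$ by adjoining H--R witnesses, keeping everything separable. Concretely, I will inductively construct an increasing sequence $D_0\subseteq D_1\subseteq D_2\subseteq\cdots$ of separable $C^*$-subalgebras of $D$ as follows. At stage $n$, fix a countable dense subset $S_n\subseteq (D_n)_+$ and, for each $a\in S_n$ and each $k\in\bN$, use the Hjelmborg--R\o{}rdam criterion for $D$ to choose some $x_{a,k}\in D$ with $\|a-x_{a,k}x_{a,k}^*\|<1/k$ and $\|(x_{a,k}^*x_{a,k})(x_{a,k}x_{a,k}^*)\|<1/k$. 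Let $D_{n+1}$ be the (separable) $C^*$-subalgebra of $D$ generated by $D_n$ together with all these witnesses. Set $E\coloneqq\overline{\bigcup_{n=0}^\infty D_n}$.

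Then $E$ is a separable $C^*$-subalgebra of $D$ containing $D_0$, and hence is $\sigma$-unital. The key point is that $E$ itself satisfies the Hjelmborg--R\o{}rdam criterion: given $a\in E_+$ and $\epsilon>0$, approximate $a$ within $\epsilon/3$ by some $a'\in (D_n)_+$ for some $n$, then approximate $a'$ within $\epsilon/3$ by some element of $S_n$, and pick the corresponding witness in $D_{n+1}\subseteq E$ (choosing $k$ large enough), using continuity of $x\mapsto xx^*$ and $x\mapsto (x^*x)(xx^*)$ to absorb the small perturbations. Applying \cite[Proposition~2.2]{Hjelmborg-Rordam98} to the $\sigma$-unital $C^*$-algebra $E$ shows $E$ is stable, witnessing separable stability of $D$.

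The only real subtlety is the limit argument in the final paragraph: one must check that approximating $a\in E_+$ by $a'\in (D_n)_+$ and choosing a witness for $a'$ (not for $a$) still yields a witness for $a$ with slightly worse constants. Since the map $x\mapsto xx^*$ is locally Lipschitz on bounded sets and $\|(x^*x)(xx^*)\|$ depends continuously on $x$, one can estimate $\|a-xx^*\|\le\|a-a'\|+\|a'-xx^*\|$ and control $\|(x^*x)(xx^*)\|$ directly from the witness condition for $a'$; arranging the approximations with triangle-inequality slack of $\epsilon/3$ and the witness constant $1/k$ below $\epsilon/3$ suffices. This is the only computational step, and it is routine.
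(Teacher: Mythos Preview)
Your proof is correct and follows essentially the same approach as the paper: both directions match, with the reverse direction carried out via the same inductive enlargement $D_0\subseteq D_1\subseteq\cdots$ adjoining Hjelmborg--R\o{}rdam witnesses at each stage and taking the closure of the union. The paper's write-up is slightly terser (it absorbs your density/perturbation discussion into the statement ``for any $a\in (D_n)_+$ and $\epsilon>0$ there exists $x\in D_{n+1}$\ldots''), but the content is the same.
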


\begin{proof}
It is immediate that separable stability implies the  Hjelmborg--R\o{}rdam criterion.  To see the converse, it suffices to show that if $D$ is a $C^*$-algebra
satisfying the Hjelmborg--R\o{}rdam criterion and $D_0$ is a
separable subalgebra of $D$, then there exists a separable subalgebra
$E\subseteq D$ satisfying the Hjelmborg--R\o{}rdam criterion with
$D_0\subseteq E$.  Fix a dense sequence $(d_l)_{l=1}^\infty$ in
$(D_0)_+$.  By the Hjelmborg--R\o{}rdam criterion, for each
$k,l\in\mathbb N$, there exists $x_{k,l}\in D$ such that $\|d_l -
x_{k,l}^*x_{k,l}\| < 1/k$ and
$\|(x_{k,l}^*x_{k,l})(x_{k,l}x_{k,l}^*)\| < 1/k$.  Let $D_1\coloneqq
C^*(D_0,\{x_{k,l}:k,l\in\mathbb N\})$.  This ensures that for any
$a\in (D_0)_+$, and $\epsilon>0$, there exists $x\in D_1$ with
$\|a-x^*x\|<\epsilon$ and $\|(x^*x)(xx^*)\|<\epsilon$.  Continuing in
this way, we obtain a nested sequence $D_0 \subseteq D_1 \subseteq
\cdots $ of separable $C^*$-subalgebras of $D$ so that the
Hjelmborg--R\o{}rdam criterion for positive elements $a$ in $D_n$ can
be witnessed by elements $x\in D_{n+1}$.  Then $E\coloneqq
\overline{\bigcup_{n=1}^\infty D_n}$ will satisfy the
Hjelmborg--R\o{}rdam criterion.
\end{proof}

With the Hjelmborg--R\o{}rdam criterion in place, we now deduce
separable stability of the relevant trace-kernel
ideals.\footnote{While we only need the result for nuclear $B$, we
  state it for $B$ exact --- the exactness only being used to apply
  Haagerup's result that quasitraces are traces (\cite{Haagerup14}).}

\begin{lemma}
  \label{lem:J_BsepStable}
  Let $B$ be a unital simple separable exact $\mathcal Z$-stable
  $C^*$-algebra with $T(B)\neq \emptyset$.  Then $J_B$ is separably
  stable.
\end{lemma}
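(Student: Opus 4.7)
The plan is to reduce via Proposition~\ref{prop:HR-stability} to verifying the Hjelmborg--R\o rdam criterion for $J_B$, and then to construct the required witnesses pointwise on a representative sequence using strict comparison in $B$ itself. Strict comparison of positive elements in $B$ is available by Proposition~\ref{p:ZStableUnperforated}\ref{p:ZStableUnperforated.2}, combining exactness of $B$ (via Haagerup's theorem) with $\mathcal Z$-stability. The key observation is that a positive element of $J_B$ admits a lift $(a_n)_{n=1}^\infty$ of positive contractions in $B$ with uniform tracial smallness: by definition $\sup_{\tau \in T(B)}\tau(a_n^2)\to 0$, and Cauchy--Schwarz for traces gives $\sup_{\tau\in T(B)}\tau(a_n)\to 0$ as well. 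This uniform smallness is exactly what enables Cuntz comparison in $B$.

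Fix a positive contraction $a=(a_n)_{n=1}^\infty\in J_B$ and $\epsilon>0$. First I would set $\delta\coloneqq\epsilon/3$ and consider the pair of functional calculus elements $(a_n-\delta)_+$ and $(\delta-a_n)_+$, which have disjoint supports in $[0,1]$, so any element of $\overline{(\delta-a_n)_+ B(\delta-a_n)_+}$ is orthogonal to $(a_n-\delta)_+$. Second, for all sufficiently large $n$ the Markov-type estimates
\begin{equation}
d_\tau((a_n-\delta)_+)\leq \tau(a_n)/\delta \quad\text{and}\quad d_\tau((\delta-a_n)_+)\geq 1-\tau(a_n)/\delta
\end{equation}
yield $d_\tau((a_n-\delta)_+)<d_\tau((\delta-a_n)_+)$ uniformly in $\tau\in T(B)$ (precisely when $\tau(a_n)<\delta/2$), so strict comparison in $B$ gives $(a_n-\delta)_+\precsim(\delta-a_n)_+$. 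Third, applying R\o rdam's lemma (\cite[Proposition~2.4]{Rordam92}) with slack $\gamma\coloneqq\delta$ produces $v_n\in B$ with $v_n^*v_n=(a_n-2\delta)_+$ and $v_nv_n^*\in\overline{(\delta-a_n)_+ B(\delta-a_n)_+}$; set $v_n\coloneqq 0$ for the finitely many exceptional $n$.

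The sequence $v\coloneqq(v_n)_n\in B_\infty$ then satisfies $\tau(v_n^*v_n)\leq\tau(a_n)\to 0$ uniformly, so $v\in J_B$; by construction $v_nv_n^*\cdot v_n^*v_n=0$ (the two factors lie in orthogonal functional calculus pieces of $a_n$); and $\|a-v^*v\|\leq 2\delta<\epsilon$. Taking $x\coloneqq v^*\in J_B$ would then witness the Hjelmborg--R\o rdam criterion for $a$ and $\epsilon$, completing the verification. The main delicate point is the choice of cut-off functions: using the pair $((a_n-\delta)_+,(\delta-a_n)_+)$ gives \emph{exact} orthogonality of $v^*v$ and $vv^*$ (avoiding an $\epsilon$-test argument to promote approximate orthogonality to exact), while strict comparison between these two elements succeeds precisely when $\sup_\tau\tau(a_n)$ is small, which is the defining condition on $J_B$. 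Thus the orthogonality and comparison requirements are in tension, and are reconciled exactly by the trace-kernel hypothesis.
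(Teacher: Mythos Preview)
Your argument is correct and takes a genuinely different route from the paper's proof. Both arguments verify the Hjelmborg--R\o rdam criterion, but the paper works entirely inside $B_\infty$: it perturbs $a$ so as to admit a local unit $e\in (J_B)_+$, sets $b\coloneqq 1_{B_\infty}-e$, and then invokes strict comparison in $B_\infty$ (Proposition~\ref{prop:B_inftyProperties_2}\ref{prop:B_inftyProperties.3}, which in turn relies on Ozawa's density of limit traces from Proposition~\ref{NoSillyTraces}) to get $a\precsim b$; the element $x$ is then extracted via \cite[Proposition~2.7(iii)]{Kirchberg-Rordam00}. By contrast, you work pointwise on a lift and use only strict comparison in $B$ itself (Proposition~\ref{p:ZStableUnperforated}\ref{p:ZStableUnperforated.2}), applying R\o rdam's lemma at each coordinate to the disjointly supported functional-calculus pieces $(a_n-\delta)_+$ and $(\delta-a_n)_+$. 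The payoff of your approach is that it avoids the passage of strict comparison to the sequence algebra and the attendant appeal to Ozawa's theorem, so it is logically lighter; the paper's approach is slightly more streamlined in exposition since it stays in $J_B\subseteq B_\infty$ throughout and uses structural facts about $B_\infty$ already on record. One small point worth making explicit: you silently reduce to $a$ a positive contraction (so that its lift can be taken to consist of positive contractions), which is harmless by scaling.
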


\begin{proof}
  We check that $J_B$ satisfies the Hjelmborg--R\o{}rdam criterion of
  Proposition~\ref{prop:HR-stability}.  Let $a \in (J_B)_+$ and
  $\epsilon>0$.  By perturbing $a$ slightly and using functional
  calculus, we may assume that there is a contraction $e \in (J_B)_+$
  such that $ea=a$.  Set $b\coloneqq 1_{B_\infty}-e \in (B_\infty)_+$,
  and note that $ab=0$.  Since $e \in J_B$, we have $\tau(b)=1$ for all $\tau \in T_\infty(B)$, and therefore, for all
  $\tau \in T(B_\infty)$ by density of the convex hull of the limit
  traces in $T(B_\infty)$ (Proposition~\ref{NoSillyTraces}). Hence $d_\tau(b)\geq 1$ for all $\tau \in T(B_\infty)$. On the
  other hand (again using Proposition~\ref{NoSillyTraces}),
  $d_\tau(a)=\lim_{r\to\infty} \tau(a^{1/r}) = 0$ for all $\tau \in
  T(B_\infty)$, since $a^{1/r} \in J_B$ for all $r$.  As $B_\infty$
  has strict comparison of positive elements by bounded traces
  (Proposition~\ref{prop:B_inftyProperties_2}\ref{prop:B_inftyProperties.3}), it
  follows that $a \precsim b$.  By
  \cite[Proposition~2.7(iii)]{Kirchberg-Rordam00} there exists $x\in
  B_\infty$ such that $x^*x = (a-\epsilon)_+$ and $xx^* \in
  \overline{b B_\infty b}$.
  Because $x^*x=(a-\epsilon)_+ \in J_B$, we get $x \in J_B$ and
  $\|x^*x-a\|<\epsilon$.  Also, since $xx^* \in \overline{bB_\infty b}$
  and $ab=0$, we have $xx^*a=0$, so $(xx^*)(x^*x) = 0$.
\end{proof}

\begin{remark}\label{rem6.12}
  As mentioned in the introduction, it is natural to consider the
  extension associated with the finite part of the bidual $B^{**}_{\text{fin}}$.  However,
  when $B$ has infinitely many extreme traces, the kernel of the
  surjection $B_\omega \to (B^{**}_{\text{fin}})^\omega$ is not
  separably stable.  To see this, we can take an element
  $b=(b_n)_{n=1}^\infty \in (B_\omega)_+$ such that $\|b_n\|_{2,\tau}
  \to 0$ for all $\tau$ yet $\sup_{\tau \in T(B)} \|b_n\|_{2,\tau} \to
  1$ (i.e., the convergence is pointwise but not uniform).  This means
  that the image of $b$ in $(B^{**}_{\text{fin}})^\omega$ is zero, but
  the trace condition makes it impossible to find $x^*x \in
  B_\omega$ close to $b$ with $\|(xx^*)(x^*x)\|$ small.
\end{remark}

\section{Classifying lifts along the trace-kernel extension}
\label{sec:class-lifts-trace}

The main objective of this section is to classify lifts from the trace-kernel quotient in terms of $KL(A,J_B)$.  There are two versions of
this result. We first establish Theorem~\ref{thm:ClassifyingLifts},
which classifies lifts of unitizably full maps $\theta\colon A\to
B^\infty$, and then we follow a ``de-unitization'' procedure to convert
this result into the classification of unital lifts (Theorem
\ref{thm:ClassifyingUnitalLifts}, which is the precise formulation of Theorem~\ref{intro-lifts}).

The classification of lifts has a long history, dating back to the
theory of extensions of $C^*$-algebras developed by Brown, Douglas,
and Fillmore (\cite{Brown-Douglas-etal77}). 
Our results build on more recent developments by CS (\cite{Schafhauser17,Schafhauser18}); see Remarks~\ref{r:qdlift} and \ref{rmk:LiftsContext}.

\subsection{Classifying lifts of unitizably full maps}

In the following theorem, we follow the definitions of $KK$- and
$KL$-groups for non-$\sigma$-unital codomain $C^*$-algebras (such as
$J_B$) from Section~\ref{subsec:elem-of-kk}.  We defer the proof to Section~\ref{subsec:proof-of-lifts}, allowing us to first address preliminaries.

\begin{theorem}[Classification of lifts]
\label{thm:ClassifyingLifts}
Let $A$ be a separable nuclear $C^*$-algebra, let $B$ be a unital simple separable nuclear $\mathcal Z$-stable $C^*$-algebra with $T(B)\neq\emptyset$, and suppose
$\theta\colon A \to B^\infty$ is a unitizably full $^*$-ho\-mo\-mor\-phism.
\begin{enumerate}[(i)]
\item For any $\kappa \in \KK(A,B_\infty)$ with $KK(A, q_B)(\kappa) = [\theta]_{KK(A,B^\infty)}$, there exists a $^*$-ho\-mo\-mor\-phism $\phi\colon A \to B_\infty$ that lifts $\theta$ and such that $[\phi]_{KK(A,B_\infty)}=\kappa$.\label{thm:ClassifyingLifts.C1}
\item Given a $^*$-homomorphism $\psi\colon A\to B_\infty$ that
  lifts $\theta$ and any $\lambda \in \KL(A,J_B)$, there exists a
  $^*$-ho\-mo\-mor\-phism $\phi\colon A \to B_\infty$ that also lifts
  $\theta$ and such that
  $[\phi,\psi]_{KL(A,J_B)}=\lambda$.\label{thm:ClassifyingLifts.C2}
\item If $\phi_1$ and $\phi_2$ are $^*$-homomorphisms that lift $\theta$ and satisfy $[\phi_1,\phi_2]_{KL(A,J_B)} = 0$, then they are unitarily equivalent.
  In particular, the map $\phi$ in \ref{thm:ClassifyingLifts.C2} is unique up to unitary equivalence.\label{thm:ClassifyingLifts.C3}
\end{enumerate}
\end{theorem}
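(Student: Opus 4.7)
The strategy, outlined in Section~\ref{subsec:methods-behind-thmB}, is to pull back the trace-kernel extension along $\theta$, separabilize to a stable and $\Z$-stable subextension, use unitizable fullness of $\theta$ together with Theorem~\ref{thm:absorbing-z-stable} to obtain absorption, and then apply the $KK$-existence theorem (Theorem~\ref{thm:KK-Existence}) and the $\Z$-stable $KK$- and $KL$-uniqueness theorems (Theorem~\ref{thm:KK-Uniqueness}). Throughout, one exploits that $J_B$ is separably stable (Lemma~\ref{lem:J_BsepStable}) and separably $\Z$-stable (inherited from $B$).

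\textbf{Setup and absorption.} Let $\mathsf{e}\colon 0 \to J_B \to E \to A \to 0$ be the pullback of the trace-kernel extension along $\theta$. I would construct nested separable sub-extensions $\mathsf{e}_0 \colon 0 \to J_0 \to E_0 \to A \to 0$ inside $\mathsf{e}$, with $J_0 \subseteq J_B$ stable and $\Z$-stable, and $E_0 \subseteq B_\infty$ containing the images of the $^*$-homomorphisms relevant to the current step (given lifts of $\theta$ and representatives of Cuntz pairs realizing the prescribed classes). The Busby invariant $\tau_0 \colon A \to \mathcal{Q}(J_0)$ inherits unitizable fullness from $\theta$ and is therefore absorbing by Theorem~\ref{thm:absorbing-z-stable}. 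Denoting the canonical map by $\mu\colon B_\infty \to \mathcal M(J_0)$, Corollary~\ref{cor:multipler-vs-corona} then ensures that $\mu\circ\phi$ is absorbing whenever $\phi \colon A \to B_\infty$ lifts $\theta$.

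\textbf{Parts (i) and (ii): existence.} For (ii), apply Theorem~\ref{thm:KK-Existence}\ref{KKexistence.1} with the absorbing anchor $\mu\circ\psi$ to produce an absorbing $^*$-homomorphism $\phi' \colon A \to \mathcal M(J_0)$ such that $(\phi', \mu\circ\psi)$ is a Cuntz pair with $[\phi', \mu\circ\psi]_{KL(A,J_0)}$ mapping to $\lambda$ under the natural map $KL(A,J_0) \to KL(A,J_B)$ of Proposition~\ref{KL-inductivelimit}. Because $\phi'(a) - \mu(\psi(a)) \in J_0 \subseteq B_\infty$ and $\psi$ already lands in $B_\infty$, $\phi'$ itself factors through $B_\infty$ and yields a lift $\phi$ of $\theta$ with $[\phi,\psi]_{KL(A,J_B)} = \lambda$. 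The same Cuntz-pair construction works at the $KK$ level, which we use to prove (i): Proposition~\ref{prop:KK-facts}\ref{prop:KK-facts.3} converts the hypothesis on $\kappa$ into $[\tau_0]_{KK(A,\mathcal{Q}(J_0))}=0$ after enlarging $E_0$ to witness this vanishing in the inductive limit; Theorem~\ref{thm:KK-Existence}\ref{KKexistence.2} then provides an absorbing lift $\phi_0 \colon A \to \mathcal M(J_0)$ of $\tau_0$, which actually lifts $\theta$ in $B_\infty$. A final $KK$-adjustment, using Theorem~\ref{thm:KK-Existence}\ref{KKexistence.1} with $\phi_0$ as anchor to realize $\kappa - [\phi_0]_{KK(A,B_\infty)} \in \im(KK(A,J_B) \to KK(A,B_\infty))$, produces the desired $\phi$.

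\textbf{Part (iii): uniqueness.} Given lifts $\phi_1, \phi_2$ of $\theta$ with $[\phi_1,\phi_2]_{KL(A,J_B)}=0$, choose a common $E_0$ containing both ranges, enlarged so that $[\mu\circ\phi_1,\mu\circ\phi_2]_{KL(A,J_0)}=0$. Since both $\mu\circ\phi_i$ are absorbing, Theorem~\ref{thm:KK-Uniqueness}\ref{KL-Uniqueness} yields unitaries $u_n \in (J_0 \otimes \Z)^\dagger$ with $\|u_n(\phi_1(a)\otimes 1_\Z)u_n^* - \phi_2(a)\otimes 1_\Z\| \to 0$ for all $a \in A$. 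By separable $\Z$-stability of $J_B$, I would enlarge $J_0$ so that $(J_0 \otimes \Z)^\dagger$ sits canonically inside $J_B^\dagger \subseteq B_\infty$, producing approximate unitary equivalence of $\phi_1$ and $\phi_2$ in $B_\infty$; Lemma~\ref{lem:AUEImpliesUE} then upgrades this to genuine unitary equivalence. The main obstacle is orchestrating the separabilization uniformly to support all three parts, and in particular ensuring both that the absorbing lift from Theorem~\ref{thm:KK-Existence}\ref{KKexistence.2} respects the requirement of lifting $\theta$ in $B_\infty$ (not merely in $\mathcal M(J_0)$) and that all the $KK$- and $KL$-classes in play are faithfully represented in the chosen sub-extension via the inductive-limit definitions from Section~\ref{subsec:elem-of-kk}.
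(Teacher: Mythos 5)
Your proposal tracks the paper's proof closely: the paper likewise separabilizes the trace-kernel extension to a unital separable $\Z$-stable subextension $0 \to I \to E \to D \to 0$ with $I$ stable (Lemma~\ref{lem:ClassifyingLiftsSeparablize}), obtains absorption of $\mu_{\mathsf{e}}\circ\phi$ from unitizable fullness of $\theta$ via Theorem~\ref{thm:absorbing-z-stable} (Lemma~\ref{ClassifyingLifts.AbsorptionCondition}), proves a $KK$-level version of part (ii) as Proposition~\ref{prop:KKExistence} using $KK$-existence and the pullback property, and then proves part (i) by producing some lift from Theorem~\ref{thm:KK-Existence}\ref{KKexistence.2} (using $KK(A,\mathcal M(I))=0$ rather than half-exactness) and correcting its $KK$-class via Proposition~\ref{prop:KKExistence}. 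Your descriptions of (i) and (ii) are essentially this, modulo a few phrasing slips (e.g.\ $\phi'$ does not ``factor through $B_\infty$''; one uses the pullback square to produce a $^*$-homomorphism $\phi\colon A \to E_0 \subseteq B_\infty$ with $\mu_{\mathsf{e}}\circ\phi = \phi'$, and for (ii) one should realize a $KK$-lift of $\lambda$ in a separable subideal and then pass to the $KL$-quotient).

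The one genuine gap is in the last step of (iii). Having unitaries $u_n \in (J_0\otimes\Z)^\dagger$ with $\mathrm{Ad}(u_n)(\phi_1(a)\otimes 1_\Z) \to \phi_2(a)\otimes 1_\Z$ in $E_0\otimes\Z$, you cannot conclude approximate unitary equivalence of $\phi_1$ and $\phi_2$ in $B_\infty$ merely by ``enlarging $J_0$ so that $(J_0\otimes\Z)^\dagger$ sits canonically inside $J_B^\dagger$'': there is no canonical such embedding, and even if one existed it would not undo the tensor factor of $1_\Z$ on $\phi_i(a)$, since the approximate conjugacy lives in $E_0\otimes\Z$, not in $B_\infty$. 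What is actually needed is strong self-absorption of $\Z$ together with $\Z$-stability of the separable ambient algebra $E_0$ (arranged in the separabilization via separable $\Z$-stability of $B_\infty$). Proposition~\ref{PropSSA} gives an isomorphism $E_0 \to E_0\otimes\Z$ approximately unitarily equivalent to $\mathrm{id}_{E_0}\otimes 1_\Z$; transporting the approximate conjugacy back along this isomorphism shows $\phi_1 \approx_u \phi_2$ in $E_0 \subseteq B_\infty$, and Lemma~\ref{lem:AUEImpliesUE} upgrades this to genuine unitary equivalence. So the conclusion you want is correct, but it rests on the strong self-absorption mechanism acting on $E_0$, not on any canonical inclusion of $(J_0\otimes\Z)^\dagger$ into $J_B^\dagger$.
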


The data from Theorem~\ref{thm:ClassifyingLifts} is summarized in the
following diagram: given $\theta$ and a $KK$-class $\kappa$ we can
lift $\theta$ to a map $A\to B_\infty$.  Further, for a fixed lift
$\psi$ of $\theta$, the lifts $\phi$ are classified up to unitary
equivalence by the class $[\phi, \psi]_{KL(A, J_B)}$.
\begin{equation}\label{ClassificationLifts.Diag1}
  \begin{tikzcd}
    & & & A \ar[dl, bend right, dash dot, "{\kappa}" description] \ar[dl, bend left,
    "{\phi,\psi}" description, end anchor={[yshift=+.5ex]}] \ar[d, "\theta\text{ unitizably
      full}"] \ar[dll, bend right, dash dot,
    "{\lambda=[\phi,\psi]_{KL}}" swap]\\
    0\ar[r] & \ar[r] J_B\ar[r] & B_\infty\ar[r] & B^\infty\ar[r] & 0
  \end{tikzcd}
\end{equation}

\begin{remark}
  \label{rem:KK-lifts-are-needed}
It is important in Theorem
\ref{thm:ClassifyingLifts}\ref{thm:ClassifyingLifts.C1} that we are
given a $KK$-lift $\kappa$ of $\theta$. This is a genuine assumption
--- for example, it certainly fails when $A$ has a non-trivial
projection and $B$ does not.  When $A$ satisfies the UCT, the
existence of a $KK$-lift of $\theta$ is equivalent to the existence of
$K_0$-lift of $\theta$ --- that is, a map $\alpha_0 \colon K_0(A)
\rightarrow K_0(B_\infty)$ such that $K_0(q_B) \circ \alpha_0 = K_0(\theta)$ (by Proposition~\ref{Prop:KK-trace-kernel-quotient} and surjectivity of $\Gamma^{(A,B_\infty)}\colon KK(A,B_\infty) \to \Hom(K_*(A),K_*(B))$).
When we apply Theorem~\ref{thm:ClassifyingLifts}\ref{thm:ClassifyingLifts.C1} in the classification of full approximate embeddings, we will use the UCT to produce a $KK$-lift of $\theta$ compatible with a specified morphism in total $K$-theory.
\end{remark}

\begin{remark}\label{r:qdlift}
  CS's proof in \cite{Schafhauser17} of the quasidiagonality theorem of \cite{TWW} (that every faithful trace on a separable nuclear $C^*$-algebra satisfying the UCT is quasidiagonal) can be viewed as a
  special case of Theorem~\ref{thm:ClassifyingLifts}\ref{thm:ClassifyingLifts.C1}.\footnote{\cite{Schafhauser17}
    works with ultrapowers rather than the sequence algebras we use
    here.  This is more natural in the unique trace setting because
    the tracial ultrapower is a von Neumann algebra.}  Indeed,
  a standard computation (see Section~3 of \cite{Schafhauser17}, for
  example) shows that $K_0(q_{\mathcal Q}) \colon K_0({\mathcal Q}_\infty) \rightarrow
  K_0({\mathcal Q}^\infty)$ is a surjective linear map of rational vector spaces
  and hence splits.  Now, if $A$ is a nuclear $C^*$-algebra with a
  faithful trace, one obtains a unitizably full
  $^*$-ho\-mo\-mor\-phism $\theta\colon A \to {\mathcal Q}^\infty$ from the
  amenability of this trace.  This necessarily admits a $K_0$-lift
  since $K_0(q_{\mathcal Q})$ is a split surjection, so when $A$ satisfies the
  UCT, $\theta$ admits a $KK$-lift as well by
  Remark~\ref{rem:KK-lifts-are-needed}.  Now,
  Theorem~\ref{thm:ClassifyingLifts}\ref{thm:ClassifyingLifts.C1}
  implies there is an embedding $A \hookrightarrow {\mathcal Q}_\infty$,
  which allows access to Voiculescu's local characterization of
  quasidiagonality (\cite[Theorem~1]{Voiculescu91}) (since $A$ is
  nuclear, and so this map has a c.p.c.\ lift to $\ell^\infty({\mathcal Q})$ by the
  Choi--Effros lifting theorem).
\end{remark}

\begin{remark}\label{rmk:LiftsContext}
CS implicitly establishes versions of Theorem~\ref{thm:ClassifyingLifts}\ref{thm:ClassifyingLifts.C2} and \ref{thm:ClassifyingLifts.C3} in the proofs of \cite[Propositions~4.2 and~4.3]{Schafhauser18}, in the case that $B$ has a unique (quasi)-trace and is ${\mathcal Q}$-stable (but is not necessarily nuclear). These results, and CS's approach to the quasidiagonality theorem in \cite{Schafhauser17}, allow for exact domains with appropriate amenability conditions on the trace induced by $\theta$ and on $\phi$ and $\psi$ (in \cite{Schafhauser17}, this recaptures the more general version of the quasidiagonality theorem from \cite{Gabe17}).  In comparison, Theorem~\ref{thm:ClassifyingLifts} relaxes $\mathcal Q$-stability to $\Z$-stability and allows for general trace simplices but only covers nuclear domains and codomains.  The additional machinery to allow for exact domains and general $\Z$-stable codomains (whose quasitraces are traces) will be found in the second paper of this series.
\end{remark}

\begin{remark}
  The uniqueness in Theorem~\ref{thm:ClassifyingLifts}\ref{thm:ClassifyingLifts.C3} means that, given
  $\psi$ as in \ref{thm:ClassifyingLifts.C2}, any two lifts
  $\phi_1,\phi_2\colon A\to B_\infty$ of $\theta$ with
  $[\phi_1,\psi]_{KL(A,J_B)}=[\phi_2,\psi]_{KL(A,J_B)}$ are unitarily
  equivalent. In order to view this as a classification of lifts up to
  unitary equivalence by $KL(A,J_B)$, we should check that $[\,\cdot\,
  , \psi]_{KK(A,J_B)}$ (and hence $[\,\cdot\, ,\psi]_{KL(A,J_B)}$) is
  invariant under unitary equivalence: a priori, it is only invariant
  under conjugation by unitaries in $J_B^\dag$ (see
  Proposition~\ref{prop:KK-facts}).  Given $\phi_1\colon A\to
  B_\infty$ lifting $\theta$ and a unitary $u\in B_\infty$, set
  $\phi_2\coloneqq \Ad(u)\circ\phi_1$. By \cite[Proposition~2.1]{CETW21},
  the unitary group of $B^\infty \cap q_B(\phi_1(A))'$ is
  path-connected, and hence
  Lemma~\ref{lemma:AsympEquiv}\ref{lemma:AsympEquiv.C1} implies that
  there is a continuous path of unitaries $(v_t)_{t \geq 0} \subseteq
  J_B^\dag$ such that $\mathrm{Ad}(v_t) \circ \phi_1 \rightarrow
  \phi_2$ point-norm (taking the continuous path $(u_t)_{t \geq 0}$ of
  the lemma to be the constant path $u$).  Then a standard application
  of Kirchberg's $\epsilon$-test (Lemma~\ref{lem:EpsTest}) provides a
  unitary $v \in J_B^\dag$ with $\Ad(v) \circ \phi_1 = \phi_2$.  This
  implies $[\phi_1, \psi]_{KK(A,J_B)} = [\phi_2, \psi]_{KK(A,J_B)}$, as
  claimed.
\end{remark}

Pullback arguments relating extensions to the standard multiplier-corona extension are repeatedly used in the proof of Theorem~\ref{thm:ClassifyingLifts}. We set out our notation for this below, which will be used freely throughout the rest of the section.

\begin{notation}
  Given an extension $\mathsf{e}\colon 0 \to I
  \xrightarrow{j_{\mathsf{e}}} E \xrightarrow{q_{\mathsf{e}}} D \to
  0$, we let $\mu_{\mathsf{e}}\colon E \to \mathcal M(I)$ be the
  canonical map of $E$ into the multiplier algebra of $I$, and write
  $\bar\mu_{\mathsf{e}}\colon D \to \mathcal Q(I)$ for the Busby map
  of $\mathsf{e}$.  Recall that these maps fit into the following
  commuting diagram, in which the right-hand square is a pullback:
  \begin{equation}
    \label{eq:ClassifyingLiftsPullback}
    \begin{tikzcd}
      \mathsf{e}\colon \, 0 \ar{r} & I \ar{r}{j_\mathsf{e}} \ar[equals]{d} & E \ar{r}{q_{\mathsf{e}}} \ar{d}{\mu_{\mathsf{e}}} & D \ar{r} \ar{d}{\bar{\mu}_{\mathsf{e}}} & 0\hphantom{.} \\
      \bar{\mathsf{e}}\colon \, 0 \ar{r} & I \ar{r}{\bar{\jmath}_\mathsf{e}} & \mathcal M(I) \ar{r}{\bar{q}_\mathsf{e}} & \mathcal{Q}(I) \ar{r} & 0.
    \end{tikzcd}
  \end{equation}
  That is,
  $^*$-ho\-mo\-mor\-phisms $\theta\colon A\to D$ and $\bar{\phi}\colon
  A\to\mathcal M(I)$ with $\bar q_{\mathsf e} \circ \bar\phi = \bar
  \mu_{\mathsf e} \circ \theta$ induce a unique
  $^*$-ho\-mo\-mor\-phism $\phi\colon A\to E$ with $\mu_{\mathsf{e}}
  \circ \phi = \bar{\phi}$ and $q_{\mathsf{e}} \circ \phi = \theta$.
  More concretely, $E \cong \{(x,d)\in \mathcal M(I)\oplus D :
  \bar{q}_{\mathsf{e}}(x) = \bar{\mu}_{\mathsf{e}}(d)\}$ via the
  isomorphism $e\mapsto (\mu_\mathsf{e}(e), q_\mathsf{e}(e))$.
\end{notation}

\begin{lemma}\label{ClassifyingLifts.AbsorptionCondition}
Let $A$ be a separable nuclear $C^*$-algebra and let
\begin{equation}
\begin{tikzcd}
\mathsf{e}\colon 0\ar[r]& I\ar[r]& E\ar[r]& D\ar[r]& 0    
\end{tikzcd}
\end{equation}
be a unital separable extension such that $I$ is stable and $E$ is
$\Z$-stable. If $\theta\colon
A\rightarrow D$ is a unitizably full $^*$-ho\-mo\-mor\-phism, then 
$\bar{\mu}_{\mathsf{e}} \circ \theta\colon A\to \mathcal Q(I)$ is an absorbing $^*$-ho\-mo\-mor\-phism, and if
$\phi\colon A\to E$ is a $^*$-homomorphism that lifts $\theta$, then  $\mu_{\mathsf{e}} \circ
\phi \colon A\to\mathcal M(I)$ is an absorbing $^*$-ho\-mo\-mor\-phism.
\end{lemma}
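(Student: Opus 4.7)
The plan is to reduce both absorption statements to Theorem~\ref{thm:absorbing-z-stable}, which (given separability and nuclearity of $A$) characterizes absorption of $^*$-homomorphisms into $\mathcal M(I)$ or $\mathcal Q(I)$ by unitizable fullness, provided $I$ is $\sigma$-unital, stable, and $\mathcal Z$-stable.

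First I would check the hypotheses on $I$. Separability of $\mathsf{e}$ gives that $I$ is separable (hence $\sigma$-unital), and stability is given. For $\mathcal Z$-stability of $I$, I would invoke the fact that $\mathcal Z$-stability passes to hereditary $C^*$-subalgebras of separable $C^*$-algebras (\cite[Corollary~3.2]{Toms-Winter07}): since $I$ is a hereditary subalgebra of the $\mathcal Z$-stable separable algebra $E$, it is $\mathcal Z$-stable. This allows Theorem~\ref{thm:absorbing-z-stable} to be applied to the pair $(A, I)$.

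Next, I would verify that $\bar\mu_{\mathsf e}\circ\theta\colon A\to\mathcal Q(I)$ is unitizably full. Because $\mathsf{e}$ is a unital extension, the Busby map $\bar\mu_{\mathsf e}\colon D\to\mathcal Q(I)$ is unital, so $(\bar\mu_{\mathsf e}\circ\theta)^\dagger=\bar\mu_{\mathsf e}\circ\theta^\dagger$. Now, for any nonzero $a\in A^\dagger$, unitizable fullness of $\theta$ yields $1_D\in\overline{D\,\theta^\dagger(a)\,D}$, and applying the $^*$-homomorphism $\bar\mu_{\mathsf e}$ (which sends $1_D$ to $1_{\mathcal Q(I)}$) gives
\begin{equation}
1_{\mathcal Q(I)}\in\overline{\mathcal Q(I)\,\bar\mu_{\mathsf e}(\theta^\dagger(a))\,\mathcal Q(I)},
\end{equation}
so $\bar\mu_{\mathsf e}\circ\theta^\dagger$ is full. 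Theorem~\ref{thm:absorbing-z-stable} then delivers absorption of $\bar\mu_{\mathsf e}\circ\theta$.

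Finally, for $\mu_{\mathsf e}\circ\phi\colon A\to\mathcal M(I)$, commutativity of the pullback square \eqref{eq:ClassifyingLiftsPullback} gives $q_I\circ\mu_{\mathsf e}\circ\phi=\bar\mu_{\mathsf e}\circ q_{\mathsf e}\circ\phi=\bar\mu_{\mathsf e}\circ\theta$, which we have just shown to be absorbing. Corollary~\ref{cor:multipler-vs-corona} then upgrades this to absorption of $\mu_{\mathsf e}\circ\phi$. The only substantive point in the argument is the $\mathcal Z$-stability of $I$; once this is in hand, everything else is formal.
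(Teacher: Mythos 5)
Your proposal is correct and takes essentially the same route as the paper: propagate unitizable fullness through the unital Busby map, invoke Theorem~\ref{thm:absorbing-z-stable}, then upgrade the multiplier statement via Corollary~\ref{cor:multipler-vs-corona}. You usefully make explicit the step --- left implicit in the paper's one-line proof --- that $I$ is $\mathcal{Z}$-stable because it is an ideal (hence a hereditary subalgebra) of the separable $\mathcal{Z}$-stable $E$.
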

\begin{proof}
  Since $\bar{\mu}_{\mathsf{e}}\circ\theta$ is the composition of a
  unitizably full $^*$-ho\-mo\-mor\-phism followed by a unital
  $^*$-ho\-mo\-mor\-phism, it is unitizably full, so absorbing by
  Theorem~\ref{thm:absorbing-z-stable}.  The second part follows from
  the first and Corollary~\ref{cor:multipler-vs-corona}.
\end{proof}

\subsection{Separabilization}
Theorem~\ref{thm:ClassifyingLifts} will be proved using the
$KK$-existence and uniqueness results of
Section~\ref{sec:KK-classification}.  As such, it is vital to
separabilize --- pass to a separable subextension --- so that one can
obtain the absorption needed to apply these results.  We do this in
Lemma~\ref{lem:ClassifyingLiftsSeparablize}.  The idea is to use the
separable stability of $J_B$ and separable $\Z$-stability of
$B_\infty$ to factorize the data in the diagram
(\ref{ClassificationLifts.Diag1}) through a unital separable
subextension.
This is illustrated in the diagram below, where $E$ is
$\mathcal Z$-stable and $I$ is stable:\footnote{There are slight differences
  between the data in this diagram and in the lemma, as in order to correct the
  $KK$-class of a lift in the proof Theorem~\ref{thm:ClassifyingLifts}\ref{thm:ClassifyingLifts.C1},
  we will need to factorize a $KK$-class rather than a $KL$-class using part \ref{lem:ClassifyingLiftsSeparablize.C2} of the lemma. To prove the
  uniqueness part of Theorem~\ref{thm:ClassifyingLifts}, we only need
  to factorize maps inducing the trivial $KL$-class, leading to part
  \ref{lem:ClassifyingLiftsSeparablize.C3} of the lemma.}
The proof of \cite[Proposition~1.9]{Schafhauser18}, used below, employs separably inheritable properties (in the sense of Blackadar, \cite[II.8.5]{Blackadar06}), and the fact that they are closed under countable intersections.

\begin{equation}
  \begin{tikzcd}[column sep=large]
    & & & A \ar[dl, bend right, dash dot, "{\kappa'}" description] \ar[dl, bend left,
     "{\phi|^E,\psi|^E}" description, pos=0.4, end anchor={[yshift=+0.75ex]}] \ar[d,
    "{\theta|^D}"] \ar[dll, bend right, dash dot, "{\lambda' =
      [\phi|^E,\psi|^E]_{KL}}", swap]\\
    0\ar[r] & \ar[r] I\ar[d, hookrightarrow] \ar[r] & E \ar[d,
    hookrightarrow] \ar[r] & D \ar[d, hookrightarrow] \ar[r] & 0\phantom{.}\\
    0\ar[r] & \ar[r] J_B \ar[r] & B_\infty \ar[r] & B^\infty\ar[r] & 0.
  \end{tikzcd}
\end{equation} 

\begin{lemma}
  \label{lem:ClassifyingLiftsSeparablize}
  Let $A$, $B$, and $\theta$ be as in Theorem~\ref{thm:ClassifyingLifts}.  There is a unital separable
  subextension
  \begin{equation}
    \label{eq:ClassifyingLiftsSeparablize}
    \mathsf{e}\colon  0 \to I \xrightarrow{j_{\mathsf{e}}} E \xrightarrow{q_{\mathsf{e}}} D \to 0
  \end{equation}
  of the trace-kernel extension $\mathsf{e}_B$ such that $\theta$
  corestricts to a unitizably full map $\theta|^D\colon A \to D$, $E$
  is unital and $\mathcal Z$-stable, and $I$ is stable.  Furthermore:
  \begin{enumerate}[(i)]
  \item Given $\kappa \in \KK(A,B_\infty)$ such that $KK(A,
    q_B)(\kappa)=[\theta]_{KK(A,B^\infty)}$, $\mathsf{e}$ can be
    chosen so that $\kappa$ factorizes as $KK(A, \iota_{E\subseteq
      B_\infty})(\kappa')$ with $\kappa' \in \KK(A,E)$ and $KK(A,
    q_{\mathsf{e}})(\kappa')=[\theta|^D]_{KK(A,
      D)}$.\label{lem:ClassifyingLiftsSeparablize.C1}
  \item Given $\psi\colon A\to B_\infty$ and $\kappa \in \KK(A,J_B)$,
    $\mathsf{e}$ can be chosen so that both $\psi(A)\subseteq E$ and
    $\kappa$ factorizes as $KK(A, \iota_{I \subseteq J_B})(\kappa')$
    for some $\kappa' \in KK(A,
    I)$.\label{lem:ClassifyingLiftsSeparablize.C2}
  \item Given two maps $\phi_1,\phi_2\colon A \to B_\infty$,
    $\mathsf{e}$ can be chosen so that
    $\phi_1(A)\cup\phi_2(A)\subseteq E$. Moreover, if each of $\phi_1$
    and $\phi_2$ lift $\theta$ and $[\phi_1,\phi_2]_{KL(A,J_B)}=0$ and
    $\phi_1|^E$ and $\phi_2|^E$ denote the corestrictions of $\phi_1$
    and $\phi_2$ to $E$, then $\mathsf{e}$ can be chosen so that
    $\big[\phi_1|^E,\phi_2|^E\big]_{KL(A, I)} =
    0$.\label{lem:ClassifyingLiftsSeparablize.C3}
  \end{enumerate}
\end{lemma}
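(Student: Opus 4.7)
The proof is a standard separable-inheritability argument in the spirit of \cite[II.8.5]{Blackadar06} and \cite[Proposition~1.9]{Schafhauser18}. The plan is to build $\mathsf{e}$ as the closed union of a nested sequence of separable unital subextensions
\[
\mathsf{e}_n\colon 0 \to I_n \to E_n \to D_n \to 0 \qquad (n \geq 0)
\]
of $\mathsf{e}_B$, absorbing at each stage a countable family of witnesses for each property we need.

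At the base stage, I would choose $E_0 \subseteq B_\infty$ separable and unital, containing $1_{B_\infty}$ together with a lift of a separable $D_0 \subseteq B^\infty$ with $\theta(A) \subseteq D_0$, and (as applicable in \ref{lem:ClassifyingLiftsSeparablize.C2}--\ref{lem:ClassifyingLiftsSeparablize.C3}) also the relevant $\psi(A)$ or $\phi_i(A)$, then set $I_0 = E_0 \cap J_B$. For items \ref{lem:ClassifyingLiftsSeparablize.C1}--\ref{lem:ClassifyingLiftsSeparablize.C3}, the inductive-limit descriptions of $KK$ and $KL$ (Definition~\ref{KK-inductivelimit}, Proposition~\ref{KL-inductivelimit}) together with Proposition~\ref{prop:KK-facts}\ref{prop:KK-facts.5} let me enlarge $E_0$ or $I_0$ so that the prescribed $\kappa \in KK(A, B_\infty)$ already factors as $KK(A,\iota_{E_0 \subseteq B_\infty})(\kappa')$ for some $\kappa' \in KK(A, E_0)$ satisfying $KK(A, q_{\mathsf{e}_0})(\kappa') = [\theta|^{D_0}]_{KK(A, D_0)}$ (for \ref{lem:ClassifyingLiftsSeparablize.C1}), or so that the given class in $KK(A, J_B)$ factors through $KK(A, I_0)$ (for \ref{lem:ClassifyingLiftsSeparablize.C2}), or so that $[\phi_1|^{E_0}, \phi_2|^{E_0}]_{KL(A, I_0)} = 0$ (for \ref{lem:ClassifyingLiftsSeparablize.C3}). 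Each such enlargement adds only countable data.

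For the inductive step, I would absorb into $\mathsf{e}_{n+1}$ three kinds of witnesses drawn from countable dense subsets fixed in advance. First, for each $a$ in a dense sequence in $(I_n)_+$ and each $m \in \mathbb N$, separable stability of $J_B$ (Lemma~\ref{lem:J_BsepStable}) combined with Proposition~\ref{prop:HR-stability} provides $x \in J_B$ with $\|a - x^*x\| < 1/m$ and $\|(x^*x)(xx^*)\| < 1/m$; all such $x$ are included in $I_{n+1}$. Second, using separable $\mathcal Z$-stability of $B_\infty$ (Proposition~\ref{prop:B_inftyProperties_1}), I include a separable $\mathcal Z$-stable $C^*$-subalgebra of $B_\infty$ containing $E_n$. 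Third, for each $a$ in a dense sequence in $(A^\dagger)_+\setminus\{0\}$ and each $m$, unitizable fullness of $\theta$ in $B^\infty$ provides $y_1,\dots,y_k,z_1,\dots,z_k \in B^\infty$ with $\|\sum_i y_i \theta^\dagger(a) z_i - 1_{B^\infty}\| < 1/m$, and lifts of these witnesses are included in $E_{n+1}$. After enlarging, I set $I_{n+1} = E_{n+1} \cap J_B$ and $D_{n+1} = q_B(E_{n+1})$; all remain separable. Finally set $E = \overline{\bigcup_n E_n}$, $I = \overline{\bigcup_n I_n}$, $D = q_B(E) = \overline{\bigcup_n D_n}$.

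The main obstacle is checking that the limit algebras genuinely carry the target global properties rather than merely separable analogues, and that \ref{lem:ClassifyingLiftsSeparablize.C1}--\ref{lem:ClassifyingLiftsSeparablize.C3} persist from $\mathsf{e}_0$ to $\mathsf{e}$. For the global properties: $I$ satisfies the Hjelmborg--R\o{}rdam criterion because every positive element of $I$ is a norm-limit of elements of some $(I_n)_+$ whose witnesses lie in $I_{n+1}\subseteq I$, so $I$ is $\sigma$-unital and stable by \cite{Hjelmborg-Rordam98}; $E$ is $\mathcal Z$-stable because the absorbed $\mathcal Z$-stable subalgebras form a cofinal tower inside $E$, so the local (central-sequence) characterization of $\mathcal Z$-stability used in Proposition~\ref{prop:B_inftyProperties_1} passes to $E$ itself; and $\theta|^D$ is unitizably full by density of the chosen test set in $(A^\dagger)_+\setminus\{0\}$ together with stability of fullness under norm-perturbation. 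For \ref{lem:ClassifyingLiftsSeparablize.C1}--\ref{lem:ClassifyingLiftsSeparablize.C3}, functoriality of $KK$ and $KL$ in the second variable (Proposition~\ref{prop:KK-facts}\ref{prop:KK-facts.5} and Proposition~\ref{prop:KL-facts}\ref{prop:KL-facts.5}) transports the base-stage factorizations along the inclusions $E_0 \hookrightarrow E$, $I_0 \hookrightarrow I$, $D_0 \hookrightarrow D$, giving the required compatibilities. The real subtlety is not any single one of these verifications but the bookkeeping: a single countable induction must simultaneously absorb witnesses for the three global properties together with the $KK$/$KL$-theoretic factorizations of \ref{lem:ClassifyingLiftsSeparablize.C1}--\ref{lem:ClassifyingLiftsSeparablize.C3}, while continually preserving the extension structure $I_n = E_n \cap J_B$ and $D_n = q_B(E_n)$.
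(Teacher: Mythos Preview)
Your proposal is correct and follows essentially the same separable-inheritability strategy as the paper. The only difference is organizational: the paper handles the enlargements sequentially (first unitizably full via \cite[Proposition~1.9]{Schafhauser18}, then the $KK$/$KL$ factorizations, then stability and $\mathcal Z$-stability via \cite[Proposition~1.6]{Schafhauser18} together with the permanence of these properties under increasing unions \cite[Corollary~4.1]{Hjelmborg-Rordam98}, \cite[Corollary~3.4]{Toms-Winter07}), relying throughout on the observation that each of the conditions is inherited by larger separable subextensions; you instead fold all witnesses into a single interleaved countable induction and verify the limit properties by hand. Your explicit version is exactly what \cite[Propositions~1.6 and~1.9]{Schafhauser18} do under the hood, so nothing is lost either way.
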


\begin{proof}
Note that if $\mathsf e$ is a unital separable subextension of $\mathsf e_B$ satisfying \ref{lem:ClassifyingLiftsSeparablize.C1}, \ref{lem:ClassifyingLiftsSeparablize.C2}, or \ref{lem:ClassifyingLiftsSeparablize.C3}, then the same holds for any larger subextension of $\mathsf e_B$. Similarly, if $\mathsf e$ is such that $\theta|^{D}$ is unitizably full, then the same property is satisfied for any larger subextension of $\mathsf e_B$ since $1_{B^\infty} \in D$.
We start with any separable subextension $\mathsf{e}_0\colon 0 \to I_0 \to E_0 \to D_0 \to 0$ such that $E_0$ contains $1_{B_\infty}$ and then enlarge the algebras while keeping them separable as follows. 

Applying \cite[Proposition 1.9]{Schafhauser18} to the forced unitization $\theta^\dagger\colon A^\dagger\to B^\infty$, $D_0$ may be enlarged to arrange that $\theta$ corestricts to a unitizably full map $A \to D_0$. We make the corresponding enlargement of $E_0$ so that it contains lifts of a countable dense subset of $D_0$, and enlarge $I_0$ to $J_B\cap E_0$.

For conditions \ref{lem:ClassifyingLiftsSeparablize.C1},
\ref{lem:ClassifyingLiftsSeparablize.C2}, and
\ref{lem:ClassifyingLiftsSeparablize.C3}, we use the characterizations
of $\KK(A, C)$ and $\KL(A,C)$ as inductive limits of $\KK(A, C_0)$ and
$\KL(A,C_0)$ over separable subalgebras $C_0$ of $C$ (this is Definition~\ref{KK-inductivelimit} for $KK$ and Proposition~\ref{KL-inductivelimit} for $KL$).
In particular, given $\kappa$ as in
\ref{lem:ClassifyingLiftsSeparablize.C1}, considering the identity
$KK(A,q_B)(\kappa)=[\theta]_{KK(A,B^\infty)}$, we can find suitable
separable enlargements of $E_0$ and $D_0$ and $\kappa'\in KK(A,E_0)$
so that $q_B(E_0)=D_0$, $\kappa=KK(A, \iota_{E_0\subseteq
  B_\infty})(\kappa')$, and $KK(A,q_B|_{E_0})(\kappa') =
[\theta|^{D_0}]_{KK(A,D_0)}$.

Given $\psi$ and $\kappa$ as in
\ref{lem:ClassifyingLiftsSeparablize.C2}, $I_0$ may be enlarged so
that $\psi(A)\subseteq E_0$ and $\kappa$ factorizes through $I_0$;
this forces corresponding enlargements to $E_0$ and $D_0$.
Likewise, given $\phi_1$ and $\phi_2$ as in
\ref{lem:ClassifyingLiftsSeparablize.C3}, we can enlarge $E_0$ (and
hence also $I_0$ and $D_0$) so that $\phi_1(A)\cup\phi_2(A)\subseteq
E_0$. As $[\phi_1,\phi_2]_{KL(A,J_B)}=0$, by the characterization of $KL$ as an inductive limit, a suitable separable enlargement of
$I_0$ and $E_0$ satisfies $[\phi_1|^{E_0},\phi_2|^{E_0}]_{KL(A, I_0)}=
0$.  We enlarge $D_0$ to the image of $q_B(E_0)$.

Finally, the $\mathcal Z$-stability hypothesis on $B$ ensures that
$B_\infty$ is separably $\mathcal Z$-stable by Proposition
\ref{prop:B_inftyProperties_2}\ref{prop:B_inftyProperties.2}, and,
together with nuclearity, this hypothesis also gives that $J_B$ is
separably stable by Lemma~\ref{lem:J_BsepStable}.  As both stability
and $\Z$-stability are preserved under sequential inductive limits of
separable $C^*$-algebras with injective connecting maps (\cite[Corollary~4.1]{Hjelmborg-Rordam98} and
\cite[Corollary~3.4]{Toms-Winter07}), we can enlarge $0\rightarrow
I_0\rightarrow E_0\rightarrow D_0\rightarrow 0$ to $\mathsf{e}$ as in
\eqref{eq:ClassifyingLiftsSeparablize} in such a way that $I$ is
stable and $E$ is $\mathcal Z$-stable by \cite[Proposition
1.6]{Schafhauser18}.
\end{proof}

\subsection{Proof of Theorem~\ref{thm:ClassifyingLifts}}
\label{subsec:proof-of-lifts}

We start the proof of Theorem~\ref{thm:ClassifyingLifts} with the
following strengthening of part \ref{thm:ClassifyingLifts.C2} of that theorem, which
modifies lifts to realize classes in $KK(A,J_B)$ (rather than just
classes in $KL(A,J_B)$); we will also use this to correct the
$KK$-class of a lift in the proof of part
\ref{thm:ClassifyingLifts.C1}.

\begin{proposition}\label{prop:KKExistence}
In the notation of Theorem~\ref{thm:ClassifyingLifts}, if $\psi \colon A \rightarrow B_\infty$ is a lift of $\theta$ and $\kappa\in \KK(A,J_B)$, then there exists a $^*$-ho\-mo\-mor\-phism $\phi\colon A \to B_\infty$ that also lifts $\theta$ and such that $[\phi,\psi]_{KK(A, J_B)}=\kappa$.
\end{proposition}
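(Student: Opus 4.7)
The plan is to deduce the proposition from the $KK$-existence theorem (Theorem~\ref{thm:KK-Existence}\ref{KKexistence.1}) applied to a separabilization. The $KK$-existence theorem demands a $\sigma$-unital stable ideal and an absorbing $^*$-homomorphism; both are achieved by passing to a suitable separable subextension. Lemma~\ref{lem:ClassifyingLiftsSeparablize}\ref{lem:ClassifyingLiftsSeparablize.C2} yields a unital separable subextension
\begin{equation}
\mathsf{e}\colon 0 \to I \xrightarrow{j_{\mathsf{e}}} E \xrightarrow{q_{\mathsf{e}}} D \to 0
\end{equation}
of the trace-kernel extension such that $\psi(A)\subseteq E$, $\theta$ corestricts to a unitizably full $^*$-homomorphism $\theta|^D\colon A \to D$, $E$ is $\mathcal Z$-stable, $I$ is stable, and $\kappa = KK(A,\iota_{I\subseteq J_B})(\kappa')$ for some $\kappa'\in KK(A,I)$. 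Write $\psi|^E \colon A \to E$ for the corestriction of $\psi$.

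With this setup, Lemma~\ref{ClassifyingLifts.AbsorptionCondition} tells us $\mu_{\mathsf{e}} \circ \psi|^E \colon A \to \mathcal M(I)$ is absorbing. So Theorem~\ref{thm:KK-Existence}\ref{KKexistence.1} produces an absorbing $^*$-homomorphism $\bar\phi\colon A \to \mathcal M(I)$ such that $(\bar\phi, \mu_{\mathsf{e}}\circ\psi|^E)$ is an $(A,I)$-Cuntz pair with
\begin{equation}
[\bar\phi,\mu_{\mathsf{e}}\circ\psi|^E]_{KK(A,I)} = \kappa'.
\end{equation}
In particular $\bar q_{\mathsf{e}}\circ\bar\phi = \bar q_{\mathsf{e}}\circ\mu_{\mathsf{e}}\circ\psi|^E = \bar\mu_{\mathsf{e}}\circ\theta|^D$. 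The pullback description of $E$ in \eqref{eq:ClassifyingLiftsPullback} now furnishes a unique $^*$-homomorphism $\phi|^E \colon A \to E$ with $\mu_{\mathsf{e}}\circ\phi|^E = \bar\phi$ and $q_{\mathsf{e}}\circ\phi|^E = \theta|^D$. Composing with the inclusion $E\hookrightarrow B_\infty$ gives the desired $\phi \colon A \to B_\infty$, and by construction $q_B\circ\phi = \theta$.

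It remains to verify that $[\phi,\psi]_{KK(A,J_B)}=\kappa$. Since the inclusion $E\hookrightarrow \mathcal M(I)$ (namely, $\mu_{\mathsf{e}}$) extends the identity on $I$, Proposition~\ref{prop:KK-facts}\ref{prop:KK-facts.5} gives
\begin{equation}
[\phi|^E,\psi|^E]_{KK(A,I)} = [\mu_{\mathsf{e}}\circ\phi|^E,\mu_{\mathsf{e}}\circ\psi|^E]_{KK(A,I)} = [\bar\phi,\mu_{\mathsf{e}}\circ\psi|^E]_{KK(A,I)} = \kappa'.
\end{equation}
Applying Proposition~\ref{prop:KK-facts}\ref{prop:KK-facts.5} once more, using that the inclusion $E\hookrightarrow B_\infty$ extends $\iota_{I\subseteq J_B}$, yields
\begin{equation}
[\phi,\psi]_{KK(A,J_B)} = KK(A,\iota_{I\subseteq J_B})([\phi|^E,\psi|^E]_{KK(A,I)}) = KK(A,\iota_{I\subseteq J_B})(\kappa') = \kappa,
\end{equation}
as required.

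The principal obstacle --- and the only substantive step --- is setting up the separabilization correctly so that the $KK$-existence theorem applies: one needs the ideal $I$ to be simultaneously stable (so $\mathcal M(I)$ is available in the usual sense for $KK$-theory) and large enough to detect $\kappa$, and one needs $E$ to be $\mathcal Z$-stable so that unitizable fullness of $\theta|^D$ yields absorption of $\mu_{\mathsf{e}}\circ\psi|^E$. Once these are in hand, the rest of the argument is a direct invocation of $KK$-existence and the pullback description of $E$, combined with naturality of the Cuntz--Thomsen class.
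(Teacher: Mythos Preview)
Your proof is correct and follows essentially the same approach as the paper: separabilize via Lemma~\ref{lem:ClassifyingLiftsSeparablize}\ref{lem:ClassifyingLiftsSeparablize.C2}, obtain absorption from Lemma~\ref{ClassifyingLifts.AbsorptionCondition}, apply $KK$-existence, and use the pullback square to produce $\phi$. Your verification of the $KK$-class via two explicit invocations of Proposition~\ref{prop:KK-facts}\ref{prop:KK-facts.5} is slightly more detailed than the paper's, which appeals directly to the definition of the Cuntz-pair class, but the argument is the same.
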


\begin{proof}
  Use
  Lemma~\ref{lem:ClassifyingLiftsSeparablize}\ref{lem:ClassifyingLiftsSeparablize.C2}
  to find a unital separable subextension $\mathsf{e}$ of the
  trace-kernel extension such that $E$ is $\Z$-stable, $I$ is stable,
  $\psi(A)\subseteq E$, $\theta|^D$ is unitizably full, and
  $\kappa=KK(A,\iota_{I\subseteq J_B})(\kappa')$ for some
  $\kappa'\in KK(A,I)$.  By Lemma~\ref{ClassifyingLifts.AbsorptionCondition}, $\mu_{\mathsf{e}} \circ
  \psi$ is absorbing.  Therefore, by $KK$-existence
  (Theorem~\ref{thm:KK-Existence}\ref{thm:KK-Existence.C1}),
  $\kappa'$ can be realized as $[\bar{\phi},{\mu}_{\mathsf{e}} \circ
  \psi]_{KK(A,I)}$ for some absorbing $^*$-ho\-mo\-mor\-phism
  $\bar{\phi}\colon A\to \mathcal M(I)$ so that
  $(\bar{\phi},\mu_{\mathsf{e}} \circ \psi)$ forms a Cuntz pair.  In
  particular, $\bar{\phi}$ lifts $\bar\mu_{\mathsf{e}} \circ
  \theta|^D$, and so the pullback square in
  \eqref{eq:ClassifyingLiftsPullback} induces a
  $^*$-ho\-mo\-mor\-phism $\phi\colon A\to E$ lifting $\theta|^D$ and
  satisfying $\bar{\phi} = \mu_{\mathsf{e}} \circ \phi$.  Then, by
  definition (see also Footnote~\ref{ftn:cuntz-pair-convention}),
\begin{equation}
  \begin{aligned}
    [\phi, \psi]_{KK(A,I)} &=
    [\mu_{\mathsf{e}} \circ \phi, \mu_{\mathsf{e}} \circ \psi]_{KK(A,I)} \\
    &= [\bar\phi,\mu_{\mathsf{e}} \circ \psi]_{KK(A,I)} \\ &= \kappa'.
  \end{aligned}
\end{equation}
Therefore, viewing $\phi$ as a map into $B_\infty$,
$[\phi,\psi]_{KK(A,J_B)}=\kappa$.
\end{proof}

\begin{proof}[Proof of Theorem~\ref{thm:ClassifyingLifts}]
\ref{thm:ClassifyingLifts.C1}:
  Use
  Lemma~\ref{lem:ClassifyingLiftsSeparablize}\ref{lem:ClassifyingLiftsSeparablize.C1}
  to produce a unital separable subextension $\mathsf{e}$ of the
  trace-kernel extension $\mathsf e_B$ such that $I$ is stable, $E$ is
  $\Z$-stable, $\theta(A)\subseteq D$, $\theta|^D$ is unitizably full, and so that $\kappa$
  factorizes as $\kappa=KK(A,\iota_{E\subseteq B_\infty})(\kappa')$
  for some $\kappa'\in KK(A,E)$ with $KK(A,q_\mathsf{e})([\theta|^D])
  = \kappa'$. Since $\kappa'$ is a $KK$-lift of $\theta|^D$ and the
  pullback square in \eqref{eq:ClassifyingLiftsPullback} commutes, it
  follows that $[\bar{\mu}_{\mathsf{e}} \circ
  \theta|^D]_{KK(A,\mathcal Q(I))}$ factorizes through $KK(A,\mathcal
  M(I))$.  As $I$ is stable, $\KK( A, \mathcal M(I) ) = 0$ by
  \cite[Proposition~4.1]{Dadarlat00b}, so $[\bar{\mu}_{\mathsf{e}}
  \circ \theta|^D]_{KK(A,\mathcal Q(I))}=0$.  By the $KK$-existence
  theorem (Theorem~\ref{thm:KK-Existence}\ref{thm:KK-Existence.C2}), $\bar{\mu}_{\mathsf{e}}
  \circ \theta|^D\colon A\to \mathcal{Q}(I)$ lifts to a
  $^*$-ho\-mo\-mor\-phism $\bar{\psi} \colon A \to \mathcal M(I)$.  By
  the pullback \eqref{eq:ClassifyingLiftsPullback},
  $(\bar\psi,\theta|^D)$ induces a $^*$-ho\-mo\-mor\-phism $\psi\colon
  A \to E$. Regarding $\psi$ as taking values in $B_\infty$, it
  follows that $\psi$ lifts $\theta$.

  There is no reason to expect that $\psi$ provides the $\KK$-class we
  need.  This must be corrected.  By construction, in $KK(A,
  B^\infty)$, we have
  \begin{equation}
    \begin{aligned}
      KK(A,q_B)(\kappa) &= [\theta]_{KK(A, B^\infty)}  \\ &= [q_B \circ\psi]_{KK(A, B^\infty)}\\
      &= KK(A,q_{B})([\psi]_{KK(A,B_\infty)}),
    \end{aligned}
  \end{equation}
  (the last equality is recorded in Proposition~\ref{prop:KK-facts}\ref{prop:KK-facts.5}).
  Accordingly, $\kappa- [\psi]_{KK(A,B_\infty)}$ lies in the kernel of
  $KK(A, q_B)$. By half-exactness of $KK(A, \,\cdot\,)$ when $A$ is
  nuclear (see Proposition~\ref{prop:KK-facts}\ref{prop:KK-facts.3}), there exists $\kappa'\in
  KK(A,J_B)$ with
  \begin{equation} \label{eq:ClassifyingLiftsEx3} KK(A,
    j_B)(\kappa')=\kappa-[\psi]_{KK(A,B_\infty)}. \end{equation} By
  Proposition~\ref{prop:KKExistence}, there exists $\phi\colon A \to
  B_\infty$ also lifting $\theta$ such that
  $[\phi,\psi]_{\KK(A,J_B)}=\kappa'$.  Using Proposition~\ref{prop:KK-facts}\ref{prop:KK-facts.5}, we have
  \begin{equation}
    \begin{aligned}
      [\phi]_{KK(A,B_\infty)} - [\psi]_{KK(A,B_\infty)} &= KK(A,
      j_B)(\kappa')\\
      & \overset{\mathclap{\eqref{eq:ClassifyingLiftsEx3}}}= \hspace{1ex}\kappa - [\psi]_{KK(A,B_\infty)},
    \end{aligned}
  \end{equation}
  and so $[\phi]_{KK(A,B_\infty)} = \kappa$.

\ref{thm:ClassifyingLifts.C2}:
  This is an immediate consequence
  of Proposition~\ref{prop:KKExistence}, since $KL(A, J_B)$ is a quotient of $KK(A, J_B)$.

\ref{thm:ClassifyingLifts.C3}:
  Fix a subextension $\mathsf{e}$ as in Lemma~\ref{lem:ClassifyingLiftsSeparablize}\ref{lem:ClassifyingLiftsSeparablize.C3}.
  Then both the maps
  \begin{equation}
      \mu_{\mathsf{e}} \circ \phi_1|^E, \mu_{\mathsf{e}}
  \circ \phi_2|^E\colon A \to \mathcal M(I)
  \end{equation}are absorbing by Lemma~\ref{ClassifyingLifts.AbsorptionCondition}.  The extension
  $\mathsf{e}$ is constructed so that the Cuntz pair
  $(\mu_{\mathsf{e}} \circ \phi_1|^E,\mu_{\mathsf{e}} \circ
  \phi_2|^E)$ represents the trivial class in $KL(A,I)$.  Therefore,
  the $\mathcal Z$-stable $KL$-uniqueness theorem
  (Theorem~\ref{thm:KK-Uniqueness}\ref{KL-Uniqueness}) gives a sequence
  $(u_n)_{n=1}^\infty$ of unitaries in $(I \otimes \mathcal Z)^\dag
  \subseteq E\otimes\mathcal Z$ such that
  \begin{equation}
    \|\mathrm{Ad}(u_n)\big((\mu_\mathsf{e}\otimes \id_{\mathcal Z})(\phi_1(a)\otimes 1_\mathcal{Z})\big) - (\mu_\mathsf{e}\otimes \id_{\mathcal Z})(\phi_2(a)\otimes 1_\mathcal{Z})\| \to 0
 \end{equation}
 for all $a \in A$.
  Since $\Ad(u_n)$ commutes with $\mu_\mathsf{e}\otimes \id_{\Z}$, and $(\mu_\mathsf{e}\otimes\id_{\Z})|_{I\otimes \Z}$ is injective, it follows that $\mathrm{Ad}(u_n)(\phi_1(a)\otimes 1_{\Z}) \to \phi_2(a)\otimes 1_{\Z}$.

  We now have that $\phi|^E \otimes 1_\mathcal Z$ and $\psi|^E \otimes 1_\mathcal Z$ are approximately unitarily equivalent.  Since $E$ is $\mathcal Z$-stable, Proposition~\ref{PropSSA} implies $\phi$ and $\psi$ are approximately unitarily equivalent.  Therefore, $\phi$ and $\psi$ are unitarily equivalent by Lemma~\ref{lem:AUEImpliesUE}.
\end{proof}

\subsection{Classification of unital lifts}\label{sect:unital-lifts}
We end this section by proving Theorem~\ref{intro-lifts}, a unital version of
Theorem~\ref{thm:ClassifyingLifts} which is needed in the proof of the
classification of full approximate embeddings (Theorem
\ref{Main3}).  The map $\Gamma^{(A,B_\infty)}_0\colon KK(A,B_\infty)\to\Hom(K_0(A),K_0(B_\infty)\big)$ below was defined in \eqref{KKtoHom}.

\begin{theorem}[Classification of unital  lifts]
  \label{thm:ClassifyingUnitalLifts}
  Let $A$ be a unital separable nuclear $C^*$-algebra, let $B$ be a
  unital simple separable nuclear $\mathcal Z$-stable $C^*$-algebra
  with $T(B)\neq\emptyset$, and suppose $\theta\colon A \to B^\infty$
  is a unital $^*$-ho\-mo\-mor\-phism such that $\tau \circ \theta$ is
  a faithful trace on $A$ for all $\tau\in T(B^\infty)$.  
  \begin{enumerate}[(i)]
  \item For any $\kappa \in \KK(A,B_\infty)$ satisfying $KK(A, q_B)(\kappa)
    = [\theta]_{KK(A,B^\infty)}$ and
    $\Gamma^{(A,B_\infty)}_0(\kappa)([1_A]_0)=[1_{B_\infty}]_0$, there exists a unital
    $^*$-ho\-mo\-mor\-phism $\phi\colon A \to B_\infty$ that lifts
    $\theta$ and satisfies
    $[\phi]_{KK(A,B_\infty)}=\kappa$.\label{thm:ClassifyingUnitalLifts.C1}
  \item Given any unital $^*$-homomorphism $\psi\colon A\to B_\infty$
    that lifts $\theta$ and any $\lambda \in \ker\KL(A,j_B)$, there exists a
    unital $^*$-ho\-mo\-mor\-phism $\phi\colon A \to B_\infty$ that
    also lifts $\theta$ and satisfies
    $[\phi,\psi]_{KL(A,J_B)}=\lambda$.\label{thm:ClassifyingUnitalLifts.C2}
  \item Any two unital $^*$-homomorphisms $\phi_1$ and $\phi_2$ that both lift $\theta$ and satisfy $[\phi_1,\phi_2]_{KL(A,J_B)}=0$ are unitarily
    equivalent.\label{thm:ClassifyingUnitalLifts.C3}
\end{enumerate}
\end{theorem}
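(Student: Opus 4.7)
The plan is to deduce this unital classification from the unitizably-full version in Theorem~\ref{thm:ClassifyingLifts} via a ``de-unitization trick'', passing to $M_2(B)$ so that the given unital $\theta$ appears as the top-left corner of a non-unital, unitizably-full map. Concretely, I will work with
\begin{equation}
    \tilde\theta \coloneqq \begin{pmatrix}\theta & 0 \\ 0 & 0\end{pmatrix}\colon A \longrightarrow M_2(B^\infty) \cong M_2(B)^\infty.
\end{equation}
The faithfulness of $\tau\circ\theta$ for all $\tau \in T(B^\infty)$ makes $\theta$ full (Proposition~\ref{NoSillyTracesB^infty}\ref{NoSillyTracesB^infty.3}), and the resulting ideal structure of $M_2(B^\infty)$ shows $\tilde\theta^\dag$ is full, so $\tilde\theta$ is unitizably full. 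The first step is to transport $\kappa$ (resp.\ $\psi$ and $\lambda$) through the natural matrix isomorphisms coming from Remark~\ref{matrix} and Propositions~\ref{prop:KK-facts}\ref{prop:KK-facts.1} and \ref{prop:KL-facts}\ref{prop:KL-facts.1}, and apply the appropriate part of Theorem~\ref{thm:ClassifyingLifts} to $\tilde\theta$ (with codomain $M_2(B)$, valid by Remark~\ref{matrixB}) to produce a non-unital lift $\tilde\phi\colon A \to M_2(B)_\infty$ of $\tilde\theta$, writing $\tilde\lambda$ for the image of $\lambda$ under the isomorphism $KL(A, J_B) \cong KL(A, M_2(J_B))$.

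The central technical step is to convert $\tilde\phi$ into a unital lift $\phi\colon A \to B_\infty$ of $\theta$. The projection $p \coloneqq \tilde\phi(1_A)\in M_2(B_\infty)$ lifts $\mathrm{diag}(1_{B^\infty},0)$ and satisfies $[p]_0 = [\mathrm{diag}(1_{B_\infty},0)]_0$ in $K_0(M_2(B_\infty))$: in case (i) this is precisely the hypothesis on $\kappa$, while in case (ii) it follows from $\lambda\in\ker KL(A,j_B)$ by naturality of the action of $KL$ on $K_0$. Since $B_\infty$ has stable rank one (Proposition~\ref{prop:B_inftyProperties_2}\ref{prop:B_inftyProperties.2}), so does $M_2(B_\infty)$, and I will choose a unitary $U_0 \in M_2(B_\infty)$ with $U_0 p U_0^* = \mathrm{diag}(1_{B_\infty},0)$. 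The image $\bar U_0 \coloneqq q_{M_2(B)}(U_0)$ then commutes with $\mathrm{diag}(1_{B^\infty},0)$, so decomposes as $\mathrm{diag}(\bar u, \bar w)$ for unitaries $\bar u,\bar w\in B^\infty$. Using Proposition~\ref{prop:B^inftyExponentials} to lift $\bar w^*$ to a unitary $w\in B_\infty$, I would replace $U_0$ by $\mathrm{diag}(1,w)U_0$; this preserves $U_0 p U_0^* = \mathrm{diag}(1_{B_\infty},0)$ and arranges $q_{M_2(B)}(U_0) = \mathrm{diag}(\bar u, 1)$. The corner compression $\phi_0\colon A \to B_\infty$ characterized by $\iota^{(2)}_{B_\infty}\circ\phi_0 = U_0\tilde\phi U_0^*$ is then a unital $^*$-homomorphism lifting $\bar u\theta(\,\cdot\,)\bar u^*$. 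Finally, lifting $\bar u$ to a unitary $u\in B_\infty$ (Proposition~\ref{prop:B^inftyExponentials} once more), the map $\phi\coloneqq u^*\phi_0 u$ will be a unital lift of $\theta$.

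The remaining verifications use Propositions~\ref{prop:KK-facts}\ref{prop:KK-facts.2} and \ref{prop:KL-facts}\ref{prop:KL-facts.2} (invariance of Cuntz-pair classes under conjugation by unitaries in the unitized ideal). Setting $V \coloneqq \mathrm{diag}(u^*,1)\cdot U_0 \in M_2(B_\infty)$, a direct computation gives $\iota^{(2)}_{B_\infty}\circ\phi = \Ad(V)\circ\tilde\phi$, and $q_{M_2(B)}(V) = \mathrm{diag}(\bar u^*\bar u, 1) = 1$ shows $V \in M_2(J_B)^\dag$. For part (i), Proposition~\ref{prop:KK-facts}\ref{prop:KK-facts.2} then gives $[\iota^{(2)}_{B_\infty}\circ\phi]_{KK(A,M_2(B_\infty))} = [\tilde\phi]_{KK(A,M_2(B_\infty))}$, which transports back to $[\phi]_{KK(A,B_\infty)} = \kappa$. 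For part (ii), using $\tilde\psi = \iota^{(2)}_{B_\infty}\circ\psi$ and Proposition~\ref{prop:KL-facts}\ref{prop:KL-facts.2},
\begin{equation}
    [\iota^{(2)}_{B_\infty}\circ\phi,\, \tilde\psi]_{KL(A,M_2(J_B))} = [\tilde\phi,\tilde\psi]_{KL(A,M_2(J_B))} = \tilde\lambda,
\end{equation}
which transports back to $[\phi,\psi]_{KL(A,J_B)} = \lambda$. For part (iii), applying Theorem~\ref{thm:ClassifyingLifts}\ref{thm:ClassifyingLifts.C3} to $\iota^{(2)}_{B_\infty}\circ\phi_1$ and $\iota^{(2)}_{B_\infty}\circ\phi_2$ --- which are lifts of $\tilde\theta$ with vanishing $KL(A,M_2(J_B))$-class --- yields a unitary $U \in M_2(B_\infty)$ conjugating one to the other; evaluating at $1_A$ forces $U$ to commute with $\mathrm{diag}(1_{B_\infty},0)$, so $U = \mathrm{diag}(u_1,u_2)$, and the top-left entry $u_1 \in B_\infty$ implements a unitary equivalence of $\phi_1$ and $\phi_2$.

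The main obstacle --- addressed by the $\bar w$-correction in the second paragraph --- is arranging that the conjugating element $V$ lies in $M_2(J_B)^\dag$ rather than only in $M_2(B_\infty)$. Without this refinement, one would only obtain equality of $KK$-classes in $KK(A, M_2(B_\infty))$, which suffices for (i) but is too weak to control the $KL(A, J_B)$-class needed in (ii).
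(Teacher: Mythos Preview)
Your proposal is correct and follows essentially the same de-unitization strategy as the paper: pass to $M_2(B)$, apply Theorem~\ref{thm:ClassifyingLifts}, use stable rank one to conjugate the projection $\tilde\phi(1_A)$ onto $\mathrm{diag}(1_{B_\infty},0)$, and then correct the conjugating unitary so that it lies in $M_2(J_B)^\dag$ by lifting its diagonal components from $B^\infty$.

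The one notable difference is in part~(i): the paper first conjugates $\phi^{(2)}$ to a corner map $\iota^{(2)}_{B_\infty}\circ\tilde\phi$, and then invokes the trace classification (Theorem~\ref{thm:B^inftyClassification}) to show $q_B\circ\tilde\phi$ is unitarily equivalent to $\theta$, lifting that unitary separately. Your argument instead treats (i) and (ii) uniformly, building the single unitary $V\in M_2(J_B)^\dag$ directly and bypassing Theorem~\ref{thm:B^inftyClassification}. This is a genuine (if minor) simplification: for (i) you only need invariance of $KK(A,M_2(B_\infty))$-classes under conjugation by unitaries in $M_2(B_\infty)$, so the stronger containment $V\in M_2(J_B)^\dag$ is overkill there, but it costs nothing and keeps the two parts parallel. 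The paper's route has the virtue of making explicit that the trace data alone pins down $q_B\circ\tilde\phi$ up to unitary equivalence, which foreshadows later arguments; yours is more self-contained.
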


A unital $^*$-homomorphism can never be unitizably full, so Theorem
\ref{thm:ClassifyingLifts} cannot be applied directly to such
maps. Instead, we use a standard ``de-unitization'' trick (seen in \cite{Schafhauser17} and \cite{Schafhauser18}, for example), working in a
$2\times 2$ matrix amplification to provide additional room for maps
to become unitizably full. Recall that $\iota^{(2)}_D\colon D \to M_2(D)$ denotes the top left-hand corner embedding (see Remark~\ref{matrix}).

\begin{proposition}\label{Prop.EasyUnitizablyFull}
  Let $A$ and $D$ be unital $C^*$-algebras.  If $\theta\colon A\to D$
  is a unital and full $^*$-homomorphism, then $\iota^{(2)}_D \circ \theta$
  is unitizably full.
\end{proposition}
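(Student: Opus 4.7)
The plan is to verify the definition of unitizably full directly by examining the forced unitization $(\iota^{(2)}_D\circ\theta)^\dagger\colon A^\dagger\to M_2(D)$ on non-zero elements. Since $A$ is already unital, $A^\dagger \cong A\oplus\mathbb C$, so a general non-zero element is of the form $(a,\lambda)$ with $a\in A$ and $\lambda\in\mathbb C$. Under $(\iota^{(2)}_D\circ\theta)^\dagger$ this maps to
\begin{equation}
\begin{pmatrix}\theta(a)+\lambda 1_D & 0\\ 0 & \lambda 1_D\end{pmatrix}\in M_2(D).
\end{equation}
Fullness of this element in $M_2(D)$ will be checked by computing the ideal it generates, using the standard correspondence between ideals of $M_2(D)$ and ideals of $D$.

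First I would handle the case $\lambda\neq 0$: here the $(2,2)$-entry is $\lambda 1_D$, which generates $D$ as an ideal of $D$; hence the ideal generated in $M_2(D)$ contains $M_2(D)$. Then I would handle the case $\lambda=0$ (so $a\neq 0$): the element becomes $\iota^{(2)}_D(\theta(a))$, and fullness of $\theta$ together with $a\neq 0$ implies that $\theta(a)$ generates $D$ as an ideal, and consequently $\iota^{(2)}_D(\theta(a))$ generates $M_2(D)$ as an ideal. The second case is exactly where the ``de-unitization'' trick of passing from $D$ to $M_2(D)$ earns its keep: without the extra corner there is no room to distinguish the added unit from a genuinely non-zero element of $A$, whereas in $M_2(D)$ the two cases above automatically split along the scalar/non-scalar divide.

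There is no substantive obstacle --- the result is a short and essentially formal verification once one writes out $A^\dagger$ and reduces ideals in $M_2(D)$ to ideals in $D$. The only mild subtlety worth flagging is that one needs $A$ unital to write $A^\dagger=A\oplus\mathbb C$ so that the two cases $\lambda\neq 0$ and $\lambda=0$ are genuinely disjoint; this is exactly the hypothesis in the statement.
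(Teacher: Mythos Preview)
Your proof is correct and takes essentially the same approach as the paper: write out the image of a generic element of $A^\dagger$ in $M_2(D)$ and check fullness by a short case analysis on the scalar part. The only cosmetic difference is the choice of coordinates on $A^\dagger$: the paper parametrizes $x\in A^\dagger$ as $x=a+\lambda(1_{A^\dagger}-1_A)$, which (using that $\theta$ is unital) yields the slightly cleaner diagonal form $\begin{pmatrix}\theta(a)&0\\0&\lambda 1_D\end{pmatrix}$ and lets both cases be handled in one line, whereas your standard $(a,\lambda)$ coordinates give $\begin{pmatrix}\theta(a)+\lambda 1_D&0\\0&\lambda 1_D\end{pmatrix}$ and a two-case split; the content is identical.
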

\begin{proof}
  An element $x\in A^\dagger$ can be written as $x=a+\lambda
  (1_{A^\dagger}-1_A)$ for some $a\in A$ and $\lambda\in \mathbb C$.
  Thus
  \begin{equation}
    (\iota^{(2)}_D \circ \theta)^\dagger(x)=\begin{pmatrix}\theta(a)&0\\0&\lambda
      1_D
    \end{pmatrix}.
  \end{equation}
  If $a\neq 0$ or $\lambda\neq 0$, then this generates $M_2(D)$ as an ideal since $\theta$ is full.
\end{proof}

Given a unital map $\theta\colon A\to B^\infty$, we will apply
Theorem~\ref{thm:ClassifyingLifts} to $\iota^{(2)}_{B^\infty} \circ \theta$.

\begin{proof}[Proof of Theorem~\ref{thm:ClassifyingUnitalLifts}] \ref{thm:ClassifyingUnitalLifts.C1}:
  The hypothesis that $\tau \circ \theta$ is faithful for every
  $\tau\in T(A)$ ensures that $\theta$ is full by
  Lemma~\ref{p:fullcomp}.  Accordingly,
  $\iota^{(2)}_{B^\infty}\circ\theta\colon A\to M_2(B^\infty)$ is unitizably
  full by Proposition~\ref{Prop.EasyUnitizablyFull}.  Applying
  commutativity of the top-right square of \eqref{matrix.diag} to
  $KK(A,q_B)(\kappa)=[\theta]_{KK(A,B^\infty)}$ gives
  \begin{equation}
    KK(A,q_{M_2(B)})\big( KK(A,\iota^{(2)}_{B_\infty})
    (\kappa) \big)
    =
    [\iota_{B^\infty}^{(2)} \circ \theta]_{KK(A,M_2(B)^\infty)};
  \end{equation}
  i.e., $KK(A,\iota^{(2)}_{B_\infty})(\kappa)$ is a $KK$-lift of
  $\iota^{(2)}_{B^\infty} \circ \theta$.  Therefore, Theorem~\ref{thm:ClassifyingLifts}\ref{thm:ClassifyingLifts.C1}, applied to
  $M_2(B)$ in place of $B$ (using Remark~\ref{matrixB}), gives
  a map $\phi^{(2)}\colon A\to M_2(B_\infty)\cong M_2(B)_\infty$ (see Remark~\ref{matrix})
  lifting $\iota^{(2)}_{B^\infty} \circ \theta$ with
  \begin{equation}\label{ClassifyingUnitalLifts.E1}
    [\phi^{(2)}]_{KK(A,M_2(B_\infty))} =
    KK(A,\iota^{(2)}_{B_\infty})(\kappa).
  \end{equation}
  As $\Gamma^{(A,B_\infty)}_0(\kappa)([1_A]_0)=[1_{B_\infty}]_0$, it follows from naturality of $\Gamma^{(A,\ \cdot\ )}_0$ and \eqref{eq:Gamma-KK-K} that
  $[\phi^{(2)}(1_A)]_0=[\iota_{B_\infty}^{(2)}(1_{B_\infty})]_0$.
  Applying Proposition
  \ref{prop:B_inftyProperties_2}\ref{prop:B_inftyProperties.2} with
  $M_2(B)$ in place of $B$ (and using Remark~\ref{matrixB}), we
  see that $M_2(B_\infty)\cong M_2(B)_\infty$ has stable rank one,
  and hence $\phi^{(2)}(1_A)$ is unitarily equivalent to
  $\iota^{(2)}_{B_\infty}(1_{B_\infty})$.\footnote{This uses the well-known fact that stable rank one implies cancellation in $K_0$, and
    this, in turn, implies that projections which agree in $K_0$ are
    unitarily equivalent; see \cite[Proposition 6.5.1]{Blackadar98},
    for example.}  Therefore, $\phi^{(2)}$ is unitarily equivalent to
  a map of the form $\iota^{(2)}_{B_\infty} \circ \tilde\phi$ for a unital
  $^*$-homomorphism $\tilde{\phi}\colon A\to B_\infty$.

  Fix a trace $\tau \in T(B^\infty)$. 
  Recall that $\tau_2$ is the canonical non-normalized tracial
  functional on $M_2(B^\infty)$ extending $\tau$ (so,
  $\tau_2(1_{M_2(B^\infty)}) = 2)$.  Then
  \begin{equation}
  \begin{array}{rcl}
      \tau \circ q_B  \circ \tilde \phi
      &=& \tau_2 \circ \iota^{(2)}_{B^\infty} \circ  q_B  \circ \tilde\phi\\
      &\overset{\text{Rem. \ref{matrix}}}=&
      \tau_2 \circ q_{M_2(B)} \circ \iota_{B_\infty}^{(2)} \circ \tilde\phi\\
      &=& \tau_2 \circ q_{M_2(B)} \circ \phi^{(2)} \\
      &=& \tau_2 \circ 
      \iota^{(2)}_{B_\infty}  \circ \theta \\
      &=& \tau \circ \theta,
    \end{array}
    \end{equation}
  where the third equality follows from unitary invariance of the
  trace.  As this holds for all $\tau \in T(B^\infty)$, we have $\Aff
  T(q_B \circ \tilde{\phi})=\Aff T(\theta)$. Therefore, by the
  classification of maps into $B^\infty$ (Theorem~\ref{thm:B^inftyClassification}), $q_B \circ \tilde{\phi}$ is
  unitarily equivalent to $\theta$.  The von Neumann-like behavior of
  $B^\infty$ ensures that unitaries in $B^\infty$ lift to unitaries in
  $B_\infty$ (Proposition~\ref{prop:B^inftyExponentials}), so we can
  find a unitary $u\in B_\infty$ such that $\Ad(q_B(u)) \circ q_B
  \circ \tilde{\phi}=\theta$.  Define $\phi\coloneqq \Ad(u) \circ
  \tilde{\phi}$.  Since $\phi$ is unitarily equivalent to
  $\iota^{(2)}_{B_\infty} \circ \phi$,
  \eqref{ClassifyingUnitalLifts.E1} gives
  \begin{equation}
    KK(A,\iota^{(2)}_{B_\infty})([\phi]_{KK(A,B_\infty)}) =
    KK(A,\iota^{(2)}_{B_\infty})(\kappa).
  \end{equation}
  But $KK(A,\iota^{(2)}_{B_\infty})$ is an isomorphism
  (Proposition~\ref{prop:KK-facts}\ref{prop:KK-facts.1}), and so
  $[\phi]_{KK(A,B_\infty)}=\kappa$.

\ref{thm:ClassifyingUnitalLifts.C2}: 
  Using the commutativity of the top-right square of \eqref{matrix.diag} and
  making the identification $M_2(B^\infty)\cong M_2(B)^\infty$ (see Remark~\ref{matrix}) it
  follows that $\iota^{(2)}_{B_\infty} \circ \psi$ lifts
  $\iota^{(2)}_{B^\infty} \circ \theta$, and the latter map is
  unitizably full, as noted in the proof of part
  \ref{thm:ClassifyingUnitalLifts.C1}.

  Applying Theorem~\ref{thm:ClassifyingLifts}\ref{thm:ClassifyingLifts.C2} to $M_2(B)$,
  $\iota^{(2)}_{B_\infty} \circ \psi$, $\iota^{(2)}_{B^\infty} \circ
  \theta$, and $KL(A,\iota^{(2)}_{J_B}) (\lambda)$, there is a
  $^*$-homomorphism $\phi^{(2)}\colon A\to M_2(B_\infty)\cong
  M_2(B)_\infty$ lifting $\iota^{(2)}_{B^\infty} \circ \theta$ and
  with $[\phi^{(2)}, \iota^{(2)}_{B_\infty} \circ
  \psi]_{KL(A,M_2(J_B))} = KL(A,\iota^{(2)}_{J_B})(\lambda)$. Then
  using Proposition~\ref{prop:KK-facts}\ref{prop:KK-facts.5} in the first step,
  and commutativity of the top-left square of \eqref{matrix.diag} at
  the third step, we have
  \begin{equation}
    \begin{aligned}
      [\phi^{(2)}]_{KL(A,M_2(B_\infty))} &- [\iota^{(2)}_{B_\infty}
      \circ \psi]_{KL(A,M_2(B_\infty))} \\
      &{}\hspace{-3ex}= KL(A,j_{M_2(B)})\big( [\phi^{(2)}, \iota^{(2)}_{B_\infty} \circ
      \psi]_{KL(A,M_2(J_B))}\big)\\
      &{}\hspace{-3ex}= KL(A,j_{M_2(B)} \circ \iota^{(2)}_{J_B})(
      \lambda )\\
      &{}\hspace{-3ex}= KL(A, \iota^{(2)}_{B_\infty}) \circ
      KL(A,j_B)(\lambda)\\
      &{}\hspace{-3ex}=0.
    \end{aligned}
  \end{equation}
  Therefore, the induced map on $K_0$ vanishes, so
  $[\phi^{(2)}(1_A)]_0 = [(\iota^{(2)}_{B_\infty} \circ \psi)(1_A)]_0 =
  [\iota^{(2)}_{B_\infty}(1_{B_\infty})]_0$.

  Using that $M_2(B_\infty)\cong M_2(B)_\infty$ has stable rank one
  (by
  Remark~\ref{matrixB} and ~\ref{prop:B_inftyProperties_2}\ref{prop:B_inftyProperties.2}), there exists a unitary $u\in M_2(B_\infty)$ such
  that $\mathrm{Ad}(u) \circ \phi^{(2)}(1_A) =
  \iota_{B_\infty}^{(2)}(1_{B_\infty})$.  As $\phi^{(2)}$ lifts $\iota^{(2)}_{B^\infty}\circ\theta$, we have
  \begin{equation}
    \big(
    \mathrm{Ad}(q_{M_2(B)}(u)) \circ
    \iota^{(2)}_{B^\infty}
    \big)
    (1_{B^\infty}) =
    \iota^{(2)}_{B^\infty}(1_{B^\infty}),
  \end{equation} 
  so we can write
  \begin{equation}
    q_{M_2(B)}(u)=\overline v\oplus\overline
    w\coloneqq \begin{pmatrix}\overline{v}&0\\0&\overline{w}\end{pmatrix} 
  \end{equation}
  for some unitaries $\overline{v},\overline{w}\in B^\infty$.  These
  lift to unitaries $v,w\in B_\infty$ respectively by
  Proposition~\ref{prop:B^inftyExponentials}.  Since $\big( \Ad
  ((v\oplus w)^*u) \circ \phi^{(2)}\big)(1_A) =
  \iota^{(2)}_{B_\infty}(1_{B_\infty})$, we can write $\Ad ((v\oplus
  w)^*u) \circ \phi^{(2)}= \iota^{(2)}_{B_\infty} \circ \phi$ for some
  unital $^*$-homomorphism $\phi\colon A\to B_\infty$.  By
  construction, $q_{M_2(B)}((v\oplus w)^*u)=1_{M_2(B_\infty)}$, so
  $\phi$ lifts $\theta$.  As $(v\oplus w)^*u\in J_{M_2(B)}^\dagger$,
  we have
  \begin{equation}
    [\iota^{(2)}_{B_\infty} \circ \phi, \iota^{(2)}_{B_\infty} \circ
    \psi]_{KL(A,M_2(J_B))} = [\phi^{(2)},\iota^{(2)}_{B_\infty} \circ
    \psi]_{KL(A,M_2(J_B))}
  \end{equation}
  (by Proposition~\ref{prop:KK-facts}\ref{prop:KK-facts.2}).  Therefore,
  \begin{equation}
    \begin{aligned}
      KL(A,\iota^{(2)}_{J_B})( [\phi,\psi]_{KL(A,J_B)} ) &=
      [\iota^{(2)}_{B_\infty} \circ \phi, \iota^{(2)}_{B_\infty}
      \circ \psi]_{KL(A,M_2(J_B))}\\
      &= KL(A,\iota^{(2)}_{J_B})(\lambda).
    \end{aligned}
  \end{equation}
  Since $KL(A,\iota^{(2)}_{J_B})$ is an isomorphism
  (Proposition~\ref{prop:KL-facts}\ref{prop:KL-facts.1}), $[\phi,\psi]_{KL(A,J_B)} = \lambda$.

\ref{thm:ClassifyingUnitalLifts.C3}:
  Given
  two unital lifts $\phi_1,\phi_2\colon A\to B_\infty$ of $\theta$
  such that $[\phi_1,\phi_2]_{KL(A,J_B)}=0$, Proposition~\ref{prop:KL-facts}\ref{prop:KL-facts.5} yields
  \begin{equation}
     [\iota_{B_\infty}^{(2)} \circ \phi_1, \iota_{B_\infty}^{(2)}
    \circ \phi_2]_{KL(A,M_2(J_B))} 
    = KL(A,\iota_{J_B}^{(2)})( [\phi_1,\phi_2]_{KL(A,J_B)} ) = 0. 
  \end{equation}
  Just as at the beginning of \ref{thm:ClassifyingUnitalLifts.C1},
  $\iota^{(2)}_{B^\infty} \circ \theta$ is unitizably full, so we can
  apply
  Theorem~\ref{thm:ClassifyingLifts}\ref{thm:ClassifyingLifts.C3} with
  $M_2(B)$ in place of $B$, $\iota^{(2)}_{B^\infty} \circ \theta$ in
  place of $\theta$, $\iota^{(2)}_{B_\infty} \circ\phi_2$ in place of
  $\psi$, $\iota^{(2)}_{B_\infty} \circ \phi_1$ in place of $\phi$, 
  and $\lambda = 0$ to learn that $\iota^{(2)}_{B_\infty}\circ
  \phi_1$ and $\iota^{(2)}_{B_\infty}\circ \phi_2$ are unitarily
  equivalent.  Since $\phi_1$ and $\phi_2$ are unital, a unitary $u\in
  M_2(B_\infty)$ with $\Ad(u)\circ \iota^{(2)}_{B_\infty} \circ \phi_1
  = \iota^{(2)}_{B_\infty} \circ \phi_2$ must be of the form $u
  = \begin{pmatrix}v & 0\\ 0 & w\end{pmatrix}$ for unitaries $v,w\in
  B_\infty$. Hence $\Ad(v) \circ \phi_1=\phi_2$.
\end{proof}


\section{The universal (multi)coefficient theorem and computing $KL(A,J_B)$}
\label{sec:uct-rot}

Our main goal in this section is to prove Theorem~\ref{intro:calcKL}, computing $KL(A,J_B)$ in terms of the invariant $\inv$. This is the point in the argument at which the universal (multi)coefficient theorem becomes crucial.  We review the UCT and UMCT in Sections~\ref{subsec:the-uct} and \ref{subsec:the-umct} and turn to the calculation of $KL(A,J_B)$ in Section~\ref{Sect:RotationMap}.

\subsection{The universal coefficient theorem}
\label{subsec:the-uct}

Inspired by Brown's earlier UCT for Brown--Douglas--Fillmore theory (\cite{Brown84}), Rosenberg and Schochet's UCT (\cite{Rosenberg-Schochet87}) establishes a class of $C^*$-algebras $A$ for which $KK(A,I)$ can be computed using $K$-theory. 
\begin{definition}\label{DefUCT}
  A separable $C^*$-algebra $A$ is said to \emph{satisfy the universal coefficient theorem (UCT)}, or \emph{belong to the UCT class} if, for every
  $\sigma$-unital $C^*$-algebra $I$, the map $\Gamma^{(A,I)}$ from \eqref{KKtoHom} is surjective
  and a natural map
  \begin{equation}
  \label{eq:UCT-kernel}
  \ker\Gamma^{(A, I)} \rightarrow \Ext(K_*(A), K_{1-*}(I)).
\end{equation}
is bijective.\footnote{The definition of this map will not be needed here; it is described as the map $\kappa$ in \cite[Section 23.1]{Blackadar98}.}
\end{definition}

Accordingly, when $A$ satisfies the UCT, there is a short exact
sequence\footnote{Often, the statement of the UCT is graded, in
  that it also involves a map $\Gamma^{(A,I), 1}\colon KK^1(A,I) \coloneqq
  KK(A, I\otimes C_0(0,1)) \to
\Hom(K_*(A), K_{1-*}(I))$ and its kernel. Since
  $I\otimes C_0(0,1)$ is $\sigma$-unital when $I$ is, these two definitions are equivalent.}
\begin{equation}
  \label{eq:the-UCT}
  0 \rightarrow \Ext\big( K_*(A), K_{1-*}(I) \big) \rightarrow
  KK(A,I)
  \xrightarrow{\Gamma^{(A,I)}}
  \Hom\big( K_*(A), K_*(I) \big) \rightarrow 0
\end{equation}
where the first map is the inverse of the map in \eqref{eq:UCT-kernel}. It is through this sequence that $KK(A,I)$ can be calculated using $K$-theory. 

The fundamental question is: which $C^*$-algebras satisfy the UCT?  Rosenberg and Schochet tackle this through closure properties: they consider the smallest class (often called the \emph{bootstrap class}) of separable nuclear $C^*$-algebras containing all separable type I $C^*$-algebras closed under Morita equivalence, inductive limits, the 2-out-of-3-property for extensions, and crossed products by $\mathbb R$ and $\mathbb Z$.  It follows from \cite{Rosenberg-Schochet87} that the UCT holds for every $C^*$-algebra in this class, and, moreover, they observe that a separable $C^*$-algebra satisfies the UCT if and only if it is $KK$-equivalent to an abelian $C^*$-algebra (see the text following \cite[Remark 7.6]{Rosenberg-Schochet87}).  Today, this last statement is often taken as the definition of the UCT class.

At the time Rosenberg and Schochet's paper was submitted, it was unknown whether all separable $C^*$-algebras satisfy the UCT. Skandalis showed this is not the case (\cite[Th\'eor\`eme 4.1 and Corollaire 4.2]{Skandalis88}),\footnote{Skandalis showed that if $G$ is an infinite lattice in a rank one connected simple Lie group then $C^*_r(G)$ is not $KK$-equivalent to a nuclear $C^*$-algebra, and, in particular, not $KK$-equivalent to an abelian one. So every such $C^*_r(G)$ fails the UCT.} but it remains a major open problem whether all separable nuclear $C^*$-algebras satisfy the UCT.  Building on Higson and Kasparov's deep work on the Baum--Connes conjecture (\cite{Higson-Kasparov97}), Tu established the UCT for $C^*$-algebras associated to second countable locally compact Hausdorff amenable groupoids (\cite[Proposition 10.7 and Lemme 3.5]{Tu99}).  This was utilized by Barlak and Li (\cite{Barlak-Li17}) to show that all nuclear $C^*$-algebras containing a Cartan masa satisfy the UCT, using Renault's reconstruction theorem (\cite{Renault08}) for Cartan masas in terms of twisted \'etale locally compact Hausdorff effective groupoid $C^*$-algebras.  

\begin{remark}\label{rmk:UCTcrossedproducts}
    The problem of whether the crossed product $A\rtimes G$ satisfies the UCT has a very different status depending on whether $G$ has torsion.
    \begin{enumerate}
        \item 
    Let $A$ be a separable $C^*$-algebra which satisfies the UCT and let $G$ be a countable discrete amenable group acting on $A$.  
    When $G$ is torsion-free, it follows from \cite[Corollary~9.4]{Meyer-Nest06} that $A\rtimes G$ satisfies the UCT.  In fact, the same result implies $A\rtimes G$ satisfies the UCT if $A\rtimes H$ does for every finite subgroup $H$ of $G$. 
    To see this, note that the notation $\langle \star\rangle$ in \cite{Meyer-Nest06} denotes the class of separable $C^*$-algebras which satisfy the UCT (since it is closed under $KK$-equivalence and contains the bootstrap class described above, and hence all separable abelian $C^*$-algebras).
    The dual Dirac morphism hypothesis of \cite[Corollary~9.4]{Meyer-Nest06} follows from \cite{Higson-Kasparov97}, and when $G$ is discrete, all compact subgroups of $G$ are finite.
        \item
    For finite groups, the UCT problem (whether every separable nuclear $C^*$-algebra satisfies the UCT) is equivalent to whether the UCT is preserved by crossed products of nuclear $C^*$-algebras by finite groups.
    More precisely, \cite[Theorem~4.17]{Barlak-Szabo17} shows that the UCT problem is equivalent to whether every crossed product of $\mathcal O_2$ by $\Zn{2}$ or $\Zn{3}$ satisfies the UCT.
    This is related to results announced by Kirchberg (cf.\ \cite[Exercise 23.15.12]{Blackadar98}).
    \end{enumerate}   
\end{remark}

We record the following explicit computation of $\Gamma^{(A,I)}_1$ for use in the calculation of $\ker KL(A,j_B)$ in Section~\ref{Sect:RotationMap}. This is well-known to experts, but isn't easily pinned down in the literature; the approach we take borrows ideas from \cite{Higson87}.

\begin{proposition}
  \label{prop:KK-K1-computation}
  Suppose $A$ is a unital separable $C^*$-algebra, $I$ is a $C^*$-algebra, and $(\phi, \psi) \colon A \rightrightarrows E\rhd I$ is a unital 
 Cuntz pair (i.e., $E$, $\phi$, and $\psi$ are all unital). Then $\Gamma_1^{(A,I)}([\phi,\psi]_{KK(A,I)})\colon K_1(A) \to K_1(I)$ is the map
\begin{equation}
    \label{eq:UCT-map-formula}
	[u]_{K_1(A)} \mapsto [\phi(u)\psi(u)^*]_{K_1(I)},
\end{equation}
for all unitaries $u \in U_n(A)$ and $n \in \mathbb N$.
\end{proposition}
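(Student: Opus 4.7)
The plan is to exploit naturality to reduce to a universal computation on $C(\mathbb T)$. First note that the right-hand side of \eqref{eq:UCT-map-formula} is well-defined: since $(\phi, \psi)$ is a unital Cuntz pair, $\phi(u)\psi(u)^* - 1 = (\phi(u) - \psi(u))\psi(u)^* \in M_n(I)$, so $\phi(u)\psi(u)^*$ is a unitary in $M_n(I^\dagger)$ representing a class in $K_1(I)$. For $u \in U_n(A)$, the unital $^*$-ho\-mo\-mor\-phism $\alpha_u \colon C(\mathbb T) \to M_n(A)$ with $\alpha_u(z) = u$, together with naturality of $\Gamma_1^{(\cdot,\cdot)}$ in both variables and matrix stability of $K$-theory, reduces the claim to showing $\Gamma_1^{(C(\mathbb T), I)}([V, W])([z]_1) = [VW^*]_{K_1(I)}$ for every unital Cuntz pair $(V, W) \colon C(\mathbb T) \rightrightarrows E \rhd I$ (i.e., $V, W \in U(E)$ with $V - W \in I$).

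The next reduction establishes $[V, W] = [VW^*, 1_E]$ in $KK(C(\mathbb T), I)$. Adding the degenerate Cuntz pair $(W^*, W^*)$ (which represents zero) gives $[V, W] = [V \oplus W^*, W \oplus W^*]$ in $KK(C(\mathbb T), M_2(I)) \cong KK(C(\mathbb T), I)$. Since $\mathrm{diag}(W, W^*)$ is homotopic to $1_{M_2(E)}$ through unitaries in $U_2(E)$ via a standard rotation argument, pick a continuous path $(W_t)_{t \in [0,1]}$ of unitaries in $M_2(E)$ with $W_0 = \mathrm{diag}(W, W^*)$ and $W_1 = 1_{M_2(E)}$, and set $V_t \coloneqq \mathrm{diag}(VW^*, 1_E) \cdot W_t$. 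Then $V_t - W_t = \mathrm{diag}(VW^* - 1_E, 0) \cdot W_t \in M_2(I)$ throughout, so $(V_t, W_t)$ is a Cuntz homotopy from $(V \oplus W^*, W \oplus W^*)$ to $(\mathrm{diag}(VW^*, 1_E), 1_{M_2(E)}) = (VW^*, 1_E) \oplus (1_E, 1_E)$. Since the second summand is degenerate, $[V, W] = [VW^*, 1_E]$ in $KK(C(\mathbb T), I)$.

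The pair $(VW^*, 1_E)$ is the pushforward of a universal Cuntz pair. Since $VW^* \in U(I^\dagger) \subseteq E$, there is a unital $^*$-homomorphism $\bar\beta \colon (C_0(\mathbb T \setminus \{1\}))^\dagger \to I^\dagger \subseteq E$ with $\bar\beta(z) = VW^*$, restricting to $\beta \colon C_0(\mathbb T \setminus \{1\}) \to I$. Under the identification $(C_0(\mathbb T \setminus \{1\}))^\dagger \cong C(\mathbb T)$, the pair $(z, 1) = (\id_{C(\mathbb T)}, \ev_1)$ (where $\ev_1(f) \coloneqq f(1) \cdot 1_{C(\mathbb T)}$) is a universal Cuntz pair $C(\mathbb T) \rightrightarrows C(\mathbb T) \rhd C_0(\mathbb T \setminus \{1\})$, and $(VW^*, 1_E)$ is its image under $\bar\beta$. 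By Proposition~\ref{prop:KK-facts}\ref{prop:KK-facts.5} and naturality of $\Gamma_1$ in the second variable, the computation reduces to showing $\Gamma_1^{(C(\mathbb T), C_0(\mathbb T \setminus \{1\}))}([z, 1])([z]_1) = [z]_{K_1(C_0(\mathbb T \setminus \{1\}))}$. For this, push forward along the inclusion $\iota \colon C_0(\mathbb T \setminus \{1\}) \hookrightarrow C(\mathbb T)$: Proposition~\ref{prop:KK-facts}\ref{prop:KK-facts.5} gives $KK(C(\mathbb T), \iota)([z, 1]) = [\id_{C(\mathbb T)}]_{KK} - [\ev_1]_{KK}$, and applying $\Gamma_1$ and \eqref{eq:Gamma-KK-K} yields $[z]_1 - [1]_1 = [z]_1 \in K_1(C(\mathbb T))$. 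Since $K_1(\iota) \colon K_1(C_0(\mathbb T \setminus \{1\})) \to K_1(C(\mathbb T))$ is an isomorphism (both equal $\mathbb Z$ with generator represented by $z$), the universal formula follows; applying $K_1(\beta)$ recovers $[VW^*]_{K_1(I)}$.

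The main technical step is the Cuntz-homotopy argument proving $[V, W] = [VW^*, 1_E]$; the key subtlety is choosing $V_t = \mathrm{diag}(VW^*, 1_E) \cdot W_t$ so that the ideal condition $V_t - W_t \in M_2(I)$ persists throughout, which is enabled by the fact that $VW^* - 1_E \in I$. The remaining ingredients --- naturality of $\Gamma_1$, the pushforward identity of Proposition~\ref{prop:KK-facts}\ref{prop:KK-facts.5}, and the universal computation on $C(\mathbb T)$ --- are routine.
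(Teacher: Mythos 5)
Your proof is correct, but it takes a genuinely different route from the paper's. The paper uses a pullback: the extension $0 \to I \to E \to E/I \to 0$ is pulled back along $q\circ\phi = q\circ\psi$ to produce a \emph{split} extension $0\to I\to C\to A\to 0$ through which $\phi$ and $\psi$ factor via unital lifts $\tilde\phi,\tilde\psi\colon A\to C$; then naturality of $\Gamma_1$ and injectivity of $K_1(\tilde\jmath)$ (a consequence of the splitting) give the formula directly, with the key identity $\tilde\phi(u)\tilde\psi(u)^* = \phi(u)\psi(u)^*$ in $U_n(I^\dagger)$ read off from the pullback structure. You instead reduce to the universal Cuntz pair $(\id_{C(\mathbb T)}, \ev_1)\colon C(\mathbb T) \rightrightarrows C(\mathbb T) \rhd C_0(\mathbb T\setminus\{1\})$ via naturality along $\alpha_u\colon C(\mathbb T)\to M_n(A)$, and the central step is a rotation-homotopy argument establishing $[V,W] = [VW^*, 1_E]$. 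Both are valid. The pullback route is shorter, works uniformly in $A$ without introducing matrix amplifications (your $\alpha_u$ requires juggling $M_n(A)$, $M_n(E)$, $M_n(I)$ and repeated appeals to $K_1$- and $KK$-matrix-stability), and exemplifies a technique reused elsewhere; your route is more hands-on, makes the role of the Whitehead rotation visible, and pinpoints the ``universal'' computation. One place to tighten your write-up: when you form the homotopy $(V_t, W_t)$ and identify endpoints, you are implicitly asserting that a family of Cuntz pairs $C(\mathbb T)\rightrightarrows C([0,1], M_2(E)) \rhd C([0,1], M_2(I))$ yields equal $KK$-classes at $t=0$ and $t=1$, and that the $2\times 2$ matrix direct sum realizes $KK$-addition after transport along $KK(C(\mathbb T), \iota^{(2)}_I)$; both are standard (cf.\ the paper's Footnote~\ref{fn:ComparingDirectSums} and Proposition~\ref{prop:KK-facts}\ref{prop:KK-facts.1}), but should be cited.
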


Note that the right side of \eqref{eq:UCT-map-formula} is indeed in $K_1(I)$, since $\phi(u)\psi(u)^*\in U_n(I^\dagger)$ by the hypothesis that $\phi$ and $\psi$ are unital.

\begin{proof}
  Consider the pullback diagram
  \begin{equation}
    \label{eq:UCT-pullback}
    \begin{tikzcd}
      0 \arrow{r} & I \arrow{r}{\tilde{\jmath}} \arrow[equals]{d}
        & C \arrow{r}{\tilde{q}} \arrow{d}{\pi}
        & A \arrow{r} \arrow{d}{q\circ\phi=q\circ\psi} & 0\phantom{.} \\
      0 \arrow{r} & I \arrow{r}{j} & E \arrow{r}{q} & E/I \arrow{r} & 0.
    \end{tikzcd}
  \end{equation}
By the pullback property, there are unital $^*$-homomorphisms $\tilde{\phi}, \tilde{\psi}
  \colon A \rightarrow C$ so that 
 \begin{equation}\label{prop:KK-K1-computation.eq.new4}
 \pi\circ\tilde{\phi} = \phi,\ \pi\circ
  \tilde{\psi} = \psi,\text{ and }\tilde{q} \circ\tilde{\phi} =
  \tilde{q}\circ\tilde{\psi} = \mathrm{id}_A.
  \end{equation}
  By Proposition~\ref{prop:KK-facts}\ref{prop:KK-facts.5} (with $J\coloneqq I$, $F\coloneqq C$, $\theta\coloneqq \id_I$, and $\bar\theta\coloneqq \pi$), $[\phi, \psi]_{KK(A,I)} = [\tilde \phi , \tilde \psi]_{KK(A,I)}$. Using Proposition~\ref{prop:KK-facts}\ref{prop:KK-facts.5} again, we have
  \begin{equation}\label{prop:KK-K1-computation.eq.new5}
  KK(A,\tilde{\jmath})([\phi, \psi]_{KK(A,I)}) = [\tilde \phi]_{KK(A,C)} - [\tilde \psi]_{KK(A,C)}.
  \end{equation} 
  Using \eqref{prop:KK-K1-computation.eq.new5} and naturality of $\Gamma_1^{(A,\ \cdot\ )}$, 
  we obtain
  \begin{equation}\label{prop:KK-K1-computation.eq.new2}
  \begin{array}{rcl}
  K_1(\tilde \jmath) \circ \Gamma_1^{(A,I)}([\phi, \psi]_{KK(A,I)}) &=&  \Gamma_1^{(A,C)}( [\tilde \phi]_{KK(A,C)} - [\tilde \psi]_{KK(A,C)}) \\
  &\stackrel{\eqref{eq:Gamma-KK-K}}=& K_1(\tilde \phi) - K_1(\tilde \psi).
  \end{array}
  \end{equation}
  For $n\in \mathbb N$, and a unitary $u\in U_n(A)$, $\pi(\tilde{\phi}(u)\tilde{\psi}(u)^*)=\phi(u)\psi(u)^*$, and so in $U_n(I^\dagger)$,
  \begin{equation}\label{prop:KK-K1-computation.eq.new}
\tilde{\phi}(u)\tilde{\psi}(u)^*=\phi(u)\psi(u)^*.
  \end{equation}
  Consequently, we get
  \begin{equation}\label{prop:KK-K1-computation.eq.new3}
  (K_1(\tilde \phi) - K_1(\tilde \psi))([u]_{K_1(A)}) = [\tilde \phi(u)\tilde \psi(u)^*]_{K_1(C)} = K_1(\tilde \jmath)([\phi(u)\psi(u)^*]_{K_1(I)}).
  \end{equation}
  As the top row of \eqref{eq:UCT-pullback} splits by \eqref{prop:KK-K1-computation.eq.new4}, it follows that $K_1(\tilde \jmath)$ is injective. Combining this with \eqref{prop:KK-K1-computation.eq.new2} and \eqref{prop:KK-K1-computation.eq.new3} we get
  \begin{equation}
  \Gamma_1^{(A,I)}([\phi, \psi]_{KK(A,I)})([u]_{K_1(A)}) = [\phi(u)\psi(u)^*]_{K_1(I)}. \qedhere
  \end{equation}  
  \end{proof}

\subsection{The universal multicoefficient theorem}
\label{subsec:the-umct} We now turn to the universal multicoefficient theorem obtained by Dadarlat and Loring in \cite{Dadarlat-Loring96}. From a modern viewpoint, this computes $KL$ as homomorphisms of total $K$-theory. 

At this point we need to change pictures of total $K$-theory from the definition $K_i(D;\Zn{n})\coloneqq K_{1-i}(D\otimes \mathbb I_n)$ we gave in Section~\ref{ss:totalKtheory} to the definition $K_i(D;\Zn{n})\coloneqq KK(\mathbb I_n,S^iD)$ used by Dadarlat and Loring.\footnote{Here $S^0D = D$, and $S^1D$ is the suspension $C_0(0,1)\otimes D$.}
As noted in \cite{Dadarlat-Loring96}, it is not so difficult to provide a natural isomorphism between $K_{1-i}(D\otimes \mathbb I_n)$ and $KK(\mathbb I_n,S^i D)$; we do so for completeness in Proposition~\ref{p:Zntwodef} (the only proof we know of in the literature is somewhat indirect, given in \cite{Kaminker-Schochet19} as a consequence of Spanier--Whitehead duality).  However, as Dadarlat and Loring also observe, it would be awkward to explicitly transfer the Bockstein operations through such an isomorphism, so they construct a family of Bockstein operations directly in their picture $KK(\mathbb I_n,D)$.  As experts are undoubtedly aware, any two such families generate the same ring\footnote{The Bockstein operations need not, however, agree on the nose.} and so give rise to the same morphisms in total $K$-theory (see Lemma~\ref{lem:BocksteinYoneda}\ref{BocksteinYoneda.1}). We formalize this in the following proposition (proved in Appendix~\ref{appendix.a.2}), the upshot of which is that it is legitimate for us to apply the universal multicoefficient theorem with either definition of total $K$-theory.

\begin{proposition}\label{prop:TotalKtheoryAgrees}
    The definition of total $K$-theory from \cite{Dadarlat-Loring96} is naturally isomorphic to the definition in Section~\ref{ss:totalKtheory}. Precisely, given any natural isomorphism of the functors $KK^i(\mathbb I_n,\,\cdot \,)$ with $K_i(\,\cdot\, ; \Zn{n})$, the resulting morphisms of total $K$-theory coincide.
\end{proposition}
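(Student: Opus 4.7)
The plan is to handle the proposition in two steps: first, to produce a natural isomorphism between the two functors; and second, to show that any such natural isomorphism transports the Bockstein operations in a way that preserves the notion of $\Lambda$-morphism, so that $\mathrm{Hom}_\Lambda$ does not depend on the choice of isomorphism. Write $F_1^{(i,n)}(D) \coloneqq KK(\mathbb I_n, S^i D)$ and $F_2^{(i,n)}(D) \coloneqq K_{1-i}(D \otimes \mathbb I_n)$ for the two functors in question; both carry their own canonical families of Bockstein operations, and the task is to show that these distinct-looking ring actions yield the same morphism sets.

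For the existence of a natural isomorphism $\eta^{(i,n)}_D \colon F_1^{(i,n)}(D) \to F_2^{(i,n)}(D)$, I would use the Kasparov product. Since $\mathbb I_n$ is nuclear, lies in the bootstrap class, and has $K_\ast(\mathbb I_n) \cong (0, \Zn{n})$, the UCT applied to $F_1^{(i,n)}$ and the K\"unneth theorem applied to $F_2^{(i,n)}$ each produce a natural short exact sequence of the form \eqref{eq:bockstein-new} in which the outer terms, $K_i(D) \otimes \Zn{n}$ and $\mathrm{Tor}(K_{i+1}(D), \Zn{n})$, agree canonically. I would define $\eta^{(i,n)}_D$ by taking the external Kasparov product with a fixed class witnessing the relevant Bott-type self-duality of $\mathbb I_n$; naturality is then automatic from functoriality of the Kasparov product, and a five-lemma comparison of the two short exact sequences shows that $\eta^{(i,n)}_D$ is an isomorphism for every $D$.

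The substantive content is the second step: showing that \emph{any} such $\eta$ preserves the $\Lambda$-morphism structure. Here I would invoke the Yoneda-style Lemma~\ref{lem:BocksteinYoneda}, whose content is that a natural transformation between representable $KK$-functors is determined by its value on the universal class (the identity $[\id_{\mathbb I_n}]$ in $KK(\mathbb I_n, \mathbb I_n)$). Because each Bockstein operation is, in either picture, natural in $D$, it is therefore representable by a canonical element of an appropriate $KK$-group built from suspensions and tensor products of the $\mathbb I_n$. Transporting the Dadarlat--Loring Bocksteins across $\eta$ yields a family of natural operations on $\{K_i(\,\cdot\,; \Zn{n})\}$ which, by this representability, generates the same subring of natural self-operations as the Bocksteins of Section~\ref{ss:totalKtheory}. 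Consequently, a system of group homomorphisms is compatible with one family of Bocksteins if and only if it is compatible with the other, so the resulting $\Lambda$-morphisms coincide regardless of the choice of $\eta$.

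The principal obstacle I expect is making the representability statement of Lemma~\ref{lem:BocksteinYoneda} precise and verifying that the two Bockstein families really do generate the same subring of natural self-operations. Once this Yoneda-type fact is in place --- identifying every natural transformation of our $KK$-functors with a single $KK$-class and hence with its values on universal generators --- the remaining comparison is formal, and independence of the $\Lambda$-morphism structure from the choice of natural isomorphism follows.
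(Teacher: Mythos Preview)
Your approach is essentially the same as the paper's: existence of a natural isomorphism is Proposition~\ref{p:Zntwodef}, and the independence of the $\Lambda$-morphism structure from this choice is Lemma~\ref{lem:BocksteinYoneda}, whose proof is exactly the Yoneda argument you sketch (the group of natural transformations $K_1(\,\cdot\,;\Zn{n})\to K_0(\,\cdot\,)$ is $K_0(S\mathbb I_n)\cong\Zn{n}$, so any generator is an integer multiple of any other). One point to sharpen: the conclusion is not merely that the two families generate the same \emph{subring}, but that each individual transported Bockstein is an integer multiple of its counterpart and vice versa; for this you must verify the hypothesis of Lemma~\ref{lem:BocksteinYoneda}, namely that $k$ times each transported Dadarlat--Loring Bockstein is nonzero for $k=1,\dots,n-1$, which follows from the exactness of the Bockstein sequences in their picture.
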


For $C^*$-algebras $A$ and $I$ with $A$ separable, working in the $KK(\mathbb I_n,S^iD)$ picture of total $K$-theory, the Kasparov product provides a map
\begin{equation}\label{KKtoHomLambda}
\Gamma^{(A,I)}_\Lambda\colon KK(A,I)\to \Hom_\Lambda(\totK(A),\totK(I))
\end{equation}
that is natural in both variables (in this picture, the Bockstein operations are given by various Kasparov products, as set out in \cite{Schochet84}, so associativity of the Kasparov product shows that the induced maps do intertwine the Bockstein operations).\footnote{As with $\Gamma^{(A,I)}$ defined in \eqref{KKtoHom}, $\Gamma^{(A,I)}_\Lambda$ is defined first using the Kasparov product with $I$ separable, and then for general $I$, as a limit over separable subalgebras $I_0$ of $I$. See \eqref{eq:AppendixKKtoHom2}.} Moreover, as the Kasparov product descends by continuity to $KL$, $\Gamma^{(A,I)}_\Lambda$ factors through $KL(A,I)$ (for the non-separable case, this is shown in the proof of Theorem~\ref{thm:the-umct} in Appendix~\ref{sec:kkappendix}).

Assuming that $A$ satisfies the UCT (and assuming $I$ is separable), Dadarlat and Loring's universal multicoefficient theorem identifies the kernel of the map in \eqref{KKtoHomLambda} with the subgroup  $\mathrm{PExt}(K_*(A), K_{1-*}(I))$ of $\mathrm{Ext}(K_*(A), K_{1-*}(I))$ consisting of \emph{pure extensions}.\footnote{That is, group extensions where every element of finite order in the quotient lifts to an element of the same order.} 
Using his work on topologies on $KK$-theory, Dadarlat  subsequently
 identified $\mathrm{PExt}(K_*(A), K_{1-*}(I))$ naturally with $Z_{KK(A,I)}\subseteq KK(A,I)$ from \eqref{eq:KLZ} when $A$ satisfies the UCT and $I$ is $\sigma$-unital (\cite{Dadarlat05}, following work by Schochet in the nuclear setting, \cite{Schochet02}).  Combining these results gives the modern viewpoint on the Dadarlat--Loring universal multicoefficient theorem (which is equivalent to the UCT for separable nuclear $C^*$-algebras).\footnote{Lin introduced the notion that a separable nuclear $C^*$-algebra $A$ satisfies the \emph{approximate universal coefficient theorem} in \cite{Lin05} to mean that the maps $\widetilde{\Gamma}_\Lambda^{(A,I)}$ are isomorphisms for all separable $C^*$-algebras $I$. (Lin uses a formally different topology on $KK(A,I)$ when he takes the closure of $\{0\}$, but as explained by Dadarlat in \cite[Section 5]{Dadarlat05}, this is equivalent to the topology set out in Definition~\ref{def:KL} used to define $KL(A,I)$).  In \cite[Theorem 5.5]{Dadarlat05}, Dadarlat shows that for separable nuclear $C^*$-algebras $A$, Lin's approximate universal coefficient theorem and the universal coefficient theorem are equivalent. That is, a separable nuclear $C^*$-algebra $A$ satisfies the UCT if and only if the maps in \eqref{MUCT.E1} are always isomorphisms.}  In Appendix \ref{sec:nonsepUMCT} we note how to extend the universal multicoefficient theorem to general $I$.

\begin{theorem}[The universal multicoefficient theorem]
  \label{thm:the-umct}
  Let $A$ and $I$ be $C^*$-algebras with $A$ separable.  Then the map $\Gamma^{(A,I)}_\Lambda$
  in \eqref{KKtoHomLambda} induces a map
  \begin{equation}\label{MUCT.E1}
   \widetilde{\Gamma}_{\Lambda}^{(A,I)}\colon KL(A,I)\to\Hom_\Lambda(\totK(A),\totK(I)).
  \end{equation}
This map is natural in $A$ and $I$ and is
  an isomorphism when $A$ satisfies the UCT. For a $^*$-homomorphism $\phi:A \to I$, the map satisfies
  \begin{equation} \label{MUCT.E2} \widetilde\Gamma_\Lambda^{(A,I)}([\phi]_{KL(A,I)})=\totK(\phi).
  \end{equation}
\end{theorem}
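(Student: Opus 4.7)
The plan is to reduce to the case of separable $I$, for which the theorem is the modern reformulation combining the Dadarlat--Loring UMCT (\cite[Theorem~1.5]{Dadarlat-Loring96}) with Dadarlat's identification $Z_{KK(A,I)}\cong\mathrm{PExt}(K_*(A),K_{1-*}(I))$ (\cite[Theorem~5.5]{Dadarlat05}), as summarized in the paragraph preceding Theorem~\ref{thm:the-umct}. The general case is then handled via the limit descriptions $KK(A,I)=\varinjlim_{I_0}KK(A,I_0)$ and $KL(A,I)=\varinjlim_{I_0}KL(A,I_0)$ (Definition~\ref{KK-inductivelimit} and Proposition~\ref{KL-inductivelimit}), together with the continuity $\totK(I)\cong\varinjlim_{I_0}\totK(I_0)$, where $I_0$ ranges over separable $C^*$-subalgebras of $I$.

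Explicitly, I would define $\widetilde\Gamma_\Lambda^{(A,I)}$ as the composition
\begin{equation}
KL(A,I)=\varinjlim_{I_0}KL(A,I_0)\xrightarrow{\varinjlim\widetilde\Gamma_\Lambda^{(A,I_0)}}\varinjlim_{I_0}\Hom_\Lambda(\totK(A),\totK(I_0))\longrightarrow\Hom_\Lambda(\totK(A),\totK(I)),
\end{equation}
where the last map is induced by the inclusions $I_0\hookrightarrow I$. Naturality in $A$ and $I$ is inherited from the underlying Kasparov-product construction of $\Gamma_\Lambda$, and for $\phi\colon A\to I$ a $^*$-homomorphism, the formula \eqref{MUCT.E2} is verified by choosing any separable $I_0\subseteq I$ containing $\phi(A)$, corestricting, and chasing through the diagram above. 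The well-definedness of each $\widetilde\Gamma_\Lambda^{(A,I_0)}$ at the separable stage --- which is independent of any UCT hypothesis on $A$ --- can alternatively be checked directly from naturality of $\Gamma_\Lambda$ together with the fact that $\totK(C(\bcN,I_0))$ decomposes as a finite-support direct sum indexed by $\mathbb N$ plus a copy for the point $\infty$, which forces any class killed by $\totK(\ev_n)$ for all $n\in\mathbb N$ to also be killed by $\totK(\ev_\infty)$.

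The main obstacle is proving the isomorphism claim for general $I$, assuming $A$ satisfies the UCT. By the separable UMCT, the first arrow in the composition above is an isomorphism at each stage, so it suffices to show that the natural map
\begin{equation}
\varinjlim_{I_0\text{ sep.}}\Hom_\Lambda\bigl(\totK(A),\totK(I_0)\bigr)\longrightarrow\Hom_\Lambda\bigl(\totK(A),\totK(I)\bigr)
\end{equation}
is an isomorphism. For surjectivity, given a $\Lambda$-morphism $\underline\alpha\colon\totK(A)\to\totK(I)$, separability of $A$ ensures that each source group $K_i(A)$ and $K_i(A;\Zn{n})$ is countable, so $\underline\alpha$ is determined by countably many image elements in $\totK(I)\cong\varinjlim_{I_0}\totK(I_0)$; these can be collected inside a single separable $I_0\subseteq I$. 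The $\Lambda$-compatibility relations (the Bockstein diagrams \eqref{eq:bockstein-2}, \eqref{eq:bockstein-commute}, and \eqref{eq:bockstein-secondexact}) may not initially hold as equalities in $\totK(I_0)$ because the transition maps $\totK(I_0)\to\totK(I)$ need not be injective, but each failed relation can be witnessed as an equality inside a larger separable subalgebra. Iterating through countably many such enlargements and closing off under their union produces a single separable $I_0$ through which $\underline\alpha$ lifts as a $\Lambda$-morphism. Injectivity is handled by the same iterative enlargement technique applied to a countable generating set of $\totK(A)$, yielding that two $\Lambda$-morphisms into $\totK(I_0)$ agreeing after composition with $\totK(I_0)\to\totK(I)$ must already agree in $\totK(I_1)$ for some separable $I_0\subseteq I_1\subseteq I$.
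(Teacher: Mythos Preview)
Your proposal is correct and follows essentially the same route as the paper: reduce to the separable case via the inductive-limit descriptions of $KL$ and of $\Hom_\Lambda(\totK(A),\totK(\,\cdot\,))$, invoke Dadarlat--Loring and Dadarlat there, and argue the well-definedness of $\widetilde\Gamma_\Lambda^{(A,I_0)}$ from the structure of $\totK(C(\bcN,I_0))$. The paper packages the $\Hom_\Lambda$-continuity argument and the definition of $\Gamma_\Lambda^{(A,I)}$ as a limit into the text \emph{preceding} the proof rather than inside it, and appeals to Lemma~\ref{lem:ZKK-limit} to transfer the vanishing on $Z_{KK}$ from separable $I_0$ to general $I$, but the substance is the same as yours.
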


 In the special case when the codomain is the trace-kernel quotient of one of our allowed codomains, the $K$-theory computations of Section~\ref{SSTKKThy} combine with the universal (multi)coefficient theorem to show that both $\Gamma_0^{(A,B^\infty)}$ and $\Gamma_\Lambda^{(A,B^\infty)}$ compute
 $KK(A,B^\infty)$ (and not just $KL(A,B^\infty)$).

\begin{proposition}\label{Prop:KK-trace-kernel-quotient}
  Let $A$ be a separable $C^*$-algebra satisfying the UCT and
  let $B$ be a unital simple separable nuclear finite $\Z$-stable $C^*$-algebra.  Then the canonical maps
\begin{equation}\label{eq:KK-trace-kernel-quotient}
\begin{split}
\Gamma^{(A, B^\infty)}_0& \colon KK(A,B^\infty)\to \Hom (K_0(A),K_0(B^\infty)),\text{ and} \\
\Gamma^{(A, B^\infty)}_\Lambda& \colon KK(A,B^\infty)\to \Hom_{\Lambda}(\totK(A),\totK(B^\infty))
\end{split}
\end{equation}
are isomorphisms.
\end{proposition}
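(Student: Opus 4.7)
The plan is to combine the UMCT (which applies to the non-$\sigma$-unital codomain $B^\infty$ by Theorem~\ref{thm:the-umct}) with the very simple structure of $\totK(B^\infty)$ afforded by Proposition~\ref{prop:B^inftyKtheory}. That result gives $K_0(B^\infty) \cong \Aff T(B^\infty)$ as a $\mathbb Q$-vector space (in particular, divisible), while $K_1(B^\infty) = 0$ and $K_i(B^\infty;\Zn{n}) = 0$ for all $i \in \{0,1\}$ and $n \geq 2$. Consequently, a $\Lambda$-morphism $\underline{\alpha}\colon \totK(A) \to \totK(B^\infty)$ has only $\alpha_0$ potentially non-zero; conversely, any group homomorphism $K_0(A) \to K_0(B^\infty)$ extends (with all other components zero) to a $\Lambda$-morphism, since every Bockstein compatibility reduces to a statement into or out of the zero group. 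Hence the forgetful map
\begin{equation}
\Hom_\Lambda\big(\totK(A), \totK(B^\infty)\big) \xrightarrow{\cong} \Hom\big(K_0(A), K_0(B^\infty)\big)
\end{equation}
is an isomorphism.

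Applying the UMCT with codomain $B^\infty$, the map $\widetilde{\Gamma}_\Lambda^{(A,B^\infty)}\colon KL(A,B^\infty) \to \Hom_\Lambda(\totK(A), \totK(B^\infty))$ is an isomorphism, and composing with the forgetful isomorphism above gives $\widetilde{\Gamma}_0^{(A,B^\infty)}\colon KL(A,B^\infty) \xrightarrow{\cong} \Hom(K_0(A), K_0(B^\infty))$. Since $\Gamma_\Lambda^{(A,B^\infty)}$ factors through the quotient $\pi\colon KK(A,B^\infty) \to KL(A,B^\infty)$ (the Kasparov product being continuous with respect to any reasonable topology and $\Hom_\Lambda$ being discrete), and the $K_0$-component $\Gamma_0^{(A,B^\infty)}$ factors as $\widetilde{\Gamma}_0^{(A,B^\infty)} \circ \pi$, both conclusions of the proposition reduce to the single assertion that $\pi$ is an isomorphism, i.e., that $Z_{KK(A,B^\infty)} = 0$.

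The main task, and the main obstacle, will thus be to verify that $Z_{KK(A,B^\infty)} = 0$. The approach is to exploit the inductive limit descriptions $KK(A,B^\infty) = \varinjlim KK(A,I_0)$ (Definition~\ref{KK-inductivelimit}) and $KL(A,B^\infty) = \varinjlim KL(A,I_0)$ (Proposition~\ref{KL-inductivelimit}), taken over separable subalgebras $I_0 \subseteq B^\infty$, so that $Z_{KK(A,B^\infty)} = \varinjlim Z_{KK(A,I_0)}$. Since $A$ satisfies the UCT and each $I_0$ is separable, Dadarlat's identification (discussed after \eqref{KKtoHomLambda}) gives $Z_{KK(A,I_0)} \cong \mathrm{PExt}(K_*(A), K_{1-*}(I_0))$. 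A pure extension of $K_*(A)$ by $K_*(I_0)$ pushes forward, via the inclusion $K_*(I_0) \hookrightarrow K_*(B^\infty)$, to an extension by $K_*(B^\infty)$ that splits: $\mathrm{PExt}(K_0(A), K_1(\cdot))$ vanishes after pushforward because $K_1(B^\infty) = 0$, while $\mathrm{PExt}(K_1(A), K_0(\cdot))$ vanishes because $K_0(B^\infty)$ is divisible, and divisible abelian groups are injective.

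The subtle point in the last step is to realize such a splitting inside some separable $I_1 \subseteq B^\infty$ with $I_0 \subseteq I_1$, rather than merely in the non-separable $B^\infty$. Given a splitting $s\colon K_1(A) \to G'$ of the pushout extension $0 \to K_0(B^\infty) \to G' \to K_1(A) \to 0$, the image $s(K_1(A))$ involves only countably many elements of $K_0(B^\infty)$; a standard separabilization argument in the spirit of \cite[Proposition~1.9]{Schafhauser18}, iterated to absorb all these elements into the $K_0$-group of a separable intermediate algebra, produces the required $I_1$. Once $Z_{KK(A,B^\infty)} = 0$ is established, both $\Gamma_0^{(A,B^\infty)}$ and $\Gamma_\Lambda^{(A,B^\infty)}$ are isomorphisms, as required.
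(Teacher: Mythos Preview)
Your approach ultimately works, but it takes an unnecessarily circuitous route because of a misconception: $B^\infty$ is \emph{unital} (it is the quotient of the unital algebra $B_\infty$ by the ideal $J_B$), and in particular it is $\sigma$-unital. It is $J_B$, not $B^\infty$, that fails to be $\sigma$-unital. This single observation collapses most of your argument.

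Because $B^\infty$ is $\sigma$-unital, the paper applies the UCT exact sequence~\eqref{eq:the-UCT} directly: since $K_1(B^\infty)=0$ and $K_0(B^\infty)\cong\Aff T(B^\infty)$ is divisible, one has $\Ext(K_*(A),K_{1-*}(B^\infty))=0$ (divisible abelian groups being injective), so the sequence collapses and $\Gamma^{(A,B^\infty)}_0$ is immediately an isomorphism. Then $\Gamma_\Lambda^{(A,B^\infty)}$ is injective since $\Gamma_0^{(A,B^\infty)}$ factors through it, and surjective by the UMCT. This takes three lines and avoids any separabilization or explicit analysis of $Z_{KK}$.

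Your separabilization sketch in the final paragraph is also slightly imprecise: the splitting $s$ maps $K_1(A)$ into the pushout $G'$, not into $K_0(B^\infty)$, so ``the image $s(K_1(A))$ involves only countably many elements of $K_0(B^\infty)$'' is not quite the right formulation. What one actually needs is that the extension $0 \to K_0(I_0) \to G \to K_1(A) \to 0$ becomes a coboundary over $K_0(B^\infty) = \varinjlim K_0(I_1)$, and the cobounding map $K_1(A) \to K_0(B^\infty)$ has countable image, hence lands in $K_0(I_1)$ for some separable $I_1 \supseteq I_0$. This can be made to work, but it is machinery you do not need here once you recognize that $B^\infty$ is unital.
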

\begin{proof}
The $K$-theory of $B^\infty$ was computed in Proposition~\ref{prop:B^inftyKtheory}: we have
  $K_0(B^\infty)\cong\Aff T(B^\infty)$ and $K_1(B^\infty) = 0$. In
  particular, $K_0(B^\infty)$ is divisible.  It follows  that $\Ext (K_*(A),K_{1-*}(B^\infty))=0$ (by \cite[Corollary~2.3.2 and Exercise~2.5.1]{Weibel94}, for example). Therefore, as $A$ satisfies the
  UCT, $\Gamma_0^{(A,B^\infty)}$ is an isomorphism by the exactness of \eqref{eq:the-UCT}.
  
  As $\Gamma_0^{(A,B^\infty)}$ factors through $\Gamma_\Lambda^{(A,B^\infty)}$, the latter is necessarily injective.  Combining this with the surjectivity of $\Gamma_\Lambda^{(A,B^\infty)}$ from the universal multicoefficient theorem gives the result.
\end{proof}

\subsection{Computing $KL(A,J_B)$}\label{Sect:RotationMap}

Theorem~\ref{thm:algK1} below relates the $KL$-class of a unital $(A, J_B)$-Cuntz pair $(\phi, \psi)\colon A\rightrightarrows B_\infty\rhd J_B$ satisfying $[\phi]_{KK(A,B_\infty)}=[\psi]_{KK(A,B_\infty)}$ and the homomorphisms they induce on $\Ka(A)$.
In this case, the map $\Ka(\phi)-\Ka(\psi)$ yields a homomorphism
\begin{equation}\label{neweq:8.15}
	K_1(A)/\mathrm{Tor}(K_1(A)) \to \ker \minusa_{B_\infty}
        \subseteq \Ka(B_\infty).
\end{equation}
The following says that, under the UCT, the map in \eqref{neweq:8.15} encodes the class $[\phi,\psi]_{KL(A, J_B)}$.
 The connection between the map $R_{A, B}$ in the following theorem and the ``rotation maps'' $R_{\phi, \psi}$ appearing in the Gong--Lin--Niu classification (\cite{GLN-part1, GLN-part2}) is discussed in Remark~\ref{rem:rotation}.

\begin{theorem}
  \label{thm:algK1}
  Let $A$ be a unital separable $C^*$-algebra, and let $B$ be a unital
  simple separable nuclear finite $\mathcal{Z}$-stable $C^*$-algebra.  Then
  there is a
   natural
  group homomorphism
  \begin{equation}
    R_{A,B}\colon
    \ker KL(A,j_B)
    \to
    \Hom \big(
      K_1(A)\big/ \mathrm{Tor}(K_1(A)), \ker \minusa_{B_\infty}
    \big)
  \end{equation} with the following property.  For a unital $(A,J_B)$-Cuntz pair $(\phi,\psi)\colon A\rightrightarrows 
   B_\infty\rhd J_B$ for which $[\phi]_{KL(A, B_\infty)} = [\psi]_{KL(A,
    B_\infty)}$, the diagram
  \begin{equation}
\label{eq:algK1Diag}
  \begin{tikzcd}
  \Ka(A)\ar[rrrr,"\Ka(\phi)-\Ka(\psi)"]\ar[d, two heads, "\minusa_A"] &&&& \Ka(B_\infty) \\
  K_1(A)\ar[r,"t_A"]&K_1(A)/\mathrm{Tor}(K_1(A))\ar[rrr,"R_{A,B}({[\phi,\psi]}_{KL(A,J_B)})"] &&& \ker\minusa_{B_\infty}\ar[u, tail]  \\
\end{tikzcd}
  \end{equation}
commutes, where $t_A\colon K_1(A) \to K_1(A)/\mathrm{Tor}(K_1(A))$ is the
  quotient map. If, in addition, $A$ satisfies the UCT, then
  $R_{A,B}$ is an isomorphism.
\end{theorem}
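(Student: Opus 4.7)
The plan is to build $R_{A,B}$ as the composition of three natural maps, each drawn from material already developed in the paper. First, the universal multicoefficient theorem provides a natural homomorphism
\begin{equation}
	\widetilde{\Gamma}_\Lambda^{(A, J_B)} \colon KL(A, J_B) \longrightarrow \Hom_\Lambda(\totK(A), \totK(J_B)),
\end{equation}
and by naturality (applied to $j_B$), this restricts to a map $\ker KL(A, j_B) \to \ker \Hom_\Lambda(\totK(A), \totK(j_B))$. Second, Proposition~\ref{prop:B^inftyKtheory} tells us $K_0(B^\infty) \cong \Aff T(B^\infty)$ is uniquely divisible and $K_1(B^\infty) = 0$, so Lemma~\ref{lem:hom-Lambda-computation} applies to the trace-kernel extension, yielding a natural isomorphism
\begin{equation}
	\ker \Hom_\Lambda(\totK(A), \totK(j_B)) \stackrel\cong\longrightarrow \Hom\big(K_1(A)/\mathrm{Tor}(K_1(A)), \ker K_1(j_B)\big),
\end{equation}
under which a $\Lambda$-morphism $\underline{\alpha}$ in the left-hand group is sent to the map induced by its $K_1$-component $\alpha_1$. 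Third, $\omega_B$ from Theorem~\ref{Thm:K1J} satisfies $\minusa_{B_\infty} \circ \omega_B = K_1(j_B)$ (and is an iso when $B$ is simple and nuclear), so it restricts to an isomorphism $\ker K_1(j_B) \cong \ker \minusa_{B_\infty}$. Post-composing by $\omega_B$ produces $R_{A,B}$, which is natural since each factor is.

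Next I would verify that the diagram \eqref{eq:algK1Diag} commutes for any unital Cuntz pair $(\phi,\psi) \colon A \rightrightarrows B_\infty \rhd J_B$ with $[\phi]_{KL(A,B_\infty)} = [\psi]_{KL(A,B_\infty)}$. The $K_1$-component of the $\Lambda$-morphism $\widetilde{\Gamma}_\Lambda^{(A, J_B)}([\phi,\psi]_{KL(A, J_B)})$ is $\Gamma_1^{(A, J_B)}([\phi,\psi]_{KK(A, J_B)})$, which by Proposition~\ref{prop:KK-K1-computation} sends $[u]_1 \in K_1(A)$ to $[\phi(u)\psi(u)^*]_{K_1(J_B)}$ for each unitary $u \in U_n(A)$. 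Applying the explicit formula from Theorem~\ref{Thm:K1J}\ref{Thm:K1J.1}, and noting that unitality of $\phi$ and $\psi$ forces the scalar part $s(\phi(u)\psi(u)^*) = 1$, we obtain
\begin{equation}
	\omega_B\big([\phi(u)\psi(u)^*]_{K_1(J_B)}\big) = \ka{\phi(u)\psi(u)^*} = \ka{\phi(u)} - \ka{\psi(u)} = (\Ka(\phi) - \Ka(\psi))(\ka{u}).
\end{equation}
Since $\minusa_A(\ka{u}) = [u]_1$ and every element of $\Ka(A)$ is of the form $\ka{u}$, this verifies commutativity of \eqref{eq:algK1Diag}.

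For this verification to be well-posed, I must observe that $\Ka(\phi) - \Ka(\psi)$ actually factors as the diagram claims. The composition $\minusa_{B_\infty}\circ(\Ka(\phi) - \Ka(\psi)) = (K_1(\phi) - K_1(\psi))\circ \minusa_A$ vanishes because $K_1(\phi) = K_1(\psi)$, so the map lands in $\ker\minusa_{B_\infty}$. On $\ker\minusa_A = \im \Th_A$, we have $(\Ka(\phi) - \Ka(\psi))\circ \Th_A = \Th_{B_\infty}\circ(\Aff T(\phi) - \Aff T(\psi))$, and this vanishes because the Cuntz-pair condition $\phi - \psi \in J_B$ forces $\tau\circ\phi = \tau\circ\psi$ for all $\tau \in T_\infty(B)$, hence for all $\tau \in T(B_\infty)$ by density (Proposition~\ref{NoSillyTraces}). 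So the map factors through $K_1(A)$, and then through $K_1(A)/\mathrm{Tor}(K_1(A))$ because the target $\ker\minusa_{B_\infty} \cong \Aff T(B_\infty)/\overline{\im \rho_{B_\infty}}$ is divisible, hence torsion-free. The main nuisance in the argument is keeping the various naturality conventions straight (particularly the sign conventions relating Cuntz pairs and the maps induced on $KK$- and $KL$-theory), but once the explicit formula for $\omega_B$ is matched up with the Cuntz--Thomsen formula for $\Gamma_1$, the verification is essentially automatic.

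Finally, when $A$ satisfies the UCT, Theorem~\ref{thm:the-umct} tells us that $\widetilde{\Gamma}_\Lambda^{(A, J_B)}$ and $\widetilde{\Gamma}_\Lambda^{(A, B_\infty)}$ are isomorphisms, so $\widetilde{\Gamma}_\Lambda^{(A, J_B)}$ restricts to an isomorphism $\ker KL(A, j_B) \stackrel\cong\to \ker\Hom_\Lambda(\totK(A),\totK(j_B))$. Combined with the two unconditional isomorphisms (from Lemma~\ref{lem:hom-Lambda-computation} and Theorem~\ref{Thm:K1J}\ref{Thm:K1J.2}) used to define $R_{A,B}$, this shows $R_{A,B}$ is an isomorphism.
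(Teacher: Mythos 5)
Your proof is correct and takes essentially the same approach as the paper: you construct $R_{A,B}$ as the same three-fold composition (the restriction of $\widetilde{\Gamma}_\Lambda^{(A,J_B)}$ to $\ker KL(A,j_B)$, the isomorphism from Lemma~\ref{lem:hom-Lambda-computation}, and $\omega_B$ from Theorem~\ref{Thm:K1J}), and verify commutativity via Proposition~\ref{prop:KK-K1-computation} together with the explicit formula for $\omega_B$. The ``well-posed'' paragraph is logically redundant --- the factoring is already guaranteed by Lemma~\ref{lem:hom-Lambda-computation} once you know the $K_1$-component lies in $\ker\Hom_\Lambda(\totK(A),\totK(j_B))$ --- but it is harmless.
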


\begin{proof}
Applying naturality of the maps $\widetilde{\Gamma}^{(A,\,\cdot\,)}_\Lambda$ in the universal multicoefficient theorem (Theorem~\ref{thm:the-umct}) to $j_B$ induces $\xi\colon \ker KL(A, j_B) \to\ker \Hom_\Lambda(\underline{K}(A), \underline{K}(j_B))$ as in the following commutative diagram:

\begin{equation}\label{Rotation.Diag}
    \begin{tikzcd}
     \ker KL(A, j_B) \ar[r,"\xi"]\ar[d,tail]& \ker \Hom_\Lambda(\underline{K}(A), \underline{K}(j_B))\ar[d,tail]\\
      KL(A, J_B) \ar[r,"\widetilde{\Gamma}_{\Lambda}^{(A,J_B)}"] \ar[d, "{KL(A, j_B)}", swap] &
      \Hom_\Lambda\big( \underline{K}(A), \underline{K}(J_B) \big)
      \ar[d, "{\Hom_\Lambda(\underline{K}(A), \underline{K}(j_B))}"]\\
      KL(A, B_\infty) \ar[r,"\widetilde{\Gamma}_{\Lambda}^{(A,B_\infty)}",swap] & \Hom_\Lambda\big(\underline{K}(A),
      \underline{K}(B_\infty)\big)
    \end{tikzcd}
  \end{equation}
The universal multicoefficient theorem tells us that the horizontal maps $\widetilde{\Gamma}_\Lambda^{(A,J_B)}$ and $\widetilde{\Gamma}_\Lambda^{(A,B_\infty)}$ are isomorphisms when $A$ satisfies the UCT. Therefore, $\xi$ is also an isomorphism in this case.
  
  As $B$ is nuclear and $\Z$-stable, the $K$-theory computation of Proposition~\ref{prop:B^inftyKtheory} ensures that $K_1(B^\infty)=0$ and $K_0(B^\infty)\cong \Aff T(B^\infty)$, so that $K_0(B^\infty)$ is uniquely divisible.  Thus Lemma~\ref{lem:hom-Lambda-computation} applies to the trace-kernel extension 
  and gives a natural isomorphism  \begin{equation}
    \label{eq:KTheoryIsoagain}
  \begin{aligned}
  \ker\Hom_\Lambda\big(
    \underline{K}(A),\underline{K}(j_B)
  \big)
  &\stackrel\cong\longrightarrow
  \Hom \big(
    K_1(A)/\mathrm{Tor}\big(K_1(A)\big), \ker K_1(j_B)
  \big)
  \end{aligned}
  \end{equation}
sending $\underline{\alpha}$ to the map $\alpha_1'\colon K_1(A)/\mathrm{Tor}(K_1(A))\to \ker K_1(j_B)$ induced by $\alpha_1$.  Theorem~\ref{Thm:K1J} provides a natural isomorphism $\omega_B'\colon \ker K_1(j_B)\to \ker\minusa_{B_\infty}$ which is given explicitly by \eqref{Thm:K1J:Formula}.  We define $R_{A,B}$ to be the composition of these three natural maps, as follows:
\begin{equation}
\begin{tikzcd}
\ker KL(A,j_B)\ar[r,"\xi"]\ar[ddr,bend right=20,swap,"R_{A,B}"]&\ker \Hom_\Lambda\big(
    \underline{K}(A),\underline{K}(j_B)\big)\ar[d,"\underline{\alpha}\mapsto \alpha_1'"]\\&\Hom \big(
    K_1(A)/\mathrm{Tor}\big(K_1(A)\big), \ker K_1(j_B)\big)\ar[d,"{\Hom (K_1(A)/\mathrm{Tor}(K_1(A)),\omega_B')}"]\\
    &\Hom (K_1(A)/\mathrm{Tor}(K_1(A)), \ker\minusa_{B_\infty}).
\end{tikzcd}
\end{equation}
As the vertical maps above are isomorphisms, $R_{A,B}$ is an isomorphism when $\xi$ is, and so $R_{A,B}$ is an isomorphism when $A$ satisfies the UCT.

Now let $(\phi,\psi)\colon A\rightrightarrows B_\infty\rhd J_B$ be a unital Cuntz pair such that $[\phi, \psi]_{KK(A, J_B)}$ belongs to $\ker KK(A, j_B)$.  The computation of $R_{A,B}([\phi,\psi]_{KL(A,J_B)})\circ t_A\circ \minusa_A$ is summarized in the following commutative diagram (commutativity of the middle square is the definition of $R_{A,B})$:
  \begin{equation}
  \begin{tikzcd}
    \Ka(A) \ar[r, "t_A\circ \minusa_A"] \ar[dr, swap, "\minusa_A"] &
    K_1(A)/\mathrm{Tor}(K_1(A))
    \ar[rr, "R_{A,B}({[\phi,\psi]}_{KL(A,J_B)})"]
    &&[2em] \ker\minusa_{B_\infty} &[-3em] \subseteq
    &[-2.75em] \Ka(B_\infty)\\
    & K_1(A) \ar[u, "t_A"] \ar[rr, swap, "\tilde\Gamma_1^{(A,J_B)}({[\phi,\psi]}_{KL(A,J_B)})"] && \ker
    K_1(j_B) \ar[u, "\omega_B'"] &[-3em] \subseteq
    &[-2.75em] K_1(J_B) \ar[u,"\omega_B"].
  \end{tikzcd}
\end{equation}
It remains to check that 
\begin{equation}
\omega_B\circ \tilde\Gamma_1^{(A,J_B)}\big([\phi,\psi]_{KL(A,J_B)}\big)\circ \minusa_A=\Ka(\phi)-\Ka(\psi).
\end{equation}
Let $u \in U_n(A)$.
By Proposition~\ref{prop:KK-K1-computation}, 
\begin{equation}
\tilde\Gamma_1^{(A,J_B)}([\phi,\psi]_{KL(A,J_B)})([u]_1)=[\phi(u)\psi(u)^*]_1.    
\end{equation}
Letting $s:J_B^\dagger \to \mathbb C1_{B_\infty} \subseteq B_\infty$ be the canonical scalar map, so that $s(\phi(u)\psi(u)^*)=1_{M_n(B_\infty)}$ since $(\phi,\psi)$ is a unital Cuntz pair.
Therefore, the explicit formula for $\omega_B$ from Theorem~\ref{Thm:K1J}\ref{Thm:K1J.1} gives 
\begin{equation}
\begin{split}
\omega_B\big(\tilde\Gamma_1^{(A,J_B)}([\phi,\psi]_{KL(A,J_B)})([u]_1)\big)&= \ka{\phi(u)\psi(u)^*} - \ka{1_{B_\infty}}\\
&= \ka{\phi(u)}-\ka{\psi(u)}\\
&=(\Ka(\phi)-\Ka(\psi))(\ka{u}),
\end{split}
\end{equation}
as required. \end{proof}

The map in Theorem~\ref{thm:algK1} can be directly related to algebraic $K_1$.
\begin{remark}
Let $A$ be a unital separable $C^*$-algebra and $B$ a unital separable exact
$\mathcal Z$-stable $C^*$-algebra with $T(B) \neq \emptyset$. Then there is a natural homomorphism
\begin{equation}\label{eq:KLtoalg}
    KL(A, J_B) \to \Hom(\Ka(A), \Ka(B_\infty)), \quad \lambda \mapsto \omega_B \circ \tilde \Gamma_1^{(A,J_B)}(\lambda) \circ \minusa_A,
\end{equation}
    where $\omega_B \colon K_1(J_B) \to \Ka(B_\infty)$ is the map from Theorem~\ref{Thm:K1J}. The same computation as in the end of the proof of Theorem~\ref{thm:algK1} shows that if  $(\phi,\psi)\colon A\rightrightarrows 
   B_\infty\rhd J_B$ is a unital $(A,J_B)$-Cuntz pair, then 
   \begin{equation}
       [\phi, \psi]_{KL(A,J_B)} \mapsto \Ka(\phi) - \Ka(\psi)
   \end{equation} 
   is the map in \eqref{eq:KLtoalg}.
\end{remark}

We conclude this section by proving Theorem~\ref{intro:calcKL}, explicitly describing $KL(A,J_B)$ in terms of $\inv(\,\cdot\,)$.

\begin{theorem}
\label{thm:calcKL}
    Let $A$ be a unital separable $C^*$-algebra satisfying the UCT and let $B$ be a unital simple separable nuclear finite $\mathcal Z$-stable $C^*$-algebra.  
    The map 
    \begin{equation}
\Theta\coloneqq \tilde \Gamma_\Lambda^{(A, B_\infty)} \circ KL(A, j_B)\colon KL(A,J_B) \to \Hom_\Lambda(\totK(A),\totK(B))
    \end{equation}
    fits into the exact sequence
    \begin{equation}\label{eq:KKJ-again}
    \begin{tikzcd}
	   0 \arrow{rr} &[-3ex] &[-28ex] \ker \Hom_\Lambda\big(\totK(A),\totK(j_B)\big) \arrow{r} \arrow[phantom]{dl}[coordinate, name = Z, near end]{} &[-7ex] KL(A, J_B) \arrow[rounded corners, to path={-- ([xshift=3ex]\tikztostart.east) |- (Z) [near end]\tikztonodes -| ([xshift=-3ex]\tikztotarget.west) -- (\tikztotarget)}]{dll}[swap, pos = .9]{\Theta } \\[1ex] & \Hom_\Lambda \big(\totK(A), \totK(B_\infty)\big) \arrow{rr} & & \Hom_\Lambda \big(\totK(A), \totK(B^\infty) \big),
	\end{tikzcd}
	\end{equation}
    where the second arrow is the restriction of $(\tilde\Gamma_\Lambda^{(A, J_B)})^{-1}$,\footnote{Recall that $\tilde\Gamma_\Lambda^{(A, J_B)}$ is an isomorphism by the universal multicoefficient theorem (Theorem~\ref{thm:the-umct}).} 
    and the final arrow is given by $\Hom_\Lambda\big(\totK(A), \totK(q_B)\big)$.  Moreover:
\begin{enumerate}
\item
\label{calcKL.1}
The range of $\Theta$ consists of all $\underline\alpha$ for which $\rho_{B_\infty} \circ \alpha_0 = 0$ and the kernel of $\Theta$ is  $\ker KL(A,j_B)$.
\item
\label{calcKL.2}
Let $\psi\colon A \to B_\infty$ be a unital full $^*$-homomorphism.  There is a bijection $\Omega$ taking $\ker \Theta \subseteq KL(A,J_B)$ to the set of $\beta\colon \Ka(A) \to \Ka(B_\infty)$ such that $(\totK(\psi),\beta,\Aff T(\psi))$ is a $\inv(\,\cdot\,)$-morphism, given by
\begin{equation}
\label{eq:calcKLOmegaDef}
\Omega(\lambda)\coloneqq \Ka(\psi) + R_{A,B}(\lambda) \circ t_A \circ \minusa_A,\qquad \lambda \in \ker\Theta.
\end{equation}
\end{enumerate}
\end{theorem}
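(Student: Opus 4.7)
The plan is to combine the universal multicoefficient theorem (UMCT, Theorem~\ref{thm:the-umct}) with the structural computations for the trace-kernel extension in Section~\ref{SSTKKThy} to transport the relevant exactness from $KL$-theory to the $\Hom_\Lambda$-picture, and then use Lemma~\ref{Compatibility.Lem} together with Theorem~\ref{thm:algK1} for part~(\ref{calcKL.2}).

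For the exactness of \eqref{eq:KKJ-again} and part~(\ref{calcKL.1}): since $A$ satisfies the UCT, $A$ is $KK$-equivalent to a separable nuclear $C^*$-algebra, so $KK(A,\,\cdot\,)$ is half-exact (Proposition~\ref{prop:KK-facts}\ref{prop:KK-facts.3}), and this descends to $KL(A,\,\cdot\,)$. The UMCT provides natural isomorphisms $\widetilde\Gamma_\Lambda^{(A,\,\cdot\,)}$ at $J_B$, $B_\infty$, and $B^\infty$; transporting the half-exact sequence $KL(A,J_B)\to KL(A,B_\infty)\to KL(A,B^\infty)$ through these isomorphisms (using their naturality) yields the exactness claims in \eqref{eq:KKJ-again}. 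In particular, $\ker\Theta = \ker KL(A,j_B)$ since $\widetilde\Gamma_\Lambda^{(A,B_\infty)}$ is an isomorphism, and the first term of \eqref{eq:KKJ-again} identifies via $(\widetilde\Gamma_\Lambda^{(A,J_B)})^{-1}$ with $\ker KL(A,j_B)$. The range of $\Theta$ equals $\ker\Hom_\Lambda(\totK(A),\totK(q_B))$ by exactness; by Proposition~\ref{prop:B^inftyKtheory}, $\totK(B^\infty)$ is concentrated in $K_0$, so this kernel consists of $\underline\alpha$ with $K_0(q_B)\circ\alpha_0 = 0$. Combining the isomorphism $\rho_{B^\infty}\colon K_0(B^\infty)\to\Aff T(B^\infty)$ with naturality of the pairing map and the isomorphism $\Aff T(q_B)$ (Proposition~\ref{NoSillyTracesB^infty}), this condition becomes $\rho_{B_\infty}\circ\alpha_0 = 0$.

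For part~(\ref{calcKL.2}): Theorem~\ref{thm:algK1} (using the UCT) gives that $R_{A,B}$ is an isomorphism. For each $\lambda\in\ker\Theta$, I first verify that $\Omega(\lambda)$ defines a $\inv$-morphism $(\totK(\psi),\Omega(\lambda),\Aff T(\psi))$. Since $\inv(\psi)$ is a $\inv$-morphism (Proposition~\ref{Inv:AFunctor}) and the correction term $R_{A,B}(\lambda)\circ t_A\circ\minusa_A$ factors through $\ker\minusa_{B_\infty}\hookrightarrow\Ka(B_\infty)$, the compatibilities in \eqref{eq:compatibility} are preserved: $\minusa_{B_\infty}$ annihilates the correction, and $\minusa_A\circ\Th_A = 0$ gives vanishing when precomposing with $\Th_A$. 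The key check is \eqref{eq:compatibility2}: by Proposition~\ref{p:newmap}, $\minusa_A\circ\zeta_A^{(n)} = \nu_{0,A}^{(n)}$, whose image lies in $\mathrm{Tor}(K_1(A))$ and is therefore killed by $t_A$, so the correction vanishes when composed with $\zeta_A^{(n)}$. Injectivity of $\Omega$ follows from injectivity of $R_{A,B}$ and surjectivity of $t_A\circ\minusa_A$. For surjectivity, given any valid $\beta$, Lemma~\ref{Compatibility.Lem} applied to $(\totK(\psi),\beta,\Aff T(\psi))$ and $\inv(\psi)$ produces a unique $r\colon K_1(A)/\mathrm{Tor}(K_1(A))\to\ker\minusa_{B_\infty}$ realizing $\beta - \Ka(\psi)$ as in that lemma; surjectivity of $R_{A,B}$ then furnishes $\lambda\in\ker\Theta$ with $R_{A,B}(\lambda) = r$, so $\Omega(\lambda) = \beta$.

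The main technical obstacle is the middle-term exactness of \eqref{eq:KKJ-again} --- that $\im\Theta = \ker\Hom_\Lambda(\totK(A),\totK(q_B))$ --- which requires carefully descending half-exactness of $KK(A,\,\cdot\,)$ to $KL(A,\,\cdot\,)$ on the trace-kernel extension and transporting through the UMCT; additional direct work with the $K$-theory of $B^\infty$ and the Bockstein operations (in the spirit of Lemma~\ref{lem:hom-Lambda-computation}) may be needed to handle the lifting of $\alpha_1$ compatibly.
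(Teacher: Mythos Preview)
Your treatment of part~(\ref{calcKL.2}) matches the paper's argument essentially exactly, as does the identification of $\ker\Theta$ and the description of $\im\Theta$ in terms of $\rho_{B_\infty}$.

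The gap is in the exactness argument for \eqref{eq:KKJ-again} at $\Hom_\Lambda(\totK(A),\totK(B_\infty))$. You assert that half-exactness of $KK(A,\,\cdot\,)$ ``descends to $KL(A,\,\cdot\,)$,'' but this is false in general: neither $KL(A,\,\cdot\,)$ nor $\Hom_\Lambda(\totK(A),\totK(\,\cdot\,))$ is half-exact (the paper gives an explicit counterexample in a footnote, using the suspension of the universal UHF algebra). So your proposed route of transporting a half-exact $KL$-sequence through the UMCT does not work, and framing the obstacle as a matter of ``carefully descending'' is misleading.

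The paper resolves this differently: it works with $KK$ (not $KL$) throughout the diagram chase. Given $\underline\alpha\in\ker\Hom_\Lambda(\totK(A),\totK(q_B))$, surjectivity of $\Gamma_\Lambda^{(A,B_\infty)}$ (from the UMCT) lifts it to $\kappa\in KK(A,B_\infty)$. The crucial additional input is Proposition~\ref{Prop:KK-trace-kernel-quotient}: because $K_0(B^\infty)$ is divisible and $K_1(B^\infty)=0$, the $\Ext$-term in the UCT sequence vanishes, so $\Gamma_\Lambda^{(A,B^\infty)}\colon KK(A,B^\infty)\to\Hom_\Lambda(\totK(A),\totK(B^\infty))$ is itself injective (not merely after passing to $KL$). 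This forces $\kappa\in\ker KK(A,q_B)$, whence half-exactness of $KK$ yields a preimage in $KK(A,J_B)$, and applying $\Gamma_\Lambda^{(A,J_B)}$ gives the desired element of $\im\Hom_\Lambda(\totK(A),\totK(j_B))$. The extra step you anticipated ``in the spirit of Lemma~\ref{lem:hom-Lambda-computation}'' is not needed; the point is rather that the von Neumann-like $K$-theory of $B^\infty$ makes the $KK\to\Hom_\Lambda$ map injective at that node.
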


\begin{proof}
The exactness at the first two entries of \eqref{eq:KKJ-again} follows from the naturality of $\tilde\Gamma_\Lambda^{(A, \,\cdot\,)}$.  Indeed, after using $\tilde\Gamma_\Lambda^{(A, J_B)}$ to identify $KL(A, J_B)$ with $\Hom_\Lambda\big(\totK(A), \totK(J_B)\big)$ via the universal multicoefficient theorem, the second arrow becomes the inclusion, $\Theta$ becomes $\Hom_\Lambda\big( \totK(A), \totK(j_B) \big)$ and \eqref{eq:KKJ-again} is certainly exact at this entry.  For exactness at $\Hom_\Lambda(\totK(A),\totK(B_\infty))$, it suffices to show that $\im \Theta$ is contained in $\ker \Hom_\Lambda\big(\totK(A), \totK(q_B)\big)$ since the reverse inclusion follows from $q_B \circ j_B = 0$. The issue is that, in general, neither $KL(A, \,\cdot\,)$ nor $\Hom_\Lambda\big(\totK(A), 
    \totK(\,\cdot\,)\big)$ is half-exact,\footnote{For instance, let $SQ \coloneqq C_0((0,1),Q)$ be the suspension of the universal UHF algebra $Q$. Then $KL(SQ,\,\cdot\,) \cong \Hom_\Lambda\big(\totK(SQ), \totK(\,\cdot\,)\big) \cong \Hom\big(\mathbb Q, K_1(\,\cdot\,)\big)$. Consider $C \coloneqq \{ f \in C_0([0,1), Q) : f(0) \in \mathbb C 1_{Q}\}$, which contains $SQ$ as an ideal with $C/S Q \cong \mathbb C$. Applying $K_1$ to $S Q \to C \to \mathbb C$ gives $\mathbb Q \to \mathbb Q/\mathbb Z \to 0$, and thus applying $KL(S Q,\,\cdot\,)$ or $\Hom_\Lambda\big(\totK(S Q), \totK(\,\cdot\,)\big)$ to this sequence produces $\Hom(\mathbb Q, \mathbb Q) \to \Hom(\mathbb Q, \mathbb Q/\mathbb Z) \to 0$. This is not exact since the cokernel of the left map is $\Ext(\mathbb Q, \mathbb Z) \neq 0$.} so we instead use half-exactness of $KK(A,\,\cdot\,)$ together with the control on $KK(A,B^\infty)$ given by the von Neumann-like behavior of $B^\infty$.
    
    Consider the commutative diagram
    \begin{equation}\label{eq:KKJ2}
    \begin{tikzcd}[column sep = 2.7ex, row sep = 8ex]
        KK(A, J_B) \arrow{r}\arrow{d}{\Gamma_\Lambda^{(A, J_B)}} & KK(A, B_\infty) \arrow{r} \arrow{d}  {\Gamma_\Lambda^{(A, B_\infty)}} & KK(A, B^\infty) \arrow{d}{\Gamma_\Lambda^{(A, B^\infty)}} \hskip -2ex \\
        \Hom_\Lambda\big(\totK(A), \totK(J_B)\big) \arrow{r} & \Hom_\Lambda\big(\totK(A), \totK(B_\infty)\big) \arrow{r} & \Hom_\Lambda\big(\totK(A), \totK(B^\infty)\big) \hskip -2ex
    \end{tikzcd}
    \end{equation}
    associated to the trace-kernel extension. Since $A$ satisfies the UCT, $A$ is $KK$-equivalent to an abelian $C^*$-algebra (see \cite[Theorem 23.10.5]{Blackadar98}). Then half-exactness of $KK(A,\,\cdot\,)$ (recorded as Proposition~\ref{prop:KK-facts}\ref{prop:KK-facts.3}) gives exactness of the top row of \eqref{eq:KKJ2}.
    A diagram chase using the surjectivity of $\Gamma_\Lambda^{(A, B_\infty)}$ (from the UMCT) and the injectivity of $\Gamma_\Lambda^{(A, B^\infty)}$ (from Proposition~\ref{Prop:KK-trace-kernel-quotient}) proves that $\ker \Hom_\Lambda\big(\totK(A), \totK(q_B)\big)$ is contained in $\im \Hom_\Lambda\big(\totK(A), \totK(j_B)\big)$.\footnote{Any $\underline{\alpha}\in \ker\Hom_\Lambda(\totK(A),\totK(q_B))$ is induced by some $\kappa\in KK(A,B_\infty)$, which is in the kernel of $KK(A,q_B)$ (by injectivity of $\Gamma_\Lambda^{(A,B^\infty)}$). Half-exactness shows that $\kappa=KK(A,j_B)(\kappa')$ for some $\kappa'\in KK(A,J_B)$. Commutativity of the left square of \eqref{eq:KKJ2} shows that $\Gamma_\Lambda^{(A,J_B)}(\kappa')$ is mapped onto $\underline{\alpha}$ via $\Hom(\totK(A), \totK(j_B))$} By naturality of $\tilde{\Gamma}_\Lambda^{(A,\,\cdot\,)}$,
    \begin{equation}
    \Theta=\Hom_\Lambda(\totK(A),\totK(j_B))\circ\tilde{\Gamma}_\Lambda^{(A,J_B)},
    \end{equation}
    so that (using that $\tilde{\Gamma}_\Lambda^{(A,J_B)}$ is an isomorphism by the universal multicoefficient theorem) $\im\Theta=\im\Hom_\Lambda(\totK(A),\totK(j_B))$. Therefore, \eqref{eq:KKJ-again} is exact.

\ref{calcKL.1}:
The exactness of \eqref{eq:KKJ-again} implies that the image of $\Theta$ consists of the set of all $\underline\alpha\colon \totK(A)\to \totK(B_\infty)$ for which $\underline K(q_B) \circ \underline\alpha = 0$.
By Proposition~\ref{prop:B^inftyKtheory}, this condition is equivalent to $\rho_{B_\infty} \circ \alpha_0 = 0$.  The kernel of $\Theta$ is $\ker\,KL(A,j_B)$ as $\tilde{\Gamma}_\Lambda^{(A,B_\infty)}$ is an isomorphism.

\ref{calcKL.2}:
Fix $\lambda \in \ker \Theta$ and define $\beta\coloneqq \Omega(\lambda)$. We first check that $(\totK(\psi),\beta,\Aff T(\psi))$ is a morphism $\inv(A) \to \inv(B_\infty)$. This boils down to the fact that $\inv(\psi)$ is a morphism (Proposition~\ref{Inv:AFunctor}) and the correcting term $R_{A,B}(\lambda)\circ t_A\circ\minusa_A$ in \eqref{eq:calcKLOmegaDef} is annihilated in the $\inv(\,\cdot\,)$-compatibility relations. 

Precisely, the first square of \eqref{eq:compatibility} commutes for free since it does not involve the map $\beta$.
For the second square, since $\minusa_A \circ \Th_A = 0$, we have 
\begin{equation}
\beta \circ \Th_A \stackrel{\eqref{eq:calcKLOmegaDef}}= \Ka(\psi) \circ \Th_A \stackrel{\text{Prop.\ \ref{Inv:AFunctor}}}= \Th_{B_\infty} \circ \Aff T(\psi),
\end{equation}
For the third square of \eqref{eq:compatibility}, since $\im R_{A,B}(\lambda) \subseteq \ker\minusa_{B_\infty}$, we similarly have 
\begin{equation}
\minusa_{B_\infty} \circ \beta \stackrel{\eqref{eq:calcKLOmegaDef}}= \minusa_{B_\infty} \circ \Ka(\psi) \stackrel{\text{Prop.\ \ref{Inv:AFunctor}}}= K_1(\psi) \circ \minusa_A.
\end{equation}
Finally, for commutativity of \eqref{eq:compatibility2}, note that by \eqref{eq:newmap1}, we have $\minusa_A \circ \zeta_A^{(n)} = \nu_{0,A}^{(n)}$, so the range of this composition is contained in $\mathrm{Tor}(K_1(A),\Zn{n})$.
Consequently, $t_A \circ \minusa_A \circ \zeta_A^{(n)} = 0$, so similar to the previous calculations, we have
\begin{equation}
\beta \circ \zeta_A^{(n)} \stackrel{\eqref{eq:calcKLOmegaDef}}= \Ka(\psi) \circ \zeta_A^{(n)} \stackrel{\text{Prop.\ \ref{Inv:AFunctor}}}= \zeta_B^{(n)} \circ K_0(\psi;\Zn{n}).
\end{equation}
This shows that $(\totK(\psi),\beta,\Aff T(\psi))$ is a $\inv(\,\cdot\,)$-morphism.

Suppose now that $\lambda,\lambda' \in \ker\Theta$ satisfy $\Omega(\lambda)=\Omega(\lambda')$.
By \eqref{eq:calcKLOmegaDef}, we get $R_{A,B}(\lambda-\lambda')\circ t_A \circ \minusa_A =0$.
Since $t_A \circ \minusa_A$ is surjective, and $R_{A,B}$ is injective (from Theorem~\ref{thm:algK1}), it follows that $\lambda=\lambda'$.
This shows that $\Omega$ is injective.

Finally, let us show that $\Omega$ is surjective; that is, if $\beta:\Ka(A)\to\Ka(B_\infty)$ is such that $(\totK(\psi),\beta,\Aff T(\psi))$ is a morphism $\inv(A) \to \inv(B_\infty)$, then there exists $\lambda \in \ker\Theta$ such that $\Omega(\lambda)=\beta$.
Using Lemma~\ref{Compatibility.Lem} with $\beta'\coloneqq \Ka(\psi)$, there is a homomorphism $r\colon K_1(A)/\mathrm{Tor}(K_1(A))\to\ker\minusa_{B_\infty}$ such that
  \begin{equation}
\label{eq:calcKLDiagram}
  \begin{tikzcd}
  \Ka(A)\ar[rr,"\beta-\Ka(\psi)"]\ar[d,"\minusa_A", two heads] && \Ka(B_\infty) \\
  K_1(A)\ar[r,"t_A", two heads]&K_1(A)/\mathrm{Tor}(K_1(A))\ar[r,"r"] & \ker\minusa_{B_\infty}\ar[u,tail]  \\
\end{tikzcd}
\end{equation}
commutes.
By surjectivity of $R_{A,B}$ (from Theorem~\ref{thm:algK1}), it follows that there exists $\lambda \in \ker KL(A,j_B)=\ker \Theta$ such that $r=R_{A,B}(\lambda)$.
Hence,
\begin{equation}
\begin{array}{rcl}
\Omega(\lambda)
&\stackrel{\eqref{eq:calcKLOmegaDef}}=& \Ka(\psi)+R_{A,B}(\lambda)\circ t_A \circ \minusa_A \\
&=& \Ka(\psi)+r \circ t_A \circ \minusa_A \\
&\stackrel{\eqref{eq:calcKLDiagram}}=& \Ka(\psi)+(\beta-\Ka(\psi)) \\
&=& \beta,
\end{array}
\end{equation}
as required.
\end{proof}

\begin{remark}\label{rem:rotation}
The map $R_{A, B}$ in Theorem~\ref{thm:algK1} is  related to the rotation maps appearing in early classification results of $^*$-homomorphism up to asymptotic unitary equivalence, dating back to \cite{Kishimoto-Kumjian01}.  For the sake of comparison, we recall the version of the rotation map from \cite[Definition~2.21]{GLN-part1}.

Suppose $A$ and $B$ are unital $C^*$-algebras with $A$ separable and $\phi, \psi \colon A \rightarrow B$ are $^*$-homomorphisms with $[\phi]_{KK(A, B)} = [\psi]_{KK(A, B)}$ and $\Aff T(\phi) = \Aff T(\psi).$ Define the \emph{mapping torus} of $\phi$ and $\psi$ by
\begin{equation}
    M_{\phi,\psi} \coloneqq \{(f,a) \in  C([0, 1], B) \oplus A : \phi(a) = f(0) \text{ and } \psi(a) = f(1) \}.
\end{equation}
There is a corresponding extension
\begin{equation}\label{8.38}
\begin{tikzcd}
    0 \arrow{r} & SB \arrow{r} & M_{\phi, \psi} \arrow{r} & A \arrow{r} & 0
\end{tikzcd}
\end{equation}
with the first map being the inclusion into the first factor and the second map being the projection onto the second factor.  Using that $[\phi]_{KK(A, B)} = [\psi]_{KK(A, B)}$, this extension admits a $KK$-splitting $\kappa \in KK(A, M_{\phi, \psi})$,\footnote{In the six-term exact sequence obtained by applying the functor $KK(A, \,\cdot\,)$ to \eqref{8.38}, the boundary map is naturally identified with $KK(A, \phi) - KK(A, \psi)$, and hence it vanishes since $\phi$ and $\psi$ agree in $KK(A, B)$.  The $KK$-splitting $\kappa$ is obtained as a lift of $[\mathrm{id}_A]_{KK(A, A)}$ along the surjective map $KK(A, M_{\phi, \psi}) \rightarrow KK(A, A)$.} and hence there is an induced split exact sequence
\begin{equation}
\begin{tikzcd}
    0 \arrow{r} & K_0(B) \arrow{r} & K_1(M_{\phi, \psi}) \arrow[shift right = .5ex]{r} & K_1(A) \arrow{r} \arrow[shift right = .5ex]{l}[swap]{\kappa_1} \arrow{r} & 0,
\end{tikzcd}
\end{equation}
where $\kappa_1 \coloneqq \Gamma_1^{(A, M_{\phi, \psi})}(\kappa)$.

Using that $\Aff T(\phi) = \Aff T(\psi)$, the de la Harpe--Skandalis determinant produces well-defined map $R_{\phi, \psi} \colon  K_1(M_{\phi, \psi}) \rightarrow \Aff T(B)$ by
\begin{equation}
\label{eq:rotationR}
    R_{\phi, \psi}([(u, v)]_1) \coloneqq \tilde\Delta_B(u), \qquad (u, v) \in U_\infty(M_{\phi, \psi}),
\end{equation}
called the \emph{rotation map} corresponding to $\phi$ and $\psi$.  The composition
\begin{equation}
\label{eq:rotationTildeR}
\begin{tikzcd}
    K_1(A) \arrow{r}{\kappa_1} & K_1(M_{\phi, \psi}) \arrow{r}{R_{\phi, \psi}} & \Aff T(B) \arrow[two heads]{r} & \Aff T(B)/\overline{\im \rho_B},
\end{tikzcd}
\end{equation}
which is denoted $\tilde R_{\phi, \psi}$, is independent of the choice of splitting $\kappa$.\footnote{Two different splitting $\kappa$ and $\kappa'$ induce a map $\kappa_1 - \kappa'_1 \colon K_1(A) \rightarrow K_0(B)$.  Well-definedness follows from \cite[Lemma~3.3]{Lin08}.}

If $(\phi, \psi)\colon A\rightrightarrows B_\infty \rhd J_B$ is a Cuntz pair as in Theorem~\ref{thm:algK1}, and, in addition, $[\phi]_{KK(A, B_\infty)} = [\psi]_{KK(A, B_\infty)}$, so that both $\tilde R_{\phi, \psi}$ and $R_{A, B}([\phi, \psi]_{KL(A, J_B)})$ are defined, then they are related by the equation
\begin{equation}\label{equation:comparingrotationmaps}
    \tilde R_{\phi, \psi} = -\overline{\det}_{B_\infty} \circ R_{A, B}([\phi, \psi]_{KL(A, J_B)}) \circ t_A.
\end{equation}

To see this, by \eqref{eq:detInverse} and \eqref{eq:rotationTildeR}, it suffices to show that \begin{equation}
    \Th_{B_\infty} \circ R_{\phi,\psi}\circ \kappa_1=-R_{A,B}([\phi,\psi]_{KL(A,J_B)})\circ t_A.
\end{equation}
For $v \in U_\infty(A)$, write $\kappa_1([v]_1)=[(u,v')]_1$, so that $[v']_1=[v]_1$ and $u$ is a path in $U_\infty(B_\infty)$ from $\phi(v')$ to $\psi(v')$.
Then
\begin{equation}
    \begin{array}{rcl}
        \Th_{B_\infty}\left(R_{\phi,\psi}(\kappa_1([v]_1))\right)
        &=& \Th_{B_\infty}(R_{\phi,\psi}([(u,v')]_1)) \\
        &\stackrel{\eqref{eq:rotationR}}=& \Th_{B_\infty}(\tilde\Delta_{B_\infty}(u)) \\
        &\stackrel{\eqref{DefDeterminant},\eqref{DefThomsen}}=& [u(1)u(0)^*]_{\mathrm{alg}} \\
        &=& [\psi(v')]_{\mathrm{alg}}-[\phi(v')]_{\mathrm{alg}} \\
        &=& (\Ka(\psi)-\Ka(\phi))([v']_{\mathrm{alg}}) \\
        &\stackrel{\eqref{eq:algK1Diag}}=& -R_{A,B}([\phi,\psi]_{KL(A,J_B)})(t_A([v']_1)) \\
        &=& -R_{A,B}([\phi,\psi]_{KL(A,J_B)})(t_A([v]_1)),
    \end{array}
\end{equation}
establishing \eqref{equation:comparingrotationmaps}.
\end{remark}

\section{Classification of unital $^*$-homomorphisms}
\label{sec:main-results}

We now have all the pieces in place to establish the main classification theorems: Theorems~\ref{Main}, \ref{Main2}, and~\ref{Main3}. First we prove the  classification of unital full approximate embeddings, and then we use intertwining arguments to deduce Theorem~\ref{Main2} in Section~\ref{sect:classembed} and Theorem~\ref{Main} in Section~\ref{sec:unitalclass}.

\subsection{Classification of unital full approximate embeddings}
\label{subsec:approx-classif-embeddings}

We start with the precise statement of Theorem~\ref{Main3}.

\begin{theorem}[Classification of unital full approximate embeddings]
  \label{approximate-classification}
  Let $A$ and $B$ be unital separable nuclear $C^*$-algebras such that $A$ satisfies the UCT and $B$ is simple and $\mathcal Z$-stable.  If
  $(\underline{\alpha},\beta,\gamma) \colon \inv(A)\to\inv(B_\infty)$ is a faithful morphism, then there exists a unital full
  $^*$-homomorphism $\phi \colon A\to B_\infty$ such that $\inv(\phi)
  = (\underline{\alpha},\beta,\gamma)$, and this $\phi$ is unique up
  to unitary equivalence.
\end{theorem}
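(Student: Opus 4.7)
The plan is to combine the three classification tools assembled in the paper: the classification of unital $^*$-homomorphisms into the trace-kernel quotient by traces (Theorem~\ref{thm:B^inftyClassification}), the classification of unital lifts along the trace-kernel extension (Theorem~\ref{thm:ClassifyingUnitalLifts}), and the computation of $KL(A,J_B)$ in terms of the total invariant (Theorem~\ref{thm:calcKL}), proceeding exactly along the strategy outlined in diagram~\eqref{overview-diag} of the introduction.

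For existence, I first pass to the trace-kernel quotient. The isomorphism $\Aff T(q_B)\colon \Aff T(B_\infty)\to \Aff T(B^\infty)$ from Proposition~\ref{NoSillyTracesB^infty}\ref{NoSillyTracesB^infty.1} converts $\gamma$ into a positive unital linear map $\Aff T(A)\to \Aff T(B^\infty)$, and Theorem~\ref{thm:B^inftyClassification} produces a unital $\theta\colon A\to B^\infty$ realizing it. Faithfulness of $(\underline{\alpha},\beta,\gamma)$ together with Proposition~\ref{NoSillyTracesB^infty}\ref{NoSillyTracesB^infty.3} ensures that $\theta$ is full. The next  step is to find a $KK$-lift of $\theta$ to $B_\infty$ compatible with $\underline{\alpha}$. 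Because $A$ satisfies the UCT and $\Gamma^{(A,B_\infty)}$ is surjective, I can choose $\kappa\in KK(A,B_\infty)$ inducing $\alpha_0$ and $\alpha_1$ on $K_*$. Applying $KK(A,q_B)$ and using Proposition~\ref{Prop:KK-trace-kernel-quotient} (which identifies $KK(A,B^\infty)$ with $\Hom(K_0(A),K_0(B^\infty))$ via $\Gamma_0^{(A,B^\infty)}$) together with the commutativity of the first square in \eqref{eq:compatibility}, shows that $KK(A,q_B)(\kappa)=[\theta]_{KK(A,B^\infty)}$; the unit compatibility $\Gamma_0^{(A,B_\infty)}(\kappa)([1_A]_0)=[1_{B_\infty}]_0$ follows from $\alpha_0([1_A]_0)=[1_{B_\infty}]_0$. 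Theorem~\ref{thm:ClassifyingUnitalLifts}\ref{thm:ClassifyingUnitalLifts.C1} then gives a unital lift $\phi_0\colon A\to B_\infty$ of $\theta$ with $[\phi_0]_{KK(A,B_\infty)}=\kappa$; in particular $\underline{K}(\phi_0)=\underline{\alpha}$ by the universal multicoefficient theorem, $\Aff T(\phi_0)=\gamma$ (since $\phi_0$ lifts $\theta$ and the quotient induces an isomorphism on affine function spaces), and $\phi_0$ is full by Lemma~\ref{p:fullcomp} (or fullness of $\theta$).

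In general, $\Ka(\phi_0)\neq \beta$, and this is corrected using Theorem~\ref{thm:calcKL}\ref{calcKL.2}. Applied with $\psi\coloneqq \phi_0$, the bijection $\Omega$ sends some $\lambda\in \ker\Theta=\ker KL(A,j_B)$ to $\beta$, i.e.
\begin{equation}
\beta=\Ka(\phi_0)+R_{A,B}(\lambda)\circ t_A\circ \minusa_A.
\end{equation}
Theorem~\ref{thm:ClassifyingUnitalLifts}\ref{thm:ClassifyingUnitalLifts.C2} then produces a unital lift $\phi\colon A\to B_\infty$ of $\theta$ with $[\phi,\phi_0]_{KL(A,J_B)}=\lambda$. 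By Theorem~\ref{thm:algK1}, the map $\Ka(\phi)-\Ka(\phi_0)$ factors as $R_{A,B}(\lambda)\circ t_A\circ \minusa_A$, so $\Ka(\phi)=\beta$. Since $\lambda\in\ker KL(A,j_B)$, we have $[\phi]_{KL(A,B_\infty)}=[\phi_0]_{KL(A,B_\infty)}$, and hence $\underline{K}(\phi)=\underline{K}(\phi_0)=\underline{\alpha}$ by the universal multicoefficient theorem; $\Aff T(\phi)=\gamma$ since $\phi$ also lifts $\theta$. Thus $\inv(\phi)=(\underline{\alpha},\beta,\gamma)$, and $\phi$ is full because $q_B\circ\phi=\theta$ is.

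For uniqueness, let $\phi_1,\phi_2\colon A\to B_\infty$ be unital full $^*$-homomorphisms with $\inv(\phi_1)=\inv(\phi_2)=(\underline{\alpha},\beta,\gamma)$. Then $\Aff T(q_B\circ\phi_1)=\Aff T(q_B\circ\phi_2)$, so Theorem~\ref{thm:B^inftyClassification} gives a unitary $u\in B^\infty$ with $\Ad(u)\circ q_B\circ \phi_1=q_B\circ\phi_2$. Lifting $u$ to a unitary in $B_\infty$ via Proposition~\ref{prop:B^inftyExponentials} and conjugating $\phi_1$, I may assume $q_B\circ\phi_1=q_B\circ\phi_2\eqqcolon \theta$, so $(\phi_1,\phi_2)$ is a unital Cuntz pair. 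Since $\underline{K}(\phi_1)=\underline{K}(\phi_2)$, the universal multicoefficient theorem gives $[\phi_1]_{KL(A,B_\infty)}=[\phi_2]_{KL(A,B_\infty)}$, i.e.\ $[\phi_1,\phi_2]_{KL(A,J_B)}\in\ker KL(A,j_B)=\ker\Theta$. Applying $\Omega$ with $\psi\coloneqq\phi_2$, both $\Ka(\phi_1)$ and $\Omega([\phi_1,\phi_2]_{KL(A,J_B)})$ arise in the same way from diagram~\eqref{eq:algK1Diag}, so the correction term $R_{A,B}([\phi_1,\phi_2]_{KL(A,J_B)})\circ t_A\circ \minusa_A$ equals $\Ka(\phi_1)-\Ka(\phi_2)=0$. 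Surjectivity of $t_A\circ\minusa_A$ and injectivity of $R_{A,B}$ (Theorem~\ref{thm:algK1}) force $[\phi_1,\phi_2]_{KL(A,J_B)}=0$, and Theorem~\ref{thm:ClassifyingUnitalLifts}\ref{thm:ClassifyingUnitalLifts.C3} concludes that $\phi_1$ and $\phi_2$ are unitarily equivalent.

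The main obstacle is the first step: producing a $KK$-lift $\kappa$ of $\theta$ that preserves the class of the unit \emph{and} agrees with the prescribed total $K$-theoretic data $\underline{\alpha}$. Here the universal coefficient theorem (used via surjectivity of $\Gamma^{(A,B_\infty)}$) and the fact that $\Gamma_0^{(A,B^\infty)}$ is an isomorphism (Proposition~\ref{Prop:KK-trace-kernel-quotient}, which relies on the von Neumann-like $K$-theory of $B^\infty$) are both essential; without the latter one would only know that $\kappa$ lifts $\theta$ modulo $\ker\Gamma^{(A,B^\infty)}$. The subsequent correction of the $\Ka$-component is clean once Theorem~\ref{thm:calcKL} is in hand, and is precisely what the maps $\zeta^{(n)}$ in the invariant are designed to accommodate via Lemma~\ref{Compatibility.Lem}.
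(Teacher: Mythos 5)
Your argument reproduces, almost step for step, the paper's own proof of the stably finite case — the route via $\theta$ realizing $\gamma$ on $B^\infty$, lifting $\theta$ using a $KK$-class $\kappa$ from the UMCT, and then correcting the $\Ka$-component using Theorem~\ref{thm:calcKL}\ref{calcKL.2} and Theorem~\ref{thm:ClassifyingUnitalLifts}\ref{thm:ClassifyingUnitalLifts.C2}, with uniqueness following the same structure in reverse. However, there are two gaps.

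First, and more seriously as a structural matter: your proof only handles the case $T(B) \neq \emptyset$, but it never says so. By Kirchberg's dichotomy (Theorem~\ref{T:KDich}), a unital simple separable exact $\Z$-stable $B$ is either stably finite or purely infinite, and in the purely infinite case $T(B) = \emptyset$, which makes $J_B = B_\infty$ and $B^\infty = 0$. Then Theorem~\ref{thm:B^inftyClassification}, Theorem~\ref{thm:ClassifyingUnitalLifts}, and Theorem~\ref{thm:calcKL} — all of which carry the hypothesis $T(B) \neq \emptyset$ or that $B$ is finite — are simply unavailable. The theorem as stated covers both halves of the dichotomy, so the purely infinite case requires a separate argument (the paper reduces it to the $\mathcal O_\infty$-stable classification of \cite{Gabe-Preprint}, noting via Remark~\ref{rmk:TotInvPI} that a faithful $\inv$-morphism into $\inv(B_\infty)$ is just a unital $\totK$-morphism). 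Your proposal should at minimum note the dichotomy and explain why the traceless case is dispatched by established Kirchberg--Phillips technology.

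Second, a local but genuine defect in the existence step: you choose $\kappa \in KK(A,B_\infty)$ using surjectivity of $\Gamma^{(A,B_\infty)}$ to realize only $\alpha_0$ and $\alpha_1$ on $K_*$, and then claim $\underline K(\phi_0) = \underline\alpha$ ``by the universal multicoefficient theorem.'' That does not follow: two $\Lambda$-morphisms can agree on $K_*$ while differing on the $\Zn{n}$-coefficient groups (otherwise $\totK$ would carry no information beyond $K_*$), and $\tilde\Gamma^{(A,B_\infty)}_\Lambda([\kappa]_{KL})$ is controlled only on the $K_*$-components by your choice. The fix is to invoke the UMCT (Theorem~\ref{thm:the-umct}) at the outset to choose $\kappa$ with $\tilde\Gamma^{(A,B_\infty)}_\Lambda([\kappa]_{KL}) = \underline\alpha$ on all of total $K$-theory. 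Without this, the subsequent application of Theorem~\ref{thm:calcKL}\ref{calcKL.2} (which characterizes the $\beta'$ making $(\totK(\phi_0),\beta',\Aff T(\phi_0))$ an $\inv$-morphism) produces a $\phi$ whose $\totK$-component could differ from $\underline\alpha$, so you cannot conclude $\inv(\phi) = (\underline\alpha,\beta,\gamma)$.
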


As mentioned in the introduction, 
by means of Kirchberg's dichotomy (Theorem~\ref{T:KDich}), $B$ is either purely infinite or stably finite.
While the stably finite case is our main result, let us first discuss the purely infinite case, which is a refined version of the Kirchberg--Phillips Theorem.  This could be proved using Kirchberg's (unpublished) techniques from the 1990s (\cite{Kirchberg}); we will deduce the result from the purely infinite classification theorem in \cite{Gabe-Preprint}.

\begin{proof}[Proof of Theorem~\ref{approximate-classification} when $T(B)=\emptyset$]
	$B$ is purely infinite and hence is $\mathcal O_\infty$-stable by \cite[Theorem 3.15]{Kirchberg-Phillips00}.
As noted in Remark~\ref{rmk:TotInvPI}, $\inv$-morphisms from $\inv(A)$ to $\inv(B_\infty)$ (all of which are vacuously faithful) correspond to $\totK$-morphisms from $\totK(A)$ to $\totK(B_\infty)$ sending $[1_A]_0$ to $[1_{B_\infty}]_0$.
Hence by \cite[Theorem 8.12]{Gabe-Preprint}, these correspond bijectively to full $\mathcal O_\infty$-stable (in the sense of \cite[Definition 4.1]{Gabe-Preprint}) unital $^*$-homomorphisms $A \to B_\infty$, up to approximate unitary equivalence.

By \cite[Proposition~1.12]{Schafhauser18}, $B_\infty$ is separably $\mathcal O_\infty$-stable, and consequently any $^*$-homomorphism $A \to B_\infty$ factorizes through a separable $\mathcal O_\infty$-stable subalgebra of $B_\infty$, and is therefore $\mathcal O_\infty$-stable.
Note also that approximate unitary equivalence is the same as unitary equivalence (as recorded in Lemma~\ref{lem:AUEImpliesUE}).
This proves the theorem in this case.
\end{proof}

Now we return to the stably finite case, beginning with the existence portion of Theorem~\ref{approximate-classification}. The strategy is to first realize $\gamma$ by a map into $B^\infty$, obtained from the classification of maps into $B^\infty$, and lift this to a map into $B_\infty$ realizing $\underline{\alpha}$ using the classification of unital lifts.  Finally, we adjust this lift by combining the second part of the classification of unital lifts with the $KL$ computations in Section~\ref{sec:uct-rot}.

\begin{proof}[Proof of existence in Theorem~\ref{approximate-classification} when $T(B)\neq \emptyset$]
  Suppose $(\underline{\alpha}, \beta, \gamma) \colon \inv(A) \to \inv(B)$
  is a faithful morphism.  By the classification of maps into
  $B^\infty$ by traces (Theorem~\ref{thm:B^inftyClassification}),
  there is a unital $^*$-homomorphism $\theta \colon A \to B^\infty$
  such that
  \begin{equation}
    \label{Existence.Eq.T.Theta}
    \Aff T(\theta)
    = \Aff T(q_B) \circ \gamma \colon \Aff T(A) \to \Aff T(B^\infty).
  \end{equation}
  Since $(\underline{\alpha}, \beta, \gamma)$ is faithful, every trace
  $\tau\in T(B^\infty)$ gives rise to a faithful trace $\tau\circ \theta = \gamma^*(\tau\circ q_B)$ on
  $A$.

  Since $A$ satisfies the UCT, Dadarlat and Loring's universal multicoefficient theorem
  (Theorem~\ref{thm:the-umct}) provides $\kappa \in KK(A, B_\infty)$
  inducing $\underline{\alpha}$. We
  claim that $\kappa$ is a $KK$-lift of $\theta$, i.e., that
  \begin{equation}\label{eq:KK-lift}
    KK(A, q_B)(\kappa) = [\theta]_{KK(A,B^\infty)}.
  \end{equation}
To see this, first note that $\Gamma_0^{(A,B^\infty)}(KK(A,q_B)(\kappa))=K_0(q_B)\circ\alpha_0$ using \eqref{eq:Gamma-KK-K}. Using naturality of the pairing map $\rho$ in
  the first and last line, and the compatibility of $\alpha_0$ and
  $\gamma$ (i.e., commutativity of the first square in
  \eqref{eq:compatibility} from the conditions on $\inv$-morphisms) in
  the second, we have
  \begin{equation}
    \begin{array}{rcl}
      \rho_{B^\infty} \circ K_0(q_B) \circ \alpha_0
      &=& \ \Aff T(q_B) \circ \rho_{B_\infty} \circ \alpha_0 \\
      &=& \ \Aff T(q_B) \circ \gamma \circ \rho_{A} \\
      &\stackrel{\mathclap{\eqref{Existence.Eq.T.Theta}}}{=}& \ \Aff T(\theta) \circ \rho_{A}\\
      &=& \ \rho_{B^\infty} \circ K_0(\theta).
    \end{array}
  \end{equation}
  Since $\rho_{B^\infty}$ is an isomorphism 
  (Proposition~\ref{prop:B^inftyKtheory}), $K_0(\theta)=K_0(q_B)\circ
  \alpha_0 $, and so $\Gamma_0^{(A,B^\infty)}(KK(A,q_B)(\kappa))=\Gamma_0^{(A,B^\infty)}([\theta]_{KK(A,B^\infty)})$. The claim in \eqref{eq:KK-lift} now follows as $\Gamma_0^{(A,B^\infty)}$ is an isomorphism by Proposition~\ref{Prop:KK-trace-kernel-quotient} (using that $A$ satisfies the UCT).

  Note also that $\Gamma_0^{(A,B_\infty)}(\kappa)([1_A]_0)=\alpha_0([1_A]_0)=[1_{B_\infty}]_0$, as $(\underline{\alpha},\beta,\gamma)$ is a $\inv$-morphism.
  Therefore, the classification of unital lifts (Theorem~\ref{thm:ClassifyingUnitalLifts}\ref{thm:ClassifyingUnitalLifts.C1}) implies the existence of a unital $^*$-homomorphism $\psi\colon A\to B_\infty$ lifting $\theta$ and satisfying $[\psi]_{KK(A,B_\infty)}=\kappa$. Since $\psi$ lifts $\theta$ and $\Aff T(q_B)$ is an isomorphism (by Proposition~\ref{NoSillyTracesB^infty}\ref{NoSillyTracesB^infty.1}), we have
  \begin{equation}
    \label{approximate.classification.e.3}
    \Aff T(\psi)=\gamma.
  \end{equation}
  Likewise, since $\kappa$ induces $\underline{\alpha}$, we have
  $\underline{K}(\psi)=\underline{\alpha}$. However, there is no
  reason why $\Ka(\psi)$ should be equal to $\beta$.  

By Theorem~\ref{thm:calcKL}\ref{calcKL.2}, there exists $\lambda\in \ker KL(A,j_B) \subseteq KL(A,J_B)$ such that\footnote{It is in Theorem~\ref{thm:calcKL}\ref{calcKL.2}, and the application to Lemma~\ref{Compatibility.Lem} within, that we use the fact that $\beta$ is compatible with the maps $\zeta^{(n)}$.}
\begin{equation}
\label{approximate.classification.e.4}
\beta = \Ka(\psi)+R_{A,B}(\lambda) \circ t_A \circ \minusa_A.
\end{equation}
By the classification of unital lifts (Theorem~\ref{thm:ClassifyingUnitalLifts}\ref{thm:ClassifyingUnitalLifts.C2}), there exists a unital $^*$-homomorphism $\phi\colon A\to B_\infty$ lifting $\theta$ with $[\phi,\psi]_{KL(A,J_B)}=\lambda$.   Since this class lies in
  $\ker KL(A, j_B)$, we have 
  $[\phi]_{KL(A,B_\infty)} = [\psi]_{KL(A,B_\infty)}$ by Proposition~\ref{prop:KL-facts}\ref{prop:KL-facts.5}.
Therefore, by applying $\tilde\Gamma_\Lambda^{(A,B_\infty)}$, we have
  \begin{equation}
    \totK(\phi) = \totK(\psi) = \underline{\alpha}.
  \end{equation}
  By Theorem~\ref{thm:algK1}, we have
  \begin{equation}
\begin{array}{rcl}
    \Ka(\phi)
    &=& \Ka(\psi) + R_{A, B}([\phi, \psi]_{KL(A, J_B)}) \circ t_A \circ \minusa_A \\
    &=& \Ka(\psi) + R_{A, B}(\lambda) \circ t_A \circ \minusa_A \\
    &\stackrel{\eqref{approximate.classification.e.4}}=& \beta.
\end{array}
  \end{equation}
  
  Finally, since $\phi$ lifts $\theta$ and $\Aff T(q_B)$ is an isomorphism, we have $\Aff T(\phi)=\gamma$.
  The three previous equations combine to show that $\inv(\phi) =
  (\underline{\alpha}, \beta, \gamma)$, completing the proof.
\end{proof}

Now we turn to uniqueness. The pairing maps $\zeta^{(n)}$ of
Section~\ref{sec:new-map} are not needed for this part of the
theorem.

\begin{proof}[Proof of uniqueness in Theorem~\ref{approximate-classification} when $T(B)\neq \emptyset$]
  Let $\phi, \psi\colon A\to B_\infty$ be two unital full
  $^*$-homomorphisms such that $\inv(\phi) = \inv(\psi)$.  Then, in particular, $\Aff
  T(\phi) = \Aff T(\psi)$, and we have
  \begin{equation}
    \Aff T(q_B \circ \phi)
    = \Aff T(q_B \circ \psi) \colon \Aff T(A) \to \Aff T(B^\infty).
  \end{equation}
  Hence $q_B\circ \phi$ and $q_B\circ \psi$ are unitarily equivalent by the
  classification of maps into $B^\infty$ by traces
  (Theorem~\ref{thm:B^inftyClassification}).  Since every unitary in
  $B^\infty$ can be lifted to a unitary in $B_\infty$ by
  Proposition~\ref{prop:B^inftyExponentials}, we have that $q_B\circ\phi = q_B
  \circ \Ad(u) \circ \psi$ for some unitary $u\in B_\infty$.  By replacing $\psi$ with $\Ad(u)\circ\psi$ (which leaves $\inv(\,\cdot\,)$ unchanged --- see Proposition~\ref{Inv:AFunctor}), we may assume that
  $q_B\circ \phi = q_B\circ \psi$.
  Therefore, $(\phi,\psi)$ forms an $(A,J_B)$-Cuntz pair, and defines
  an element $[\phi,\psi]_{KL(A,J_B)}$ in $KL(A, J_B)$.

  Letting $\Theta$ be as in Theorem~\ref{thm:calcKL}\ref{calcKL.1}, we have
\begin{equation}
\begin{array}{rcl}
\Theta([\phi,\psi]_{KL(A,J_B)})
&\stackrel{\text{Thm.\ \ref{thm:calcKL}\ref{calcKL.1}}}{=}&\big(\tilde\Gamma_\Lambda^{(A,B_\infty)}\circ KL(A,j_B)\big)([\phi,\psi]_{KL(A,J_B)}) \\
&\stackrel{\text{Prop.\ \ref{prop:KL-facts}\ref{prop:KL-facts.5}}}=& \tilde\Gamma_\Lambda^{(A,B_\infty)}([\phi]_{KL(A,B_\infty)}-[\psi]_{KL(A,B_\infty)})\\
&\stackrel{\eqref{MUCT.E2}}=& \totK(\phi)-\totK(\psi)\\
&=& 0.
\end{array}
\end{equation}
This shows that $[\phi,\psi]_{KL(A,J_B)}\in \ker \Theta =\ker KL(A,j_B) $.

Since $\Ka(\phi)=\Ka(\psi)$ and both $t_A$ and $\minusa_A$ are surjective, it follows by Theorem~\ref{thm:calcKL} that $R_{A,B}([\phi,\psi]_{KL(A,J_B)})=0$.
Since $R_{A,B}$ is injective (by Theorem~\ref{thm:calcKL}), we conclude that $[\phi,\psi]_{KL(A,J_B)}=0$.
  
Hence by the classification of unital lifts (Theorem~\ref{thm:ClassifyingUnitalLifts}\ref{thm:ClassifyingUnitalLifts.C3}), it follows that $\phi$ and $\psi$ are unitarily equivalent.
\end{proof}

\subsection{Classification of embeddings}\label{sect:classembed}

Next we use the classification of unital full approximate embeddings to prove the classification of embeddings theorem (Theorem~\ref{Main2}).  Whereas uniqueness up to approximate unitary equivalence in Theorem~\ref{Main2} is an immediate consequence of the uniqueness up to unitary equivalence in Theorem~\ref{approximate-classification}, an intertwining argument is needed for existence.
In the literature (such as \cite{Lin01a,Lin07,GLN-part1,GLN-part2}), this intertwining argument is often applied in an approximate form, by reformulating the approximate classification theorem in terms of approximately multiplicative maps, requiring the use of approximately defined maps on the invariant.
In contrast, we opt for an approach that uses intertwining by reparameterization to identify those maps $A\to B_\infty$ that are unitarily equivalent to maps factoring through $B$.  To the best of our knowledge, the idea first appears (albeit in a continuous form) in Phillip's approach to the Kirchberg--Phillips theorem in \cite[Proposition~1.3.7]{Phillips00}, where it is attributed to Kirchberg.

To set this up, given a function $r \colon \mathbb{N} \to \mathbb{N}$ define a reparameterization map $r^*\colon{}\ell^\infty(B)\to \ell^\infty(B)$ by $r^*((b_n)_{n=1}^\infty)\coloneqq (b_{r(n)})_{n=1}^\infty$.  When $\lim_{n\to\infty}r(n)=\infty$, this also induces a map $B_\infty\to B_\infty$, which we continue to denote by $r^*$. 
Recall from \eqref{eq:iota} that $\iota_B\colon B\to B_\infty$ denotes the canonical embedding.

  \begin{proposition}[Intertwining by reparameterization, {\cite[Theorem 4.3]{Gabe20}}]\label{t:Gabereindex}
  Let $A$ and $B$ be $C^*$-algebras, with $A$ separable and $B$ unital. Let $\phi_\infty \colon A \to B_\infty$ be a $^*$-homomorphism. There exists a $^*$-homomorphism $\phi\colon A \to B$ such that $\phi_\infty$ and $\iota_B\circ \phi$ are unitarily equivalent if and only if for every $r\colon \mathbb N\to \mathbb N$ with $\lim_{n\to \infty}r(n) = \infty$, the maps $\phi_\infty$ and $r^*\circ \phi_\infty$ are unitarily equivalent.
  \end{proposition}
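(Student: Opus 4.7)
The plan is to handle the two directions separately, with the ``only if'' direction being an immediate calculation and the ``if'' direction requiring a diagonal argument using the reparameterization hypothesis.

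For the easy direction, suppose $\phi_\infty = \Ad(u)\circ\iota_B\circ\phi$ for some unitary $u\in B_\infty$ and $^*$-homomorphism $\phi\colon A\to B$. The key observation is that $\iota_B$ takes values in constant sequences, which are fixed by every reparameterization: $r^*\circ\iota_B=\iota_B$ for all $r\colon\mathbb N\to\mathbb N$. Consequently, given $r$ with $r(n)\to\infty$, we have $r^*\circ\phi_\infty=\Ad(r^*(u))\circ\iota_B\circ\phi$, which is unitarily equivalent to $\iota_B\circ\phi$ (via $r^*(u)$) and hence to $\phi_\infty$ (via $u^*\cdot r^*(u)$).

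For the hard direction, I would begin by fixing a countable dense subset $\{a_k\}_{k=1}^\infty$ of the unit ball of $A$ and a sequence of norm-bounded maps $\phi_n\colon A\to B$ lifting $\phi_\infty$, meaning $\phi_\infty(a)=(\phi_n(a))_{n=1}^\infty+c_0(B)$ for each $a\in A$. By choosing the lift to be $^*$-preserving on $\{a_k\}$ and extending linearly from the countable $\mathbb Q[i]$-span, one may arrange approximate multiplicativity in the sense that $\|\phi_n(a_ja_k)-\phi_n(a_j)\phi_n(a_k)\|\to 0$ and $\|\phi_n(a_j^*)-\phi_n(a_j)^*\|\to 0$ for all $j,k$. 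The objective is to extract an increasing sequence $m_1<m_2<\cdots$ of integers and unitaries $v_k\in B$ so that $\psi_k(a)\coloneqq\Ad(v_k)(\phi_{m_k}(a))$ is pointwise Cauchy on $\{a_j\}$; the limit will then extend to a $^*$-homomorphism $\phi\colon A\to B$, and the data $(m_k,v_k)$ can be packaged into a unitary in $B_\infty$ realizing $\phi_\infty\sim_u\iota_B\circ\phi$.

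The construction of the $m_k$ and $v_k$ proceeds inductively using Kirchberg's $\epsilon$-test (Lemma~\ref{lem:EpsTest}). Having chosen $m_1<\cdots<m_{k-1}$ and $v_1,\ldots,v_{k-1}$, I would apply the reparameterization hypothesis to a family of functions $r$ that are piecewise constant on long initial intervals (with values close to $m_{k-1}$) and then grow to infinity. For each such $r$, the hypothesis produces a unitary in $B_\infty$ implementing the equivalence $r^*\circ\phi_\infty\sim_u\phi_\infty$, which can be lifted to a sequence of unitaries $(u_n)$ in $B$ such that $\|u_n\phi_n(a)u_n^*-\phi_{r(n)}(a)\|\to 0$ for $a\in\{a_1,\ldots,a_k\}$. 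The $\epsilon$-test then lets me extract a single index $m_k>m_{k-1}$ and unitary $v_k\in B$ satisfying $\|v_k\phi_{m_k}(a_j)v_k^*-v_{k-1}\phi_{m_{k-1}}(a_j)v_{k-1}^*\|<2^{-k}$ for $j=1,\ldots,k$.

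The main obstacle is that a reparameterization cannot literally be eventually constant (the hypothesis requires $r(n)\to\infty$), so one cannot directly ``compare'' $\phi_n$ to a fixed $\phi_{m_{k-1}}$. This must be circumvented by using reparameterizations whose plateau value slowly drifts upward (replacing $m_{k-1}$ with $m_{k-1},m_{k-1}+1,m_{k-1}+2,\ldots$ on consecutive blocks, say), and simultaneously applying the hypothesis for infinitely many such $r$ via the $\epsilon$-test to extract a single choice of $(m_k,v_k)$ that meets the required approximation. Verifying that the resulting unitary adjustments can be reassembled into a unitary in $B_\infty$ implementing $\phi_\infty\sim_u\iota_B\circ\phi$ is then a further $\epsilon$-test argument, relying on the approximate multiplicativity of the lift.
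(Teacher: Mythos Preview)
The paper does not prove this proposition; it simply cites \cite[Theorem~4.3]{Gabe20}. So there is no proof in the paper to compare against, and your task is really to reconstruct Gabe's argument.

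Your ``only if'' direction is correct. In the ``if'' direction, the overall architecture (lift to a sequence $(\phi_n)$, extract a subsequence along which unitary conjugates are Cauchy, take the limit) is right, but the inductive step has a genuine gap. You want to compare $\phi_n$ for large $n$ to the \emph{fixed} map $\phi_{m_{k-1}}$, and you propose doing this via a reparameterization $r$ that is approximately $m_{k-1}$ on a long initial block and then drifts to infinity. The problem is that the hypothesis only yields the asymptotic statement $\|u_n\phi_n(a)u_n^*-\phi_{r(n)}(a)\|\to 0$; on the tail where this kicks in, $r(n)$ has already drifted far from $m_{k-1}$, so you learn nothing about $\phi_{m_{k-1}}$. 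The ``slowly drifting'' variant does not help for the same reason, and your invocation of the $\epsilon$-test does not bridge this: the test produces an element of $\prod X_n$ making countably many limsup-conditions vanish, but it cannot manufacture a comparison at a fixed finite index.

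The standard repair is to first prove, by contradiction, the uniform statement: for every finite $\mathcal F\subset A$ and $\epsilon>0$ there is $N$ such that for all $n,m\geq N$ some unitary $u\in B$ satisfies $\|u\phi_n(a)u^*-\phi_m(a)\|<\epsilon$ for $a\in\mathcal F$. If this fails, pick $n_1<m_1<n_2<m_2<\cdots$ witnessing the failure, and set $r(n_k)=m_k$ and $r=\mathrm{id}$ elsewhere; then $r(n)\to\infty$, and the hypothesis gives unitaries $(u_j)$ with $\|u_j\phi_j(a)u_j^*-\phi_{r(j)}(a)\|\to 0$, so in particular $u_{n_k}$ works for $(n_k,m_k)$ once $k$ is large---a contradiction. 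With this claim in hand your inductive construction of $m_k,v_k$ goes through directly, and a final reparameterization (say $r(n)=m_k$ for $m_k\le n<m_{k+1}$) shows $\phi_\infty\sim_u r^*\circ\phi_\infty\sim_u\iota_B\circ\phi$.
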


With this additional ingredient in place we can prove the classification of embeddings (Theorem~\ref{Main2}).

\begin{theorem}[Classification of embeddings]
  \label{one-sided-classification}
	Let $A$ and $B$ be unital separable nuclear $C^*$-algebras such that $A$ satisfies the UCT and $B$ is simple and $\mathcal Z$-stable.  If
	$(\underline{\alpha},\beta,\gamma) \colon \inv(A)\to\inv(B)$ is a faithful morphism, then there exists a unital injective
	$^*$-homomorphism $\phi \colon A\to B$ such that $\inv(\phi)
	= 	(\underline{\alpha},\beta,\gamma)$, and this $\phi$ is unique up to approximate unitary equivalence.
\end{theorem}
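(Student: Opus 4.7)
The strategy is to combine the classification of unital full approximate embeddings (Theorem~\ref{approximate-classification}) with intertwining by reparameterization (Proposition~\ref{t:Gabereindex}), using the canonical embedding $\iota_B\colon B \to B_\infty$ to transfer the $B_\infty$-level classification back to $B$. I expect the genuine difficulty to sit already in Theorem~\ref{approximate-classification}, so the remainder is essentially bookkeeping.

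For existence, I would first form the composition $\inv(\iota_B) \circ (\underline{\alpha},\beta,\gamma)\colon \inv(A) \to \inv(B_\infty)$. The initial thing to check is that this composition remains faithful: any $\tau \in T(B_\infty)$ restricts via $\iota_B$ to a tracial state on $B$ (using unitality of $\iota_B$), which is automatically faithful by simplicity of $B$, and $\gamma^*$ of such a trace is faithful on $A$ by hypothesis. (When $T(B) = \emptyset$ one has $T(B_\infty) = \emptyset$, so faithfulness is vacuous.) Theorem~\ref{approximate-classification} will then produce a unital full $^*$-homomorphism $\phi_\infty\colon A \to B_\infty$ realizing this composition on $\inv$.

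To descend from $\phi_\infty$ to a map $\phi\colon A \to B$ with $\iota_B \circ \phi$ unitarily equivalent to $\phi_\infty$, the plan is to apply Proposition~\ref{t:Gabereindex}. This requires showing that $\phi_\infty$ and $r^* \circ \phi_\infty$ are unitarily equivalent for every reparameterization $r\colon \mathbb N \to \mathbb N$ with $r(n) \to \infty$. Since $r^* \circ \iota_B = \iota_B$ as $^*$-ho\-mo\-mor\-phisms, $\inv(r^*) \circ \inv(\iota_B) = \inv(\iota_B)$, and hence $\inv(r^* \circ \phi_\infty) = \inv(\phi_\infty)$; moreover $r^* \circ \phi_\infty$ remains unital and full (the latter because $r^*$ is a unital $^*$-endomorphism of $B_\infty$ and hence maps full elements to full elements). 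The uniqueness half of Theorem~\ref{approximate-classification} will deliver the required unitary equivalence. The resulting $\phi$ will realize $(\underline{\alpha},\beta,\gamma)$ on $\inv$ by injectivity of $\inv(\iota_B)$ (Lemma~\ref{lem:inv-iota-injective}); it will be unital because $\iota_B(\phi(1_A))$ will be unitarily conjugate in $B_\infty$ to $\phi_\infty(1_A) = 1_{B_\infty}$; and it will be injective because $\phi_\infty$ is injective (fullness forces this) and $\iota_B$ is injective.

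For uniqueness, suppose $\phi_1, \phi_2\colon A \to B$ are unital injective $^*$-homomorphisms with $\inv(\phi_1) = \inv(\phi_2) = (\underline{\alpha},\beta,\gamma)$. I would compose with $\iota_B$: the maps $\iota_B \circ \phi_i$ are unital full $^*$-homomorphisms $A \to B_\infty$ (fullness uses simplicity of $B$ together with injectivity of $\phi_i$ to write $1_B$ in the ideal generated by $\phi_i(a)$ for every non-zero $a \in A$) with the same invariant, so Theorem~\ref{approximate-classification} will supply a unitary $u \in B_\infty$ conjugating $\iota_B \circ \phi_1$ onto $\iota_B \circ \phi_2$. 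Finally, a standard lifting argument --- in which a bounded lift $(v_n)_{n=1}^\infty$ of $u$ is perturbed to unitaries $u_n \coloneqq v_n(v_n^*v_n)^{-1/2}$ in $B$ for large $n$ --- will convert this into approximate unitary equivalence of $\phi_1$ and $\phi_2$ in $B$. The only non-routine step I foresee is the invariant-level bookkeeping above (faithfulness of the composed morphism and the identity $\inv(r^* \circ \phi_\infty) = \inv(\phi_\infty)$), both of which are elementary.
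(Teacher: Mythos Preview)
Your proposal is correct and follows essentially the same route as the paper: compose the invariant morphism with $\inv(\iota_B)$, invoke Theorem~\ref{approximate-classification} to get $\phi_\infty$, verify reparameterization invariance via $r^*\circ\iota_B=\iota_B$ to apply Proposition~\ref{t:Gabereindex}, and then use Lemma~\ref{lem:inv-iota-injective} to recover $\inv(\phi)=(\underline{\alpha},\beta,\gamma)$; uniqueness is handled identically by pushing forward through $\iota_B$ and lifting unitaries. The only cosmetic differences are that you spell out the faithfulness check and the unitality/injectivity of $\phi$ in slightly more detail than the paper does.
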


\begin{proof}
Uniqueness is essentially an immediate consequence of Theorem~\ref{approximate-classification}. Given unital injective $^*$-homomorphisms $\phi, \psi \colon A \to B$, the compositions $\iota_B \circ \phi,\iota_B \circ \psi\colon A\to B_\infty$ are full as $B$ is simple. Accordingly, if $\inv(\phi)=\inv(\psi)$, then $\inv(\iota_B \circ \phi) = \inv(\iota_B \circ \psi)$ and so $\iota_B \circ \phi$ and
  $\iota_B \circ \psi$ are unitarily equivalent by Theorem~\ref{approximate-classification}. As unitaries in $B_\infty$ can be lifted to unitaries in $\ell^\infty(B)$, $\phi$ and $\psi$
  are approximately unitarily equivalent.

  For existence, consider a faithful $\inv$-morphism
\begin{equation}
(\underline{\alpha}, \beta, \gamma) \colon \inv(A)
  \to \inv(B).
\end{equation}
Then $\inv(\iota_B)\circ(\underline{\alpha},\beta,\gamma)$ is also faithful, so  
the existence part of Theorem~\ref{approximate-classification} gives a unital full
  $^*$-homomorphism $\phi_\infty \colon A \to B_\infty$ such that
  \begin{equation}
    \inv(\phi_\infty) = \inv(\iota_B)\circ (\underline{\alpha}, \beta, \gamma).
  \end{equation}
  Fix $r \colon \mathbb{N} \to \mathbb{N}$ with $\lim_{n \to \infty} r(n) =
  \infty$. Since 
  $r^* \circ \iota_B = \iota_B$, it follows that
  \begin{equation}\inv(r^* \circ \phi_\infty) = \big( \inv(r^*) \circ \inv(\iota_B) \big) (\underline \alpha, \beta, \gamma) =  \inv(\phi_\infty).
  \end{equation}
  Moreover, $r^*\circ\phi_\infty$ is full as $r^*$ is unital and $\phi_\infty$ is full. Therefore,  $r^* \circ\phi_\infty$ and
  $\phi_\infty$ are unitarily equivalent by the uniqueness portion of Theorem~\ref{approximate-classification}.  Then intertwining by reparameterizations (Proposition~\ref{t:Gabereindex}) gives a $^*$-homomorphism $\phi \colon A \to
  B$ such that $\iota_B \circ \phi$ and $\phi_\infty$ are unitarily
  equivalent.  As $\phi_\infty$ is unital and faithful, so is $\phi$.
  Note that
  \begin{equation}
\inv(\iota_B)\circ \inv(\phi)= \inv(\iota_B \circ \phi) \stackrel{\text{Prop. }\ref{Inv:AFunctor}}= \inv(\phi_\infty) = \inv(\iota_B) \circ (\underline{\alpha}, \beta, \gamma).
  \end{equation}
  By Lemma~\ref{lem:inv-iota-injective}, $\inv(\iota_B)$ is a monomorphism, so $\inv(\phi) = (\underline{\alpha}, \beta, \gamma)$.
\end{proof}

\begin{remark}
    We use sequence algebras $B_\infty$ rather that ultrapowers $B_\omega$ in our classification of full approximate embeddings (Theorem~\ref{Main3}) in order to access the intertwining by reparameterization technique; reparameterizations do not make sense for ultrapowers. Prompted by the discrepancy between sequence algebras and ultrapowers, Farah proved that an existence result into the ultrapower is equivalent to an existence result into the sequence algebra (\cite[Theorem A]{Farah}).
\end{remark}

Corollary~\ref{MainCor} is an immediate consequence of the classification of embeddings.

\begin{corollary}
Let $A$ and $B$ be unital separable nuclear $C^*$-algebras with non-empty tracial state spaces, and suppose that $A$ satisfies the UCT and that $B$ is simple and $\mathcal Z$-stable.  Given a faithful unital compatible pair
\begin{equation}
(\alpha_*,\gamma)\colon{}(K_*(A),\Aff T(A))\to (K_*(B),\Aff T(B)),
\end{equation}
there is a unital embedding $\phi\colon{}A\to B$ such that $K_*(\phi)=\alpha_*$ and $\Aff T(\phi)=\gamma$.
\end{corollary}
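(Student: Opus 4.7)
The plan is to reduce this corollary directly to Theorem~\ref{one-sided-classification} (the classification of embeddings) by extending the given $KT_u$-data to a $\inv$-morphism using Theorem~\ref{ThmExtension}.

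First, I would unpack what the hypotheses give. A ``faithful unital compatible pair'' $(\alpha_*,\gamma)\colon(K_*(A),\Aff T(A))\to(K_*(B),\Aff T(B))$ is precisely a faithful $KT_u$-morphism in the sense of Section~\ref{sec:Elliott-invariant}: $\alpha_0$ preserves the class of the unit, $\gamma$ is positive and unital, the square relating $\rho_A,\rho_B$ via $\alpha_0$ and $\gamma$ commutes, and the induced continuous affine map $\gamma^*\colon T(B)\to T(A)$ sends every trace on $B$ to a faithful trace on $A$.

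Next, I would apply Theorem~\ref{ThmExtension} to extend $(\alpha_*,\gamma)$ to a $\inv$-morphism $(\underline{\alpha},\beta,\gamma)\colon\inv(A)\to\inv(B)$. Here $\underline{\alpha}$ extends $\alpha_*$ to total $K$-theory (using Proposition~\ref{TotalKExtend}), and $\beta\colon\Ka(A)\to\Ka(B)$ is constructed so that both compatibility diagrams \eqref{eq:compatibility} and \eqref{eq:compatibility2} commute; the extension does not touch $\gamma$, so faithfulness is preserved.

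Finally, I would invoke Theorem~\ref{one-sided-classification} with the hypothesis that $A$ is unital separable nuclear and satisfies the UCT, and $B$ is unital simple separable nuclear and $\mathcal Z$-stable (the hypothesis $T(B)\neq\emptyset$ is only used implicitly to make the tracial part of the invariant non-trivial --- the theorem is stated without this assumption). The theorem produces a unital injective $^*$-homomorphism $\phi\colon A\to B$ with $\inv(\phi)=(\underline{\alpha},\beta,\gamma)$. Reading off the $K_*$ and $\Aff T$ components yields $K_*(\phi)=\alpha_*$ and $\Aff T(\phi)=\gamma$, as required. There is no real obstacle: all the work was done in establishing Theorems~\ref{ThmExtension} and~\ref{one-sided-classification}; the corollary is just the observation that the existence part of the classification of embeddings descends to the Elliott invariant via a non-canonical extension of the morphism.
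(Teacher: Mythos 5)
Your proof is correct and takes essentially the same route as the paper: extend $(\alpha_*,\gamma)$ to an $\inv$-morphism and then apply the classification of embeddings (Theorem~\ref{one-sided-classification}). In fact your citation of Theorem~\ref{ThmExtension} is the more precise one — the paper cites Proposition~\ref{PropExtendAlgK1}, which only produces the $\Ka$-component $\beta$, whereas the full extension including $\underline{\alpha}$ and the $\zeta^{(n)}$-compatibility is Theorem~\ref{ThmExtension}.
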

\begin{proof}
By Proposition~\ref{PropExtendAlgK1}, we can extend the pair $(\alpha_*,\gamma)$ to a compatible triple $(\underline{\alpha},\beta,\gamma)\colon{}\inv(A)\to\inv(B)$.  The classification of embeddings theorem (Theorem~\ref{one-sided-classification}) then gives a unital embedding $\phi\colon{}A\to B$ with $\inv(\phi)=(\underline{\alpha},\beta,\gamma)$. Hence $K_*(\phi)=\alpha_*$ and $\Aff T(\phi)=\gamma$.
\end{proof}

Note that in the previous corollary the embedding $\phi$ is not necessarily unique up to approximate unitary equivalence.  In general, uniqueness requires the total invariant.  See Example \ref{ZwreathZ} below.

\begin{remark}
\label{rmk:MatuiClassification} Let us note how Theorem~\ref{one-sided-classification} relates to other classification results for embeddings. In \cite[Corollary 6.8 and Theorem 7.1]{Matui11}, Matui proves uniqueness results for embeddings $A\to B$, where $A$ is either a unital AH algebra (including $C(X)$) or a unital simple separable nuclear $\mathcal Z$-stable $C^*$-algebra that is rationally TAF and satisfies the UCT.  Matui allows unital simple separable stably finite $\mathcal Z$-stable codomains $B$ that are either rationally TAF or exact with finitely many extreme traces that are separated by projections.  For comparing maps $\phi, \psi \colon A \rightarrow B$, Matui uses an invariant consisting of $\underline K(\, \cdot\,)$, $\Aff T(\, \cdot\,)$ and (vanishing of) an induced homomorphism $\Theta_{\phi, \psi} \colon K_1(A) \to \Aff T(B)/ \overline{\im\rho_B}$ given by 
\begin{equation}
	\Theta_{\phi, \psi}([u]_1) = \mathrm{det}_B(\phi(u)^* \psi(u)), \qquad u\in U_\infty(A).\footnote{The well-definedness of $\Theta_{\phi,\psi}$ requires $K_1(\phi) = K_1(\psi)$ and $T(\phi) = T(\psi)$; see \cite[Lemma 3.1]{Matui11}.}
\end{equation}
Using this, one can compute that $\overline{\Th}_B \circ \Theta_{\phi, \psi} \circ \minusa_A = \Ka(\psi) - \Ka(\phi)$ so (since $\minusa_A$ is surjective and $\overline{\Th}_B$ is injective) $\Theta_{\phi, \psi}$ vanishes if and only if $\Ka(\phi) = \Ka(\psi)$. Note that this map $\Theta$ coincides with the map $\tilde R_{\phi, \psi}$ of Remark~\ref{rem:rotation} in the case $[\phi]_{KK(A, B)} = [\psi]_{KK(A, B)}$ (so that $\tilde R_{\phi, \psi}$ is defined).  Accordingly, in the case when the codomains are additionally assumed to be nuclear, Matui's uniqueness theorems are encompassed within Theorem~\ref{one-sided-classification} above.

While this paper was being prepared, Gong, Lin, and Niu independently obtained a classification of embeddings (\cite[Theorems 4.3 and 6.5]{Gong-Lin-etal23}), building on prior work of Lin and Niu in the rationally tracial rank one setting (\cite{Lin-Niu14}).  Compared with Theorem \ref{one-sided-classification}, the main difference in their result is a simplicity hypothesis on the domain algebra.\footnote{The codomain hypotheses differ in their existence and uniqueness theorem but largely overlap with our codomain hypotheses.}

Whereas our approach establishes Theorem~\ref{one-sided-classification} as an ingredient towards classification of $C^*$-algebras, Gong, Lin, and Niu use their earlier classification theorems (\cite{GLN-part1,GLN-part2}) and the additional simplicity hypothesis on $A$ to obtain internal tracial approximation structure on its UHF-stabilizations.
They also use a variation of Winter's localization technique.
\end{remark}

\subsection{The unital classification theorem}\label{sec:unitalclass}

The stably finite half of the unital classification theorem is now obtained by the standard strategy of symmetrizing the assumptions on the domain and codomain in the classification of embeddings and using the following form of Elliott's two-sided intertwining argument (see \cite[Corollary~2.3.4]{Rordam02}, for example).
\begin{proposition}[Elliott's two-sided intertwining]\label{two-sided-intertwining}
Let $A$ and $B$ be unital separable $C^*$-algebras, and let $\phi_0\colon A\to B$ and $\psi_0\colon B\to A$ be $^*$-homomorphisms. Suppose that $\psi_0\circ\phi_0$ is approximately unitarily equivalent to $\mathrm{id}_A$ and $\phi_0\circ\psi_0$ is approximately unitarily equivalent to $\mathrm{id}_B$.  Then $\phi_0$ is approximately unitarily equivalent to an isomorphism between $A$ and $B$.
\end{proposition}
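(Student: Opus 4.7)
The plan is to run Elliott's approximate two-sided intertwining argument. Fix dense sequences $(a_k)_{k=1}^\infty \subseteq A$ and $(b_k)_{k=1}^\infty \subseteq B$, together with a summable sequence $(\epsilon_n)_{n=1}^\infty$ of positive tolerances. Setting $\phi_1 \coloneqq \phi_0$ and $\psi_1 \coloneqq \psi_0$, I would construct inductively $^*$-homomorphisms $\phi_n \colon A \to B$ and $\psi_n \colon B \to A$, and unitaries $u_n \in A$, $v_n \in B$, with $\psi_{n+1} = \mathrm{Ad}(u_n) \circ \psi_n$ and $\phi_{n+1} = \mathrm{Ad}(v_n) \circ \phi_n$, such that the zig-zag conditions
\begin{equation}
\|\psi_{n+1} \circ \phi_n(a_k) - a_k\| < \epsilon_n \quad\text{and}\quad \|\phi_{n+1} \circ \psi_{n+1}(b_k) - b_k\| < \epsilon_n
\end{equation}
hold for $k \leq n$. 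The inductive step rests on the fact that inner perturbations preserve approximate unitary equivalence: $\psi_n \circ \phi_n$ remains approximately unitarily equivalent to $\psi_0 \circ \phi_0$, hence to $\mathrm{id}_A$, which supplies $u_n$; the analogous observation for $\phi_{n+1} \circ \psi_{n+1}$ and $\mathrm{id}_B$ then supplies $v_n$.

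Choosing the $\epsilon_n$ summable and the additional tolerances at stage $n$ small enough, the telescoping estimates $\|\phi_{n+1}(a_k) - \phi_n(a_k)\|$ and $\|\psi_{n+1}(b_k) - \psi_n(b_k)\|$ sum to finite values on each $a_k$ and $b_k$. By a standard $3\epsilon$-argument, $(\phi_n)$ and $(\psi_n)$ therefore converge point-norm to $^*$-homomorphisms $\phi \colon A \to B$ and $\psi \colon B \to A$. The zig-zag conditions, combined with density of the sequences $(a_k)$ and $(b_k)$, force $\psi \circ \phi = \mathrm{id}_A$ and $\phi \circ \psi = \mathrm{id}_B$. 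Thus $\phi$ is an isomorphism with inverse $\psi$. Since $\phi_n = \mathrm{Ad}(v_{n-1} \cdots v_1) \circ \phi_0$ is unitarily equivalent to $\phi_0$ for every $n$, its point-norm limit $\phi$ is approximately unitarily equivalent to $\phi_0$, as required.

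The main obstacle is the bookkeeping in the induction: the unitary perturbation at stage $n$ that secures the zig-zag condition for index $n+1$ must not wreck the previously established estimates on $\{a_1,\ldots,a_n\}$ and $\{b_1,\ldots,b_n\}$. This is the familiar device in Elliott intertwining, handled by choosing the tolerance governing the $(n+1)^{\text{st}}$ step small enough, relative to the data already fixed at stages $1,\ldots,n$, so that all subsequent perturbations remain absorbable within the summable budget $\sum_n \epsilon_n$.
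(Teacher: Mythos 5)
Your overall strategy — Elliott's approximate intertwining via summable tolerances and a zig-zag construction — is the standard route, and indeed the paper proves this proposition simply by citing \cite[Corollary~2.3.4]{Rordam02}, whose proof runs the same way. However, there is a genuine gap in the step where you claim the telescoping estimates sum.

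To bound $\|\phi_{n+1}(a_k) - \phi_n(a_k)\|$, one combines $\psi_{n+1}(\phi_n(a_k))\approx a_k$ (your first zig-zag condition), giving $\phi_{n+1}(\psi_{n+1}(\phi_n(a_k)))\approx\phi_{n+1}(a_k)$ by contractivity, with the estimate $\phi_{n+1}(\psi_{n+1}(b))\approx b$ evaluated at $b=\phi_n(a_k)$. But your second zig-zag condition is imposed only on the fixed set $\{b_1,\dots,b_n\}$, and $\phi_n(a_k)$ need not be near any of these elements — so that final step is unjustified and the pointwise convergence of $(\phi_n)$ does not follow. Equivalently: $\phi_{n+1}(a_k)-\phi_n(a_k)=[v_n,\phi_n(a_k)]v_n^*$, and nothing in your construction forces $v_n$ to approximately commute with $\phi_n(a_k)$. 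The bookkeeping you flag at the end is also not quite the right obstruction: the stage-$n$ estimates are statements about the already-fixed maps $\phi_n$ and $\psi_{n+1}$ and cannot be retroactively wrecked by defining $\phi_{n+1}$. The actual device — which is precisely the content of the compatibility conditions in Elliott's intertwining theorem — is that the finite sets must be enlarged \emph{adaptively} during the induction, so that the $B$-side zig-zag estimate at stage $n$ is guaranteed on $\phi_n(a_1),\dots,\phi_n(a_n)$ (and the $A$-side on $\psi_{n+1}(b_1),\dots,\psi_{n+1}(b_n)$), not merely on the a priori finite sets. With that correction, the telescoping argument goes through and the rest of your proof is fine.
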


When $A$ and $B$ are as in the unital classification theorem (Theorem~\ref{Main}), an isomorphism of the total invariant can be lifted to an isomorphism of the algebra.

\begin{theorem}\label{algebra-classification-total}
  Let $A$ and $B$ be unital simple separable nuclear
  $\mathcal{Z}$-stable $C^*$-algebras satisfying the UCT.  Suppose
\begin{equation}
(\underline \alpha, \beta, \gamma)\colon{} \inv(A) \to \inv(B)
\end{equation}
is a $\inv$-isomorphism. Then there exists a $^*$-isomorphism $\phi\colon A\to B$, unique up to approximate unitary equivalence, such that $\inv(\phi) = (\underline \alpha, \beta, \gamma)$.
\end{theorem}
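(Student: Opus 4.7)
The plan is to deduce Theorem~\ref{algebra-classification-total} from the classification of embeddings (Theorem~\ref{one-sided-classification}) via Elliott's two-sided intertwining argument (Proposition~\ref{two-sided-intertwining}). The only mild subtlety lies in ensuring the hypotheses of the classification of embeddings are met in both directions, which amounts to a faithfulness check that follows from the simplicity of $A$ and $B$.

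First, I would observe that both $(\underline\alpha, \beta, \gamma)$ and its inverse are faithful $\inv$-morphisms. Indeed, every trace on a unital simple $C^*$-algebra is automatically faithful, since for $\tau \in T(A)$ the set $\{a \in A : \tau(a^*a) = 0\}$ is an ideal that is proper because $\tau(1_A) = 1$, and hence is $\{0\}$ by simplicity. Consequently, for any $\tau \in T(B)$, the trace $\gamma^*(\tau) \in T(A)$ is faithful, and symmetrically for the inverse. In the purely infinite case, $T(A) = T(B) = \emptyset$ (by Theorem~\ref{T:KDich} together with the fact that $\inv$ identifies tracial state spaces) and the faithfulness condition is vacuous.

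Next, applying the existence part of Theorem~\ref{one-sided-classification} to $(\underline\alpha, \beta, \gamma)$ produces a unital injective $^*$-homomorphism $\phi_0 \colon A \to B$ with $\inv(\phi_0) = (\underline\alpha, \beta, \gamma)$, and applying it to the inverse produces a unital injective $^*$-homomorphism $\psi_0 \colon B \to A$ with $\inv(\psi_0) = (\underline\alpha, \beta, \gamma)^{-1}$. By functoriality of $\inv(\,\cdot\,)$ (Proposition~\ref{Inv:AFunctor}),
\begin{equation}
    \inv(\psi_0 \circ \phi_0) = \inv(\psi_0) \circ \inv(\phi_0) = \mathrm{id}_{\inv(A)} = \inv(\mathrm{id}_A),
\end{equation}
and similarly $\inv(\phi_0 \circ \psi_0) = \inv(\mathrm{id}_B)$. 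Since $A$ and $B$ are simple, both $\psi_0 \circ \phi_0$ and $\mathrm{id}_A$ are unital injective $^*$-homomorphisms $A \to A$, and analogously on the other side. The uniqueness part of Theorem~\ref{one-sided-classification} then gives
\begin{equation}
    \psi_0 \circ \phi_0 \sim_{\mathrm{au}} \mathrm{id}_A \quad \text{and} \quad \phi_0 \circ \psi_0 \sim_{\mathrm{au}} \mathrm{id}_B.
\end{equation}

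Finally, Proposition~\ref{two-sided-intertwining} yields a $^*$-isomorphism $\phi \colon A \to B$ approximately unitarily equivalent to $\phi_0$. Since $\inv$ is invariant under approximate unitary equivalence, $\inv(\phi) = \inv(\phi_0) = (\underline\alpha, \beta, \gamma)$, as required. Uniqueness of $\phi$ up to approximate unitary equivalence is immediate: any two $^*$-isomorphisms realizing $(\underline\alpha, \beta, \gamma)$ are unital full embeddings inducing the same $\inv$-morphism, so the uniqueness clause of Theorem~\ref{one-sided-classification} applies. The argument is short because all the technical content has already been packaged into Theorem~\ref{one-sided-classification}; the only thing to watch is the faithfulness observation above, and there is no substantive obstacle beyond that.
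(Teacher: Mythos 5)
Your proof is correct and follows the paper's own argument essentially line-for-line: verify faithfulness from simplicity (traces on simple unital $C^*$-algebras are automatically faithful), produce $\phi_0$ and $\psi_0$ via the existence part of Theorem~\ref{one-sided-classification}, compare $\psi_0\circ\phi_0$ with $\mathrm{id}_A$ (and $\phi_0\circ\psi_0$ with $\mathrm{id}_B$) via the uniqueness part, and invoke Elliott's two-sided intertwining. The only addition is your explicit remark about the purely infinite case being vacuous, which is harmless but already subsumed by the observation that traces on simple unital $C^*$-algebras are faithful.
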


\begin{proof}
Uniqueness follows from the uniqueness portion of the classification of embeddings theorem (Theorem~\ref{one-sided-classification}), noting that faithfulness of the invariant is automatic since all traces on $A$ are faithful.

The existence portion of the classification of embeddings theorem (Theorem~\ref{one-sided-classification}) gives unital $^*$-homomorphisms $\phi_0\colon A\to B$ and $\psi_0\colon A\to B$ such that $\inv(\phi_0)=(\underline{\alpha},\beta,\gamma)$ and $\inv(\psi_0)=\inv(\phi_0)^{-1}$. By the uniqueness aspect of the classification of embeddings theorem (Theorem~\ref{one-sided-classification}), $\phi_0 \circ \psi_0$ is approximately unitarily equivalent to $\id_B$ and $\psi_0 \circ \phi_0$ is approximately unitarily equivalent to $\id_A$. Therefore, Elliott's two-sided intertwining argument (Proposition~\ref{two-sided-intertwining}) provides a $^*$-isomorphism $\phi \colon A \to B$ which is approximately unitarily equivalent to $\phi_0$. Hence $\inv(\phi) = \inv(\phi_0) = (\underline \alpha, \beta, \gamma)$ since $\inv$ is invariant under approximate unitary equivalence (Proposition~\ref{Inv:AFunctor}).
\end{proof}

We now establish Theorem~\ref{Main} as a consequence.

\begin{theorem}\label{algebra-classification}
  Let $A$ and $B$ be unital simple separable nuclear
  $\mathcal{Z}$-stable $C^*$-algebras satisfying the UCT.  Suppose
\begin{equation}
(\alpha_*,\gamma)\colon{}(K_*(A),\Aff T(A))\to (K_*(B),\Aff T(B))
\end{equation}
is a $KT_u$-isomorphism. Then there exists a $^*$-isomorphism $\phi\colon A\to B$ such that $K_*(\phi) = \alpha_*$ and $\Aff T(\phi)=\gamma$.
\end{theorem}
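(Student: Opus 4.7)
The strategy is to reduce Theorem~\ref{algebra-classification} to the total-invariant version, Theorem~\ref{algebra-classification-total}, by lifting the given $KT_u$-isomorphism to an $\inv$-isomorphism. A dichotomy argument first splits off the purely infinite case, which is handled by the Kirchberg--Phillips theorem.

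First I would dichotomize using Kirchberg's theorem (Theorem~\ref{T:KDich}): the unital simple separable exact $\Z$-stable $C^*$-algebras $A$ and $B$ are each either purely infinite or stably finite. Since $(\alpha_\ast,\gamma)$ is a $KT_u$-isomorphism, $\gamma\colon \Aff T(A)\to\Aff T(B)$ is a unital order isomorphism of Archimedean order unit spaces, so $\Aff T(A)=0$ if and only if $\Aff T(B)=0$; equivalently (via Kadison duality and Haagerup's theorem on unital nuclear $C^*$-algebras) $T(A)=\emptyset$ iff $T(B)=\emptyset$. Thus either both algebras are purely infinite, or both are stably finite.

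In the purely infinite case, $T(A)=T(B)=\emptyset$, and $(\alpha_\ast,\gamma)$ reduces to an isomorphism of $K$-theory together with the class of the unit. This is the setting of the Kirchberg--Phillips classification of unital Kirchberg algebras satisfying the UCT, which produces the desired $\phi$.

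In the stably finite case, the plan is to apply the extension theorem (Theorem~\ref{ThmExtension}) to obtain a $\inv$-isomorphism
\begin{equation}
    (\underline\alpha,\beta,\gamma)\colon \inv(A)\longrightarrow \inv(B)
\end{equation}
extending the given $KT_u$-isomorphism $(\alpha_\ast,\gamma)$. Then invoking the total-invariant classification (Theorem~\ref{algebra-classification-total}) yields a $^*$-isomorphism $\phi\colon A\to B$ with $\inv(\phi)=(\underline\alpha,\beta,\gamma)$. Restricting this equality to the $K$-theory and trace components gives $K_\ast(\phi)=\alpha_\ast$ and $\Aff T(\phi)=\gamma$, as required.

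There is no substantive obstacle here: all of the work is done in Theorems~\ref{ThmExtension} and~\ref{algebra-classification-total}, together with the Kirchberg--Phillips theorem in the purely infinite half of the dichotomy. The only point worth emphasizing is that the extension $\beta$ on Hausdorffized algebraic $K_1$ produced by Theorem~\ref{ThmExtension} is non-canonical, but this is irrelevant for the statement of Theorem~\ref{algebra-classification}, which only records behavior on $K_\ast$ and $\Aff T$.
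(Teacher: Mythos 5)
Your proof is correct and follows essentially the same route as the paper's: extend the $KT_u$-isomorphism to a $\inv$-isomorphism via Theorem~\ref{ThmExtension}, then invoke Theorem~\ref{algebra-classification-total} and restrict to the $K_*$ and $\Aff T$ components. The top-level dichotomy you introduce into purely infinite versus stably finite cases is harmless but unnecessary: both Theorem~\ref{ThmExtension} (which is purely algebraic) and Theorem~\ref{algebra-classification-total} already apply uniformly in the traceless case, where $\Aff T(A)=\Aff T(B)=0$ makes the trace component vacuous and faithfulness automatic, so there is no need to peel off the Kirchberg--Phillips case separately at this stage --- the dichotomy has already been absorbed into the supporting theorems.
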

\begin{proof}
By Proposition~\ref{PropExtendAlgK1}, there is a unital faithful invertible compatible triple $(\underline{\alpha},\beta,\gamma)\colon{}\inv(A)\to\inv(B)$ extending $(\alpha_*,\gamma)$. By Proposition~\ref{algebra-classification-total}, there is an isomorphism $\phi \colon A \to B$ with $\inv(\phi) = (\underline \alpha, \beta, \gamma)$. Hence $K_*(\phi)=\alpha_*$ and $\Aff T(\phi)=\gamma$.
 \end{proof}

The classification of automorphisms modulo approximate unitary equivalence has been well-studied in operator algebras.
For example, Connes' theorem builds on his prior work on this problem for II$_1$ factors (e.g., \cite{Connes75a,Connes75b}).
Given a unital $C^*$-algebra $A$, write $\mathrm{Aut}(A)$ for the automorphism group of $A$, and $\overline{\mathrm{Inn}(A)}$ for the subgroup of approximately inner automorphisms.  When $A$ is a unital Kirchberg $C^*$-algebra satisfying the UCT, the quotient group $\mathrm{Aut}(A)/\overline{\mathrm{Inn}(A)}$ is isomorphic to the group of unital automorphisms of $\underline{K}(A)$ (cf.~\cite[Theorem 5.6]{Elliott-Rordam95}). For unital finite classifiable $C^*$-algebras, unital automorphisms of the total invariant encode this information.

\begin{corollary}
\label{cor:AutClassification}
Let $A$ be a unital simple separable nuclear $\Z$-stable $C^*$-algebra satisfying the universal coefficient theorem.  Then the quotient $\mathrm{Aut}(A)/\overline{\mathrm{Inn}(A)}$ is isomorphic to the group of automorphisms of $\inv(A)$.
\end{corollary}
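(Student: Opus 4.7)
The plan is to deduce the corollary directly from Theorem~\ref{algebra-classification-total}, by showing that the map $\phi \mapsto \inv(\phi)$ from $\mathrm{Aut}(A)$ to $\mathrm{Aut}(\inv(A))$ is a surjective group homomorphism with kernel exactly $\overline{\mathrm{Inn}(A)}$.

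First I would observe that by Proposition~\ref{Inv:AFunctor}, $\inv$ is a functor, and in particular sends automorphisms of $A$ to automorphisms of $\inv(A)$. Moreover, it sends compositions to compositions, so the map
\begin{equation}
    \Phi \colon \mathrm{Aut}(A) \longrightarrow \mathrm{Aut}(\inv(A)), \quad \phi \longmapsto \inv(\phi),
\end{equation}
is a group homomorphism.

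Next I would identify its kernel. Proposition~\ref{Inv:AFunctor} says that $\inv$ is invariant under approximate unitary equivalence, so $\overline{\mathrm{Inn}(A)} \subseteq \ker \Phi$. Conversely, if $\phi \in \ker \Phi$, then $\inv(\phi) = \inv(\mathrm{id}_A)$, and the uniqueness part of Theorem~\ref{algebra-classification-total} (applied with $B \coloneqq A$) gives that $\phi$ is approximately unitarily equivalent to $\mathrm{id}_A$; that is, $\phi \in \overline{\mathrm{Inn}(A)}$. Hence $\ker \Phi = \overline{\mathrm{Inn}(A)}$, and $\Phi$ descends to an injective group homomorphism
\begin{equation}
    \bar\Phi \colon \mathrm{Aut}(A)/\overline{\mathrm{Inn}(A)} \longrightarrow \mathrm{Aut}(\inv(A)).
\end{equation}

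Finally I would check surjectivity of $\bar\Phi$: given any automorphism $(\underline{\alpha}, \beta, \gamma)$ of $\inv(A)$, the existence part of Theorem~\ref{algebra-classification-total} (again with $B \coloneqq A$) produces a $^*$-isomorphism $\phi \colon A \to A$ with $\inv(\phi) = (\underline{\alpha}, \beta, \gamma)$, so $\bar\Phi([\phi]) = (\underline{\alpha}, \beta, \gamma)$. This completes the proof. There is really no obstacle here; the corollary is a formal packaging of Theorem~\ref{algebra-classification-total} as a statement about the group $\mathrm{Aut}(A)/\overline{\mathrm{Inn}(A)}$, with the only thing to verify being that both existence and uniqueness of the realizing isomorphism in that theorem are used (for surjectivity and for computing the kernel, respectively).
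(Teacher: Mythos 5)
Your proof is correct and takes exactly the approach the paper intends: the paper's own proof is the single sentence ``This follows immediately from Theorem~\ref{algebra-classification-total},'' and your argument simply spells out the details (functoriality of $\inv$, kernel computed from uniqueness, surjectivity from existence) that make that deduction immediate.
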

\begin{proof}
This follows immediately from Theorem~\ref{algebra-classification-total}.
\end{proof}

We found the following Bernoulli shift example, where the $\Ka$-class of an automorphism is particularly discernible, informative during our work on this paper. 

\begin{example}[Automorphisms of $\mathcal Z \wr \mathbb Z$]\label{ZwreathZ}
	Consider the infinite tensor product $\mathcal Z^{\otimes \mathbb Z}$, let $\mathbb Z$ act on this algebra by shifting, and let $A \coloneqq  \mathcal Z^{\otimes \mathbb Z} \rtimes \mathbb Z$ denote the crossed product.  As we review below, it is well-known that such a construction gives rise to a $C^*$-algebra covered by Theorem \ref{Main}.
 
It is immediate that $A$ is separable and unital.  Since $\Z^{\otimes \mathbb Z}$ is isomorphic to $\mathcal Z$ (\cite[Corollary~8.8]{Jiang-Su99}), it is nuclear and satisfies the UCT.  Both properties are preserved by crossed products by $\mathbb Z$, and therefore $A$ is also nuclear and satisfies the UCT.
The shift action on $\mathcal R^{\bar\otimes \mathbb Z}$ is outer,\footnote{This is well-known; a proof can be found in \cite[Lemma~2.5]{Patchell}.} which means that the shift action on $\Z^{\otimes \mathbb Z}$ is strongly outer.
Therefore, $A$ is simple by \cite[Theorem 3.1]{Kishimoto81}, $\mathcal Z$-stable by \cite[Corollary 4.10]{Matui-Sato12a}, and has unique trace by \cite[Theorem~4.3(4)$\Rightarrow$(1)]{Thomsen95} (cf.\ \cite[Theorem 1.11]{Ursu21}).
	
	Now we turn to the invariant of $A$. Using $K_0(\mathcal Z) \cong \mathbb Z$ with unit corresponding to $1 \in \mathbb Z$ and $K_1(\mathcal Z) = 0$, the Pimsner--Voiculescu sequence (\cite{Pimsner-Voiculescu}) implies $K_0(A) \cong \mathbb Z$ with the unit corresponding to $1 \in \mathbb Z$ and $K_1(A) \cong \mathbb Z$.
Combining this with the fact that $A$ has unique trace, we may therefore (by \eqref{eq:K1algSplitting}) identify
\begin{equation}
\label{eq:ZwreathZKa}
\Ka(A) \cong \mathbb T \oplus \mathbb Z,
\end{equation}
and so (as explained below) Corollary \ref{cor:AutClassification} gives
	\begin{equation}
		\mathrm{Aut}(A) / \overline{\mathrm{Inn}(A)} \cong \mathbb T \rtimes \Zn{2}.
	\end{equation}

To do this concretely, let $u \in A$ denote the canonical unitary implementing the group action. Then $[u]_{\mathrm{alg}}$ is a generator of the copy of $\mathbb Z$.
We get a group monomorphism from $\mathbb T$ to $\Ka(A)$ given by
\begin{equation} z \mapsto [z1_A]_{\mathrm{alg}}, \end{equation}
giving a complementary copy of $\mathbb T$.

Since $K_*(A)$ is torsion-free, we have, by \eqref{eq:bockstein-new}, that $K_i(A;\Zn{n})\cong K_i(A)\otimes \Zn{n} = \Zn{n}$ for all $n$.
We also see that for an automorphism $\phi$ of $A$, its action on $K$-theory determines its action on total $K$-theory; since $\phi$ also fixes $[1_A]_0$, $\inv(\phi)$ is entirely determined by $\Ka(\phi)$.

An automorphism of $\Ka(\phi)$ is compatible with the Thomsen map $\Th_A$ if and only if it acts as the identity on $\mathbb T$.
Hence, by Corollary~\ref{cor:AutClassification}, $\mathrm{Aut}(A)/\overline{\mathrm{Inn}(A)}$ identifies with the group of automorphisms of $\Ka(A)\cong \mathbb T \oplus \mathbb Z$ that fix $\mathbb T$.
Each such automorphism is determined by its action on the generator $[u]_{\mathrm{alg}}$ of the copy of $\mathbb Z$.
This group of automorphisms is $\mathbb T \rtimes \Zn{2}$, where an element $z\in\mathbb T$ corresponds to the automorphism $\alpha_z$ taking $[u]_{\mathrm{alg}}$ to $[zu]_{\mathrm{alg}}$, and the generator of $\Zn{2}$ corresponds to the automorphism $\omega$ taking $[u]_{\mathrm{alg}}$ to $-[u]_{\mathrm{alg}}$.

We can find explicit representatives for $\alpha_z$ and $\omega$ as follows:
\begin{enumerate}
\item
For $z \in \mathbb T$, let $\phi_z\colon A \to A$ denote the gauge action.
In other words, $\phi_z$ fixes $\Z^{\otimes \mathbb Z}$ and
\begin{equation} \phi_z(u)\coloneqq  zu.
\end{equation}
Then $\phi_z$ induces the automorphism $\alpha_z$.
\item Let $\psi\colon A \to A$ be the $^*$-homomorphism determined by
\begin{equation} \begin{split}
\psi(u)&\coloneqq u^*,\\ \psi(\cdots \otimes a_{-1} \otimes a_0 \otimes a_1 \otimes \cdots) &\coloneqq \cdots \otimes a_1 \otimes a_0 \otimes a_{-1} \otimes \cdots, \end{split}
\end{equation}
where $\cdots \otimes a_{-1} \otimes a_0 \otimes a_1 \otimes \cdots$ denotes an elementary tensor in $\Z^{\otimes \mathbb Z}$.
Then we can see that $[\psi(u)]_{\mathrm{alg}} = [u^*]_{\mathrm{alg}} = -[u]_{\mathrm{alg}}$, so $\psi$ induces the automorphism $\omega$.
\end{enumerate}
Since the gauge actions act as the identity on $KT_u(A)$, their non-innerness is not detected by that invariant.

Interestingly, the automorphisms $\phi_z$ and $\psi$ just described generate a copy of $\mathbb T \rtimes \Zn{2}$ inside $\mathrm{Aut}(A)$ (that is, without taking the quotient by $\overline{\mathrm{Inn}(A)}$).
Consequently, 
\begin{equation}
     \mathrm{Aut}(A) \cong \overline{\mathrm{Inn}(A)} \rtimes ( \mathbb T \rtimes \Zn{2}) \cong \overline{\mathrm{Inn}(A)} \rtimes \mathrm{Aut}(\inv(A).
\end{equation}
In particular, if $G$ is a discrete group acting on $\inv(A)$, then the action lifts to an action on $A$.
\end{example}

Connes' deep analysis of automorphisms of factors from \cite{Connes75b,Connes75a,Connes77} was instrumental in his equivalence of injectivity and hyperfiniteness and subsequent classification results (\cite{Connes76,Connes75}).  Amongst other results, Connes showed that there is a unique outer action of a finite cyclic group on $\mathcal R$ up to conjugacy \cite[Theorem 5.1]{Connes77}. This was subsequently generalised by Jones to all finite groups (\cite{Jones80}) and by Ocneanu to all discrete amenable groups (\cite{Ocneanu85}). These results instigated a vast body of work examining actions of amenable groups on classifiable $C^*$-algebras (see Izumi's survey \cite{Izumi-10}). A recent capstone result is JG and Szab\'o's dynamical Kirchberg-Phillips theorem (\cite{Gabe-Szabo23}), determining when outer actions of disrete amenable groups on Kirchberg algebras are cocycle conjugate in terms of equivariant $KK$-theory.

While any countable discrete group $G$ acts by outer automorphisms on any classifiable $C^*$-algebra $A$,\footnote{Indeed, identify $A$ with $A\otimes\Z^{\otimes G}$ and consider $\id_A\otimes\sigma$, where $\sigma\colon G\curvearrowright \Z^{\otimes G}$ is the Bernouli shift action obtained by permuting the tensor factors.} it is far from clear what possible actions are induced at the level of invariants.  In view of the classification of automorphisms modulo approximately inner automorphisms (Corollary~\ref{cor:AutClassification}), this problem takes the following form (see the introduction to \cite{Barlak-Szabo17}).
    
\begin{question}\label{q:groupactions}
    Let $A$ be a classifiable $C^*$-algebra (that is, one satisfying the hypotheses of Theorem~\ref{Main}) and let $G$ be a discrete amenable group.
    When does a group action $G\curvearrowright \inv(A)$ lift to an action $G \curvearrowright A$?
\end{question}

State-of-the-art results for Kirchberg algebras can be found in \cite{Katsura08} (extending \cite{BKP,Spielberg07}). In particular, Katsura shows that actions of $\Zn{n}$ on $KT_u(A)$ lift to actions on $A$ when $A$ is a unital Kirchberg algebra satisfying the UCT.  However Question \ref{q:groupactions} is open for a general discrete amenable groups acting on Kirchberg algebras, and even for arbitrary finite groups.  Likewise, general results are lacking in the stably finite setting. Indeed, even for a simple unital AF-algebra $A$, it appears to be open whether any action $\Zn{n}\curvearrowright \mathrm{Aut}(K_0(A),K_0(A)_+,[1_A]_0)$ lifts to an action on $A$ (see \cite[Exercise 10.11.3]{Blackadar98}, and \cite{Zhang18}). Some constructions for simple A$\mathbb T$-algebras (akin to \cite{BKP} in the Kirchberg setting) have recently been given in \cite{Zhang23}.

For a finite group $G$, a strategy was given by Barlak and Szab\'o in \cite{Barlak-Szabo17}. In the presence of enough UHF-absorption, they are able to lift a homomorphism $G\rightarrow\mathrm{Aut}(A) / \overline{\mathrm{Inn}(A)}$ to a group action by means of a model action of $G$ on the UHF-algebra $M_{|G|^\infty}$.   Their method produces actions with the \emph{Rohklin property}: a strong $C^*$-algebraic condition inspired by the non-commutative Rohklin lemmas used by Connes and Ocneanu which goes back to \cite{Kishimoto77,Herman-Jones82} and was extensively developed by Izumi (\cite{Izumi-Duke-04,Izumi-Advances04}). 
\begin{definition}[{\cite[Definition 3.1]{Izumi-Duke-04}}]
    For a finite group $G$, an action $G \curvearrowright A$ has the Rokhlin property if there is an equivariant $^*$-homomorphism $C(G) \to A_\infty \cap A'$
\end{definition}

Barlak and Szab\'o used their technique to answer Question \ref{q:groupactions} for those $M_{|G|^\infty}$-stable $C^*$-algebras where there is a classification of automorphisms up to approximate unitary equivalence (\cite[Corollaries 2.8 and 2.13]{Barlak-Szabo17}). Combining their work with Theorem \ref{algebra-classification-total} yields the following result.

\begin{corollary}\label{cor:groupactions}
    If $A$ is a unital simple separable nuclear $C^*$-algebra satisfying the UCT, $G$ is a finite group, and $A \otimes M_{|G|^\infty} \cong A$, then every group action $G \curvearrowright \inv(A)$ lifts to an action $G \curvearrowright A$ with the Rokhlin property.
    
    In particular, Question~\ref{q:groupactions} has a positive answer when $A$ is $Q$-stable and $G$ is finite.
\end{corollary}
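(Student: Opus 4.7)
The plan is to combine Corollary~\ref{cor:AutClassification} with Barlak and Szab\'o's abstract lifting machinery from \cite{Barlak-Szabo17}. Recall that their strategy, when specialized to a setting where automorphisms are classified up to approximate unitary equivalence by their action on an invariant, produces Rokhlin-type actions of any finite group $G$ from abstract group morphisms $G \to \mathrm{Aut}(A)/\overline{\mathrm{Inn}(A)}$, provided only that $A$ absorbs $M_{|G|^\infty}$ tensorially. Our task is simply to feed the classification of automorphisms furnished by this paper into their framework.

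In detail, suppose $\alpha \colon G \to \mathrm{Aut}(\inv(A))$ is a group action on the total invariant. By Corollary~\ref{cor:AutClassification} there is a canonical group isomorphism
\begin{equation}
    \mathrm{Aut}(A)/\overline{\mathrm{Inn}(A)} \cong \mathrm{Aut}(\inv(A)),
\end{equation}
so $\alpha$ corresponds to a group homomorphism $\bar{\alpha} \colon G \to \mathrm{Aut}(A)/\overline{\mathrm{Inn}(A)}$. The hypothesis $A \otimes M_{|G|^\infty} \cong A$ together with Corollary~\ref{cor:AutClassification} (which provides the classification of automorphisms of $A$ up to approximate unitary equivalence required by \cite[Corollaries 2.8 and 2.13]{Barlak-Szabo17}) allows us to apply Barlak and Szab\'o's theorem to obtain an action $\gamma \colon G \curvearrowright A$ with the Rokhlin property whose class in $\mathrm{Aut}(A)/\overline{\mathrm{Inn}(A)}$ agrees with $\bar{\alpha}$. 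Passing back through the isomorphism of Corollary~\ref{cor:AutClassification}, $\gamma$ induces $\alpha$ on $\inv(A)$, as required.

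For the ``in particular'' statement, it suffices to observe that if $A$ is $\mathcal{Q}$-stable (where $\mathcal{Q}$ is the universal UHF algebra), then $\mathcal{Q} \cong \mathcal{Q} \otimes M_{|G|^\infty}$ gives
\begin{equation}
    A \cong A \otimes \mathcal{Q} \cong A \otimes \mathcal{Q} \otimes M_{|G|^\infty} \cong A \otimes M_{|G|^\infty},
\end{equation}
so the main statement applies to any finite group $G$.

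The main obstacle is purely one of correctly invoking the external input from \cite{Barlak-Szabo17}: one must verify that the hypotheses of their model-action/lifting theorems match the output of Corollary~\ref{cor:AutClassification}. No new $C^*$-algebraic content is needed beyond the classification of automorphisms modulo approximate unitary equivalence, together with the tensor absorption $A \cong A \otimes M_{|G|^\infty}$.
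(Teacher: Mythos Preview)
Your proof is correct and follows essentially the same approach as the paper. The only cosmetic difference is that the paper works directly with Theorem~\ref{algebra-classification-total} (explicitly lifting each $\Phi_g$ to an automorphism $\theta_g$ and checking that $\theta_g\circ\theta_h$ is approximately unitarily equivalent to $\theta_{gh}$) before invoking \cite[Theorem~2.3]{Barlak-Szabo17}, whereas you package this step via Corollary~\ref{cor:AutClassification} and cite \cite[Corollaries~2.8 and~2.13]{Barlak-Szabo17}; the content is identical.
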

\begin{proof}
Fix an action $\Phi\colon G\curvearrowright \inv(A)$. By Theorem \ref{algebra-classification-total}, there exist automorphisms $(\theta_g)_{g\in G}$ of $A$ such that $\inv(\theta_g)=\Phi_g$ for each $g\in G$.  By the uniqueness in Theorem \ref{algebra-classification-total},  $\theta_g\circ \theta_h$ is approximately unitarily equivalent to $\theta_{gh}$, and so \cite[Theorem 2.3]{Barlak-Szabo17} provides the required Rohklin action.
\end{proof}

The Rokhlin property is a strong condition giving rise to a lot of rigidity. Indeed, in \cite[Section 3]{Izumi-Advances04}, Izumi analyses  $K$-theoretic restrictions imposed when an action has the Rohklin property. Moreover he shows how to use the Rohklin property to run an Evans--Kishimoto type intertwining argument (\cite{Evans-Kishimoto97}) to obtain uniqueness theorems whenever one has a classification of morphisms (\cite[Theorem 3.5]{Izumi-Duke-04}).  Combining Izumi's result with Theorem \ref{algebra-classification-total} gives the uniqueness counterpart to Corollary \ref{cor:groupactions}.  We thank Gabor Szab\'o for bringing this to our attention, through seminar talks in 2020.

\begin{corollary}
    Let $A$ be a unital simple separable nuclear $\mathcal Z$-stable $C^*$-algebra satisfying the UCT and let $G$ be a finite group.  Two actions $G \curvearrowright A$ with the Rokhlin property are conjugate via an automorphism of $A$ if and only if the induced actions $G \curvearrowright \inv(A)$ are conjugate via an automorphism of $\inv(A)$.
\end{corollary}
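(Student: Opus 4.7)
The forward direction is immediate from functoriality of $\inv(\,\cdot\,)$: if $\psi \colon A \to A$ is an automorphism conjugating two actions $\alpha, \beta \colon G \curvearrowright A$, then $\inv(\psi)$ conjugates the induced actions on $\inv(A)$. The substance of the statement lies in the converse. The plan is to first use Theorem~\ref{algebra-classification-total} to lift a conjugating invariant-automorphism to a $^*$-isomorphism of $A$, reducing to the case where the two actions agree pointwise up to approximate unitary equivalence, and then invoke Izumi's Evans--Kishimoto-style intertwining theorem for Rokhlin actions to upgrade pointwise approximate unitary equivalence to honest conjugacy.

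More precisely, suppose $\alpha, \beta \colon G \curvearrowright A$ are two Rokhlin actions and that $\Phi \colon \inv(A) \to \inv(A)$ is an automorphism with $\Phi \circ \inv(\alpha_g) = \inv(\beta_g) \circ \Phi$ for each $g \in G$. By Theorem~\ref{algebra-classification-total}, $\Phi$ lifts to a $^*$-isomorphism $\phi \colon A \to A$ with $\inv(\phi) = \Phi$. After replacing $\alpha$ by $\phi \circ \alpha_{(\,\cdot\,)} \circ \phi^{-1}$ (which is still a Rokhlin action, being conjugate to $\alpha$), we may assume that $\inv(\alpha_g) = \inv(\beta_g)$ for all $g \in G$. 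The uniqueness part of Theorem~\ref{algebra-classification-total} then gives that $\alpha_g$ and $\beta_g$ are approximately unitarily equivalent for every $g \in G$.

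At this point we invoke \cite[Theorem~3.5]{Izumi-Duke-04} (the Evans--Kishimoto intertwining argument in the presence of the Rokhlin property): if $\alpha$ and $\beta$ are two actions of a finite group on a unital separable $C^*$-algebra, both having the Rokhlin property, and if $\alpha_g$ is approximately unitarily equivalent to $\beta_g$ for every $g \in G$, then $\alpha$ and $\beta$ are conjugate via an automorphism of $A$ (in fact, via one that is approximately unitarily equivalent to $\mathrm{id}_A$). Applied in our situation, this produces an automorphism $\psi \colon A \to A$ conjugating the (modified) $\alpha$ onto $\beta$; composing with $\phi$ yields the desired conjugacy for the original actions.

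The main structural ingredients are therefore already in place: the classification theorem reduces us to pointwise approximate unitary equivalence of the two actions, and the Rokhlin property does the rest via Izumi's intertwining. The only thing to verify is that the hypotheses of \cite[Theorem~3.5]{Izumi-Duke-04} are met, which they are, since $A$ is unital and separable and both actions are assumed to have the Rokhlin property. There is no serious obstacle; the proof is essentially an assembly of the classification theorem with a known rigidity result for Rokhlin actions.
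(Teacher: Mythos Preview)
Your proof is correct and follows essentially the same approach as the paper: lift the conjugating $\inv$-automorphism to a $^*$-automorphism via Theorem~\ref{algebra-classification-total}, use the uniqueness part of that theorem to obtain pointwise approximate unitary equivalence of the actions, and then apply \cite[Theorem~3.5]{Izumi-Duke-04}. Your write-up is slightly more expansive (noting the trivial forward direction and that conjugating preserves the Rokhlin property), but the argument is the same.
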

\begin{proof}
    Suppose that $\alpha,\beta\colon G\curvearrowright A$ are Rokhlin actions such that there is an automorphism $\Theta$ of $\inv(A)$ with $\Theta\circ\inv(\alpha_g)\circ\Theta^{-1}=\inv(\beta_g)$ for all $g\in G$.  By Theorem \ref{algebra-classification-total}, there exists an automorphism $\theta$ of $A$ inducing $\Theta$.  Another application of Theorem \ref{algebra-classification-total} gives that $\theta\circ\alpha_g\circ\theta^{-1}\approx_u\beta_g$ for all $g\in G$. The result then follows from \cite[Theorem 3.5]{Izumi-Duke-04}.    
\end{proof}

\appendix

\section{Total $K$-theory}\label{appendix:totalappendix}

In this appendix we collect some technical facts regarding total $K$-theory that are not easily available in the literature.

\subsection{Two definitions of $\Zn{n}$-coefficients}

Our first objective is to give a direct proof of the following proposition which relates the two pictures of total $K$-theory we use in this paper.   To the best of our knowledge, the only proof in the literature is found in the recent paper \cite[Theorem 8.1]{Kaminker-Schochet19}, where it is derived from a Spanier--Whitehead duality theory developed therein.

\begin{proposition}\label{p:Zntwodef}
There exists a natural isomorphism between the two functors $K_0(\,\cdot\, ; \Zn{n}) \coloneqq K_1(\mathbb I_n \otimes \, \cdot \, )$ and $KK(\mathbb I_n,\, \cdot \,)$ (defined on the category of separable $C^*$-algebras).  
\end{proposition}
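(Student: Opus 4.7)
The plan is to realize $\mathbb I_n$ as a mapping cone, which forces both functors to sit inside the same six-term exact sequence, and then to construct a natural transformation between them that is the identity on the flanking terms and apply the five lemma. More precisely, I would first identify $\mathbb I_n$ with the mapping cone $C_{\phi_n}$ of the unital inclusion $\phi_n \colon \mathbb C \to M_n$ given by $c \mapsto c \cdot 1_{M_n}$. This yields the defining short exact sequence
\begin{equation}
0 \to SM_n \to \mathbb I_n \to \mathbb C \to 0.
\end{equation}
For any separable $C^*$-algebra $D$, nuclearity of $\mathbb I_n$ ensures that tensoring with $D$ preserves exactness, and $KK$-theory is half-exact in the first variable (Proposition~\ref{prop:KK-facts}\ref{prop:KK-facts.3}). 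Applying $K_*(\,\cdot\, \otimes D)$ and $KK(\,\cdot\,,D)$ to this sequence, using stability of $K$-theory (so $K_*(SM_n \otimes D) \cong K_{*+1}(D)$ and $KK(SM_n,D) \cong K_1(D)$) and the fact that $K_*(\phi_n)$ is multiplication by $n$, both six-term sequences collapse to
\begin{equation}
K_0(D) \xrightarrow{\times n} K_0(D) \to F(D) \to K_1(D) \xrightarrow{\times n} K_1(D),
\end{equation}
where $F(D)$ is $K_1(\mathbb I_n \otimes D)$ or $KK(\mathbb I_n, D)$, and the boundary maps on either side agree (both descend from $K_*(\phi_n)$).

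Next, I would construct a natural transformation $\mu_D \colon K_1(\mathbb I_n \otimes D) \to KK(\mathbb I_n, D)$ via Kasparov product with a fixed ``duality'' class $\alpha \in KK(\mathbb I_n \otimes S\mathbb I_n, \mathbb C)$. Specifically, rewriting $K_1(\mathbb I_n \otimes D) = KK(\mathbb C, S\mathbb I_n \otimes D)$, for $x$ in the latter group, form the external product $[\id_{\mathbb I_n}] \otimes x \in KK(\mathbb I_n, \mathbb I_n \otimes S\mathbb I_n \otimes D)$ and compose with $\alpha \otimes [\id_D] \in KK(\mathbb I_n \otimes S\mathbb I_n \otimes D, D)$ to land in $KK(\mathbb I_n, D)$. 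Naturality in $D$ is immediate from naturality of the Kasparov product. A Künneth computation gives $K_*(\mathbb I_n \otimes S\mathbb I_n) \cong (\Zn{n}, \Zn{n})$, so the UCT yields $KK(\mathbb I_n \otimes S\mathbb I_n, \mathbb C) \cong \Zn{n}$; I would take $\alpha$ to be the class that under the UCT implements the identity on $\Zn{n}$. With this choice, $\mu_D$ intertwines the flanking maps in the two six-term exact sequences, and the five lemma then yields that $\mu_D$ is an isomorphism.

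The main obstacle will be pinning down the correct duality class $\alpha$ and verifying that $\mu_D$ restricts to the identity on each copy of $K_*(D)$ in the flanking terms. This compatibility amounts to a statement about the ``evaluation'' pairing between $\mathbb I_n$ and its Spanier--Whitehead dual $S\mathbb I_n$, and tracking $\alpha$ through the mapping cone boundaries in both the $K$-theory and $KK$-theory sides is where the bulk of the technical work lies. Once this compatibility is in place, the rest is routine diagram chasing.
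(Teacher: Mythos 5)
Your overall strategy — constructing a natural transformation and running the five lemma through the mapping‑cone six‑term sequences with $\times n$ boundary maps — matches the structure of the paper's argument. But the way you propose to produce the natural transformation has a genuine gap, and it is precisely the gap the paper's proof is designed to close.

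First, your description of the duality class $\alpha \in KK(\mathbb I_n \otimes S\mathbb I_n, \mathbb C)$ is incoherent as stated. Since $K_*(\mathbb C) = (\mathbb Z, 0)$ is torsion-free and $K_*(\mathbb I_n \otimes S\mathbb I_n) \cong (\Zn{n}, \Zn{n})$, the $\Hom$-term in the UCT sequence for $KK(\mathbb I_n \otimes S\mathbb I_n, \mathbb C)$ vanishes, so the group is pure $\Ext$: any $\alpha$ "implements" the zero map on $K$-theory. Asking for the class that "implements the identity on $\Zn{n}$" therefore does not single out a class, and the characterization you would actually need (that slant product with $\alpha$ induces isomorphisms, i.e.\ $\alpha$ is a genuine Spanier--Whitehead duality class) is something you would have to establish independently. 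Second, even given the right $\alpha$, you would still need to verify that $\mu_D$ intertwines the boundary maps and restricts to the identity on the flanking $K_*(D)$ terms — you flag this yourself as "where the bulk of the technical work lies," but without it the five lemma does not apply and the argument is not a proof. The paper explicitly notes that the only existing argument along these lines is \cite[Theorem~8.1]{Kaminker-Schochet19}, which requires developing a full Spanier--Whitehead duality theory; your proposal is essentially a sketch of that route with the key verifications left open.

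The paper avoids both obstacles by realizing the duality class at the level of an explicit $^*$-homomorphism $\theta\colon \mathbb I_n \otimes \mathbb I_n \to SM_n$ (built from the two evaluation-at-$0$ maps $\nu_n$ and the inclusion $\mu_n\colon SM_n \hookrightarrow \mathbb I_n$), then defines the natural transformation $\Omega_D$ as the composite of "tensor with $\mathbb I_n$" and push-forward along $\theta$. The compositions $\theta\circ(\id\otimes\mu_n)$ and $\theta\circ(\mu_n\otimes\id)$ are then computed to be homotopic to $\nu_n\otimes\id_{SM_n}$ and $\sigma\otimes\nu_n$ (with $\sigma$ the flip of the suspension), and these homotopies give the commutativity of the two flanking squares by hand, with no appeal to the UCT, K\"unneth, or an abstract duality class. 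In short: what you identify as "the main obstacle" is not a routine afterthought but is the actual content of the proposition, and your proposal does not contain a mechanism to carry it out.
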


Before proving the proposition, we isolate some notation. For separable $C^*$-algebras $C$, $D$, and $E$ there is a natural map\footnote{Here, and throughout this section, all tensor products are spatial.} 
\begin{equation}
    \mathcal T^{(C, D)}_E \colon KK(C, D) \rightarrow KK(E \otimes C , E \otimes D),
\end{equation}
defined by
\begin{equation}\label{tensorD}
    \mathcal T^{(C, D)}_E\big([\phi, \psi]_{KK(C, D)}\big) = [\mathrm{id}_E \otimes \phi, \mathrm{id}_E \otimes \psi]_{KK(E \otimes C, E \otimes D)}
\end{equation}
for a Cuntz pair $(\phi, \psi) \colon C \rightrightarrows \mathcal M(D \otimes \mathcal K) \rhd  D \otimes \mathcal K$. The following standard lemma is a $KK$-theoretic analogue of the fact that that $\theta\otimes \id_C$ and $\id_E\otimes \phi$ commute for $^*$-homomorphisms $\theta\colon E\to F$ and $\phi\colon C\to D$.\footnote{To see the analogy, consider the effect of the diagram \eqref{lem:app-tensor-diag} on $[\phi]_{KK(C,D)}$.}

\begin{lemma}\label{lem:app-tensor}
    Let $C$, $D$, $E$ and $F$ be separable $C^*$-algebras and let $\theta \colon E \rightarrow F$ be a $^*$-homomorphism.  Then the diagram
    \begin{equation}\label{lem:app-tensor-diag}
    \begin{tikzcd}[row sep = 8ex, column sep = 18ex]
        KK(C, D) \arrow{r}{\mathcal T_E^{(C, D)}} \arrow{d}{\mathcal T_F^{(C, D)}} & KK(E \otimes C, E \otimes D) \arrow{d}{KK(E \otimes C, \theta \otimes \mathrm{id}_D)} \\
        KK(F \otimes C, F \otimes D) \arrow{r}{KK(\theta \otimes \mathrm{id}_C, F \otimes D)} & KK(E \otimes C, F \otimes D)
    \end{tikzcd}
    \end{equation}
    commutes.
\end{lemma}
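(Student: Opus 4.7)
The plan is to unfold both compositions using the Cuntz--Thomsen picture of $KK$-theory and show that they agree at the level of representing Cuntz pairs (rather than merely at the level of $KK$-classes). Fix a Cuntz pair $(\phi,\psi)\colon C\rightrightarrows G\rhd D$ representing a class in $KK(C,D)$, where $G$ is a separable $C^*$-algebra containing $D$ as an ideal (one can always take $G = \mathcal M(D\otimes \mathcal K)$ and $D$ replaced by $D\otimes \mathcal K$, but it will be convenient to allow general $G$).

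For the top-then-right composition, $\mathcal T_E^{(C,D)}([\phi,\psi]_{KK(C,D)})$ is represented by the Cuntz pair $(\mathrm{id}_E\otimes\phi, \mathrm{id}_E\otimes\psi)\colon E\otimes C\rightrightarrows E\otimes G\rhd E\otimes D$. Applying $KK(E\otimes C,\theta\otimes\mathrm{id}_D)$ via Proposition~\ref{prop:KK-facts}\ref{prop:KK-facts.5} --- with the $^*$-homomorphism $\theta\otimes\mathrm{id}_D\colon E\otimes D\to F\otimes D$ extended by $\theta\otimes\mathrm{id}_G\colon E\otimes G\to F\otimes G$ --- yields the Cuntz pair $\bigl((\theta\otimes\mathrm{id}_G)\circ(\mathrm{id}_E\otimes\phi), (\theta\otimes\mathrm{id}_G)\circ(\mathrm{id}_E\otimes\psi)\bigr)\colon E\otimes C\rightrightarrows F\otimes G\rhd F\otimes D$. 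For the left-then-bottom composition, $\mathcal T_F^{(C,D)}$ produces the Cuntz pair $(\mathrm{id}_F\otimes\phi, \mathrm{id}_F\otimes\psi)\colon F\otimes C\rightrightarrows F\otimes G\rhd F\otimes D$, and applying $KK(\theta\otimes\mathrm{id}_C,F\otimes D)$ (precomposition by $\theta\otimes\mathrm{id}_C$) gives the Cuntz pair $\bigl((\mathrm{id}_F\otimes\phi)\circ(\theta\otimes\mathrm{id}_C), (\mathrm{id}_F\otimes\psi)\circ(\theta\otimes\mathrm{id}_C)\bigr)\colon E\otimes C\rightrightarrows F\otimes G\rhd F\otimes D$.

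A direct check on elementary tensors shows that these two Cuntz pairs coincide as $^*$-homomorphisms $E\otimes C\to F\otimes G$: for $e\in E$ and $c\in C$, both compositions send $e\otimes c$ to $\theta(e)\otimes\phi(c)$ (respectively $\theta(e)\otimes\psi(c)$). By linearity and continuity, the two pairs of maps agree, and hence represent the same class in $KK(E\otimes C, F\otimes D)$.

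The main subtlety to be addressed is the legitimacy of working with Cuntz pairs into $E\otimes G$ (rather than into $\mathcal M(E\otimes D\otimes \mathcal K)$, as in the definition \eqref{tensorD} of $\mathcal T$). For this, one appeals to the fact that the class in $KK(C,D)$ defined by a general Cuntz pair $(\phi,\psi)\colon C\rightrightarrows G\rhd D$ agrees with the class of the multiplier-algebra Cuntz pair obtained by tensoring with a rank-one projection $e\in\mathcal K$ and composing with the canonical map $G\otimes\mathcal K\to\mathcal M(D\otimes\mathcal K)$ (see Footnote~\ref{ftn:cuntz-pair-convention}). Since this construction is natural in $G$ and compatible with tensoring by $\mathrm{id}_E$, the computation above translates to the standard multiplier-algebra representatives without change.
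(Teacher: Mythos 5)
Your proof is correct and genuinely simpler than the paper's. The key difference: the paper applies Proposition~\ref{prop:KK-facts}\ref{prop:KK-facts.5} with the ambient algebras $\mathcal M(E\otimes D)$ and $\mathcal M(F\otimes D)$, so it must extend $\theta\otimes\mathrm{id}_D$ to a map $\mathcal M(\theta\otimes\mathrm{id}_D)$ between them. That extension does not exist for arbitrary $\theta$, which is why the paper first replaces $\theta$ by a homotopic quasi-unital $^*$-homomorphism (via Jensen--Thomsen), assumes $D$, $F$ stable, and then performs a careful computation with an approximate unit. You observe instead that Proposition~\ref{prop:KK-facts}\ref{prop:KK-facts.5} applies to any ambient algebra, so one can keep the Cuntz pairs inside $E\otimes G$ and $F\otimes G$ (with $G$ the ambient algebra of the original pair) and use the $^*$-homomorphism $\theta\otimes\mathrm{id}_G$ directly, which exists without any quasi-unital condition, stability, or homotopy replacement. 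The elementary-tensor check then finishes it. This removes the Thomsen step entirely; the computation is otherwise the same.

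Two small points. First, you declare $G$ to be separable but then immediately allow $G=\mathcal M(D\otimes\mathcal K)$, which is not separable; separability of $G$ plays no role in the argument, so simply drop the adjective. Second, the ``main subtlety'' paragraph slightly misdescribes the target: for the standard choice $G=\mathcal M(D\otimes\mathcal K)$, the definition \eqref{tensorD} already produces a Cuntz pair into $E\otimes\mathcal M(D\otimes\mathcal K)$ (not $\mathcal M(E\otimes D\otimes\mathcal K)$), whose class is well-defined by the convention of Footnote~\ref{ftn:cuntz-pair-convention}, and Proposition~\ref{prop:KK-facts}\ref{prop:KK-facts.5} applies to such general Cuntz pairs without further justification. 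So that paragraph can be shortened to a bare citation of the footnote convention rather than a naturality argument.
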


\begin{proof}
    Without loss of generality, we may assume $D$ and $F$ are stable.  By results of Thomsen, after replacing $\theta$ with a homotopic $^*$-homomorphism, we may assume that $\theta$ extends to a strictly continuous map $\mathcal M(\theta) \colon \mathcal M(E) \rightarrow \mathcal M(F)$, which also ensures that $\theta\otimes\id_D\colon E\otimes D\to F\otimes D$ extends to a strictly continuous map $\mathcal M(\theta\otimes\id_D)\colon \mathcal M(E\otimes D)\to\mathcal M(F\otimes D)$.\footnote{By \cite[Lemma 1.3.19]{Jensen-Thomsen91}, the stability of $B$ implies $\theta$ is homotopic to a quasi-unital $^*$-homomorphism (\cite[Definition~1.3.13]{Jensen-Thomsen91}). Replace $\theta$ by this quasi-unital $^*$-homomorphism. Then for an approximate unit for $E\otimes D$ of the form $(e_\lambda\otimes d_\lambda)$, we have $(\theta\otimes\id_D)(e_\lambda\otimes d_\lambda)\to \mathcal M(\theta)(1_{\mathcal M(E)})\otimes 1_{\mathcal M(D)}$ strictly in $\mathcal M(E\otimes D)$.  It follows that $\overline{(\theta\otimes\id_D)(E\otimes D)(F\otimes D)}=\mathcal M(\theta)(1_{\mathcal M(E)}\otimes 1_{\mathcal M(D)})(F\otimes D)$, so that $\theta\otimes\id_D$ has the specified strictly continuous extension to $\mathcal M(\theta\otimes\id_D)\colon \mathcal M(E\otimes D)\to\mathcal M(F\otimes D)$ by \cite[Corollary~1.1.15]{Jensen-Thomsen91}. These results are originally found in \cite{Thomsen90}.}
 For a Cuntz pair $(\phi,\psi) \colon C \rightrightarrows \mathcal M(D) \rhd D$, there is a commutative diagram
    \begin{equation}
    \begin{tikzcd}
        E \otimes C \arrow[shift left = .5ex]{r}{\mathrm{id}_E \otimes \phi} \arrow[shift right = .5ex]{r}[swap]{\mathrm{id}_E \otimes \psi} \arrow{d}{\theta\otimes \mathrm{id}_C} &[2ex] 
        \mathcal M(E \otimes D) \arrow[phantom]{r}[description]{\rhd} \arrow{d}{\mathcal M(\theta \otimes \mathrm{id}_D)}&[-3ex]
        E \otimes D \arrow{d}{\theta \otimes \mathrm{id}_D}\phantom{,}\\[2ex]
        F\otimes C \arrow[shift left = .5ex]{r}{\mathrm{id}_F \otimes \phi} \arrow[shift right = .5ex]{r}[swap]{\mathrm{id}_F \otimes \psi} &[2ex] 
        \mathcal M(F \otimes D) \arrow[phantom]{r}[description]{\rhd} &[-3ex] 
        F \otimes D,
    \end{tikzcd}
    \end{equation}
    and the result follows.
\end{proof}

\begin{proof}[Proof of Proposition~\ref{p:Zntwodef}]
It suffices to show $K_0(\mathbb I_n \otimes \, . \,) \cong KK^1(\mathbb I_n, \,\cdot \,)$ as the result follows by taking suspensions.  By Bott periodicity and the stability of $KK$-theory, it is enough to show 
\begin{equation}
    KK(\mathbb C, \mathbb I_n \otimes \,\cdot \,) \cong KK(\mathbb I_n, SM_n \otimes \,\cdot \,).
\end{equation}
This is what we will show.

Recall that for $n\geq 2$,
\begin{equation}
\mathbb I_n  \coloneqq \{ f \in C([0,1], M_n) : f(0) \in \mathbb C1_{M_n}, f(1) = 0\}.
\end{equation}
Let $\mu_n \colon SM_n \to \mathbb I_n$ and $\nu_n \colon \mathbb I_n \to \mathbb C$ denote the canonical $^*$-homo\-morphisms given by the inclusion and evaluation at 0, respectively.  Define 
\begin{equation}
\theta \colon \mathbb I_n \otimes \mathbb I_n \longrightarrow SM_n
\end{equation}
to be the $^*$-homomorphism given on elementary tensors by
\begin{equation}
\theta(f\otimes g) (t) \coloneqq \begin{cases}
f(1-2t) \nu_n(g),  &  t\in [0,\tfrac{1}{2}]; \\
 g(2t-1)\nu_n(f), &  t\in [\tfrac{1}{2}, 1]
\end{cases}
\end{equation}
for $f,g\in \mathbb I_n$ and $t\in [0,1]$.
Define a natural transformation 
\begin{equation}
    \Omega \colon KK(\mathbb C, \mathbb I_n \otimes \,\cdot\,) \rightarrow KK(\mathbb I_n, SM_n \otimes \,\cdot\,)
\end{equation}
by 
\begin{equation}\label{omega}
    \Omega_D \coloneqq KK(\mathbb I_n, \theta \otimes \mathrm{id}_D) \circ \mathcal T^{(\mathbb C, \mathbb I_n \otimes D)}_{\mathbb I_n}
\end{equation}
for a separable $C^*$-algebra $D$.  We will show $\Omega_D$ is an isomorphism.

The composition $\theta \circ (\id_{\mathbb I_n} \otimes \mu_n) \colon \mathbb I_n \otimes SM_n \to SM_n$ is given on elementary tensors by
\begin{equation}
 \theta \big((\id_{\mathbb I_n} \otimes \mu_n) (f \otimes g)\big)(t) = 
 \begin{cases}
 0, & t\in [0,\tfrac{1}{2}]; \\
 g(2t-1)  \nu_n(f), & t\in [\tfrac{1}{2}, 1]
\end{cases} 
\end{equation}
for $f \in \mathbb I_n$, $g\in SM_n$, and $t\in [0,1]$. It follows that $\theta \circ (\id_{\mathbb I_n} \otimes \mu_n)$ is homotopic to 
\begin{equation}
\nu_n \otimes \id_{SM_n} \colon \mathbb I_n \otimes SM_n \longrightarrow \mathbb C \otimes SM_n = SM_n.
\end{equation}
In particular, if $D$ is a separable $C^*$-algebra, then homotopy invariance of $KK$-theory implies
\begin{equation}\label{eq:app-homotopy}
    KK(\mathbb I_n, (\theta \circ (\mathrm{id}_{\mathbb I_n} \otimes \mu_n))\otimes\id_D) = KK(\mathbb I_n, \nu_n \otimes \mathrm{id}_{SM_n} \otimes \mathrm{id}_D).
\end{equation}

For any separable $C^*$-algebra $D$, we get\begin{equation}\label{eq:2coeffdefcompute}
\begin{array}{rcl}
&& \hspace*{-5em} 
\Omega_D \circ KK(\mathbb C, \mu_n \otimes \id_D) \\
&\stackrel{\eqref{omega}}{=}& KK(\mathbb I_n, \theta \otimes \mathrm{id}_D)
\circ \mathcal T^{(\mathbb C, \mathbb I_n \otimes D)}_{\mathbb I_n} \circ KK(\mathbb C, \mu_n \otimes \mathrm{id}_D)  \\
&=& KK(\mathbb I_n, \theta \otimes \mathrm{id}_D) \circ KK(\mathbb I_n, \mathrm{id}_{\mathbb I_n} \otimes \mu_n \otimes \mathrm{id}_D) \circ \mathcal T_{\mathbb I_n}^{(\mathbb C, SM_n \otimes D)}  \\
&=& KK(\mathbb I_n, (\theta \circ (\mathrm{id}_{\mathbb I_n} \otimes \mu_n)) \otimes \mathrm{id}_D) \circ \mathcal T_{\mathbb I_n}^{(\mathbb C, SM_n \otimes D) } \\
&\stackrel{\eqref{eq:app-homotopy}}{=}& KK(\mathbb I_n, \nu_n \otimes \mathrm{id}_{SM_n} \otimes \mathrm{id}_D)  \circ \mathcal T_{\mathbb I_n}^{(\mathbb C, SM_n \otimes D) } \\
&\stackrel{\text{Lem.~\ref{lem:app-tensor}}}{=}& KK(\mathrm{id}_{\mathbb C} \otimes \nu_n, \mathbb C \otimes SM_n \otimes D) \circ \mathcal T_{\mathbb C}^{(\mathbb C, SM_n \otimes D)} \\
&=& KK(\nu_n, SM_n \otimes D),
\end{array}
\end{equation}
using the naturality of $\mathcal T_{\mathbb I_n}^{(\mathbb C, \,\cdot\,)}$ for the second inequality and the functoriality of $KK(\mathbb I_n, \,\cdot\,)$ for the third.
It follows that that the diagram
\begin{equation}\label{eq:2coeffdef1}
\begin{tikzcd}[column sep = large]
    KK(\mathbb C, SM_n \otimes D) \arrow[equals]{r} \arrow{d}{\times n} & KK(\mathbb C, SM_n \otimes D) \arrow{d}{\times n} \\
    KK(\mathbb C, SM_n \otimes D) \arrow[equals]{r} \arrow{d}{KK(\mathbb C, \mu_n \otimes \mathrm{id}_D)} & KK(\mathbb C, SM_n \otimes D) \arrow{d}{KK(\nu_n SM_n \otimes D)} \\ KK(\mathbb C, \mathbb I_n \otimes D) \arrow{r}{\Omega_D} & KK(\mathbb I_n, SM_n \otimes D)
\end{tikzcd}
\end{equation}
commutes.

Similarly, $\theta \circ (\mu_n \otimes \id_{\mathbb I_n})$ is homotopic to
\begin{equation}
\sigma \otimes \nu_n \colon SM_n \otimes \mathbb I_n \to SM_n \otimes \mathbb C = SM_n,
\end{equation} 
where  $\sigma \colon SM_n \to SM_n$ is the $^*$-homomorphism $\sigma(f)(t) \coloneqq f(1-t)$ for $t\in [0,1]$ and $f\in SM_n$.  This yields
\begin{equation}\label{eq:app-homotopy2}
   KK(SM_n, (\theta \circ (\mu_n \otimes \mathrm{id}_{\mathbb I_n})) \otimes \mathrm{id}_D) = KK(SM_n, \sigma \otimes \nu_n \otimes \mathrm{id}_D)
\end{equation}
for every separable $C^*$-algebra $D$.
Further, since $KK(SM_n, \sigma) = -\id_{KK(SM_n, SM_n)}$, Bott periodicity implies that tensoring with $\sigma$ induces a natural isomorphism in $KK$-theory; i.e.,
\begin{equation}\label{eq:app-eta}
    \eta^{(C, D)} \coloneqq KK(SM_n \otimes C, \sigma \otimes \mathrm{id}_D) \circ \mathcal T_{SM_n}^{(C, D)}
\end{equation}
is an isomorphism $KK(C, D) \xrightarrow\cong KK(SM_n \otimes C, SM_n \otimes D)$ for all $C^*$-algebras $C$ and $D$.

Fix a separable $C^*$-algebra $D$.  
We compute
\begin{equation}
\begin{array}{rcl}
&& \hspace*{-7em} KK(\mu_n, SM_n \otimes D) \circ \Omega_D \\
&\stackrel{\eqref{omega}}{=}& KK(\mu_n, SM_n \otimes D) \circ KK(\mathbb I_n, \theta \otimes \mathrm{id}_D) \circ \mathcal T_{\mathbb I_n}^{(\mathbb C, \mathbb I_n \otimes D)} \\
&=& KK( SM_n, \theta \otimes \mathrm{id}_D) \circ KK(\mu_n, \mathbb I_n \otimes \mathbb I_n \otimes D) \circ \mathcal T_{\mathbb I_n}^{(\mathbb C, \mathbb I_n \otimes D)} \\
&\stackrel{\text{Lem.~\ref{lem:app-tensor}}}{=}& KK(SM_n, \theta \otimes \mathrm{id}_D) \circ KK(SM_n, \mu_n \otimes \mathrm{id}_{\mathbb I_n} \otimes \mathrm{id}_D) \circ \mathcal T_{SM_n}^{(\mathbb C, \mathbb I_n \otimes D)} \\
&=& KK(SM_n, (\theta \circ (\mu_n \otimes \mathrm{id}_{\mathbb I_n})) \otimes \mathrm{id}_D) \circ \mathcal T_{SM_n}^{(\mathbb C, \mathbb I_n \otimes D)} \\
&\stackrel{\eqref{eq:app-homotopy2}}{=}& KK(SM_n, \sigma \otimes \nu_n \otimes \mathrm{id}_D) \circ \mathcal T_{SM_n}^{(\mathbb C, \mathbb I_n \otimes D)} \\
&=& KK(SM_n, \sigma \otimes \mathrm{id}_D) \circ KK(SM_n, \mathrm{id}_{SM_n} \otimes \nu_n \otimes \mathrm{id}_D) \circ T_{SM_n}^{(\mathbb C, \mathbb I_n \otimes D)} \\
&=& KK(SM_n, \sigma \otimes \mathrm{id}_D) \circ \mathcal T_{SM_n}^{(\mathbb C, D)} \circ KK(\mathbb C, \nu_n \otimes \mathrm{id}_D) \\
&\stackrel{\eqref{eq:app-eta}}{=}& \eta^{(\mathbb C, D)} \circ KK(\mathbb C, \nu_n \otimes \mathrm{id}_D),
\end{array}
\end{equation}
where the second equality follows from bifunctoriality of
$KK(\,\cdot\,,\,\cdot\,)$, the fourth and sixth equality follow from the functoriality of $KK(SM_n, \,\cdot\,)$, and the seventh equality follows from the naturality of $\mathcal T_{SM_n}^{(\mathbb C, \,\cdot\,)}$.  This implies
\begin{equation}
\label{eq:2coeffdef2}
\begin{tikzcd}[column sep = large, row sep = large]
    KK(\mathbb C, \mathbb I_n \otimes D) \arrow{r}{\Omega_D} \arrow{d}{KK(\mathbb C, \nu_n \otimes \mathrm{id}_D)} & KK(\mathbb I_n, SM_n \otimes D) \arrow{d}{KK(\mu_n, SM_n \otimes D)} \\
    KK(\mathbb C, D) \arrow{r}[swap]{\eta^{(\mathbb C, D)}}{\cong} \arrow{d}{\times n} & KK(SM_n, SM_n \otimes D) \arrow{d}{\times n} \\
    KK(\mathbb C, D) \arrow{r}[swap]{\eta^{(\mathbb C, D)}}{\cong} & KK(SM_n, SM_n \otimes D)
\end{tikzcd}
\end{equation}
commutes.

When concatenating the left columns of \eqref{eq:2coeffdef1} and \eqref{eq:2coeffdef2}, this becomes part of the six-term exact sequence \eqref{eq:bockstein-2} for the Bockstein maps, and thus this concatenation is exact. Similarly, concatenating the right columns gives an exact sequence, using the Puppe exact sequence in $KK(\,\cdot\,, SM_n \otimes D)$ applied to the diagonal inclusion $\mathbb C \rightarrow M_n$ (see \cite[Theorem~19.4.3]{Blackadar06}).\footnote{This follows by considering the six-term exact sequences in $KK(\,\cdot\,,SM_n \otimes D)$ corresponding to the rows of
\[\begin{tikzcd}[ampersand replacement = \&] 0 \arrow{r} \& SM_n \arrow{r} \arrow[equals]{d} \& \mathbb I_n \arrow{r} \arrow{d} \& \mathbb C \arrow{r} \arrow{d} \& 0\phantom{.} \\ 0 \arrow{r} \& SM_n \arrow{r} \& CM_n \arrow{r} \& M_n \arrow{r} \& 0.
\end{tikzcd}\]  
Using that the boundary map in the second row in an isomorphism, since the cone is contractible, we may identify the boundary map in the top row with multiplication by $n$.} 
By the five lemma it follows that $\Omega_A$ is an isomorphism.
\end{proof}

\subsection{Two definitions of total $K$-theory}\label{appendix.a.2}

In this section, we prove Proposition~\ref{prop:TotalKtheoryAgrees}. This amounts to showing that for any two sets of Bockstein operations, the corresponding maps are integer multiples of each other. To that end, 
we consider an arbitrary collection of natural transformations
\begin{equation}\label{eq:Lambda1}
  \begin{tikzcd}
    K_i(\,\cdot\,)\ar[r, rightarrow, "\mu'^{(n)}_i"] & K_i(\, \cdot \, ; \Zn{n}) \ar[r, rightarrow, "\nu'^{(n)}_i"] & K_{1-i}(\,\cdot\,)
  \end{tikzcd}
\end{equation}
and
\begin{equation}\label{eq:Lambda2}
  \begin{tikzcd}
    K_i (\, \cdot\, ; \Zn{n}) \ar[r, rightarrow, "\kappa'^{(nm,n)}_i"]
    &
    K_i (\, \cdot\, ; \Zn{nm}) \ar[r, rightarrow, "\kappa'^{(n,nm)}_i"] & K_i(\, \cdot\, ; \Zn{n})
  \end{tikzcd}
\end{equation} 
indexed by $i = 0, 1$ and $n, m \geq 2$.

\begin{remark}
If natural transformations as above are only defined on the functors restricted to the category of separable $C^*$-algebras or $\sigma$-unital $C^*$-algebras, then they can canonically be extended to the functors defined on the category of all $C^*$-algebras since the functors $K_i$ and $K_i(\,\cdot\, ; \Zn{n})$ preserve inductive limits.
Hence we may assume, for example, that the Bockstein operations in \cite{Dadarlat-Loring96} (which are only defined there for $\sigma$-unital $C^*$-algebras) are defined on all $C^*$-algebras.
\end{remark}

\begin{lemma}
\label{lem:BocksteinYoneda}
Given a collection of natural transformations as in \eqref{eq:Lambda1} and \eqref{eq:Lambda2}, suppose that 
$k\mu'^{(n)}_i$, $k \nu'^{(n)}_i$, $k \kappa'^{(nm,n)}_i$, and $k \kappa'^{(n,nm)}_i$
are non-zero for all $i=0,1$, $n,m\geq 2$, and $k=1,\dots, n-1$.
\begin{enumerate}
\item
\label{BocksteinYoneda.1}
Each map $\mu'^{(n)}_i$, $\nu'^{(n)}_i$, $\kappa'^{(nm,n)}_i$, and $\kappa'^{(n,nm)}_i$ and its corresponding Bockstein map $\mu^{(n)}_i$, $\nu^{(n)}_i$, $\kappa^{(nm,n)}_i$, and $\kappa^{(n,nm)}_i$ are integer multiples of each other.
\item 
\label{BocksteinYoneda.2}
For any $C^*$-algebras $D$ and $E$, a collection of homomorphisms $\alpha_i \colon K_i(D) \to K_i(E)$ and $\alpha_i^{(n)} \colon K_i(D;\Zn{n}) \to K_i(E;\Zn{n})$ forms a $\Lambda$-homomorphism if and only if these homomorphisms intertwine all the natural transformations $\mu'^{(n)}_i$, $\nu'^{(n)}_i$, $\kappa'^{(nm,n)}_i$, and $\kappa'^{(n,nm)}_i$.
\item 
\label{BocksteinYoneda.3}
These natural transformations induce natural exact sequences as in \eqref{eq:bockstein-2}, \eqref{eq:bockstein-secondexact}, and \eqref{eq:bockstein-commute}.
\end{enumerate}
\end{lemma}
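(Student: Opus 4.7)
The plan is to leverage the representability of the $K$-theory functors in $KK$-theory (via Proposition~\ref{p:Zntwodef}, we have $K_i(\,\cdot\,;\Zn{n}) \cong KK(\mathbb{I}_n, S^i \,\cdot\,)$ and $K_i(\,\cdot\,) \cong KK(\mathbb{C}, S^i \,\cdot\,)$) and apply a Yoneda-type argument to reduce everything to a computation of a few $KK$-groups. Each of the natural transformations in \eqref{eq:Lambda1} and \eqref{eq:Lambda2} lies in a natural transformation group between two such representable functors, which by Yoneda is a specific (small) $KK$-group, and both the unprimed (standard) Bockstein and the primed transformation are elements of this group. The hypothesis that $k$-multiples are non-zero then pins down the relationship between them.

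First I would establish that on separable $C^*$-algebras, every natural transformation $KK(C, S^\ast \,\cdot\,) \to KK(D, S^\ast \,\cdot\,)$ (between the functors appearing in the lemma) is given by the Kasparov product with a unique element of $KK^\ast(D, C)$. This is standard Yoneda in the $KK$-category, using that the $KK$-functors factor any homotopy-invariant, stable, split-exact functor. To extend from separable to general $C^*$-algebras, use that $K_i(\,\cdot\,)$ and $K_i(\,\cdot\,;\Zn{n})$ both commute with inductive limits of nested sequences with injective connecting maps, so a natural transformation on the full category of $C^*$-algebras is uniquely determined by (and determines) its restriction to separable $C^*$-algebras. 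Next, compute the relevant $KK$-groups using the six-term exact sequence arising from the defining extension $0 \to SM_n \to \mathbb{I}_n \to \mathbb{C} \to 0$: one finds $K_0(\mathbb{I}_n) = 0$ and $K_1(\mathbb{I}_n) \cong \Zn{n}$, from which $KK(\mathbb{C}, \mathbb{I}_n) \cong \Zn{n}$, $KK^1(\mathbb{I}_n, \mathbb{C}) \cong \Zn{n}$, and analogous cyclic-group computations for $KK^\ast(\mathbb{I}_m, \mathbb{I}_n)$ give the groups in which $\kappa^{(nm,n)}_i$ and $\kappa^{(n,nm)}_i$ reside. In each case, the standard Bockstein map corresponds to a generator of the relevant cyclic group (being induced directly from the canonical $^*$-homomorphisms $SM_n \hookrightarrow \mathbb{I}_n \to \mathbb{C}$ and from the inclusions $\mathbb{I}_n \hookrightarrow \mathbb{I}_{nm}$ and $\mathbb{I}_{nm} \hookrightarrow \mathbb{I}_n \otimes M_m$).

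Part (i) then follows: the primed transformation corresponds to some element of the same cyclic group $\Zn{n}$, hence equals $c$ times the Bockstein for some $c \in \mathbb{Z}$, and the hypothesis that $k \mu'^{(n)}_i \neq 0$ for $k = 1, \dots, n-1$ forces $c$ to be a unit modulo $n$; lifting a modular inverse $c'$ of $c$ to $\mathbb{Z}$ then gives $\mu^{(n)}_i = c' \mu'^{(n)}_i$, and similarly for the other maps. Part (ii) is then immediate: if $\alpha_i^{(n)} \circ \mu^{(n)}_i = \mu^{(n)}_i \circ \alpha_i$, then multiplying by $c$ gives $\alpha_i^{(n)} \circ \mu'^{(n)}_i = \mu'^{(n)}_i \circ \alpha_i$, and the reverse direction uses the integer $c'$; the same reasoning applies to the other Bocksteins. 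Part (iii) holds because the maps $\mu'^{(n)}_i$ and $\mu^{(n)}_i$ have the same kernel and image (an integer multiple by a unit in a cyclic group $\Zn{n}$ preserves both), so the exact sequences \eqref{eq:bockstein-2}, \eqref{eq:bockstein-secondexact}, and the commutativity of \eqref{eq:bockstein-commute} transfer verbatim from the unprimed to the primed transformations.

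The main obstacle is the Yoneda step: ensuring that the representation of natural transformations by Kasparov products is valid on the full category of $C^*$-algebras (not just separable ones) and that the relevant $KK$-groups are indeed cyclic of the expected orders. Both points are essentially routine — the former from continuity of $K$-theory with coefficients, the latter from the six-term exact sequence — but care is needed to check that the standard Bockstein maps (as defined in Section~\ref{ss:totalKtheory} via the tensor product with $\mathbb{I}_n$) correspond to generators rather than, say, degenerate classes, which boils down to tracking how the defining short exact sequence $0 \to SM_n \to \mathbb{I}_n \to \mathbb{C} \to 0$ induces these Bockstein operations.
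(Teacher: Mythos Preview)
Your proposal is correct and follows essentially the same approach as the paper: both reduce to separable $C^*$-algebras by continuity of the functors, invoke Yoneda in the $KK$-category to identify the natural transformation groups with cyclic groups of order $n$, and deduce that the primed and unprimed Bocksteins each generate (and hence are integer multiples of one another). The paper presents the argument for one representative case ($\nu^{(n)}_1$), noting in a footnote that the standard Bockstein also satisfies the non-vanishing hypothesis, and derives parts (ii) and (iii) from (i) exactly as you do.
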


\begin{proof}
Part \ref{BocksteinYoneda.2} follows from \ref{BocksteinYoneda.1}. For example, to show that if $\alpha_0$ and $\alpha_1^{(n)}$ intertwine $\nu'^{(n)}_1$ then they intertwine $\nu^{(n)}_1$, we start with $\alpha_0 \circ \nu'^{(n)}_{1,D} = \nu'^{(n)}_{1,E} \circ \alpha_1^{(n)}$, and, from \ref{BocksteinYoneda.1}, obtain some $j\in\mathbb Z$ such that $\nu^{(n)}_1=j\nu'^{(n)}_1$.
Then
\begin{equation}
\alpha_0 \circ \nu^{(n)}_{1,D} = j \alpha_0 \circ \nu'^{(n)}_{1,D} = j \nu'^{(n)}_{1,E} \circ \alpha_1^{(n)} = \nu^{(n)}_{1,E} \circ \alpha_1^{(n)}.
\end{equation}
The other cases are all similar.

Similarly, part \ref{BocksteinYoneda.3} follows from \ref{BocksteinYoneda.1}.
For example, as above, if $\nu^{(n)}_1 = j \nu'^{(n)}_1$ for some $j\in \mathbb Z$, then for any $C^*$-algebra $D$  we get $\ker \nu'^{(n)}_{i,D} \subseteq \ker \nu^{(n)}_{i,D}$ and $\im \nu'^{(n)}_{i,D} \subseteq \im \nu^{(n)}_{i,D}$. By symmetry, we obtain equalities. The same type of argument works for the other Bockstein operations, and also for the composition $\mu_{1-i}'^{(n)} \circ \nu_{i}'^{(n)}$ in \eqref{eq:bockstein-secondexact}.

Now, to prove \ref{BocksteinYoneda.1}, we focus on showing that $\nu^{(n)}_1$ is an integer multiple of $\nu'^{(n)}_1$; a similar argument proves the other direction.\footnote{For this, we need to know that the hypothesis holds for $\nu^{(n)}_1$ in place of $\nu'^{(n)}_1$; this follows by noting that
\begin{equation*}
\nu^{(n)}_{1,S\mathbb I_n} \colon K_1(S\mathbb I_n; \Zn{n}) \to K_0(S\mathbb I_n)\cong \Zn{n}
\end{equation*}
is an isomorphism (by exactness of \eqref{eq:bockstein-2}, since $K_1(\mathbb I_n) \cong \Zn{n}$ and $K_0(\mathbb I_n) =0$).}

By continuity of the functors $K_0$ and $K_1(\,\cdot \,;\Zn{n})$, it suffices to find $j\in \mathbb N$ such that $\nu^{(n)}_{1,D}=j\nu'^{(n)}_{1,D}$ when $D$ is a separable $C^*$-algebra. When restricted to this setting, by associativity of the Kasparov product, the functors $K_0$ and $K_1(\,\cdot \,;\Zn{n})$ factorize through the $KK$-category,\footnote{The $KK$-category is the category having as objects all separable $C^*$-algebras and with $KK(D,E)$ as the set of morphisms from $D$ to $E$.} in which we have natural isomorphisms $K_1(\,\cdot\, ; \Zn{n}) \cong KK(S\mathbb I_n, \,\cdot\,)$ given in Proposition~\ref{p:Zntwodef}. For any separable $C^*$-algebra $D$, the functor $KK(D,\,\cdot \,)$ is a $\Hom$-functor in the $KK$-category, so by the Yoneda lemma, the group of natural transformations from $KK(D,\,\cdot \,)$ to $F$ is naturally isomorphic to $F(D)$ for any additive functor $F$ from the $KK$-category to the category of abelian groups. Hence the group of natural transformations from $K_1(\,\cdot \,; \Zn{n})$ to $K_0$ is naturally isomorphic to $K_0(S\mathbb I_n) \cong \Zn{n}$. In particular, a natural transformation $\eta \colon K_1(\, \cdot\, ; \Zn{n}) \rightarrow K_0$ generates the entire group of natural transformation between these functors if and only if $k\eta \neq 0$ for every $k=1,\dots, n-1$. This is true for $\nu'^{(n)}_1$ by assumption.
Hence $\nu^{(n)}_1$ is an integer multiple of $\nu'^{(n)}_1$.
\end{proof}

\begin{proof}[Proof of Proposition~\ref{prop:TotalKtheoryAgrees}]
After identifying $KK^i(\mathbb I_n, \,\cdot \,)$ with $K_i(\, \cdot\, ; \Zn{n})$ via natural isomorphisms (which exist by Proposition~\ref{p:Zntwodef}), the Bockstein operations in \cite{Dadarlat-Loring96} induce natural transformations satisfying the criterion of Lemma~\ref{lem:BocksteinYoneda}.  
\end{proof}

\subsection{A split exact sequence relating ordinary and total $K$-theory}\label{sec:bodigheimer}

We end this appendix by showing how to use B\"odigheimer's work (\cite{Bodigheimer79,Bodigheimer80}) to prove Proposition~\ref{TotalKExtend} by means of an unnatural splitting of an exact sequence relating total $K$-theory with ordinary $K$-theory.

Recall that for an abelian group $G$ and $n \geq 2$, we identify $\mathrm{Tor}(G,\Zn{n})$ with the $n$-torsion group, $\{g\in G: ng=0\}$.
In particular, for any $m,n$, we have a natural inclusion $\mathrm{Tor}(G,\Zn{n}) \to \mathrm{Tor}(G,\Zn{mn})$, as well as a natural map $\mathrm{Tor}(G,\Zn{mn}) \to \mathrm{Tor}(G,\Zn{n})$.

Let $n\geq 2$, and let $n=p_1^{r_1}\cdots p_k^{r_k}$ be its prime factorization.
For any $C^*$-algebra $D$, $i =0, 1$, and $j=1,\dots, k$, using commutativity of \eqref{eq:bockstein-commute}, we get a commuting diagram\footnote{In this section, we use $K_i(D)/nK_i(D)$ rather that the isomorphic group $K_i(D)\otimes \Zn{n}$ to more easily describe maps between such groups.}
\begin{equation}
  \begin{tikzcd}[row sep = 7ex]
    K_i(D)/ p_j^{r_j}K_i(D)
    \ar[r, rightarrowtail, "\overline{\mu}_{i,D}^{(p_j^{r_j})}"]
    \ar[d, two heads, "\times n/p_j^{r_j}"]
    &
    K_i(D; \Zn{p_j^{r_j}})
    \ar[d,  "\kappa_{i,D}^{(n, p_j^{r_j})}"]
    \ar[r, twoheadrightarrow, " \overline{\nu}_{i,D}^{(p_j^{r_j})}"]
    &
    \mathrm{Tor}(K_{1-i}(D), \Zn{p_j^{r_j}})
    \ar[d, rightarrowtail] \\
    K_i(D)/nK_i(D)
    \ar[r, rightarrowtail, "\overline{\mu}_{i,D}^{(n)}"]
    &
    K_i(D; \Zn{n})
    \ar[r, twoheadrightarrow, "\overline{\nu}_{i,D}^{(n)}"]
    &
    \mathrm{Tor}(K_{1-i}(D), \Zn{n}).
  \end{tikzcd}
\end{equation}
where the rows are as in \eqref{eq:bockstein-new}. Taking the direct sum of the top row for $j=1,\dots, k$, we get a commutative diagram
\begin{equation}\label{eq:bockstein-primefactorise}
  \begin{tikzcd}[row sep =7ex]
    \bigoplus_j K_i(D)/p_j^{r_j}K_i(D)
    \ar[r, rightarrowtail, "\quad \bigoplus_j \overline{\mu}_{i,D}^{(p_j^{r_j})}"]
    \ar[d, "\cong"]
    &
    \bigoplus_j K_i(D; \Zn{p_j^{r_j}})
    \arrow{d}[yshift = 1ex]{\bigoplus_j \kappa_{i,D}^{(n, p_j^{r_j})}}[swap]{\cong} 
    \ar[r, twoheadrightarrow, "\bigoplus_j \overline{\nu}_{i,D}^{(p_j^{r_j})}"]
    &
    \bigoplus_j \mathrm{Tor}(K_{1-i}(D), \Zn{p_j^{r_j}})
    \ar[d, "\cong"] \\
    K_i(D)/nK_i(D)
    \ar[r, rightarrowtail, "\overline{\mu}_{i,D}^{(n)}"]
    &
    K_i(D; \Zn{n})
    \ar[r, twoheadrightarrow, "\overline{\nu}_{i,D}^{(n)}"]
    &
    \mathrm{Tor}(K_{1-i}(D), \Zn{n}),
\end{tikzcd}
\end{equation}
where the left and right vertical maps are isomorphisms by the Chinese remainder theorem ($\Zn{n} \cong \bigoplus_j \Zn{p_j^{r_j}}$) together with additivity of $\otimes $ and $\mathrm{Tor}$, and the middle vertical map is therefore an isomorphism by the five lemma.

\begin{lemma}[B\"odigheimer]\label{l:Bodigheimer}
For any $C^*$-algebra $D$ there are (unnatural) homomorphisms
\begin{equation}
\mathrm{Tor}(K_{1-i}(D), \Zn{n}) \xrightarrow{s_{i,D}^{(n)}} K_i(D; \Zn{n})   \xrightarrow{\iota_{i,D}^{(n)}} K_i(D)/nK_i(D)
\end{equation}
for $n\geq 2$ such that 
\begin{equation}\label{eq:Bodigheimersplit}
\overline{\nu}_{i,D}^{(n)} \circ s_{i,D}^{(n)}  = \id_{\mathrm{Tor}(K_{1-i}(D),\Zn{n})},\qquad \iota_{i,D}^{(n)} \circ \overline \mu_{i,D}^{(n)} = \id_{K_i(D)/nK_i(D)},
\end{equation}
and the following diagram commutes:
\begin{equation}\label{eq:splittingbockstein}
  \begin{tikzcd}[row sep = 7ex]
    \mathrm{Tor}(K_{1-i}(D), \Zn{n})
    \ar[r, "s_{i,D}^{(n)}"] \ar[d]
    &
    K_i(D; \Zn{n}) \ar[d, "\kappa_{i,D}^{(nm,n)}"] \ar[r, "\iota_{i,D}^{(n)}"]
    &
    K_i(D)/nK_i(D)\phantom{.}
    \ar[d, "\times m"] \\
    \mathrm{Tor}(K_{1-i}(D), \Zn{nm})
    \ar[r, "s_{i,D}^{(nm)}"]
    \ar[d, "\times m"]
    &
    K_i(D; \Zn{nm})\phantom{.}
    \ar[d, "\kappa_{i,D}^{(n,nm)}"]
    \ar[r, "\iota_{i,D}^{(nm)}"]
    &
    K_i(D)/nmK_i(D) \ar[d]  \\
    \mathrm{Tor}(K_{1-i}(D), \Zn{n})
    \ar[r, "s_{i,D}^{(n)}"] & K_i(D; \Zn{n}) \ar[r, "\iota_{i,D}^{(n)}"]
    &
    K_i(D)/nK_i(D).
  \end{tikzcd}
\end{equation}
\end{lemma}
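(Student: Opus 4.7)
The plan is to reduce to the case where $n$ is a prime power using \eqref{eq:bockstein-primefactorise}, and then construct the splittings inductively in a way that respects the tower of transition maps $\kappa^{(\cdot,\cdot)}$.

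For the reduction, write $n = p_1^{r_1}\cdots p_k^{r_k}$ and use the isomorphisms in \eqref{eq:bockstein-primefactorise} to identify $K_i(D;\Zn{n})$ and the two outer terms of the short exact sequence with direct sums indexed by the primes $p_j$. Under these identifications, both $\kappa^{(nm,n)}$ and $\kappa^{(n,nm)}$ respect the prime decomposition, acting on the $p$-primary summand entirely through the $p$-adic valuations of $n$ and $nm$. Consequently, it suffices to construct compatible splittings when $n$ and $m$ are powers of a single fixed prime $p$, the general case being recovered by taking direct sums.

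Now fix a prime $p$ and set $G\coloneqq K_i(D)$ and $H\coloneqq K_{1-i}(D)$. The plan is to construct, for every $r\geq 1$, splittings
\begin{equation*}
  s^{(p^r)}\colon H[p^r]\longrightarrow K_i(D;\Zn{p^r})\quad\text{and}\quad \iota^{(p^r)}\colon K_i(D;\Zn{p^r})\longrightarrow G/p^rG
\end{equation*}
that together fit coherently into the ladder of short exact sequences
\begin{equation*}
  0\to G/p^rG\to K_i(D;\Zn{p^r})\to H[p^r]\to 0,\qquad r\geq 1,
\end{equation*}
connected by $\kappa^{(p^{r+s},p^r)}$, $\kappa^{(p^r,p^{r+s})}$, and the corresponding maps (multiplication by $p^s$, inclusion, projection) on the outer terms, as laid out in \eqref{eq:bockstein-commute}. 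This is a purely algebraic statement about the pro-system indexed by $r$, and the existence of such coherent splittings is exactly B\"odigheimer's result from \cite{Bodigheimer79,Bodigheimer80} (its $C^*$-algebraic adaptation appears as \cite[Lemma~2.2.8]{EilersPhD} and \cite[Remark~4.6]{Eilers97}). The construction proceeds by induction on $r$: having fixed splittings at all lower levels, one chooses a splitting at level $p^r$ compatible with them. The lifting obstruction at each step lives in an $\mathrm{Ext}$-group that vanishes because one has already arranged compatibility at lower stages, and the divisibility of $G/p^rG$ as a $\Zn{p^r}$-module (dually, the injectivity of $H[p^r]$ in the $p$-primary pro-category) is what makes the relevant $\mathrm{Ext}$-groups trivial.

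Once the prime-power case is handled, the splittings for general $n$ are obtained by summing the prime-power splittings using \eqref{eq:bockstein-primefactorise}, and the commutativity of \eqref{eq:splittingbockstein} follows from the prime-power analogue together with the naturality of \eqref{eq:bockstein-primefactorise} in the transition maps. The main obstacle is the inductive step in the prime-power construction: at stage $r$ one must choose $\iota^{(p^r)}$ and $s^{(p^r)}$ that are simultaneously compatible with all previously chosen splittings under \emph{both} the upward map $\kappa^{(p^{r},p^{r-1})}$ and the downward map $\kappa^{(p^{r-1},p^r)}$, and this two-sided compatibility --- rather than either direction alone --- is precisely what B\"odigheimer's machinery is designed to deliver.
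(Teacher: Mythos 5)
Your overall strategy --- reduce to prime powers via the Chinese remainder decomposition \eqref{eq:bockstein-primefactorise}, apply B\"odigheimer's abstract splitting lemma in the prime-power case, and then sum up --- is indeed the paper's strategy, so the skeleton of the argument is right. There is, however, a real gap in how you handle the general-$n$ case, and it is precisely the non-obvious part.

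You assert that under the CRT identifications, $\kappa^{(n,nm)}$ and $\kappa^{(nm,n)}$ ``respect the prime decomposition, acting on the $p$-primary summand entirely through the $p$-adic valuations of $n$ and $nm$,'' and that commutativity of \eqref{eq:splittingbockstein} then follows from ``naturality.'' This is not correct as stated. Write $n = \prod_j p_j^{r_j}$ and $m = \prod_j p_j^{t_j}$. On the $p_j$-primary summand, $\kappa_{i,D}^{(n,nm)}$ does \emph{not} simply restrict to $\kappa_{i,D}^{(p_j^{r_j}, p_j^{r_j+t_j})}$: because $\kappa^{(n,nm)}$ is induced by the diagonal embedding $\mathbb I_{nm}\hookrightarrow \mathbb I_n\otimes M_m$, tracing through the CRT identifications introduces a multiplicative correction by $m/p_j^{t_j}$, i.e.\ the part of $m$ coprime to $p_j$ (this is equation \eqref{eq:kappafact2} in the paper). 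Likewise, the multiplication-by-$m$ map on $\mathrm{Tor}(K_{1-i}(D),\Zn{nm})$ restricted to the $p_j$-summand does not factor purely as $\times p_j^{t_j}$; it too carries a factor $\times(m/p_j^{t_j})$. The commutativity of \eqref{eq:splittingbockstein} for general $n,m$ therefore does not drop out of ``naturality of the CRT decomposition'' --- one has to check explicitly that the same coprime-to-$p_j$ factor appears on both sides of the square (and is harmless there since it acts invertibly on $p_j^{r_j}$-torsion groups). This is the arithmetic verification the paper performs after defining $s_{i,D}^{(n)}$ by \eqref{eq:bodigheimer-s}; your proposal skips it. So while the idea is right, the key step is precisely the one you've elided.

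A secondary, cosmetic remark: you describe both splittings $s$ and $\iota$ as coming directly out of B\"odigheimer's lemma via an inductive lifting-obstruction argument. The paper invokes B\"odigheimer only for the right splittings $s_{i,D}^{(p^k)}$ and then produces $\iota_{i,D}^{(n)}$ afterwards from $s_{i,D}^{(n)}$ via the elementary observation that in a commuting ladder of short exact sequences, a compatible family of left splittings induces a compatible family of right splittings (and vice versa). Either route is fine; just be aware the inductive $\mathrm{Ext}$-vanishing picture you sketch is a paraphrase of B\"odigheimer's argument, not a substitute for citing the lemma.
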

\begin{proof}
We start with the maps $s_{i,D}^{(n)}$. 
Recall that for $n, m \geq 2$, the maps
\begin{equation}\label{eq:app-kappa}
\begin{split}
    &\kappa_{*, D}^{(nm, n)} \colon K_*(D ; \mathbb Z/n) \longrightarrow K_*(D ; \mathbb Z/nm) \text{ and} \\
    &\kappa_{*,D}^{(n, nm)} \colon K_*(D ; \mathbb Z/nm) \longrightarrow K_*(D; \mathbb Z/n)
\end{split}
\end{equation}
are induced by the
canonical inclusions $\mathbb I_n \rightarrow \mathbb I_{nm}$ and  $\mathbb I_{nm} \rightarrow \mathbb I_n \otimes M_m$, respectively.  It follows that the compositions $\kappa_{*, D}^{(nm, n)} \circ \kappa_{*,D}^{(n, nm)}$ and $\kappa_{*,D}^{(n, nm)} \circ \kappa_{*, D}^{(nm, n)}$ are given by multiplication by $m$ since the compositions of the corresponding $^*$-homomorphisms, in both directions, are the diagonal embeddings into the $m \times m$ matrices.  Further, note that $nx = 0$ for all $x \in K_*(D; \mathbb Z/n)$.  It follows that for a prime $p$, the diagram
\begin{equation}
\begin{tikzcd}[row sep = 8ex, column sep = 7.5ex]
    \vdots \arrow[shift left]{d} & \vdots \arrow[shift left]{d} & \vdots \arrow[shift left]{d} \\
    K_i(D)/ p^k K_i(D) \arrow[tail]{r}{\bar\mu_{i, D}^{(p^k)}} \arrow[shift left]{u} \arrow[shift left]{d} & K_i(D ; \mathbb Z/p^k) \arrow[shift left]{u} \arrow[two heads]{r}{\bar \nu_{i, D}^{(p^k)}} \arrow[shift left]{d}{\kappa_{i, D}^{(p^{k+1}, p^k)}} & \mathrm{Tor}(K_{1-i}(D), \mathbb Z/p^k) \arrow[shift left]{u} \arrow[shift left,"\times p"]{d} \\
    K_i(D)/p^{k+1} K_i(D) \arrow[tail]{r}{\bar\mu_{i, D}^{(p^{k+1})}} \arrow[shift left,"\times p"]{u} \arrow[shift left]{d}& K_i(D ; \mathbb Z/p^{k+1}) \arrow[shift left]{u}{\kappa_{i, D}^{(p^k, p^{k+1})}} \arrow[two heads]{r}{\bar \nu_{i, D}^{(p^{k+1})}} \arrow[shift left]{d} & \mathrm{Tor}(K_{1-i}(D), \mathbb Z/p^{k+1}) \arrow[shift left]{u} \arrow[shift left]{d} \\
    \vdots \arrow[shift left]{u} & \vdots \arrow[shift left]{u} & \vdots \arrow[shift left]{u}
\end{tikzcd}
\end{equation}
satisfies the hypotheses of the (purely algebraic) lemma in \cite[Section~2]{Bodigheimer80} for $i = 0, 1$,\footnote{In \cite{Bodigheimer80}, for an abelian group $G$ and natural number $n \geq 2$, the group $\mathrm{Tor}(G, \mathbb Z/n)$ is denoted $G[n]$.} where the vertical maps on the left and right are induced by the canonical maps between $\mathbb Z/p^k$ and $\mathbb Z/p^{k+1}$. This lemma implies that the maps $s_{i,D}^{(n)}$ exist satisfying \eqref{eq:Bodigheimersplit} and such that \eqref{eq:splittingbockstein} commutes when $n$ and $m$ above are powers of the same prime $p$; we therefore fix such $s_{i,D}^{(p^r)}$ for all primes $p$ and $r \geq 1$. 

We now use the identifications in the right-hand square of \eqref{eq:bockstein-primefactorise} to define maps $s_{i,D}^{(n)}$ for general $n$.
For $n\geq 2$ arbitrary, let $n=p_1^{r_1} \cdots p_k^{r_k}$ be its prime factorization.
Then we define $s_{i,D}^{(n)}\colon \mathrm{Tor}(K_{1-i}(D);\Zn{n}) \to K_i(D;\Zn{n})$ such that the following commutes
\begin{equation}
\label{eq:bodigheimer-s}
  \begin{tikzcd}[row sep =8ex, column sep=8ex]
    \bigoplus_j \mathrm{Tor}(K_{1-i}(D), \Zn{p_j^{r_j}})
    \ar[d, "\cong"] \ar[r,"\bigoplus_j s_{i,D}^{(p_j^{r_j})}"] 
    &
    \bigoplus_j K_i(D; \Zn{p_j^{r_j}})
    \arrow{d}[yshift = 1ex]{\bigoplus_j \kappa_{i,D}^{(n, p_j^{r_j})}}[swap]{\cong}  \\
    \mathrm{Tor}(K_{1-i}(D), \Zn{n})
    \ar[r,"s_{i,D}^{(n)}"]
    &
    K_i(D; \Zn{n}).
\end{tikzcd}
\end{equation}

Since the top map, $\bigoplus_j s_{i,D}^{(p_j^{r_j})}$, is a right splitting for the top row of \eqref{eq:bockstein-primefactorise}, it follows that the $s_{i,D}^{(n)}$ is a right splitting for the bottom row, i.e., $\overline{\nu}_{i,D}^{(n)}\circ s_{i,D}^{(n)}  = \id_{\mathrm{Tor}(K_{1-i}(D),\Zn{n})}$.

To check commutativity of the left side of \eqref{eq:splittingbockstein}, we consider $m,n\geq 2$.
Let $n=p_1^{r_1} \cdots p_k^{r_k}$ and $m=p_1^{t_1}\cdots p_k^{t_k}$ be their prime factorizations, allowing some exponents to possibly be zero.
For convenience, we take $K_i(D;\Zn{1})$ to be $0$ by convention; then \eqref{eq:bockstein-primefactorise} and the definition of $s_{i,D}^{(n)}$ from \eqref{eq:bodigheimer-s} are still correct with some $r_j$ equal to $0$.

Since $\kappa_{i,D}^{(nm,p_j^{r_j+t_j})}\circ \kappa_{i,D}^{(p_j^{r_j+t_j},p_j^{r_j})}=\kappa_{i,D}^{(nm,p_j^{r_j})}=\kappa_{i,D}^{(nm,n)}\circ\kappa_{i,D}^{(n,p_j^{r_j})}$ (by the definition of these maps), the following diagram commutes:
\begin{equation}
  \begin{tikzcd}[row sep =8ex, column sep=13ex]
    \bigoplus_j K_{i}(D; \Zn{p_j^{r_j}})
    \arrow{d}[yshift = 1ex]{\bigoplus_j \kappa_{i,D}^{(n, p_j^{r_j})}}[swap]{\cong} 
    \ar[r,"\bigoplus_j \kappa_{i,D}^{(p_j^{r_j+t_j},p_j^{r_j})}"]
    &
    \bigoplus_j K_{i}(D; \Zn{p_j^{r_j+t_j}})
    \arrow{d}[yshift = 1ex]{\bigoplus_j \kappa_{i,D}^{(nm, p_j^{r_j+t_j})}}[swap]{\cong}  \\
    K_{i}(D; \Zn{n})
    \ar[r,"\kappa_{i,D}^{(nm,n)}"]
    &
    K_{i}(D; \Zn{nm}).
\end{tikzcd}
\end{equation}
From this and \eqref{eq:bodigheimer-s}, one can deduce commutativity of the top-left square of \eqref{eq:splittingbockstein} for general $n$ and $m$ from B\"odigheimer's commutativity of this square for the case that $n$ and $m$ are powers of the same prime.

Fix $j\in\{1,\dots,k\}$.
Then we have
\begin{equation}
\label{eq:kappafact2}
\kappa^{(n,nm)}_{i,D}\circ\kappa^{(nm,p_j^{r_j+t_j})}_{i,D}=\frac{m}{p_j^{t_j}} \kappa_{i,D}^{(n,p_j^{r_j})}\circ
\kappa_{i,D}^{(p_j^{r_j},p_j^{r_j+t_j})}
\end{equation}
since the left side comes from the inclusion $\mathbb I_{p_j^{r_j+t_j}} \to \mathbb I_n \otimes M_m$ given by $f \mapsto f \otimes 1_{M_{mn/p_j^{r_j+t_j}}}$, while the 
right side (ignoring the factor $m/p_j^{t_j}$) comes from the inclusion $\mathbb I_{p_j^{r_j+t_j}} \to \mathbb I_n \otimes M_{p_j^{t_j}}$ given by $f \mapsto f \otimes 1_{M_{n/p_j^{r_j}}}$.


Therefore, for an element $x$ of the direct summand $\mathrm{Tor}(K_i(D),\Zn{p_j^{r_j+t_j}})$ of $\mathrm{Tor}(K_i(D),\Zn{nm})$, we have
\begin{equation}
\begin{array}{rcl}
\kappa^{(n,nm)}_{i,D}\big(s^{(nm)}_{i,D}(x)\big)
&\stackrel{\eqref{eq:bodigheimer-s}}=& \kappa^{(n,nm)}_{i,D}\Big( \kappa^{(nm,p_j^{r_j+t_j})}_{i,D}\big(s^{(p_j^{r_j+t_j})}_{i,D}(x)\big)\Big) \\
&\stackrel{\eqref{eq:kappafact2}}=& \frac{m}{\,p_j^{t_j}} \Big(\kappa^{(n,p_j^{r_j})}_{i,D}\Big(\kappa^{(p_j^{r_j},p_j^{r_j+t_j})}_{i,D}\big( s^{(p_j^{r_j+t_j})}_{i,D}(x)\big)\Big)\Big) \\
&=& \frac{m}{\,p_j^{t_j}} \kappa^{(n,p_j^{r_j})}_{i,D}\Big( s^{(p_j^{r_j})}_{i,D}(p_j^{t_j}x)\Big) \\
&=& \kappa^{(n,p_j^{r_j})}_{i,D}\Big( s^{(p_j^{r_j})}_{i,D}(m x)\Big) \\
&\stackrel{\eqref{eq:bodigheimer-s}}=& s^{(n)}_{i,D}(mx),
\end{array}
\end{equation}
where the third equation comes from B\"odigheimer's commutativity of the bottom-left square of \eqref{eq:splittingbockstein} in the case that $n$ and $m$ are powers of $p_j$.
Since
\begin{equation}
\mathrm{Tor}(K_i(D),\Zn{nm})=\bigoplus_j
\mathrm{Tor}(K_i(D),\Zn{p_j^{r_j+t_j}})
\end{equation}
and the above shows commutativity of the bottom-left square of \eqref{eq:splittingbockstein} on each of these summands, it follows that the bottom-left square of \eqref{eq:splittingbockstein} commutes.

The splitting lemma provides the maps $\iota_{i,D}^{(n)}$.  A version that suffices for our purposes says that given a commutative diagram of abelian groups
  \begin{equation}
    \begin{tikzcd}
      0 \ar[r] & G_1 \ar[r] \ar[d] & H_1 \ar[r] \ar[d] & L_1 \ar[r]\ar[d] & 0 \\
      0 \ar[r] & G_2 \ar[r]        & H_2 \ar[r]        & L_2 \ar[r]       & 0
    \end{tikzcd}
  \end{equation}
  with exact rows, if there is a left splitting for each row that commutes with the downward maps, then the corresponding right splittings also commute with the downward maps.\footnote{If an exact sequence $0 \rightarrow G \xrightarrow i H \xrightarrow p L \rightarrow 0$ is an exact sequence of abelian groups and $q \colon H \rightarrow G$ is a splitting of $i$, then a splitting $j \colon L \rightarrow H$ of $q$ is given explicitly as follows: the map $\mathrm{id}_H - i \circ q \colon H \rightarrow H$ vanishes on $\im i = \ker p$, and hence there is a map $j$ with $j \circ p = \mathrm{id}_H - i\circ q$.  Then $p \circ j \circ p = p$, and as $p$ is surjective, $p \circ j = \mathrm{id}_L$.  Using this explicit formula for the splitting, the claim follows.}
\end{proof}

\begin{proof}[Proof of Proposition \ref{TotalKExtend}]
Fix homomorphisms $\iota_{i,A}^{(n)}$ and $s_{i,B}^{(n)}$ as in Lemma~\ref{l:Bodigheimer}. 
Given $\alpha_\ast \colon K_\ast(A) \to K_\ast(B)$ we define group homomorphisms $\alpha_i^{(n)}  \colon K_i(A; \Zn{n}) \to K_i(B; \Zn{n})$ by
\begin{equation}
\alpha_i^{(n)} \coloneqq \overline{\mu}_{i,B}^{(n)} \circ(\alpha_i \otimes  \id_{\Zn{n}})\circ \iota_{i,A}^{(n)} + s_{i,B}^{(n)}\circ \mathrm{Tor}(\alpha_{1-i}, \Zn{n})\circ \overline{\nu}_{i,A}^{(n)}.
\end{equation}
It follows from the construction that the collection of maps $\alpha_i$ and $\alpha_i^{(n)}$ for $i=0,1$ and $n\geq 2$ intertwines the natural transformations $\mu_i$ and $\nu_i$, and by Lemma~\ref{l:Bodigheimer}, they also intertwine the $\kappa_{i}$. 
\end{proof}

\section{$KK$-theory outside the \texorpdfstring{$\sigma$}{sigma}-unital setting}
\label{sec:kkappendix}

In this appendix we give some details regarding the inductive limit description of $KK$-theory in the case when the second variable is not $\sigma$-unital, extending some standard results from the $\sigma$-unital case to the general framework.

\subsection{Cuntz's picture of $KK$-theory}

We will make use of Cuntz's picture of $KK$-theory from \cite{Cuntz87}. This (like its precursor in \cite{Cuntz83a}) works smoothly for non-$\sigma$-unital algebras in the second variable and allows for a very clean description of $KK(A,\theta)$ for a $^*$-homomorphism $\theta$. 

\begin{definition}[{Cuntz, \cite{Cuntz87}}]
\label{defn:KKc}
Let $A$ and $I$ be $C^*$-algebras with $A$ separable.
Let $qA$ be the kernel of the map $\id_A * \id_A\colon A * A \to A$ (where $*$ denotes the full free product).
Define
\begin{equation}
KK_c(A,I)\coloneqq [qA,I\otimes \mathcal K],
\end{equation}
(i.e. homotopy classes of $^*$-homomorphisms from $qA$ to $I \otimes \mathcal K$).
Write $[\rho]_{KK_c(A,I)} \in KK_c(A,I)$ for the homotopy class of $\rho\colon qA \to I \otimes \mathcal K$.

Given another $C^*$-algebra $J$ and a $^*$-homomorphism $\theta\colon I\to J$, the induced map
$KK_c(A,\theta)\colon KK_c(A,I) \to KK_c(A,J)$ is given by
\begin{equation}
\label{eq:KKcTheta}
KK_c(A,\theta)([\rho]_{KK_c(A,I)}) \coloneqq [(\theta\otimes \mathrm{id}_{\mathcal K})\circ\rho]_{KK_c(A,J)}.
\end{equation}
\end{definition}

The above definition of $KK_c(A,\theta)$ makes $KK_c(A,\,\cdot\,)$ a covariant functor, and indeed $KK_c(\,\cdot\, ,\,\cdot\,)$ is a bifunctor (though we do not need an explicit description of functoriality in the first variable). Cuntz used Higson's characterization of $KK$-theory for separable $C^*$-algebras (\cite{Higson87}) to show that $KK_c(\,\cdot\, ,\, \cdot\,)$ gives another realization of $KK$-theory in the separable setting. This was extended to the case when the second variable is $\sigma$-unital in \cite{Jensen-Thomsen91}, where the isomorphism between the Cuntz picture and Cuntz--Thomsen picture is described explicitly (the proof reveals that the isomorphism is natural) as in the following proposition. To set this up, note that given a Cuntz pair $(\phi,\psi)\colon A \rightrightarrows E \rhd I\otimes \mathcal K$, we have $(\phi * \psi)(qA) \subseteq I\otimes \mathcal K$ (see \cite[Lemma 5.14]{Jensen-Thomsen91}); we write
\begin{equation}
\label{eq:KKcClass}
[\phi,\psi]_{KK_c(A,I)} \coloneqq \big[(\phi * \psi)|_{qA}^{I\otimes \mathcal K}\big]_{KK_c(A,I)}.
\end{equation}

\begin{proposition}[cf.\ {\cite[Theorem 5.2.4]{Jensen-Thomsen91}}]
\label{prop:KKequalsKKc}
Let $A$ and $I$ be $C^*$-algebras with $A$ separable and $I$ $\sigma$-unital.
Then 
$[\phi,\psi]_{KK(A,I)} \mapsto [\phi,\psi]_{KK_c(A,I)}$ defines an isomorphism 
$KK(A,I)\cong KK_c(A,I)$
that is natural in both variables.
\end{proposition}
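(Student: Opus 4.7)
The plan is to proceed in four stages: verify well-definedness and the group homomorphism property of the stated assignment, then invoke \cite[Theorem 5.2.4]{Jensen-Thomsen91} for bijectivity, and finally check naturality in each variable separately.

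For well-definedness, a homotopy of Cuntz pairs encoded as a Cuntz pair
\begin{equation}
(\Phi, \Psi)\colon A \rightrightarrows C_\sigma([0,1], \mathcal M(I \otimes \mathcal K)) \rhd C([0,1], I\otimes\mathcal K)
\end{equation}
yields a $^*$-homomorphism $(\Phi * \Psi)|_{qA}\colon qA \to C([0,1], I\otimes\mathcal K)$ interpolating the endpoint Cuntz pairs, so the class in $KK_c(A,I)$ depends only on the homotopy class of $(\phi,\psi)$. The group homomorphism property will follow from compatibility of the direct sum operations: the same Cuntz isometries in $\mathcal M(I \otimes \mathcal K)$ used to implement $\oplus$ in the Cuntz--Thomsen picture implement the direct sum in $KK_c(A,I)$ via the identification $\mathcal K \cong M_2(\mathcal K)$, using that $qA$ is an ideal in $A * A$.

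Bijectivity is \cite[Theorem 5.2.4]{Jensen-Thomsen91}. An inverse is provided on generators by sending $\rho\colon qA \to I\otimes\mathcal K$ to the class of the Cuntz pair $(\mu \circ \rho \circ q\iota_A^{(1)}, \mu \circ \rho \circ q\iota_A^{(2)})$, where $\iota_A^{(1)}, \iota_A^{(2)}\colon A \to A*A$ are the canonical inclusions and $\mu\colon I\otimes \mathcal K \to \mathcal M(I\otimes \mathcal K)$ is the multiplier embedding; the work is to verify that this is indeed an inverse.

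Naturality in the first variable is essentially tautological. For a $^*$-homomorphism $\theta\colon A_1 \to A_2$ between separable $C^*$-algebras, the universal property of $q(\,\cdot\,)$ provides $q\theta\colon qA_1 \to qA_2$ satisfying $(\phi \circ \theta) * (\psi \circ \theta) = (\phi * \psi) \circ q\theta$ as maps $qA_1 \to \mathcal M(I\otimes\mathcal K)$. Both sides land in $I \otimes \mathcal K$ on $qA_1$, and hence realize the required commutativity between $KK(\theta, I)$ and $KK_c(\theta, I)$.

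The main obstacle is naturality in the second variable, because the definition of $KK(A, \theta)$ for a $^*$-homomorphism $\theta\colon I_1 \to I_2$ in the Cuntz--Thomsen picture is subtle (see \cite[Section 4.1]{Jensen-Thomsen91}): after replacing $\theta$ by a homotopic quasi-unital $^*$-homomorphism, $\theta \otimes \id_{\mathcal K}$ extends to a strictly continuous $^*$-homomorphism $\overline{\theta}\colon \mathcal M(I_1\otimes\mathcal K) \to \mathcal M(I_2\otimes\mathcal K)$, and $KK(A, \theta)([\phi, \psi]_{KK(A,I_1)}) = [\overline{\theta}\circ\phi, \overline{\theta}\circ\psi]_{KK(A,I_2)}$. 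The naturality reduces to the identity
\begin{equation}
\big((\overline{\theta}\circ\phi)*(\overline{\theta}\circ\psi)\big)\big|_{qA} = (\theta\otimes\id_{\mathcal K})\circ (\phi*\psi)|_{qA},
\end{equation}
which holds because $\overline{\theta}$ restricts to $\theta\otimes\id_{\mathcal K}$ on $I_1\otimes\mathcal K$ and $(\phi*\psi)$ sends $qA$ into $I_1\otimes\mathcal K$. Combining this with the definition of $KK_c(A,\theta)$ from \eqref{eq:KKcTheta} yields the required compatibility. Homotopy invariance in the choice of quasi-unital representative is automatic, so naturality is independent of all auxiliary choices.
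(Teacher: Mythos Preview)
The paper does not give its own proof of this proposition: it simply cites \cite[Theorem~5.2.4]{Jensen-Thomsen91} for the isomorphism and remarks parenthetically that ``the proof reveals that the isomorphism is natural.'' Your sketch is exactly in this spirit --- defer bijectivity to Jensen--Thomsen and then check naturality by unwinding the definitions on both sides --- so there is nothing to compare; you are supplying the details the paper chose to omit.

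One point in your sketch is not correct as written. Your proposed inverse, sending $\rho\colon qA \to I\otimes\mathcal K$ to the Cuntz pair $(\mu \circ \rho \circ q\iota_A^{(1)}, \mu \circ \rho \circ q\iota_A^{(2)})$, does not parse: the maps $\iota_A^{(j)}\colon A \to A*A$ do not land in $qA$, and applying the functor $q$ gives $q\iota_A^{(j)}\colon qA \to q(A*A)$, which cannot be postcomposed with $\rho$. The actual inverse in \cite{Jensen-Thomsen91} is more indirect (it passes through the $KK$-equivalence between $A$ and $qA$), and there is no elementary formula of the shape you wrote. Since you correctly attribute bijectivity to the reference anyway, this does not break your argument, but you should either drop the attempted description or replace it with a correct one. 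Your naturality checks are fine; the second-variable argument via quasi-unital replacement is exactly the mechanism Jensen--Thomsen use, and it is also how the paper later handles Proposition~\ref{appendix.corestrict}.
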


The clean description of $KK_c(A,\theta)$ above facilitates the following computation.\footnote{This calculation was performed in Kasparov's original Fredholm module picture, in the case that $I_2=E_1=E_2$ separable, in \cite[Proposition 2.1]{Schafhauser18}. Comparing the computations demonstrates the power of Cuntz's picture.}

\begin{proposition}\label{appendix.corestrict}
Let $A$ be a separable $C^*$-algebra and let $\theta\colon I_1\to I_2$ be a $^*$-homomorphism between $C^*$-algebras.  Suppose that $E_i$ is a $C^*$-algebra containing $I_i$ as an ideal for $i=1,2$ and that $\theta$ extends to a $^*$-homomorphism $\bar\theta\colon E_1 \to E_2$. 
Given a Cuntz pair $(\phi,\psi)\colon A \rightrightarrows E_1 \rhd I_1$, we have
\begin{equation} 
\label{eq:appendix.corestrict}
KK_c(A,\theta)([\phi,\psi]_{KK_c(A,I_1)}) = [\bar\theta \circ \phi,\bar\theta\circ \psi]_{KK_c(A,I_2)}. \end{equation}
In particular, if $I_1$ and $I_2$ are $\sigma$-unital, then
\begin{equation}
KK(A,\theta)([\phi,\psi]_{KK(A,I_1)}) = [\bar\theta \circ \phi,\bar\theta\circ \psi]_{KK(A,I_2)}. \end{equation}
\end{proposition}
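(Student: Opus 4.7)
The plan is to unwind the definitions in Cuntz's picture, where functoriality in the second variable is given by the transparent formula \eqref{eq:KKcTheta}; then \eqref{eq:appendix.corestrict} reduces to the universal property of the full free product, and the final ``In particular'' clause follows by naturality as recorded in Proposition~\ref{prop:KKequalsKKc}.

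First I would fix a rank-one projection $e \in \mathcal K$ and form the Cuntz pair $(\phi \otimes e, \psi \otimes e) \colon A \rightrightarrows E_1 \otimes \mathcal K \rhd I_1 \otimes \mathcal K$, which is how the class $[\phi,\psi]_{KK_c(A,I_1)}$ of Footnote~\ref{ftn:cuntz-pair-convention} and \eqref{eq:KKcClass} is to be interpreted in Cuntz's picture. Explicitly, this class is represented by the $^*$-homomorphism
\begin{equation}
\rho_1 \coloneqq \big((\phi \otimes e) * (\psi \otimes e)\big)\big|_{qA}^{I_1 \otimes \mathcal K} \colon qA \longrightarrow I_1 \otimes \mathcal K,
\end{equation}
so by \eqref{eq:KKcTheta} the left-hand side of \eqref{eq:appendix.corestrict} is the class of $(\theta \otimes \mathrm{id}_{\mathcal K}) \circ \rho_1$ in $KK_c(A,I_2)$.

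The key observation is that $\bar\theta \otimes \mathrm{id}_{\mathcal K} \colon E_1 \otimes \mathcal K \to E_2 \otimes \mathcal K$ extends $\theta \otimes \mathrm{id}_{\mathcal K}$, so $(\theta \otimes \mathrm{id}_{\mathcal K}) \circ \rho_1$ is the corestriction to $I_2 \otimes \mathcal K$ of $(\bar\theta \otimes \mathrm{id}_{\mathcal K}) \circ \big((\phi \otimes e) * (\psi \otimes e)\big)$. By the universal property of the full free product, this last map equals
\begin{equation}
\big((\bar\theta \circ \phi) \otimes e\big) * \big((\bar\theta \circ \psi) \otimes e\big),
\end{equation}
whose restriction to $qA$ is, by construction, the representative of $[\bar\theta \circ \phi, \bar\theta \circ \psi]_{KK_c(A,I_2)}$. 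This yields \eqref{eq:appendix.corestrict}.

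For the ``In particular'' statement, when $I_1$ and $I_2$ are $\sigma$-unital, Proposition~\ref{prop:KKequalsKKc} supplies natural isomorphisms $KK(A,I_j) \cong KK_c(A,I_j)$ identifying Cuntz-pair classes with Cuntz-pair classes; naturality in the second variable then transports \eqref{eq:appendix.corestrict} to the corresponding statement for $KK$. The only real subtlety is keeping track of the implicit tensor-by-$\mathcal K$ conventions of Footnote~\ref{ftn:cuntz-pair-convention} and \eqref{eq:KKcClass}, but introducing the projection $e$ at the outset reduces everything to a direct application of the universal property of $A * A$, so the argument is essentially diagrammatic once the notation is set up.
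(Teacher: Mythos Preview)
Your proposal is correct and follows essentially the same route as the paper: both arguments unwind the definitions \eqref{eq:KKcClass} and \eqref{eq:KKcTheta} and reduce \eqref{eq:appendix.corestrict} to the identity $(\bar\theta\circ\phi)*(\bar\theta\circ\psi)=\bar\theta\circ(\phi*\psi)$ on $A*A$ (hence on $qA$), then invoke Proposition~\ref{prop:KKequalsKKc} for the $\sigma$-unital case. The only cosmetic difference is that the paper first replaces $I_1,I_2$ by their stabilizations so as to suppress the tensor with $\mathcal K$, whereas you carry the rank-one projection $e$ explicitly throughout; both are equally valid ways to handle the convention of Footnote~\ref{ftn:cuntz-pair-convention}.
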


\begin{proof}
To shorten notation, we may assume (by stabilizing everything) that $I_1,I_2$ are stable. One may verify (using \cite[Lemma 5.1.2]{Jensen-Thomsen91}) that $((\bar\theta\circ\phi) * (\bar\theta\circ\psi))|_{qA} = \theta\circ(\phi*\psi)|_{qA}$.
We use this for the third equality below:
\begin{equation}\begin{array}{rcl}
&&\hspace*{-7em} KK_c(A,\theta)\big([\phi,\psi]_{KK_c(A,I_1)}\big) \\
&\stackrel{\eqref{eq:KKcClass}}=& KK_c(A,\theta)\big(\big[(\phi * \psi)|_{qA}^{I_1}\big]_{KK_c(A,I_1)}\big) \\
&\stackrel{\eqref{eq:KKcTheta}}=& \big[\theta \circ \big((\phi * \psi)|_{qA}^{I_1}\big)\big]_{KK_c(A,I_2)} \\
&=& \big[\big((\bar\theta\circ \phi)*(\bar\theta\circ \psi)\big)|_{qA}^{I_2}\big]_{KK_c(A,I_2)} \\
&=& [\bar\theta\circ\phi,\bar\theta\circ\psi]_{KK_c(A,I_2)}.
\end{array} \end{equation}

The last line of the proposition follows from Proposition~\ref{prop:KKequalsKKc}
\end{proof}

\subsection{The inductive limit picture of $KK$-theory}

Recall that  in Definition~\ref{KK-inductivelimit} for $C^*$-algebras $A$ and $I$ with $A$ separable, we defined
\begin{equation}\label{KK-inductivelimit-equation-appendix} KK(A,I) \coloneqq \varinjlim\limits_{I_0\text{ sep.}} KK(A,I_0),\end{equation}
with the limit taken over all separable $C^*$-subalgebras of $I$, ordered by inclusion.  We now justify the interpretation of the class of an $(A,I)$-Cuntz pair in this picture.

\begin{proposition}
\label{prop:KKwelldefined}
Let $A$ and $I$ be $C^*$-algebras with $A$ separable.
Given a Cuntz pair $(\phi,\psi)\colon A \rightrightarrows E \rhd I$, the class $[\phi,\psi]_{KK(A,I)}$ in Definition~\ref{KK-inductivelimit} is well-defined.
That is,
\begin{enumerate}
\item
\label{KKwelldefined.1}
 there exist separable $C^*$-subalgebras $E_0\subseteq E$ and $I_0\subseteq I$ such that $I_0\lhd E_0$, $\phi(A)\cup\psi(A) \subseteq E_0$, and $(\phi-\psi)(A)\subseteq I_0$, and
\item
\label{KKwelldefined.2}
the class of $[\phi|^{E_0},\psi|^{E_0}]_{KK(A,I_0)}$ in $\varinjlim\limits_{I_0\text{ \rm sep.}} KK(A,I_0)$ does not depend on the choice of $I_0$ and $E_0$ in \ref{KKwelldefined.1}.
\end{enumerate}
\end{proposition}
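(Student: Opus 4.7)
Proof plan for Proposition~\ref{prop:KKwelldefined}.

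For part \ref{KKwelldefined.1}, the plan is to build $E_0$ and $I_0$ by an increasing-union construction. Start with any separable $C^*$-subalgebra $E_0^{(0)} \subseteq E$ containing $\phi(A) \cup \psi(A)$ (which exists as $A$ is separable) and any separable $C^*$-subalgebra $I_0^{(0)} \subseteq I$ containing $(\phi-\psi)(A)$. Inductively, choose separable $E_0^{(n)} \subseteq E$ and $I_0^{(n)} \subseteq I$ so that
\begin{equation}
E_0^{(n-1)} \cup I_0^{(n-1)} \subseteq E_0^{(n)} \quad\text{and}\quad E_0^{(n-1)} \cdot I_0^{(n-1)} \cup I_0^{(n-1)} \cdot E_0^{(n-1)} \subseteq I_0^{(n)};
\end{equation}
the right-hand containments can be arranged inside $I$ because $I$ is an ideal of $E$. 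Setting $E_0 \coloneqq \overline{\bigcup_n E_0^{(n)}}$ and $I_0 \coloneqq \overline{\bigcup_n I_0^{(n)}}$ then gives separable $C^*$-algebras with $I_0 \lhd E_0$, $\phi(A) \cup \psi(A) \subseteq E_0$, and $(\phi-\psi)(A) \subseteq I_0$, as required.

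For part \ref{KKwelldefined.2}, suppose $(E_0, I_0)$ and $(E_0', I_0')$ are two choices as in \ref{KKwelldefined.1}. The plan is to produce a third choice $(E_0'', I_0'')$ dominating both, so that the inductive-limit connecting maps identify the three resulting $KK$-classes. Apply the same iterative construction as in part \ref{KKwelldefined.1}, starting from the (already separable) subalgebras $C^*(E_0 \cup E_0') \subseteq E$ and $C^*(I_0 \cup I_0') \subseteq I$, to obtain separable $C^*$-subalgebras $I_0'' \lhd E_0''$ of $E$ with $E_0 \cup E_0' \subseteq E_0''$ and $I_0 \cup I_0' \subseteq I_0''$.

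Now Proposition~\ref{appendix.corestrict} (applied to $\theta \coloneqq \iota_{I_0 \subseteq I_0''}$, extended by $\iota_{E_0 \subseteq E_0''}$) yields
\begin{equation}
KK(A, \iota_{I_0 \subseteq I_0''})([\phi|^{E_0}, \psi|^{E_0}]_{KK(A, I_0)}) = [\phi|^{E_0''}, \psi|^{E_0''}]_{KK(A, I_0'')},
\end{equation}
and similarly with $(E_0', I_0')$ in place of $(E_0, I_0)$. Since the connecting maps in the inductive system \eqref{KK-inductivelimit-equation-appendix} are by definition $KK(A, \iota_{J_0 \subseteq J_1})$ for separable $J_0 \subseteq J_1 \subseteq I$, these identities show that both $[\phi|^{E_0}, \psi|^{E_0}]_{KK(A, I_0)}$ and $[\phi|^{E_0'}, \psi|^{E_0'}]_{KK(A, I_0')}$ have the same image, namely $[\phi|^{E_0''}, \psi|^{E_0''}]_{KK(A, I_0'')}$, in $\varinjlim_{J \text{ sep.}} KK(A, J)$.

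The only mild subtlety is in part \ref{KKwelldefined.1}, making sure that when we enlarge to force $I_0$ to be an ideal of $E_0$, we can keep $I_0 \subseteq I$; this is automatic because $I$ is an ideal of $E$, so $E \cdot I \cup I \cdot E \subseteq I$. Everything else is a routine application of naturality of $KK$ via Proposition~\ref{appendix.corestrict} (which does not require $\sigma$-unitality for its statement in the Cuntz picture, and the separable case suffices here).
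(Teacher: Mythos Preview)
Your proof is correct and part \ref{KKwelldefined.2} is essentially identical to the paper's argument. For part \ref{KKwelldefined.1}, however, the paper takes a shorter route: it simply sets $E_0 \coloneqq C^*(\phi(A)\cup\psi(A))$ and lets $I_0$ be the ideal of $E_0$ generated by $(\phi-\psi)(A)$. The point you flag as a ``mild subtlety'' --- that $I_0$ lands in $I$ --- is handled automatically, since $I \lhd E$ forces any $E_0$-ideal generated by a subset of $I$ to remain in $I$. This avoids your iterative construction entirely (and in particular sidesteps the small imprecision in your inductive step, where you should also require $I_0^{(n-1)} \subseteq I_0^{(n)}$ to ensure the unions are increasing). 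The same shortcut works in part \ref{KKwelldefined.2}: take $E_2 \coloneqq C^*(E_0 \cup E_0')$ and $I_2$ the ideal of $E_2$ generated by $I_0 \cup I_0'$.
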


\begin{proof}
\ref{KKwelldefined.1}:
Let $E_0$ denote the $C^*$-algebra generated by $\phi(A) \cup \psi(A)$ and let $I_0$ be the ideal generated by $(\phi - \psi)(A)$ in $E_0$.

\ref{KKwelldefined.2}:
Suppose that $I_1\subseteq I$ and $E_1\subseteq E$ are separable $C^*$-subalgebras that also satisfy the conditions in \ref{KKwelldefined.1}.  Let $E_2$ be the $C^*$-algebra generated by $E_0 \cup E_1$ and let $I_2$ be the ideal generated by $I_0 \cup I_1$ in $E_2$.  These are separable $C^*$-subalgebras of $E$ and $I$, respectively.

Using Proposition~\ref{appendix.corestrict} twice (with $\theta$ equal to the inclusions of $I_0$ and $I_1$ into $I_2$ in turn), we have
\begin{equation}
\begin{split} 
KK(A,\iota_{I_0\subseteq I_2})\big([\phi|^{E_0},\psi|^{E_0}]_{KK(A,I_0)}\big)
&= [\phi|^{E_2},\psi|^{E_2}]_{KK(A,I_2)}\\
&= KK(A,\iota_{I_1\subseteq I_2})\big([\phi|^{E_1},\psi|^{E_1}]_{KK(A,I_1)}\big).\end{split}
\end{equation}
Therefore, $[\phi|^{E_0},\psi|^{E_0}]_{KK(A,I_0)}$ and $[\phi|^{E_1},\psi|^{E_1}]_{KK(A,I_1)}$ represent the same class in $\varinjlim\limits_{I_0\text{ sep.}} KK(A,I_0)$.
\end{proof}

\begin{proposition}
\label{prop:KKbifunc}
$KK(\,\cdot\,,\,\cdot\,)$ (as in Definition~\ref{KK-inductivelimit}) is a bifunctor.
\end{proposition}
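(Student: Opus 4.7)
The plan is to transfer bifunctoriality from the separable case to the general case via a colimit argument, with the main technical input being Proposition~\ref{appendix.corestrict}. There are three things to check: functoriality in each variable separately, and compatibility of the two functorialities.

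For the first variable, given a $^*$-homomorphism $\alpha\colon A_1\to A_2$ between separable $C^*$-algebras, the standard bifunctoriality of $KK$ in the separable setting gives contravariant maps $KK(\alpha,I_0)\colon KK(A_2,I_0)\to KK(A_1,I_0)$ for each separable $I_0\subseteq I$, and these are natural with respect to inclusions $I_0\subseteq I_1$. They therefore assemble into a map on the colimits, giving $KK(\alpha,I)\colon KK(A_2,I)\to KK(A_1,I)$, with preservation of identities and composition inherited from the separable case.

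For the second variable, given $\theta\colon I\to J$, the plan is to define $KK(A,\theta)$ so that on a class $[\phi,\psi]_{KK(A,I)}$ of a Cuntz pair $(\phi,\psi)\colon A\rightrightarrows E\rhd I$ we have $KK(A,\theta)([\phi,\psi]_{KK(A,I)})=[\bar{\theta}\circ\phi,\bar{\theta}\circ\psi]_{KK(A,J)}$ for any extension $\bar{\theta}$ of $\theta$ to an ambient $C^*$-algebra. Concretely, for each separable $I_0\subseteq I$ satisfying the conditions of Proposition~\ref{prop:KKwelldefined}\ref{KKwelldefined.1}, $\theta(I_0)$ is a separable subalgebra of $J$, and I would take the composition
\begin{equation}
KK(A,I_0)\xrightarrow{KK(A,\theta|_{I_0}^{\theta(I_0)})}KK(A,\theta(I_0))\longrightarrow KK(A,J),
\end{equation}
where the first arrow comes from separable-case functoriality and the second is the structure map of the colimit defining $KK(A,J)$. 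The key step is to show that this descends to a well-defined group homomorphism $KK(A,\theta)\colon KK(A,I)\to KK(A,J)$; this reduces to showing compatibility with the inclusions $I_0\subseteq I_1$, which is exactly the content of Proposition~\ref{appendix.corestrict} applied to $\bar{\theta}$ as the extension of $\theta|_{I_0}^{\theta(I_0)}$ to $\theta|_{I_1}^{\theta(I_1)}$. Functoriality of $KK(A,\,\cdot\,)$ in $\theta$ then reduces to the separable-case functoriality together with the fact that $(\theta_2\circ\theta_1)(I_0)\subseteq\theta_2(\theta_1(I_0))$.

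Finally, bifunctoriality --- commutativity of $KK(\alpha,J)\circ KK(A_2,\theta)=KK(A_1,\theta)\circ KK(\alpha,I)$ --- is a diagram chase: every element of $KK(A_2,I)$ comes from some $KK(A_2,I_0)$ with $I_0\subseteq I$ separable, and on this image both compositions agree by bifunctoriality in the separable setting applied to $\theta|_{I_0}^{\theta(I_0)}$ and $\alpha$. The main obstacle throughout is the well-definedness of $KK(A,\theta)$ on the colimit, which is what Proposition~\ref{appendix.corestrict} is designed to handle; once that is in place, everything else is a routine colimit argument.
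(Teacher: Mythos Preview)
Your proposal is correct and follows essentially the same colimit strategy as the paper. Two minor points: the reference to Proposition~\ref{prop:KKwelldefined}\ref{KKwelldefined.1} is misplaced (you want the map defined for \emph{all} separable $I_0\subseteq I$, not just those arising from a particular Cuntz pair), and the compatibility with inclusions $I_0\subseteq I_1$ does not require Proposition~\ref{appendix.corestrict} --- it follows immediately from functoriality of $KK(A,\,\cdot\,)$ in the separable case applied to the commuting square $\theta|_{I_1}^{\theta(I_1)}\circ\iota_{I_0\subseteq I_1}=\iota_{\theta(I_0)\subseteq\theta(I_1)}\circ\theta|_{I_0}^{\theta(I_0)}$. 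The paper makes exactly this observation, and otherwise your argument matches.
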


\begin{proof}
Functoriality in the first variable is entirely straightforward.

For the second variable, given $C^*$-algebras $A$, $I$, and $J$ such that $A$ is separable, and a $^*$-homomorphism $\theta\colon I \to J$, we must define a group homomorphism $KK(A,\theta)\colon KK(A,I) \to KK(A,J)$.
First, fix a separable $C^*$-subalgebra $I_0\subseteq I$; then there exists a separable $C^*$-subalgebra $J_0\subseteq J$ such that $\theta(I_0)\subseteq J_0$.
Moreover, by functoriality of $KK(A,\,\cdot\,)$ in the separable case, we see that the map 
\begin{equation}
    KK(A,I_0) \to \varinjlim\limits_{J_0\text{ sep.}} KK(A,J_0)
\end{equation}
given by $KK(A,\theta|_{I_0}^{J_0})$ does not depend on the choice of $J_0$.
We may call this map $KK(A,\theta|_{I_0})\colon KK(A,I_0)\to KK(A,J)$.

The system of maps $(KK(A,\theta|_{I_0}))_{I_0\subseteq I\text{ sep.}}$ is compatible with the connecting maps defining $KK(A,I)$ (by functoriality for separable codomains) and therefore they induce a map $KK(A,\theta)\colon KK(A,I) \to KK(A,J)$.
It is a straightforward check that this definition makes $KK(A,\,\cdot\,)$ a covariant functor, using that it is a covariant functor in the separable case.
It is likewise straightforward to check that the functoriality in the first and second variable commute, so that $KK(\,\cdot\, ,\,\cdot\,)$ is a bifunctor.
\end{proof}

We end this subsection by noting that Cuntz's picture of $KK$-theory agrees with the inductive limit definition in the non-separable case.
(The left-hand side of \eqref{eq:KKequalsKKc-nonsep} below is, by definition, the inductive limit over separable subalgebras of $I$.)
This justifies that the inductive limit picture agrees with the usual picture in the case of $\sigma$-unital but non-separable algebras $I$.

\begin{proposition}
\label{prop:KKequalsKKc-nonsep}
Let $A$ and $I$ be $C^*$-algebras with $A$ separable.
Then
\begin{equation}
\label{eq:KKequalsKKc-nonsep}
KK(A,I) \cong KK_c(A,I),
\end{equation}
and the isomorphism is natural in both variables.

In particular, if $I$ is $\sigma$-unital, then the definitions of $KK(A,I)$ given in Definitions~\ref{defn:KK} and \ref{KK-inductivelimit} agree.
\end{proposition}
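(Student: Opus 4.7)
The plan is to first show that $KK_c(\,\cdot\,,\,\cdot\,)$ itself satisfies the analogous inductive-limit description, and then combine this with the $\sigma$-unital isomorphism of Proposition~\ref{prop:KKequalsKKc}. Specifically, I would prove that the natural map
\begin{equation}
    \Psi \colon \varinjlim_{I_0\text{ sep.}} KK_c(A,I_0) \longrightarrow KK_c(A,I),
\end{equation}
induced by $KK_c(A, \iota_{I_0 \subseteq I})$, is bijective. The essential leverage is that $qA$ is separable (since $A$ is), so every $^*$-homomorphism $\rho\colon qA \to I\otimes\mathcal K$ and every homotopy between such maps has separable image.

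For surjectivity of $\Psi$, given $[\rho]_{KK_c(A,I)}$ represented by $\rho\colon qA \to I\otimes\mathcal K$, pick a separable $I_0 \subseteq I$ with $\rho(qA) \subseteq I_0\otimes\mathcal K$; the corestriction then represents a class in $KK_c(A,I_0)$ mapping to $[\rho]_{KK_c(A,I)}$. For injectivity, suppose $\rho_1,\rho_2\colon qA \to I_0\otimes\mathcal K$ become homotopic after composition with $\iota_{I_0\subseteq I}\otimes\id_{\mathcal K}$. A homotopy $H\colon qA \to C([0,1],I\otimes\mathcal K)$ has separable image and therefore factors through $C([0,1],I_1\otimes\mathcal K)$ for some separable $C^*$-subalgebra $I_1\subseteq I$; after enlarging $I_1$ to contain $I_0$, this witnesses that $\rho_1$ and $\rho_2$ already agree in $KK_c(A,I_1)$, and so in the inductive limit.

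With $\Psi$ bijective, Proposition~\ref{prop:KKequalsKKc} applies to each separable $I_0 \subseteq I$ (separable implies $\sigma$-unital) and is natural in the second variable, so the isomorphisms $KK(A,I_0) \cong KK_c(A,I_0)$ assemble into an isomorphism of inductive systems; combining this with the definition $KK(A,I) = \varinjlim_{I_0} KK(A,I_0)$ from Definition~\ref{KK-inductivelimit} yields
\begin{equation}
    KK(A,I) \;\cong\; \varinjlim_{I_0\text{ sep.}} KK_c(A,I_0) \;\stackrel{\Psi}{\cong}\; KK_c(A,I).
\end{equation}
Tracing through the definitions and Proposition~\ref{appendix.corestrict} shows that a Cuntz pair $(\phi,\psi)\colon A \rightrightarrows E \rhd I$ is sent to $[\phi,\psi]_{KK_c(A,I)}$, since the correspondence already holds on any separable subextension by Proposition~\ref{prop:KKequalsKKc}. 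Naturality in both variables is immediate on the separable level and lifts to the inductive limits because $^*$-homomorphisms send separable subalgebras to separable subalgebras, so cofinality in the inductive systems gives a functorial assignment. The concluding \emph{in particular} is then a transitivity statement: for $\sigma$-unital $I$, Proposition~\ref{prop:KKequalsKKc} directly gives an isomorphism between the direct definition of $KK(A,I)$ (Definition~\ref{defn:KK}) and $KK_c(A,I)$, while the construction above gives the same isomorphism starting from the inductive-limit definition (Definition~\ref{KK-inductivelimit}); both send Cuntz pair classes to their $KK_c$-representatives, so the two definitions of $KK(A,I)$ are canonically isomorphic.

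The main obstacle is largely bookkeeping rather than conceptual: one needs to verify that the connecting maps $KK_c(A,\iota_{I_0\subseteq I_1})$ make $\{KK_c(A,I_0)\}_{I_0}$ into a genuine inductive system of abelian groups (which amounts to functoriality of $KK_c(A,\,\cdot\,)$, already encoded in Definition~\ref{defn:KKc}), and that the isomorphisms $KK(A,I_0) \cong KK_c(A,I_0)$ intertwine the connecting maps, which is exactly naturality in Proposition~\ref{prop:KKequalsKKc}. The only subtlety is the simultaneous separability argument for homotopies in the injectivity step, but this is standard once one observes that $qA$ is separable.
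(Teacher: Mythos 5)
Your argument follows the same strategy as the paper's: establish that $KK_c(A,\,\cdot\,)$ satisfies the inductive-limit-over-separable-subalgebras property (surjectivity via separable image of $\rho$, injectivity via separabilizing a homotopy), and then transport Proposition~\ref{prop:KKequalsKKc} across the limit. The only cosmetic difference is in the injectivity step, where you invoke separability of the image of $H\colon qA\to C([0,1],I\otimes\mathcal K)$ directly, while the paper reindexes over $t\in[0,1]\cap\mathbb Q$ and uses density; both are correct and amount to the same separability bookkeeping.
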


\begin{proof}
Using Proposition~\ref{prop:KKequalsKKc}, there is a natural identification 
\begin{equation}
    KK(A,I)\cong \varinjlim\limits_{I_0\text{ sep.}} KK_c(A,I_0).
\end{equation}
We will therefore prove the existence of a natural isomorphism between the right-hand side and $KK_c(A,I)$.

Given separable $C^*$-subalgebras $I_0\subseteq I_1 \subseteq I$, we have 
\begin{equation}
    KK_c(A,\iota_{I_1\subseteq I})\circ KK_c(A,\iota_{I_0\subseteq I_1}) = KK_c(A,\iota_{I_0\subseteq I}),
\end{equation}
since $KK_c(A,\,\cdot\,)$ is designed to be functorial without restriction on the codomain. Therefore, we may define
\begin{equation}
\Theta\coloneqq \varinjlim\limits_{I_0\text{ sep.}} KK_c(A,\iota_{I_0\subseteq I})\colon \varinjlim\limits_{I_0\text{ sep.}} KK_c(A,I_0)\to KK_c(A,I), \end{equation}
a homomorphism that is natural in both variables.

For surjectivity, consider $\rho\colon qA \to I\otimes\mathcal K$. Let $I_1$ be a separable $C^*$-subalgebra of $I$ such that $\rho(qA)\subseteq I_1\otimes\mathcal K$.
Then $\rho$ corestricts to $I_1\otimes\mathcal K$, thus giving an element $\kappa\coloneqq \big[\rho|^{I_1\otimes\mathcal K}\big]_{KK_c(A,I_1)} \in \varinjlim\limits_{I_0\text{ sep.}} KK_c(A,I_0)$.
We have
\begin{equation}
\Theta(\kappa) = KK_c(A,\iota_{I_1\subseteq I})\big(\big[\rho|^{I_1\otimes\mathcal K}\big]_{KK_c(A,I_1)}\big) = [\rho]_{KK_c(A,I)}
\end{equation}
by \eqref{eq:KKcTheta}.

For injectivity, suppose that $I_1$ is a separable $C^*$-subalgebra of $I$ and $\rho\colon qA \to I_1\otimes\mathcal K$ is a $^*$-homomorphism such that
\begin{equation}
\Theta([\rho]_{KK_c(A,I_1)}) = 0.
\end{equation}
That is to say, $\iota_{I_1\subseteq I}\circ\rho$ is homotopic to the zero $^*$-homomorphism within the space of $^*$-homomorphisms $qA \to I\otimes\mathcal K$.
Let $(\rho_t\colon qA \to I\otimes\mathcal K)_{t\in [0,1]}$ be such a homotopy, so that $\rho_0 = \iota_{I_1\subseteq I}\circ\rho$ and $\rho_1 = 0$.
Then let $I_2\subseteq I$ be a separable $C^*$-subalgebra containing $I_1$ and $\rho_t(A)\subseteq 
 I_2\otimes\mathcal K$ for $t\in [0,1]\cap\mathbb Q$. By density, it follows that $(\rho_t)_{t\in [0,1]}$ corestricts to a homotopy of $^*$-homomorphisms $qA \to I_2\otimes\mathcal K$, from $\iota_{I_1\subseteq I_2}\circ\rho$ to $0$.
Thus
\begin{equation}
KK_c(A,\iota_{I_1\subseteq I_2})\big([\rho]_{KK_c(A,I_1)}\big)
= [\iota_{I_1\subseteq I_2}\circ\rho]_{KK_c(A,I_2)} \\
= 0.
\end{equation}
This shows that $[\rho]_{KK_c(A,I_1)}$ represents $0$ in $\varinjlim\limits_{I_0\text{ sep.}} KK(A,I_0)$, as required.

The last statement follows using Proposition~\ref{prop:KKequalsKKc}.
\end{proof}

\begin{remark}\label{rmk:GammaAgree}
    If $A$ and $I$ are separable $C^*$-algebras and $J$ is a $\sigma$-unital and non-separable $C^*$-algebra, then the Kasparov product $KK(A,I)\times KK(I,J) \rightarrow KK(A, J)$ agrees with taking the limit of Kasparov products over separable subalgebras $J_0$ of $J$.
    To see this, let $\kappa \in KK(A,I)$ and $\kappa' \in KK(I,J)$.
    According to Definition~\ref{KK-inductivelimit}, we may find a separable subalgebra $J_0$ of $J$ such that $\kappa'$ comes from $\kappa_0' \in KK(A,J_0)$ --- that is, $\kappa' = KK(A,\iota_{J_0\subseteq J})(\kappa_0') = [\iota_{J_0\subseteq J}]_{KK(J_0,J)} \circ \kappa_0'.$
    Then
    \begin{equation}
    \begin{split}
        \kappa'\circ \kappa &= ([\iota_{J_0\subseteq J}]_{KK(J_0,J)}\circ \kappa'_0) \circ\kappa \\
        &= [\iota_{J_0\subseteq J}]_{KK(J_0,J)}\circ (\kappa'_0 \circ\kappa) \\
        &= KK(A,\iota_{J_0\subseteq J})(\kappa'_0\circ\kappa).
    \end{split}
    \end{equation}
    The above argument only uses that the Kasparov product is associative and that $KK(A,\iota_{J_0\subseteq J})(\,\cdot\,) = [\iota_{J_0\subseteq J}]_{KK(J_0,J)}\circ (\,\cdot\,)$, so it holds for the Kasparov product as defined in any picture. 
\end{remark}

\subsection{$KL$-theory outside the \texorpdfstring{$\sigma$}{sigma}-unital setting}

Next we turn to $KL$, aiming to prove Proposition~\ref{KL-inductivelimit} that $KL(A,I)$ can also be described as a limit over separable $C^*$-subalgebras.

\begin{lemma}\label{lem:ZKK-limit}
    Let $A$ and $I$ be $C^*$-algebras with $A$ separable.
    Then
    \begin{equation} Z_{KK(A,I)} = \varinjlim_{I_0 \text{ \rm sep.}} Z_{KK(A,I_0)}, \end{equation}
    inside $\varinjlim\limits_{I_0\text{ \rm sep.}} KK(A,I_0)$.
\end{lemma}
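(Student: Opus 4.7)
The containment $\supseteq$ will be the easy direction, coming straight from naturality. Given a separable $I_0 \subseteq I$ and $\kappa_0 \in KK(A, C(\bar{\mathbb N}, I_0))$ with $KK(A, \mathrm{ev}_n)(\kappa_0) = 0$ for every $n \in \mathbb N$, I would push it forward via $KK(A, C(\bar{\mathbb N}, \iota_{I_0 \subseteq I}))$ to obtain an element of $KK(A, C(\bar{\mathbb N}, I))$ witnessing that the image of $KK(A, \mathrm{ev}_\infty)(\kappa_0)$ lies in $Z_{KK(A,I)}$; here I would invoke the fact that $\mathrm{ev}_n \circ C(\bar{\mathbb N}, \iota_{I_0 \subseteq I}) = \iota_{I_0 \subseteq I} \circ \mathrm{ev}_n$, together with functoriality (Proposition~\ref{prop:KKbifunc}) in the second variable of $KK$.

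For the containment $\subseteq$, I would take $z \in Z_{KK(A,I)}$ and a witness $\kappa \in KK(A, C(\bar{\mathbb N}, I))$ with $KK(A, \mathrm{ev}_n)(\kappa) = 0$ for all $n \in \mathbb N$. By the inductive limit description of $KK(A, C(\bar{\mathbb N}, I))$ (Definition~\ref{KK-inductivelimit}), $\kappa$ factors through $KK(A, J)$ for some separable $C^*$-subalgebra $J \subseteq C(\bar{\mathbb N}, I)$. The first subtlety is that $J$ need not be of the form $C(\bar{\mathbb N}, I_0)$. I would remedy this by setting $I_0 \coloneqq \overline{\bigcup_{n \in \bar{\mathbb N}} \mathrm{ev}_n(J)}$, which is separable since $\bar{\mathbb N}$ is countable and $J$ is separable, and noting that by construction $J \subseteq C(\bar{\mathbb N}, I_0)$. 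Composing the inclusions gives a class $\kappa_0 \in KK(A, C(\bar{\mathbb N}, I_0))$ mapping to $\kappa$.

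The main obstacle is the second subtlety: a priori we only know $KK(A, \mathrm{ev}_n)(\kappa_0)$ maps to $0$ in $KK(A,I)$, not that it vanishes in $KK(A,I_0)$. The map $KK(A,I_0) \to KK(A,I)$ need not be injective. However, by the definition of this map as an inductive limit, vanishing in $KK(A,I)$ means vanishing in some intermediate $KK(A, \tilde I_n)$ for a separable $I_0 \subseteq \tilde I_n \subseteq I$. Since the index $n$ ranges over the countable set $\mathbb N$, I would let $I_0' \coloneqq \overline{\bigcup_{n \in \mathbb N} \tilde I_n}$, which remains separable, and let $\kappa_0'$ be the image of $\kappa_0$ under $KK(A, C(\bar{\mathbb N}, \iota_{I_0 \subseteq I_0'}))$. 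By the naturality used in the first paragraph, $KK(A, \mathrm{ev}_n)(\kappa_0') = 0$ in $KK(A, I_0')$ for each $n \in \mathbb N$, so $KK(A, \mathrm{ev}_\infty)(\kappa_0') \in Z_{KK(A, I_0')}$; and its image in $KK(A,I)$ is $z$, as required.

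Throughout, the only inputs are the functoriality of $KK(A, \,\cdot\,)$ (Proposition~\ref{prop:KKbifunc}), the commutation of composing with the inclusion $\iota_{I_0 \subseteq I}$ past each evaluation $\mathrm{ev}_n$, and the standard fact (built into Definition~\ref{KK-inductivelimit}) that an element of the limit vanishes exactly when it vanishes at some finite separable stage. The countability of $\bar{\mathbb N}$ is what allows both enlargements of $I_0$ above to be kept separable; this is really the crux of the argument.
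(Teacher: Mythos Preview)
Your proof is correct and follows essentially the same approach as the paper. The only difference is expository: the paper compresses your two enlargement steps (first enlarging $J$ to $C(\bar{\mathbb N}, I_0)$, then enlarging $I_0$ to $I_0'$ to force the vanishing conditions at each $n \in \mathbb N$) into a single sentence, saying that ``by increasing $J_0$'' one can arrange both properties simultaneously; your more explicit decomposition makes the argument easier to follow.
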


\begin{proof}
  Suppose $I_0\subseteq I$ is separable and $\kappa_0 \in Z_{KK(A,I_0)}$. Then there exists an element $\bar\kappa_0 \in KK(A,C(\bcN,I_0))$ such that $KK(A,\ev_n)(\bar\kappa_0)=0$ for $n\in\mathbb N$ and $KK(A,\ev_\infty)(\bar\kappa_0)=\kappa_0$.
  We see that $KK(A,\iota_{I_0\subseteq I})(\kappa_0) \in Z_{KK(A,I)}$ by pushing this forward to $C(\bcN,I)$.

On the other hand, suppose that $\kappa \in Z_{KK(A,I)}$. Then there exists an element $\bar\kappa \in KK(A,C(\bcN,I))$ such that $KK(A,\mathrm{ev}_n)(\bar\kappa)=0$ for all $n\in\mathbb N$ and $KK(A,\mathrm{ev}_\infty)(\bar\kappa)=KK(A,\iota_{I_0\subseteq I})(\kappa)$.

Using the definition of $KK(A,C(\bcN,I))$ as a limit, there exists a separable subalgebra $J_0\subseteq C(\bcN,I)$ and $\bar\kappa_0 \in KK(A,J_0)$ such that $KK(A,\iota_{J_0\subseteq C(\bcN,I)})(\bar\kappa_0)=\bar\kappa$.
By increasing $J_0$, we can arrange that $J_0$ has the form $C(\bcN,I_0)$ for some separable $I_0 \subseteq I$, and that $KK(A,\mathrm{ev}_n)(\bar\kappa_0)= 0$ in $KK(A,I_0)$ for all $n\in \mathbb N$.
Setting $\kappa_0\coloneqq KK(A,\ev_\infty)(\bar\kappa_0) \in Z_{KK(A,I_0)}$, we then have $\kappa=KK(A,\iota_{I_0\subseteq I})(\kappa_0)$, as required.
\end{proof}

\begin{proof}[Proof of Proposition~\ref{KL-inductivelimit}]
Functoriality of $KL$ descends from functoriality of $KK$ (Proposition~\ref{prop:KKbifunc}).
Given separable $C^*$-subalgebras $I_0\subseteq I_1\subseteq I_2\subseteq I$, 
functoriality applied to the inclusions $I_0\subseteq I_1 \subseteq I_2$ tells us that 
the limit $\varinjlim\limits_{I_0\text{ sep.}}KL(A,I_0)$ exist, and functoriality applied to $I_0\subseteq I_1 \subseteq I$ provides a map
\begin{equation} 
\Theta\coloneqq \varinjlim\limits_{I_0\text{ sep.}} KL(A,\iota_{I_0\subseteq I})\colon \varinjlim\limits_{I_0\text{ sep.}} KL(A,I_0) \to KL(A,I). \end{equation}
By naturality of the map from $KK$ to $KL$, we get a commuting diagram
\begin{equation}
\begin{tikzcd} 
\varinjlim\limits_{I_0\text{ sep.}} KK(A,I_0) \ar[r,equals] \ar[d, two heads, shorten <= -2.35ex] & KK(A,I) \ar[d, two heads] \\
\varinjlim\limits_{I_0\text{ sep.}} KL(A,I_0) \ar[r,"\Theta"] & KL(A,I).
\end{tikzcd}
\end{equation}
In particular, this shows that $\Theta$ is surjective.

For injectivity, let $I_0 \subseteq I$ and $\lambda \in KL(A,I_0)$ satisfy $KL(A,\iota_{I_0\subseteq I})(\lambda)=0$.
Let $\kappa \in KK(A,I_0)$ be a lift of $\lambda$.
Then $KK(A,\iota_{I_0\subseteq I})(\kappa) \in Z_{KK(A,I)}$, so by Lemma~\ref{lem:ZKK-limit}, there exists $I_1\subseteq I$ separable such that $I_0\subseteq I_1$ and $KK(A,\iota_{I_0\subseteq I_1})(\kappa)\in Z_{KK(A,I_1)}$.
Therefore,
\begin{equation}
    KL(A,\iota_{I_0\subseteq I_1})(\lambda)=0,
\end{equation}
as required.
\end{proof}

Now we recall and prove Propositions~\ref{prop:KK-facts} and \ref{prop:KL-facts}.

\begin{proposition}\label{prop:KasparovProdAppendix}
Let $A$ and $I$ be $C^*$-algebras with $A$ separable, let $E$ be a $C^*$-algebra containing $I$ as an ideal, and let $(\phi,\psi)\colon A \rightrightarrows E \rhd I$ be an $(A, I)$-Cuntz pair.
\begin{enumerate}
\item \label{KasparovProdAppendix.1}
Let $\iota^{(2)}_I\colon I \to M_2(I)$ be the top-left corner inclusion.  Then the induced maps $KK(A,\iota^{(2)}_I)\colon KK(A,I) \to KK(A,M_2(I))$ and $KL(A,\iota^{(2)}_I)\colon KL(A,I)\to KL(A,M_2(I))$ are isomorphisms. Moreover, 
\begin{equation}
\label{eq:MatrixInclusionFormula}
\begin{split}
KK(A,\iota^{(2)}_I)([\phi,\psi]_{KK(A,I)}) &= [\iota^{(2)}_{E}\circ\phi,\iota^{(2)}_E\circ\psi]_{KK(A,M_2(I))}, \quad\text{and} \\
KL(A,\iota^{(2)}_I)([\phi,\psi]_{KL(A,I)}) &= [\iota^{(2)}_{E}\circ\phi,\iota^{(2)}_E\circ\psi]_{KL(A,M_2(I))}.
\end{split}\end{equation}
\item \label{KasparovProdAppendix.3}
For any unitary $u \in I^\dagger$,
\begin{equation}\begin{split}
\label{eq:KK-UnitaryConjugacyAppendix}
[\phi,\psi]_{KK(A,I)}&=[\Ad u\circ\phi,\psi]_{KK(A,I)},\text{ and} \\
[\phi,\psi]_{KL(A,I)}&=[\Ad u\circ\phi,\psi]_{KL(A,I)}.
\end{split}\end{equation}
\item \label{KasparovProdAppendix.4}
$[\phi,\psi]_{KK(A,I)}+[\psi,\phi]_{KK(A,I)}=0$ and
$[\phi,\psi]_{KL(A,I)}+[\psi,\phi]_{KL(A,I)}=0$.
\item \label{KasparovProdAppendix.2}
If $J \lhd F$ and $\theta\colon I \to J$ is a $^*$-homomorphism that extends to $\bar\theta\colon E \to F$, then $KK(A,\theta)([\phi,\psi]_{KK(A,I)}) = [\bar\theta\circ \phi,\bar\theta\circ \psi]_{KK(A,J)}$ and similarly $KL(A,\theta)([\phi,\psi]_{KL(A,I)}) = [\bar\theta\circ \phi,\bar\theta\circ \psi]_{KL(A,J)}$.
\item \label{KasparovProdAppendix.5}
    Let $A$ be nuclear, and suppose
    \begin{equation}
      \mathsf{e}\colon
      0\longrightarrow I \stackrel{j_{\mathsf{e}}}\longrightarrow
      E\stackrel{q_{\mathsf{e}}}\longrightarrow D\longrightarrow 0
    \end{equation}
is an extension of $C^*$-algebras.
Then the sequence
    \begin{equation}
      KK(A,I) \xrightarrow{KK(A,j_{\mathsf{e}})}
      KK(A,E)\xrightarrow{KK(A,q_{\mathsf{e}})} KK(A,D) 
    \end{equation}
    is exact. The same holds when $A$ is $KK$-equivalent to a nuclear $C^*$-algebra.
\end{enumerate}
\end{proposition}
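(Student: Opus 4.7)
The unifying strategy will be to reduce each claim to its well-known analog in the separable (or at worst $\sigma$-unital) setting using the inductive limit presentations of $KK$ and $KL$ from Definition~\ref{KK-inductivelimit} and Proposition~\ref{KL-inductivelimit}, together with Cuntz's picture when functoriality in the second variable is needed.

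For parts~\ref{KasparovProdAppendix.1}--\ref{KasparovProdAppendix.4}, my plan is to first choose separable subalgebras $I_0 \subseteq I$ and $E_0 \subseteq E$ (and, for~\ref{KasparovProdAppendix.2}, $J_0 \subseteq J$ with $\theta(I_0) \subseteq J_0$) satisfying the separability conditions of Proposition~\ref{prop:KKwelldefined}\ref{KKwelldefined.1}, enlarged to contain all auxiliary data (the unitary $u$ in~\ref{KasparovProdAppendix.3}, the second Cuntz pair in~\ref{KasparovProdAppendix.4}, etc.). For separable second variable, each assertion is standard: matrix stability and the formula~\eqref{eq:MatrixInclusionFormula} for~\ref{KasparovProdAppendix.1} (Proposition~\ref{prop:KK-facts} in the separable case is classical), unitary-conjugation invariance for~\ref{KasparovProdAppendix.3} and the sign rule for~\ref{KasparovProdAppendix.4} (both well-known in the Cuntz--Thomsen picture), and naturality of Cuntz pair classes for~\ref{KasparovProdAppendix.2} --- which is precisely Proposition~\ref{appendix.corestrict}, already proved above using Cuntz's picture without any $\sigma$-unitality hypothesis. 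Each such identity or isomorphism then transports to the non-separable setting by applying $\varinjlim$ over separable subalgebras, using functoriality of $KK$ (Proposition~\ref{prop:KKbifunc}) and the naturality of the inductive limit connecting maps. The $KL$-versions follow from the $KK$-versions by passing to the quotient $KL = KK/Z_{KK}$, using Lemma~\ref{lem:ZKK-limit} for compatibility with separable limits.

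The main work will be in~\ref{KasparovProdAppendix.5}. Given $\kappa \in KK(A,E)$ with $KK(A,q_{\mathsf{e}})(\kappa) = 0$, the plan is to produce a separable subextension
\begin{equation}
    0 \longrightarrow I_0 \longrightarrow E_0 \stackrel{q_\mathsf{e}|_{E_0}^{D_0}}{\longrightarrow} D_0 \longrightarrow 0
\end{equation}
of $\mathsf{e}$ together with a lift $\kappa_0 \in KK(A,E_0)$ of $\kappa$ such that $KK(A, q_\mathsf{e}|_{E_0}^{D_0})(\kappa_0) = 0$ already in $KK(A, D_0)$. To do so, I will first pick any separable $E_0' \subseteq E$ and $\kappa_0' \in KK(A,E_0')$ lifting $\kappa$, set $D_0' \coloneqq q_\mathsf{e}(E_0')$, and observe that $KK(A, q_\mathsf{e}|_{E_0'}^{D_0'})(\kappa_0')$ has vanishing image in $KK(A, D)$. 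Since $KK(A,D)$ is the inductive limit over separable subalgebras of $D$, there exists a separable $D_0 \subseteq D$ with $D_0' \subseteq D_0$ for which this image already vanishes in $KK(A,D_0)$. Then I will enlarge $E_0'$ to a separable $E_0 \subseteq E$ by adjoining lifts of a countable dense subset of $D_0$, so that $q_\mathsf{e}(E_0) = D_0$, and set $I_0 \coloneqq I \cap E_0$. A functoriality check identifies the image $\kappa_0 \in KK(A,E_0)$ of $\kappa_0'$ as having vanishing image in $KK(A,D_0)$. Classical half-exactness of $KK(A,\,\cdot\,)$ for separable (or $\sigma$-unital) codomains when $A$ is nuclear then supplies $\mu \in KK(A,I_0)$ with $KK(A, j_\mathsf{e}|_{I_0}^{E_0})(\mu) = \kappa_0$, and pushing $\mu$ forward along $\iota_{I_0 \subseteq I}$ produces the desired preimage of $\kappa$ in $KK(A,I)$. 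The extension to $A$ merely $KK$-equivalent to a nuclear $C^*$-algebra is handled by taking the Kasparov product with the $KK$-equivalence class, which preserves exactness of any fragment of an exact sequence.

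The main obstacle I anticipate is the bookkeeping in~\ref{KasparovProdAppendix.5}: the need to simultaneously enlarge $E_0$, $I_0$, and $D_0$ so that they genuinely form a subextension of $\mathsf{e}$ \emph{and} the relevant $KK$-class already vanishes at the separable level (not merely after pushing forward into $KK(A,D)$). Once this coordinated enlargement is in place, the classical half-exactness result for separable codomains does the rest.
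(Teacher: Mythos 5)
Your argument matches the paper's own proof: parts~\ref{KasparovProdAppendix.1}--\ref{KasparovProdAppendix.4} are reduced to the separable case via the inductive limit presentation of $KK$ and the Cuntz-picture naturality in Proposition~\ref{appendix.corestrict}, with the $KL$-versions obtained by passing to quotients, and part~\ref{KasparovProdAppendix.5} is proved by constructing a separable subextension over which the given $KK$-class already vanishes before invoking classical half-exactness. Your treatment of part~\ref{KasparovProdAppendix.5} is in fact slightly more explicit than the paper's, which glosses over the need to enlarge $E_0$ (by adjoining lifts of a dense subset of $D_0$) so that $q_{\mathsf{e}}(E_0)=D_0$ rather than merely $q_{\mathsf{e}}(E_0)\subseteq D_0$, a step required to obtain an honest separable subextension.
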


\begin{proof}
\ref{KasparovProdAppendix.1}:
In the case when $I$ is separable, the $KK$ isomorphism result follows from \cite[Example 17.8.2(c)]{Blackadar98}, for example.
The formula for $KK(A,\iota^{(2)}_I)$ follows from Proposition~\ref{appendix.corestrict}.
For general $I$, the result follows from the definition of $KK(A,\iota^{(2)}_I)$ as a limit of maps $KK(A,\iota^{(2)}_{I_0})$ (where $I_0\subseteq I$ is separable), in the same spirit as the proof of Proposition~\ref{prop:KKbifunc}. 

\ref{KasparovProdAppendix.3}:
First suppose that $I$ is separable.
We may assume that the scalar part of $u$ is $1_{I^\dag}$. 
Then $u\oplus u^*$ is homotopic to $1_{I^\dag}\oplus 1_{I^\dag}$ via
the unitary path
\begin{equation}
  [0,\pi/2] \ni t \mapsto
  \begin{pmatrix}
    \cos^2(t)(u-1_{I^\dag}) +1_{I^\dag} & - \cos(t)\sin(t) (u-1_{I^\dag}) \\
    \cos(t) \sin(t) (u^* -1_{I^\dag}) & \cos^2(t)(u^* -1_{I^\dag}) + 1_{I^\dag}
  \end{pmatrix}
\end{equation}
which is constantly $1_{I^\dag}\oplus 1_{I^\dag}$ modulo $M_2(I)$, since $u-1_{I^\dag}\in I$. Conjugating with this unitary path in the first entry induces a homotopy of $(A,M_2(I))$-Cuntz pairs from $((\Ad u\circ \phi)\oplus 0,\psi\oplus 0) = (\Ad(u\oplus u^*) \circ (\phi\oplus 0), \psi \oplus 0)$ to $(\phi\oplus 0,\psi\oplus 0)$.
Thus (using the identification in \ref{KasparovProdAppendix.1}), $[\phi,\psi]_{KK(A,I)}=[\Ad u\circ\phi,\psi]_{KK(A,I)}$.

For non-separable $I$, the result follows from the separable case and the definition $KK(A, I)$ as a direct limit (see Definition~\ref{KK-inductivelimit}).
The $KL$ version follows by taking quotients.

\ref{KasparovProdAppendix.4}:
When $I$ is separable, the $KK$ formula can be found in \cite[Proposition~4.1.5]{Jensen-Thomsen91}, for example.
For non-separable $I$, the result follows from the separable case, and the $KL$ version follows by taking quotients.

\ref{KasparovProdAppendix.2}:
When $I$ and $J$ are separable, this is the last part of Proposition~\ref{appendix.corestrict}.
For the non-separable case, set $E_0\coloneqq C^*(\phi(A)\cup\psi(A)) \subseteq E$ and $F_0\coloneqq C^*(\theta(I_0))\subseteq F$.  Then let $I_0 \lhd E_0$ and $J_0 \lhd F_0$ be the ideals generated by $(\phi-\psi)(A)$ and $\theta(I_0)$, respectively.
Then Proposition~\ref{appendix.corestrict} gives
\begin{equation}
KK(A,\theta|_{I_0}^{J_0})\big(\big[\phi|^{E_0},\psi|^{E_0}\big]_{KK(A,I_0)}\big)
= \left[(\bar\theta\circ \phi)|^{F_0},(\bar\theta\circ\psi)|^{F_0}\right]_{KK(A,J_0)}.
\end{equation}
By the definition of $KK(A,\theta)$ in the proof of Proposition~\ref{prop:KKbifunc}, it follows that $KK(A,\theta)([\phi,\psi]_{KK(A,I)})=[\bar\theta\circ\phi,\bar\theta\circ\psi]_{KK(A,J)}$.
The computation for $KL$ follows by taking quotients.

\ref{KasparovProdAppendix.5}:
This is standard when $E$ is separable, being a fragment of the six-term exact sequence (see \cite[Theorem 19.5.7 and Example 19.5.2(a)]{Blackadar98}, or \cite[Exercise 20.10.2(f)]{Blackadar98} for the case that $A$ is $KK$-equivalent to a nuclear $C^*$-algebra, for example).  
For the non-separable case, as $\operatorname{im} KK(A,j_{\mathsf{e}})\subseteq \ker KK(A,q_{\mathsf{e}})$ by functoriality, the only thing to check is that 
$\ker KK(A,q_{\mathsf{e}}) \subseteq \operatorname{im} KK(A,j_{\mathsf{e}})$. Fix $\kappa \in \ker KK(A,q_{\mathsf{e}})$.
As in the proof of Proposition~\ref{prop:KKbifunc}, there exist separable subalgebras $E_0$ and $D_0$ of $E$ and $D$, respectively, such that $q_{\mathsf{e}}(E_0)\subseteq D_0$, $\kappa$ comes from some element $\kappa_0 \in KK(A,E_0)$, and $KK(A,q_{\mathsf{e}}|_{E_0}^{D_0})(\kappa_0)=0$ in $KK(A,D_0)$.
Setting $I_0\coloneqq I\cap E_0$, we have an extension of separable $C^*$-algebras
\begin{equation}
 \mathsf{e}_0\colon  0 \rightarrow I_0 \stackrel{j_{\mathsf{e}_0}}\longrightarrow E_0\stackrel{q_{\mathsf{e}_0}}\longrightarrow D_0\to 0. 
\end{equation}
Thus by the separable case, there exists $\kappa_0' \in KK(A,I_0)$ such that $KK(A,j_{\mathsf{e}_0})(\kappa_0')=\kappa_0$.
Letting $\kappa' \in KK(A,I)$ be the image of $\kappa_0'$, it follows that $KK(A,j_{\mathsf{e}})(\kappa')=\kappa$.
\end{proof}

\subsection{The universal multicoefficient theorem}
\label{sec:nonsepUMCT}

Now we turn to the natural map
\begin{equation}
\Gamma^{(A,I)}\colon KK(A,I) \to \mathrm{Hom}(K_*(A),K_*(I)).
\end{equation}
In the case that $I$ is $\sigma$-unital, the natural identification of $KK(\mathbb C,S^iI)$ with $K_i(I)$ for $i=0,1$ (\cite[Proposition~17.5.5]{Blackadar98}) combined with the Kasparov product provides this map.
For $\phi\colon A \to I$, since $KK(\mathbb C,S^i\phi)$ identifies with $K_i(\phi)$ and since taking the Kasparov product with $[\phi]_{KK(A,I)}$ corresponds to applying $KK(\,\cdot\,,\phi)$ (\cite[Example 18.4.2(a)]{Blackadar98}), we have
\begin{equation}
  \Gamma^{(A,I)}_i([\phi]_{KK(A, I)}) = K_i(\phi).
\end{equation}

For $I$ non-separable, since $K_*(A)$ is countable, we have a natural isomorphism
\begin{equation}
\label{eq:HomKLimit}
\mathrm{Hom}(K_*(A),K_*(I)) \cong \varinjlim\limits_{I_0\text{ sep.}} \mathrm{Hom}(K_*(A),K_*(I_0)),
\end{equation}
(see \cite[Proposition~1.10]{Schafhauser18}, for example), and upon making this identification we may define
\begin{equation}
\label{eq:AppendixKKtoHom}
\Gamma^{(A,I)}\coloneqq \varinjlim_{I_0\text{ sep.}} \Gamma^{(A,I_0)}.
\end{equation}
When $I$ is both non-separable and $\sigma$-unital, a priori this gives two competing definitions for $\Gamma^{(A,I)}$. By Remark~\ref{rmk:GammaAgree}, they agree.

More generally, identifying $KK(\mathbb I_n,S^i I)$ naturally with $K_i(I;\Zn{n})$ as per the picture in \cite{Dadarlat-Loring96} (using continuity of $K_i(\,\cdot\,;\Zn{n})$ for non-separable $I$), we likewise have\footnote{Injectivity of the map 
$\varinjlim\limits_{I_0\text{ sep.}}\mathrm{Hom}_\Lambda(\underline{K}(A),\underline{K}(I_0))\to
\mathrm{Hom}_{\Lambda}(\underline{K}(A),\underline{K}(I))$
follows directly from injectivity of the corresponding map \eqref{eq:HomKLimit}.
For surjectivity, given an element $\underline{\alpha}$ of $\mathrm{Hom}_{\Lambda}(\underline{K}(A),\underline{K}(I))$, first by surjectivity of the corresponding map \eqref{eq:HomKLimit}, we can lift $\underline{\alpha}$ to a graded group homomorphism $\underline{K}(A) \to \underline{K}(I_0)$ for some separable $I_0\subseteq I$.
Then using injectivity of the map \eqref{eq:HomKLimit}, we can enlarge $I_0$ to ensure compatibility with each of the countably many Bockstein operations.}
\begin{equation}
\mathrm{Hom}_{\Lambda}(\underline{K}(A),\underline{K}(I)) \cong \varinjlim\limits_{I_0\text{ sep.}}\mathrm{Hom}_\Lambda(\underline{K}(A),\underline{K}(I_0)).
\end{equation}
This gives a homomorphism
\begin{equation}
\label{eq:AppendixKKtoHom2}
\Gamma_\Lambda^{(A,I)}\colon KK(A,I) \to \mathrm{Hom}_\Lambda(\underline{K}(A),\underline{K}(I)).
\end{equation}
Just as with $\Gamma^{(A,I)}$, the two possible definitions of $\Gamma^{(A,I)}_\Lambda$ for $I$ $\sigma$-unital and non-separable agree.  When $A$ satisfies the UCT, Dadarlat and Loring's universal multicoefficient theorem says that $\Gamma_\Lambda^{(A,I)}$ descends to an isomorphism when taking the $KL$ quotient on the left.
While their statement only allows separable codomains, the result extends to the non-separable case.

\begin{proof}[Proof of Theorem~\ref{thm:the-umct}]
First let us justify that $\tilde\Gamma_\Lambda^{(A,I)}$ is well-defined, without assuming that $A$ satisfies the UCT.
When $I$ is separable, this is implicit in the paragraph following \cite[Eq.\ (13)]{Dadarlat05}.\footnote{The idea is that for $\kappa \in KK(A,C(\bcN,I))$, the sequence 
$$\left(\Gamma_\Lambda^{(A,I)}(KK(A,\ev_n)(\kappa))_i^{(m)} \colon K_i(A; \Zn{m}) \to K_i(I; \Zn{m})\right)_{n=1}^\infty$$ of homomorphisms is eventually equal to $\Gamma_\Lambda^{(A,I)}(KK(A,\ev_\infty)(\kappa))_i^{(m)}$. Hence any element of $Z_{KK(A,I)}$ is mapped to zero by $\Gamma_\Lambda^{(A,I)}$.}
For $I$ non-separable, since $Z_{KK(A,I)}=\varinjlim\limits_{I_0\text{ sep.}} Z_{KK(A,I_0)}$ (Lemma~\ref{lem:ZKK-limit}), it follows from the separable case that $\Gamma_\Lambda^{(A,I)}$ vanishes on $Z_{KK(A,I)}$.

Now, suppose that $A$ satisfies the UCT.
If $I$ is separable, then $\Tilde\Gamma_\Lambda^{(A,I)}$ is an isomorphism by \cite[Theorem 4.1]{Dadarlat05} (see also \cite{Dadarlat-Loring96}).
For the non-separable case, we have 
\begin{equation}
\tilde\Gamma_\Lambda^{(A,I)} = \varinjlim_{I_0\text{ sep.}} \tilde\Gamma_\Lambda^{(A,I_0)}.
\end{equation}
Since $\tilde\Gamma_\Lambda^{(A,I_0)}\colon KL(A,I_0) \to \mathrm{Hom}_\Lambda(\underline{K}(A),\underline{K}(I_0))$ is an isomorphism for each $I_0$, it follows that $\tilde\Gamma_\Lambda^{(A,I)}$ is as well.
\end{proof}

\end{document}